\documentclass[msc,american]{ThesisPUC_uk}


\usepackage[top=3cm, bottom=3cm, left=3cm, right=3cm]{geometry}
\usepackage{amsmath,amsfonts,a4,amssymb,enumitem,psfrag}
\usepackage{amsthm}
\usepackage[latin1]{inputenc}  
\usepackage{mathtools}
\usepackage{graphicx}
\usepackage{float}
\usepackage{subcaption}
\usepackage{scalerel}
\usepackage{rotating}
\usepackage[cal=cm, scr=dutchcal]{mathalfa}
\usepackage{bbm}
\usepackage{url}
\usepackage{color}
\usepackage[toc, nonumberlist, nogroupskip]{glossaries}

\newcommand{\R}{\mathbb{R}}
\newcommand{\Sp}{\mathbb{S}}
\newcommand{\C}{\mathbb{C}}
\newcommand{\T}{\mathbb{T}}
\newcommand{\N}{\mathbb{N}}
\newcommand{\Z}{\mathbb{Z}}

\newcommand{\bi}{\mathbf{i}}

\newcommand{\Mod}[1]{\ (\text{mod}\ #1)}

\newcommand{\quotient}[2]{{\left.\raisebox{.2em}{$#1$}\middle/\raisebox{-.2em}{$#2$}\right.}}
\DeclareRobustCommand{\qquotient}[2]{{\left.\raisebox{.2em}{$#1$}\middle/\raisebox{-.2em}{$#2$}\right.}} 

\DeclareRobustCommand{\rrestr}{\text{$f$}\raisebox{-.2em}{$\big|_{A}$}}

\DeclareMathOperator{\sgn}{sgn}
\DeclareMathOperator{\card}{card}

\DeclareMathOperator*{\Bigcdot}{\scalerel*{\cdot}{\bigodot}}


\DeclarePairedDelimiter\floor{\lfloor}{\rfloor}

\theoremstyle{plain}
\newtheorem*{theorem*}{Theorem}
\newtheorem{teorema}{Theorem} 
\newtheorem{theo}{Theorem}
\newtheorem{prop}{Proposition}[section]
\newtheorem{corolario}[prop]{Corollary}
\newtheorem{lema}[prop]{Lemma}

\newcommand\EatDot[1]{}


\AfterBegin{itemize}{\addtolength{\itemsep}{-0.5em}}
\BeforeBegin{equation}{\vspace{-0.5em}}
\BeforeEnd{equation}{\vspace{-0.5em}}
\BeforeBegin{theo}{\vspace{-0.4em}}
\BeforeEnd{theo}{\vspace{-0.2em}}
\BeforeBegin{prop}{\vspace{-0.4em}}
\BeforeEnd{prop}{\vspace{-0.2em}}
\parskip=4pt plus 8pt


\author{Fillipo de Souza Lima Impellizieri}
\authorR{Impellizieri, Fillipo de Souza Lima}
\resume{The author obtained the degree of Bacharel em Matem\'atica from PUC-Rio in July 2013}
\advisor{Nicolau Cor\c{c}\~ao Saldanha}
\advisorR{Saldanha, Nicolau Cor\c{c}\~ao}
\title{Domino Tilings of the Torus}
\runninghead{Domino Tilings of the Torus}

\titlebr{Coberturas do Toro por Domin\'os}
\day{11} \month{September} \monthN{09} \year{2015}

\city{Rio de Janeiro}
\CDD{510}
\department{Matem\'atica}
\program{Matem\'atica}
\programbr{Matem\'atica}
\school{Centro T\'ecnico Cient\'ifico}
\university{Pontif\'icia Universidade Cat\'olica do Rio de Janeiro}
\uni{PUC-Rio}

\jury{
\jurymember{Carlos Tomei}{Departamento de Matem\'atica -- PUC-Rio}
\jurymember{Juliana Abrantes Freire}{Departamento de Matem\'atica -- PUC-Rio}
\jurymember{Marcio da Silva Passos Telles}{Instituto de Matem\'atica e Estat\'istica -- UERJ}
\jurymember{Robert David Morris}{Instituto de Matem\'atica Pura e Aplicada -- IMPA}
\schoolhead{Jos\'e Eugenio Leal}
}

\acknowledgment{ 
To CNPq, FAPERJ and PUC-Rio, for making this research possible.

To my advisor, Prof. Nicolau Cor\c{c}\~ao Saldanha, for his boundless patience, eagerness to help and valuable insight.

To the Department of Mathematics, especially Creuza, Fred and Renata, for helping me get here and putting up with my fickleness.

To my jury, for their appreciation, understanding and precious feedback. In particular, I would like to thank Carlos Tomei for his spirit and enthusiasm, and Marcio Telles for his dedication.

To my professor Ricardo Earp, for his guidance and caring.

To my friends, for their positivity and reassurance. In particular, to my graduation friends Felipe, Leonardo and Gregory for holding onto this `academic friendship'.

To my family, for their patience, unwavering support and cherished love.
}
\keywords
{
\key{domino} 
\key{tiling}
\key{torus}
\key{lattice}
\key{flux}
\key{flip}
\key{height function}
\key{Kasteleyn matrix}
}
\keywordsbr
{
\key{domin\'o}
\key{cobertura} 
\key{toro}
\key{reticulado}
\key{fluxo}
\key{flip}
\key{fun\c{c}\~ao altura}
\key{matriz de Kasteleyn}
}

\abstractbr
{
	Consideramos o problema de contar e classificar coberturas por domin\'{o}s de toros quadriculados. O problema de contagem para ret\^{a}ngulos foi estudado por Kasteleyn e usamos muitas de suas ideias. Coberturas por domin\'{o}s de regi\~oes planares podem ser representadas por fun\c{c}\~oes altura; para um toro dado por um reticulado $L$, estas fun\c{c}\~oes exibem $L$-quasiperiodicidade aritm\'{e}tica. As constantes aditivas determinam o fluxo da cobertura, que pode ser interpretado como um vetor no reticulado dual $(2L)^*$. Damos uma caracteriza\c{c}\~ao dos valores de fluxo efetivamente realizados e de como coberturas correspondentes se comportam. Tamb\'{e}m consideramos coberturas por domin\'{o}s do reticulado quadrado infinito; coberturas de toros podem ser vistas como um caso particular destas. Descrevemos a constru\c{c}\~ao e uso de matrizes de Kasteleyn no problema de contagem, e como elas podem ser aplicadas para contar coberturas com valores de fluxo prescritos. Finalmente, estudamos a distribui\c{c}\~ao limite do n\'{u}mero de coberturas com um dado valor de fluxo quando o reticulado $L$ sofre uma dilata\c{c}\~ao uniforme.
}
\abstract
{
	We consider the problem of counting and classifying domino tilings of a quadriculated torus. The counting problem for rectangles was studied by Kasteleyn and we use many of his ideas. Domino tilings of planar regions can be represented by height functions; for a torus given by a lattice $L$, these functions exhibit arithmetic $L$-quasiperiodicity. The additive constants determine the flux of the tiling, which can be interpreted as a vector in the dual lattice $(2L)^*$. We give a characterization of the actual flux values, and of how corresponding tilings behave. We also consider domino tilings of the infinite square lattice; tilings of tori can be seen as a particular case of those. We describe the construction and usage of Kasteleyn matrices in the counting problem, and how they can be applied to count tilings with prescribed flux values. Finally, we study the limit distribution of the number of tilings with a given flux value as a uniform scaling dilates the lattice $L$.
}

\tablesmode{noshow}
\makenoidxglossaries
\newglossaryentry{dual graph}
{
name={dual graph},
sort={dual graph},
description={The graph $G$ obtained from a quadriculated region $R$ by substituting each of its squares by a vertex and joining neighboring vertices by an edge; see page \pageref{def:dualgraph}}
}

\newglossaryentry{matching}
{
name={(perfect) matching},
sort={matching},
description={A perfect matching is a set of edges on a graph $G$ in which each vertex features exactly once. Corresponds to a tiling. See page \pageref{def:perfectmatching}}
}
\newglossaryentry{bipartite}
{
name={bipartite},
sort={bipartite},
description={A graph whose vertices can be separated into two disjoint sets $U, V$ such that every edge joins a vertex from $U$ to $V$. See page \pageref{def:bipartite}}
}

\newglossaryentry{black and white condition}
{
name={black and white (condition)},
sort={black and white},
description={The requirement that a region's dual graph be bipartite. Alternatively, the requirement that every domino on a tiling consist of one square of each color. Implies an equal number of black squares and white squares. See pages \pageref{def:bwcondition1}, \pageref{def:bwcondition2} and \pageref{def:bwcondition3}}
}

\newglossaryentry{flip}
{
name={flip},
sort={flip},
description={A move on a tiling that exchanges two dominoes tiling a $2 \times 2$ square by two dominoes in the only other possible configuration}
}

\newglossaryentry{height function}
{
name={height function},
sort={height function},
description={An integer function that encodes a tiling. For a tiling of a quadriculated planar region $R$, it is defined on the vertices of $R$; see Section \ref{sec:flipplano}. For a tiling of a quadriculated torus $\qquotient{\R^2}{L}$, it is $L$-quasiperiodic and defined on $\Z^2$ (sometimes called its \emph{toroidal} height function); see Section \ref{sec:hfuntor}}
}

\newglossaryentry{Kasteleyn matrix}
{
name={Kasteleyn matrix},
sort={Kasteleyn matrix},
description={A modified adjacency matrix whose determinant counts tilings of a region. For Kasteleyn matrices of planar regions, see Section \ref{sec:kastmatplano}. For an overview of Kasteleyn matrices on the square torus, see page \pageref{def:kasttorussimplified}; for a detailed exposition on more general tori, refer to Chapter \ref{chap:kastmattorus}}
}

\newglossaryentry{Pfaffian}
{
name={Pfaffian},
sort={Pfaffian},
description={The determinant of a skew-symmetric matrix $A$ can be written as the square of a polynomial in $A$'s entries. This polynomial is $A$'s Pfaffian}
}

\newglossaryentry{i}
{
name={$\bi$},
sort={i},
description={The imaginary unit}
}

\newglossaryentry{Dn}
{
name={$D_n$},
sort={dn},
description={A $2n \times 2n$ square fundamental domain for the $2n \times 2n$ square torus $\T_n$. See page \pageref{def:dn}}
}
\newglossaryentry{Tn}
{
name={$\T_n$},
sort={Tn},
description={The $2n \times 2n$ square torus $\T_n$. See pages \pageref{def:tn}}
}

\newglossaryentry{cross-flip}
{
name={cross-flip},
sort={cross-flip},
description={A flip involving a cross-over domino. See page \pageref{def:crossflip}}
}

\newglossaryentry{cross-over domino}
{
name={cross-over domino},
sort={cross-over domino},
description={On a fundamental domain of a torus $\T_L$, a domino that crosses a side that's been identified with another side. See page \pageref{def:crossoverdomino} }
}

\newglossaryentry{flux}
{
name={flux},
sort={flux},
description={A flux of a tiling counts cross-over dominoes with a sign; see page \pageref{def:fluxsimple} for an overview, and Chapter \ref{chap:flux} for a detailed exposition. For a torus $\T_L$, the flux may also be thought as an element of the affine lattice $L^{\#}$; see Section \ref{sec:lsharp}}
}

\newglossaryentry{Laurent polynomial}
{
name={Laurent polynomial},
sort={Laurent polynomial},
description={A Laurent series with finitely many nonzero coefficients}
}

\newglossaryentry{Phi}
{
name={$\Phi$},
sort={zPhi},
description={The mod 4 prescription function on the infinite square lattice $\Z^2$. See page \pageref{def:phiprescrip}}
}

\newglossaryentry{Quasiperiodicity}
{
name={(arithmetic) quasiperiodicity},
sort={quasiperiodicity},
description={A function satisfying $f(u+v) = f(u)+C$ for some $v, C$ and all $u$}
}

\newglossaryentry{L}
{
name={$L$},
sort={Lattice},
description={A lattice. See page \pageref{def:lattice}}
}

\newglossaryentry{Lstar}
{
name={$L^*$},
sort={Latticedual},
description={The dual lattice of $L$, given by $\text{Hom}(L, \Z)$. See page \pageref{def:duallattice} }
}

\newglossaryentry{Lsharp}
{
name={$L^{\#}$},
sort={Latticesharp},
description={The translate of $L^*$ in $(2L)^*$ that contains all flux values of tilings of $\T_L$. See Section \ref{sec:lsharp}}
}

\newglossaryentry{disjointUnion}
{
name={$A \sqcup B$},
sort={zunion},
description={The union of two disjoint sets $A$ and $B$}
}

\newglossaryentry{valid lattice}
{
name={valid lattice},
sort={valid lattice},
description={A lattice whose vectors have integral coordinates that are the same parity. See Section \ref{sec:vallat}}
}

\newglossaryentry{E}
{
name={$\mathscr{E}, \mathscr{O}$},
sort={even},
description={The lattices $2\Z^2$ and $2\Z^2 + (1,1)$ of vectors whose coordinates are respectively both even and both odd}
}


\newglossaryentry{TL}
{
name={$\T_L$},
sort={TL},
description={The torus $\qquotient{\R^2}{L}$, where $L$ is a lattice. See page \pageref{def:tl}}
}

\newglossaryentry{t}
{
name={$t$},
sort={t},
description={A tiling}
}
\newglossaryentry{phi}
{
name={$\varphi$},
sort={zphi},
description={A flux value}
}
\newglossaryentry{edge}
{
name={edge},
sort={edge},
description={An edge on a graph, or an edge on the boundary of a square of a quadriculated region}
}
\newglossaryentry{edge-path}
{
name={edge-path},
sort={edge-path},
description={A sequence of neighboring vertices (either on a graph or on a quadriculated region)}
}
\newglossaryentry{FL}
{
name={$\mathscr{F}(L)$},
sort={FL},
description={The set of all flux values of tilings of $\T_L$. See Section \ref{sec:fluxchar}}
}
\newglossaryentry{v1}
{
name={$\lVert v \rVert_1$},
sort={zvnorm1},
description={For $v=(x,y)$, the $1$-norm $\lvert x \rvert + \lvert y \rvert$}
}
\newglossaryentry{vinfinity}
{
name={$\lVert v \rVert_{\infty}$},
sort={zvnorminfinity},
description={For $v=(x,y)$, the infinity norm $\max\{\lvert x \rvert, \lvert y \rvert$\}}
}
\newglossaryentry{hmax}
{
name={$h_{\max}$},
sort={hmax},
description={The height function that is maximal over height functions on $\Z^2$ that are 0 at the origin. See page \pageref{hmax}}
}
\newglossaryentry{hmin}
{
name={$h_{\min}$},
sort={hmin},
description={The height function that is minimal over height functions on $\Z^2$ that are 0 at the origin. See page \pageref{hminplano}}
}
\newglossaryentry{doublyinfinite}
{
name={doubly-infinite},
sort={doublyinfinite},
description={A staircase edge-path or domino staircase that is infinite in both directions}
}
\newglossaryentry{H0R}
{
name={$\mathbcal{H}_0(R)$},
sort={H0R},
description={The set of height functions on $R$ that are 0 at the origin. See page \pageref{def:h0r}}
}
\newglossaryentry{gammavw}
{
name={$\Gamma (v,w), \Psi (v,w)$},
sort={zgammavw},
description={The set of edge-paths in $\Z^2$ joining $v$ to $w$ and that respectively respect and reverse color-induced edge orientation. See pages \pageref{def:gammavw} and \pageref{def:psivw}}
}
\newglossaryentry{lgamma}
{
name={$l(\gamma)$},
sort={lgamma},
description={The length of an edge-path $\gamma$}
}
\newglossaryentry{edge-profile}
{
name={edge-profile (type)},
sort={edge-profile},
description={The color-induced orientation of edges round a vertex, one of two types. See page \pageref{def:edgeprofile}}
}
\newglossaryentry{ordered sum}
{
name={ordered sum (representation)},
sort={ordered sum},
description={A unique representation of edge-paths in $\Gamma$ or $\Psi$ sets. See \pageref{def:orderedsumrepresentation}}
}
\newglossaryentry{hvphimax}
{
name={$h_{\max}^{v, \varphi}$},
sort={hmaxvphi},
description={The height function that is maximal over height functions on $\Z^2$ that take the value $4\cdot \langle \varphi, v \rangle$ at $v$. See \pageref{def:hvphimax}}
}
\newglossaryentry{hLphimax}
{
name={$h_{\max}^{L,\varphi}$},
sort={hmaxLphi},
description={The height function that is maximal over toroidal height functions of $\T_L$ with flux $\varphi$. See page \pageref{def:hLphimax}}
}
\newglossaryentry{flip-connected}
{
name={flip-connected},
sort={flipcon},
description={A quadriculated region or set of tilings such that any two distinct tilings can be joined by a sequence of flips. For flip-connectedness on simply-connected planar regions, see Corollary \ref{flipconec} in Section \ref{sec:flipplano}. For a discussion of flip-connectedness on tori, refer to Chapter \ref{chap:fliptorus}}
}
\newglossaryentry{flip-isolated}
{
name={flip-isolated},
sort={flipiso},
description={A tiling which admits no flips, or a set of tilings in which no two distinct tilings can be joined by a sequence of flips}
}
\newglossaryentry{staircase}
{
name={staircase},
sort={staircase},
description={For domino staircases, see page \pageref{def:dominostaircase}. For staircase edge-paths, see page \pageref{def:staircaseedgepath}. For types of staircases, see pages \pageref{def:staircaseedgepathtype} and \pageref{def:stairtype}}
}
\newglossaryentry{windmill}
{
name={windmill (tiling)},
sort={windmill},
description={A tiling of the infinite square lattice that admits no flips and consists entirely of infinite domino staircases that are never doubly-infinite. See page \pageref{def:windmill}}
}
\newglossaryentry{brick wall}
{
name={brick wall},
sort={brick wall},
description={A tiling that uses only one type of domino (vertical or horizontal) and consists entirely of doubly-infinite domino staircases. See page \pageref{def:brickwall}}
}
\newglossaryentry{rugged rectangle}
{
name={rugged rectangle},
sort={rugged rectangle},
description={A type of quadriculated region. See page \pageref{def:rugrec}}
}
\newglossaryentry{rugged quadrant}
{
name={(cardinal) rugged quadrant},
sort={rugged quadrant},
description={A type of quadriculated region. See page \pageref{def:rugquad}}
}
\newglossaryentry{vLeL}
{
name={$[e]_L, [v]_L,[S]_L$},
sort={zvLeL},
description={An $L$-equivalence class of a vertex $v$, edge $e$ or doubly-infinite domino staircase $S$. See pages \pageref{def:vbracketl}, \pageref{def:ebracketl} and \pageref{def:Lclassstaircase}}
}
\newglossaryentry{LKasteleynsigning}
{
name={$L$-Kasteleyn signing},
sort={LKasteleynsigning},
description={An assignment of plus and minus signs to $L$-equivalence classes of edges. See page \pageref{def:lkastsign}}
}
\newglossaryentry{LKasteleynsigninguniversal}
{
name={Universal Kasteleyn signing},
sort={universalKasteleynsigning},
description={A Kasteleyn signing that applies to any valid lattice. See page \pageref{def:lkastsignuni}}
}
\newglossaryentry{gz2}
{
name={$G(\Z^2)$},
sort={gz2},
description={The dual graph of the infinite square lattice. See page \pageref{def:gz2}}
}
\newglossaryentry{Lflip}
{
name={$L$-flip (round $v$)},
sort={Lflip},
description={A flip round each vertex in $[v]_L$. See page \pageref{def:lflip}}
}
\newglossaryentry{stairflip}
{
name={stairflip},
sort={stairflip},
description={A move on a tiling that exchanges a doubly-infinite domino staircase by a doubly-infinite domino staircase in the only other possible configuration. See page \pageref{def:stairflip}}
}
\newglossaryentry{Lstairflip}
{
name={$L$-stairflip (on $S$)},
sort={Lstairflip},
description={A stairflip on each doubly-infinite domino staircase in $[S]_L$. See page \pageref{def:Lstairflip}}
}
\newglossaryentry{stairL}
{
name={$\text{Stair}(L)$},
sort={stairL},
description={The set of $L$-equivalence classes of doubly-infinite domino staircases in $\Z^2$. See page \pageref{def:stairL}}
}
\newglossaryentry{stairLk}
{
name={$\text{Stair}(L;k)$},
sort={stairLk},
description={The set of $L$-equivalence classes of type-$k$ doubly-infinite domino staircases in $\Z^2$. See page \pageref{def:stairLk}}
}
\newglossaryentry{stairLkverthor}
{
name={$\text{Stair}(L;k;\text{vert}), \text{Stair}(L;k;\text{hor})$},
sort={stairLkverthor},
description={The sets of $L$-equivalence classes of type-$k$ doubly-infinite domino staircases in $\Z^2$ whose dominoes are all vertical, or all horizontal. See page \pageref{def:stairLkverthor}}
}
\newglossaryentry{xiL}
{
name={$\xi_L$},
sort={zxiL},
description={The $L$-stairflip operator. See page \pageref{def:xiL}}
}
\newglossaryentry{xiLkexclusive}
{
name={$\xi_L$-$k$-exclusive},
sort={zxiLkexclusive},
description={A type of subset of $\text{Stair}(L;k)$. See page \pageref{def:xiLkexclusive}}
}
\newglossaryentry{Qk}
{
name={$Q_k$},
sort={Qk},
description={A component of $\partial Q$. See pages \pageref{def:qk} and \pageref{def:qkprop}}
}
\newglossaryentry{cycleflip}
{
name={cycle, cyle flip},
sort={cycle flip},
description={Simultaneously representing two tilings of $\T_L$ on the same fundamental domain induces a decomposition of the domain into disjoint domino cycles. A cycle flip uses these cycles to go from one tiling to the other. For details, see page \pageref{def:cycflip}}
}
\newglossaryentry{cycle interior}
{
name={interior of a cycle},
sort={interior of a cycle},
description={The union of the interior of domino paths of a closed cycle; see page \pageref{def:intc}}
}
\newglossaryentry{cycle exterior}
{
name={exterior of a cycle},
sort={exterior of a cycle},
description={The intersection of the exterior of domino paths of a closed cycle; see page \pageref{def:extc}}
}
\newglossaryentry{ctot1}
{
name={$C(t_0,t_1)$},
sort={ct0t1},
description={The set of cycles formed by the superposition of tilings $t_0$ and $t_1$. See page \pageref{def:ct0t1}}
}
\newglossaryentry{short}
{
name={short},
sort={short},
description={A vector $v$ of a lattice $L$ such that $s \cdot v \notin L$ for all $s \in [0,1)$}
}
\newglossaryentry{parameter}
{
name={(cycle) parameter},
sort={parameter},
description={A short vector in $L$, unique up to sign, associated to an open cycle. See page \pageref{def:cycparam}}
}
\newglossaryentry{quasicycle}
{
name={($v$-)quasicycle},
sort={quasicycle},
description={A type of ($v$-)quasiperiodic function. See page \pageref{def:quasicyc}}
}
\newglossaryentry{pseudoflux}
{
name={$\phi(\gamma)$},
sort={zphi de gamma},
description={The pseudo-flux of the quasicycle $\gamma$; see page \pageref{def:pseudoflux}}
}
\newglossaryentry{compatible}
{
name={compatible (with $\gamma$)},
sort={compatible},
description={A tiling of a torus is compatible with a quasicycle $\gamma$ if it contains every other domino of $\gamma$; see page \pageref{def:compatible}}
}
\newglossaryentry{argument}
{
name={argument (function)},
sort={argument},
description={A function that can be defined for certain edge-paths. See page \pageref{def:argfunction}}
}
\newglossaryentry{flux connecting}
{
name={flux-connecting (basis)},
sort={flux-connecting},
description={A basis $\{v_0^*,v_1^*\}$ of $L^*$ such that the moves $\pm v_i^*$ connect $\mathscr{F}(L)$. See page \pageref{def:fluxconnecbasis}}
}
\newglossaryentry{Kast}
{
name={$\text{Kast}(e)$},
sort={Kaste},
description={The Kasteleyn weight of the edge $e$. For Kasteleyn weights, refer to the construction of Kasteleyn matrices (Section \ref{sec:kastmatplano} and Chapter \ref{chap:kastmattorus})}
}
\newglossaryentry{DL}
{
name={$D_L$},
sort={DL},
description={A specific choice of rectangular fundamental domain for a lattice $L$. See page \pageref{def:dl}}
}
\newglossaryentry{Dab}
{
name={$D(a,b)$},
sort={Dab},
description={A set of points in a translate of $D_L$. See page \pageref{def:dab}}
}
\newglossaryentry{op0p1}
{
name={$o(p_0p_1)$},
sort={op0p1},
description={The oriented weight of the edge $p_0p_1$. See page \pageref{def:orientedweight}}
}
\newglossaryentry{qquotient}
{
name={$\qquotient{A}{B}$},
sort={zquotient},
description={The quotient of $A$ by $B$}
}
\newglossaryentry{z2modLstar}
{
name={$\left(\qquotient{\Z^2}{L}\right)^*$},
sort={zquotientstar},
description={The group $\text{Hom}\left(\qquotient{\Z^2}{L},\Sp^1\right)$}
}
\newglossaryentry{FLq}
{
name={$\mathcal{F}(L,q)$},
sort={FLq},
description={The space of complex functions on $\big(\Z+\frac12\big)^2$ that are $L$-periodic with parameter $q$. See page \pageref{def:flq}}
}
\newglossaryentry{psi}
{
name={$\psi$},
sort={zpsi},
description={An isomorphism between a space of $L$-periodic functions and $\mathcal{F}(L,q)$. See page \pageref{def:psi}}
}
\newglossaryentry{l0}
{
name={$L_0$},
sort={Latticez},
description={The lattice spanned by $\{(2,0),(1,1)\}$; the lattice that contains every valid lattice. See page \pageref{def:l0}}
}
\newglossaryentry{l0bw}
{
name={$L_b, L_w$},
sort={Latticezbw},
description={The affine lattices $L_0 + \big(\frac12, \frac12 \big)$ and $L_0 + \big(\frac12, -\frac12\big)$, respectively. Equivalently, respectively the set of black vertices and the set of white vertices of $G\left(\Z^2\right)$. See page \pageref{def:l0b}}
}
\newglossaryentry{BLq}
{
name={$\mathcal{B}(L,q)$},
sort={BLq},
description={The space of complex functions on $L_b$ that are $L$-periodic with parameter $q$. See page \pageref{def:blq}}
}
\newglossaryentry{WLq}
{
name={$\mathcal{W}(L,q)$},
sort={WLq},
description={The space of complex functions on $L_w$ that are $L$-periodic with parameter $q$. See page \pageref{def:wlq}}
}
\newglossaryentry{DbDw}
{
name={$D_b,D_w$},
sort={DbDw},
description={Ordered bases of $\mathcal{B}(L,q)$ and $\mathcal{W}(L,q)$, associated to the construction of a Kasteleyn matrix. See page \pageref{def:dbdw}}
}
\newglossaryentry{KKd}
{
name={$K, K_D$},
sort={KKD},
description={A Kasteleyn matrix. The subindex in $K_D$ refers to the matrix constructed from Kasteleyn weights in a fundamental domain $D_L$; see page \pageref{def:kd}}
}
\newglossaryentry{KE}
{
name={$K_E$},
sort={KKE},
description={A diagonal Kasteleyn matrix, obtained from representing $K_D$ in exponential bases. See page \pageref{def:ke}}
}
\newglossaryentry{frestra}
{
name={$\rrestr$},
sort={zfrestra},
description={The restriction of $f$ to $A$}
}
\newglossaryentry{EbEw}
{
name={$E_b,E_w$},
sort={EbEw},
description={Ordered bases of $\mathcal{B}(L,q)$ and $\mathcal{B}(L,q)$, called exponential bases, associated to a diagonalization of $K_D$. See page \pageref{def:ebew}}
}
\newglossaryentry{XAB}
{
name={$X(A,B)$},
sort={XAB},
description={The change of basis from $A$ to $B$. See page \pageref{def:XAB}}
}
\newglossaryentry{rho}
{
name={$\rho$},
sort={zrho},
description={A complex phase related to the determinants of $K_D$ and $K_E$. See page \pageref{def:rho}}
}
\newglossaryentry{rho12}
{
name={$\rho_1,\rho_2$},
sort={zrho12},
description={Complex phases associated to $\rho$. See page \pageref{def:rho12}}
}
\newglossaryentry{even lattice}
{
name={even lattice},
sort={even lattice},
description={A valid lattice that does not contain points with odd coordinates. See page \pageref{def:latticeoddeven}}
}
\newglossaryentry{odd lattice}
{
name={odd lattice},
sort={odd lattice},
description={A valid lattice that contains points with odd coordinates. See page \pageref{def:latticeoddeven}}
}
%

\begin{document}

\chapter{Introduction}

Tilings of planar regions by dominoes (and also lozenges) can be thought of as perfect matchings of a corresponding graph.
In this sense, the enumeration of matchings was studied as early as 1915 by MacMahon \cite{macmahon1984combinatory}, whose focus was on plane partitions.
Also around the time, chemists and physicists were interested in aromatic hydrocarbons and the behavior of liquids.
Hereafter, I will refer to perfect matchings simply by `matchings'.

Research on dimers in statistical mechanics had a major breakthrough in 1961, when Kasteleyn \cite{kasteleyn1961statistics} (and, independently, Temperley and Fisher \cite{temperley1961dimer}) discovered a technique to count the matchings of a subgraph $G$ of the infinite square lattice.
He proved that this number is equal to the Pfaffian of a certain $0$,$1$-matrix $M$ associated with $G$.
Not much later, Percus \cite{percus1969one} showed that when $G$ is bipartite, one can modify $M$ so as to obtain the number from its determinant (rather than from its Pfaffian).
James Propp \cite{propp1999enumeration} provides an interesting overview of the topic on his `Problems and Progress in Enumeration of Matchings'.

In the early 90s, more advances were made and gave new impetus to research. Conway \cite{conway1990tiling} devised a group-theoretic argument that, in many interesting cases, may be used to show that a given region cannot be tessellated by a given set of tiles.
In a related work, Thurston \cite{thurston1990} introduced the concept of height functions: integer-valued functions that encode a tiling of a region.
With them, he presented a simple algorithm that verifies the domino-tileability of simply-connected planar regions.

In 1992, Aztec diamonds were examined by Elkies, Kuperberg, Larsen and Propp \cite{elkies1992alternating}, who gave four proofs of a very simple formula for the number of domino tilings of these regions.
Later, probability gained importance with the study of random tilings, and Jockush, Propp and Shor \cite{jockusch1998random, cohn1996local} proved the Arctic Circle Theorem.
This framework was further generalized in the early 2000s by Kenyon, Okounkov and Sheffield \cite{kenyonokounkov2006dimers,kenyonokounkov2006planar}, whose work relates random tilings to Harnack curves and describes the variational problem in terms of the complex Burgers equation.

While now much is known about tilings for planar regions, higher dimensions have proven less tractable.
Randall and Yngve \cite{randall2000random} examined analogues of Aztec diamonds in three dimensions for which many of the two-dimensional results can be adapted.
Hammersley \cite{hammersley1966limit} makes asymptotic estimates on the number of brick tilings of a $d$-dimensional box as all dimensions go to infinity.
In his thesis, Milet \cite{milet2015domino} studied certain three-dimensional regions for which he defines an invariant that can be interpreted under knot theory.

This dissertation was motivated by the observation of a certain asymptotic behavior in the statistics of domino tilings of square tori.
We elaborate: consider a quadriculated torus, represented by a square with sides of even length and whose opposite sides are identified.
A \emph{domino} is a $2 \times 1$ rectangle.
Below, we have a tiling of the $4 \times 4$ torus which also happens to be a tiling of the $4 \times 4$ square.
\begin{figure}[H]
		\centering
    \includegraphics[width=0.22\textwidth]{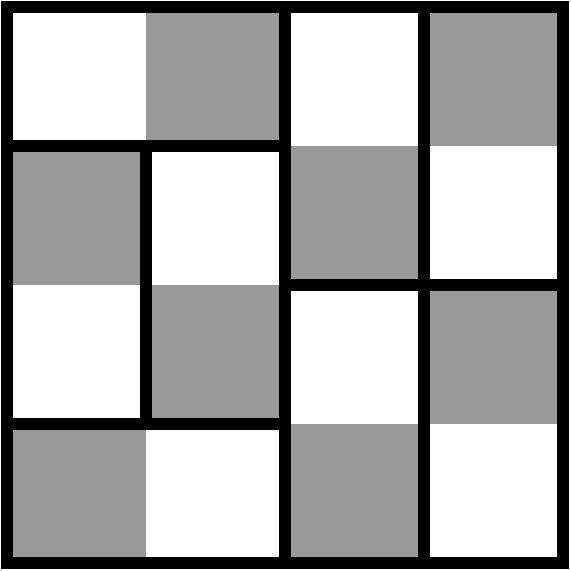}
		\caption*{A domino tiling of the $4 \times 4$ torus}
\end{figure}

Because in the torus opposite sides are identified, we may also consider tilings with dominoes that `cross over' to the opposing side.
\begin{figure}[H]
		\centering
		\includegraphics[width=\textwidth]{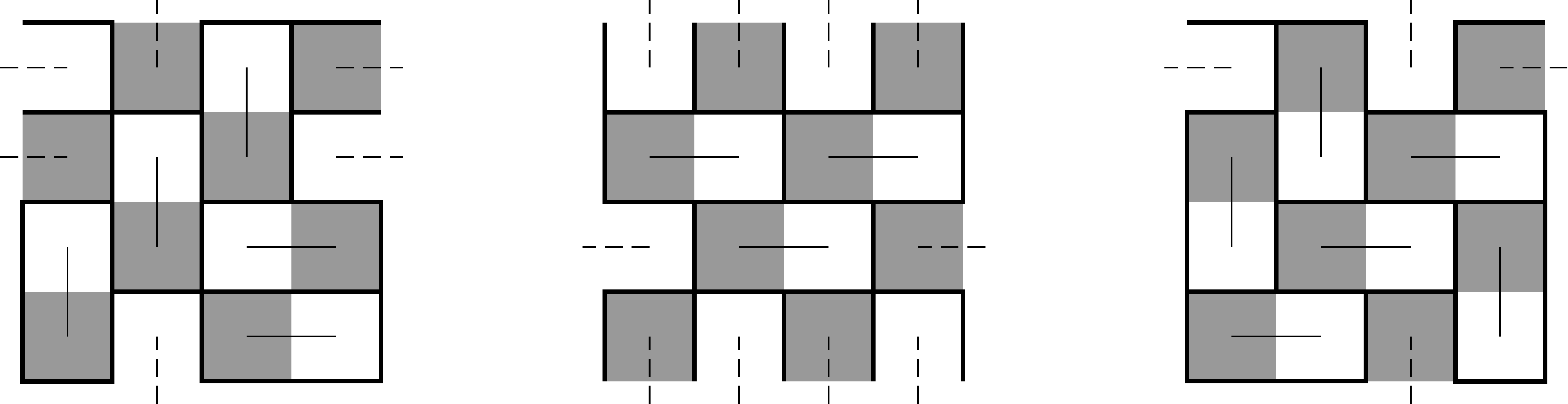}
		\caption*{Tilings of the $4 \times 4$ torus featuring cross-over dominoes}
\end{figure}

The \emph{flux} of a tiling is an algebraic construct that counts these cross-over dominoes, with a sign; one may think of it as a pair of integers.
In the next figure, we assign the positive sign when a white square is to the right of the blue curve or when a black square is above the red curve (and the negative sign otherwise).
Hence, their fluxes are $(0,-1)$, $(1,0)$ and $(1,1)$, where the first integer counts horizontal dominoes crossing the blue curve and the second integer counts vertical dominoes crossing the red curve.

\begin{figure}[H]
		\vspace{0.4cm}
		\centering
		\includegraphics[width=\textwidth]{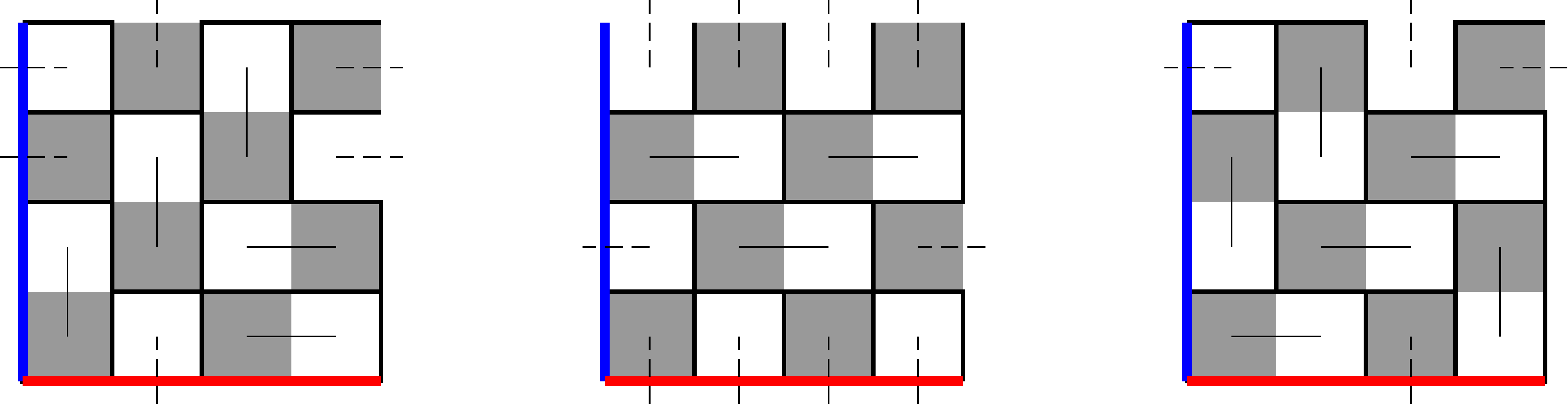}
		\caption*{From left to right, tilings with flux $(0,-1)$, $(1,0)$ and $(1,1)$}
\end{figure}

We may thus count tilings of tori by flux. In the $4 \times 4$ model, we have:
\begin{equation*}
\hspace{-15pt}\boxed{\left.
\begin{array}{c}
\text{Flux}\\
\hphantom{\text{Proportion}}\\
\text{Tilings}
\end{array} \middle|
\begin{array}{c}
(0,0)\\
\hphantom{0.48529}\\
132
\end{array} \middle|
\begin{array}{c}
(0, \pm 1), (\pm 1,0)\\
\hphantom{0.48529}\\
32
\end{array} \middle|
\begin{array}{c}
(\pm 1, \pm 1)\\
\hphantom{0.48529}\\
2
\end{array} \middle|
\begin{array}{c}
(0, \pm 2), (\pm 2,0) \\
\hphantom{0.48529}\\
 1
\end{array}\right.}
\end{equation*}

For a total of 272 tilings. Observe the proportion of total tilings by flux:
\begin{equation*}
\hspace{-15pt}\boxed{\left.
\begin{array}{c}
\text{Flux}\\
\\
\text{Proportion}
\end{array} \middle|
\begin{array}{c}
(0,0)\\
\\
0.48529
\end{array} \middle|
\begin{array}{c}
(0, \pm 1), (\pm 1,0)\\
\\
0.11765
\end{array} \middle|
\begin{array}{c}
(\pm 1, \pm 1)\\
\\
0.00735
\end{array} \middle|
\begin{array}{c}
(0, \pm 2), (\pm 2,0) \\
\\
 0.00368
\end{array}\right.}
\end{equation*}

Now we repeat the process for different square tori:
\begin{equation*}
\hspace{-15pt}\boxed{\left.\begin{array}{c}
\text{Flux}\\
\\
4 \times 4\\
\\
6 \times 6\\
\\
10 \times 10\\
\\
16 \times 16\\
\end{array} \middle|
\begin{array}{c}
(0,0)\\
\\
0.48529\\
\\
0.48989\\
\\
0.49436\\
\\
0.49564
\end{array} \middle|
\begin{array}{c}
(0, \pm 1), (\pm 1,0)\\
\\
0.11765\\
\\
0.11082\\
\\
0.10575\\
\\
0.10411
\end{array} \middle|
\begin{array}{c}
(\pm 1, \pm 1)\\
\\
0.00735\\
\\
0.01416\\
\\
0.01820\\
\\
0.02053
\end{array} \middle|
\begin{array}{c}
(0, \pm 2), (\pm 2,0) \\
\\
0.00368\\
\\
0.00253\\
\\
0.00141\\
\\
0.00109
\end{array}\right.}
\end{equation*}

In each case, tilings with flux $(0,0)$ comprise almost half of all tilings of the $2n \times 2n$ square torus.
For other values of flux in the table it may not be as apparent, but as $n$ increases the proportions stabilize.

\begin{theorem*}As $n$ goes to infinity, the proportions converge to a discrete gaussian distribution.
More specifically, for each $i,j \in \Z$, as $n$ goes to infinity the proportion relative to flux $(i,j)$ tends to
$$\frac{2 \cdot {\Gamma\left(\frac34\right)}^2}{\sqrt{\left(6 + 4 \sqrt{2}\right) \cdot \pi}} \cdot \exp\left(-\frac12 \left(i^2+j^2\right)\right)$$
\end{theorem*}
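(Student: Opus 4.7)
My plan is to compute the characteristic function of the flux distribution via Kasteleyn determinants and then recover $N_n(i,j)/N_n$ by Fourier inversion on the $2$-torus. For the square torus $\T_n$, the construction summarised in the abstract produces, for each spin structure $(a,b)\in\{0,1\}^2$, a Kasteleyn matrix $K^{(a,b)}_n$; introducing twist parameters $z_1,z_2\in\Sp^1$ that multiply the Kasteleyn weight of every cross-over edge along each of the two fundamental cycles yields a matrix $K^{(a,b)}_n(z_1,z_2)$ whose determinant is a signed Laurent polynomial in $z_1,z_2$. A standard $\pm$-combination
\[
Z_n(z_1,z_2) \;=\; \tfrac12\!\!\sum_{(a,b)\in\{0,1\}^2}\!\!\varepsilon_{a,b}\,\det K^{(a,b)}_n(z_1,z_2) \;=\; \sum_{\varphi\in L^{\#}}\! z_1^{\varphi_1} z_2^{\varphi_2}\, N_n(\varphi)
\]
recovers the exact flux generating polynomial, so that
\[
\frac{N_n(i,j)}{N_n} \;=\; \frac{1}{(2\pi)^2}\!\int_{[-\pi,\pi]^2}\! \frac{Z_n(e^{\bi\theta_1},e^{\bi\theta_2})}{Z_n(1,1)}\, e^{-\bi(i\theta_1+j\theta_2)}\, d\theta_1\, d\theta_2.
\]

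The main computation uses the diagonalisation of each $K^{(a,b)}_n(z_1,z_2)$ in the exponential bases $E_b, E_w$, whose eigenvalues take the form
\[
\lambda^{(a,b)}_{k,l}(z_1,z_2) \;=\; z_1\,\omega_{k,a} + z_1^{-1}\omega_{k,a}^{-1} + \bi\bigl(z_2\,\omega_{l,b} + z_2^{-1}\omega_{l,b}^{-1}\bigr), \qquad \omega_{k,a}=e^{\bi\pi(2k+a)/(2n)},
\]
with $0\le k,l<2n$, so that $\det K^{(a,b)}_n(z_1,z_2)$ is an explicit double product. Taking logarithms, the leading $n^2$-order Riemann-sum approximation is the Kasteleyn free-energy integral, which is the same for all four sectors and cancels in the ratio $Z_n(\cdot)/Z_n(1,1)$. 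The surviving pieces are a smooth Taylor expansion at $(\theta_1,\theta_2)=(0,0)$ of the twist-dependent free energy, whose Hessian (with the appropriate rescaling by $6+4\sqrt 2$) yields the Gaussian factor $\exp\bigl(-\tfrac12(i^2+j^2)\bigr)$, together with $n^0$-order theta-function corrections in each sector, triggered by the zero mode of the untwisted sector at $(z_1,z_2)=(1,1)$ where $\lambda^{(0,0)}_{0,0}$ vanishes. Evaluated at the square CM modulus $\tau=\bi$, these corrections combine via the Jacobi identity $\vartheta_3(0,\bi)=\pi^{1/4}/\Gamma(3/4)$ and its companions for $\vartheta_2,\vartheta_4$ to assemble the prefactor $2\Gamma(3/4)^2/\sqrt{(6+4\sqrt 2)\pi}$. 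Uniform estimates on a neighbourhood of the origin, together with exponential smallness of $Z_n(\cdot)/Z_n(1,1)$ away from it, justify dominated convergence in the Fourier integral and give the claimed limit.

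\textbf{Main obstacle.} The crux of the argument is the simultaneous asymptotic analysis of the four sector double products to subleading order. The zero mode of the untwisted sector is precisely what produces the nontrivial $\Gamma(3/4)^2$ prefactor, and requires a careful local expansion of $\log\lambda^{(0,0)}_{0,0}(e^{\bi\theta_1},e^{\bi\theta_2})$ near the origin; maintaining uniformity in the twist $(\theta_1,\theta_2)$ while identifying the resulting combination of theta constants at $\tau=\bi$ with the claimed closed form via Chowla--Selberg-type specialisations is, in my view, the most technically demanding point.
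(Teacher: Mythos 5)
First, a point of reference: the dissertation states explicitly that it will \emph{not} prove this theorem --- it is offered as motivation, and the text only builds groundwork for it (the characterization $\mathscr{F}(L)=L^{\#}\cap Q$, the Laurent polynomial $p_K=\det K(q_0,q_1)$ whose coefficients count tilings by flux, the $\pm\tfrac12$ combination of $p_K(\pm1,\pm1)$ in Proposition \ref{proplinearcomb}, and the diagonalization of $K$ in exponential bases in Chapter \ref{chap:detkast}). So there is no internal proof to compare you against; what follows is an assessment of your proposal on its own terms.

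Your overall strategy --- Fourier inversion of the flux generating polynomial, with $Z_n$ assembled from the four sign-evaluations of the twisted Kasteleyn determinant and the asymptotics of the double products producing a ratio of theta constants at $\tau=\bi$ --- is the natural route and is the one the dissertation's machinery (and the cited work of Boutillier and de Tili\`ere) points toward. But as written it is a programme rather than a proof, and it contains a concrete error. The eigenvalue formula $\lambda^{(a,b)}_{k,l}(z_1,z_2)=z_1\omega_{k,a}+z_1^{-1}\omega_{k,a}^{-1}+\bi\bigl(z_2\omega_{l,b}+z_2^{-1}\omega_{l,b}^{-1}\bigr)$ cannot be correct: if every one of the roughly $2n^2$ eigenvalues carried a full factor $z_1^{\pm1}$, the determinant would be a Laurent polynomial of degree of order $n^2$ in $z_1$, whereas the flux characterization forces the exponents of $q_0,q_1$ to be bounded by $n$ in absolute value. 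The twist must enter as a \emph{fractional} shift of the momenta, $\omega_k\mapsto\exp\bigl(\bi(\pi(2k+a)+\theta_1)/(2n)\bigr)$, exactly as in the paper's $\lambda(k_0,k_1)$ with $q_m=\exp(2\pi\bi\cdot u_m)$; with that normalization the $\theta$-dependence of $\log\det$ is $O(1)$ rather than $O(n^2)$, which is precisely what makes $Z_n(e^{\bi\theta_1},e^{\bi\theta_2})/Z_n(1,1)$ converge to a fixed ratio of theta functions. (There is then no ``Hessian of the free energy, rescaled by $6+4\sqrt2$'' step: the discrete Gaussian is read off directly as the Fourier coefficients of that limiting ratio, and $6+4\sqrt2$ only appears in the closed-form evaluation of the normalizing theta constant.) Beyond this, every genuinely hard step --- uniform subleading asymptotics of the four double products, the vanishing eigenvalue of the untwisted sector, dominated convergence in the inversion integral, and the identity evaluating the constant as $2\Gamma(3/4)^2/\sqrt{(6+4\sqrt2)\pi}$ --- is named but not carried out. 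The outline is the right one; the proof is not yet there.
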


The formula for the rather curious constant can be derived from theta-function identities; see Yi \cite{yi2004theta}.
For comparison, we provide the previous table, together with the limit value given by the formula above:
\begin{equation*}
\hspace{-15pt}\boxed{\left.\begin{array}{c}
\text{Flux}\\
\\
4 \times 4\\
\\
6 \times 6\\
\\
10 \times 10\\
\\
16 \times 16\\
\\
\text{Limit}
\end{array} \middle|
\begin{array}{c}
(0,0)\\
\\
0.48529\\
\\
0.48989\\
\\
0.49436\\
\\
0.49564\\
\\
0.49629
\end{array} \middle|
\begin{array}{c}
(0, \pm 1), (\pm 1,0)\\
\\
0.11765\\
\\
0.11082\\
\\
0.10575\\
\\
0.10411\\
\\
0.10317
\end{array} \middle|
\begin{array}{c}
(\pm 1, \pm 1)\\
\\
0.00735\\
\\
0.01416\\
\\
0.01820\\
\\
0.02053\\
\\
0.02145
\end{array} \middle|
\begin{array}{c}
(0, \pm 2), (\pm 2,0) \\
\\
0.00368\\
\\
0.00253\\
\\
0.00141\\
\\
0.00109\\
\\
0.00093
\end{array}\right.}
\end{equation*}

We will \textbf{not} prove this theorem in this dissertation.

Nevertheless, it motivated us to study domino tilings of the torus and the underlying combinatorial and algebraic structures involved.
We expect the content of this text lays the groundwork for writing a proof of this theorem in the future.

That said, results of this kind are not new to physicists, and in fact neither to mathematicians.
Boutillier and de Tili\`{e}re \cite{boutillier2009loop} derived explicit formulas for the limit proportions in the honeycomb model of the torus (in this model, a matching may be thought of as a lozenge tiling of the torus).
They interpret matchings as loops (see Cycles and cycle flips, Section \ref{sec:cyc}) and study the asymptotic behavior of corresponding winding numbers.
Although their methods differ from ours, parallels can be drawn.

In Chapter \ref{chap:domplano}, we examine domino tilings of quadriculated planar, simply-connected regions.
We discuss how the study of domino tilings is related to the problem of determining perfect matchings of a graph, and present the idea of black-and-white colorings (so our equivalent graphs are bipartite).
In Section \ref{sec:flipplano} we explore two concepts, as well as their relations.
A \emph{flip} is a move on a tiling that exchanges two dominoes tiling a $2 \times 2$ square by two dominoes in the only other possible configuration.
A \emph{height function} is an integer valued function on the vertices of the squares of a tiling $t$ that encodes $t$.
Later, these concepts will be generalized to the torus case, and many results of this section (like a characterization of height functions, or the flip-connectedness of these regions) admit adaptation.

Section \ref{sec:kastmatplano} details the construction of \emph{Kasteleyn matrices} and explains how their determinants can be used to count domino tilings of a region.
Finally, Section \ref{sec:exret} contains a worked, classical example: the problem of enumerating domino tilings of the rectangle.

In Chapter \ref{chap:tor} begins our study of the torus; we initially consider the square torus $\T_n$ with side length $2n$.
The notion of \emph{flux} is introduced here, and an overview of how Kasteleyn matrices can be adapted is provided.
We also supply a figure with all tilings of the $4 \times 4$ square torus.

Section \ref{sec:hfuntor} extends height functions to this scenario by interpreting $\T_n$ as a quotient $\quotient{\R^2}{L}$, where $L$ is the lattice generated by $\{(2n,0),(0,2n)\}$.
Moreover, we show the flux manifests in the \emph{arithmetic quasiperiodicity} of height functions: they satisfy $h(u+v) = h(u)+k$ for some $v, k$ and all $u$.

In Section \ref{sec:vallat}, we consider more general tori $\T_L$ by allowing other lattices $L$ in the quotient.
These are called \emph{valid lattices}: their vectors have integral coordinates that are the same parity.
This condition is necessary for the resulting graph to be bipartite.

Chapter \ref{chap:flux} further investigates the flux.
In Section \ref{sec:lsharp}, we describe how the flux can be thought of as an element of the \emph{dual lattice} $(2L)^*$.
More precisely, we show there is a translate of $L^*$ in $(2L)^*$ that contains all flux values; we call this affine lattice $L^{\#}$.

Section \ref{sec:fluxchar} provides our first theorem.
For a valid lattice $L$, let $\mathscr{F}(L)$ be the set of all flux values of tilings of $\T_L$; the inner product identification allows us to regard $\mathscr{F}(L)$ as a subset of $\R^2$.
Consider also the (filled) square $Q \subset \R^2$ with vertices $\left(\pm \frac12, 0\right), \left(0,\pm \frac12\right)$.

\begin{teorema}[Characterization of flux values] $\mathscr{F}(L) = L^\# \cap Q$.
\end{teorema}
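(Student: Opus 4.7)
The theorem asserts two inclusions, and my plan is to handle them separately.

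For the first inclusion $\mathscr{F}(L) \subseteq L^\# \cap Q$, the containment in $L^\#$ is already known from Section \ref{sec:lsharp}, so all that remains is to prove $\mathscr{F}(L) \subseteq Q$, i.e.\ that $\|\varphi\|_1 \leq 1/2$ for any achievable $\varphi$. My approach is to exploit the interplay between arithmetic quasiperiodicity and the Lipschitz property of the toroidal height function. If $t$ tiles $\T_L$ with flux $\varphi$, then $h_t(u+v) = h_t(u) + 4\langle \varphi, v\rangle$ for every $v \in L$. On the other hand, $h_t$ changes by $\pm 1$ along each edge, so any edge-path from $u$ to $u+v$ of length $\|v\|_1$ forces $|h_t(u+v) - h_t(u)| \leq \|v\|_1$. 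Combining the two facts yields $4|\langle \varphi, v\rangle| \leq \|v\|_1$ for every $v\in L$. Applying this to two vectors of $L$ pointing along the diagonal directions (for the square torus $L=2n\Z^2$, the choices $v=(2n,2n)$ and $v=(2n,-2n)$) gives $|\varphi_1+\varphi_2|\leq 1/2$ and $|\varphi_1-\varphi_2|\leq 1/2$, which together amount to $\varphi \in Q$. For a general valid lattice $L$, one picks suitable representatives of $L$ aligned with the diagonals, a routine adaptation.

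For the reverse inclusion $L^\# \cap Q \subseteq \mathscr{F}(L)$, I would proceed by explicit construction. The four vertices $(\pm 1/2,0),(0,\pm 1/2)$ of $Q$ should be realized by brick walls: the all-horizontal brick wall tiling of $\T_L$ has flux of the form $(\pm 1/2, 0)$ and the all-vertical brick wall has flux $(0, \pm 1/2)$, with signs determined by the orientation conventions used for cross-over dominoes. To reach an arbitrary $\varphi \in L^\# \cap Q$, my plan is to start from an appropriate brick wall and apply $L$-stairflips: each such move on an $L$-equivalence class of doubly-infinite staircases alters the flux by a controlled vector in $L^\#$. Verifying that the available stairflip directions generate, as a set of admissible moves, all displacements required to sweep out $L^\# \cap Q$ from the boundary brick walls would then complete the argument; the flip- and stairflip-connectedness results developed in Chapter \ref{chap:fliptorus} are the natural tool for making this precise.

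The main obstacle is the second direction. Producing some tiling with some nonzero flux is easy; producing a tiling for \emph{every} lattice point of $L^\# \cap Q$ requires a careful induction on the number of stairflips performed, keeping track of which doubly-infinite staircases remain available after each move. The combinatorics bifurcates depending on whether $L$ is an even or odd lattice, since this affects the structure of $L^\#$ inside $(2L)^*$. The subtlest point is the behavior near the boundary of $Q$: the extremal fluxes admit essentially rigid tilings (brick walls and near-brick-walls), so one must confirm these configurations actually fit the given $\T_L$ for every valid $L$, which imposes divisibility constraints that need to be reconciled with the definition of $L^\#$.
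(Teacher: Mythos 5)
Your first inclusion is essentially the paper's route (Proposition \ref{diamond}), but the justification you give is not quite right. A toroidal height function does \emph{not} change by $\pm 1$ along every edge: along an edge oriented by the coloring it changes by $+1$ if the edge is in the tiling and by $-3$ otherwise, so the naive Lipschitz bound is $|h(u+v)-h(u)| \leq 3\lVert v\rVert_1$, which is useless here. The bound $|h(v)| \leq \lVert v \rVert_1$ that you need for diagonal vectors $v=(\pm x,\pm x)$ is true, but it comes from choosing a \emph{staircase} path that respects (for the upper bound) or reverses (for the lower bound) edge orientation, along which the increment per edge is at most $+1$ (resp.\ at least $-1$). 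This is exactly what the paper's $h_{\max}$ and $h_{\min}$ machinery (Propositions \ref{hmax} and \ref{hmaxfor}, Corollary \ref{hminplano}, together with Lemma \ref{xx} to find diagonal vectors in $L$) makes precise. So this half is repairable and, once repaired, coincides with the paper's argument.

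The second inclusion is where your proposal breaks down, and the gap is structural rather than technical. Stairflips are only defined on doubly-infinite domino staircases, and a tiling of $\T_L$ admits such staircases (indeed consists entirely of them) precisely when its flux lies on $\partial Q$ (Corollary \ref{htorocarac} and Proposition \ref{diaescada}); moreover an $L$-stairflip applied to such a tiling yields another tiling consisting entirely of parallel doubly-infinite staircases, hence with flux again in $\partial Q$ --- in fact on the same side $Q_k$. Ordinary flips preserve the flux outright. Consequently no sequence of flips and stairflips starting from a brick wall can ever produce a tiling whose flux lies in the \emph{interior} of $Q$, so your sweep can at best establish $L^\# \cap \partial Q \subseteq \mathscr{F}(L)$ (which is the content of Section \ref{sec:strucbound}, not of this theorem). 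Realizing the interior flux values requires a different mechanism entirely: the paper constructs, for each $\varphi \in L^\# \cap Q$, the candidate maximal toroidal height function $h_{\max}^{L,\varphi}(w) = \min_{v \in L}\bigl(4\langle\varphi,v\rangle + h_{\max}(w-v)\bigr)$ and proves it is a well-defined height function with flux $\varphi$; the crucial point, Lemma \ref{mindiamond}, is that this minimum exists exactly when $\varphi \in Q$, which is where the hypothesis $\varphi \in Q$ actually enters. Without some substitute for this construction your proof of $L^\# \cap Q \subseteq \mathscr{F}(L)$ does not go through.
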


The proof is given by two separate propositions, each showing one inclusion.
Much of the technical work here relates to the description of \emph{maximal height functions} (given a base value at a base point).

In Chapter \ref{chap:fliptorus}, we discuss how flip-connectedness extends to the torus.
Flips preserve flux values, so of course the situation must be unlike that of Section \ref{sec:flipplano}.
It turns out that for flux values in the interior of $Q$, its tilings are flip-connected, but for flux values in the boundary, none of its tilings admit any flips: they are flip-isolated!

In order to show that, Section \ref{sec:tilplancarac} is devoted to understanding tilings that do not admit flips, and contains our second theorem.
It is a fairly independent section, requiring only that the reader be familiar with (maximal) height functions and flips; see Sections \ref{sec:flipplano} and \ref{sec:fluxchar}.

\begin{teorema}[Characterization of tilings of the infinite square lattice]\label{introteo2}
Let $t$ be a tiling of $\Z^2$. Then exactly one of the following applies:
\begin{enumerate}
\item $t$ admits a flip;
\item $t$ consists entirely of parallel, doubly-infinite domino staircases;
\begin{figure}[H]
		\centering
		\includegraphics[width=0.95\textwidth]{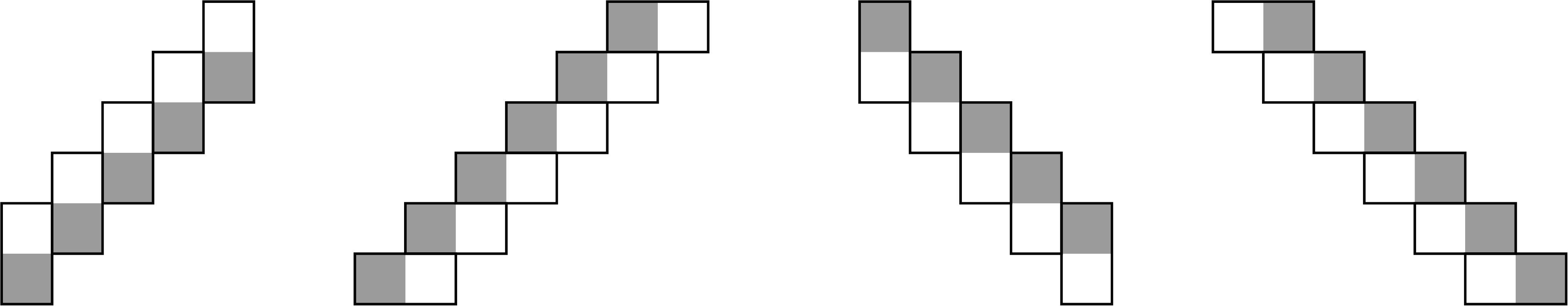}
		\caption*{Examples of domino staircases}
\end{figure}
\item $t$ is a windmill tiling.
\begin{figure}[H]
		\centering
		\includegraphics[width=0.95\textwidth]{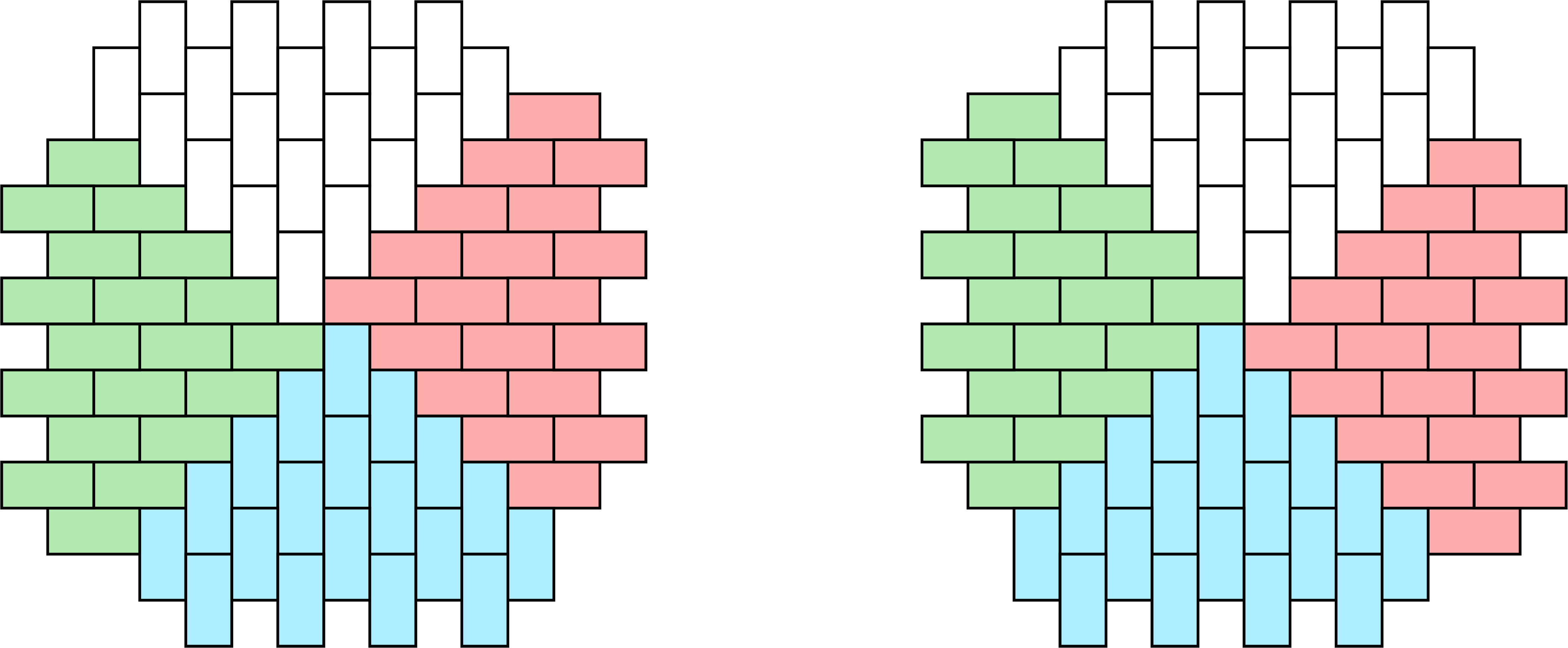}
		\caption*{Windmill tilings of $\Z^2$}
\end{figure}
\end{enumerate}
\end{teorema}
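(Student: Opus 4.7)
The plan is to establish the three cases are pairwise exclusive and that every tiling $t$ of $\Z^2$ falls into one of them. Pairwise exclusivity is the easier half: case 3 excludes flips by definition, and I would verify directly that a tiling of type 2 also admits no flip, since every $2 \times 2$ block inside a tiling by parallel doubly-infinite staircases either straddles two distinct staircases (hence meets at least three dominoes) or sits along a single staircase in a configuration that is not two parallel dominoes. Cases 2 and 3 are then separated by the presence versus absence of a doubly-infinite staircase, so they cannot overlap.

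For exhaustiveness, suppose $t$ admits no flip. The crucial local constraint is that no $2 \times 2$ block is covered by exactly two parallel dominoes. I would pick any domino $d \in t$ and chase its forced neighborhood. Taking $d$ horizontal covering the squares at $(0,0)$ and $(1,0)$, the square $(0,1)$ is covered by some domino $d'$ that cannot be the horizontal domino over $(0,1)$--$(1,1)$, so $d'$ is either vertical going up or horizontal going left. A short case analysis shows that either choice forces the dominoes covering $(1,1), (1,2), (2,2), \dots$ (and symmetrically in the opposite direction) in a unique way, producing a maximal \emph{staircase} through $d$ --- either an L-alternating staircase or a brick-wall-like run of parallel dominoes.

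Once every domino of $t$ lies on a well-defined maximal staircase, I would pass to the global structure. If some staircase $S$ in $t$ is doubly-infinite, then $S$ separates the plane into two half-planes; applying the same local forcing to dominoes along $S$ shows that the adjacent staircase on each side must be parallel to $S$ and also doubly-infinite. Iterating, the plane fills up with parallel doubly-infinite staircases, giving case 2. If instead no staircase is doubly-infinite, then every staircase must terminate in both directions; I would examine the local configuration at a termination and show that flip-avoidance forces the staircase to turn into a perpendicular neighbor, and that tracking all such turns consistently across the plane reconstructs the windmill pattern of case 3.

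The \textbf{main obstacle} will be the global step in the singly-infinite case: showing that the way staircases end and turn is rigid enough to produce \emph{precisely} a windmill, ruling out any other topology of infinite staircases that might tile $\Z^2$ without flips. This requires careful bookkeeping of how turns propagate from one staircase to the next, and I anticipate most of the technical work will go into verifying that the pattern of turns is entirely dictated by flip-avoidance and cannot meander arbitrarily.
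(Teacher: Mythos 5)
Your exclusivity argument and your treatment of the doubly-infinite case are sound and essentially match the paper: the propagation step you describe is the content of Lemma \ref{stairpath} (a domino choice against a doubly-infinite staircase edge-path propagates in one direction; if the resulting staircase is not doubly-infinite, its extremal domino together with the next propagated staircase forces a flip). The genuine gap is exactly where you flag it, and it is larger than ``careful bookkeeping.'' First, the local forcing is weaker than you claim: in a flip-free tiling the domino covering a given square adjacent to $d$ is \emph{not} uniquely determined after one choice --- e.g.\ with $d$ horizontal on $(0,0),(1,0)$ and $(0,1),(0,2)$ covered vertically, the square $(1,1)$ is forced horizontal, but the squares below and to the right of $d$ retain independent choices --- so ``every domino lies on a well-defined maximal staircase'' and ``tracking turns'' do not give you a deterministic global structure to reconstruct. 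Second, and more seriously, nothing in your sketch rules out other global arrangements of singly-infinite staircases; asserting that the turns ``reconstruct the windmill'' is precisely the theorem in the case at hand, not a proof of it.

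The paper closes this gap with different machinery. Assuming no flip and no doubly-infinite staircase, the height function $h$ of $t$ has no local extrema, so from any vertex $v_0$ one extracts a bi-infinite sequence of neighbours along which $h$ increases by $1$ at each step; the resulting paths lie in $t$ and respect edge orientation, hence (Lemma \ref{lemapath}) are monotone in their horizontal or vertical edges. A boundedness analysis of the counts of up/down (or left/right) edges, using the rugged rectangles of Lemma \ref{rugrec}, shows the two ends of this sequence stabilize into two infinite staircase edge-paths in $t$ meeting at $v_0$, i.e.\ every vertex is the tip of a cardinal rugged quadrant. Lemma \ref{quad} then merges all quadrants of a given cardinal type into a single maximal one, all four types must occur, and the only way four cardinal quadrants tile $\Z^2$ is a windmill. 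If you want to complete your argument, you will need a substitute for this ``every vertex is the tip of a quadrant'' step together with a merging lemma; the turn-tracking picture alone does not supply either.
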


The proof (and theory leading up to it) delves into properties of domino staircases and staircase edge-paths.

Tilings of the torus can be seen as periodic tilings of $\Z^2$.
Section \ref{sec:plantor} combines this observation with Theorem \ref{introteo2} to obtain relations between the shape of a tiling and its flux.
The final result is the above description of flip-connectedness on the torus.
For a survey of flip-connectedness on more general surfaces, see Saldanha, Tomei, Casarin and Romualdo \cite{saldanha1995}.

In Chapter \ref{chap:kastmattorus}, we go into detail about the construction of a Kasteleyn matrix for the torus.
Some of its entries are monomials in $q_0, q_1, q_0^{-1},q_1^{-1}$, or Laurent monomials in $q_0,q_1$, so its determinant is a \emph{Laurent polynomial} $p_K$ in $q_0,q_1$.
We show that each monomial in $p_K$ counts tilings with a flux given by the exponents of $q_0, q_1$.
Later, Chapter \ref{chap:detkast} will consider these variables as complex numbers on the unit circle.
Moreover, Section \ref{sec:strucbound} examines the structure of tilings with flux in the boundary of $Q$, primarily through a move called \emph{stairflip}, that exchanges a doubly-infinite domino staircase by the only other one.

Chapter \ref{chap:signflux} elaborates on how the signs of monomials in $p_K$ are assigned.
The main tool here are \emph{cycles} and \emph{cycle flips}.
Cycles are obtained by representing two tilings simultaneously, and cycle flips use them to go from one tiling to the other.

\begin{figure}[H]
		\centering
		\includegraphics[width=0.75\textwidth]{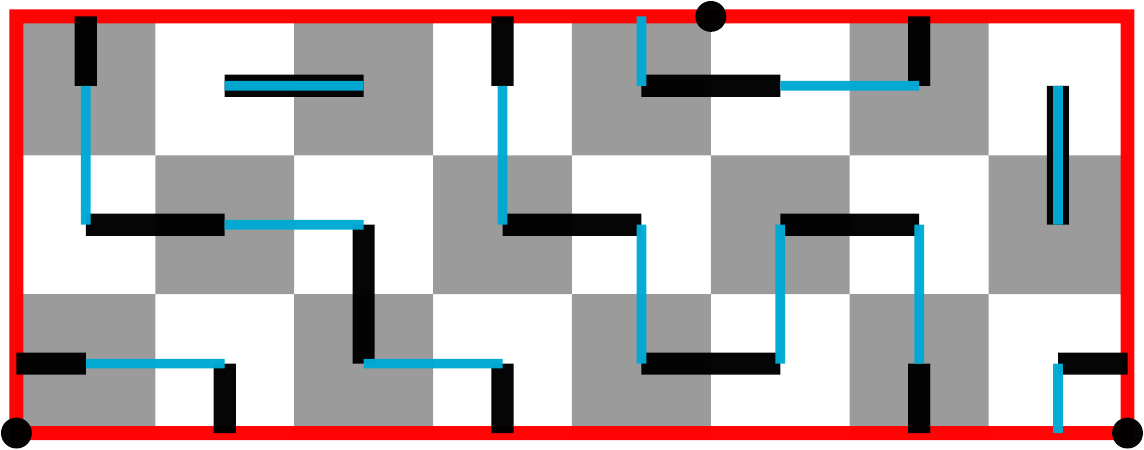}
		\caption*{Cycles: one tiling has black dominoes, the other has blue dominoes}
\end{figure}

In the end of Section \ref{sec:cycfluxsgn}, we exhibit an odd-one-out pattern for signs over $\mathscr{F}(L)$, and use it to show that the total number of tilings can be given as a linear combination of $p_K(\pm1,\pm1)$ where each coefficient is either $\frac12$ or $-\frac12$.

\begin{figure}[ht]
		\centering
		\def\svgwidth{0.9\columnwidth}
    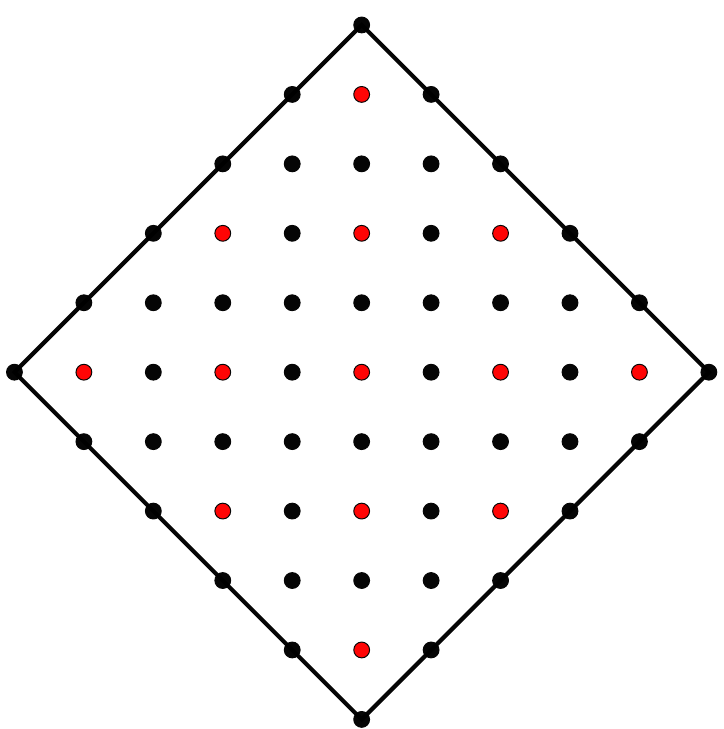
		\caption*{The odd-one-out sign pattern}
\end{figure}

Chapter \ref{chap:detkast} revisits techniques used in Section \ref{sec:exret} and refines them for the calculation of Kasteleyn determinants of the torus.
In Section \ref{sec:casom}, we examine the case of $M = KK^* \oplus K^*K$, for which we can compute all eigenvalues.
Section \ref{sec:espfunc} interprets $K, K^*$ as linear maps on spaces of $L$-quasiperiodic functions, allowing us to exhibit bases for which they are diagonal.
Studying the change of basis, we are able to relate the determinant of the original matrix to that of its diagonal version.

Finally, Section \ref{sec:contas} makes explicit calculations on these determinants and investigates the effects of scaling $L$ uniformly.
This leads to our third and last theorem, which relates the Laurent polynomials (from Kasteleyn determinants) for $L$ and $nL$ by a simple product formula.

Let $p_{[L,E]}: \R^2 \longrightarrow \C$ be defined by $p_{[L,E]}(u_0,u_1) = \det\big(K_E(q_0,q_1)\big)$, where $K_E$ is the diagonal Kasteleyn matrix for $L$ and $q_m = \exp(2 \pi \bi \cdot u_m)$.

\begin{teorema}For any positive integer $n$ and reals $u_0, u_1$ $$p_{[nL,E]}(n\cdot u_0, n\cdot u_1) = \prod\limits_{0 \leq i,j < n} p_{[L,E]}\left(u_0+\frac{i}{n},u_1 - \frac{j}{n}\right).$$
\end{teorema}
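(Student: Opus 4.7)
The strategy is to show that the diagonal Kasteleyn matrix $K_E$ for $nL$ at parameter $(nu_0, nu_1)$ is block-diagonal with $n^2$ blocks, each one coinciding with $K_E$ for $L$ at one of the twisted parameters $(u_0 + i/n, u_1 - j/n)$; the product formula for the determinant then follows at once.

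Recall (Section \ref{sec:espfunc}) that, with $q_m = \exp(2\pi \bi\, u_m)$ and a fixed basis $\{v_0, v_1\}$ of $L$, the exponential basis $E_b$ of $\mathcal{B}(L, q)$ consists of functions $\psi_w(x) = \exp(2\pi \bi\, \langle w, x \rangle)$ with $\langle w, v_m \rangle \equiv u_m \pmod 1$, taken modulo $L^*$; similarly for $E_w$. I would then pass to $nL$ with basis $\{n v_0, n v_1\}$ and parameter $q^{(n)}_m = \exp(2\pi \bi\, n u_m)$: the analogous condition $\langle w, v_m \rangle \equiv u_m \pmod{1/n}$ refines the previous one, and the resulting set of admissible $w$'s decomposes as a disjoint union of $n^2$ cosets, indexed by $(i, j) \in \{0, \ldots, n-1\}^2$, each coset being exactly the exponential basis of $\mathcal{B}(L, q_{i,j})$ for the twisted parameter $q_{i,j} = (\exp(2\pi \bi\, (u_0 + i/n)), \exp(2\pi \bi\, (u_1 - j/n)))$. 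The sign in the $u_1$-slot is forced by the orientation conventions of $D_L$; alternatively, since $p_{[L,E]}$ is $1$-periodic in its second argument, replacing $-j/n$ by $+j/n$ would yield the same product. An identical decomposition holds on the $\mathcal{W}$ side.

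Since the universal Kasteleyn signing makes the Kasteleyn weights $L$-periodic, the operator $K^{[nL]}$ commutes with $L$-translations and hence preserves the decomposition $\mathcal{B}(nL, q^{(n)}) = \bigoplus_{(i,j)} \mathcal{B}(L, q_{i,j})$. Equivalently, each $\psi_w$ is an eigenvector of $K^{[nL]}$ with eigenvalue given by a universal local formula in $w$, and this eigenvalue coincides with the one seen by $K^{[L]}$ upon viewing $\psi_w$ as an element of the appropriate $\mathcal{B}(L, q_{i,j})$. The determinant, being a product of diagonal entries in the exponential basis, therefore factors as claimed. The main technical work will be bookkeeping: checking that the universal Kasteleyn signing induces a well-defined, $L$-equivariant operator on both $L$- and $nL$-quasiperiodic spaces, and tracking the sign conventions to pin down the precise shift pattern $(+i/n, -j/n)$; both issues are absorbed into the definitions of the universal signing and of $D_L$ rather than requiring new ideas.
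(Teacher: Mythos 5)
Your proposal is correct and is essentially the paper's proof in structural clothing: the decomposition of the admissible frequencies for $nL$ into $n^2$ cosets of $L^*$ is exactly the paper's reindexing $k_0 = n\,l_0 + j$, $k_1 = n\,l_1 + i$ of the double product of eigenvalues, and your "universal local formula" for the eigenvalue of $\zeta_w$ is the explicit trigonometric identity the paper verifies directly. The one point you underestimate is that each diagonal entry of $K_E$ is well-defined only up to sign as a function of the frequency class modulo ${L_0}^*$, so obtaining the exact equality of $p$'s (not merely equality up to a factor of $\pm 1$) requires matching the specific representatives $\zeta[k_0,k_1]$ used in the definition of $p_{[L,E]}$ across the two lattices --- and that matching \emph{is} the division-with-remainder computation which constitutes the entirety of the paper's proof, rather than bookkeeping absorbed into the definitions.
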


Intuitively, Theorem \ref{pkn} says $\det\big({K[nL]}_E(q_0,q_1)\big)$ can be obtained from determinants of ${K[L]}_E$ by considering all $n$-th roots of $q_0$ and of $q_1$.
Product formulas of this kind have been encountered by Saldanha and Tomei \cite{saldanha2003tilings} in their study of quadriculated annuli.
\chapter{Definitions and Notation}
\label{chap:notation}

This will be a short chapter detailing definitions and conventions used in the dissertation.

The imaginary unit will be denoted by the boldface $\bi$.

A \emph{lattice}\label{def:lattice} is a subgroup of $\R^2$ that is isomorphic to $\Z^2$ and spans $\R^2$ (as a real vector space).
An equivalent description is that a lattice $L$ is the (additive) group of all integer linear combinations of a basis $\beta$ of $\R^2$; in this case, we say $L$ is generated by $\beta$.
Notice different bases may generate the same lattice.

The \emph{dual lattice}\label{def:duallattice} of $L$ is $L^* = \text{Hom}(L, \Z)$, the set of homomorphisms from $L$ to $\Z$.
Observe that, under addition, $L^*$ is a group.
Moreover, we may identify an element $f \in L^*$ with a unique $\tilde{f} \in \R^2$ via $f(v) = \langle \tilde{f}, v \rangle$ (for all $v \in L$).
This allows us to see $L^*$ as an additive subgroup of $\R^2$, so that $L^*$ is itself a lattice.
Under this representation, it is easy to see that ${(L^*)}^* = L$. We will generally not make a distinction between $f$ and $\tilde{f}$.

Given a basis $\beta = \{v_0,v_1\}$ of $L$, its \emph{dual basis} is $\beta^* = \{{v_0}^*,{v_1}^*\}$, where $\langle {v_i}^*,v_j \rangle = \delta_{ij}$ ($0 \leq i, j \leq 1$).
Geometrically, this means ${v_i}^*$ is perpendicular to $v_j$ and its length is determined by the equality $\langle {v_i}^*,v_i\rangle = 1$.
It is a straightforward exercise to check that $\beta^*$ generates $L^*$, and that ${(\beta^*)}^* = \beta$. We can also make explicit calculations; let $v_0 = (a,b)$ and $v_1 = (c,d)$.
Then
\begin{alignat*}{1}
{v_0}^* &= \frac{1}{ad-bc}\cdot \left( d , -c \right)\\
{v_1}^* &= \frac{1}{ad-bc}\cdot \left( -b, a \right)
\end{alignat*}

Notice that because $\{v_0,v_1\}$ is a basis of $\R^2$, $ad-bc$ is always nonzero, so the dual basis is well-defined.

A \emph{fundamental domain} for a lattice $L$ is a set $D \subset \R^2$ such that, for all $v \in \R^2$, the affine lattice $L+v$ intersects $D$ exactly once.
Another way to think of this is as follows: $L$ acts on $\R^2$ by translation, so the orbit of any $v \in \R^2$ under $L$ (that is, the set of images of $v$ under $L$) is the affine lattice $L+v$. Hence, $D$ contains exactly one point from each orbit: it is a visual realization of the representatives of each orbit.
It is easily seen that $\R^2$ is partitioned by the sets $D+v, v \in L$.

For a lattice generated by $\{v_0,v_1\}$, the fundamental domain is usually the parallelogram $\{s\cdot v_0 + t\cdot v_1 \text{ }|\text{ } s,t \in [0,1)\}$, but we will generally prefer other kinds of fundamental domain (discussed in Section \ref{sec:vallat}).

Consider the infinite square lattice $\Z^2$.
A \emph{quadriculated region} is a union of (closed, filled) unit squares with vertices in $\Z^2$.
We say two squares are adjacent if they share an edge.
A \emph{domino} is a union of two adjacent unit squares, that is, a $2 \times 1$ rectangle with vertices in $\Z^2$.
A \emph{(domino) tiling} of a quadriculated region is a collection $t$ of dominoes on $R$ with pairwise disjoint interiors and such that every unit square of $R$ belongs to a domino in $t$.

A \emph{torus} is a quotient $\quotient{\R^2}{L}$; we may represent it by a fundamental domain of $L$ whose boundary has appropriate identifications.
If the fundamental domain is chosen to be a quadriculated region, we say the torus is a \emph{quadriculated torus}.
A tiling of a quadriculated torus is much like that of its fundamental domain, except dominoes account for boundary identifications.
Alternatively, a tiling of a quadriculated torus is an $L$-periodic tiling of the infinite square lattice.
\chapter{Domino tilings on the plane}
\label{chap:domplano}

Let $R$ be a finite, simply-connected, quadriculated planar region.
A \textit{domino} is a 2$\times$1 rectangle made of two unit squares.
Is it possible to tile $R$ entirely using only domino pieces? In how many ways can this be done?

For instance, if $R$ has an odd number of squares, then there is no domino tiling of $R$.
If $R$ is the 2$\times$3 rectangle below...
\begin{figure}[ht]
		\centering
    \includegraphics[width=0.245\textwidth]{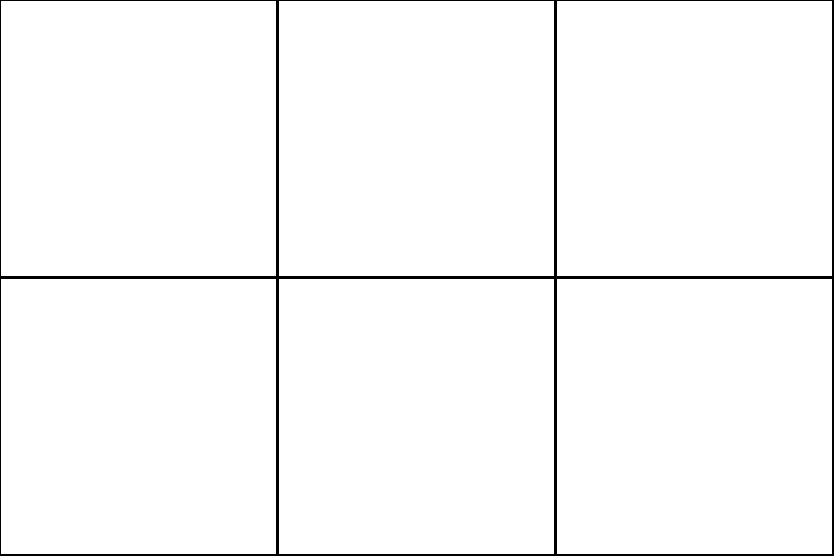}
		\caption{A $2\times 3$ rectangle.}
\end{figure}

\noindent ...then there are exactly three distinct domino tilings of $R$:
\begin{figure}[ht]
		\centering
    \includegraphics[width=0.9\textwidth]{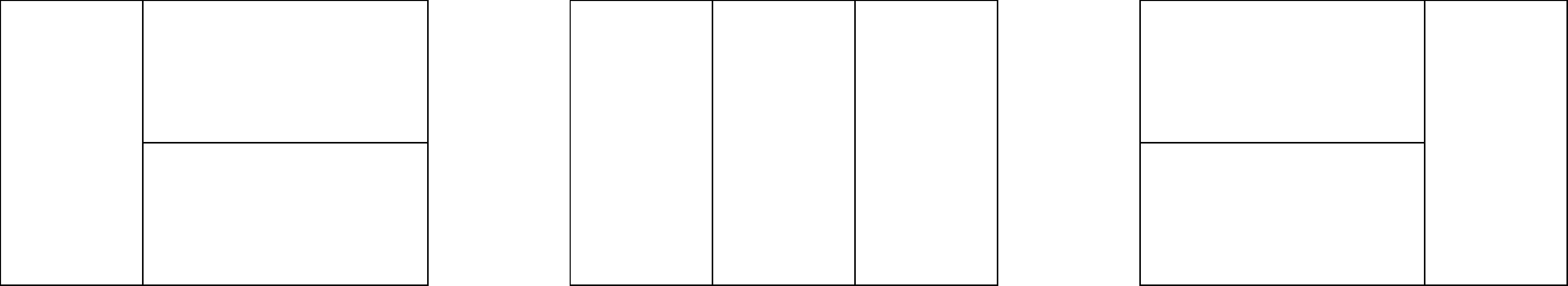}
		\caption{Domino tilings of the $2\times 3$ rectangle.}
\end{figure}

The first observation is this problem can be converted to a dual problem on graph theory.
This conversion associates to the region $R$ a graph $G$ ($R$'s \textit{dual graph})\label{def:dualgraph} obtained by substituting each square of $R$ by a vertex and joining neighboring vertices by an edge (horizontally and vertically, but not diagonally).
On a domino tiling level, each domino corresponds to an edge on $G$: the edge joining the two vertices whose associated squares that are tiled by that domino.

For instance, the region $R$ and the graph $G$ in Figure \ref{fig:Rgraph} are dual. Likewise, the domino tiling of $R$ and the $G$ subgraph in Figure \ref{fig:RSubgraph} are dual.
\begin{figure}[ht]
		\centering
		\includegraphics[width=0.625\textwidth]{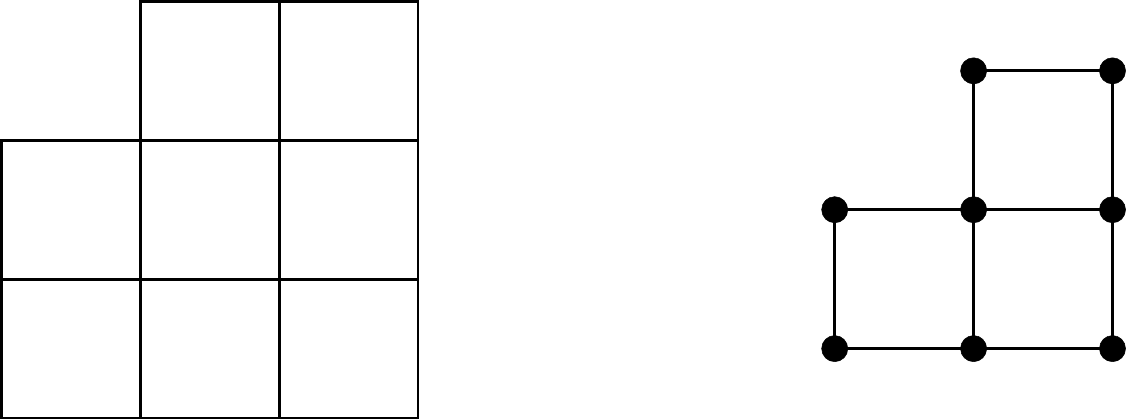}
		\caption{A quadriculated region $R$ and its dual graph $G$.}
		\label{fig:Rgraph}
\end{figure}

\begin{figure}[ht]
		\centering
		\includegraphics[width=0.625\textwidth]{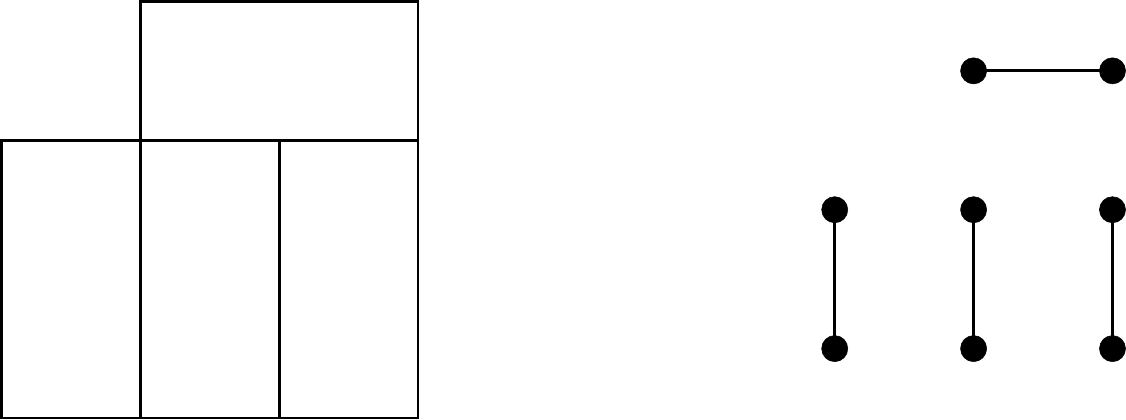}
		\caption{A domino tiling of $R$ and the corresponding $G$ subgraph.}
		\label{fig:RSubgraph}
\end{figure}

In this context, a question on domino tilings of $R$ can be translated naturally into a question on the matchings of $G$.
A \textit{matching} $M$ of a graph $G$ is a set of edges on $G$ with no common vertex.
If two vertices on $G$ are joined by an edge of $M$, we say $M$ \textit{matches} those vertices.
A \textit{perfect matching}\label{def:perfectmatching} of a graph $G$ is a matching of $G$ that matches all vertices on $G$.

Now we may translate the opening questions: `Is it possible to tile $R$ by dominoes?' becomes `Is there a perfect matching of the dual graph $G$?'; and `In how many ways can this be done?' becomes `How many perfect matchings does the dual graph $G$ have?'.
Henceforth, unless explicitly stated, we shall use \textit{matchings} when referring to perfect matchings. Non-perfect matchings do not interest us in this study.

The second observation is that these constructions lend themselves naturally to the concept of bipartite graphs.
A graph $G$ is \textit{bipartite}\label{def:bipartite} if its vertices can be separated into two disjoint sets $U$ and $V$ so that every edge on $G$ joins a vertex in $U$ to a vertex in $V$.
In this case, the sets $U$ and $V$ are called a \textit{bipartition} of $G$.
With this in mind, we may return to our initial problem and consider a prescribed `bipartition' on $R$: we assign the label `black' to an initial square, then assign the label `white' to its neighbors, and so on in alternating fashion.
Naturally, the vertices of the dual graph $G$ inherit the labels.
\begin{figure}[H]
		\centering
		\includegraphics[width=0.625\textwidth]{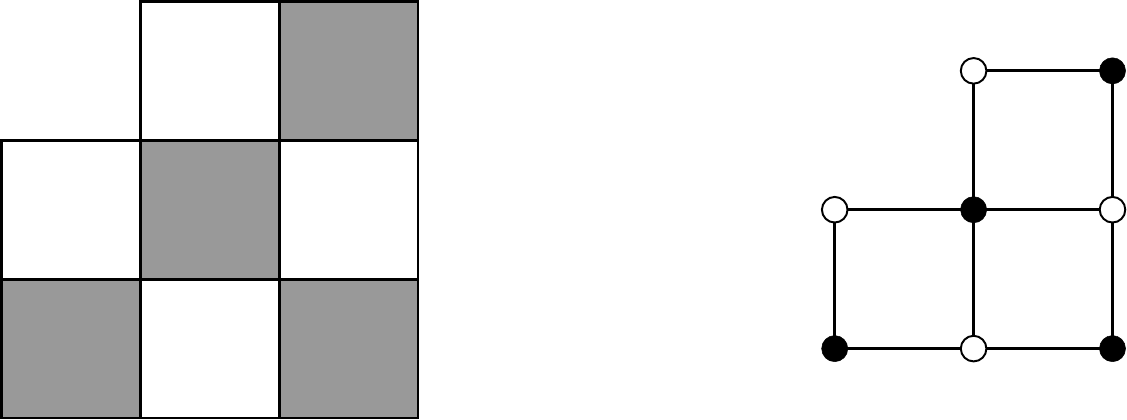}
		\caption{A quadriculated region and its dual graph colored as above.}
\end{figure}

At this point, notice every domino in a tiling of $R$ must be made of a single black square and a single white square.
Hence, a \textbf{necessary condition}\label{def:bwcondition1} for $R$ to admit a domino tiling is that the number of black squares and the number of white squares be equal.
Observe, however, that it is not sufficient.
\begin{figure}[H]
		\centering
		\includegraphics[width=0.35\textwidth]{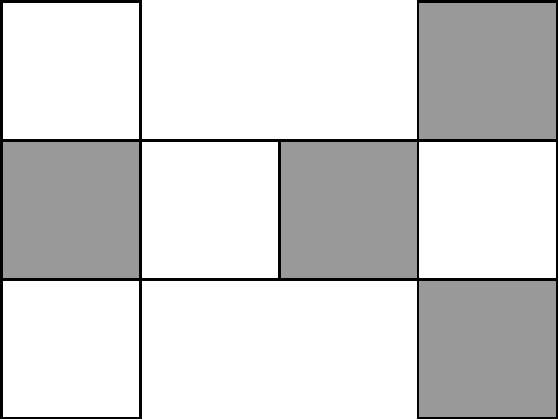}
		\caption{A region that satisfies the black-and-white condition but admits no domino tiling.}
\end{figure}

We point out that we will generally think of $G$ as embedded on the region $R$, with each vertex lying on the center of its corresponding square and each edge a straight line.

\section{Flips and height functions}\label{sec:flipplano}

We now introduce the concept of \textit{flips}.
To that end, notice a 2$\times$2 square can be tiled by two dominoes in exactly two ways: by using both dominoes vertically, or by using both dominoes horizontally.

Consider two adjacent parallel dominoes forming a 2$\times$2 square.
A \textit{flip} of these two dominoes consists in substituting the domino tiling of the square they form by the only other domino tiling of that same square.
Naturally, the concept of flip is transferred to the graph treatment of the problem.
\begin{figure}[H]
		\centering
		\includegraphics[width=0.635\textwidth]{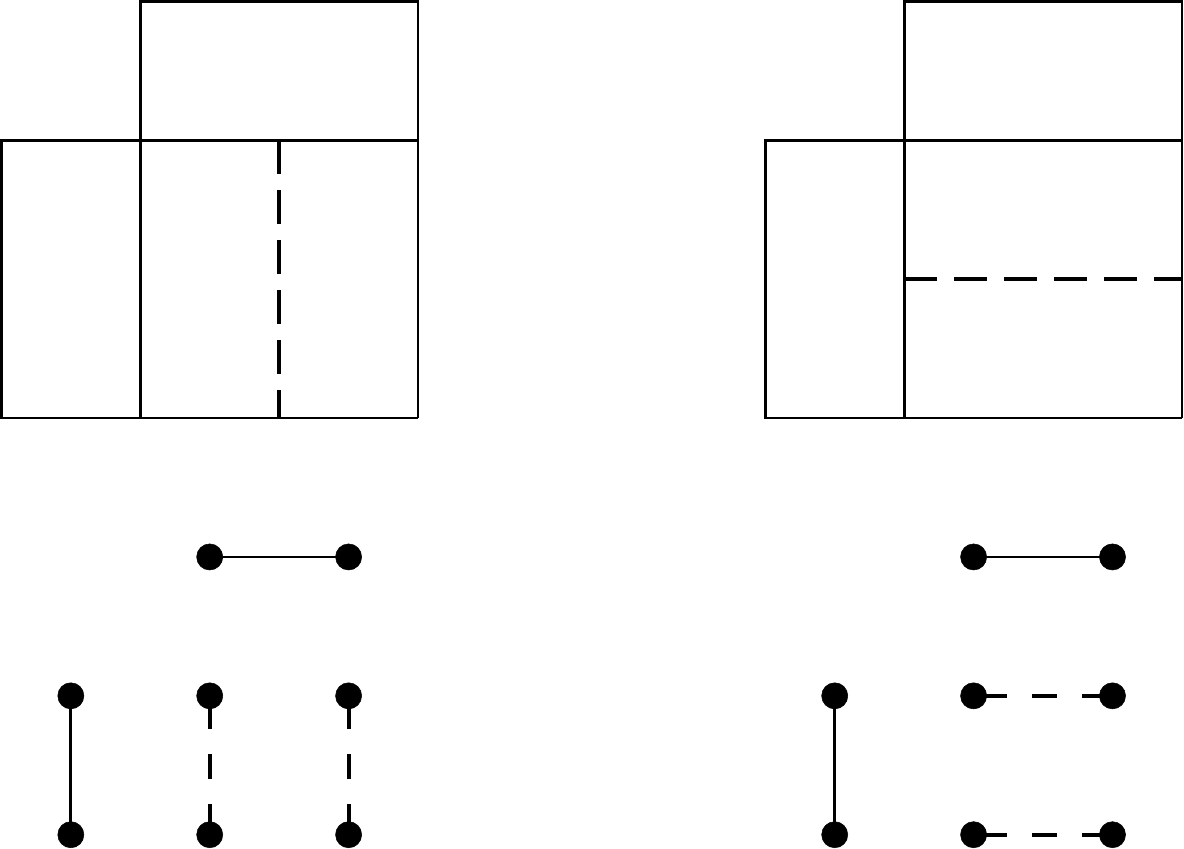}
		\caption{A flip on a region's tiling and on its corresponding graph.}
\end{figure}

Of course, given a domino tiling of a planar region $R$, the execution of a flip takes us to a new domino tiling of $R$.
Following this train of thought, a natural question might be whether two given domino tilings of $R$ can be joined by a sequence of flips.
To answer this question, we will investigate the \textbf{height function} $h$ of a domino tiling $t$ of $R$.

We highlight the distinction between an edge on a graph $G$ and an edge on a quadriculated region $R$: the latter refers to an edge on the boundary of a square on $R$.
Similarly, an edge on a domino tiling $t$ of $R$ is an edge on $R$ (of a square, not of a domino) that does not cross a domino (it has not been `erased' to produce said domino).

We choose once and for all the clockwise orientation for black squares; the other orientation is assigned to white squares.
This choice induces an orientation on each edge on $R$.
Notice it is consistent: along an edge where two squares meet, each square will have a different orientation and thus the orientations induced on the edge will agree.

Now, choose a base vertex $v$ on $R$ and assign an integer value to it; we will always choose a base vertex in the boundary $\partial R$ of the region $R$ and we will always assign the value $0$ to it.
This is the value $h$ takes on $v$.
We now propagate that value across all vertices of $R$ as follows.
For each vertex $w$ joined to $v$ by an edge on $t$, that edge may point from $v$ to $w$ or from $w$ to $v$, depending on its orientation as defined above.
In the first case, $w$ is assigned the integer value $h(v) + 1$; otherwise, it is assigned the integer value $h(v) - 1$.

By connectivity, this process defines the height function on each vertex of $R$, but it may not be clear whether or not the definition is consistent. 
It's easy to verify consistency on a single domino, as the image below shows.
\begin{figure}[H]
    \centering
    \def\svgwidth{0.85\columnwidth}
    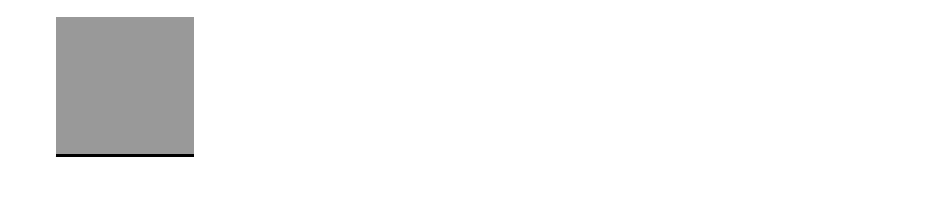
		\caption{Height consistency for horizontal dominoes.}
\end{figure}

Consistency for a general simply-connected planar region $R$ can be proved as follows: starting from a vertex on which $h$ is well-defined (for instance, the base vertex $v$), suppose we wish to check consistency on another vertex, say $w$.
Consider then two different edge-paths $\gamma_0$ and $\gamma_1$ on $t$ joining those vertices; these paths can be seen as the boundary of a region tiled by dominos.
The area of that region is thus well-defined.
Now, incrementally deform $\gamma_0$ onto $\gamma_1$, with each step producing a new region with less area than the previous one through the removal of a domino.
Here, consistency on a single domino ensures each step is consistent with the previous one.
Finally, the simply-connectedness of $R$ guarantees this process can fully deform $\gamma_0$ onto $\gamma_1$.

We provide a simple example of this process below.
\begin{figure}[H]
		\centering
		\includegraphics[width=0.875\textwidth]{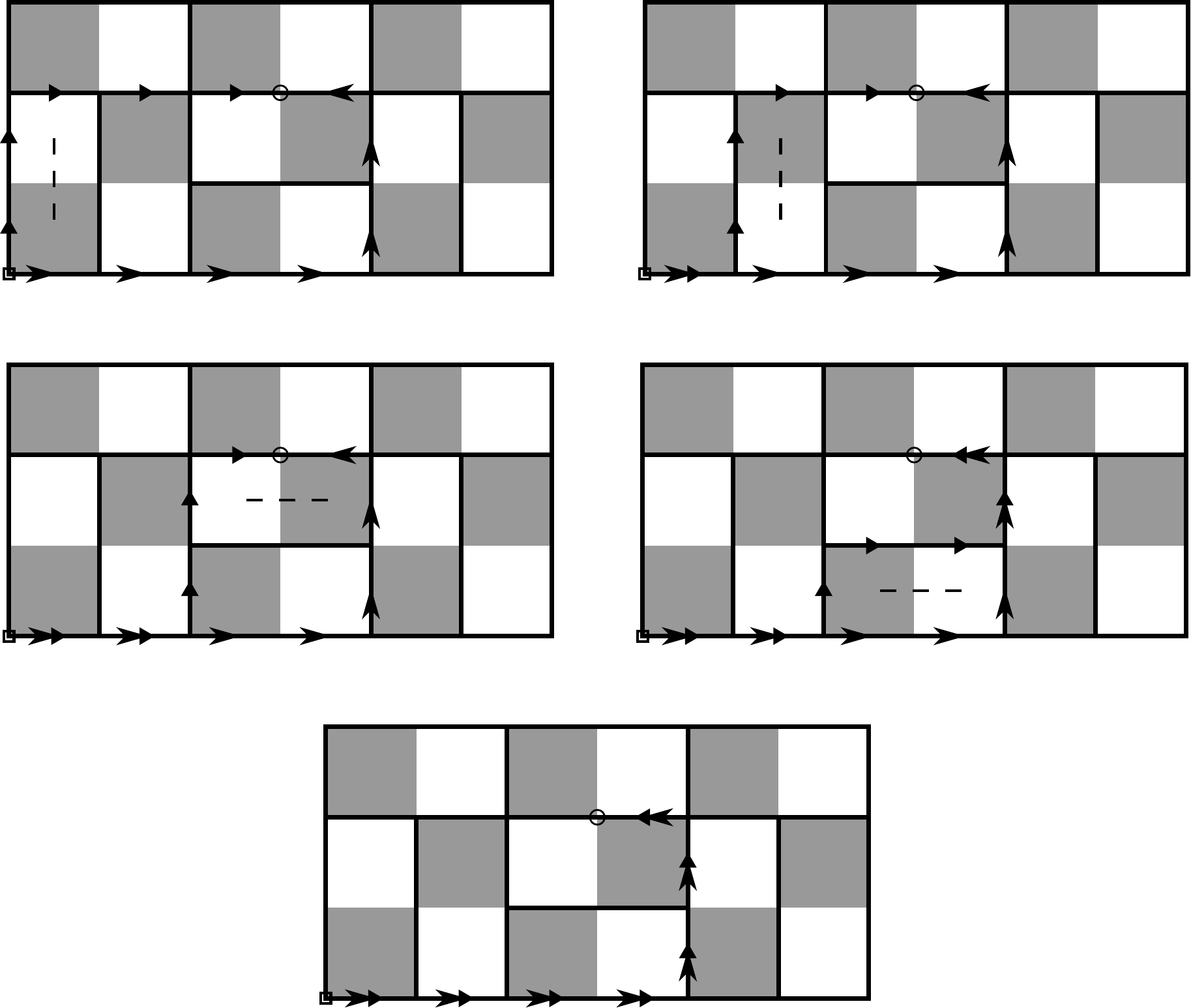}
		\caption{Deforming one edge-path into another.}
\end{figure}

With these conventions, given a domino tiling $t$ and a base vertex $v$ of a black-and-white quadriculated region $R$, the height function $h$ of $t$ is well-defined.
An example of height function $h$ can be seen in the following image; the marked vertex is the base vertex.
\begin{figure}[H]
    \centering
    \def\svgwidth{0.725\columnwidth}
    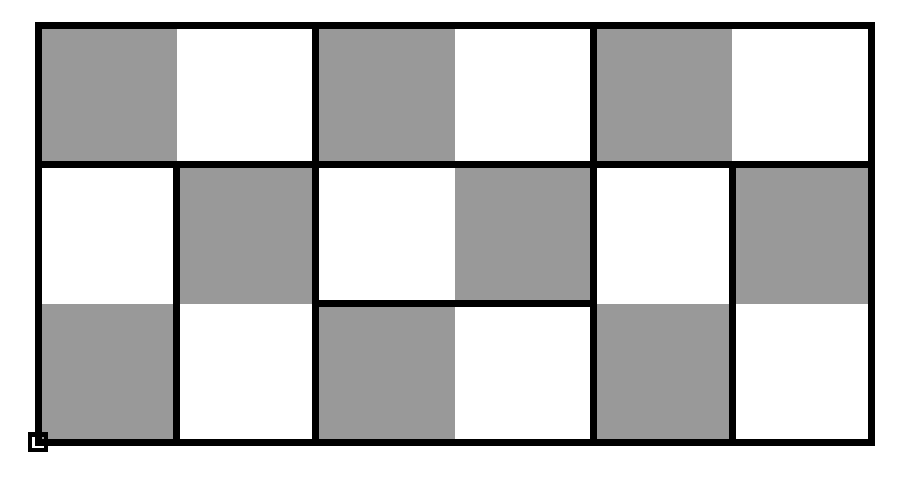
		\caption{An example of height function.}
\end{figure}

This provides a constructive definition of height functions, but we highlight now some of their properties.

\begin{prop}\label{hprescrip}
Let $R$ be a black-and-white quadriculated region.
Fix a base vertex $v \in \partial R$ (independent of choice of tiling).
Then (1) the values a height function takes on $\partial R$ and (2) the mod 4 values a height function takes on all of $R$ are all independent of choice of tiling. 
\end{prop}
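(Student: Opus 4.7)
The plan is to handle the two parts separately, since they make different claims (exact values on $\partial R$ versus mod-$4$ values on all of $R$) and call for different strategies. Part (1) is essentially an immediate corollary of the definition of height function, while part (2) requires introducing a reference function that captures the tiling-independent content of the propagation rule.

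For part (1), the key observation is that every edge of $\partial R$ is an edge of $t$ for every tiling $t$: dominoes lie in the interior of $R$, so no domino crosses a boundary edge. Consequently, the values of $h$ along $\partial R$ are obtained by propagating from $v$ using the orientation rule along the boundary edges only. This orientation is determined by the coloring of $R$ and not by $t$, so the values $h$ takes on $\partial R$ depend only on $R$ and on $v$, as claimed.

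For part (2), I would first define a reference function $h_0$ on the vertices of $R$, valued in $\Z/4\Z$, by the rule $h_0(w) \equiv h_0(u) + 1 \pmod{4}$ whenever $u \to w$ is an oriented edge of $R$. Well-definedness reduces to checking consistency around each unit square: each square's four edges are oriented coherently (clockwise round a black square, counterclockwise round a white one), so the oriented boundary walk around the square contributes $+4 \equiv 0 \pmod{4}$. Normalizing by $h_0(v) \equiv 0$, this determines $h_0$ on every vertex of $R$ by connectivity of the $1$-skeleton.

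It then remains to show that, for any tiling $t$ and any oriented edge $u \to w$ of $R$, $h(w) - h(u) \equiv 1 \pmod{4}$. On edges of $t$ this is immediate from the propagation rule. On edges crossed by a domino, one computes the difference across the internal edge by walking round the six boundary edges of the domino (all of which are edges of $t$); the total comes out to $-3$ in the oriented direction, and $-3 \equiv 1 \pmod{4}$. Combining this with $h(v) = 0$ gives $h \equiv h_0 \pmod{4}$ everywhere on $R$, establishing (2). The main point of care is the domino computation itself: a small case analysis by domino orientation (horizontal and vertical) is needed to verify that the internal edge always produces a difference of exactly $-3$ in the color-induced direction. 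This is essentially orientation bookkeeping, and once it is carried out the mod-$4$ claim follows immediately.
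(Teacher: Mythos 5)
Your proof is correct and follows essentially the same route as the paper's: part (1) rests on the observation that boundary edges belong to every tiling, and part (2) rests on the fact that along an oriented edge the height changes by $+1$ (edge in $t$) or $-3$ (edge crossed by a domino), both congruent to $1$ mod $4$. The only differences are presentational — you package the mod-$4$ claim as agreement with an explicit reference function $h_0$ and derive the $-3$ by walking round the domino boundary, whereas the paper compares tilings directly and cites the $-3$ as part of the constructive definition — but the underlying argument is identical.
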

\begin{proof}
Remember that, regardless of the choice of tiling $t$, an edge on $\partial R$ is an edge on $t$.
Since we have already proved consistency, (1) is automatic.

For (2), let $u$ and $w$ be vertices on $R$ joined by an edge $e$.
Observe that the orientation of $e$ depends only on the region $R$ and not on choice of tiling; assume then that $e$ is oriented from $u$ to $w$.
The constructive definition implies a change in height function along $e$ occurs in one of the following ways:
\begin{itemize}
	\item If $e$ is on the tiling $t$, then $h(w)=h(u)+1$.
	\item If $e$ is not on the tiling $t$, then $h(w)=h(u)-3$.
\end{itemize}

Notice that in both cases $h(w)$ has the same mod 4 value.
The same occurs when $e$ is oriented from $w$ to $u$. 
By connectivity, we are done.
\end{proof}

Proposition \ref{hprescrip} allows us to fully characterize height functions of tilings of a region $R$.

\begin{prop}[Characterization of height functions]\label{hcarac}
Let $R$ be a black-and-white quadriculated region. Fix a base vertex $v \in \partial R$.
Then an integer function $h$ on the vertices of $R$ is a height function (of a tiling of $R$) if and only if $h$ satisfies the following properties:
\begin{enumerate}
	\item $h$ has the prescribed values on $\partial R$.
	\item $h$ has the prescribed mod 4 values on all of $R$.
	\item $h$ changes by at most 3 along an edge on $R$.
\end{enumerate}
\end{prop}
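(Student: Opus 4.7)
The plan is to prove both directions of the characterization separately. Necessity follows almost immediately from Proposition \ref{hprescrip}: properties (1) and (2) are exactly that proposition's content, while (3) can be read off from the constructive definition, which makes each oriented edge-change equal to either $+1$ (edge on the tiling) or $-3$ (edge not on the tiling), both of absolute value at most $3$.

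Sufficiency is the substantive direction, and the strategy is to recover a tiling from $h$. First I would combine (2) and (3) to pin down the oriented change along each edge. The prescribed mod 4 values (inherited from any particular tiling) change by $+1$ or $-3$ along any oriented edge, and both are $\equiv 1 \pmod 4$; hence by (2) every oriented change $h(w) - h(u)$ along an edge $e: u \to w$ is also $\equiv 1 \pmod 4$. Combined with (3), this forces $h(w) - h(u) \in \{+1, -3\}$. Now declare $e$ to be \emph{erased} (interior to a domino) precisely when $h(w) - h(u) = -3$, and on the tiling otherwise.

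Next I would verify that the erased edges really assemble into a domino tiling. The key step is to show that around every unit square of $R$ exactly one edge is erased. Traversing the square boundary in its assigned orientation (clockwise for black, counterclockwise for white), the oriented changes telescope to $0$, so if $k$ of the four edges contribute $-3$ and $4-k$ contribute $+1$, then $-3k + (4-k) = 0$, forcing $k=1$. Condition (1) guarantees that boundary edges of $R$ are not erased (since on $\partial R$ the values of $h$ agree with those coming from some actual tiling, along which boundary edges always have change $+1$), so every erased edge is interior to $R$ and pairs two adjacent squares into a domino. Reading off the construction, the height function of the resulting tiling coincides with $h$ by induction on edge-distance from the base vertex $v$.

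The main obstacle is the mod 4 step tying property (2) to the $\{+1,-3\}$ dichotomy for oriented changes; this requires carefully revisiting the proof of Proposition \ref{hprescrip} to extract not only the prescribed values mod 4 but also the structural fact that the oriented difference across any edge is $\equiv 1 \pmod 4$. Once that is in place, the telescoping identity around each unit square is a short algebraic step and the remainder is assembly.
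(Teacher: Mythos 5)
Your proposal is correct and follows essentially the same route as the paper: necessity from Proposition \ref{hprescrip}, and sufficiency by erasing the edges along which $h$ changes by $3$ and checking that each unit square has exactly one erased side, none of which lies on $\partial R$. The only difference is that you make explicit the telescoping argument ($-3k+(4-k)=0$, hence $k=1$) that the paper leaves as an assertion, which is a welcome addition rather than a deviation.
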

\begin{proof}
Proposition \ref{hprescrip} and its proof guarantee that any height function satisfies the listed properties.
We will now show that if an integer function on the vertices of $R$ satisfies those properties, it is the height function of a tiling on $R$.
To that end, we will construct a tiling $t$ that realizes one such function $h$.

On $R$, whenever two vertices joined by an edge have $h$-values that differ by 3, erase that edge (thus producing a domino).
We claim the result is a domino tiling $t$ on $R$.
Indeed, properties (2) and (3) ensure each square on $R$ will have exactly one of its sides erased.
Furthermore, by (1) that side will never occur on $\partial R$.
It's easy to see this yields a domino tiling of $R$; furthermore, by construction this tiling's height function is $h$.
\end{proof}

From now on, for any black-and-white quadriculated region $R$, assume the base vertex $v$ is fixed independently of choice of tiling.

Another interesting and perhaps less obvious property of height functions is that the minimum of two height functions is itself a height function.

\begin{prop}\label{hmin}
Let $R$ be a black-and-white quadriculated region and $t_1$, $t_2$ be two domino tilings of $R$ with corresponding height functions $h_1$, $h_2$.
Then $h_m = min \{h_1,h_2\}$ is a height function on $R$.
\end{prop}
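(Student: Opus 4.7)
The plan is to invoke the characterization of height functions given in Proposition \ref{hcarac}: I will show that $h_m$ satisfies the three listed properties, from which it follows that $h_m$ is the height function of some tiling of $R$. Since $h_1$ and $h_2$ share the prescribed boundary values and the prescribed mod~4 values at every vertex, these two conditions for $h_m$ are essentially immediate: on $\partial R$, $h_1 = h_2$ gives $h_m$ the correct boundary values, and at any internal vertex $h_m$ equals one of $h_1, h_2$, hence has the correct mod~4 value.

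The substantive step is property (3), the bound $|h_m(w)-h_m(u)| \leq 3$ along every edge $uw$ of $R$. My plan is a case analysis on which of $h_1, h_2$ realizes the minimum at each endpoint. If the same function realizes the minimum at both $u$ and $w$ (say $h_m(u) = h_1(u)$ and $h_m(w) = h_1(w)$), then $h_m(w) - h_m(u) = h_1(w) - h_1(u)$ and the bound is inherited directly from the fact that $h_1$ is a height function.

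The interesting case is the \emph{mixed} one: without loss of generality, $h_m(u) = h_1(u) \leq h_2(u)$ and $h_m(w) = h_2(w) \leq h_1(w)$. Orient the edge $u \to w$, so that $h_i(w) - h_i(u) \in \{1, -3\}$ for $i = 1, 2$. Then
\[
h_m(w) - h_m(u) = h_2(w) - h_1(u) \leq h_1(w) - h_1(u) \leq 1,
\]
using $h_2(w) \leq h_1(w)$, and similarly
\[
h_m(w) - h_m(u) = h_2(w) - h_1(u) \geq h_2(w) - h_2(u) \geq -3,
\]
using $h_1(u) \leq h_2(u)$. Hence $|h_m(w) - h_m(u)| \leq 3$, as required.

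The hard part is precisely this mixed case, and the trick is the two-sided sandwich by $h_1$ and $h_2$ that lets the boundedness of each factor force boundedness of the mixed difference. Once property (3) is verified, properties (1) and (2) together with Proposition \ref{hcarac} produce a tiling $t_m$ of $R$ whose height function is $h_m$, completing the proof.
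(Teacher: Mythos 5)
Your proof is correct, and the key step is handled by a genuinely different argument from the paper's. Where you treat the mixed case (minimum realized by $h_1$ at $u$ and by $h_2$ at $w$) by sandwiching the cross-difference,
\[
-3 \;\leq\; h_2(w) - h_2(u) \;\leq\; h_2(w) - h_1(u) \;\leq\; h_1(w) - h_1(u) \;\leq\; 1,
\]
the paper instead shows that the strictly mixed case \emph{cannot occur at all}: assuming $h_1(v) < h_2(v)$ and $h_2(w) < h_1(w)$, it uses the orientation to pin down $h_1(w) = h_1(v)+1$ and $h_2(w) = h_2(v)-3$, then invokes the mod~4 prescription to write $h_1(v) = h_2(v) - 4k$ with $k \geq 1$ and derives the contradiction $-4 < -4k$. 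Your route is more elementary — it never uses the mod~4 values in the edge estimate, only the fact that each $h_i$ changes by at most $1$ forward and at most $3$ backward along an oriented edge, which is the standard "min of Lipschitz functions is Lipschitz" argument. The paper's route costs the extra mod~4 input but buys a stronger structural fact: on every edge, $h_m$ coincides with one of $h_1, h_2$ at \emph{both} endpoints, so the tiling realizing $h_m$ locally agrees with $t_1$ or $t_2$. For the statement as posed, both are complete; your handling of properties (1) and (2) via Propositions \ref{hprescrip} and \ref{hcarac} matches the paper's implicit use of them.
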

\begin{proof}
Indeed, by Proposition \ref{hcarac}, it suffices to show that $h_m$ changes by at most $3$ along an edge on $R$.
This is trivially verified on vertices $v$ and $w$ joined by an edge whenever $h_m = h_1$ or $h_m = h_2$ on both $v$ and $w$.
Suppose this is not the case; furthermore, suppose without loss of generality $h_m(v) = h_1(v)$, $h_m(w) = h_2(w)$ and that the edge joining them points from $v$ to $w$.

The edge's orientation implies $h_i(w)= h_i(v)+1$ if the edge is on $t_i$ and $h_i(w)= h_i(v)- 3$ otherwise ($i=1,2$).
Since $h_1(v) < h_2(v)$, the only possibility that realizes $h_2(w)<h_1(w)$ is the edge being on $t_1$ and not on $t_2$, so that $h_1(w)=h_1(v)+1$ and $h_2(w)=h_2(v) - 3$.
Now, because $h_1(v) - h_2(v) < 0$ and mod 4 values are prescribed, the difference must be $-4k$ for some positive integer $k$, so that $h_1(v) = h_2(v) - 4k$.

Finally, $h_2(w)<h_1(w)$ can now be rewritten as $h_2(v)-3 < (h_2(v)-4k)+1$, or simply $-4<-4k$.
This is a contradiction, implying only the cases when $h_m = h_1$ or $h_m = h_2$ on both $v$ and $w$ can occur.
\end{proof}

\begin{corolario}[Minimal height function]\label{hminimal}
Let $R$ be a black-and-white quadriculated region.
If $R$ can be tiled by dominoes, then there is a minimal height function.
\end{corolario}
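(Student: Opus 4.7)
The plan is to leverage Proposition \ref{hmin} together with the finiteness of the set of tilings of $R$. Since $R$ is a finite quadriculated region, it has only finitely many squares and therefore only finitely many domino tilings; call them $t_1, t_2, \ldots, t_N$, with corresponding height functions $h_1, h_2, \ldots, h_N$.

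First, I would define $h_{\min}$ pointwise by
\begin{equation*}
h_{\min}(u) = \min\{h_1(u), h_2(u), \ldots, h_N(u)\}
\end{equation*}
for every vertex $u$ of $R$. To show $h_{\min}$ is itself a height function, I would proceed by induction on $N$, iterating Proposition \ref{hmin}: the case $N=2$ is exactly the proposition, and the inductive step follows from
\begin{equation*}
\min\{h_1, \ldots, h_N\} = \min\bigl\{\min\{h_1, \ldots, h_{N-1}\}, h_N\bigr\},
\end{equation*}
where the inner minimum is a height function by the inductive hypothesis and the outer minimum is then a height function by Proposition \ref{hmin}.

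Finally, I would observe that $h_{\min}$ is minimal by construction: for any tiling $t$ of $R$, its height function appears among $h_1, \ldots, h_N$, and thus dominates $h_{\min}$ pointwise. Hence $h_{\min}$ is the sought minimal height function, and by Proposition \ref{hcarac} it corresponds to an actual tiling of $R$.

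There is no real obstacle here; the only subtlety worth emphasizing is that finiteness of the tiling set is essential, since otherwise a pointwise infimum of infinitely many integer-valued functions could fail to exist. This is guaranteed by $R$ being a finite region. In the torus case treated later, where the relevant set of tilings with fixed flux is again finite, the analogous argument will go through verbatim.
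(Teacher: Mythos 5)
Your proof is correct and is exactly the argument the paper intends: the corollary follows from Proposition \ref{hmin} by taking the pointwise minimum over the finitely many tilings of $R$ and iterating. The paper leaves this implicit (stating it as an immediate corollary), and your write-up, including the remark that finiteness is what makes the minimum well-defined, fills in precisely that reasoning.
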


Along a 2$\times$2 square tiled by dominoes, it's easy to verify that height function values are distributed so that the center vertex is a local maximum or minimum.
Furthermore, applying a flip changes a local maximum vertex to a local minimum vertex, and vice-versa, leaving other values unchanged.
Figure \ref{fig:heightflip} illustrates this phenomenon.
\begin{figure}[ht]
    \centering
    \def\svgwidth{0.85\columnwidth}
    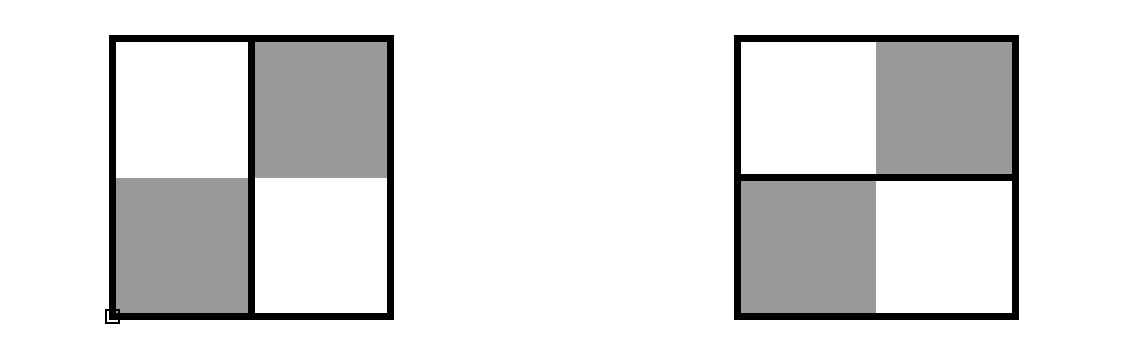
		\caption{The effect of a flip on the height function.}
		\label{fig:heightflip}
\end{figure}

Together with Corollary \ref{hminimal}, an application of this technique provides the following result.

\begin{prop}\label{hredux}
Let $R$ be a black-and-white quadriculated region with minimal height function $h_m$.
Let $h \neq h_m$ be a height function associated to the domino tiling $t$ of $R$.
Then there is a flip on $t$ that produces a height function $\tilde{h} \leq h$ with $\tilde{h}<h$ on one vertex of $R$.
\end{prop}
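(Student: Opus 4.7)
The plan is to find a vertex $v$ at which $h$ is a strict local maximum, which will automatically be the center of a flippable $2\times 2$ block of $t$; flipping this block decreases $h$ by exactly $4$ at $v$ and leaves all other values unchanged. Let $S = \{v \in V(R) : h(v) > h_m(v)\}$. Since $h \neq h_m$ and $h_m$ is minimal, $S$ is nonempty; by Proposition \ref{hprescrip}, $h$ and $h_m$ coincide on $\partial R$, so $S$ lies in the interior. The mod-$4$ prescription forces $h - h_m$ to take values in $4\Z_{\geq 0}$, so $h(v) \geq h_m(v) + 4$ for every $v \in S$. I would then choose $v \in S$ maximizing $h|_S$.

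To verify that $v$ is a strict local maximum of $h$, let $w$ be any neighbor of $v$. Since $h$ changes by $\pm 1$ or $\pm 3$ along the edge $vw$, $h(w) \neq h(v)$. If $w \in S$, maximality gives $h(w) \leq h(v)$, hence $h(w) < h(v)$. Otherwise $h(w) = h_m(w) \leq h_m(v) + 3 \leq h(v) - 1 < h(v)$, where the first inequality is the edge bound of Proposition \ref{hcarac} applied to $h_m$ and the second uses $h(v) \geq h_m(v) + 4$.

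It remains to translate this local maximum into a flippable $2\times 2$ block and execute the flip. Around an interior vertex, the color-induced orientation sends two opposite edges out of $v$ and the other two opposite edges into $v$. Along an out-edge $v \to w$ we have $h(w) - h(v) \in \{+1,-3\}$, so $h(w) < h(v)$ forces $h(w) = h(v) - 3$, which by construction means the edge lies inside a domino of $t$; the analogous argument shows the two in-edges lie on $t$. Thus the two out-edges cut across two parallel dominoes of $t$ forming a $2\times 2$ block centered at $v$. Performing the flip on this block yields $\tilde{h}$, which by the analysis accompanying Figure \ref{fig:heightflip} agrees with $h$ away from $v$ and satisfies $\tilde{h}(v) = h(v) - 4$, giving $\tilde{h} \leq h$ with strict inequality at $v$. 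The main obstacle is the middle step: ruling out $h(w) \geq h(v)$ requires combining the maximality of $h$ on $S$ with the Lipschitz bound on $h_m$, since neither condition alone controls neighbors $w \notin S$ or neighbors $w \in S$. Once the strict local maximum is established, the identification with a flippable configuration is a direct consequence of the orientation pattern around $v$.
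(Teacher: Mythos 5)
Your proof is correct and follows essentially the same strategy as the paper's: locate an interior strict local maximum of $h$ by comparing it with $h_m$, then perform the flip there, using the fact that $h-h_m$ vanishes on $\partial R$ and takes values in $4\Z_{\geq 0}$. The only (harmless) difference is in how the vertex is selected and verified --- the paper maximizes $h-h_m$ first and then $h$ among those vertices, ruling out a higher neighbor by a case-by-case contradiction, whereas you maximize $h$ directly over the set where $h>h_m$ and handle neighbors outside that set via the gap $h(v)\geq h_m(v)+4$ together with the edge bound on $h_m$ from Proposition \ref{hcarac}; both arguments are sound and lead to the same flip.
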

\begin{proof}
Consider the difference $h - h_m$.
By Proposition \ref{hcarac}, it is 0 along the boundary and takes nonnegative values on $4\Z$.
Let $V$ be the set of vertices of $R$ on which $h-h_m$ is maximum, and choose a vertex $v \in V$ that maximizes $h$.
Notice by hypothesis $V$ is non-empty, and does not intersect $\partial R$.
We assert that $v$ is a local maximum of $h$.

Suppose $v$ were not a local maximum of $h$, that is, suppose there were a vertex $w$ joined to $v$ by an edge $e$ so that $h(w) > h(v)$.
There are two cases:
\begin{enumerate}
	\item $e$ is on $t$ and points from $v$ to $w$, so that $h(w)=h(v)+1$.
	\item $e$ is not on $t$ and points from $w$ to $v$, so that $h(w)=h(v)+3$.
\end{enumerate}

Remember edge orientation does not depend on choice of tiling (and thus does not depend on the height function considered).

In case (1), $h_m(w)=h_m(v)+1$ if $e$ is on the associated minimal tiling $t_m$, and $h_m(w)=h_m(v)-3$ otherwise.
Neither can occur: the first contradicts $v$ maximizing $h$ (since $h(w)>h(v)$), and the latter contradicts $v$ maximizing $h-h_m$ (since $h(w)-h_m(w)> h(v)-h_m(v)$).

Case (2) is similar: $h_m(w)=h_m(v)-1$ if $e$ is on $t_m$, and $h_m(w)=h_m(v)+3$ otherwise.
The first contradicts $v$ maximizing $h-h_m$, and the latter contradicts $v$ maximizing $h$.

Whatever the situation, we derive a contradiction, implying $v$ must indeed be a local maximum.
Since $v$ is not on $\partial R$, we can perform a flip round $v$.
This makes it a local minimum while preserving the values $h$ takes on all other vertices of $R$ and completes the proof.
\end{proof}

Because the situation is finite, Proposition \ref{hredux} essentially tells us any tiling of a region $R$ can be taken by a sequence of flips to the tiling that minimizes height functions over tilings of $R$.
A simple but important corollary follows.

\begin{corolario}[Flip-connectedness]\label{flipconec}
Let $R$ be a black-and-white simply-connected quadriculated region tileable by dominoes.
Then any two distinct tilings of $R$ can be joined by a sequence of flips.
\end{corolario}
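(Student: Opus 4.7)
The plan is to reduce the problem to reaching a canonical ``minimal'' tiling and then use reversibility of flips. By Corollary \ref{hminimal}, because $R$ is tileable there exists a minimal height function $h_m$, corresponding to some tiling $t_m$. I would show that for any tiling $t$ of $R$, with associated height function $h$, there is a finite sequence of flips taking $t$ to $t_m$. Once that is established, the corollary follows immediately: given two tilings $t_0, t_1$, concatenate a flip-sequence from $t_0$ to $t_m$ with the reverse of a flip-sequence from $t_1$ to $t_m$. The reversibility step is trivial, since a flip on a $2 \times 2$ square is an involution: applying the same flip twice recovers the original tiling.

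To establish the reduction of an arbitrary $t$ to $t_m$, I would iterate Proposition \ref{hredux}. At each step where $h \neq h_m$, that proposition produces a flip on $t$ yielding a new tiling $\tilde t$ whose height function $\tilde h$ satisfies $\tilde h \leq h$ everywhere and $\tilde h < h$ at some vertex. To show this process terminates, I would introduce the nonnegative integer-valued quantity
\begin{equation*}
N(h) = \sum_{v \in R} \bigl( h(v) - h_m(v) \bigr),
\end{equation*}
which is well-defined because $R$ is finite and, by Proposition \ref{hcarac} together with Proposition \ref{hmin} (ensuring $h \geq h_m$ everywhere on $R$), the summand is a nonnegative multiple of $4$ at every vertex. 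Each flip furnished by Proposition \ref{hredux} strictly decreases $N(h)$, so after finitely many flips we must reach a height function $h'$ for which Proposition \ref{hredux} can no longer apply, i.e. $h' = h_m$. Since the height function determines the tiling (via the reconstruction in the proof of Proposition \ref{hcarac}, erasing edges across which $h$ jumps by $3$), the corresponding tiling is $t_m$.

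The only delicate point I anticipate is verifying that $h \geq h_m$ pointwise, which is needed to know $N(h) \geq 0$ and that the procedure cannot overshoot. This follows by noting that $\min\{h, h_m\}$ is itself a height function by Proposition \ref{hmin}, and by the minimality of $h_m$ we must have $\min\{h, h_m\} = h_m$, hence $h_m \leq h$ everywhere. With this in hand, the termination and hence flip-connectedness is essentially immediate; no further combinatorial machinery is needed.
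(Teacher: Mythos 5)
Your proposal is correct and follows the same route as the paper: the text preceding the corollary explicitly derives it by iterating Proposition \ref{hredux} to reach the minimal tiling and invoking finiteness, then (implicitly) concatenating and reversing flip sequences. Your potential function $N(h)$ and the pointwise bound $h \geq h_m$ via Proposition \ref{hmin} merely make the paper's ``because the situation is finite'' termination argument explicit.
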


\section{Kasteleyn matrices}\label{sec:kastmatplano}

A \textit{Kasteleyn matrix} `encodes' a quadriculated black-and-white region $R$ in matrix form, and its construction is similar to that of adjacency matrices.

Given one such region $R$, we can obtain an adjacency matrix $A$ of $R$ from its dual graph $G$ as follows: enumerate each black vertex (starting from 1), and do the same to white vertices.
Then $A_{ij} = 1$ if the $i$-th black vertex and $j$-th white vertex are joined by an edge, and 0 otherwise.
\begin{figure}[H]
    \centering
    \def\svgwidth{0.55\columnwidth}
    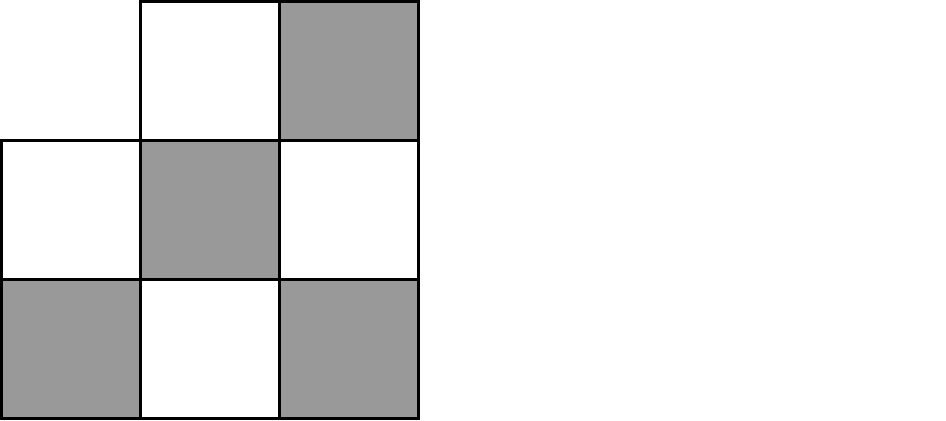 \linebreak \linebreak
		$A = \left( \begin{array}{cccc}
		1&0&1&0\\
		1&1&1&1\\
		0&1&0&1\\
		0&0&1&1
		\end{array}\right)$
		\caption{The construction of an adjacency matrix $A$ for a quadriculated, colored region.}
\end{figure}

Consider now an $n\times n$ adjacency matrix $A$ and the combinatorial expansion of its determinant:
\begin{equation}
\text{det}(A) = \sum \limits_{\sigma \in S_n} {\text{sgn}(\sigma) \prod \limits_{i=1}^n {A_{i,\sigma (i)}}}
\label{deta}
\end{equation}

In the expansion above, each nonzero term of the form $\prod\limits_{i=1}^n A_{i,\sigma (i)}$ can be seen as corresponding to a matching of $G$.
In fact, the term is nonzero if and only if each factor in the product is 1, in which case the $i$-th black vertex is joined by an edge to the $\sigma (i)-th$ white vertex.
Since $\sigma$ is a permutation on $\{1, \dots, n\}$, the collection of these edges is by construction a set of edges on $G$ in which each vertex of $G$ features exactly once.
The observation follows.

Of course, the correspondence goes both ways.
This means that, except for sgn($\sigma$), det($A$) counts the number of matchings of $G$ (and thus also the domino tilings of $R$).
How do we get past the sign?

The obvious way would be to consider ther \textit{permanent} of $A$ $$\text{perm}(A) = \sum \limits_{\sigma \in S_n} {\prod \limits_{i=1}^n {A_{i,\sigma (i)}}},$$ but permanents lack a number of interesting properties when compared to determinants, and are also much more costly to compute.

The answer is precisely the Kasteleyn matrix $K$: an altered adjacency matrix in which some entries are replaced by $-1$.
Its construction is similar to the ordinary adjacency matrix, except some edges on $G$ are assigned the value $-1$ rather than $+1$.
This distribution of minus signs can be done in many ways, but the following observation explains the general principle behind it: a flip on a matching of $G$ always changes the sign of the corresponding permutation in~\eqref{deta}.
This is because, on a permutation level, applying a flip amounts to multiplying the original permutation by a cycle of length 2.

With this in mind, the distribution of minus signs over edges on $G$ is made so that the sign change in a permutation caused by a flip is always counterbalanced by a sign change on the corresponding product of entries of $K$.
Such a distribution ensures that applying a flip does not change the `total' sign of the term $$\text{sgn}(\sigma) \prod \limits_{i=1}^n {K_{i,\sigma (i)}}$$ in~\eqref{deta}.
And since we've shown that any two distinct domino tilings of $R$ (and thus matchings of $G$) can be joined by a sequence flips, this means the sum in~\eqref{deta} is carried over identically signed numbers.
In other words, for a Kasteleyn matrix $K$ of a region $R$, $\lvert \text{det}(K) \rvert$ \textbf{is} the number of domino tilings of $R$.

An easy, convenient way of distributing minus signs over edges on $G$ is assigning them to all horizontal edges in alternating lines (say, all odd lines, or all even lines).
This way, a $2 \times 2$ square in the dual graph will always contain exactly one negative edge (either the topmost or the bottommost horizontal line), so that a flip always will always produce a sign change on the product of entries of $K$.

We highlight that in his original paper \cite{kasteleyn1961statistics}, flip-connectedness (or more generally, flips) was not a part of Kasteleyn's exposition. His methods were combinatorial but he employed Pfaffians.

Below, we show an example of construction of a Kasteleyn matrix.
In the corresponding dual graph, negative edges are red and dashed.
\begin{figure}[H]
    \centering
    \def\svgwidth{0.55\columnwidth}
    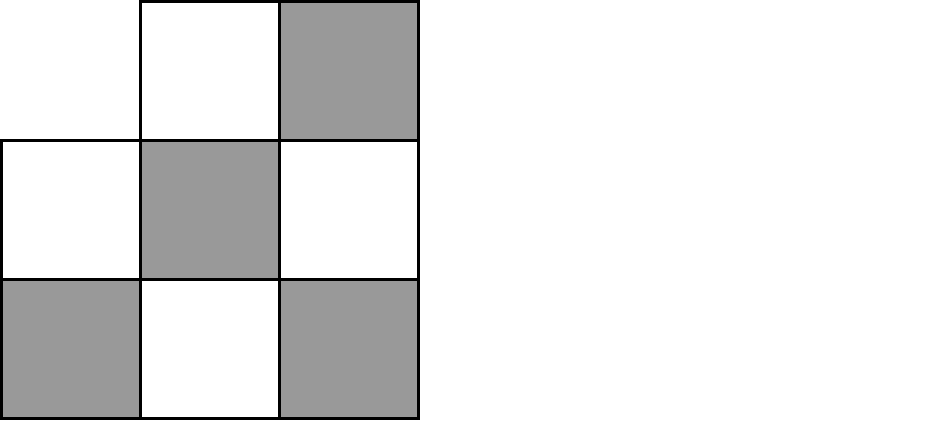 \linebreak \linebreak
		$K = \left( \begin{array}{cccc}
		-1&0&1&0\\
		1&1&1&1\\
		0&1&0&-1\\
		0&0&1&-1
		\end{array}\right)$
		\caption{The construction of a Kasteleyn matrix $K$.}
\end{figure}

\section{A classical result: domino tilings of the rectangle}\label{sec:exret}

We will end this chapter by using our methods to provide a classical result: the counting of domino tilings of an $m \times n$ black-and-white rectangular region, $R_{m,n}$.
Of course, if both $m$ and $n$ are odd, that number is 0; we assume then that $m$ is even.

Let $G$ be $R_{m,n}$'s dual graph with minus signs assigned to all horizontal edges in even lines, from which we obtain the corresponding Kasteleyn matrix $K$.
Rather than compute the determinant of $K$, we will consider the matrix $M = KK^* \oplus K^*K$; it's clear that $\text{det}(M) = \text{det}(K)^4$.

$M$ can be seen as a double adjacency matrix of $G$, acting as a linear map on the space of formal linear combinations of vertices.
It takes a vertex $v$ to the sum of vertices that are joined to $v$ via an edge-path of length two on the graph.
Notice edge sign and vertex multiplicity (when a vertex can be reached from $v$ via two distinct edge-paths) are taken into account.

Because edge-paths considered have length two, $M$ takes white vertices to white vertices and black vertices to black vertices.
Another way of thinking this is as follows: when interpreting the Kasteleyn matrix as a linear map (like $M$ above), our general construction of the Kasteleyn matrix implies it goes from the space of white vertices $W$ to the space of black vertices $B$.
Of course, this also means $K^*: B \longrightarrow W$.
It then becomes clear by the definition of $M$ that it is a color-preserving map.
This essentially means $M$ acts independently on $B$ and $W$.

Consider the grid below, so that each vertex of $G$ is identified by a double index $(i,j)$.
\begin{figure}[H]
    \centering
    \def\svgwidth{0.35\columnwidth}
    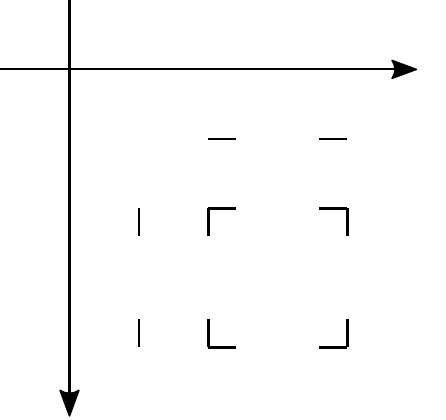
		\caption{Indexation grid.}
\end{figure}

In the obvious notation, vertices $v_{i,j}$ of $G$ at least two units away from the boundary satisfy $Mv_{i,j}= 4v_{i,j} + v_{i+2,j} + v_{i-2,j} + v_{i,j+2} + v_{i,j-2}$.
The coefficient in $v_{i,j}$ comes from moving forward then backwards in each cardinal direction; notice that a negative edge traversed this way will account for two minus signs, so the end result is always positive.
Vertices of the form $v_{i\pm 1, j \pm 1}$ do not feature because each of them can be reached via exactly two distinct edge-paths with necessarily opposite signs.

The formula can be extended to all vertices of $G$ as follows.
Put $v_{i,j} = 0$ if it is immediately outside the boundary of $G$; then, for each line of zero-vertices, reflect $G$ through that line and set a vertex $v_{i,j}$ obtained this way to be \textbf{minus} the vertex from which it was reflected.
We then repeat this process, so that in the end $v_{i,j}$ will be defined for all $i,j \in \Z$.

Figure \ref{fig:gridext} is a visual representation of this extension; in it, each gray square is a zero vertex.
More generally, one such extension can be succinctly represented by the relations:
\begin{figure}[ht]
    \centering
    \def\svgwidth{1.01\columnwidth}
    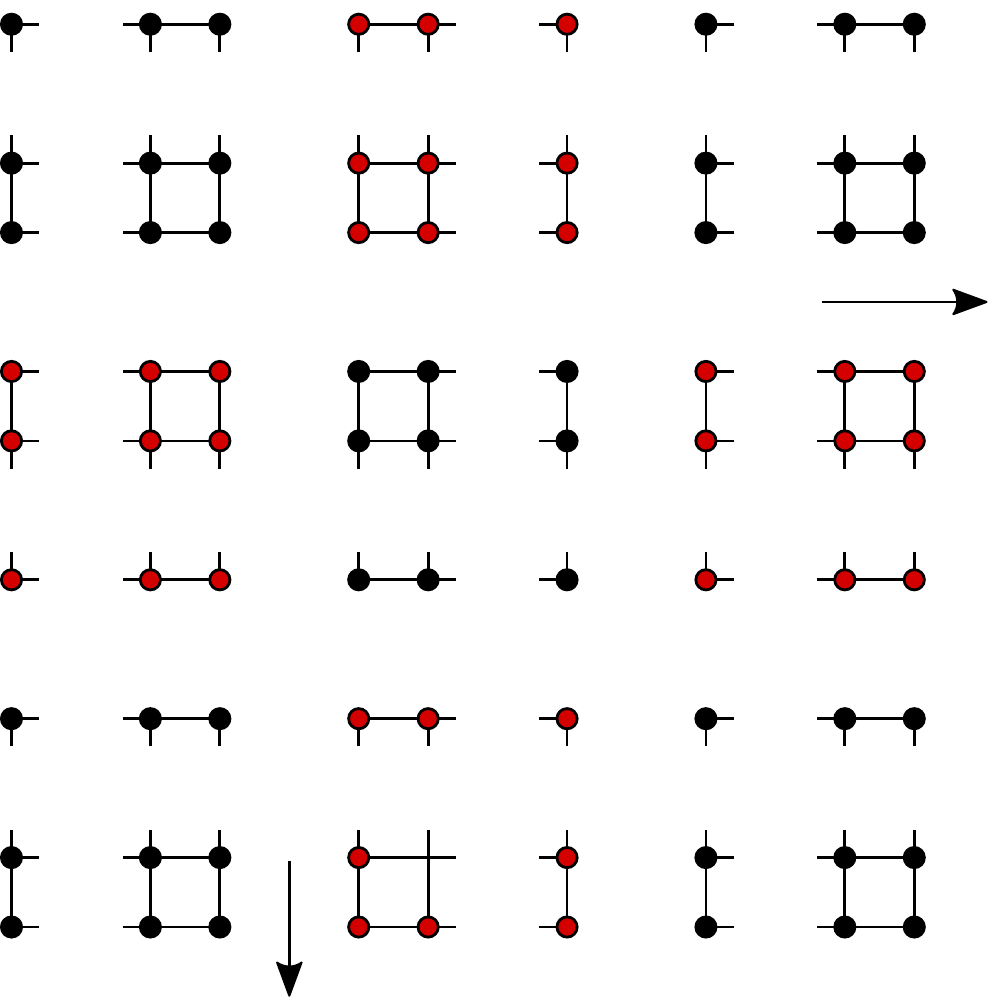
		\caption{Extension through reflections.}
		\label{fig:gridext}
\end{figure}

\begin{equation}\left\{ \begin{array}{l}\label{eqsextensao}
v_{0,j} = v_{m+1,j} = v_{i,0} = v_{i,n+1} = 0 \\
v_{-i,j} = v_{i,-j} = -v_{i,j} \\
v_{i+2m+2,j} = v_{i,j+2n+2} = v_{i,j}
\end{array} \right. \end{equation}

With this, the formula for $M$ now holds not only on all of $G$, but also on all of $\Z^2$.
Notice the space on which $M$ acts is still $mn$-dimensional, since coordinates on the original vertices of $G$ propagate to all vertices of $\Z^2$ via the relations above.
We will now compute $\text{det}(M)$.

We will always refer to the imaginary unit by the boldface $\bi$.
Let $\zeta_1 = \exp\big(\frac{\pi \cdot \bi}{m+1}\big)$ and $\zeta_2 = \exp\big(\frac{\pi \cdot \bi}{n+1}\big)$.
For all $k,l \in \Z$ with $1 \leq k \leq m$ and $1 \leq l \leq n$, let  
\begin{equation*}
\begin{split}
v(k,l) &= \sum_{i,j} 4 \text{ }\sin \left(\frac{i\cdot k \cdot\pi}{m+1}\right) \sin \left(\frac{j\cdot l \cdot \pi}{n+1}\right) \cdot v_{i,j} \\
&= \sum_{i,j} \left( \zeta_1^{ik} - \zeta_1^{-ik} \right) \left( \zeta_2^{jl} - \zeta_2^{-jl} \right) \cdot v_{i,j}
\end{split}
\end{equation*}

Notice $v(k,l)$ is a valid vector in that its coordinates respect the relations in~\eqref{eqsextensao}.
We claim each such $v(k,l)$ is an eigenvector of $M$. Indeed, we have that the $v_{i,j}$-coordinate of $Mv(k,l)$ is
\begin{alignat*}{2}
4 &\left( \zeta_1^{ik} - \zeta_1^{-ik} \right) \left( \zeta_2^{jl} - \zeta_2^{-jl} \right)& \\
+ &\left( \zeta_1^{(i+2)k} - \zeta_1^{-(i+2)k} \right) \left( \zeta_2^{jl} - \zeta_2^{-jl} \right) &+& \left( \zeta_1^{(i-2)k} - \zeta_1^{-(i-2)k} \right) \left( \zeta_2^{jl} - \zeta_2^{-jl} \right) \\
+ &\left( \zeta_1^{ik} - \zeta_1^{-ik} \right) \left( \zeta_2^{(j+2)l} - \zeta_2^{-(j-2)l} \right) &+& \left( \zeta_1^{ik} - \zeta_1^{-ik} \right) \left( \zeta_2^{(j-2)l} - \zeta_2^{-(j-2)l} \right)
\end{alignat*}

The expression above may be rearranged to yield:
\begin{equation*}
\begin{split}
\left[Mv(k,l)\right]_{i,j} = & \left( \zeta_1^{ik} - \zeta_1^{-ik} \right) \left( \zeta_2^{jl} - \zeta_2^{-jl} \right) \left( \left( \zeta_1^{k} + \zeta_1^{-k} \right)^2 + \left( \zeta_2^{l} + \zeta_2^{-l} \right)^2 \right) \\
= & \left[v(k,l)\right]_{i,j} \cdot \left( \left( \zeta_1^{k} + \zeta_1^{-k} \right)^2 + \left( \zeta_2^{l} + \zeta_2^{-l} \right)^2 \right)\text{.}
\end{split}
\end{equation*}

In other words, $v(k,l)$ is an eigenvector of $M$ with associated eigenvalue 
\begin{equation*}
\begin{split}
\lambda(k,l) &= \left( \left( \zeta_1^{k} + \zeta_1^{-k} \right)^2 + \left( \zeta_2^{l} + \zeta_2^{-l} \right)^2 \right) \\
&= 4\cdot \left(\cos \left(\frac{k\pi}{m+1}\right)\right)^2 + 4\cdot \left(\cos \left(\frac{l\pi}{n+1}\right)\right)^2
\end{split}
\end{equation*}

It is well known from Fourier series that the vectors $v(k,l)$ are linearly independent (and in fact orthogonal), so indeed these are all the $mn$ eigenvalues of $M$.
Hence, $\det(M) = \prod_{k=1}^m \prod_{l=1}^n \lambda(k,l)$.

Since $m$ is even, for all $k \in \Z$ with $1 \leq k \leq m/2$ it holds that $\cos\left(\frac{k\pi}{m+1}\right)=-\cos\left(\pi - \frac{k\pi}{m+1}\right)$.
Because in $\lambda(k,l)$ the cosines are squared, this implies $\lambda(k,l) = \lambda(m+1-k,l)$ whenever $1 \leq k \leq m/2$.
Thus, we may write $\det(M) = \prod_{k=1}^{m/2} \prod_{l=1}^n \lambda(k,l)^2$.

Finally, remember that $\det(M) = \det(K)^4$, so the number of tilings of $R_{m,n}$ is
\begin{equation*}
\lvert \det(K) \rvert = \prod_{k=1}^{m/2} \prod_{l=1}^n 2 \left( \left(\cos \left(\frac{k\pi}{m+1}\right)\right)^2 + \left(\cos \left(\frac{l\pi}{n+1}\right)\right)^2 \right)^{1/2}
\end{equation*}
\chapter{Domino tilings on the torus}\label{chap:tor}
Domino tilings on the torus (or more generally planar surfaces) are amenable to much of what's been discussed in the plane case, but modifications are unavoidable.
Let $D_n \subset [0,2n]^2$\label{def:dn} be a $2n \times 2n$ black-and-white quadriculated square region with the square $[0,1]^2$ fixed as black: initially, this will be our model fundamental domain for studying the torus.
The torus arises from identifying opposite sides on $D_n$; we will refer to a torus obtained this way by $\T_n$\label{def:tn}.
Notice $D_n$ respects the black-and-white condition.
Questions on the existence and number of domino tilings for $\T_n$ will receive treatment similar to that of ordinary planar regions.

We first introduce the concept of \textit{flux} of a tiling.
Our torus model is very similar to the ordinary square planar region, except for the identification of opposing sides.
This identification enables us to represent tilings of $\T_n$ on $D_n$ with dominoes that `cross over'\label{def:crossoverdomino} to the opposing side.
For instance, the $4 \times 4$ model of the torus admits tilings such as those represented on $D_2$ below.
\begin{figure}[H]
		\centering
		\includegraphics[width=0.975\textwidth]{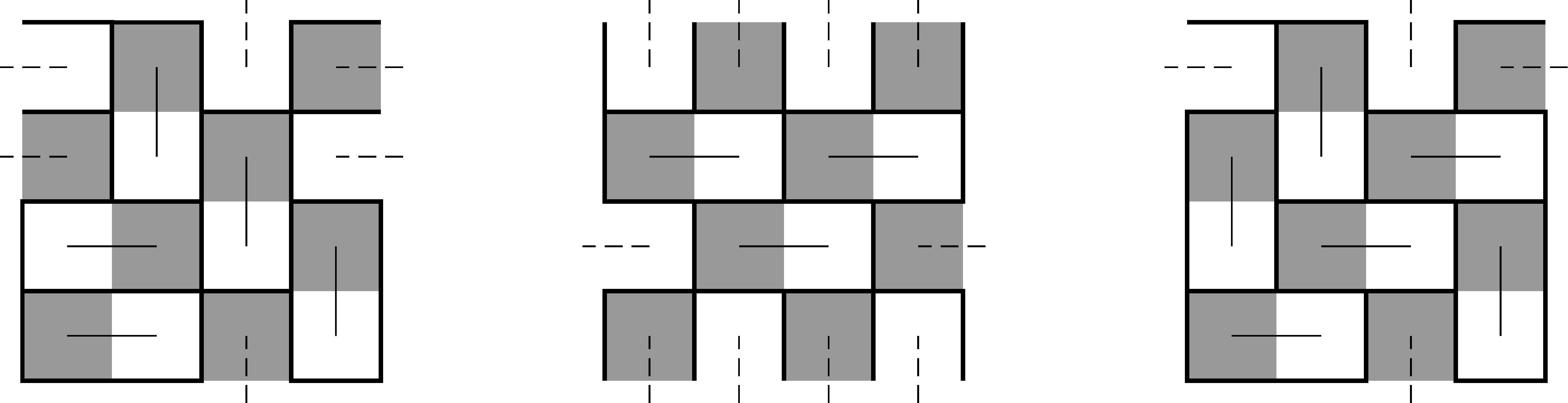}
		\caption{Example tilings of $\T_2$ with `cross over' dominoes.}
\end{figure}
Notice these new tilings also introduce new `cross-flips'\label{def:crossflip}: we can flip `cross over' dominoes just like ordinary dominoes, in the obvious way.
In fact, cross-flips are possible in the first two tilings above.

In a loose sense, the flux of a tiling counts these `cross-over' dominoes. 
We now describe this in detail. 

Remember a domino is always made up of exactly one black square and one white square.
We fix a positive orientation for dominoes: from their black square to their white square.
This orientation applies to cross-over dominoes in the obvious way, and is also inherited by edges on the dual graph in the natural way. 
We also fix positive horizontal and vertical orientations for the fundamental domain $D_n$ itself: from left to right and from top to bottom.

Let $t$ be a tiling of the torus $\T_n$ on $D_n$.
The \textit{horizontal} flux\label{def:fluxsimple} of $t$ is the number of vertical cross-over dominoes on $t$, counted positively if their orientation agrees with $D_n$'s own vertical orientation, and negatively otherwise.
Similarly, the \textit{vertical} flux of $t$ is the number of horizontal cross-over dominoes on $t$, counted positively if their orientation agrees with $D_n$'s own horizontal orientation, and negatively otherwise.
The definitions may appear to be a mismatch, but the reason for them should become clear in time.

This rather algebraic definition interacts well with flips, in a manner similar to how a signed region does: a flip always preserves the flux of a tiling.
Indeed, an ordinary flip does not involve cross-over dominoes at all, while a cross-flip replaces a count of $+1$ and $-1$ by a count of two zeroes.
On the other hand, this also implies our space of tilings is no longer flip-connected: two tilings with different flux values can never be joined by a sequence of flips!
Consequently, we cannot expect to achieve good results from na{\"i}vely applying our Kasteleyn matrix method to the torus model.
Accordingly, we will modify the construction of our Kasteleyn matrix $K$ to account for the flux of a tiling.

$\T_n$'s dual graph $G$ is the same as $D_n$'s dual graph, except it features new `crossing edges'.
Like in the planar case, enumerate each black vertex of $G$ (starting from 1), and do the same to white vertices.
We will assign weights to each edge on $G$.
Let $e_{ij}$ be the edge joining the $i$-th black vertex to the $j$-th white vertex: if no such edge exists, we assign the weight $0$ to it; otherwise, it's assigned the weight $+1$.
Next, distribute minus signs over edges on $G$ like before (taking crossing edges into account); for each edge that's assigned a minus sign this way, multiply its weight by $-1$.\label{def:kasttorussimplified}

Finally, consider vertical crossing edges.
For each of those, if its orientation agrees with the graph's own vertical orientation, multiply its weight by $q_0$; otherwise, multiply its weight by $q_0^{-1}$.
Similarly, consider horizontal crossing edges.
For each of those, if its orientation agrees with the graph's own horizontal orientation, multiply its weight by $q_1$; otherwise, multiply its weight by $q_1^{-1}$.
Notice the effect of minus signs and crossing edges is cumulative!

Now that all edges are assigned their corresponding weights, $K(i,j)$ is simply the weight of $e_{ij}$.
Below, we have the dual graph of $\T_2$, where dashed red edges are assigned the minus sign, and the corresponding Kasteleyn matrix.
\begin{figure}[H]
    \centering
    \def\svgwidth{0.27\columnwidth}
    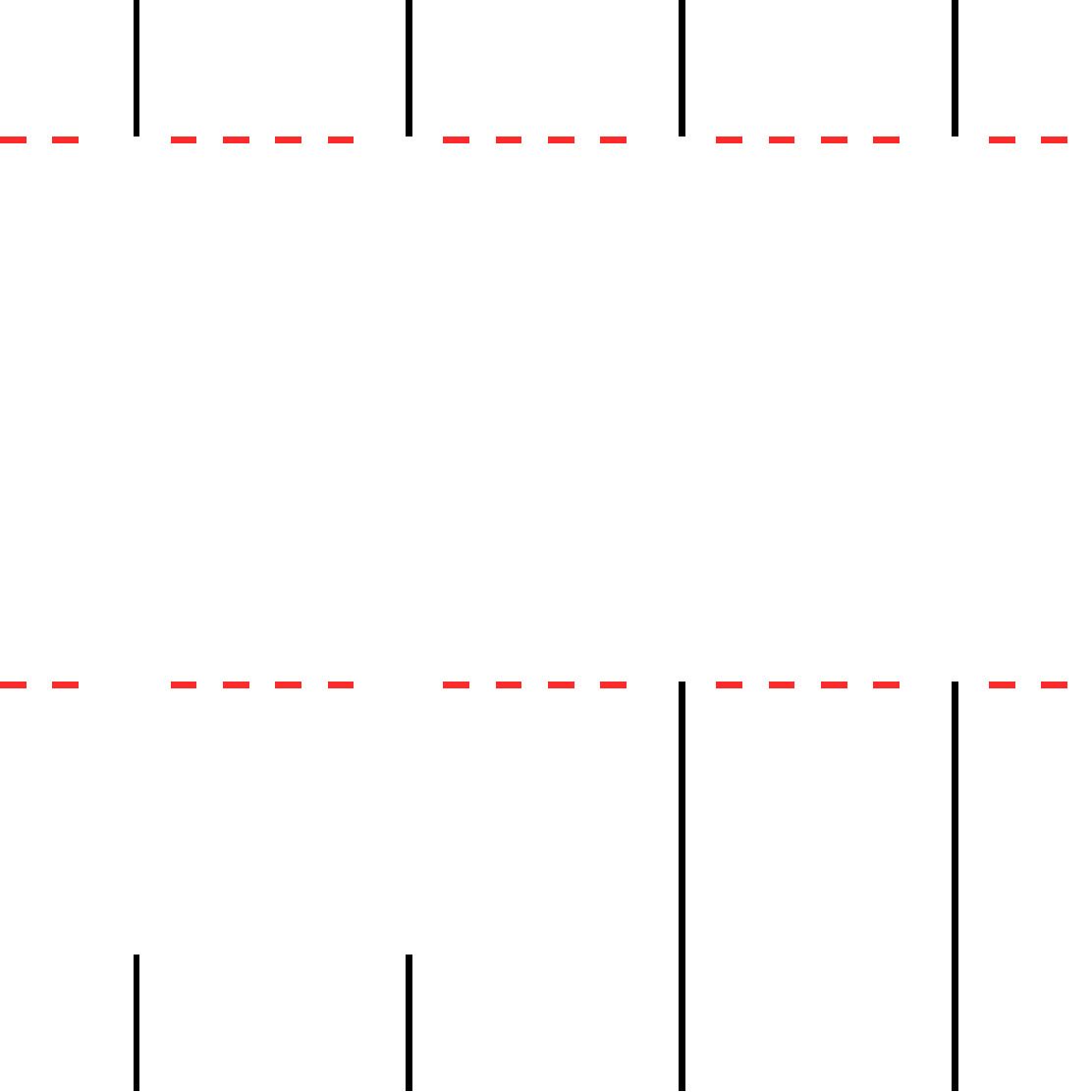\linebreak \linebreak \quad\quad\quad\quad		$K = \left( \begin{array}{cccccccc}
		1&q_1^{-1}&1&0&0&0&q_0&0\\
		1&1&0&1&0&0&0&q_0\\
		1&0&-1&-1&1&0&0&0\\
		0&1&-q_1&-1&0&1&0&0\\
		0&0&1&0&1&q_1^{-1}&1&0\\
		0&0&0&1&1&1&0&1\\
		q_0^{-1}&0&0&0&1&0&-1&-1\\
		0&q_0^{-1}&0&0&0&1&-q_1&-1
		\end{array}\right)$
		\caption{The construction of a Kasteleyn matrix for the dual graph of $\T_2$.}
\end{figure}

It's clear that $\det(K)$ is not a number, but rather a Laurent polynomial $P_K$ in $q_0$ and $q_1$.
For instance, the Kasteleyn matrix above yields the polynomial
\begin{gather*}
132 - 32\cdot \left(q_0 + q_0^{-1} + q_1 + q_1^{-1}\right)\\
- 2 \cdot \left(q_0q_1 + q_0^{-1}q_1 + q_0q_1^{-1} + q_0^{-1}q_1^{-1} \right) + q_0^2 + q_0^{-2} + q_1^{2} + q_1^{-2}
\end{gather*}

Much further ahead, Proposition \ref{signfrontq} will show that each monomial of the form $c_{i,j}\cdot q_0^{i}q_1^{j}$ in $\text{det}(K)$ counts the number of tilings of $\T_n$ with horizontal flux $i$ and vertical flux $j$: that number is precisely the modulus of $c$.
Notice $i$ or $j$ may be negative.
The total number of tilings is thus $\sum \lvert c_{i,j}\rvert$, but Proposition \ref{proplinearcomb} will show it can be computed as a suitable linear combination of $P_K(\pm 1,\pm 1)$, dependent on the assignment of negative edges.
For instance, the linear combination for $P_K$ above is $1/2 \cdot [-P_K(1,1) + P_K(-1,1) + P_K(1,-1) + P_K(-1,-1)]$.

We provide all the possible tilings of $\T_2$, represented as matchings of its dual graph, and grouped according to flux values.
Compare them with the polynomial $P_K$ above.
Notice we omit the vertices at the endpoints of each edge.
\begin{figure}[H]
    \centering
    \def\svgwidth{\columnwidth}
    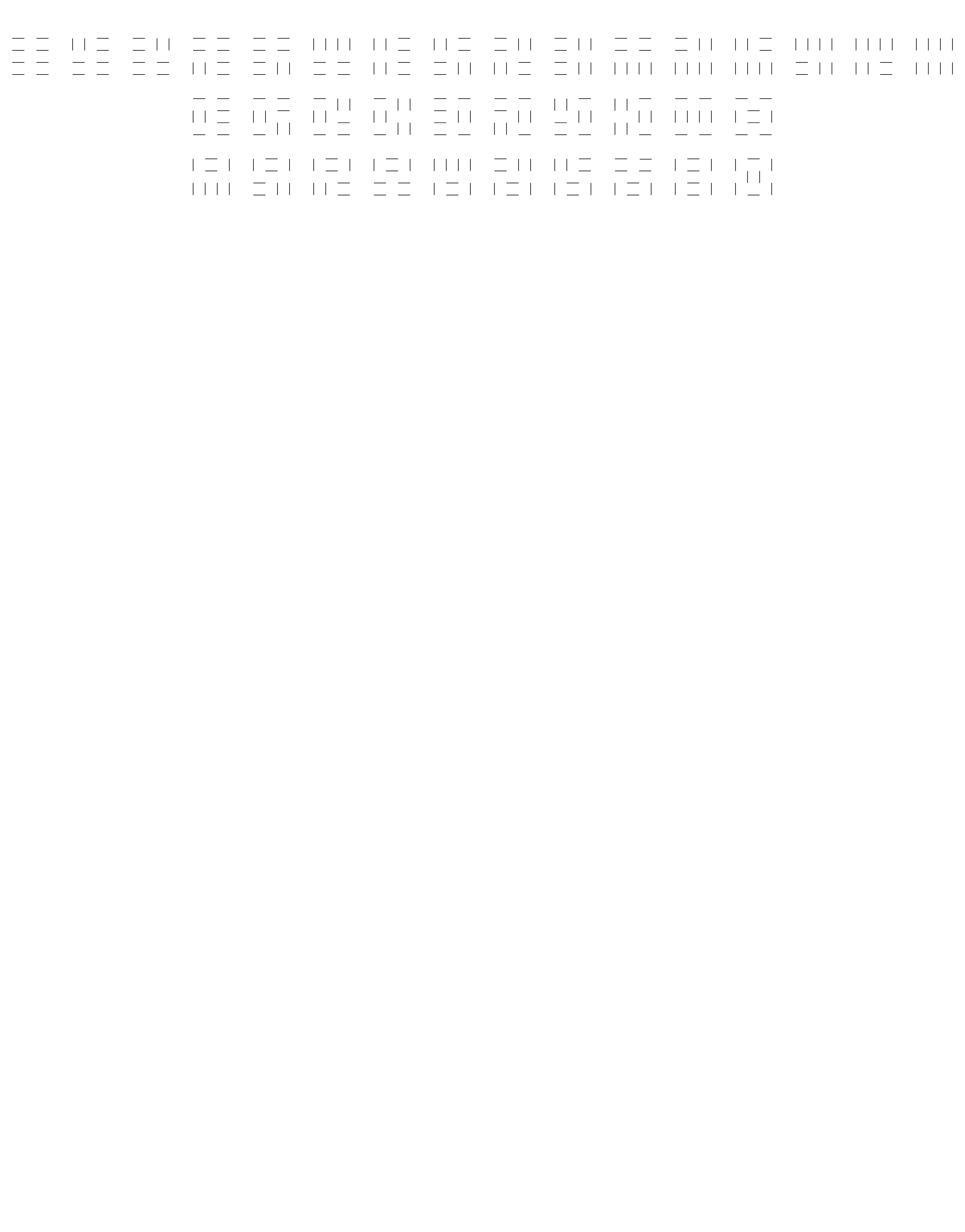
\end{figure}

\begin{figure}[H]
    \centering
    \def\svgwidth{\columnwidth}
    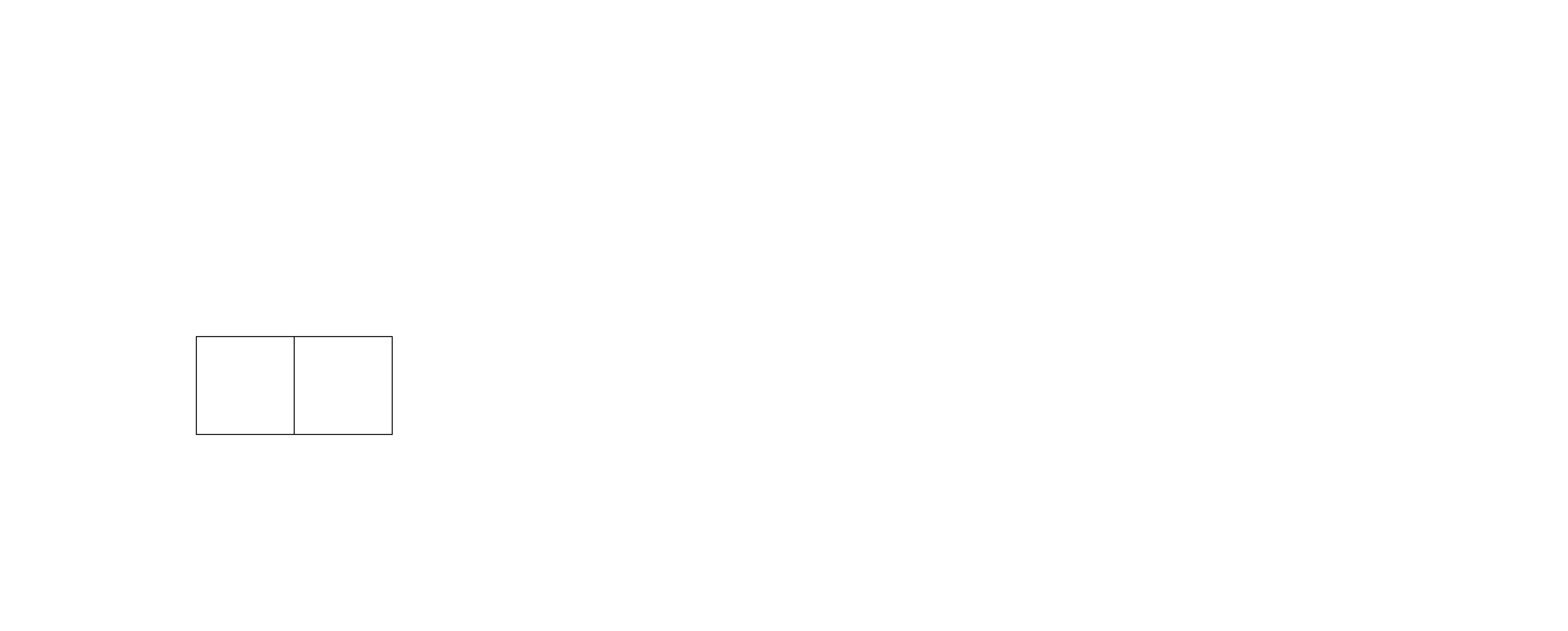
		\caption{Tilings of $\T_2$, 272 in total.}
\end{figure}

\section{Height functions on the torus}\label{sec:hfuntor}

We could attempt to define height functions constructively on $\T_n$ as we did before, but since the torus is not simply-connected we will generally find the process results in incosistencies.
Remember opposite sides are identified, so that corresponding vertices on opposite sides should have the same value.
The following image provides an example of a tiling of $\T_2$ on $D_2$ with an associated `na{\" i}ve' height function; notice its values on corresponding vertices do \textit{not} agree.
\begin{figure}[H]
    \centering
    \def\svgwidth{0.5\columnwidth}
    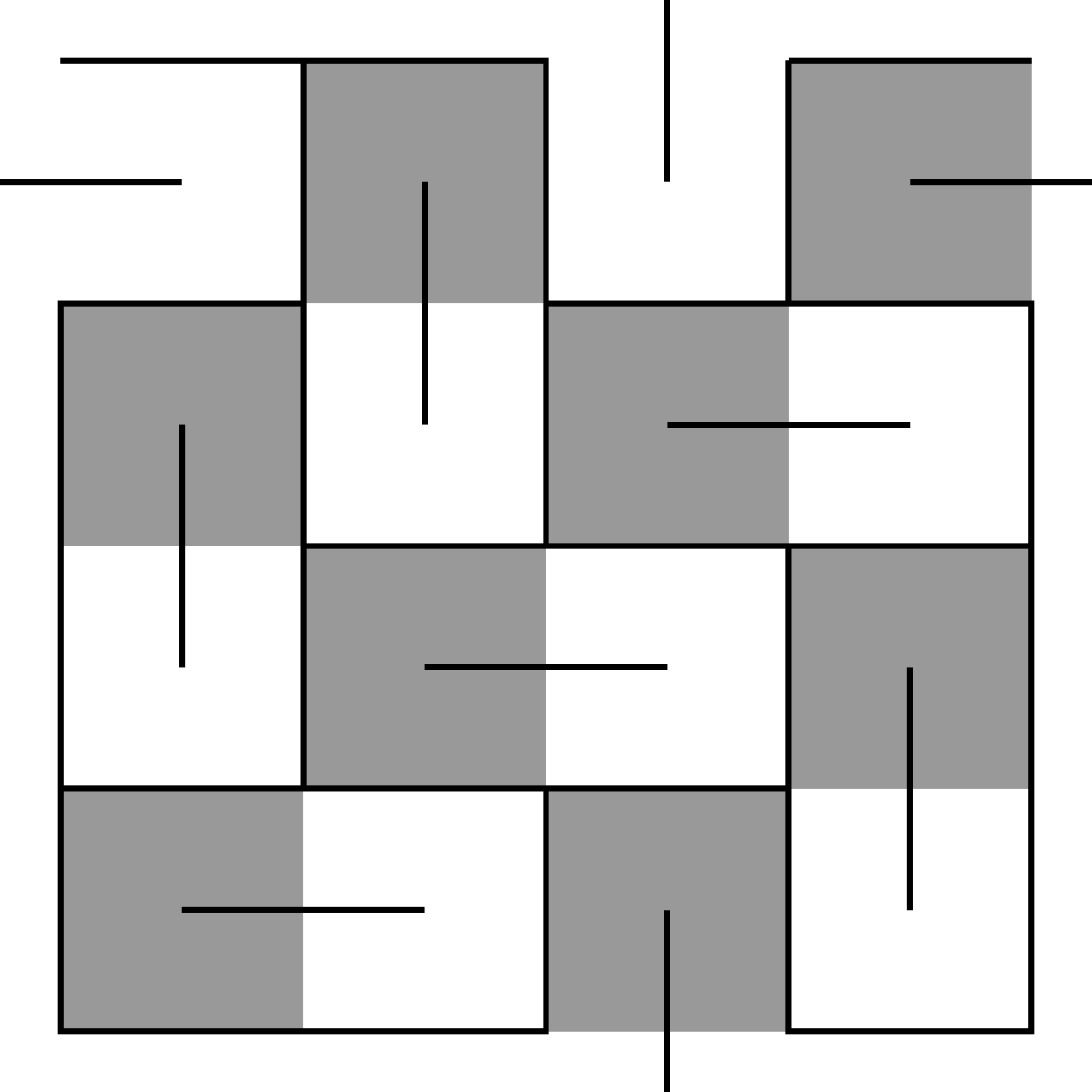
		\caption{A `wrong' height function for a tiling of $\T_2$.}\label{T2HErrado}
\end{figure}

Instead, we will change our methods.
We will interpret $\T_n$ as the quotient $\quotient{\R^2}{L}$, where $L \subset \Z^2$ is the lattice generated by $ \left\{ (2n,0), (0,2n) \right\}$; notice we can still take $D_n$ as its fundamental domain.
Consider the projection map $\Pi : \R^2 \longrightarrow \T_n$.
If base points are provided on each of $\T_n$ and $\R^2$, any tiling of $\T_n$ can be lifted by $\Pi$ to a tiling of the infinite square lattice $\Z^2$ in the obvious way (lift colors too!).
Given a tiling $t$ of $\T_n$ on $D_n$, we will always choose the point $\left (\frac12,\frac12\right)$ as base point for both $\T_n$ and $\R^2$.
Notice this choice guarantees the fiber over every vertex of a square on $D_n$ will consist of points in $\Z^2$, that the square $[0,1]^2 \subset \R^2$ will be colored black, and also that $D_n$ and $t$ will be lifted to an exact copy on the square $[0,2n]^2 \subset \R^2$.

Because identifications and the black-and-white condition are respected, it's easy to see the end result is an $L$-periodic domino tiling $\tilde{t}$ of $\Z^2$.
Since $\Z^2$ is simply-connected, we can define the height function of $\tilde{t}$ as before to be a function $\tilde{h}: \Z^2 \longrightarrow \Z$.
This will not always be the case, but unless stated otherwise, consider the base vertex as lying on the origin with base value 0 assigned to it.

Finally, we define the height function $h$ of $t$ to be the height function $\tilde{h}$ of $\tilde{t}$.
We provide an example of this construction below, using the tiling of Figure \ref{T2HErrado}.
The marked vertex is the origin.
\begin{figure}[H]
		\centering
    \def\svgwidth{\columnwidth}
    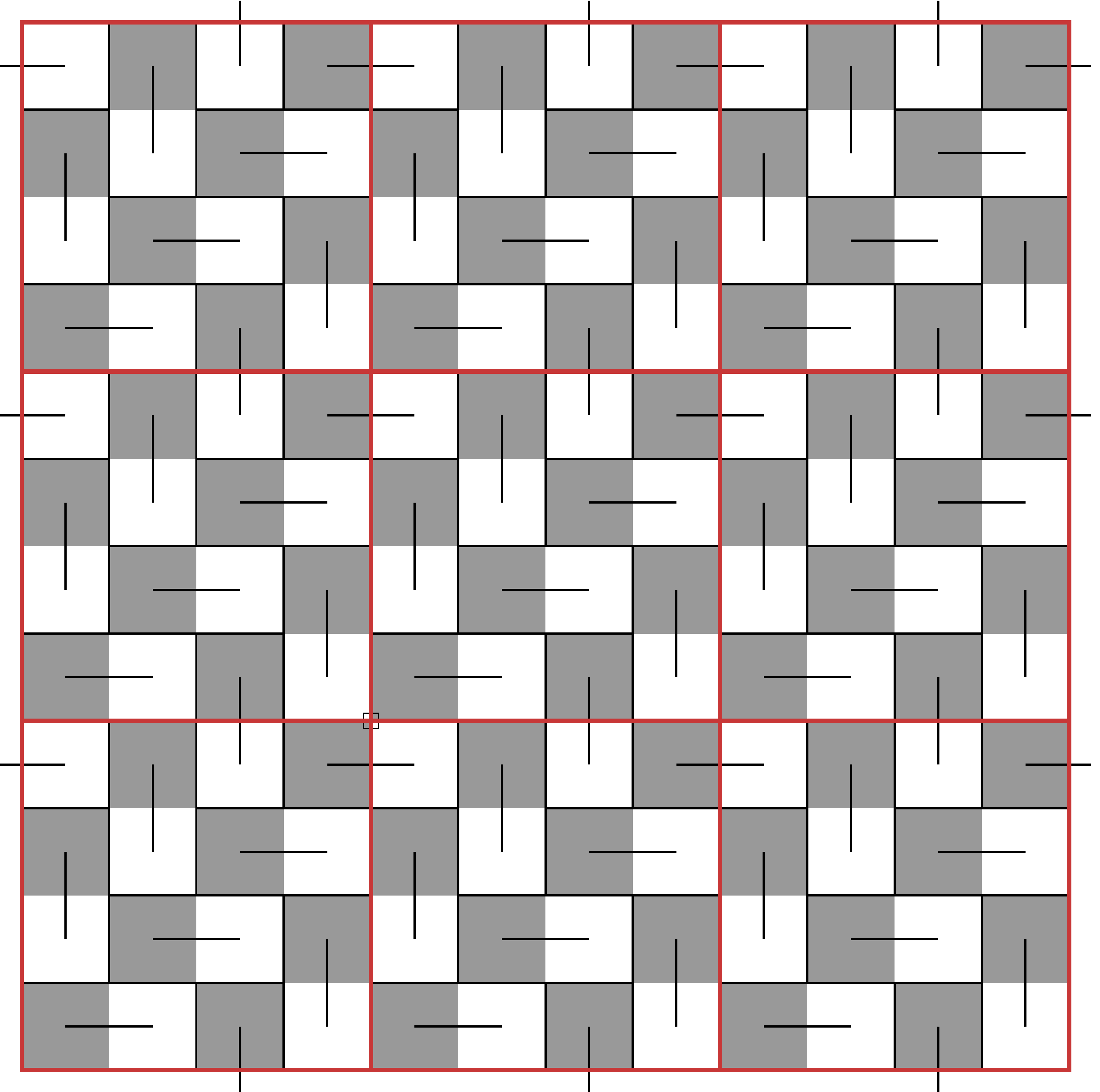
		\caption{A proper height function for a tiling of $\T_2$, defined on $\Z^2$.}
\end{figure}

Inspecting the proofs of Propositions \ref{hprescrip} and \ref{hcarac}, Proposition \ref{hsquare} follows immediately.

\begin{prop}[Height functions on the infinite square lattice]\label{hsquare}
Let $\Z^2$ be the black-and-white infinite square lattice with $[0,1]^2 \subset \R^2$ colored black.
Then an integer function $h$ on $\Z^2$ is a height function if and only if it satisfies the following properties:
\begin{enumerate}
	\item $h$ has the prescribed mod 4 values on $\Z^2$.
	\item $h$ changes by at most 3 along an edge on $\Z^2$.
\end{enumerate}

Furthermore, when $h$ has base vertex lying on the origin with base value 0, then $h(0)=0$.
\end{prop}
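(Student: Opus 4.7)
The plan is to adapt the proofs of Propositions \ref{hprescrip} and \ref{hcarac} to the infinite square lattice. The key observation is that $\Z^2$ is simply-connected, so the path-deformation argument establishing consistency of the constructive definition in Section \ref{sec:flipplano} carries over verbatim; the absence of a boundary simply removes the boundary prescription from the previous statement. The normalization $h(0) = 0$ is automatic from our choice of the origin with base value $0$.

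For necessity, given a tiling $t$ of $\Z^2$, I would propagate $h$ from the origin exactly as in the planar case. Any two edge-paths joining the origin to a vertex $v$ lie in a bounded, simply-connected sub-region of $\Z^2$, so the planar deformation argument reduces the consistency check to the single-domino consistency already verified. Along any edge oriented from $u$ to $w$, the constructive rule gives $h(w) = h(u) + 1$ if the edge lies on $t$, and $h(w) = h(u) - 3$ otherwise; in both cases $h(w) \equiv h(u) + 1 \pmod{4}$. Hence the mod $4$ values of $h$ depend only on the edge orientations and not on $t$, yielding (1); the bound $|h(w) - h(u)| \leq 3$ yields (2).

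For sufficiency, given $h$ satisfying (1) and (2), define a candidate tiling $t$ by erasing every edge along which $h$ changes by $3$. Along any edge oriented from $u$ to $w$, condition (1) forces $h(w) - h(u) \equiv 1 \pmod{4}$ and (2) caps its absolute value at $3$, so the change is either $+1$ or $-3$. Around any unit square traversed in the direction of orientation the changes sum to $0$, forcing exactly three $+1$'s and one $-3$; thus each unit square loses exactly one edge. Because the orientation on a shared edge is traversed identically from each of its two adjacent squares (clockwise around the black square equals counter-clockwise around the white neighbor on the shared edge), an erased edge is erased from both of its squares, so the removed edges partition $\Z^2$ into dominoes. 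By construction, $h$ is the height function of this tiling.

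I expect the sole obstacle is verifying the mechanics of the sufficiency construction, especially that each square has a unique erased edge shared consistently with its neighbor; this reduces to a direct computation from (1), (2), and the alternating orientation pattern, essentially identical to the argument already used in the proof of Proposition \ref{hcarac}.
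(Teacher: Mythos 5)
Your proposal is correct and follows essentially the same route as the paper, which simply observes that Proposition \ref{hsquare} follows immediately by inspecting the proofs of Propositions \ref{hprescrip} and \ref{hcarac}; you have merely written out the details of that adaptation (dropping the boundary prescription, reusing the simple-connectedness deformation argument for consistency, and the erase-the-$\pm3$-edges construction for sufficiency). No gaps.
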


The mod 4 prescription function $\Phi: \Z^2 \longrightarrow \{0,1,2,3\}$\label{def:phiprescrip} is easily computed for the usual choice of base vertex $h(0)=0$. It is given by
\begin{equation*}
\Phi(x,y)=\left\{ \begin{array}{ll}
0 & \mbox{ if $x \equiv 0$ and $y \equiv 0 \Mod{2}$;}\\
1 & \mbox{ if $x \equiv 0$ and $y \equiv 1 \Mod{2}$;}\\
2 & \mbox{ if $x \equiv 1$ and $y \equiv 1 \Mod{2}$;}\\
3 & \mbox{ if $x \equiv 1$ and $y \equiv 0 \Mod{2}$.}\\
\end{array} \right.
\end{equation*}

It is important to point out that while Proposition \ref{hsquare} characterizes general height functions on the infinite square lattice $\Z^2$, a function satisfying these properties may not be one obtained from a domino tiling of the torus.
General domino tilings of the infinite square lattice need not have any kind of periodicity.

Consider a domino tiling $t$ of $\T_n$ on $D_n$ and its height function $h$.
The flux of $t$ has a very concrete manifestation in $h$ which we now describe.
Follow the leftmost side of the tiled copy of $D_n \subset \R^2$ on $[0,2n]^2$, edge by edge and starting from the origin, until the opposite horizontal side is reached.
Notice how $h$ changes along this path according to whether or not that edge is on $t$: whenever an edge with a black square on its right is traversed, $h$ changes by $+1$ if that edge is on $t$ and by $-3$ otherwise.
Similarly, whenever an edge with a white square on its right is traversed, $h$ changes by $-1$ if that edge is on $t$ and by $+3$ otherwise.

Now, observe that in this situation an edge being on $t$ means there is no horizontal cross-over domino along it, and an edge \textbf{not} being on $t$ means there \textbf{is} a horizontal cross-over domino along it.
It's then easy to see that whenever we have a $-3$ change on $h$ along that path, the vertical flux changes by $-1$, and whenever we have a $+3$ change, the vertical flux changes by $+1$.
If every edge on that path were on $t$, the total change on $h$ along it would be 0, and it would correspond to a vertical flux value of 0.
This analysis thus makes it clear that in general the total change is a value $4k$ with $k \in \Z$, corresponding to a vertical flux value of $k$.

Let $u_y = (0,2n) \in \Z^2$.
The conclusion of the preceding paragraphs can be succintly expressed as
\begin{equation}\label{hflux}
h(u_y)=4k \Longleftrightarrow t \mbox{ has a vertical flux value of $k$,}
\end{equation} where $k \in \Z$.
This analysis considered only the path along the leftmost side of the original copy of $D_n$, starting from the base vertex lying on the origin, but in fact it can be made more general.
We claim that for \textbf{any} vertex $v$ of $\Z^2$ it holds that
\begin{equation}\label{hflux2}
h(v+u_y) - h(v) =4k \Longleftrightarrow t \mbox{ has a vertical flux value of $k$}
\end{equation}

Indeed, because~\eqref{hflux} means that~\eqref{hflux2} holds when $v=0$, it suffices to show that for any two vertices $v, w$ of $\Z^2$ we have $h(v+u_y) - h(v) = h(w+u_y) - h(w)$, or equivalently $h(v+u_y) - h(w+u_y) = h(v) - h(w)$.
Choose any edge-path $\gamma_0$ in $\tilde{t}$ joining $v$ to $w$.
Because of how $\tilde{t}$ is obtained from $t$ (via lifting), the translated path $\gamma_1 = \gamma_0 + u_y$ \textbf{is} an edge-path in $\tilde{t}$ joining $v+u_y$ to $w+u_y$, and furthermore the constructive definition of height functions implies the total change of $h$ along $\gamma_0$ or along $\gamma_1$ is the same.
This proves~\eqref{hflux2}.

Equation~\eqref{hflux2} means a vertical flux value $k$ of a domino tiling $t$ manifests in its height function $h$ as the (arithmetic) quasiperiodicity relation $h(v+u_y) = h(v) + 4k$.
The same techniques used above show that something very similar holds for the horizontal flux.

Let $u_x = (2n,0) \in \Z^2$.
Then for \textbf{any} vertex $v$ of $\Z^2$ it holds that
\begin{equation}\label{vflux2}
h(v+u_x) - h(v) =4l \Longleftrightarrow t \mbox{ has a horizontal flux value of $l$}
\end{equation}

Of course, equation~\eqref{vflux2} means a horizontal flux value $l$ of a domino tiling $t$ manifests in its height function $h$ as the quasiperiodicity relation $h(v+u_x) = h(v) + 4l$.
These relations allow us to fully characterize \textit{toroidal height functions}, that is, height functions on the infinite square lattice obtained from domino tilings on the torus.

\begin{prop}[Toroidal height functions]\label{hsqtoro}
Let $\Z^2$ be the black-and-white infinite square lattice, as before.
Then a height function $h$ on $\Z^2$ (see Proposition \ref{hsquare}) is a toroidal height function of $\T_n$ if and only if $h$ satisfies the following properties:
\begin{enumerate}
	\item $\exists k \in \Z$, $\forall v \in \Z^2$, $h(v+u_y) = h(v) + 4k$.
	\item $\exists l \in \Z$, $\forall v \in Z^2$, $h(v+u_x) = h(v) + 4l$.
\end{enumerate}
where $u_x = (2n,0)$ and $u_y = (0,2n)$.

Moreover, if $h$ is a toroidal height function of $\T_n$ and $t$ is its associated domino tiling of $\T_n$, then $k$ is $t$'s vertical flux value and $l$ is $t$'s horizontal flux value.
\end{prop}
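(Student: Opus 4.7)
The plan is to reduce both implications to Proposition \ref{hsquare}, which identifies height functions on $\Z^2$ with tilings of the infinite square lattice. The crucial observation is that a tiling is determined \emph{locally} by how its height function changes across each edge of $\Z^2$: a change of $\pm 1$ means the edge lies in the tiling, while a change of $\pm 3$ means the edge is erased (i.e.\ crossed by a domino).

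The forward implication is essentially already established in the discussion preceding the statement. If $h$ is a toroidal height function of $\T_n$ coming from a tiling $t$, then equations \eqref{hflux2} and \eqref{vflux2} directly give properties (1) and (2), and simultaneously identify $k$ with the vertical flux of $t$ and $l$ with the horizontal flux of $t$. This also settles the ``moreover'' part.

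For the converse, let $h$ be a height function on $\Z^2$ satisfying (1) and (2); by Proposition \ref{hsquare} it corresponds to a tiling $\tilde t$ of $\Z^2$. I claim $\tilde t$ is $L$-periodic, where $L$ is the lattice generated by $\{u_x, u_y\}$. Indeed, for any edge $e$ of $\Z^2$ joining $v$ to $w$, the translated edge $e + u_y$ joins $v + u_y$ to $w + u_y$, and property (1) gives
\[
h(w + u_y) - h(v + u_y) = h(w) - h(v).
\]
So $e$ and $e + u_y$ produce the same absolute change of height, hence are in the same state (both on $\tilde t$ or both erased). The analogous argument using property (2) handles translation by $u_x$. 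Therefore $\tilde t$ is invariant under both translations and descends to a tiling $t$ of $\T_n = \quotient{\R^2}{L}$; by construction, the lift of $t$ back to $\Z^2$ is $\tilde t$, so the toroidal height function of $t$ is precisely $h$.

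I do not expect any real obstacle; the argument amounts to observing that the local encoding of a tiling by its height function is compatible with periodicity. A small sanity check is that the constants $k, l$ are automatically integers: by Property (1) of Proposition \ref{hsquare}, $h$ respects the prescribed mod-$4$ values $\Phi$, and since both $u_x = (2n,0)$ and $u_y = (0,2n)$ have even coordinates, $\Phi(v + u_x) = \Phi(v + u_y) = \Phi(v)$, forcing $h(v + u_x) - h(v)$ and $h(v + u_y) - h(v)$ to be multiples of $4$.
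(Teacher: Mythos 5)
Your proof is correct and follows essentially the same route as the paper: the forward direction and the identification of $k,l$ with the flux values are read off from the preceding discussion (equations~\eqref{hflux2} and~\eqref{vflux2}), and the converse deduces $L$-periodicity of the associated tiling $\tilde t$ from the quasiperiodicity of $h$ and then passes to the quotient. Your extra observations — that periodicity of the tiling follows edge-by-edge from the height increments, and that $k,l$ are forced to be integers by the mod-4 prescription — are just a more explicit rendering of what the paper leaves implicit.
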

\begin{proof}
From our previous discussions, it follows immediately that a toroidal height function $h$ of $\T_n$ satisfies those properties and that the integers $k,l$ determine the flux of $h$'s associated domino tiling of $\T_n$.
On the other hand, properties 1 and 2 ensure the domino tiling $\tilde{t}$ of $\Z^2 \subset \R^2$ associated to the height function $h$ is invariant under translation by $u_x$ or by $u_y$.
In other words, letting $L$ be the lattice generated by $\{u_x,u_y\}$, $\tilde{t}$ is $L$-periodic.
This means a quotient by $L$ will result in a tiled torus $\T_n$, and by construction that tiling's height function is $h$.
\end{proof}

We will use this quasiperiodic characterization of toroidal height functions to establish results similar to those we obtained in the planar case.

\begin{prop}\label{htormin}
Let $t_1,t_2$ be two tilings of $\T_n$ with identical flux values $k,l$ and corresponding toroidal height functions $h_1$, $h_2$.
Then $h_m = min \{h_1,h_2\}$ is a toroidal height function of $\T_n$ with flux values $k,l$.
\end{prop}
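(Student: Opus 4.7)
The plan is to reduce the statement to Proposition \ref{hmin} adapted to the infinite square lattice, and then verify that quasiperiodicity is preserved by the minimum. Since toroidal height functions are in particular height functions on $\Z^2$ (in the sense of Proposition \ref{hsquare}), the first step is to check that $h_m = \min\{h_1,h_2\}$ is still a height function on $\Z^2$. This requires only the local edge-condition: $h_m$ changes by at most $3$ along any edge of $\Z^2$. The mod $4$ prescription is automatic since $h_1 \equiv h_2 \pmod 4$ pointwise (both agree with $\Phi$). For the edge condition, the argument in the proof of Proposition \ref{hmin} is entirely local: the only nontrivial case is when $h_m(v) = h_1(v)$ and $h_m(w) = h_2(w)$ on vertices $v, w$ joined by an edge, and the contradiction derived there depends only on the local edge orientation, the prescribed mod $4$ values and the $\pm 1, \pm 3$ change rule. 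All of these hold verbatim on $\Z^2$, so the proof carries over.

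Next I verify quasiperiodicity. By hypothesis, both $h_1$ and $h_2$ satisfy properties (1) and (2) of Proposition \ref{hsqtoro} with the \emph{same} constants $k, l$. Hence for every $v \in \Z^2$,
\begin{equation*}
h_m(v+u_y) = \min\{h_1(v)+4k,\, h_2(v)+4k\} = \min\{h_1(v),\, h_2(v)\} + 4k = h_m(v) + 4k,
\end{equation*}
and analogously $h_m(v+u_x) = h_m(v) + 4l$. Combined with the previous paragraph, Proposition \ref{hsqtoro} then identifies $h_m$ as a toroidal height function of $\T_n$ with flux values $(k,l)$, matching those of $h_1$ and $h_2$.

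A small remark on the base point: since $h_1(0) = h_2(0) = 0$ by our convention on base vertex and base value, we also have $h_m(0) = 0$, so $h_m$ is consistent with the normalization used throughout. I expect no real obstacle here; the only substantive point is recognizing that the local minimum argument of Proposition \ref{hmin} never used simple-connectedness or finiteness of the region, only the edge-by-edge propagation rule, which is exactly what Proposition \ref{hsquare} provides on $\Z^2$. The quasiperiodicity step is then immediate because adding a constant commutes with taking pointwise minima.
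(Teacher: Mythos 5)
Your proof is correct and follows essentially the same route as the paper's: reduce the edge condition to the (purely local) argument of Proposition \ref{hmin}, then check the quasiperiodicity relations directly by pulling the constant $4k$ (resp.\ $4l$) out of the pointwise minimum. The extra remarks on the mod $4$ prescription and the base point are consistent with the paper's conventions and add nothing problematic.
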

\begin{proof}
We first use Proposition \ref{hsquare} to check $h_m$ is a height function on $\Z^2$. 
Like before, it suffices to show that $h_m$ changes by at most 3 along an edge on $\Z^2$, and the proof of Proposition \ref{hmin} applies verbatim here.
We then need only check the conditions on Proposition \ref{hsqtoro}.
Notice that
\begin{equation*}
\begin{split}
h_m(v+u_y) & = \min \left\{ h_1(v+u_y),h_2(v+u_y) \right\} = \min \left\{ h_1(v)+4k,h_2(v)+4k \right\} \\
& = \min \left\{ h_1(v),h_2(v) \right\}+4k=h_m(v)+4k\mbox{,}
\end{split}
\end{equation*}
so $h_m(v+u_y)=h_m(v)+4k$.

Similarly, $h_m(v+u_x)=h_m(v)+4l$, and we are done.
\end{proof}

\begin{corolario}[Minimal height functions on the torus]
If there is a tiling of $\T_n$ with flux values $k,l \in \Z$, then there is a tiling of $\T_n$ with flux values $k,l$ and so that its height function is minimal over tilings of $\T_n$ with flux values $k,l$.
\end{corolario}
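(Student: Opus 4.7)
The plan is to mirror the argument used for Corollary \ref{hminimal} in the planar case, but now using the torus-flavored minimum-is-a-height-function result, Proposition \ref{htormin}, as the inductive step. The crucial simplifying observation is that $\T_n$ is a finite region, so it admits only finitely many domino tilings, and in particular only finitely many tilings with a prescribed flux $(k,l)$.

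First, I would enumerate those finitely many tilings as $t_1, \dots, t_N$ and let $h_1, \dots, h_N$ be their corresponding toroidal height functions (all with flux $(k,l)$, with the standard base vertex at the origin taking value $0$). Next I would define $h_m = \min\{h_1, \dots, h_N\}$ pointwise on $\Z^2$; since $N$ is finite and each $h_i(v)$ is an integer, $h_m$ is a well-defined integer function on $\Z^2$. I would then apply Proposition \ref{htormin} inductively: the minimum of $h_1$ and $h_2$ is a toroidal height function with flux $(k,l)$, then the minimum of that with $h_3$ is again a toroidal height function with flux $(k,l)$, and so on up to $h_N$. At each step the two inputs are toroidal height functions with the same flux, which is exactly the hypothesis of Proposition \ref{htormin}.

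Once $h_m$ has been shown to be a toroidal height function with flux $(k,l)$, Proposition \ref{hsqtoro} (or really its construction together with the $L$-periodicity guaranteed by the quasiperiodicity properties) produces an associated $L$-periodic tiling $\tilde{t}_m$ of $\Z^2$, whose quotient is a tiling $t_m$ of $\T_n$ whose height function is $h_m$ and whose flux is $(k,l)$. By construction $h_m \leq h_i$ pointwise for every $i$, so $h_m$ is minimal over all height functions of tilings of $\T_n$ with flux $(k,l)$.

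The only subtlety worth flagging is ensuring that the minimum really is attained by an actual tiling and not merely by an abstract function satisfying the height axioms: this is precisely what Proposition \ref{htormin} (closure of toroidal height functions under min, preserving flux) together with Proposition \ref{hsqtoro} (the characterization that turns such a function back into a tiling) buys us, so there is no genuine obstacle here. The finiteness of the tiling set removes any concern about the minimum being $-\infty$, which would otherwise be the main thing to worry about.
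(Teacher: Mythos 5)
Your proof is correct and follows exactly the route the paper intends: the corollary is stated as an immediate consequence of Proposition \ref{htormin}, obtained by iterating the pairwise minimum over the finitely many tilings of $\T_n$ with flux $(k,l)$. The paper leaves this step implicit, and your write-up simply makes that induction and the finiteness observation explicit.
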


\section{More general tori: valid lattices}\label{sec:vallat}

Remember the torus $\T_n$ may be seen as a quotient $\R^2/L$, where $L$ is the lattice generated by $\left \{ \left(2n,0\right), \left(0,2n\right) \right \}$. 
We wish to consider other tori --- or equivalently, other lattices.
Of course, for the quotient $\R^2/L$ to be a torus, we still need $L$ to be generated by two linearly independent vectors.
Moreover, when choosing a planar region $D_L$ to be its fundamental domain, we will avoid those whose boundary crosses an edge on the infinite square lattice; this ensures $D_L$ consists of whole squares whenever $L \subset \Z^2$.
See the image below.
\begin{figure}[H]
		\centering
		\includegraphics[width=0.425\textwidth]{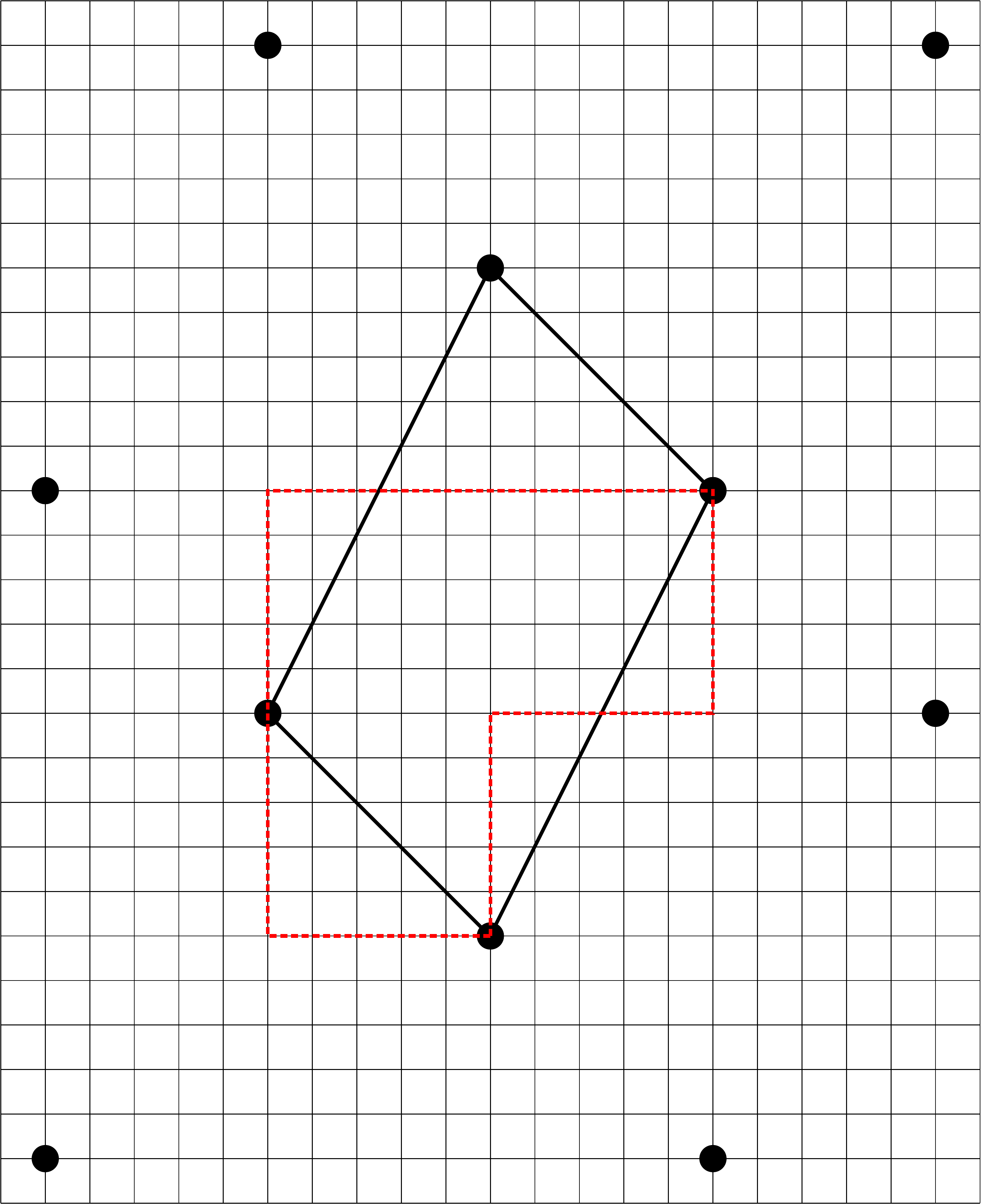}
		\caption{A lattice's usual fundamental domain, and one made up of whole squares.}
\end{figure}

We would like to ensure the black-and-white condition is respected (that is, each domino consists of exactly one white square and one black square).
This is \textit{not}\label{def:bwcondition2} equivalent to $D_L$ having the same number of black squares and white squares.
For instance, the figure below features a lattice $L \subset \Z^2$ whose fundamental domain $D_L$ has the same number of black squares and white squares; however, the identifications allow us to use dominoes consisting of two white squares or two black squares when tiling it.
\begin{figure}[H]
		\centering
		\includegraphics[width=0.465\textwidth]{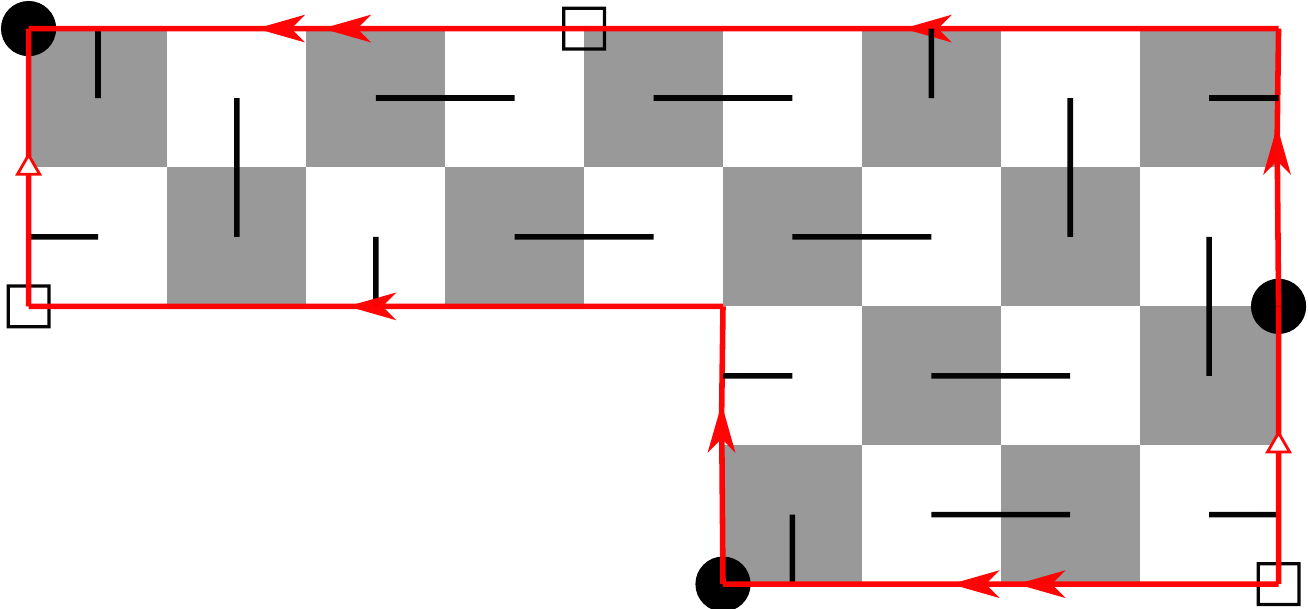}
		\caption{A tiling of a torus which does not respect the black-and-white condition.}
\end{figure}

Perhaps a better interpretation of this situation is that the copies of the fundamental domain $D_L$ that cover $\R^2$ are not all equally colored.
\begin{figure}[H]
		\centering
		\includegraphics[width=0.725\textwidth]{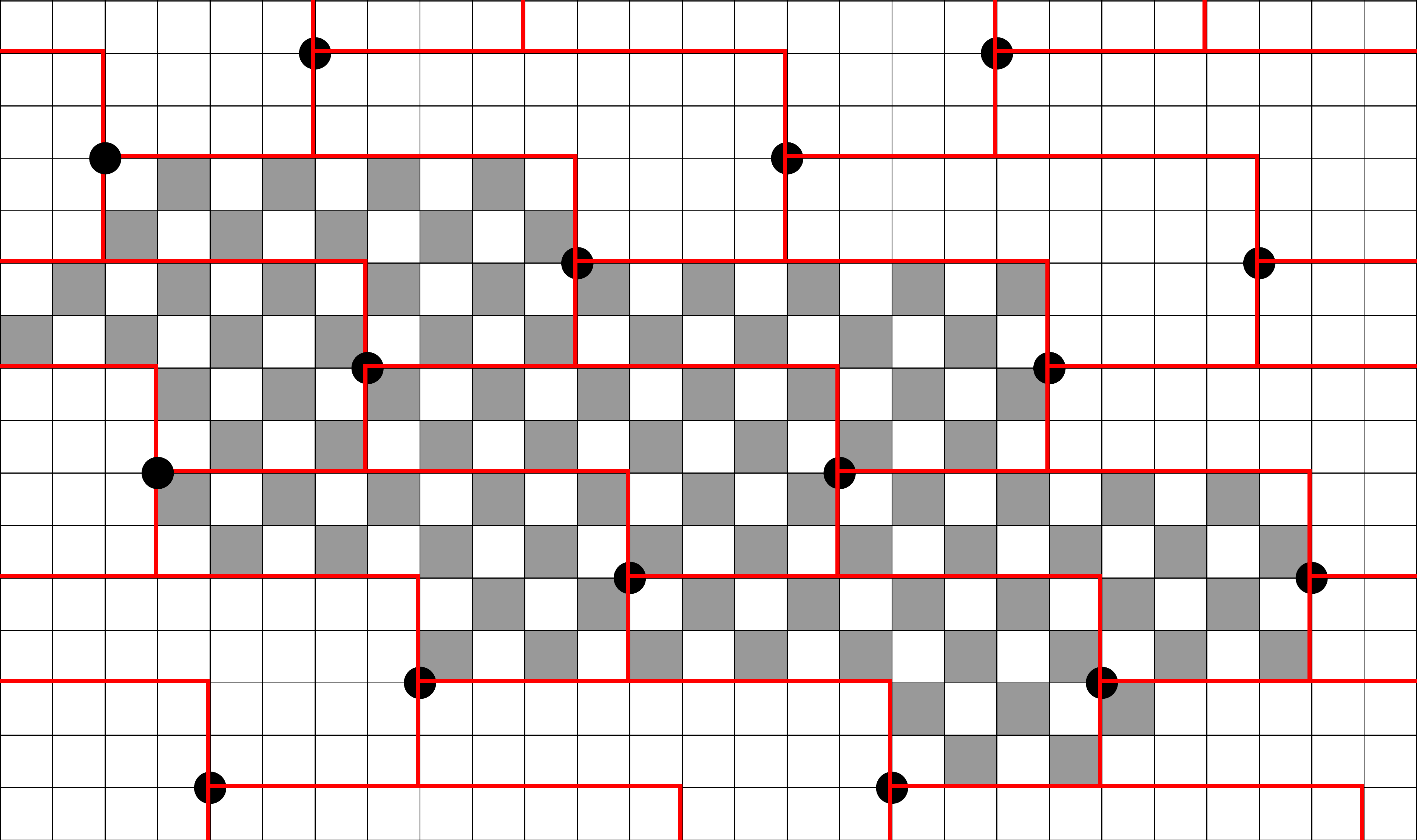}
		\caption{An invalid lattice: fundamental domains are not all equally colored.}
\end{figure}

It's now easy to see that a necessary and sufficient condition for them to be equally colored (and thus for the black-and-white condition to hold)\label{def:bwcondition3} is that the $L$-periodicity in $\R^2$ preserve square color. 
If $L$ is generated by vectors $v_0,v_1 \in \Z^2$, this is equivalent to the sum of $v_i$'s coordinates being even.
Notice this ensures $D_L$ has an even number of squares, because that number is the area of $D_L$ which is given by $\det(v_0,v_1)$.
Moreover, we claim in this situation $D_L$ automatically has an equal number of black squares and white squares.

Indeed, if $D_L$ is a rectangle, the claim clearly holds (since it has an even number of squares).
Otherwise, $D_L$ can be taken to be an L-shaped figure, like in the figure below.

\begin{figure}[H]
    \centering
    \def\svgwidth{0.425\columnwidth}
    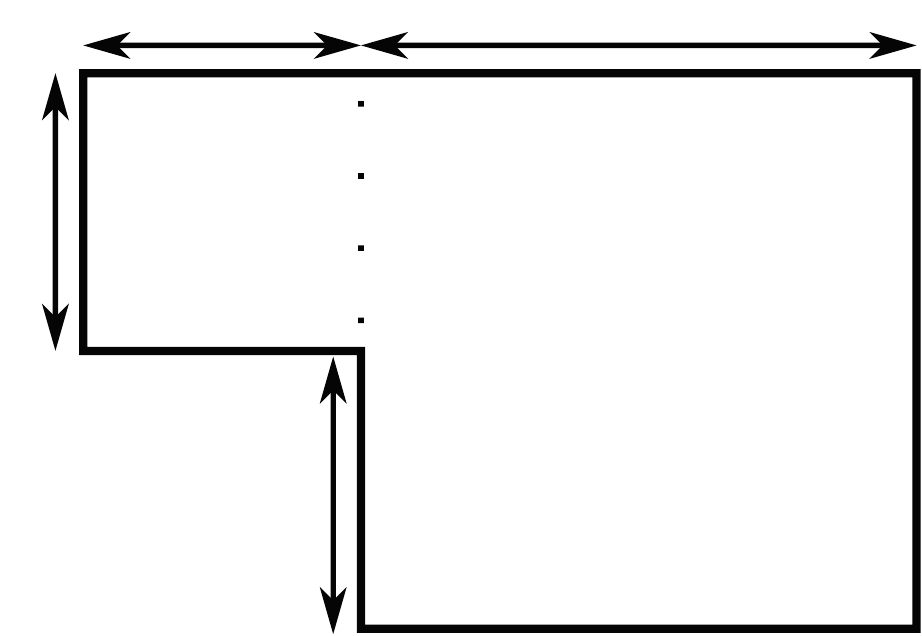
		\caption{$D_L$ can always be taken to be a rectangle or an L-shaped figure.}
\end{figure}

Notice in this case $D_L$ can be decomposed into a union of two rectangles in two different ways:
\begin{itemize}
\item $a_0 \times b_0$ rectangle with area $R_{00}$ and $a_1 \times (b_0 + b_1)$ rectangle with area $R_{01}$
\item $(a_0 + a_1) \times b_0$ rectangle with area $R_{10}$ and $a_1 \times b_1$ rectangle with area $R_{11}$
\end{itemize}

We need only show that for at least one $i = 0,1$ both $R_{i0}$ and $R_{i1}$ are even.
Since $R_{i0} + R_{i1}$ is always even (because that is the area of $D_L$), $R_{i0}$ and $R_{i1}$ always have the same parity.
Thus, the claim would fail to hold only if all $R_{ij}$ were odd. Studying the parity for $a_0, a_1, b_0$ and $b_1$, it's easy to see see this cannot be, so we are done.

Let $\mathscr{E},\mathscr{O} \subset \Z^2$ be the sets of vertices whose coordinates are respectively both even and both odd, that is $\mathscr{E} = 2\Z^2$ and $\mathscr{O} = 2\Z^2 + (1,1)$.
It is clear from this discussion that whenever a lattice $L \subset \Z^2$ is generated by two vertices $v_0,v_1$ with $\det(v_0,v_1) \neq 0$ and $v_0,v_1 \in \mathscr{E} \sqcup \mathscr{O}$, the fundamental domain $D_L$ satisfies the black-and-white condition and can be tiled by dominoes.
We say such a lattice $L$ is a \textit{valid lattice}.
We will refer to a torus obtained from a valid lattice $L$ by $\T_L$\label{def:tl}. Toroidal height functions for these tori are defined much in the same way as before, via lifting.
\chapter{Flux on the torus}
\label{chap:flux}

We now explain how the flux definition that relies on counting cross-over dominoes can be adapted to these more general tori. Let $L$ be a valid lattice generated by $\{v_0, v_1\}$ and $t$ be a tiling of the torus on the fundamental domain $D_L$. As before, consider its lift to a tiling on $\Z^2$.

For any vertex $v \in \Z^2$ there are two L-shaped paths joining $v$ to $v+v_0$; call them $u_0$ and $u_1$. Observe that if one of $v_0$'s coordinates is 0, $u_0$ coincides with $u_1$. Generally, these edge-paths form the boundary of a quadriculated rectangle $R \subset \Z^2$ in which $v$ and $v+v_0$ are opposite vertices.

Remember that whenever an edge-path crosses a domino on a tiling, the height function of that tiling changes by either $+3$ or $-3$ along that edge-path. Were we to define the flux of $t$ through $v_0$ as before, we would like to say it is the number $n_i$ of dominoes (horizontal \textbf{or} vertical) that cross $u_i$, each of which is counted positively if its corresponding height change along $u_i$ is $+3$, and negatively if it is $-3$; notice $u_i$'s orientation matters. However, there is no particular reason why $n_0$ should be used over $n_1$.

When we previously defined the flux via counting dominoes (in Chapter \ref{chap:tor}, for the square torus $\T_n$), $u_0$ and $u_1$ always coincided, so the distinction was irrelevant. If $v_0 \in \mathscr{E}$, the situation is similar. In this case, $R$ is a rectangle with an even number of squares; in particular, the number of black squares and the number of white squares in $R$ are the same. This means $n_0$ and $n_1$ are equal, so the choice of path does not matter.

Real change occurs if $v_0 \in \mathscr{O}$. In this case, $R$ is a rectangle with an odd number of squares, so the number of black squares and the number of white squares in it differ by 1. This means $\lvert n_0 - n_1 \rvert = 1$. Rather than arbitrarily choosing one of $n_0$, $n_1$, we opt for a measured approach: we take their average. Notice that applying this to previous cases yields the same result.

Of course, this means that whenever $v_0 \in \mathscr{O}$, the flux of $t$ through $v_0$ will be some $k$ in $\big(\Z + \frac12\big)$, rather than in $\Z$. This does not contradict our original quasiperiodicity relations, and inspecting the proof of Proposition \ref{hsqtoro}, we need only show that $h(v_0) = 4k$ for this case too.

\begin{lema}\label{fluxgen}
Let $L$ be a valid lattice generated by $\{v_0, v_1\}$.
Let $t$ be a tiling of $\T_L$ and $h$ its toroidal height function.
Then $$h(v_0) = 4k \Longleftrightarrow \text{ $k$ is the flux of $t$ through $v_0$,}$$where $k$ is defined as above.
\end{lema}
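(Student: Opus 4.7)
Write $v_0 = (a,b)$ and work with the lift $\tilde t$ of $t$ to $\Z^2$, taking $h(0) = 0$. Let $u_0$ and $u_1$ be the two L-shaped edge-paths from $0$ to $v_0$. To each directed edge $e$ of $\Z^2$ I attach a sign $\epsilon(e) \in \{\pm 1\}$ equal to $+1$ when the traversal direction agrees with the canonical orientation (clockwise around a black square, counter-clockwise around a white one) and $-1$ otherwise. The constructive definition of the height function then gives a change of $\epsilon(e)$ across $e$ when $e$ lies on $\tilde t$, and of $-3\epsilon(e) = \epsilon(e) - 4\epsilon(e)$ when $e$ is crossed by a domino of $\tilde t$. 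Summing along any edge-path $u$ from $0$ to $v_0$ yields
\[
h(v_0) = S(u) + 4\, n(u), \qquad S(u) := \sum_{e \in u}\epsilon(e),
\]
with $n(u) := -\sum \epsilon(e)$ over edges $e$ of $u$ crossed by a domino. By construction $n(u)$ is exactly the signed count of dominoes crossing $u$ under the convention that a crossing contributes $+1$ precisely when the height change along it is $+3$, matching the flux sign convention.

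Applying this to both paths and averaging,
\[
h(v_0) = \tfrac12(S_0 + S_1) + 4k, \qquad k = \tfrac12(n_0 + n_1),
\]
so it suffices to prove $S_0 + S_1 = 0$. Each $S_i$ splits as the sum over a horizontal subpath of length $|a|$ and a vertical subpath of length $|b|$. Along any horizontal (resp.\ vertical) row of $\Z^2$ consecutive squares alternate in color, so consecutive edges have opposite canonical orientations and $\epsilon$-values alternate along the subpath; an even-length subpath contributes $0$, while an odd-length one contributes $\pm 1$ with sign determined by the color of the starting square.

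If $v_0 \in \mathscr E$, both $a$ and $b$ are even, every subpath contributes $0$, and $S_0 = S_1 = 0$; the two expressions $h(v_0) = S_i + 4n_i$ then force $n_0 = n_1 = k$ automatically, so $h(v_0) = 4k$. If instead $v_0 \in \mathscr O$, both $a$ and $b$ are odd and each subpath contributes $\pm 1$; the horizontal subpath of $u_0$ lies along $y=0$ while that of $u_1$ lies along $y=b$, and translation by $(0,b)$ with $b$ odd swaps every square's color, flipping $\epsilon$ pointwise and so producing opposite horizontal contributions. Translation by $(a,0)$ handles the vertical subpaths identically, yielding $S_0 + S_1 = 0$ and hence $h(v_0) = 4k$. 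The reverse implication is automatic since the flux is single-valued. The main technical point is the $\mathscr O$ case: individual subpath sums are nonzero, and one has to identify the color-flip symmetry between $u_0$ and $u_1$ that forces cancellation in the sum.
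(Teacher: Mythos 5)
Your proof is correct and follows essentially the same route as the paper: decompose the height change along each L-shaped path into an edge-orientation sum plus four times the signed domino count, then average over the two paths. The only difference is cosmetic — the paper shows each orientation sum equals $\pm 2$ (with opposite signs) by isolating the length-two corner segment, whereas you show the two sums cancel directly via the colour-reversing translations by $(0,b)$ and $(a,0)$; both yield $S_0+S_1=0$ and hence $h(v_0)=4k$.
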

\begin{proof}
Let $u_0$ and $u_1$ be the L-shaped edge-paths joining the origin to $v_0$ (and oriented from the origin to $v_0$).
Let $d_i^+$ be the number of dominoes crossing $u_i$ that are counted positively and let $d_i^-$ be the number of dominoes crossing $u_i$ that are counted negatively.
Let $e_i^+$ be the number of edges on $u_i$ whose orientation (as induced by the coloring of $\Z^2$) agrees with $u_i$'s own, and let $e_i^-$ be the number of edges on $u_i$ whose orientation reverses $u_i$'s own.

Suppose $v_0 \in \mathscr{E}$. 
In this case, $e_i^+ = e_i^-$ for each $i=0,1$.
Thus, if no domino crosses $u_i$ (that is, when $d_i^+$ and $d_i^-$ are both 0) the constructive definition of height functions implies $h(v_0) = 0$.
Each domino counted by $d_i^+$ crosses an edge counted by $e_i^-$, and contributes with a height change of $+3$ along that edge (rather than $-	1$); in other words, each domino counted by $d_i^+$ contributes with a total change of $+4$ for $h(v_0)$. 
Similarly, each domino counted by $d_i^-$ contributes with a total change of $-4$ for $h(v_0)$.
All of this implies the following formula\footnote{Equation~\eqref{fluxeven} provides another way to see that when $v_0$'s coordinates are both even, the numbers $n_0$ and $n_1$ are equal.} holds for each $i=0,1$:
\begin{equation}\label{fluxeven}
h(v_0) = 4(d_i^+ - d_i^-)
\end{equation}

The lemma follows from observing that $d_i^+ - d_i^-$ is $t$'s flux through $v_0$.

Now suppose $v_0 \in \mathscr{O}$. 
In this case we no longer have $e_i^+ = e_i^-$; instead, we claim $e_i^+ - e_i^- = \pm 2$, where the sign in $\pm$ is different for each $i=0,1$.
Indeed, let $R$ be the quadriculated rectangle whose boundary is given by $u_0 \cup u_1$.
Each $u_i$ can be divided into three segments as follows: a middle segment of length two fitting a corner square in $R$, and the other two outer segments (each of which possibly has length 0); see Figure \ref{fig:u0u1}.
\begin{figure}[ht]
    \centering
    \def\svgwidth{0.8\columnwidth}
    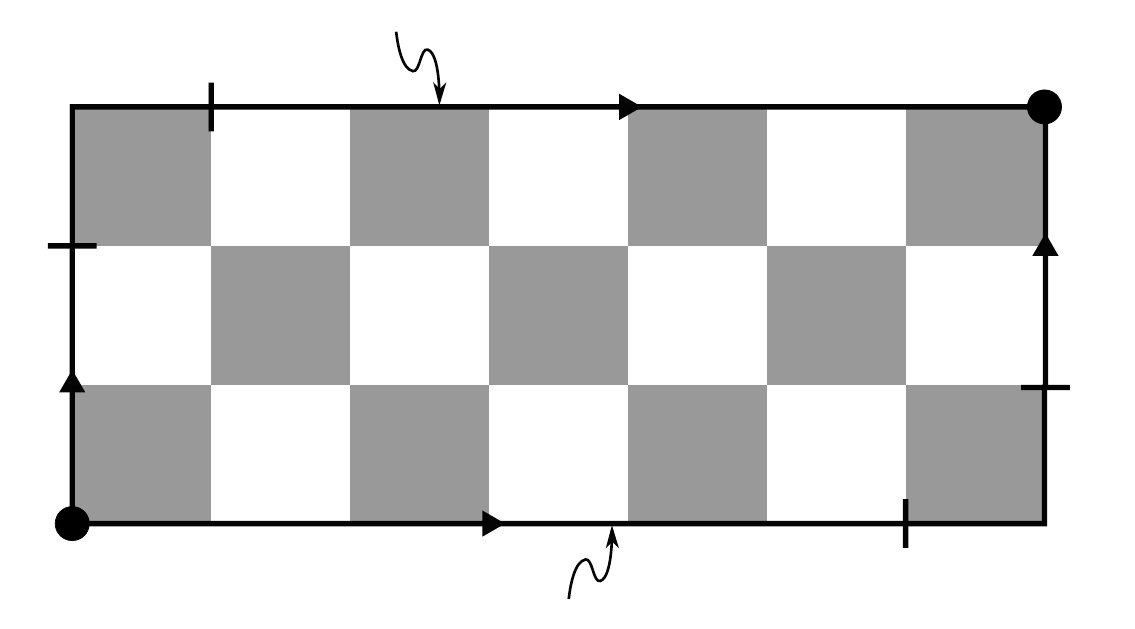
		\caption{The paths $u_0$ and $u_1$, each divided into three segments.}
		\label{fig:u0u1}
\end{figure}

For each $u_i$, each of the outer segments has even length and features edges that are alternatingly counted by $e_i^+$ and by $e_i^-$, so $e_i^+ - e_i^-$ is given entirely by the middle segment.
That segment has two edges that are counted with the same sign, but for each $u_i$ that sign is different, so the claim is proved.

Without loss of generality, say $e_0^+ - e_0^- = 2$ and $e_1^+ - e_1^- = -2$.
If no domino crosses $u_0$, the constructive definition of height functions implies $h(v_0) = 2$.
The same technique used above implies the following formula holds:
\begin{equation}\label{fluxodd0}
h(v_0) = 2 + 4(d_0^+ - d_0^-)
\end{equation}

Applying this process to $u_1$ gives us the formula\footnote{Together, equations~\eqref{fluxodd0} and~\eqref{fluxodd1} provide another way to see that when $v_0$'s coordinates are both odd, the numbers $n_0$ and $n_1$ differ by 1.}:
\begin{equation}\label{fluxodd1}
h(v_0) = -2 + 4(d_1^+ - d_1^-)
\end{equation}

Combining the two yields $h(v_0) = 4 \cdot \frac{1}{2} \left[(d_0^+ - d_0^-) + (d_1^+ - d_1^-)\right]$.
Since $\frac{1}{2} \left[(d_0^+ - d_0^-) + (d_1^+ - d_1^-)\right]$ is the flux of $t$ through $v_0$, the proof is complete.
\end{proof}

The reader might question the choice of $L$-shaped paths for the flux definition.
In this regard, we note the following.
For any edge-path $\gamma$, let $\mathscr{R}(\gamma)$ be the edge-path obtained from $\gamma$ by reflecting it across the middle point between 0 and $v_0$ (in particular, notice $\mathscr{R}(u_0) = u_1$).
Consider the numbers $n_{\gamma}$ and $n_{\mathscr{R}(\gamma)}$ of crossing dominoes, as we defined $n_0,n_1$ for $u_0,u_1$.
Then $n_{\gamma} + n_{\mathscr{R}(\gamma)} = n_0 + n_1$, so that `any measured approach' to choosing an edge-path would yield the same results.

When $v_0 \in \mathscr{O}$, the flux of a tiling through $v_0$ is some $k$ in $(\Z + \frac12)$, so Lemma \ref{fluxgen} implies $h(v_0) \equiv 2 \Mod{4}$, rather than the usual 0.
Observe that this is consistent with the mod 4 prescription function $\Phi$ calculated just after Proposition \ref{hsquare}.

Also, it's clear that the flux of $t$ through $v_1$ is similarly defined, and these properties also hold for $v_1$.

Because of Lemma \ref{fluxgen}, generalizations of Propositions \ref{hsqtoro} and \ref{htormin} to this new scenario are automatic.

\begin{prop}[General toroidal height functions]\label{hsqtorogen}
Let $L$ be a valid lattice generated by $\{v_0, v_1\}$.
Then a height function $h$ on $\Z^2$ (see Proposition \ref{hsquare}) is a toroidal height function of $\T_L$ if and only if $h$ satisfies the following property for each $i=0,1$:
\begin{alignat*}{4}
\text{$v_i \in \mathscr{E}$} \quad &\Rightarrow \quad \exists k_i \in &\Z,& \enspace &\forall v \in \Z^2, \enspace h(v+v_i) = h(v) + 4k_i \\
\text{$v_i \in \mathscr{O}$} \quad &\Rightarrow \quad \exists k_i \in (\Z  &+& \tfrac12), \enspace &\forall v \in \Z^2, \enspace h(v+v_i) = h(v) + 4k_i
\end{alignat*}

Furthermore, if $h$ is a toroidal height function of $\T_L$ and $t$ is its associated domino tiling, then $k_0$ is $t$'s flux through $v_0$ and $k_1$ is $t$'s flux through $v_1$.
\end{prop}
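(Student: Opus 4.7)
The plan is to mirror the proof of Proposition \ref{hsqtoro}, with Lemma \ref{fluxgen} supplying the machinery needed to handle generators lying in $\mathscr{O}$ (where flux values are half-integers).

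For the forward direction, I would start with a toroidal height function $h$ of $\T_L$ arising from a tiling $t$ with $L$-periodic lift $\tilde{t}$. For each generator $v_i$, pick an arbitrary $v \in \Z^2$ and an edge-path $\gamma_0$ in $\tilde{t}$ from $v$ to $v+v_i$. The $L$-periodicity of $\tilde{t}$ guarantees that the translate $\gamma_1 = \gamma_0 - v$ is an edge-path in $\tilde{t}$ from $0$ to $v_i$. Since the constructive definition of height functions makes the cumulative change along an edge-path depend only on the initial and final values, I would conclude $h(v+v_i)-h(v)=h(v_i)-h(0)=h(v_i)$, which is independent of $v$. Lemma \ref{fluxgen} then delivers $h(v_i)=4k_i$ where $k_i$ is the flux of $t$ through $v_i$, and the parity dichotomy ($k_i \in \Z$ when $v_i\in\mathscr{E}$, $k_i\in\Z+\tfrac12$ when $v_i\in\mathscr{O}$) follows directly from the definition of flux given just before the lemma.

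For the reverse direction, I would start with a height function $h$ on $\Z^2$ (in the sense of Proposition \ref{hsquare}) satisfying the stated quasiperiodicity conditions, and invoke Proposition \ref{hcarac} to produce a tiling $\tilde{t}$ of $\Z^2$ realizing $h$. The key observation is that an edge of $\Z^2$ is erased in $\tilde{t}$ exactly when $h$ changes by $\pm 3$ along it, a condition invariant under adding a constant to $h$. Since translation by $v_i$ shifts $h$ by the additive constant $4k_i$, it preserves the set of erased edges; hence $\tilde{t}$ is $L$-periodic. Quotienting by $L$ yields a tiling $t$ of $\T_L$ whose toroidal height function is, by construction, $h$.

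Finally, the identification of $k_0, k_1$ with the fluxes of $t$ through $v_0, v_1$ follows immediately from Lemma \ref{fluxgen} applied to the established identity $h(v_i) = 4k_i$. There is no serious obstacle here; the argument is essentially a translation of Proposition \ref{hsqtoro} in which the square-torus-specific claim $h(v_i)=4\cdot(\text{integer flux})$ is replaced by its $L$-general counterpart supplied by Lemma \ref{fluxgen}, with the $\mathscr{E}$/$\mathscr{O}$ dichotomy inherited directly from that lemma.
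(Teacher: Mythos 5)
Your overall route is the one the paper intends: the paper itself states that, given Lemma \ref{fluxgen}, the generalization of Proposition \ref{hsqtoro} is automatic, and both your reverse direction (reconstruct $\tilde t$ from $h$ via the erased-edge criterion, note that this criterion is invariant under adding the constant $4k_i$, conclude $\tilde t$ is $L$-periodic, then quotient) and your use of Lemma \ref{fluxgen} to identify $h(v_i)=4k_i$ with the flux through $v_i$ match what the paper does for $\T_n$ and extends here.

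There is, however, one step in your forward direction that fails as written. You take an edge-path $\gamma_0$ in $\tilde t$ from $v$ to $v+v_i$ and claim that the $L$-periodicity of $\tilde t$ guarantees that the translate $\gamma_1=\gamma_0-v$ is an edge-path in $\tilde t$. $L$-periodicity only licenses translations by vectors of $L$; for an arbitrary $v\in\Z^2$ the translate $\gamma_0-v$ need not consist of edges of $\tilde t$, and, worse, if the coordinate sum of $v$ is odd, translation by $-v$ swaps the two colours and hence reverses the colour-induced orientation of every edge, so the edge-by-edge height increments of $\gamma_0$ and $\gamma_1$ cannot be compared at all. (Telescoping alone does not rescue the step: it only says the total change along each path equals the difference of its endpoint values, which is precisely the relation you are trying to establish.) The paper's argument for equation~\eqref{hflux2} translates in the other direction: take an edge-path $\delta$ in $\tilde t$ from $0$ to $v$; then $\delta+v_i$ is an edge-path in $\tilde t$ from $v_i$ to $v+v_i$ because $v_i\in L$, and since translation by $v_i$ preserves both the colouring and membership in $\tilde t$, the two paths have identical total height change, giving $h(v)-h(0)=h(v+v_i)-h(v_i)$, i.e.\ $h(v+v_i)-h(v)=h(v_i)$. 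With that correction the rest of your argument goes through.
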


\begin{prop}\label{htormingen}
Let $L$ be a valid lattice and $t_1,t_2$ be two tilings of $\T_L$ with identical flux values $k,l$ and corresponding toroidal height functions $h_1$, $h_2$.
Then $h_m = \min \{h_1,h_2\}$ is a toroidal height function of $\T_L$ with flux values $k,l$.
\end{prop}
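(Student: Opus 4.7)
The plan is to mimic the proof of Proposition \ref{htormin} essentially verbatim, since nothing in that argument used the specifics of the square lattice $L = \langle (2n,0),(0,2n)\rangle$. The proof splits into two independent verifications: that $h_m$ is a height function on $\Z^2$ in the sense of Proposition \ref{hsquare}, and that $h_m$ satisfies the quasiperiodicity relations of Proposition \ref{hsqtorogen} with the prescribed flux values.

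For the first part, I would invoke Proposition \ref{hsquare}. The mod 4 condition is automatic: $h_1$ and $h_2$ share the prescribed mod 4 values at every vertex, so $h_m = \min\{h_1,h_2\}$ does too. The condition that $h_m$ changes by at most $3$ along any edge is exactly the content of the argument used in Proposition \ref{hmin}, which relied only on the oriented edge structure of $\Z^2$ inherited from the coloring together with the prescribed mod 4 values. That argument transfers without change, since $\Z^2$ is the ambient graph in both situations.

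For the second part, the hypothesis that $t_1$ and $t_2$ have identical flux values is what makes the minimum behave well. By Proposition \ref{hsqtorogen}, for each generator $v_i$ of $L$ there exists a single $k_i$ (equal to $k$ or $l$) such that $h_j(v+v_i) = h_j(v) + 4k_i$ for $j=1,2$ and every $v \in \Z^2$. Since an additive constant commutes with $\min$, we get
\begin{equation*}
h_m(v+v_i) = \min\{h_1(v)+4k_i,\, h_2(v)+4k_i\} = \min\{h_1(v),h_2(v)\} + 4k_i = h_m(v) + 4k_i,
\end{equation*}
so $h_m$ satisfies the quasiperiodicity conditions with the \emph{same} constants $k_i$. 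Applying Proposition \ref{hsqtorogen} in the reverse direction then yields that $h_m$ is a toroidal height function of $\T_L$ with flux values $k, l$, as required.

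There is no real obstacle here; the argument is essentially algebraic, with the only subtle point being that the additive shift $4k_i$ must coincide for the two tilings, which is exactly guaranteed by the assumption of identical flux values. Had the flux values differed, $\min\{h_1,h_2\}$ would fail to be quasiperiodic and the conclusion would collapse; this is why the hypothesis cannot be relaxed.
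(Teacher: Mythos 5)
Your proposal is correct and matches the paper's intent exactly: the paper states this proposition without proof, declaring the generalization of Proposition \ref{htormin} "automatic," and your argument is precisely the verbatim adaptation it has in mind — Proposition \ref{hsquare} plus the edge-change argument of Proposition \ref{hmin} for the height-function part, and the commutation of $\min$ with the additive shift $4k_i$ for the quasiperiodicity part. The only point worth noting is that for $v_i \in \mathscr{O}$ the constant $k_i$ lies in $\Z + \tfrac12$ rather than $\Z$, but this changes nothing in the argument.
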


\begin{corolario}[Minimal height functions on general tori]\label{htorminimal}
Let $L$ be a valid lattice.
If there is a tiling of $\T_L$ with flux values $k,l$, then there is a tiling of $\T_L$ with flux values $k,l$ and so that its height function is minimal over tilings of $\T_L$ with flux values $k,l$.
\end{corolario}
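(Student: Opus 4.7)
The plan is to mimic the argument used in the planar case (Corollary \ref{hminimal}), exploiting the fact that Proposition \ref{htormingen} lets us take pointwise minima of toroidal height functions of equal flux and stay within the class of toroidal height functions. The key additional ingredient is a finiteness observation that makes an iterated minimum well-defined.

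First, I would observe that although toroidal height functions are defined on all of $\Z^2$, the quasiperiodicity from Proposition \ref{hsqtorogen} means a toroidal height function with prescribed flux $(k,l)$ is completely determined by its restriction to the (finitely many) vertices of any fundamental domain $D_L$. Equivalently, there are only finitely many tilings of $\T_L$ (one for each way of matching the finitely many squares of $D_L$, accounting for boundary identifications), and hence only finitely many toroidal height functions with flux $(k,l)$. Call this finite set $\mathcal{H}_{k,l} = \{h_1, \ldots, h_N\}$, which is nonempty by hypothesis.

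Next, I would define $h_m(v) = \min\{h_1(v), \ldots, h_N(v)\}$ for all $v \in \Z^2$ and show by induction on $N$ that $h_m$ is itself a toroidal height function with flux $(k,l)$. The base case $N=1$ is trivial, and the inductive step follows immediately from Proposition \ref{htormingen}: if $\min\{h_1, \ldots, h_{N-1}\}$ is a toroidal height function with flux $(k,l)$, then taking its minimum with $h_N$ yields another toroidal height function with flux $(k,l)$. Since $h_m$ agrees with some $h_i$ at each vertex and is no greater than any $h_i$ everywhere, it is by construction minimal over $\mathcal{H}_{k,l}$.

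Finally, since $h_m$ is a toroidal height function with flux $(k,l)$, it corresponds to a tiling of $\T_L$ with flux $(k,l)$, which is precisely the minimal tiling sought. There is no real obstacle here; the only thing one might want to be careful about is making explicit that taking pointwise minima preserves all of the characterizing properties listed in Proposition \ref{hsqtorogen} (the mod 4 prescription, the bounded change along edges, and the quasiperiodicity with the correct additive constants), but all of this is already packaged inside Proposition \ref{htormingen}.
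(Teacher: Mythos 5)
Your proof is correct and is exactly the argument the paper intends: the corollary is stated as an immediate consequence of Proposition \ref{htormingen}, obtained by iterating the pairwise minimum over the finitely many tilings of $\T_L$ with the given flux. The finiteness observation and the induction you spell out are the only ingredients needed, so there is nothing to add.
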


\section{The affine lattice $L^{\#}$}\label{sec:lsharp}

Let $L$ be a valid lattice generated by $\{v_0, v_1\}$.
Proposition \ref{hsqtorogen} provides a new way to interpret the flux of a tiling of $\T_L$.
Given one such tiling $t$, let $h_t$ be its toroidal height function.
The quantities
\begin{equation*} \begin{split}
\varphi_t(v_0) = \frac{1}{4} \big(h_t(v+v_0) - h_t(v)\big) \\
\varphi_t(v_1) = \frac{1}{4} \big(h_t(v+v_1) - h_t(v)\big)
\end{split} \end{equation*}
do not depend on $v \in \Z^2$.
By the same token, for $i,j \in \Z$, $h_t$'s quasiperiodicity implies
\begin{equation*}\label{fluxdual}
\varphi_t(i \cdot v_0 + j \cdot v_1) = i \cdot \varphi_t(v_0) + j \cdot \varphi_t(v_1)\text{,}
\end{equation*}
so $\varphi_t$ can be seen as a homomorphism on $L$.
Additionally, since $L \subset \Z^2 \subset \R^2$ is generated by two linearly independent vectors, the usual inner product $\langle \cdot,\cdot \rangle$ provides the means to identify $\varphi_t$ with ${\varphi_t}^* \in \R^2$ via $\varphi_t(u) = \langle {\varphi_t}^*,u \rangle$.
From now on, using this identification, we will not distinguish between $\R^2$ and ${(\R^2)}^*$, and similarly we will not distinguish between $\varphi_t$ and ${\varphi_t}^*$.

What can be said about the image of the homomorphism $\varphi_t$?
Of course, it is entirely defined by the values $\varphi_t$ takes on $v_0$ and on $v_1$.
If $v_i \in \mathscr{E}$, $\varphi_t(v_i) \in \Z$.
If $v_i \in \mathscr{O}$, $\varphi_t(v_i) \in \left(\Z + \frac12\right)$.
This allows us to analyze each case separately.
Consider the following sets:
\begin{equation*}
\begin{split}
{L_{00}}^* &= \left\{ \varphi \in \text{Hom}\Big(L; \tfrac12 \Z \Big) \text{ }\Big |\text{ } \varphi(v_0), \varphi(v_1) \in \Z \right\} \\
{L_{01}}^* &= \left\{ \varphi \in \text{Hom}\Big(L; \tfrac12 \Z \Big) \text{ }\Big |\text{ } \varphi(v_0) \in \Z, \text{ } \varphi(v_1) \in \Big(\Z + \tfrac12 \Big) \right\} \\
{L_{10}}^* &= \left\{ \varphi \in \text{Hom}\Big(L; \tfrac12 \Z \Big) \text{ }\Big |\text{ } \varphi(v_0) \in \Big(\Z + \tfrac12 \Big), \text{ } \varphi(v_1) \in \Z \right\} \\
{L_{11}}^* &= \left\{ \varphi \in \text{Hom}\Big(L; \tfrac12 \Z \Big) \text{ }\Big |\text{ } \varphi(v_0), \varphi(v_1) \in \Big(\Z + \tfrac12\Big) \right\}
\end{split}
\end{equation*}

Then it's readily checked that:
\begin{equation*}
\begin{split}
v_0,v_1 \in \mathscr{E} &\Rightarrow \varphi_t \in {L_{00}}^*\\
v_0 \in \mathscr{E}, v_1 \in \mathscr{O} &\Rightarrow \varphi_t \in {L_{01}}^*\\
v_0 \in \mathscr{O}, v_1 \in \mathscr{E} &\Rightarrow \varphi_t \in {L_{10}}^*\\
v_0, v_1 \in \mathscr{O} &\Rightarrow \varphi_t \in {L_{11}}^*
\end{split}
\end{equation*}

Notice that ${L_{00}}^* = \text{Hom}(L; \Z) = L^*$.
Furthermore, the sets ${L_{ij}}^*$ decompose $\text{Hom}\Big(L; \frac{1}{2} \Z \Big)$ into four disjoint and non-empty subsets.
Observe that the parities of $\varphi(2 v_0)$ and of $\varphi(2 v_1)$ provide a way to identify $\text{Hom}\Big(L; \frac{1}{2} \Z \Big)$ with $(2L)^* = \text{Hom}(2L; \Z)$.

Another description of these sets can be given in terms of a basis for $(2L)^*$.
For each $i,j = 0,1$ let $\varphi_i \in (2L)^*$ be defined by $\varphi_i(v_j) = \frac12 \delta_{ij}$.
The set $\{ \varphi_0,\varphi_1 \}$ is a basis for $(2L)^*$, and the following characterizations are immediate:
\begin{equation*}
\begin{split}
{L_{00}}^* &= \left \{ x_0 \cdot \varphi_0 + x_1 \cdot \varphi_1 \in (2L)^* \text{ } |\text{ }x_0, x_1 \in 2\Z \right \} \\
{L_{01}}^* &= \left \{ x_0 \cdot \varphi_0 + x_1 \cdot \varphi_1 \in (2L)^* \text{ } |\text{ }x_0 \in 2\Z, \text{ } x_1 \in (2\Z + 1) \right \} \\
{L_{10}}^* &= \left\{ x_0 \cdot \varphi_0 + x_1 \cdot \varphi_1 \in (2L)^* \text{ } |\text{ }x_0 \in (2\Z + 1), \text{ } x_1 \in 2\Z \right \} \\
{L_{11}}^* &= \left\{ x_0 \cdot \varphi_0 + x_1 \cdot \varphi_1 \in (2L)^* \text{ } |\text{ }x_0, x_1 \in (2\Z +1) \right \}
\end{split}
\end{equation*}

It should now be clear the sets ${L_{ij}}^*$ are related by translations of $\varphi_0$ and/or $\varphi_1$.
Since ${L_{00}}^*$ is itself a lattice, we can generally say the ${L_{ij}}^*$ are affine, or translated, lattices.
The inner product identification (like the one we did with $\varphi_t$) allows us to see this concretely, representing $(2L)^*$, and naturally also the ${L_{ij}}^*$, in $\R^2$.
Under this representation, $(2L)^* = \frac12 L^*$ and we have the chain of inclusions $$L \subset \Z^2 \subset \frac{1}{2}\Z^2 \subset (2L)^*$$

For any given valid lattice $L$, all flux values of tilings of $\T_L$ belong to one same ${L_{ij}}^*$ and no other, depending on the parity of $v_0$ and $v_1$'s coordinates.
We will call this set $L^\#$.

As an example, for the torus $\T_n$ we have $\varphi_{0}=\left(\frac{1}{4n},0\right)$ and $\varphi_{1}=\left(0,\frac{1}{4n}\right)$, so $(2L)^* \subset \R^2$ is the lattice generated by these vectors.
Moreover, in this case $L^\# = {L_{00}}^*$, so $L^\# \subset \R^2$ is the lattice generated by $\left\{\left(\frac{1}{2n},0\right), \left(0,\frac{1}{2n}\right) \right\}$.

\begin{figure}[H]
    \centering
    \def\svgwidth{0.95\columnwidth}
    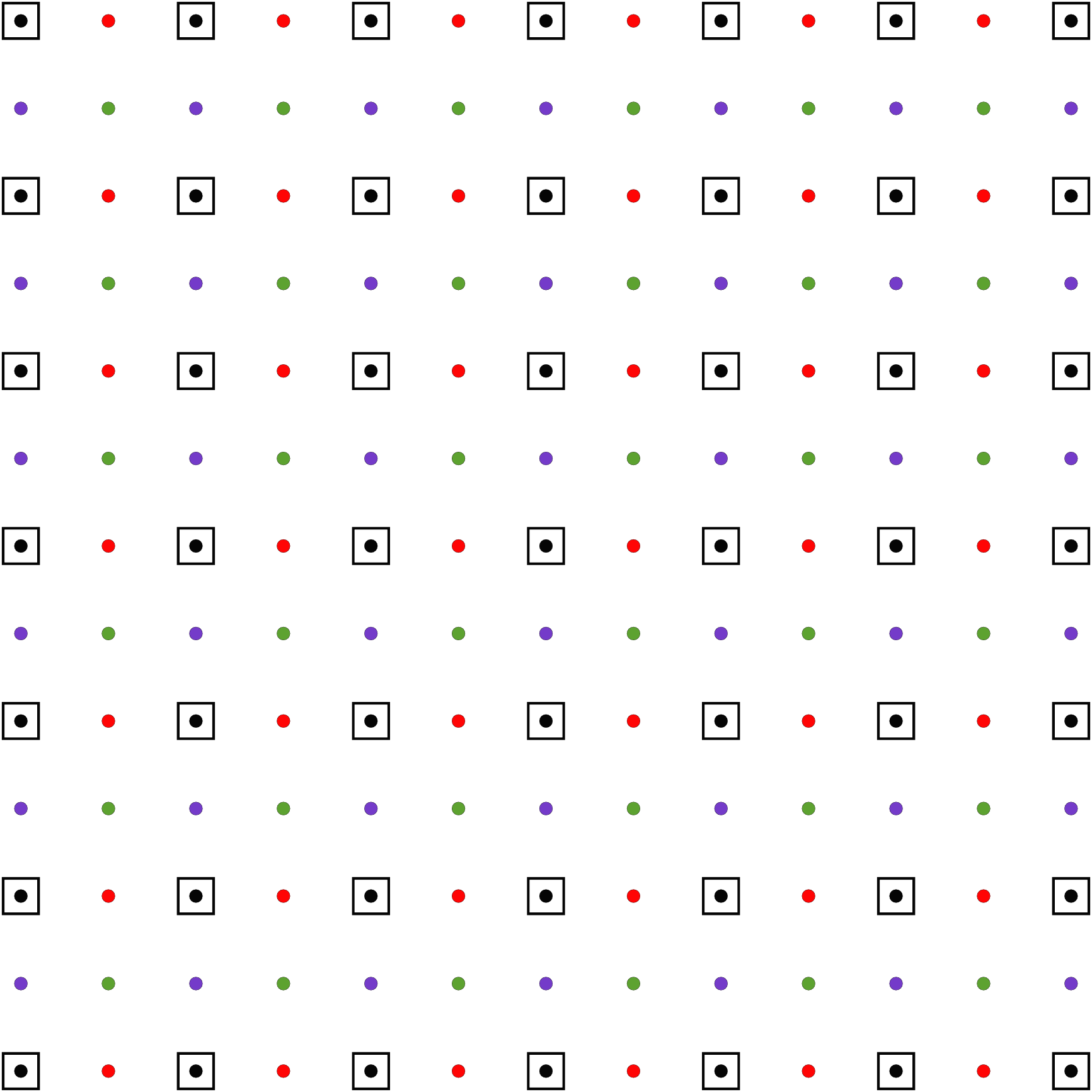
		\caption{The lattice $(2L)^*$ represented in $\R^2$. Each ${L_{ij}}^*$ corresponds to a color: ${L_{00}}^* = L^*$ is black, ${L_{10}}^*$ is red, ${L_{01}}^*$ is purple and ${L_{11}}^*$ is green. The marks round black vertices indicate $L^{\#} = {L_{00}}^*$.}
\end{figure}

\begin{prop}\label{hdelmeio}Let $L$ be a valid lattice.
Under the inner product identification, it holds that $\pm \left(\tfrac12, 0\right)$ and $\pm \left(0, \tfrac12\right)$ are in $L^\#$.
In particular, $L^\# = L^* + \left(\tfrac12,0\right)$.
\end{prop}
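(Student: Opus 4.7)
The plan is to reduce the entire proposition to a direct parity check on the pairing $\langle \psi, v_i\rangle$ where $\psi$ ranges over $\pm\psi_x,\pm\psi_y$ with $\psi_x = \bigl(\tfrac12,0\bigr)$ and $\psi_y = \bigl(0,\tfrac12\bigr)$, and $\{v_0,v_1\}$ is a generating set for $L$. Under the inner-product identification, $\psi_x$ is an element of $\mathrm{Hom}\bigl(L,\tfrac12\Z\bigr)$, and membership of $\psi_x$ in $L^\#$ is determined (by the case description just before the proposition) purely by which of the four sets $L^*_{ij}$ the values $\psi_x(v_0),\psi_x(v_1)$ force us into.

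First I would use the validity of $L$: each generator $v_k = (a_k,b_k)$ has coordinates of the same parity, so $v_k \in \mathscr{E}$ iff $a_k$ is even and $v_k \in \mathscr{O}$ iff $a_k$ is odd. A one-line calculation $\langle \psi_x, v_k\rangle = a_k/2$ then shows that $\psi_x(v_k) \in \Z$ precisely when $v_k \in \mathscr{E}$ and $\psi_x(v_k) \in \Z+\tfrac12$ precisely when $v_k \in \mathscr{O}$. Comparing with the four cases listed right before the proposition, this is exactly the membership condition for the particular $L^*_{ij}$ that equals $L^\#$, independent of which of the four parity configurations $(v_0,v_1)$ happens to lie in. Hence $\psi_x \in L^\#$. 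The same computation applied to $-\psi_x$, $\psi_y$, $-\psi_y$ (only the sign of $a_k/2$ or the swap $a_k \leftrightarrow b_k$ changes, with no effect on the integer-vs.-half-integer dichotomy) gives the other three memberships.

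For the equality $L^\# = L^* + \psi_x$, I would invoke that the four sets $L^*_{ij}$ partition $\mathrm{Hom}\bigl(L,\tfrac12\Z\bigr)$ into cosets of the subgroup $L^* = L^*_{00}$, so any single element of $L^\#$ pins the whole coset down by translation; taking $\psi_x$ as that element gives the stated identity. The argument has no real obstacle: the only thing to be careful about is keeping the inner-product identification straight and being honest that the four-case check covers every valid lattice uniformly, so that the conclusion $\psi_x \in L^\#$ does not accidentally depend on which configuration of $\mathscr{E}/\mathscr{O}$ the generators $v_0,v_1$ sit in.
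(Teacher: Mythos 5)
Your proof is correct and follows essentially the same route as the paper's: the paper writes $\left(\tfrac12,0\right)=a\cdot\varphi_0+c\cdot\varphi_1$ and $\left(0,\tfrac12\right)=b\cdot\varphi_0+d\cdot\varphi_1$ using the explicit dual basis of $(2L)^*$ and reads the parity condition off the coefficients, while you evaluate $\langle\psi,v_k\rangle=a_k/2$ directly; these are two presentations of the identical parity check, both hinging on validity forcing the coordinates of each generator to share a parity. The coset observation you use for $L^\#=L^*+\left(\tfrac12,0\right)$ is likewise exactly what the paper leaves implicit.
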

\begin{proof}Let $L$ be generated by $v_0 = (a,b)$ and $v_1 = (c, d)$.
Then it's easily checked that:
\begin{equation*}
\begin{split}
\varphi_0 = \frac{1}{2} \cdot \frac{1}{ad-bc} \cdot (d, -c) \\
\varphi_1 = \frac{1}{2} \cdot \frac{1}{ad-bc} \cdot (-b, a)
\end{split}
\end{equation*}

From these, we derive
\begin{equation*}
\begin{split}
&\pm \big(a \cdot \varphi_0 + c \cdot \varphi_1\big) = \pm \big(\tfrac12, 0\big) \\
&\pm \big(b \cdot \varphi_0 + d \cdot \varphi_1\big) = \pm \big(0, \tfrac12\big),
\end{split}
\end{equation*}where choice of signs is the same across a line.

Since $L$ is valid, $a,b$ and $c,d$ have the same parity, so these points are all in the same ${L_{ij}}^*$.
It suffices to see this set is $L^\#$.
\end{proof}

Notice the calculations in Proposition \ref{hdelmeio} also prove that when $v_0$ is multiplied by $k_0$ and $v_1$ is multiplied by $k_1$, $\varphi_{0}$ is multiplied by $k_0^{-1}$ and $\varphi_{1}$ is multiplied by $k_1^{-1}$.
In other words, as the moduli of $v_0$ and $v_1$ increase (but the angle between them is kept constant), the moduli of $\varphi_{0}$ and $\varphi_{1}$ decrease, and vice-versa.
Visually, this means that as $L$ becomes more scattered, $L^\#$ becomes more cluttered.

\section{Characterization of flux values}\label{sec:fluxchar}

For a valid lattice $L$, let $\mathscr{F}(L)$ be the set of all flux values of tilings of $\T_L$.
We know $\mathscr{F}(L) \subset L^\#$, but what more can be said about it? What elements of $L^\#$ are in $\mathscr{F}(L)$?
Surely not all --- $L^\#$ is infinite, and the definition of flux via counting dominoes makes it clear $\mathscr{F}(L)$ must be finite.
This section is devoted to answering these questions, and does so via a full characterization of $\mathscr{F}(L)$.

For $v = (x,y) \in \R^2$, let $\lVert v \rVert_1 = \lvert x\rvert + \lvert y\rvert$ and $\lVert v \rVert_\infty = \max\{\lvert x \rvert, \lvert y \rvert\}$.
Let $Q \subset \R^2$ be the set $\left \{v \in \R^2; \lVert v\rVert_1 \leq \tfrac12 \right \}$. 
\begin{figure}[ht]
    \centering
    \def\svgwidth{0.75\columnwidth}
    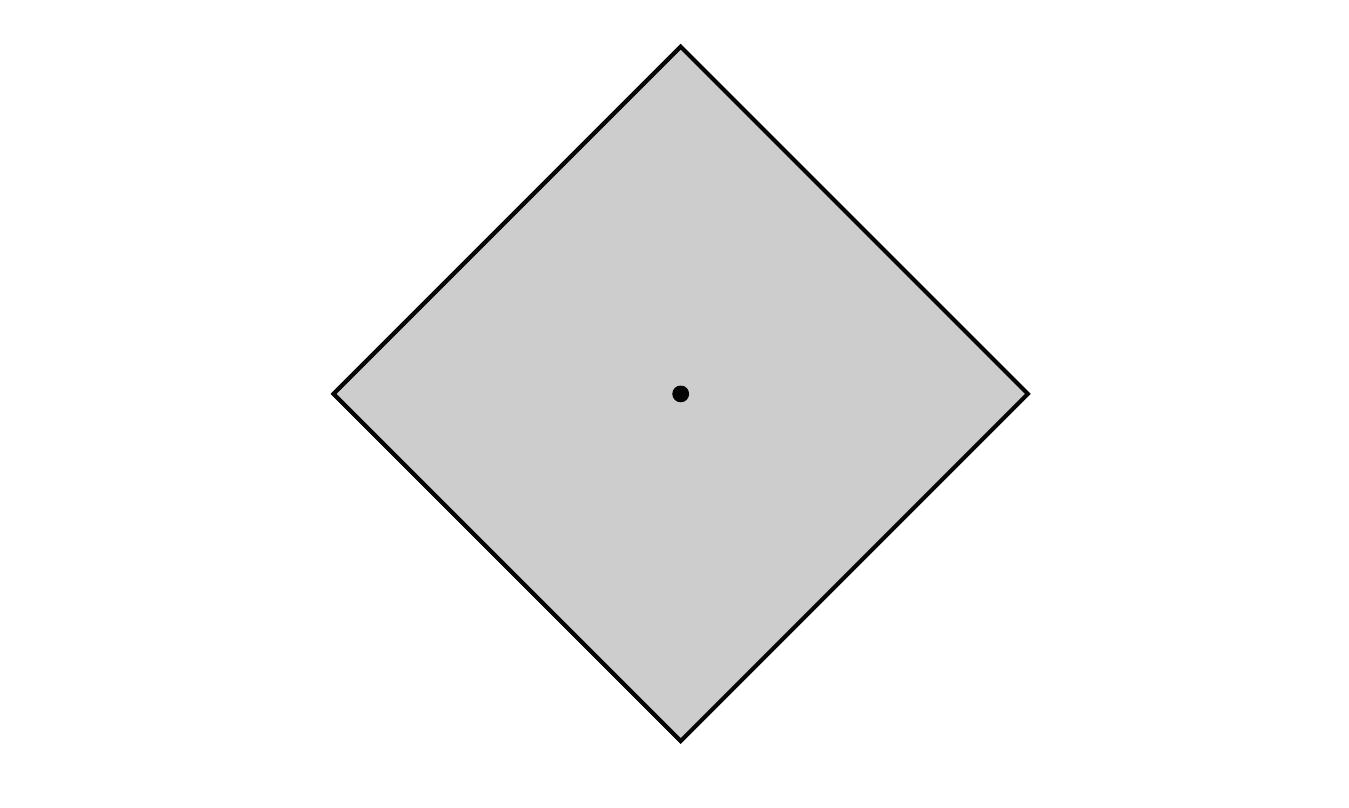
		\caption{The set $Q \subset \R^2$.}
\end{figure}

\begin{theo}[Characterization of flux values]\label{fluxcarac} $\mathscr{F}(L) = L^\# \cap Q$.
\end{theo}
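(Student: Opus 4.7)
The plan is to split the theorem into two propositions establishing the inclusions $\mathscr{F}(L) \subseteq L^\# \cap Q$ and $L^\# \cap Q \subseteq \mathscr{F}(L)$ separately, with maximal height functions as the central tool for the second.

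Since $\mathscr{F}(L) \subseteq L^\#$ was already shown in Section \ref{sec:lsharp}, the first inclusion reduces to $\mathscr{F}(L) \subseteq Q$, i.e., $\|\varphi\|_1 \le \tfrac{1}{2}$ for every flux value $\varphi$. I would decompose this into the four inequalities $\pm \varphi_x \pm \varphi_y \le \tfrac{1}{2}$ cutting out $Q$ and treat them by symmetry; take for instance $\varphi_x + \varphi_y \le \tfrac{1}{2}$. Let $t$ be a tiling of $\T_L$ with flux $\varphi$ and $h$ its toroidal height function, and choose $k \in \Z_{>0}$ with $(k, k) \in L$ (such a $k$ exists because the rational line $y = x$ meets the rank-$2$ lattice $L$ in a nontrivial rank-$1$ sublattice). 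The staircase $\gamma\colon (0, 0) \to (1, 0) \to (1, 1) \to (2, 1) \to \cdots \to (k, k)$ has length $2k$, and a check of the color-induced orientation rules (at an even-parity vertex the positively oriented outgoing edges are horizontal, at an odd-parity vertex they are vertical) places $\gamma$ in $\Gamma\big((0,0), (k, k)\big)$. Since $\Delta h \le +1$ along every positively oriented edge, $h(k, k) - h(0) \le 2k$; comparing with $h(k, k) - h(0) = 4\langle \varphi, (k, k)\rangle = 4k(\varphi_x + \varphi_y)$ from Proposition \ref{hsqtorogen} yields $\varphi_x + \varphi_y \le \tfrac{1}{2}$. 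Analogous staircases to $(\pm k, \pm k) \in L$ in the other three diagonal directions supply the remaining inequalities.

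For the second inclusion, given $\varphi \in L^\# \cap Q$ the goal is to construct a toroidal height function of $\T_L$ with flux $\varphi$; Proposition \ref{hsqtorogen} will then supply the desired tiling. The natural candidate is the maximal height function
\[
h_{\max}^{L, \varphi}(v) = \min \big\{ 4\langle \varphi, u\rangle + l(\gamma) : u \in L,\ \gamma \in \Gamma(u, v) \big\},
\]
the pointwise largest integer function on $\Z^2$ that agrees with $u \mapsto 4\langle \varphi, u\rangle$ on $L$ and respects $\Delta h \le +1$ along every positively oriented edge. Once well-definedness is secured, $L$-quasiperiodicity with parameter $\varphi$ follows from the defining formula's invariance under $L$-translations, and the mod-$4$ prescription and the edge constraints of Proposition \ref{hsquare} can be read off routinely.

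The hard step is well-definedness: showing that for $v \in L$ the minimum actually equals $4\langle \varphi, v\rangle$, or equivalently that $l(\gamma) \ge 4\langle \varphi, w\rangle$ for every $w \in L$ and every $\gamma \in \Gamma(0, w)$. A parity analysis of $\Gamma$-paths (they alternate between horizontal steps at even-parity vertices and vertical steps at odd-parity vertices) shows that the shortest $\Gamma$-path from $0$ to any $w \in L$ has length exactly $2\|w\|_\infty$, reducing the requirement to $\langle \varphi, w\rangle \le \tfrac{1}{2}\|w\|_\infty$ for all $w \in L$. Because $L$ contains vectors of the form $(\pm k, \pm k)$ in all four sign combinations, the supremum of $\langle \varphi, w\rangle / \|w\|_\infty$ over $w \in L \setminus \{0\}$ equals $\|\varphi\|_1$, so this condition is equivalent to $\|\varphi\|_1 \le \tfrac{1}{2}$, precisely the hypothesis $\varphi \in Q$. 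This dovetailing of the same parity-staircase analysis on both sides is the heart of the argument.
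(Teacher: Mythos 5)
Your proposal is correct and follows essentially the same route as the paper: the forward inclusion via diagonal staircase paths bounding $h$ on vectors $(\pm k,\pm k)\in L$ (the paper packages this as $h\le h_{\max}$ together with $h_{\max}(\pm x,\pm x)=2\lvert x\rvert$ from Proposition \ref{hmaxfor} and Lemma \ref{xx}), and the reverse inclusion via the same candidate $h_{\max}^{L,\varphi}(w)=\min_{u\in L}\big(4\langle\varphi,u\rangle+h_{\max}(w-u)\big)$, whose viability rests on exactly the estimate $\langle\varphi,w\rangle\le\tfrac12\lVert w\rVert_\infty$ for $w\in L$ that you identify. Two cosmetic quibbles: with the paper's coloring the positively oriented edges leaving an even-parity vertex are vertical rather than horizontal (which only swaps which staircase you write down), and ``well-definedness'' also requires that the minimum over the infinite set $L$ be attained at every $w\in\Z^2$, not just that it take the right value on $L$ --- but this follows from your same estimate combined with $\lvert h_{\max}(v)-2\lVert v\rVert_\infty\rvert\le 1$ and the integrality of the expression, as in the paper's Lemma \ref{mindiamond}.
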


Its proof will be given by Propositions \ref{diamond}  and \ref{conversediamond}, each showing one of the inclusions.

\begin{prop}\label{diamond}
For any valid lattice $L$, $\mathscr{F}(L) \subset L^\# \cap Q$.
\end{prop}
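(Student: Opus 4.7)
The plan is to complement the inclusion $\mathscr{F}(L) \subset L^{\#}$ from Section \ref{sec:lsharp} with the bound $\|\varphi_t\|_1 \leq \tfrac{1}{2}$ for every tiling $t$ of $\T_L$, which together give $\varphi_t \in L^{\#} \cap Q$.

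Identifying $\varphi_t$ with a vector in $\R^2$ via the inner product and using $\ell^1$--$\ell^\infty$ duality, one has
\[ \|\varphi_t\|_1 = \max\bigl\{ |\langle \varphi_t, (1,1)\rangle|,\; |\langle \varphi_t, (1,-1)\rangle| \bigr\}, \]
so it is enough to bound each of $|\langle \varphi_t, (1, \pm 1)\rangle|$ by $\tfrac{1}{2}$. The vectors $(1, \pm 1)$ need not lie in $L$, but both belong to the enclosing lattice $L_0$ spanned by $\{(2,0),(1,1)\}$, and $L$ has finite index in $L_0$; so I pick a positive integer $N$ with $u_\pm = N(1, \pm 1) \in L$. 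Iterating Proposition \ref{hsqtorogen} then supplies the quasiperiodicity relation $h_t(u) - h_t(0) = 4\varphi_t(u)$ for every $u \in L$, and so the desired estimate reduces to showing $|h_t(u_\pm) - h_t(0)| \leq 2N$.

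The height bounds come from two staircase paths of length $2N$ between $(0,0)$ and $u_+ = (N,N)$: a ``right--down'' staircase $\gamma_+$ visiting $(0,0), (1,0), (1,1), (2,1), \ldots, (N,N)$, and the complementary ``down--right'' staircase $\tilde\gamma_+$. A direct application of the clockwise-around-black-squares rule shows that every edge of $\gamma_+$ is the top or right side of a diagonal black square $[k, k+1]^2$, and is therefore oriented in the direction of travel; dually, every edge of $\tilde\gamma_+$ is oriented against it. Since $h_t$ changes by $+1$ along a well-oriented edge lying on $t$ and by $-3$ when the edge is crossed by a domino, traversal of $\gamma_+$ yields $h_t(N,N) - h_t(0,0) = 2N - 4d \leq 2N$, while $\tilde\gamma_+$ yields $h_t(N,N) - h_t(0,0) \geq -2N$. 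Hence $|\langle \varphi_t, (1,1)\rangle| \leq \tfrac{1}{2}$. The analogous pair of staircases connecting $(0,0)$ to $u_- = (N,-N)$, hugging the black squares $[k, k+1] \times [-k, -k+1]$ along the anti-diagonal, handles $|\langle \varphi_t, (1,-1)\rangle|$ identically.

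The main obstacle I anticipate is the edge-orientation verification for the diagonal staircases: the sharp constant $\tfrac{1}{2}$ arises precisely because every edge of the chosen staircase is in (or against) the direction of travel, and this monotonicity relies on the staircase tracing successive clockwise arcs of black squares on the (anti-)diagonal. Once the staircases are verified to be \emph{monotone} in this sense, the bound follows at once.
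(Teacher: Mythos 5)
Your proposal is correct and is essentially the paper's own argument: the paper obtains the same two-sided bound $\lvert h_t(\pm x,\pm x)\rvert \leq 2\lvert x\rvert$ at diagonal points of $L$ (supplied by Lemma \ref{xx}) by invoking $h_t \leq h_{\max}$ together with the formula $h_{\max}(\pm x,\pm x) = 2\lVert v\rVert_\infty$ of Proposition \ref{hmaxfor}, which is proved by exactly your orientation-respecting staircase computation, and then passes from $\pm x_t \pm y_t \leq \tfrac12$ to $\lVert \varphi_t\rVert_1 \leq \tfrac12$ just as you do. One small slip worth fixing: with the paper's conventions the staircase $(0,0),(1,0),(1,1),(2,1),\dots$ runs along the \emph{bottom} and right sides of the diagonal black squares and therefore \emph{reverses} edge orientation (it gives the lower bound $-2N$), while the complementary staircase through $(0,1),(1,1),(1,2),\dots$ respects it and gives the upper bound; since you use both paths, only the labels are swapped and the conclusion is unaffected.
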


For the proof of Proposition \ref{diamond}, we will need to develop new techniques.

There is a height function $h_{\max}$ on $\Z^2$ that is maximal over height functions $h$ on $\Z^2$ with $h(0)=0$.
Before providing a characterization, recall that a finite \emph{edge-path} in a quadriculated region $R$ is a sequence of vertices $(p_n)_{n=0}^m$ in $R$ such that $p_j$ is neighbor to $p_{j+1}$ for all $j=0,\dots, m-1$;
in this case, it's clear $p_jp_{j+1}$ is an edge in $R$ joining those two vertices.
We say an edge-path $(p_n)_{n=0}^m$ joins $p_0$ (its starting point) to $p_m$ (its endpoint) and has length $m$.
We will also consider infinite edge-paths: those with no starting point, those with no endpoint, and those with neither a starting point nor an endpoint.
In the last case, we say the edge-path is \emph{doubly-infinite}.
Notice the ordering of an edge-path's vertices imbues its edges with a natural orientation, and it need not agree with the natural orientation of $R$'s edges (induced by the coloring).

Given a tiling $t$ of $R$, an edge-path in $t$ is an edge-path in $R$ such that each of its edges are in $t$ (that is, none of its edges cross a domino in $t$).

For $v,w \in \Z^2$, let $\Gamma(v,w)$\label{def:gammavw} be the set of all edge-paths in $\Z^2$ joining $v$ to $w$ that respect edge orientation (as induced by the coloring of $\Z^2$).

Finally, let $\mathbcal{H}_0(R)$\label{def:h0r} be the set of height functions $h$ on $R$ with $h(0)=0$.
We are now ready to state the characterization.

\begin{prop}[Characterization of $h_{\max}$]\label{hmax}
Consider the infinite black-and-white square lattice $\Z^2$ (with $[0,1]^2$ black) and let $h_{\max} \in \mathbcal{H}_0(\Z^2)$ be its maximal height function.
Then $$h_{\max}(v) = \min\limits_{\gamma \in \Gamma(0,v)} l(\gamma),$$ where $l(\gamma)$ is the length of $\gamma$.
\end{prop}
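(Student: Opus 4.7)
The plan is to establish both inequalities between $h_{\max}(v)$ and $f(v) := \min_{\gamma \in \Gamma(0,v)} l(\gamma)$, and for this the key observation will be that along any orientation-respecting edge traversed from $u$ to $w$, any height function changes by exactly $+1$ (if the edge belongs to the associated tiling) or exactly $-3$ (otherwise).

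First I would prove the upper bound $h_{\max}(v) \leq f(v)$. Given any $h \in \mathbcal{H}_0(\Z^2)$ and any $\gamma = (0 = p_0, p_1, \ldots, p_l = v) \in \Gamma(0,v)$, the observation above yields $h(p_{j+1}) - h(p_j) \in \{+1, -3\}$, so $h(p_{j+1}) - h(p_j) \leq 1$ and telescoping gives $h(v) \leq l(\gamma)$. Taking the minimum over $\gamma$ and then the supremum over $h$ (in particular $h = h_{\max}$) gives $h_{\max}(v) \leq f(v)$.

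Next, I would show $f$ is itself an element of $\mathbcal{H}_0(\Z^2)$, which by Proposition \ref{hsquare} reduces to verifying: (i) $f(0)=0$ (immediate from the trivial path), (ii) $f(v) \equiv \Phi(v) \pmod 4$, and (iii) $|f(v) - f(u)| \leq 3$ along every edge. For (ii), fix any tiling of $\Z^2$ with height function $h$: since every edge step along $\gamma \in \Gamma(0,v)$ changes $h$ by $+1$ or $-3$, both $\equiv 1 \pmod 4$, we get $l(\gamma) \equiv h(v) - h(0) \equiv \Phi(v) \pmod 4$; so every path length has the correct residue, and so does the minimum. For (iii), if $e$ is an oriented edge from $u$ to $v$, then appending $e$ to a shortest path in $\Gamma(0,u)$ produces an element of $\Gamma(0,v)$, giving $f(v) \leq f(u)+1$; for the reverse direction, observe that $e$ lies on the boundary of a black unit square whose clockwise traversal provides an orientation-respecting path of length $3$ from $v$ back to $u$, so concatenating with a shortest path in $\Gamma(0,v)$ yields $f(u) \leq f(v)+3$. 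Combining, $-1 \leq f(v) - f(u) \leq 3$ in the wrong-signed sense, or equivalently $|f(v)-f(u)| \leq 3$.

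Finally, since $f \in \mathbcal{H}_0(\Z^2)$ and $h_{\max}$ is maximal over this set, $f \leq h_{\max}$; combined with the first step, $f = h_{\max}$. I expect the main subtlety to be the lower bound argument in (iii): one must use the right face (the black one, so that the three remaining oriented edges go in the correct cyclic direction) to construct the length-$3$ return path, and this is exactly where the clockwise-on-black convention is used.
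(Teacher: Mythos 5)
Your proposal is correct and follows essentially the same route as the paper: the upper bound via the $+1$/$-3$ change along orientation-respecting edges, then showing the min-length function lies in $\mathbcal{H}_0(\Z^2)$ (checking the mod 4 prescription and the length-$3$ detour around the black square adjacent to an edge) so that maximality forces equality. The only cosmetic slip is the inequality chain in (iii), which should read $-3 \leq f(v)-f(u) \leq 1$, but this still yields $|f(v)-f(u)|\leq 3$ as needed.
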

\begin{proof}
Fix $v \in \Z^2$.
We claim that for all $\gamma \in \Gamma(0,v)$ and for all $h \in \mathbcal{H}_0(\Z^2)$ it holds that $h(v) \leq l(\gamma)$.
Indeed, the constructive definition of height functions implies that whenever an edge in $\gamma$ is traversed, $h$ changes by $+1$ if that edge is on $t$ and by $-3$ otherwise, so an induction on the length of $\gamma$ justifies the claim.

Since $\Gamma(0,v)$ is never empty, it follows that any $h \in \mathbcal{H}_0( \Z^2)$ satisfies $h(v) \leq \min_{\gamma \in \Gamma(0,v)} l(\gamma)$.
Letting go of the requirement that $v \in \Z^2$ be fixed, this inequality then holds for all $v \in \Z^2$.

Now define $h_M: \Z^2 \longrightarrow \Z$ to be the function given by $h_M(v) = \min_{\gamma \in \Gamma(0,v)} l(\gamma)$, so that $h(v) \leq h_M(v)$ for all $v \in \Z^2$ and for all $h \in \mathbcal{H}_0(\Z^2)$.
If we show that $h_M \in \mathbcal{H}_0(\Z^2)$, it follows immediately that $h_M = h_{\max}$ and the proposition is proved.

By inspection, $h_M(0) = 0$.
Using Proposition \ref{hsquare}, it's easy to verify $h_M$ is a height function.
Indeed, property 1 follows from the fact that edge-paths in $\Gamma(0,v)$ respect edge orientation.
For property 2, it suffices to check that any two neighboring vertices in $\Z^2$ can always be joined by an edge-path that respects edge orientation and has length at most three: either the edge joining those two vertices, or the edge-path going round a square that contains those two vertices.\end{proof}

There is elegance to the simplicity of this rather abstract proof, but it does little to shed light on the structure and properties of $h_{\max}$; our next proposition addresses this.
In addition, we provide an image of $h_{\max}$ along with its associated tiling $t_{\max}$; see Figure \ref{fig:hmax}.
\begin{figure}[ht]
    \centering
    \def\svgwidth{0.8\columnwidth}
    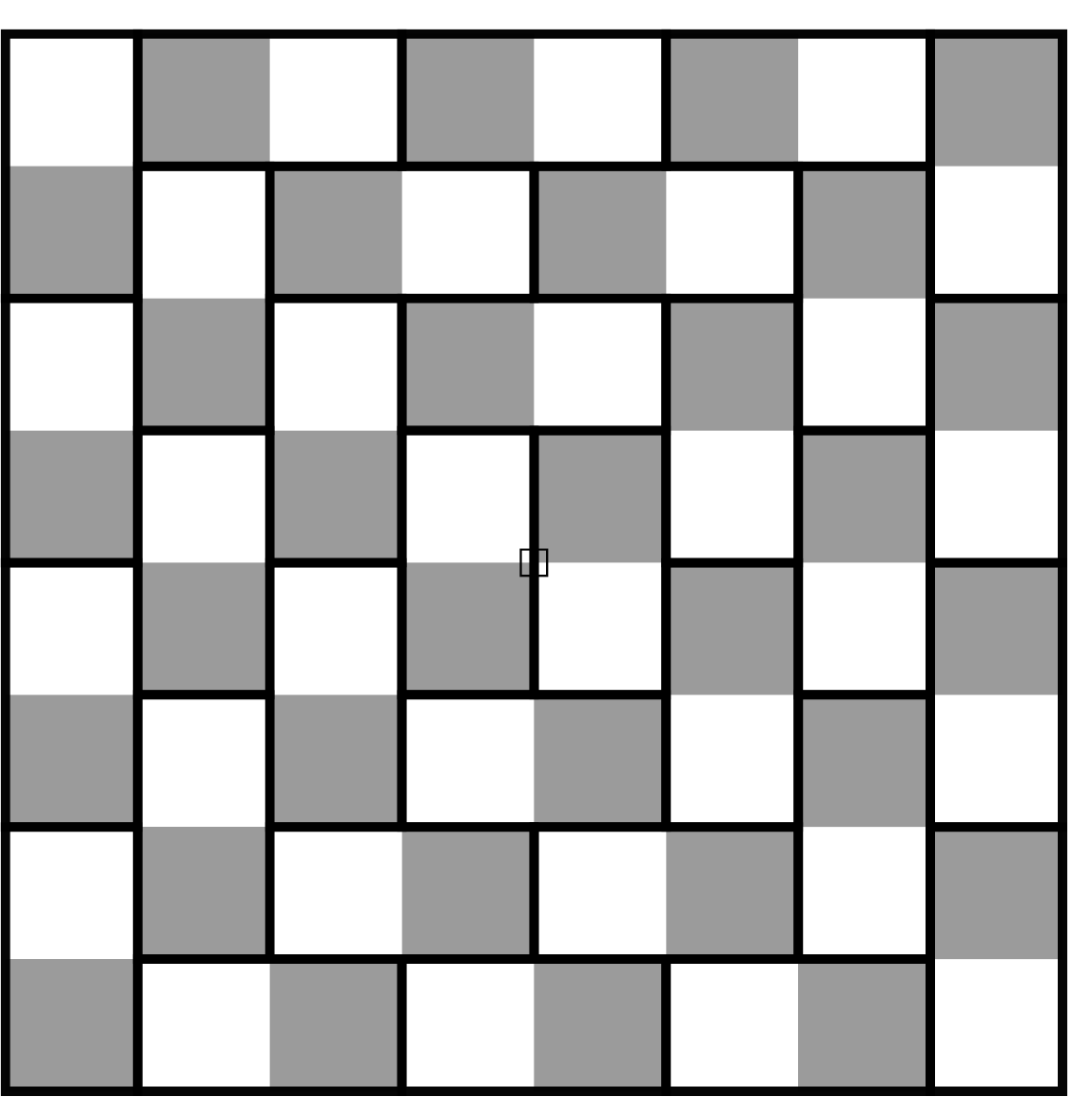
		\caption{The tiling $t_{\max}$ and its associated height function $h_{\max}$. The marked vertex is the origin. Notice its only local extremum is the origin, a minimum, and it is not the height function of any torus (since it is not quasiperiodical).}
		\label{fig:hmax}
\end{figure}

\begin{prop}\label{hmaxfor}
Let $v=(x_1,x_2) \in \Z^2$.
If $x_1 \equiv x_2 \textnormal{ (mod $2$)}$, then
\begin{equation}\label{hmaxmod2}
h_{\max}(v) = 2 \cdot \lVert v \rVert_\infty.
\end{equation}

More generally, it holds that 
\begin{equation}\label{hmaxgen}
\big| h_{\max}(v) - 2 \cdot \lVert v \rVert_\infty \big| \leq 1.
\end{equation} 
\end{prop}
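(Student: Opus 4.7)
The plan is to apply Proposition \ref{hmax}, which reduces computing $h_{\max}(v)$ to minimizing the length of an edge-path in $\Gamma(0,v)$. The first ingredient is a structural observation about the coloring-induced orientation: examining the four squares meeting at a vertex $w=(a,b)$, one checks that the outgoing edges at $w$ are the two vertical ones (N and S) if $a+b$ is even, and the two horizontal ones (E and W) if $a+b$ is odd. Consequently, every $\gamma \in \Gamma(0,v)$ strictly alternates vertical and horizontal moves, and since the origin is of the even type, $\gamma$ begins with a vertical move.

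Given this alternation, a path of length $\ell$ uses $\lceil \ell/2 \rceil$ vertical steps and $\lfloor \ell/2 \rfloor$ horizontal steps, so its endpoint has coordinate sum of the same parity as $\ell$. Writing the moves as $c$ north, $d$ south, $a$ east, $b$ west with $a,b,c,d \geq 0$, the path reaches $v=(x_1,x_2)$ precisely when $a-b=x_1$, $c-d=x_2$, $a+b=\lfloor \ell/2 \rfloor$, and $c+d=\lceil \ell/2 \rceil$. Conversely, any such tuple $(a,b,c,d)$ is realized by an actual path, since at each step the type of move is forced by the parity of the current vertex but the direction within that type is a free choice.

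A short case analysis then finishes the proof. If $x_1 \equiv x_2 \pmod 2$, the length $\ell=2k$ is even and feasibility reduces to $k \geq \max(|x_1|,|x_2|)$ with $k \equiv x_1 \equiv x_2 \pmod 2$. Since $\lVert v \rVert_\infty - |x_1|$ and $\lVert v \rVert_\infty - |x_2|$ are both nonnegative and even under the hypothesis, the minimum admissible $k$ is $\lVert v \rVert_\infty$, giving $h_{\max}(v)=2\lVert v \rVert_\infty$, i.e.\ \eqref{hmaxmod2}. If $x_1 \not\equiv x_2 \pmod 2$, then $\ell=2k+1$ is odd and $k$ is constrained by $k \geq |x_1|$, $k+1 \geq |x_2|$, and $k \equiv x_1 \pmod 2$; note also $|x_1| \neq |x_2|$ in this regime. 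When $|x_1|>|x_2|$ the minimum is $k=|x_1|=\lVert v \rVert_\infty$, producing $h_{\max}(v)=2\lVert v \rVert_\infty+1$; when $|x_2|>|x_1|$ the parity condition is automatic for $k=|x_2|-1$, so the minimum is $k=\lVert v \rVert_\infty-1$ and $h_{\max}(v)=2\lVert v \rVert_\infty-1$. Either way $|h_{\max}(v)-2\lVert v \rVert_\infty|=1$, yielding \eqref{hmaxgen}.

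The main obstacle is setting up the alternation correctly: the upper and lower bounds for $h_{\max}(v)$ are both transparent once one knows that outgoing directions at every vertex depend only on the parity of its coordinate sum. After that, the argument is a piece of elementary lattice-path bookkeeping, and no further subtlety beyond tracking the parity constraint on $k$ intervenes.
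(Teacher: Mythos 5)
Your proposal is correct and follows essentially the same route as the paper: both apply Proposition \ref{hmax}, derive the strict vertical/horizontal alternation of paths in $\Gamma(0,v)$ from the parity-determined edge-profile at each vertex, and then reduce the problem to elementary bookkeeping on the numbers of N/S/E/W steps. The only divergence is in the odd-parity case, where the paper truncates the last edge of a minimal path and reduces to the even case, whereas you solve the feasibility system for odd lengths directly --- a slightly cleaner computation that in fact yields the exact values $2\lVert v\rVert_\infty+1$ (when $\lvert x_1\rvert>\lvert x_2\rvert$) and $2\lVert v\rVert_\infty-1$ (when $\lvert x_2\rvert>\lvert x_1\rvert$) rather than just the inequality~\eqref{hmaxgen}.
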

\begin{proof}
The idea is to describe edge-paths $\gamma \in \Gamma(0,v)$ with minimal length.
Because of Proposition \ref{hmax}, the constructive definition of height functions implies any such $\gamma$ is an edge-path not only in $\Z^2$, but also in $t_{\max}$.
The explicit construction of these paths will allow us to derive relations~\eqref{hmaxmod2} and~\eqref{hmaxgen}.

\paragraph{}We introduce the concept of \textit{edge-profiles}\label{def:edgeprofile} round a vertex.
When horizontal edges round a vertex point toward it and vertical edges round that vertex point away from it, we say the edge-profile round that vertex is type-0.
When horizontal edges round a vertex point away from it and vertical edges round that vertex point toward it, we say the edge-profile round that vertex is type-1.
It's clear those are the only possible cases, see the image below.
\begin{figure}[H]
		\centering
		\includegraphics[width=0.55\textwidth]{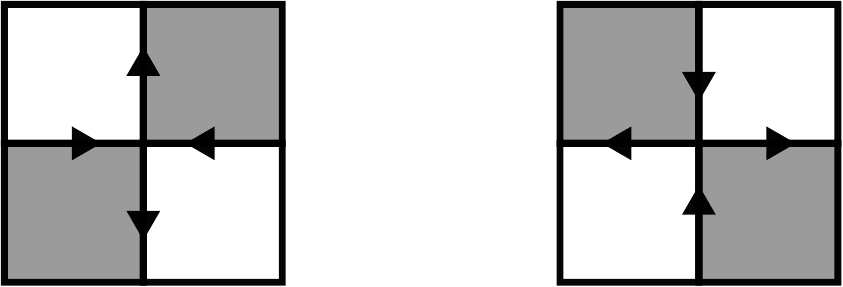}
		\caption{The two edge-profiles; type-0 to the left and type-1 to the right.}
\end{figure}

Notice the edge-profile round a vertex depends only on the region (and not on a tiling of the region).
Moreover, two neighbouring vertices will always have distinct edge-profiles, so that any edge-path on a region will always feature successive vertices with alternating edge-profiles.

This means an edge-path that respects edge orientation will necessarily alternate between vertical and horizontal edges, correspondingly as that edge emanates from a vertex with edge-profile respectively type-0 and type-1.
On the other hand, whenever an edge-path alternates between vertical and horizontal edges, it either always respects orientation (if vertical edges emanate from vertices with edge-profile type-0) or always reverses orientation (if vertical edges emanates from vertices with edge-profile type-1).
This is the content of Corollary \ref{pathorien} below.

We can now characterize edge-paths in $\Gamma(0,v)$.
Since the edge-profile round the origin in $\Z^2$ (as we have colored it) is type-0, any edge-path in $\Gamma(0,v)$ is an alternating sequence of vertical and horizontal edges, starting from the origin with a vertical edge and ending in $v$.

Consider then the vectors $e_1=(1,0)$ and $e_2=(0,1)$.
By the characterization, any edge-path in $\Gamma(0,v)$ can be uniquely represented as an ordered sum of $\pm e_i$ in which the first term is either $e_2$ or $-e_2$ and no two consecutive terms are collinear vectors.
It's that clear the length of an edge-path in this representation is simply the number of terms in the ordered sum.
Furthermore, because an edge-path in $\Gamma(0,v)$ starts at the origin, if we carry out the sum of this unique representation the result \textbf{is} in fact the vector $v \in \Z^2$.

How does the ordered sum representation\label{def:orderedsumrepresentation} of a path $\gamma \in \Gamma(0,v)$ with minimal length look like?
Let $v = x_1\cdot e_1+x_2\cdot e_2$ be a vertex in $\Z^2$ and $i,j \in \{1,2\}$ be different indices with $\lvert x_j \rvert \geq \lvert x_i \rvert$.
The ordered sum representing $\gamma$ will have exactly $\lvert x_j \rvert$ terms of the form $\pm e_j$, all of them with sign given by $\sgn(x_j)$.
Notice they add up to $x_j \cdot e_j$, and no smaller number of $\pm e_j$ terms does so.

Similarly, the ordered sum will feature $\lvert x_i \rvert$ terms of the form $\pm e_i$, all of them with sign given by $\sgn(x_i)$, adding up to $x_i \cdot e_i$.
Because $\lvert x_j \rvert \geq \lvert x_i \rvert$, in order for the ordered sum to fulfill the requirement that it be alternating in $\pm e_1$ and $\pm e_2$, it must have a number $m$ (possibly zero) of additional $\pm e_i$ terms.
Since the ordered sum starts with a $\pm e_2$ term and sums to $v$, $m$ is uniquely defined.

It is clear that whenever $\gamma$ has an ordered sum representation described as above, $\gamma \in \Gamma(0,v)$.
Additionally, no path in $\Gamma(0,v)$ may have smaller length, for the unique ordered sum representation was chosen to have the smallest possible number of terms.
By Proposition \ref{hmax}, $h_{\max}(v) = l(\gamma)$. We provide a example of this construction for $v = (4,-1)$ in Figure \ref{fig:ordsum}.
\begin{figure}[H]
		\vspace{2cm}
    \centering
    \def\svgwidth{\columnwidth}
    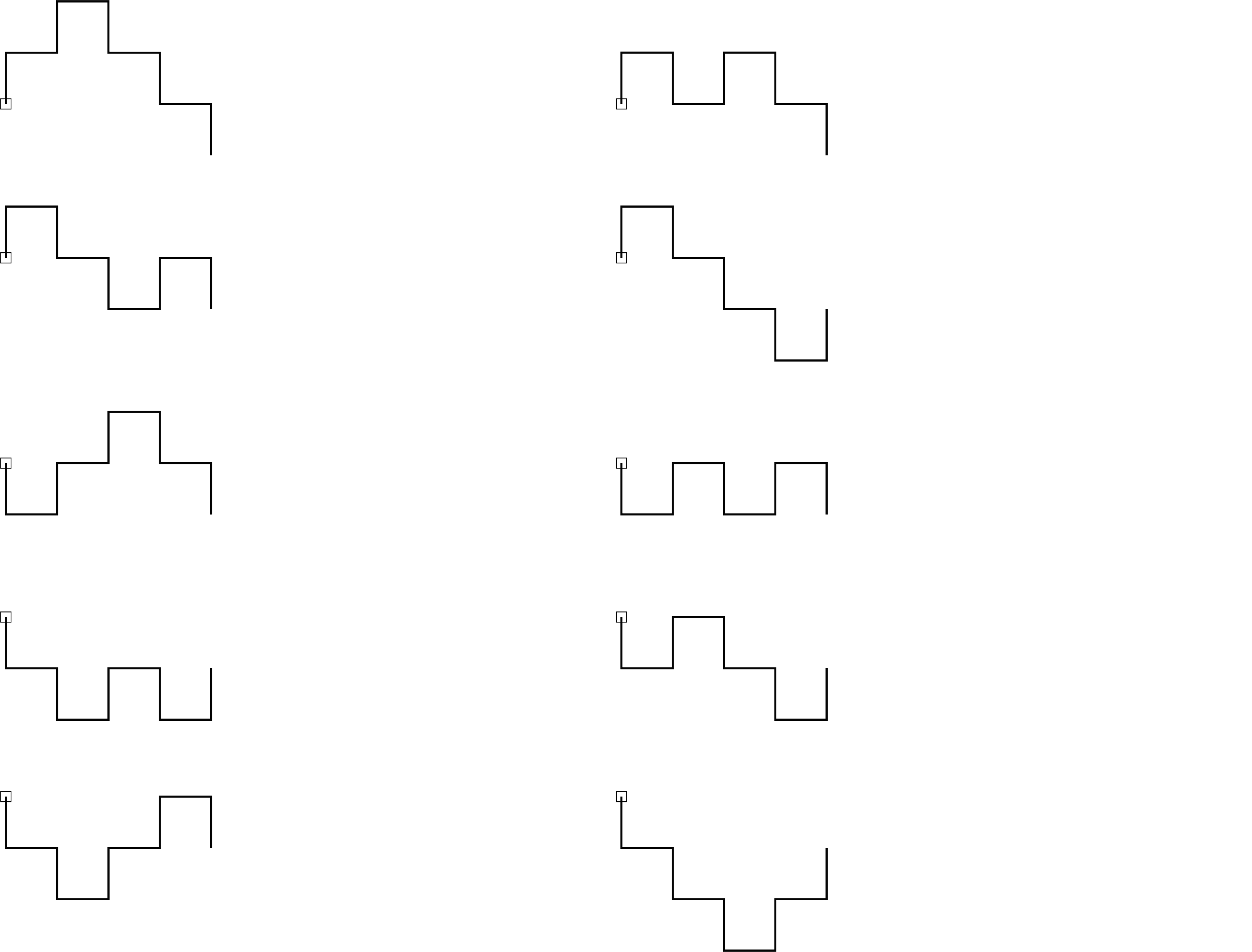
		\caption{The paths in $\Gamma\big(0,(4,-1)\big)$ with minimal length, along with their ordered sum representation. The marked vertex is the origin.}
		\label{fig:ordsum}
\end{figure}

Notice this analysis ensures all of $\gamma$'s horizontal edges or all of $\gamma$'s vertical edges have the same orientation (possibly both); see Figure \ref{fig:sameorient}.
This fact will be used in Lemma \ref{lemapath} later.
\begin{figure}[ht]
\vspace{0.7cm}
    \centering
    \def\svgwidth{0.6\columnwidth}
    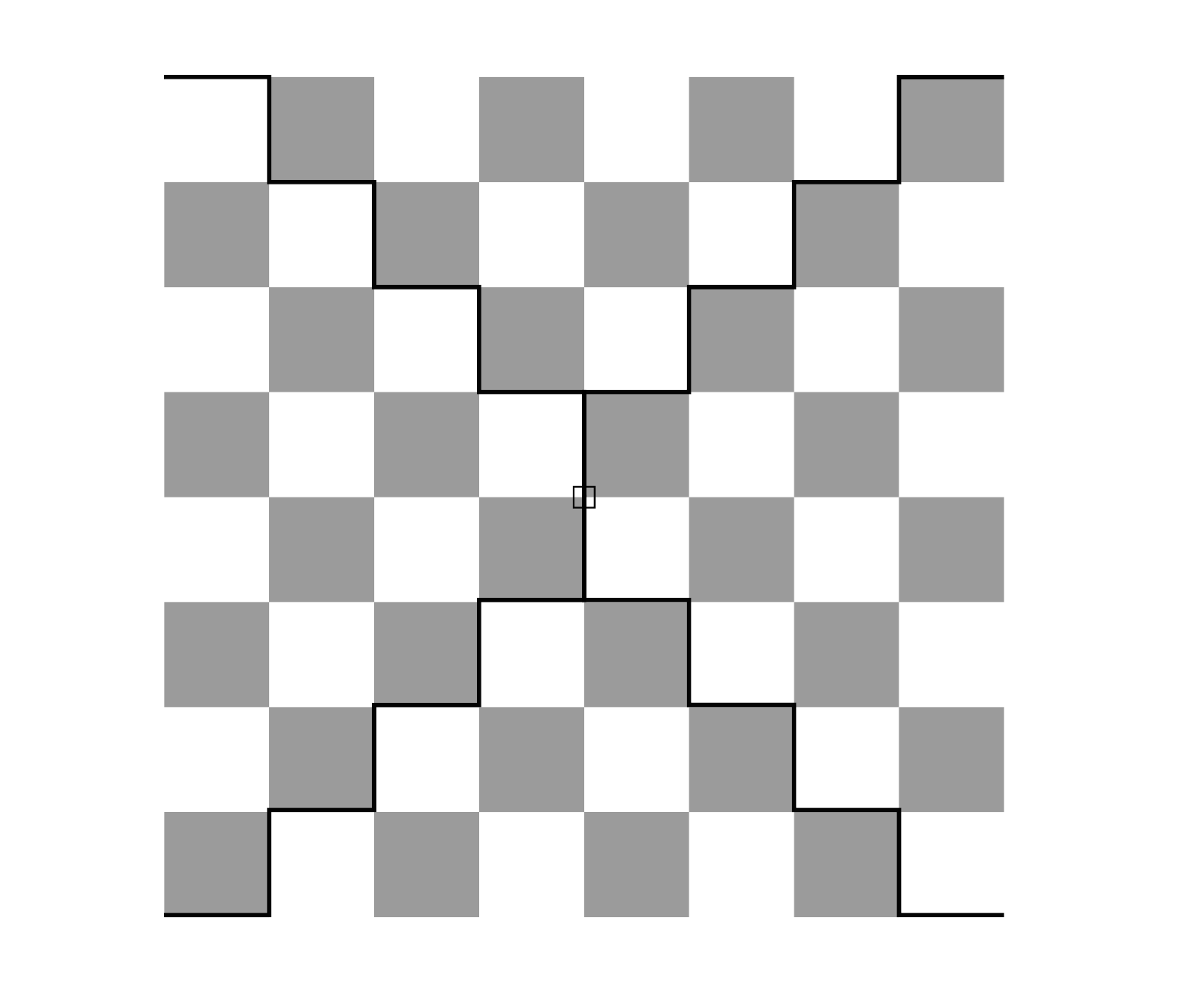
		\vspace{0.7cm}
		\caption{For each of the four regions above, if $v$ belongs to that region, the edges of any $\gamma \in \Gamma(0,v)$ with minimal length satisfy the corresponding property. The marked vertex is the origin.}
		\label{fig:sameorient}
\end{figure}

In the construction above, $m$ is always even.
Indeed, the number of plus signs and the number of minus signs in the additional $m$ terms of the form $\pm e_i$ must be equal, for otherwise they would not add up to 0.
When $x_1 \equiv x_2 \Mod{2}$, $\lvert x_j \rvert - \lvert x_i \rvert$ is even, and in this case it's easy to see we can take $m = \lvert x_j \rvert - \lvert x_i \rvert$.
This implies the ordered sum representation has a total of $2 \cdot \lvert x_j \rvert$ terms, so formula~\eqref{hmaxmod2} is proved.

It remains to prove inequality~\eqref{hmaxgen}.
Formula~\eqref{hmaxmod2} means it trivially holds whenever $x_1 \equiv x_2 \Mod{2}$, so we need only check when $x_1$ and $x_2$ have different mod 2 values.
In particular, we may assume $\lvert x_j \rvert > \lvert x_i \rvert$ (the inequality is strict).

Let $\gamma \in \Gamma(0,v)$ have minimal length.
Consider the edge-path $\tilde{\gamma}$ obtained from $\gamma$ by removing its last edge $e$. It is an edge-path in $\Gamma(0, v-e)$ with minimal length, for otherwise $\gamma \in \Gamma(0,v)$ would not have minimal length. This implies the equality $h_{\max}(v) = h_{\max}(v-e) + 1$.

Write $v-e = (y_1,y_2)$.
Observe that $v-e$ is obtained from $v$ by changing one of its coordinates by $\pm 1$.
Since $x_1$ and $x_2$ have different mod 2 values, it follows that $y_1 \equiv y_2 \Mod{2}$, and formula~\eqref{hmaxmod2} applies: $h_{\max}(v-e) = 2 \cdot \max \{ \lvert y_1 \rvert, \lvert y_2 \rvert \}$.
Combining the two equalities yields
\begin{equation}\label{hmaxineq}
h_{\max}(v) - 2 \cdot \max \{ \lvert y_1 \rvert, \lvert y_2 \rvert \} = 1
\end{equation}

There are two cases: (1) $e$ is of the form $\pm e_i$; and (2) $e$ is of the form $\pm e_j$.

In case (1), $v$'s $x_i$ coordinate is changed by $\pm 1$, so $\lvert y_j \rvert =\lvert x_j \rvert \geq \lvert y_i \rvert$.
Substituting into~\eqref{hmaxineq}, the inequality holds.

In case (2), because $\lvert x_j \rvert > \lvert x_i \rvert$, all of $\gamma$'s edges of the form $\pm e_j$ have the same orientation.
This implies $\lvert y_j \rvert =\lvert x_j \rvert - 1 \geq \lvert x_i \rvert = \lvert y_i \rvert$.
Once again, substituting into~\eqref{hmaxineq} the inequality holds, and we are done.
\end{proof}

\begin{corolario}\label{pathorien}Let $R$ be a planar region and $\gamma$ an (oriented) edge-path in $R$.
Then the following are equivalent:
\begin{itemize}
\item $\gamma$ always respects or always reverses edge orientation (as induced by the coloring of $R$);
\item $\gamma$'s edges alternate between horizontal and vertical.
\end{itemize}
\end{corolario}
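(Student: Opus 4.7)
The plan is to exploit the edge-profile framework introduced in the proof of Proposition \ref{hmaxfor}. Two facts drive the argument: every vertex of $R$ carries one of two edge-profiles (type-0 or type-1), determined entirely by the coloring, and any two adjacent vertices carry opposite types. Consequently, along any edge-path $\gamma = (p_0, p_1, \ldots, p_m)$ the profiles alternate. It will be convenient to call the $k$-th edge $e_k = p_{k-1}p_k$ \emph{forward} if its coloring-induced orientation agrees with $\gamma$'s traversal direction, and \emph{backward} otherwise.

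For the implication ``alternating horizontal/vertical $\Rightarrow$ uniformly forward or uniformly backward'', I would argue vertex by vertex. At each internal vertex $p_k$, the edges $e_k$ and $e_{k+1}$ are of different types by hypothesis; the edge-profile at $p_k$ dictates that one of the two types points toward $p_k$ and the other points away. A short case check (two profile types at $p_k$, two possible orderings of horizontal vs.\ vertical for $e_k, e_{k+1}$) shows that $e_k$ and $e_{k+1}$ are always either both forward or both backward. Since this equates the status of $e_{k+1}$ to that of $e_k$ at every internal vertex, the status propagates uniformly along $\gamma$.

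For the converse I would argue by contrapositive: if two consecutive edges $e_k, e_{k+1}$ share the same type (say, both horizontal), then the profile at their shared vertex $p_k$ forces both edges to either both point toward $p_k$ or both point away from $p_k$. In the first case $e_k$ is forward while $e_{k+1}$ is backward; in the second case the roles swap. Either way $\gamma$ contains both a forward and a backward edge, so it neither uniformly respects nor uniformly reverses orientation.

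No step is expected to present real difficulty: the whole argument is local at each vertex and reduces to a handful of cases. The only bookkeeping concern is matching each combination of profile and edge-type to the correct orientation, but the relevant dictionary has already been set up in the proof of Proposition \ref{hmaxfor}.
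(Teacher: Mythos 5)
Your proposal is correct and follows essentially the same route as the paper: the paper proves this corollary inside the proof of Proposition \ref{hmaxfor} by introducing edge-profiles, noting that adjacent vertices carry opposite profiles, and reading off the orientation of each edge locally from the profile of the vertex it emanates from. Your vertex-by-vertex case check and the contrapositive for the converse are exactly the content of that argument.
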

\begin{proof}See Proof of Proposition \ref{hmaxfor}.
\end{proof}

For $v,w \in \Z^2$, let $\Psi(v,w)$\label{def:psivw} be the set of all edge-paths in $\Z^2$ joining $v$ to $w$ that reverse edge orientation.
The techniques used to obtain the characterization of $h_{\max}$ can be very similarly employed to obtain a characterization of the minimal height function $h_{\min}$ on $\mathbcal{H}_0(\Z^2)$, and derive analogous results.

\begin{corolario}[Characterization of $h_{\min}$]\label{hminplano}
Consider the infinite black-and-white square lattice $\Z^2$ (with $[0,1]^2$ black) and let $h_{\min} \in \mathbcal{H}_0(\Z^2)$ be its minimal height function.
Then $$h_{\min}(v) = - \left( \min\limits_{\gamma \in \Psi(0,v)} l(\gamma)\right),$$ where $l(\gamma)$ is the length of $\gamma$.
Furthermore, if $v=(x_1,x_2) \in \Z^2$ and $x_1 \equiv x_2 \textnormal{ (mod $2$)}$, then
\begin{equation*}
h_{\min}(v) = -2 \cdot \lVert v \rVert_\infty.
\end{equation*}

More generally, it holds that 
\begin{equation*}
\big| h_{\min}(v) + 2 \lVert v \rVert_\infty \big| \leq 1.
\end{equation*} 
\end{corolario}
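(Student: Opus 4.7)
The proof follows the template of Proposition \ref{hmax} and Proposition \ref{hmaxfor}, with orientation-reversing paths taking the role of orientation-respecting ones.

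First I would establish the path-length formula. Fix $v \in \Z^2$, $h \in \mathbcal{H}_0(\Z^2)$, and $\gamma \in \Psi(0,v)$. Since each edge of $\gamma$ is traversed against its natural orientation, the constructive definition of height functions forces the change of $h$ across each step to be $-1$ (if the edge lies on the associated tiling) or $+3$ (otherwise). In particular every step contributes at least $-1$, so $h(v) \geq -l(\gamma)$; minimising over $\gamma$ yields $h(v) \geq -\min_{\gamma \in \Psi(0,v)} l(\gamma)$. Setting $h_m(v) := -\min_{\gamma \in \Psi(0,v)} l(\gamma)$, it remains to check via Proposition \ref{hsquare} that $h_m$ is itself a height function with $h_m(0) = 0$: the alternation of $\Psi$-paths (Corollary \ref{pathorien}) forces each such path to have length compatible with the prescribed mod 4 values, and any two neighbours in $\Z^2$ can be joined by a $\Psi$-path of length at most $3$ (the edge itself, if it reverses orientation, or else the length-three detour round an adjacent square), yielding the required edge-bound $|h_m(v)-h_m(w)| \leq 3$.

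Second, for the explicit formulas I would replay the ordered-sum argument of Proposition \ref{hmaxfor}. The origin has edge-profile type-0, so by Corollary \ref{pathorien} a $\Psi$-path emerging from it must begin with a horizontal edge (rather than vertical, as in the $\Gamma$ case) and then alternate. Representing such a path as an ordered sum in $\pm e_1, \pm e_2$ with first term $\pm e_1$ and no two consecutive collinear terms, the same counting as before gives a minimum length of $2\|v\|_\infty$ whenever $x_1 \equiv x_2 \pmod 2$, establishing the first formula for $h_{\min}$. The general inequality $|h_{\min}(v) + 2\|v\|_\infty| \leq 1$ then follows by the same last-edge recursion used in Proposition \ref{hmaxfor}: if a minimal $\Psi$-path $\gamma$ to $v$ has last edge $e$, then $\gamma$ minus $e$ is a minimal $\Psi$-path to $v-e$, and $v-e$ has coordinates of equal parity, to which the even case applies.

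The main obstacle is purely notational: one must be careful that the roles played by vertical and horizontal edges are swapped compared to Proposition \ref{hmaxfor}, since reversing orientation at a type-0 vertex forces the first step of the path to be horizontal instead of vertical. Alternatively, the whole corollary can be deduced from its $h_{\max}$ counterparts by the involution that translates the coloring by $(1,0)$, which reverses every edge orientation and negates height functions; the bookkeeping for the base vertex under this involution is the one subtlety, and is what I would avoid by the direct argument above. No genuinely new ideas are needed.
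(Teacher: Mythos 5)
Your proposal is correct and matches the paper's intent exactly: the paper's proof of this corollary is literally the one-line remark ``Similar to the proofs of Propositions \ref{hmax} and \ref{hmaxfor},'' and what you have written is precisely that adaptation carried out (sign of the per-edge height change flipped to $-1$/$+3$, first step of a minimal path horizontal rather than vertical because of the type-0 edge-profile at the origin, and the same last-edge recursion for the general inequality). No gaps.
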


\begin{proof}Similar to the proofs of Propositions \ref{hmax} and \ref{hmaxfor}.
\end{proof}

Before proving Proposition \ref{diamond}, we will need a quick lemma.

\begin{lema}\label{xx}Let $v,w$ be linearly independent vectors in $\Z^2$.
Then for each choice of signs in $(\pm x, \pm x)$, there is a nonzero integer linear combination of $v,w$ with that form.
\end{lema}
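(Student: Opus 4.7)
The plan is to exhibit, for each of the four sign patterns, an explicit nonzero pair $(m,n) \in \Z^2$ such that $mv + nw$ has the desired form. Write $v = (a,b)$ and $w = (c,d)$, so $mv + nw = (ma + nc,\, mb + nd)$. This vector lies on the line $y = x$ exactly when $m(a-b) + n(c-d) = 0$, and lies on $y = -x$ exactly when $m(a+b) + n(c+d) = 0$. I would handle the two diagonals separately; within each diagonal, negating the combination automatically flips both signs and gives the opposite ray.

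For the diagonal $y = x$, I propose the choice $(m,n) = (d-c,\, a-b)$. This pair is nonzero because linear independence of $v$ and $w$ prevents both from lying on $y = x$ simultaneously (which is exactly what $a=b$ and $c=d$ would force). A direct substitution then gives
\[
mv + nw = \bigl(ad - bc,\; ad - bc\bigr) = \bigl(\det(v,w),\, \det(v,w)\bigr),
\]
and $\det(v,w) \neq 0$ precisely by linear independence. Negating realizes $(-x,-x)$.

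Symmetrically, for the antidiagonal $y = -x$, I would take $(m,n) = (c+d,\, -(a+b))$. Again this is nonzero because $v, w$ cannot both lie on $y = -x$, and a parallel calculation yields $mv + nw = \bigl(\det(v,w),\, -\det(v,w)\bigr)$. Negating produces $(-x,\,x)$, covering the last sign pattern with the common value $x = \det(v,w) \neq 0$.

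There is no real obstacle beyond guessing the correct ansatz: the same mechanism — linear independence of $v, w$ ruling out simultaneous vanishing of the diagonal (or antidiagonal) components — serves twice, once to ensure the coefficient pair is nonzero and once to ensure the resulting coordinate is nonzero.
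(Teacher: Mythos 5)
Your proposal is correct and uses exactly the same coefficients as the paper's own proof ($(d-c,\,a-b)$ for the diagonal and $(c+d,\,-(a+b))$ for the antidiagonal, both producing the vector with common entry $\det(v,w)\neq 0$). The only cosmetic difference is that you spell out the (ultimately superfluous) check that the coefficient pair itself is nonzero, whereas the paper goes straight to the nonvanishing of the resulting combination via $\det(v,w)\neq 0$.
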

\begin{proof}Let $v = (a,b)$ and $w = (c,d)$.
It suffices to prove for $(x,x)$ and $(x,-x)$.

For the $(x,x)$ case, take $k=-c+d$ and $l=a-b$, so that $k\cdot v + l \cdot w = (ad-bc, ad-bc)$.
For the $(x,-x)$ case, take $k = c+d$ and $l=-a-b$, so that $k\cdot v + l \cdot w = (ad-bc, -(ad-bc))$.
In each case, the combination uses integer coefficients, and it is nonzero because $v$ and $w$ are linearly independent.
\end{proof}

We are now ready to prove Proposition \ref{diamond}.

\begin{proof}[Proof of Proposition \ref{diamond}]
Let $L$ be a valid lattice and $t$ a tiling of $\T_L$ with flux $\varphi_t$ and height function $h_t$.
It suffices to show that $\varphi_t \in Q$.

For any $(x,y) \in L$, we have that $\varphi_t(x,y)=\frac{1}{4}h_t(x,y)$.
Of course, this means $\varphi_t(x,y) \leq \frac{1}{4}h_{\max}(x,y)$ for any $(x,y) \in L$.
Because $L$ is a valid lattice, it is generated by two linearly independent vectors.
By Lemma \ref{xx}, for any choice of signs in $(\pm x, \pm x)$, there is a vector in $L$ with that form.
Proposition \ref{hmaxfor} then implies $\varphi_t(\pm x,\pm x) \leq \frac{1}{2}\lvert x \rvert$, so $\langle \varphi_t,(\pm 1, \pm 1) \rangle \leq \frac{1}{2}$.
Writing $\varphi_t = (x_t, y_t)$, it then holds that $\pm x_t \pm y_t \leq \frac{1}{2}$.

In particular, there is a choice of signs in the previous inequality that yields $\lVert \varphi_t \rVert_1 = \lvert x_t \rvert + \lvert y_t \rvert \leq \frac{1}{2}$, so $\varphi_t \in Q$ as desired.
\end{proof}

We now provide the remaining inclusion in Theorem \ref{fluxcarac}.

\begin{prop}\label{conversediamond}
For any valid lattice $L$, $\mathscr{F}(L) \supset L^\# \cap Q$.
\end{prop}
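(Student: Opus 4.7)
The plan is to construct, for each $\varphi \in L^\# \cap Q$, a toroidal height function of $\T_L$ realizing the flux $\varphi$. Mimicking Proposition \ref{hmax}, I would define $h^{L,\varphi}_{\max}: \Z^2 \to \Z$ by
\[
h^{L,\varphi}_{\max}(v) = \min_{w \in v + L} \bigl[ h_{\max}(w) - 4\langle \varphi, w - v \rangle \bigr].
\]
The motivation is that if $h$ is any toroidal height function of $\T_L$ with flux $\varphi$, then iterating the quasiperiodicity of Proposition \ref{hsqtorogen} gives $h(v) = h(w) - 4\langle \varphi, w-v \rangle$ for every $w \in v+L$, while also $h(w) \leq h_{\max}(w)$; so the formula above gives the pointwise supremum of all such $h$'s, and it remains to verify this supremum is itself a toroidal height function of the required type.

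First I would show the minimum is attained and that $h^{L,\varphi}_{\max}(0) = 0$. Since $L \subset \mathscr{E} \sqcup \mathscr{O}$, every $w \in L$ has coordinates of the same parity, so equation \eqref{hmaxmod2} gives $h_{\max}(w) = 2\lVert w \rVert_\infty$ exactly; combined with the $\ell^1$/$\ell^\infty$-duality bound $\lvert 4\langle \varphi, w\rangle \rvert \leq 4\lVert \varphi \rVert_1 \lVert w \rVert_\infty \leq 2\lVert w \rVert_\infty$ (using $\varphi \in Q$), this yields $h_{\max}(w) - 4\langle \varphi, w\rangle \geq 0$ for all $w \in L$ with equality at $w=0$, so $h^{L,\varphi}_{\max}(0) = 0$. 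For general $v$, the same bounds and the triangle inequality show the bracketed expression is bounded below; being integer-valued, the infimum is attained. Next I would verify the two conditions of Proposition \ref{hsquare}: for the mod-$4$ prescription, a case analysis on $\Phi$ shows $\Phi(v+v_0) - \Phi(v) \equiv 0 \pmod 4$ when $v_0 \in \mathscr{E}$ and $\equiv 2 \pmod 4$ when $v_0 \in \mathscr{O}$, matching the parities of $4\langle \varphi, v_0\rangle$ forced by the placement of $L^\#$ among the subsets ${L_{ij}}^*$ of Section \ref{sec:lsharp}, so $4\langle \varphi, w-v\rangle \equiv \Phi(w) - \Phi(v) \pmod 4$ for $w \in v+L$ and the bracketed term is $\equiv \Phi(v) \pmod 4$; for the edge-change bound, if $v'$ is a neighbor of $v$ and $w$ attains the minimum at $v$, then $w + (v'-v) \in v'+L$ is a competitor for $v'$ and the bound $\lvert h_{\max}(w) - h_{\max}(w+(v'-v))\rvert \leq 3$ transfers, with the symmetric argument handling the opposite direction.

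Finally, the $L$-quasiperiodicity of $h^{L,\varphi}_{\max}$ is immediate from the definition: substituting $v \mapsto v+v_i$ leaves the competitor set $v+L$ invariant (since $v_i \in L$) and shifts each bracketed term uniformly by $+4\langle \varphi, v_i\rangle$, giving $h^{L,\varphi}_{\max}(v+v_i) = h^{L,\varphi}_{\max}(v) + 4\langle \varphi, v_i\rangle$. By Propositions \ref{hsquare} and \ref{hsqtorogen}, $h^{L,\varphi}_{\max}$ is a toroidal height function of $\T_L$, and its associated tiling has flux $\varphi$, so $\varphi \in \mathscr{F}(L)$. The main obstacle I expect is the mod-$4$ compatibility check, as this is where the defining property $\varphi \in L^\#$ is used essentially; the hypothesis $\varphi \in Q$ enters only to pin down the value $h^{L,\varphi}_{\max}(0) = 0$ and ensure the infimum is finite.
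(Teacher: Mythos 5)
Your proposal is correct and follows essentially the same route as the paper: after the change of variables $w = v + u$ with $u \in L$, your $h^{L,\varphi}_{\max}$ is literally the paper's $\min_{u\in L}\bigl(4\langle\varphi,u\rangle + h_{\max}(v-u)\bigr)$, and your three verification steps (lower bound via $h_{\max}(u)=2\lVert u\rVert_\infty$ on $L$ and the $\ell^1$/$\ell^\infty$ bound from $\varphi\in Q$, the mod-$4$ compatibility from $\varphi\in L^\#$, and the edge bound by transferring a competitor) reproduce Lemma \ref{mindiamond}, Lemma \ref{hdelmod4}, and Items 1--3 of the paper's proof. The only cosmetic difference is that you bound the edge change by transporting the minimizer rather than concatenating paths, which yields the same conclusion.
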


Before proving it, we will need a few lemmas.

\begin{lema}\label{hdelmod4}
Let $L$ be a valid lattice.
For all $v \in L$ and $\varphi \in L^\#$ it holds that$$4 \cdot \langle \varphi , v \rangle \equiv \Phi(v) \text{ mod 4,}$$where $\Phi$ is the mod 4 prescription function on the infinite square lattice\footnote{$\Phi$ is calculated just after Proposition \ref{hsquare}.}.
\end{lema}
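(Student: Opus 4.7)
The plan is to exploit the affine structure of $L^{\#}$ to reduce the statement to a single congruence check at one concrete base point of $L^{\#}$.

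First, recall from Section \ref{sec:lsharp} that $L^{\#}$ is a translate of $L^*$ inside $(2L)^*$, so once I fix any $\varphi_0 \in L^{\#}$, every $\varphi \in L^{\#}$ can be written as $\varphi = \varphi_0 + \psi$ with $\psi \in L^*$. Since $L^* = \text{Hom}(L, \Z)$, for every $v \in L$ we have $\langle \psi, v \rangle \in \Z$, hence $4 \cdot \langle \psi, v \rangle \equiv 0 \pmod{4}$. Consequently $4 \cdot \langle \varphi, v \rangle \equiv 4 \cdot \langle \varphi_0, v \rangle \pmod{4}$, and it suffices to establish the congruence for a single, conveniently chosen $\varphi_0 \in L^{\#}$.

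Next, I would take $\varphi_0$ to be the flux of an honest tiling. Since $L$ is valid, the analysis in Section \ref{sec:vallat} shows that the fundamental domain $D_L$ admits domino tilings, so $\T_L$ has at least one tiling $t_0$; set $\varphi_0 = \varphi_{t_0} \in \mathscr{F}(L) \subset L^{\#}$ and let $h_0$ be the associated toroidal height function, normalized (as always) so that $h_0(0) = 0$. Writing $v = i \cdot v_0 + j \cdot v_1 \in L$ and iterating the quasiperiodicity relations of Proposition \ref{hsqtorogen} yields
\begin{equation*}
h_0(v) \;=\; h_0(0) + 4\bigl(i \cdot \varphi_0(v_0) + j \cdot \varphi_0(v_1)\bigr) \;=\; 4 \cdot \langle \varphi_0, v \rangle.
\end{equation*}

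Finally, Proposition \ref{hsquare} tells us that every height function on $\Z^2$ respects the mod $4$ prescription, so $h_0(v) \equiv \Phi(v) \pmod{4}$. Combining with the displayed identity, $4 \cdot \langle \varphi_0, v \rangle \equiv \Phi(v) \pmod{4}$, and the affine reduction of the first paragraph promotes this to all $\varphi \in L^{\#}$. There is no real obstacle here: the only point requiring care is the normalization $h_0(0) = 0$, which is what allows the quasiperiodicity to identify $h_0$ with $4 \cdot \langle \varphi_0, \cdot \rangle$ on $L$ rather than merely up to an additive constant.
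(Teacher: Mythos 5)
Your proof is correct, but it takes a genuinely different route from the paper's. The paper argues by direct computation: it expands $4 \cdot \langle \varphi , v \rangle$ in the basis $\{\varphi_0,\varphi_1\}$ of $(2L)^*$, obtaining $2(az_0+bz_1)$ for $v = a\cdot v_0 + b\cdot v_1$ and $\varphi = z_0\cdot\varphi_0 + z_1\cdot\varphi_1$, and then verifies the two cases of $\Phi$ using the mod $2$ congruences between the coordinates of the generators of $L$ and the coefficients $z_i$, which hold precisely because $L$ is valid and $\varphi \in L^{\#}$. You instead observe that the claim is invariant under translating $\varphi$ by any element of $L^*$ (since $L^*$ pairs integrally with $L$), which reduces everything to one base point of the affine lattice $L^{\#}$, and you certify that base point by realizing it as the flux of an actual tiling, whose toroidal height function satisfies $h_0(v) = 4\langle \varphi_0, v\rangle$ on $L$ and the mod $4$ prescription everywhere. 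This is more conceptual --- it exhibits the congruence as a shadow of the height-function prescription --- but it leans on the existence of a tiling of $\T_L$; fortunately that is established combinatorially in Section \ref{sec:vallat}, before and independently of this lemma, so there is no circularity, and the normalization $h_0(0)=0$ that you flag is indeed the only delicate point in identifying $h_0$ with $4\langle\varphi_0,\cdot\rangle$ on $L$. A small remark: your reduction step can be closed even more cheaply by taking $\varphi_0 = \left(\tfrac12,0\right)$, which lies in $L^{\#}$ by Proposition \ref{hdelmeio}; then for $v=(x,y)$ one has $4\langle\varphi_0,v\rangle = 2x$, which is $\equiv 0$ on $\mathscr{E}$ and $\equiv 2$ on $\mathscr{O}$, matching $\Phi$ in both cases and avoiding the tiling altogether.
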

\begin{proof}
Suppose $L$ is generated by $v_0=(x_0,y_0)$ and $v_1=(x_1,y_1)$.
Given $v \in L$, there are unique integers $a$ and $b$ with $v = a \cdot v_0 + b \cdot v_1$, so that $v=(ax_0 +bx_1, ay_0 + by_1)$.

Similarly, given $\varphi \in L^\#$, there are unique integers $z_0$ and $z_1$ with $\varphi = z_0 \cdot \varphi_0 + z_1 \cdot \varphi_1$.
We may then write
\begin{equation}\label{fluxmod4}
\begin{split}
4 \cdot \langle \varphi , v \rangle &= 4\big(az_0 \cdot \langle \varphi_0 , v_0 \rangle +  bz_1 \cdot \langle \varphi_1 , v_1 \rangle \big) \\
&= 2(az_0 + bz_1)
\end{split}
\end{equation}

Notice $x_0 \equiv y_0 \equiv z_0 \Mod{2}$, because $L$ is valid and $\varphi \in L^\#$.
By the same token, $x_1 \equiv y_1 \equiv z_1 \Mod{2}$.
Moreover, $L$ being valid implies $v$'s coordinates have the same parity.
We now analyze the mod 4 value of the expression in~\eqref{fluxmod4} for each case.

Suppose first that $v$'s coordinates are both even, that is, $ax_0 +bx_1 \equiv ay_0 + by_1 \equiv 0 \Mod{2}$.
We must show $2(az_0 + bz_1) \equiv 0 \Mod{4}$, or equivalently $az_0 + bz_1 \equiv 0 \Mod{2}$.
This is implied by the mod 2 equivalences between $x_0$, $y_0$ and $z_0$, and between $x_1$, $y_1$ and $z_1$, so we are done.

Suppose now that $v$'s coordinates are both odd, that is, $ax_0 +bx_1 \equiv ay_0 + by_1 \equiv 1 \Mod{2}$.
We must show $2(az_0 + bz_1) \equiv 2 \Mod{4}$, or equivalently $az_0 + bz_1 \equiv 1 \Mod{2}$.
Once again, this is implied by the mod 2 equivalences, and the proof is complete.
\end{proof}

\begin{lema}\label{mindiamond}
Let $L$ be a valid lattice and $\varphi \in L^\#$.
For each $w \in \Z^2$, consider the expression $$\min\limits_{v \in L} \Bigg( 4 \cdot \langle \varphi, v \rangle + \min\limits_{\gamma \in \Gamma(v,w)}l(\gamma) \Bigg).$$

The minimum exists if and only if $\varphi \in Q$.
\end{lema}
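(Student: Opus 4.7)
The plan is to reduce the inner minimum $\min_{\gamma \in \Gamma(v,w)} l(\gamma)$ to a value of the maximal height function $h_{\max}$ on $\Z^2$ and then apply Proposition \ref{hmaxfor}. First I would observe that since $L$ is valid, every $v \in L$ has both coordinates of the same parity, so $v_1 + v_2$ is even and translation by $-v$ preserves the black-and-white coloring of $\Z^2$ together with the edge orientation it induces. Translation therefore gives a length-preserving bijection $\Gamma(v,w) \to \Gamma(0, w-v)$, and by Proposition \ref{hmax},
\begin{equation*}
\min\limits_{\gamma \in \Gamma(v, w)} l(\gamma) = h_{\max}(w - v)
\end{equation*}
for every $v \in L$. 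By Lemma \ref{hdelmod4} we have $4\langle\varphi, v\rangle \in \Z$, so the expression $f(v) := 4\langle\varphi, v\rangle + h_{\max}(w-v)$ takes integer values on $L$.

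If $\varphi \in Q$, then $\lVert \varphi \rVert_1 \leq \tfrac12$, so H\"older's inequality gives $|\langle\varphi, v\rangle| \leq \lVert \varphi \rVert_1 \lVert v \rVert_\infty \leq \tfrac12 \lVert v \rVert_\infty$. Combining this with the lower bound $h_{\max}(w-v) \geq 2\lVert w-v \rVert_\infty - 1$ from Proposition \ref{hmaxfor} and the reverse triangle inequality for $\lVert \cdot \rVert_\infty$ yields
\begin{equation*}
f(v) \geq -2\lVert v \rVert_\infty + 2\lVert w-v \rVert_\infty - 1 \geq -2\lVert w \rVert_\infty - 1,
\end{equation*}
so $f$ is bounded below. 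Since $f$ takes integer values, its infimum is attained and the minimum exists.

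Conversely, if $\varphi \notin Q$, writing $\varphi = (\varphi_1, \varphi_2)$ I would pick signs $\sigma_1, \sigma_2 \in \{-1, 1\}$ so that $\sigma_1 \varphi_1 + \sigma_2 \varphi_2 = \lVert \varphi \rVert_1 > \tfrac12$. Lemma \ref{xx} yields a nonzero $v_\star \in L$ of the form $(-\sigma_1 x, -\sigma_2 x)$ which, after possibly flipping sign, may be taken with $x > 0$. Then $\langle\varphi, v_\star\rangle = -x\lVert \varphi \rVert_1$ and $\lVert v_\star \rVert_\infty = x$; using the upper bound $h_{\max}(w-kv_\star) \leq 2kx + 2\lVert w \rVert_\infty + 1$ also from Proposition \ref{hmaxfor},
\begin{equation*}
f(kv_\star) \leq 2kx(1 - 2\lVert \varphi \rVert_1) + 2\lVert w \rVert_\infty + 1
\end{equation*}
for every positive integer $k$. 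Since $1 - 2\lVert \varphi \rVert_1 < 0$, the right-hand side tends to $-\infty$ as $k \to \infty$, and the minimum fails to exist. The only real subtlety is confirming that translation by $v$ preserves the coloring uniformly over $v \in L$: although $L$ need not lie entirely in $\mathscr{E}$ or entirely in $\mathscr{O}$, every valid lattice vector has coordinates of a common parity and hence an even coordinate sum, which is exactly what is needed.
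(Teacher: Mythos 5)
Your proof is correct and follows essentially the same route as the paper's: reduce the inner minimum to $h_{\max}(w-v)$ via a color-preserving translation, use Lemma \ref{hdelmod4} to convert boundedness below into attainment, apply the estimates of Proposition \ref{hmaxfor}, and use Lemma \ref{xx} to produce the diagonal direction along which the expression diverges when $\varphi \notin Q$. Your lower bound for the case $\varphi \in Q$ is in fact slightly cleaner than the paper's, which separately treats the bounded region $R(w)$ and its complement, whereas your single chain of inequalities via the reverse triangle inequality handles all $v \in L$ at once.
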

\begin{proof}
First, observe that Lemma \ref{hdelmod4} guarantees the minimum is taken over integer-valued expressions; in other words, the existence of the minimum is equivalent to the existence of a lower bound.

On the one hand, the following estimate holds whenever $v \in L$:
\begin{equation}\label{estvarphi}
-\lVert \varphi \rVert_1 \cdot \lVert v \rVert_{\infty} \leq \langle \varphi, v \rangle \leq \lVert \varphi \rVert_1 \cdot \lVert v \rVert_{\infty}
\end{equation}

On the other, $\min_{\gamma \in \Gamma(v,w)}l(\gamma) = \min_{\gamma \in \Gamma(0,w-v)}l(\gamma) = h_{\max}(w-v)$, because $v$'s coordinates have the same parity (it is in $L$), so a translation by $v$ preserves orientation.
Proposition \ref{hmaxfor} then implies that for all $v \in L$
\begin{equation}\label{estmin}
\begin{split}
\min\limits_{\gamma \in \Gamma(v,w)}l(\gamma) &\geq 2 \cdot \max\{|w_1-v_1|,|w_2-v_2|\} - 1 \\
&\geq 2 \cdot \max\big\{\big||w_1|-|v_1|\big|,\big||w_2|-|v_2|\big|\big\} - 1.
\end{split}
\end{equation}

Consider the set $R(w) = \{(x,y) \in \R^2 \text{; $|x| \leq |w_1|$ and $|y| \leq |w_2|$}\}$.
Because it is bounded and $L$ is discrete, $R(w) \cap L$ is finite.
Thus, we need only show $4 \cdot \langle \varphi, v \rangle + \min_{\gamma \in \Gamma(v,w)}l(\gamma)$ has a lower bound for $v \in L$ outside $R(w)$.
In this situation, inequality~\eqref{estmin} allows us to write
\begin{equation*}
\begin{split}
\min\limits_{\gamma \in \Gamma(v,w)}l(\gamma) &\geq 2 \cdot \max\{|v_1|-|w_1|,|v_2|-|w_2|\} - 1 \\
&\geq 2 \cdot \max\{|v_1|,|v_2|\} - 2\cdot \max\{|w_1|,|w_2|\} - 1 \\
& = 2 \cdot \lVert v \rVert_{\infty} - 2 \cdot \lVert w \rVert_{\infty} - 1.
\end{split}
\end{equation*}

Combining the two yields for all $v \in L$ the estimate
\begin{equation*}
4 \cdot \langle \varphi, v \rangle + \min\limits_{\gamma \in \Gamma(v,w)}l(\gamma) \geq \big(2-4 \cdot \lVert \varphi \rVert_1\big)\cdot \lVert v \rVert_{\infty} - 2 \cdot \lVert w \rVert_{\infty} - 1,
\end{equation*}
so that when $\varphi \in Q$ a lower bound outside $R(w)$ is given by $- (2 \cdot \lVert w \rVert_{\infty} + 1)$.
Similar manipulations show that when $\varphi \in Q$, $- (2 \cdot \lVert w \rVert_{\infty} + 1)$ is a lower bound everywhere.
In other words, when $\varphi \in Q$, the minimum exists.

To complete the proof, we show that when $\varphi \notin Q$, there is no lower bound.
Observe that Lemma \ref{xx} guarantees the existence of a vertex $\tilde{v} \in L$ of the form $(\pm x, \pm x)$ and such that for all $n \in \Z$ it holds that $\langle \varphi, n \cdot \tilde{v} \rangle = -n\cdot \lVert \varphi \rVert_1 \cdot \lvert x \rvert$.
When $\varphi \notin Q$, there is some $\epsilon > 0$ for which $\lVert \varphi \rVert_1 > \frac12 + \frac{\epsilon}{4}$;
in this case, we have $4\cdot \langle \varphi, n \cdot \tilde{v} \rangle < -2n\lvert x \rvert - n\epsilon \lvert x \rvert$ for all positive integers $n$.

As before, Proposition \ref{hmaxfor} can be used to show the following estimate must hold outside $R(w)$: $$\min\limits_{\gamma \in \Gamma(n\cdot \tilde{v},w)}l(\gamma) \leq 2n \cdot \lvert x \rvert - 2\cdot \lVert w \rVert_{\infty} + 1$$

Thus, when $\varphi \notin Q$, it holds that for all positive integers $n$ $$4\cdot \langle \varphi, n \cdot \tilde{v} \rangle + \min\limits_{\gamma \in \Gamma(n\cdot \tilde{v},w)}l(\gamma) < - n\epsilon \lvert x \rvert - 2\cdot \lVert w \rVert_{\infty} + 1.$$

For fixed $w \in \Z^2$, we see the expression has no lower bound as $n$ tends to infinity, so the proof is complete.
\end{proof}

We are now ready to prove Proposition \ref{conversediamond}.
\begin{proof}[Proof of Proposition \ref{conversediamond}]
Let $\varphi \in L^\# \cap Q$.
We will construct the height function $h_{\max}^{L,\varphi}$\label{def:hLphimax} that is maximal over toroidal height functions of $\T_L$ with flux $\varphi$ (and base value 0 at the origin).

First, observe that if $h$ is a toroidal height function of $\T_L$ with flux $\varphi$, then $h(v) = 4 \cdot \varphi(v) = 4 \cdot \langle \varphi, v \rangle$ for all $v \in L$.
Consider for each $v \in L$ the height function $h_{\max}^{v, \varphi}$\label{def:hvphimax}, maximal over height functions that take the value $4 \cdot \langle \varphi, v \rangle$ on $v$ (notice they need not have the value 0 on the origin).
An easy adaptation of Proposition \ref{hmax} yields $$h_{\max}^{v, \varphi}(w) = 4 \cdot \langle \varphi, v \rangle + \min\limits_{\gamma \in \Gamma(v,w)}l(\gamma).$$

As in Lemma \ref{mindiamond}, $\min_{\gamma \in \Gamma(v,w)}l(\gamma) = \min_{\gamma \in \Gamma(0,w-v)}l(\gamma) = h_{\max}(w-v)$, so we also have $$h_{\max}^{v, \varphi}(w) = 4 \cdot \langle \varphi, v \rangle + h_{\max}(w-v).$$

If $h$ is a toroidal height function of $\T_L$ with flux $\varphi$, it follows that $h(w) \leq h_{\max}^{v, \varphi}(w)$ for all $w \in \Z^2$ and $v \in L$.
By Lemma \ref{mindiamond}, the function given by $h_{\max}^{L,\varphi}(w) = \min_{v \in L}h_{\max}^{v, \varphi}(w)$ is well defined, which implies $h(w) \leq h_{\max}^{L,\varphi}(w)$ for all $w \in \Z^2$.
We claim $h_{\max}^{L,\varphi}$ is a toroidal height function of $\T_L$ with flux $\varphi$; in this case, clearly it is maximal over such height functions.

We first prove it is a height function that takes the value 0 on the origin, as characterized by Proposition \ref{hsquare}.

\paragraph{}\indent \indent \textbf{Item 1.} $h_{\max}^{L,\varphi}(0)=0$.

Since $0 \in L$ and by inspection $h_{\max}^{0, \varphi}(0) = 0$, we have the inequality  $h_{\max}^{L,\varphi}(0) \leq 0$.
We then need only show $h_{\max}^{v, \varphi}(0) \geq 0$ for all $v \in L$.
Since for each $v \in L$ we have $h_{\max}^{v, \varphi}(0) = 4 \cdot \langle \varphi, v \rangle + h_{\max}(-v)$, the equivalent inequality $4 \cdot \langle \varphi, v \rangle \geq -h_{\max}(-v)$ suffices.
Now, because $v \in L$ and $L$ is valid, Proposition \ref{hminplano} implies $-h_{\max}(-v) = -2 \cdot \lVert v \rVert_{\infty}$.
The inequality follows from applying estimate~\eqref{estvarphi} in Lemma \ref{mindiamond} (remember $\varphi \in Q$).

\paragraph{}\indent \indent \textbf{Item 2.} $h_{\max}^{L,\varphi}$ has the prescribed mod 4 values on all of $\Z^2$.

We show that $h_{\max}^{v, \varphi}$ satisfies this condition for all $v \in L$, from which the claim follows.
Indeed, Lemma \ref{hdelmod4} implies $h_{\max}^{v, \varphi}$ respects the condition on $L$.
To see this holds on all of $\Z^2$, it suffices to note the edge-paths in $\min_{\gamma \in \Gamma(v,w)}l(\gamma)$ respect edge orientation.

\paragraph{}\indent \indent \textbf{Item 3.} $h_{\max}^{L,\varphi}$ changes by at most 3 along an edge on $\Z^2$.

Let $e$ be an edge on $\Z^2$ joining $w_1$ to $w_2$  (in the orientation induced by the coloring of $\Z^2$).
We claim $h_{\max}^{L,\varphi}(w_2) \leq h_{\max}^{L,\varphi}(w_1) +1$.
Indeed, there is some $v \in L$ and $\gamma \in \Gamma(v,w_1)$ with $h_{\max}^{L,\varphi}(w_1) = 4 \cdot \langle \varphi, v \rangle + l(\gamma)$.
Consider the path $\widetilde{\gamma} = \gamma * e$, where $*$ is edge-path concatenation.
It is clear $\widetilde{\gamma} \in \Gamma(v,w_2)$.
Thus, it follows that $h_{\max}^{L,\varphi}(w_2) \leq h_{\max}^{v, \varphi}(w_2) \leq 4 \cdot \langle \varphi, v \rangle + l(\widetilde{\gamma})$.
Since $l(\widetilde{\gamma})=l(\gamma) + 1$, the claim holds.

Finally, we claim $h_{\max}^{L,\varphi}(w_1) \leq h_{\max}^{L,\varphi}(w_2) +3$.
Like before, there is some $\tilde{v} \in L$ and $\beta \in \Gamma(\tilde{v},w_2)$ with $h_{\max}^{L,\varphi}(w_2) = 4 \cdot \langle \varphi, \tilde{v} \rangle + l(\beta)$.
Consider the path $\widetilde{\beta} = \beta * \widetilde{e}$, where $\widetilde{e}$ is the edge-path joining $w_2$ to $w_1$ that goes round a square containing $e$.
Observe that $\widetilde{e}$ respects edge-orientation, so $\widetilde{\beta} \in \Gamma(\tilde{v},w_1)$.
It follows that $h_{\max}^{L,\varphi}(w_1) \leq h_{\max}^{\tilde{v}, \varphi}(w_1) \leq 4 \cdot \langle \varphi, \tilde{v} \rangle + l(\widetilde{\beta})$.
Since $\widetilde{e}$ has length 3, $l(\widetilde{\beta}) = l(\beta) + 3$ and the claim holds.

Together, both inequalities prove (3) above.

We have thus shown that $h_{\max}^{L,\varphi}$ is a height function; it remains to show it is $L$-quasiperiodic with flux $\varphi$.
For the $L$-quasiperiodicity, we will prove that for all $v \in L$ and $w_1,w_2 \in \Z^2$ $$h_{\max}^{L,\varphi}(w_1+v) - h_{\max}^{L,\varphi}(w_1) = h_{\max}^{L,\varphi}(w_2+v) - h_{\max}^{L,\varphi}(w_2)$$

To that end, observe that any $u \in L$ can be written as $\tilde{u} + v$, so
\begin{equation*}
\begin{split}
h_{\max}^{L,\varphi}(w_i+v) &= \min\limits_{(\tilde{u} + v) \in L} \Big( 4 \cdot \langle \varphi, \tilde{u} + v \rangle + h_{\max}(w_i - \tilde{u})\Big) \\
& = \min\limits_{(\tilde{u} + v) \in L} \Big( 4 \cdot \langle \varphi, \tilde{u} \rangle + h_{\max}(w_i - \tilde{u})\Big) + 4 \cdot \langle \varphi, v \rangle \\
& = \text{ }\min\limits_{\tilde{u} \in L} \text{ }\text{ }\text{ }\Big( 4 \cdot \langle \varphi, \tilde{u} \rangle + h_{\max}(w_i - \tilde{u})\Big) + 4 \cdot \langle \varphi, v \rangle \\
& = \text{ }\text{ } h_{\max}^{L,\varphi}(w_i) + 4 \cdot \langle \varphi, v \rangle
\end{split}
\end{equation*}

Now, because $h_{\max}^{L,\varphi}(0) = 0$, this also shows that $h_{\max}^{L,\varphi}(v) = 4 \cdot \langle \varphi, v \rangle$ for all $v \in L$, so $h_{\max}^{L,\varphi}$ has flux $\varphi$ and the proof is complete.
\end{proof}

Combining Propositions \ref{diamond} and \ref{conversediamond}, we obtain the full characterization provided by Theorem \ref{fluxcarac} at the beginning of this section.
\chapter{Flip-connectedness on the torus}
\label{chap:fliptorus}

We would like to prove a flux-analogue of Proposition \ref{hredux}.
This would allow us to use flip-connectedness in studying the properties of Kasteleyn matrices for the torus, in a manner similar to the planar case.
However, it turns out that for extremal values of the flux --- those lying on the boundary\footnote{Notice that Proposition \ref{hdelmeio} implies $L^\#$ always intersects $\partial Q$.} of $Q$ ---, domino tilings of the torus with that flux value are not flip-connected.
In fact, we will see that each tiling with an extremal flux value is a flip-isolated point in the space of domino tilings of the torus.

The image below features two tilings of the torus with identical flux values, but notice none of them admits any flip at all.
\begin{figure}[H]
		\centering
		\includegraphics[width=0.7\textwidth]{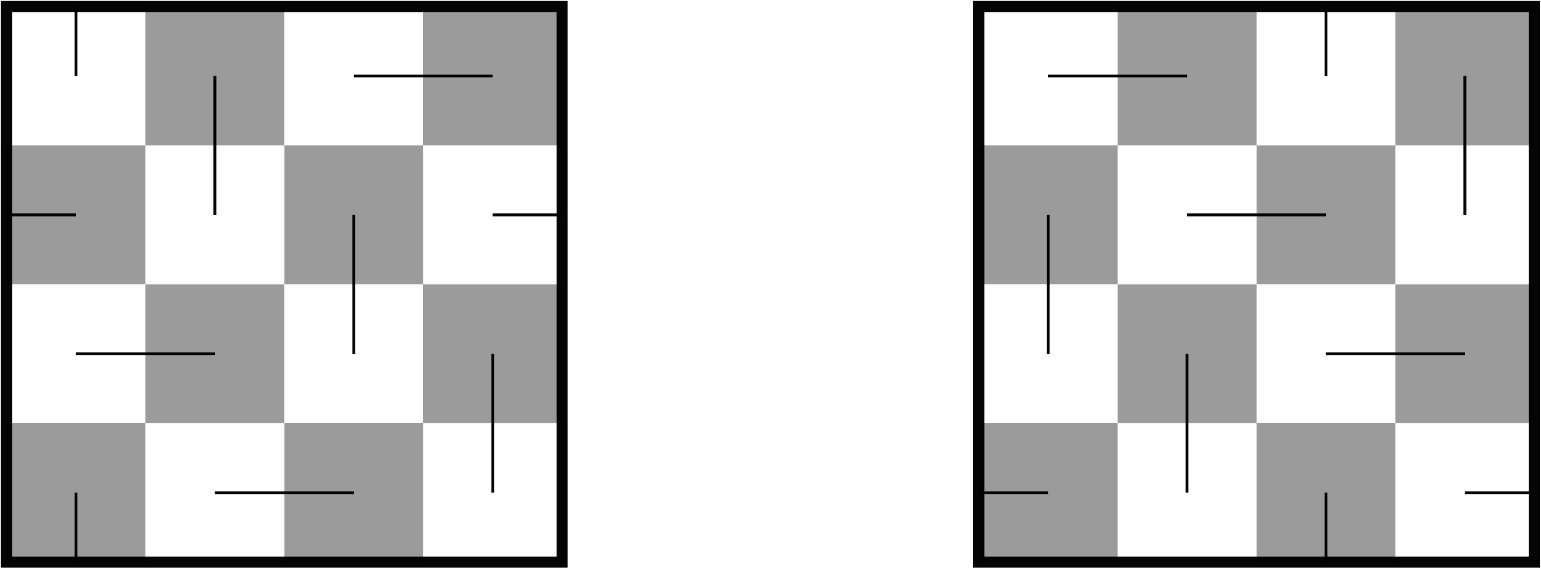}
		\caption{Two tilings of $\T_2$ that admit no flips and have identical flux values.}
\end{figure}

In attempting to reproduce the proof of Proposition \ref{hredux}, we can see how the situation is different.
Let $h,h_m$ be height functions of domino tilings of $\T_L$ with identical flux values, $h_m$ minimal and $h \neq h_m$.
Consider the difference  $g = h-h_m$.
By Proposition \ref{hsqtorogen}, for all $v \in \Z^2$ we have that $g(v) = g(v+u_x) = g(v+u_y)$, so that $g$ is $L$-periodic and in particular bounded.
Moreover, Proposition \ref{hsquare} means $g$ takes nonnegative values in $4\Z$.
Let $V$ be the set of vertices of $\Z^2$ on which $g$ is maximum.
Were we to proceed as before, we would now look for a vertex $v \in V$ that maximizes $h$, but there's generally no such vertex because of $h$'s quasiperiodicity.

What we truly seek, however, is a local maximum of $h$; if one such vertex exists, then $h$'s quasiperiodicity would give rise to a copy of the local maximum on each copy of the fundamental domain $D_L$.
Performing a flip on each of those would result in a new toroidal height function $\tilde{h}$ that is less than $h$ at these points, and identical at every other; remember flips preserve flux values.

Nonetheless, once again, there's nothing that guarantees the existence of a local maximum, and in fact for extremal values of the flux, it does not exist.
In order to understand these behaviors, we will investigate properties of tilings that do not admit flips.

\section{Tilings of the infinite square lattice}\label{sec:tilplancarac}

The next result will characterize domino tilings of the plane, but before stating it we need to introduce two new concepts.
A \textit{domino staircase}\label{def:dominostaircase}, or simply a staircase when the context is clear, is a sequence of neighbouring dominoes such that:
\begin{itemize}
\item All dominoes in the staircase are parallel, that is, either all of them are horizontal or all of them are vertical;
\item Neighbouring dominoes in the staircase always touch along one edge of the longer side;
\item Except for the first and last dominoes (if they exist), each domino in the staircase has exactly two neighbouring dominoes in the staircase;
\item For each domino in the staircase with exactly two neighbouring dominoes in the staircase, those two neighbours touch the domino at different squares.
\end{itemize}
\begin{figure}[H]
		\centering
		\includegraphics[width=0.95\textwidth]{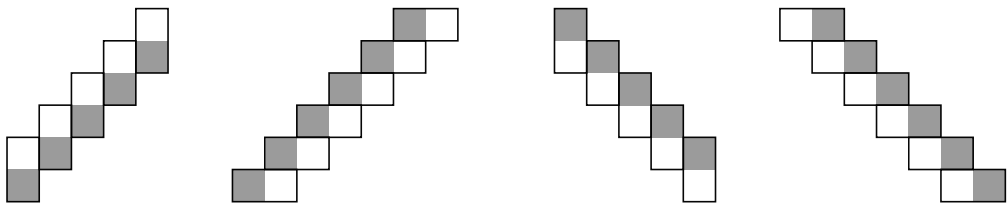}
		\caption{Examples of domino staircases.}
\end{figure}

If a staircase is finite, its length is the number of dominoes in it.
A staircase may be infinite in a single direction; if it is infinite in two directions, we say it is \textit{doubly-infinite}.

Notice that staircases have the very important property that for each domino in the staircase with exactly two neighbouring dominoes in the staircase, there are no flips involving that domino.
In particular, a doubly-infinite staircase admits no flips involving one of its dominoes.

A \textit{windmill tiling}\label{def:windmill} is a domino tiling of the plane that has one of the forms below:
\begin{figure}[H]
		\centering
		\includegraphics[width=0.85\textwidth]{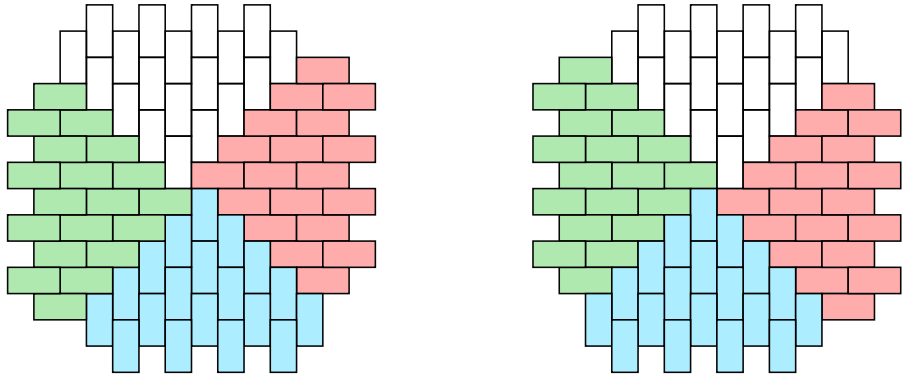}
		\caption{Windmill tilings of $\Z^2$.}
\end{figure}

Observe that a windmill tiling admits no flips and consists entirely of infinite staircases that are never doubly-infinite.
As Theorem \ref{hplanocarac} will show, they are the only tilings of $\Z^2$ with this property.

\begin{theo}[Characterization of tilings of the infinite square lattice]\label{hplanocarac}
Let $t$ be a tiling of $\Z^2$.
Then exactly one of the following applies:
\begin{enumerate}
\item $t$ admits a flip;
\item $t$ consists entirely of parallel, doubly-infinite domino staircases;
\item $t$ is a windmill tiling.
\end{enumerate}
\end{theo}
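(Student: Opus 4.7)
The plan is to assume that $t$ admits no flip and show $t$ must fall into case (2) or case (3); the three cases are visibly mutually exclusive (case (2) tilings have a translational symmetry group of rank at least one, while case (3) tilings have a canonical center), and neither case (2) nor case (3) tilings admit a flip, since any interior domino of a length-$\geq 2$ staircase is blocked from flipping by its staircase neighbours. The core work is therefore a structural analysis of flip-less tilings.

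First I would carry out a local analysis. Fix a horizontal domino $d$ with squares $(x,y),(x+1,y)$. The two squares $(x,y+1),(x+1,y+1)$ cannot both sit in an aligned horizontal domino (that would give a flip with $d$) nor in two parallel verticals extending upward (a flip between the verticals). A short case check then shows the only admissible patterns above $d$ place at least one horizontal neighbour offset by $\pm 1$, i.e., a staircase step. Repeating below, every domino of $t$ is an interior member of some maximal domino staircase $S$, and a further easy check shows that $S$ has a well-defined diagonal direction and cannot abruptly reverse its step.

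Second, suppose some maximal staircase $S$ of $t$ is doubly-infinite; without loss of generality $S$ is horizontal. The strip immediately above $S$ must be tiled compatibly with $S$: any vertical domino intruding into this strip, by the local analysis applied to it and its neighbours, would force a flip somewhere. Hence this strip is itself a horizontal doubly-infinite staircase $S'$, parallel to $S$ and offset by one diagonal step. Propagating upward and downward, every row is tiled by such a staircase, yielding case (2).

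Third, suppose no maximal staircase of $t$ is doubly-infinite. I claim each maximal staircase is still infinite in at least one direction, because a staircase finite at both ends forces at each end a perpendicular domino that itself launches a staircase, and tracking these forced extensions eventually either closes up into a windmill or contradicts the no-doubly-infinite hypothesis. Starting from any one-sided staircase $S$, its finite end is capped by a perpendicular domino initiating a new one-sided staircase rotated by $90^\circ$; iterating, the four arms must meet at a single central $2\times 2$, otherwise the turns drift outward in a spiral that leaves some squares untiled or doubly tiled. The resulting forced global picture is the windmill tiling in one of its two chiralities, giving case (3). The main obstacle is the rigidity argument of this last step: ruling out unbounded spiral configurations and confirming that four one-sided perpendicular arms can only meet in a pinwheel. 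A natural tool is to track the staircase edge-paths introduced later in the paper, and to show that two one-sided perpendicular staircases admitting a common turn uniquely determine the position of the center, which then pins all four arms to a single $2\times 2$.
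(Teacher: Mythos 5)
Your reduction to the flip-free case and your treatment of the doubly-infinite situation are sound: the propagation argument in your second step is essentially the paper's Lemma \ref{stairpath} (a staircase edge-path forces a parallel staircase on each side, or else a flip), followed by induction, and that part of the paper's proof is the same. A minor imprecision in your first step: it is not true that every domino of a flip-free tiling is an \emph{interior} member of a well-defined maximal staircase. In a brick wall each domino lies on two distinct doubly-infinite staircases (one of each diagonal direction), and in a windmill the four central dominoes are \emph{extremal} dominoes of their arms, with only one staircase neighbour each. The local analysis only yields that each domino has at least one offset parallel neighbour on each long side, which is weaker than what you assert and does not by itself produce a canonical staircase decomposition.

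The genuine gap is your third step, and you have in effect deferred the entire content of the theorem to it. The claim that a flip-free tiling with no doubly-infinite staircase must organize itself into exactly four singly-infinite perpendicular arms meeting at a single $2\times 2$ pinwheel is precisely the rigidity statement to be proved; asserting that any other configuration ``drifts outward in a spiral that leaves some squares untiled or doubly tiled'' is not an argument. A priori one must exclude, for example, tilings with several candidate turning points, or a half-plane filled with parallel singly-infinite staircases whose finite ends are capped in some non-pinwheel pattern; local forcing around a single cap does not obviously control the global picture. The paper resolves this with machinery you have not replaced: flip-freeness means the height function has no local extrema, so through every vertex there is a bi-infinite monotone edge-path; Lemmas \ref{lemapath} and \ref{rugrec} convert the two halves of such a path into rugged rectangles and show every vertex sits at the tip of a cardinal rugged quadrant; and the merging Lemma \ref{quad} shows that all north (resp.\ south, east, west) quadrants lie inside a unique maximal one, so the plane decomposes into exactly four maximal cardinal quadrants, which can only be assembled as a windmill. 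Some substitute for this global uniqueness argument --- not just the local analysis of a single turn --- is required before your proof is complete.
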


We will need a few lemmas for the proof, and thus it will be delayed.

A \textit{staircase edge-path}\label{def:staircaseedgepath} is an edge-path whose edges alternate between vertical and horizontal and such that all of its vertical edges have the same orientation and all of its horizontal edges have the same orientation.
Notice by Corollary \ref{pathorien} a staircase edge-path always respects or always reverses edge orientation.

There are essentially two kinds of staircase edge-paths: northeast-southwest or northwest-southeast.
If orientation (as induced by colors) is taken into consideration, there are four: we will refer to them by 1-3 and 3-1 for northeast-southwest, and by 2-4 and 4-2 for northwest-southeast (think quadrants in the plane).
There are thus four \textit{types}\label{def:staircaseedgepathtype} of staircase-edge paths.
Examples are provided in Figure \ref{fig:stairtypes}.
\begin{figure}[H]
		\vspace{0.1cm}
		\centering
		\includegraphics[width=0.95\textwidth]{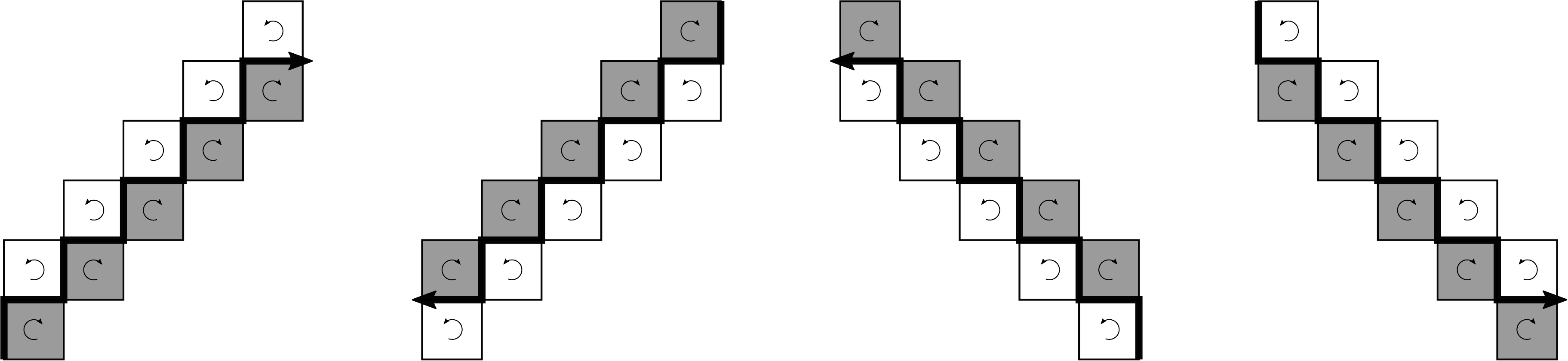}
		\caption{From left to right: 3-1, 1-3, 4-2 and 2-4 staircase edge-paths.}
		\label{fig:stairtypes}
\end{figure}

Observe that a domino staircase always admits two parallel staircase edge-paths that fit it, one on each side.
Furthermore, with orientation induced by the coloring of $\Z^2$, the staircase-edge paths are always the same type.
We can thus speak of types of domino staircases: it is the same type as that of the edge-paths that fit it (when those are oriented as per the coloring of $\Z^2$).

Four particularly interesting tilings of the plane related to this observation are the \textit{brick walls}\label{def:brickwall}.
Each of them uses only one type of domino (vertical or horizontal) and consists entirely of doubly-infinite domino staircases.
Another characterization is as follows: each of them can be seen as consisting entirely of northeast-southwest doubly-infinite domino staircases \textbf{and} entirely of northwest-southeast doubly-infinite domino staircases.
It's easy to see they are the only tilings of the plane with this property.

The following image shows the four different brick walls; the marked vertex is the origin.
\begin{figure}[H]
		\vspace{0.05cm}
		\centering
		\includegraphics[width=\textwidth]{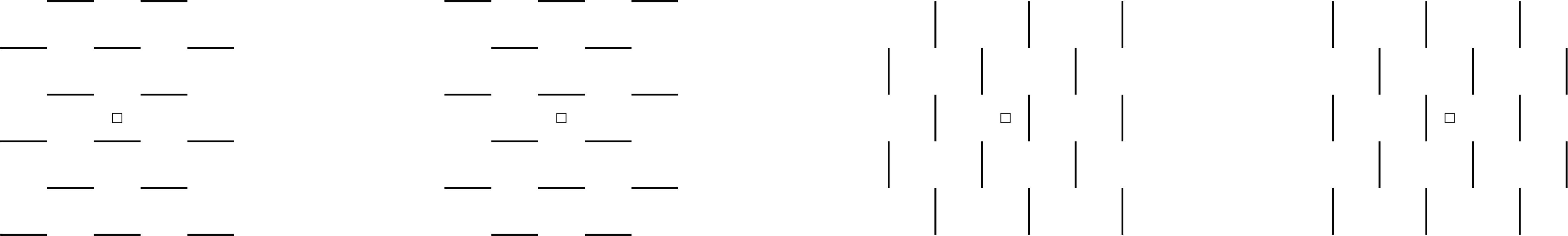}
		\caption{The four brick walls.}\label{brickimg}
\end{figure}

Later, Proposition \ref{brickflux} will expand on the importance of brick walls. 

\begin{lema}\label{stairpath}Let $t$ be a tiling of $\Z^2$.
Suppose there is a doubly-infinite staircase edge-path $\gamma$ in $t$, dividing $\R^2$ into two disjoint and quadriculated connected components $\mathcal{Z}_1$ and $\mathcal{Z}_2$.
Then for each of $\mathcal{Z}_1$ and $\mathcal{Z}_2$, its tiling by $t$ contains a doubly-infinite domino staircase that fits $\gamma$ or admits a flip.
\end{lema}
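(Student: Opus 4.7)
The plan is to focus on one of the two components, say $\mathcal{Z}_1$, and analyze the dominoes of $t$ sitting against $\gamma$ from the $\mathcal{Z}_1$-side; the argument for $\mathcal{Z}_2$ is identical after swapping roles. By the classification of staircase edge-paths into the four types (1-3, 3-1, 2-4, 4-2), and by symmetry under reflection of $\Z^2$, we may assume $\gamma$ is of a single chosen type.

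First I would identify the \emph{inner-corner squares} of $\mathcal{Z}_1$ along $\gamma$. Because $\gamma$ alternates between horizontal and vertical edges with all edges of each orientation pointing the same way, precisely at every other vertex of $\gamma$ there is a single unit square $S_k \subset \mathcal{Z}_1$ having two of its edges on $\gamma$ (they meet at that vertex); the remaining vertices of $\gamma$ are convex corners from $\mathcal{Z}_1$'s viewpoint, with no $\mathcal{Z}_1$-square incident on both adjoining edges. The squares $\{S_k\}_{k\in\Z}$ are pairwise disjoint and form a diagonal sequence (each $S_{k+1}$ touches $S_k$ only at a single vertex). Every square of $\mathcal{Z}_1$ that is adjacent to $\gamma$ is either one of these $S_k$ or lies across a single edge of $\gamma$ from one (but does not itself have an edge on $\gamma$ in the relevant configuration).

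Next I would examine the domino $D_k$ of $t$ containing $S_k$. Since $\gamma$ is an edge-path of $t$, no edge of $\gamma$ is crossed by a domino, so $D_k$ must extend into $\mathcal{Z}_1$. The two $\mathcal{Z}_1$-edges of $S_k$ leave only two options for $D_k$, which I will call \emph{option A} (the domino extends in one direction, e.g.\ perpendicular to one family of $\gamma$-edges) and \emph{option B} (the other direction). A short case check shows the following propagation: if $D_k$ takes option A, then the square that $D_k$ occupies besides $S_k$ blocks $D_{k-1}$ from taking option B, forcing $D_{k-1}$ to also take option A; symmetrically, option B at $D_k$ forces option B at $D_{k+1}$. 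Hence any mixed assignment must consist of option A for all indices up to some $k_0$ and option B for all indices past $k_0$ (or vice versa).

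For the uniform case, I would check directly that if every $D_k$ takes option A then the sequence $\{D_k\}_{k\in\Z}$ is a doubly-infinite domino staircase whose long-side boundary on the $\gamma$-side agrees edge-for-edge with $\gamma$, i.e.\ it fits $\gamma$; the same holds for option B with dominoes rotated. For the mixed case, focus on the transition index $k_0$: I would show that the square in $\mathcal{Z}_1$ sandwiched between the two transition dominoes $D_{k_0}$ and $D_{k_0+1}$ belongs to a domino $D'$ which (no matter which of its two allowed positions is chosen) completes a $2\times 2$ block of two parallel dominoes with either $D_{k_0}$ or $D_{k_0+1}$; this block admits a flip, producing the required flip in $t|_{\mathcal{Z}_1}$.

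The main obstacle will be bookkeeping: describing the options, the propagation, and the $2\times 2$ block cleanly without drowning in coordinates. I expect the cleanest route is to fix coordinates so that $\gamma$ passes through the vertices $(k,k)$ and $(k,k+1)$ for all $k\in\Z$, take $\mathcal{Z}_1$ to be the side containing the squares $S_k=[k,k+1]^2$, and then verify options A (vertical $D_k = [k,k+1]\times[k-1,k+1]$) and B (horizontal $D_k = [k,k+2]\times[k,k+1]$) together with the transition analysis on the single small model; the other types and the other side reduce to this by symmetry.
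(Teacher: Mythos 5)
Your proposal is correct and follows essentially the same route as the paper's proof: the paper also observes that the type (vertical or horizontal) of the domino covering an inner-corner square against $\gamma$ propagates in one fixed direction along the staircase, concludes that either the propagation yields a doubly-infinite staircase fitting $\gamma$ or there is a transition where the two half-staircases meet, and then checks that the leftover square at the transition forces a $2\times 2$ block with one of the two extremal dominoes, hence a flip. The only cosmetic difference is that the paper starts from a single square and grows the maximal staircase containing its domino, whereas you set up the propagation globally over all inner-corner squares first; the content is identical.
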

\begin{proof}
Let $\gamma$ be a doubly-infinite staircase edge-path in $t$ and choose any square $\mathcal{Q}$ in $\mathcal{Z}_1$ touching $\gamma$ along one (and thus two) of its edges.
Notice that any choice of type (either vertical or horizontal) for the domino tiling $\mathcal{Q}$ in $t$ propagates infinitely in one direction of $\gamma$, producing a domino staircase $S$ in $\mathcal{Z}_1$ that fits $\gamma$ and is infinite in that direction.
Observe the image below.
\begin{figure}[H]
    \centering
    \def\svgwidth{0.68\columnwidth}
    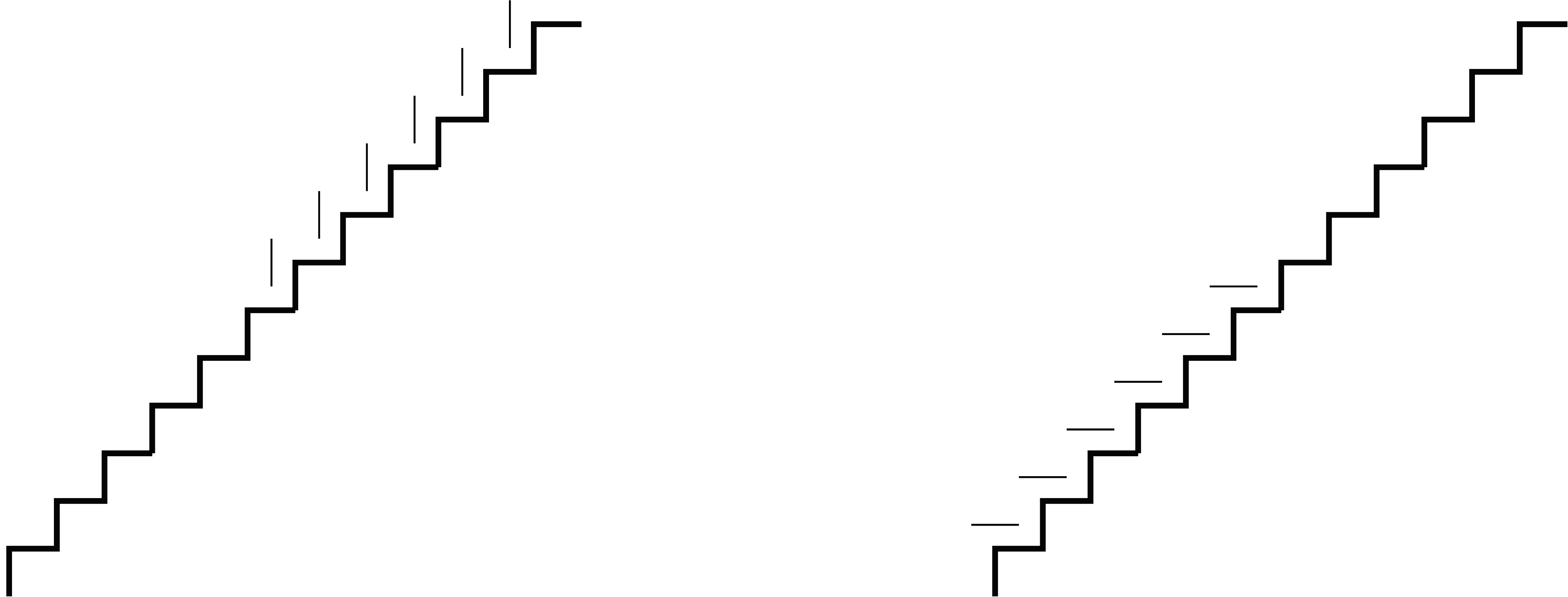
		\caption{Domino propagation across staircases.}\label{domprop}
\end{figure}

Let $S_1$ be the maximal domino staircase in $t$ containing the domino that tiles $\mathcal{Q}$; the preceding paragraph makes it clear that $S_1$ is in $\mathcal{Z}_1$ and is infinite in at least one direction.
If it is doubly-infinite, there is nothing to prove.
Otherwise, $S_1$ has exactly one extremal domino $\mathcal{D}_1$.
Consider the square $\mathcal{D}_2$ in $\mathcal{Z}_1 \setminus S_1$ that touches $\gamma$ along one of its edges and is closest to $S_1$'s extremal domino.
\begin{figure}[H]
    \centering
    \def\svgwidth{0.68\columnwidth}
    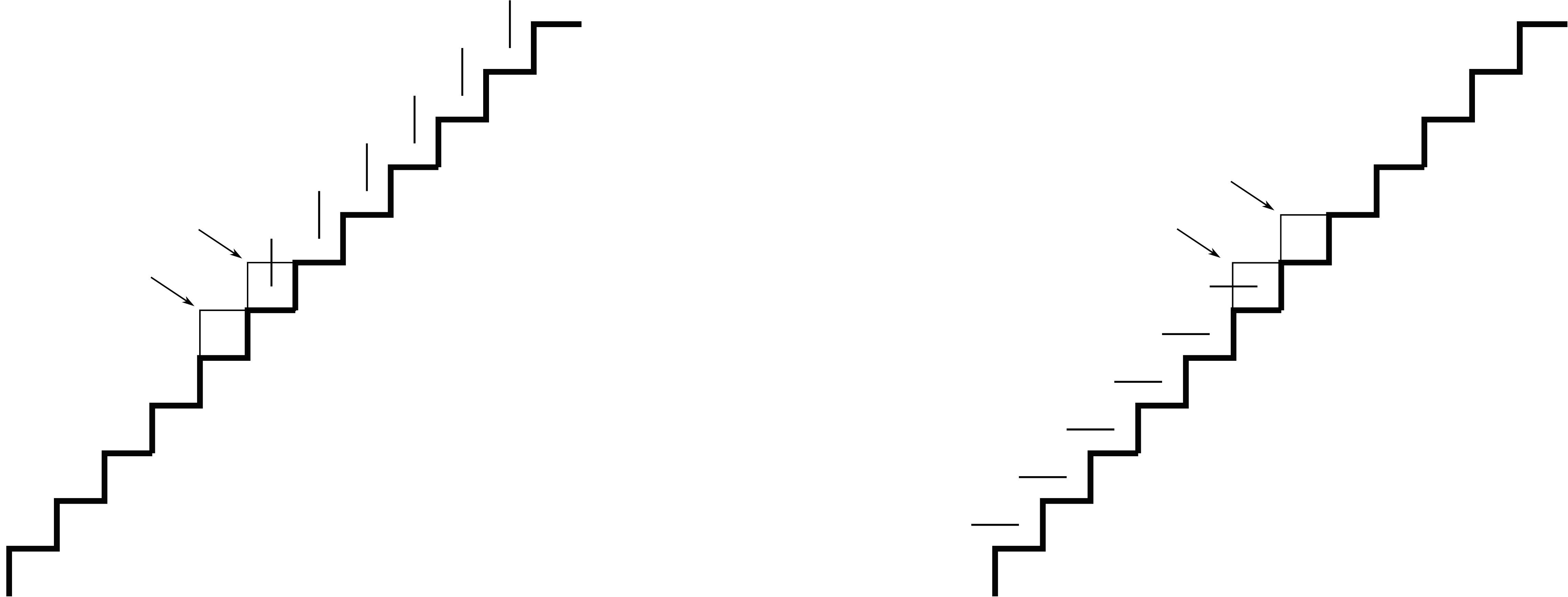
		\caption{The domino tiling $\mathcal{D}_2$ must be of the other type.}
\end{figure}

The domino tiling $\mathcal{D}_2$ cannot have the same type as the dominoes of $S_1$, for otherwise $\mathcal{D}_1$ would not be $S_1$'s extremal domino.
The choice of type for the domino tiling $\mathcal{D}_2$ is thus fixed, and like before it propagates infinitely, except this time in the other direction of $\gamma$.
This produces a new domino staircase $S_2$ in $\mathcal{Z}_1$ that fits $\gamma$ and is infinite in that direction.
Finally, consider the square in $\mathcal{Z}_1$ touching both $S_1$ and $S_2$.
\begin{figure}[H]
    \centering
    \def\svgwidth{0.285\columnwidth}
    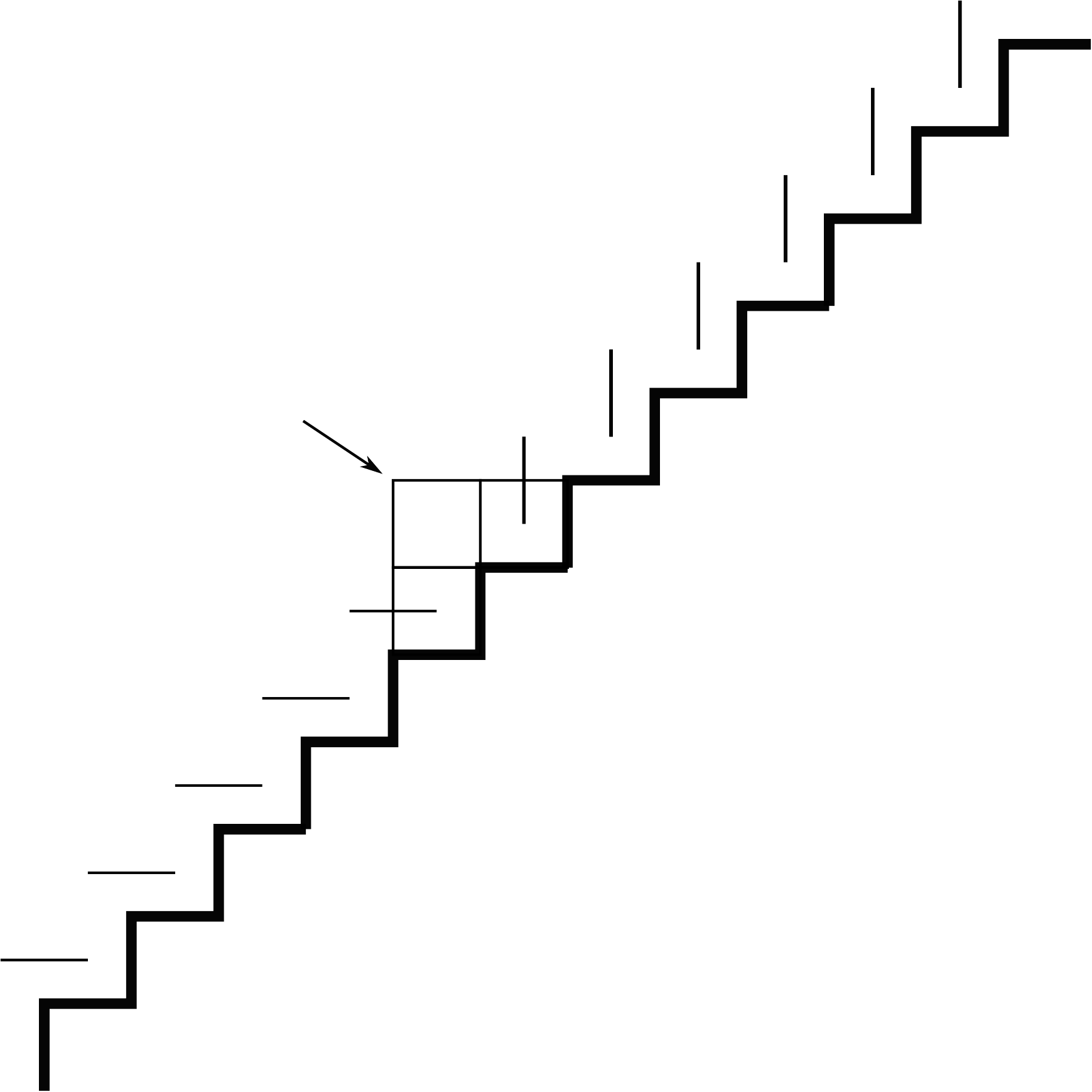
		\caption{A flip is inevitable.}
\end{figure}

Regardless of the choice of type for the domino tiling that square in $t$, a flip is enabled (either with $\mathcal{D}_1$ or with $\mathcal{D}_2$), so the claim on $\mathcal{Z}_1$ is proved.

The argument does not rely on any particular property of $\mathcal{Z}_1$, and using $\mathcal{Z}_2$ in its stead yields the complete proof.
\end{proof}

\begin{lema}\label{lemapath}
Let $t$ be a tiling of a planar region $R$ and $\gamma$ an edge-path in $t$ joining $v$ to $w$ that respects (respectively reverses) edge orientation.
Then
\begin{enumerate}
\item Any edge-path in $R$ joining $v$ to $w$ that respects (respectively reverses) edge orientation and has length $l(\gamma)$ is an edge-path in $t$;
\item $\gamma$ has minimal length over edge-paths in $R$ joining $v$ to $w$ that respect (respectively reverse) edge orientation;
\item At least one of the following properties is true:
\begin{enumerate}
\item[(a)] Every horizontal edge in $\gamma$ has the same orientation;
\item[(b)] Every vertical edge in $\gamma$ has the same orientation.
\end{enumerate}
\end{enumerate}
\end{lema}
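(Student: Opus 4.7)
By symmetry it suffices to treat the case in which $\gamma$ respects edge orientation; the reversing case is parallel, using Corollary \ref{hminplano} in place of Proposition \ref{hmax} and $\Psi(v,w)$ in place of $\Gamma(v,w)$. The unifying observation for parts (1) and (2) is height-function accounting: along any orientation-respecting edge-path in $R$, the constructive definition of $h_t$ causes the height to change by $+1$ on each edge that belongs to $t$ and by $-3$ on each edge that does not. Applied to $\gamma$, which lies entirely in $t$, this gives $h_t(w) - h_t(v) = l(\gamma)$. For any competing orientation-respecting path $\tilde\gamma$ in $R$ from $v$ to $w$, with $a$ and $b$ counting its edges in and out of $t$ respectively, the same accounting gives $h_t(w) - h_t(v) = a - 3b$ and $l(\tilde\gamma) = a + b$. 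Eliminating $a$ yields $l(\tilde\gamma) = l(\gamma) + 4b \geq l(\gamma)$, which is (2); equality forces $b = 0$, which is (1).

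For part (3) the plan is to invoke the ordered-sum analysis from the proof of Proposition \ref{hmaxfor}, whose conclusion (that in a minimum-length element of $\Gamma(v,w)$, one coordinate direction has all edges of the same orientation) is exactly what is needed, and whose proof explicitly foreshadows this reuse. The analysis requires $\gamma$ to realize the minimum length not merely within $R$ (which is what (2) gives) but within $\Gamma(v,w)$ in the full plane. My plan to bridge this gap is to extend $t$ to a tiling $t'$ of $\Z^2$; the associated height function $h_{t'}$ agrees with $h_t$ on $R$, so that for any $\beta \in \Gamma(v,w)$ the same $+1/-3$ accounting yields $l(\beta) \geq h_{t'}(w) - h_{t'}(v) = l(\gamma)$. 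Since $\gamma$ itself is in $\Gamma(v,w)$, equality holds, and Proposition \ref{hmaxfor}'s ordered-sum representation, translated from the origin to $v$ via the edge profile there, concludes (3).

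The main obstacle I anticipate is the construction of $t'$: for a simply-connected tileable region $R$, the exterior $\Z^2 \setminus R$ should admit a tiling compatible with $t$ along $\partial R$ by attaching brick-wall patterns and absorbing residual boundary imbalances locally, but making this precise is the only non-routine step. A cleaner route that bypasses tiling-extension altogether is to extend $h_t$ to a height function on $\Z^2$ directly, by a maximal-extension construction in the spirit of $h_{\max}^{v,\varphi}$ from Chapter \ref{chap:flux}; this extended height function alone is enough to drive the accounting above.
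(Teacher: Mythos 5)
Your treatment of parts (1) and (2) is correct and is in substance the paper's argument: the paper packages the same $+1/-3$ accounting as a comparison of $h_t$ with the auxiliary height function $h_{\mathrm{aux}}$, maximal over height functions on $\Z^2$ taking the value $h_t(v)$ at $v$ (an adaptation of Proposition \ref{hmax}), whereas you carry out the edge count directly; the two are interchangeable. Your identification of part (3) as a reduction to the ordered-sum analysis in the proof of Proposition \ref{hmaxfor}, applied to the translate $\gamma - v$, is also exactly what the paper does.

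The gap is the bridge you flag yourself. The ordered-sum analysis applies to paths of minimal length in $\Gamma(0,w-v)$ over the whole plane, and your proof of (2) only gives minimality over orientation-respecting paths \emph{in $R$}. Neither of your two proposed bridges closes this as stated: extending $t$ to a tiling $t'$ of $\Z^2$ requires tiling the complement $\Z^2\setminus R$ compatibly, which is a genuinely nontrivial claim and not something the paper's toolkit hands you; and extending $h_t$ to a height function on $\Z^2$ by a maximal-extension formula only restricts to $h_t$ on $R$ if $h_t(y)-h_t(x)\leq h_{\max}(y-x)$ already holds for all $x,y\in R$ --- which is precisely the inequality you are trying to establish, so that route is circular as sketched. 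The way out is to extend nothing: one only needs the single inequality $h_t(w)\leq h_{\mathrm{aux}}(w)$, where $h_{\mathrm{aux}}(u)=h_t(v)+\min_{\widetilde{\gamma}\in\Gamma(v,u)}l(\widetilde{\gamma})$ is defined on all of $\Z^2$ regardless of $t$; combined with $h_t(w)-h_t(v)=l(\gamma)$ this yields minimality of $l(\gamma)$ over all of $\Gamma(v,w)$. When $R=\Z^2$ --- the only setting in which the paper ever invokes Lemma \ref{lemapath} --- the domination $h_t\leq h_{\mathrm{aux}}$ is immediate, since $h_t$ is then itself a height function on $\Z^2$ with the prescribed value at $v$ and $h_{\mathrm{aux}}$ is maximal among those. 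For a general planar region the domination still needs an argument (the minimizing path may leave $R$), a point the paper itself passes over; if you restrict the lemma to $R=\Z^2$, your proof closes completely without any extension of the tiling.
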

\begin{proof}
(1) follows immediately from the constructive definition of height functions. 

For (2), either $\gamma \in \Gamma(v,w)$ or $\gamma \in \Psi(v,w)$.
Suppose we're in the first case; the other is analogous.
Let $h$ be $t$'s associated height function.
Consider the auxiliary height function $h_{\text{aux}}$, maximal over height functions $\tilde{h}$ on $\Z^2$ with $\tilde{h}(v)=h(v)$ (notice they need not have the value 0 on the origin).
An easy adaptation of Proposition \ref{hmax} yields $h_{\text{aux}}(u) = h(v) + \min_{\widetilde{\gamma} \in \Gamma(v,u)} l(\widetilde{\gamma})$.
Then $h \leq h_{\text{aux}}$ wherever $h$ is defined, and since the constructive definition of height functions implies $h(w) = h(v) + l(\gamma)$ (because $\gamma$ is an edge-path in $t$), we have
\begin{equation*}
h(v) + l(\gamma) = h(w) \leq h_{\text{aux}}(w) = h(v) +  \min\limits_{\widetilde{\gamma} \in \Gamma(v,w)} l(\widetilde{\gamma}).
\end{equation*}

In other words, $l(\gamma) \leq \min_{\widetilde{\gamma} \in \Gamma(v,w)} l(\widetilde{\gamma})$, so it must be an equality and (2) is proved.

Finally, consider the edge-path $\widetilde{\gamma}$ in $\Z^2$ obtained from $\gamma$ by a translation that takes $v$ to the origin, that is, $\widetilde{\gamma} = \gamma - v$.
It is clear either $\widetilde{\gamma} \in \Gamma(0,w-v)$ or $\widetilde{\gamma} \in \Psi(0,w-v)$, and by (2) it has minimal length over its corresponding set.
The Proof of Proposition \ref{hmaxfor} (or a trivial adaptation for $\Psi$ and $h_{\min}$) then implies (3), and we are done.
\end{proof}

Consider the black-and-white infinite square lattice $\Z^2$.
A finite planar region $R \subset \Z^2$ is a \textit{rugged rectangle}\label{def:rugrec} if its boundary consists of two pairs of staircase edge-paths $\{S_{00},S_{01}\}$ and $\{S_{10},S_{11}\}$ such that for each $i \in \{0,1\}$, $S_{i0}$ and $S_{i1}$ are the same type (\textbf{including} color-induced orientation).
In this case, we say the rugged rectangle has side lengths $l$ and $m$, where $l$ is the number of squares on $R$ that fit $S_{00}$ or $S_{01}$, and $m$ is the number of squares on $R$ that fit $S_{10}$ or $S_{11}$.
Observe that because $R$ is finite, there must be two northeast-southwest staircases and two northwest-southeast staircases, so side lengths are finite and do not depend on choice of staircase.

We provide examples of rugged rectangles below.
\begin{figure}[H]
		\centering
		\includegraphics[width=0.95\textwidth]{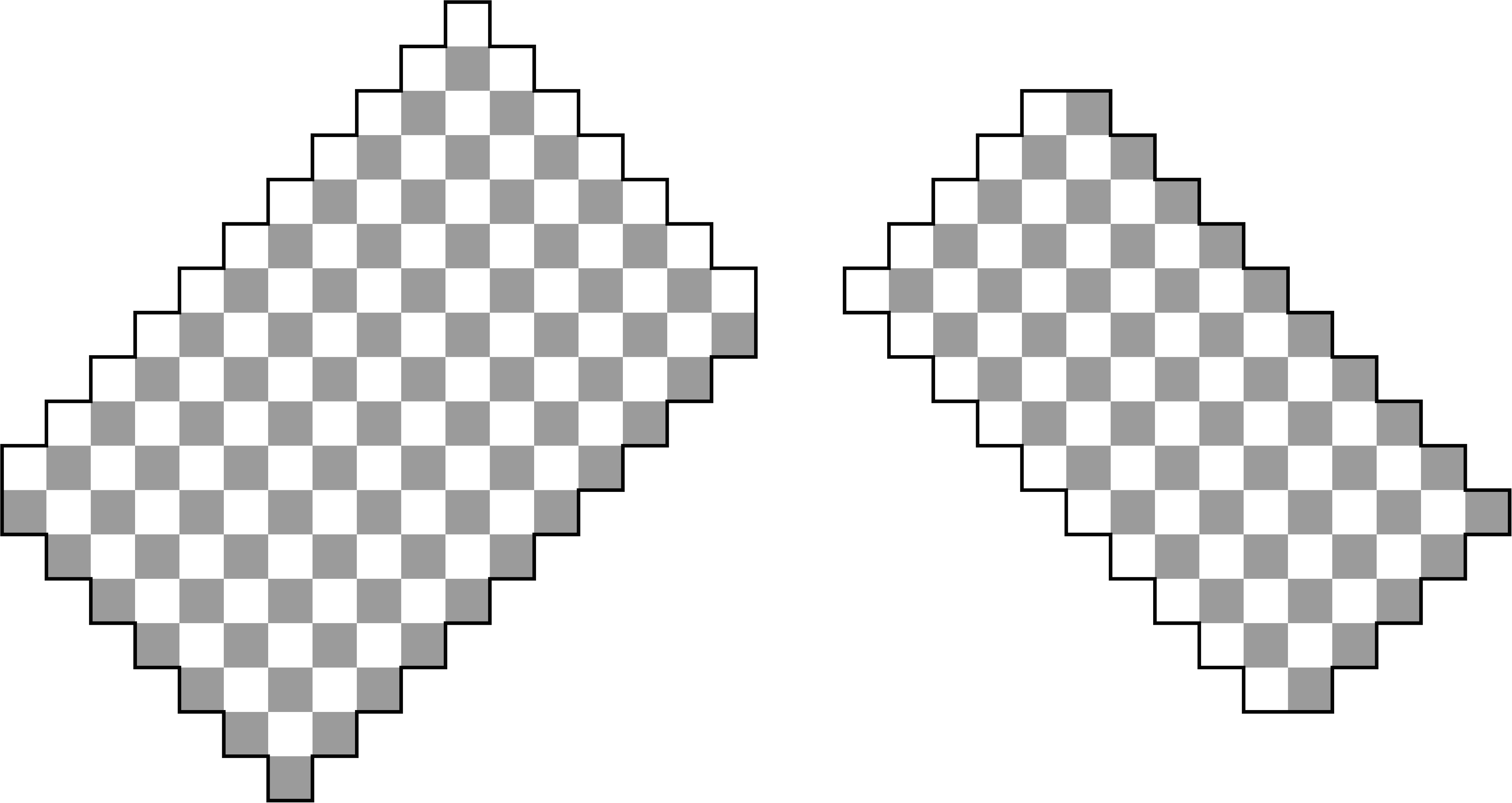}
		\caption{A rugged rectangle with side lengths 7 and 11, and one with side lengths 10 and 5.}
		\label{fig:rugrec0}
\end{figure}

Induction shows a rugged rectangle can be tiled in a single way, either entirely by vertical dominoes or entirely by horizontal dominoes.
In fact, rugged rectangles are pieces of brick walls; see Figure \ref{fig:rugrec1}.
\begin{figure}[ht]
		\centering
		\includegraphics[width=0.95\textwidth]{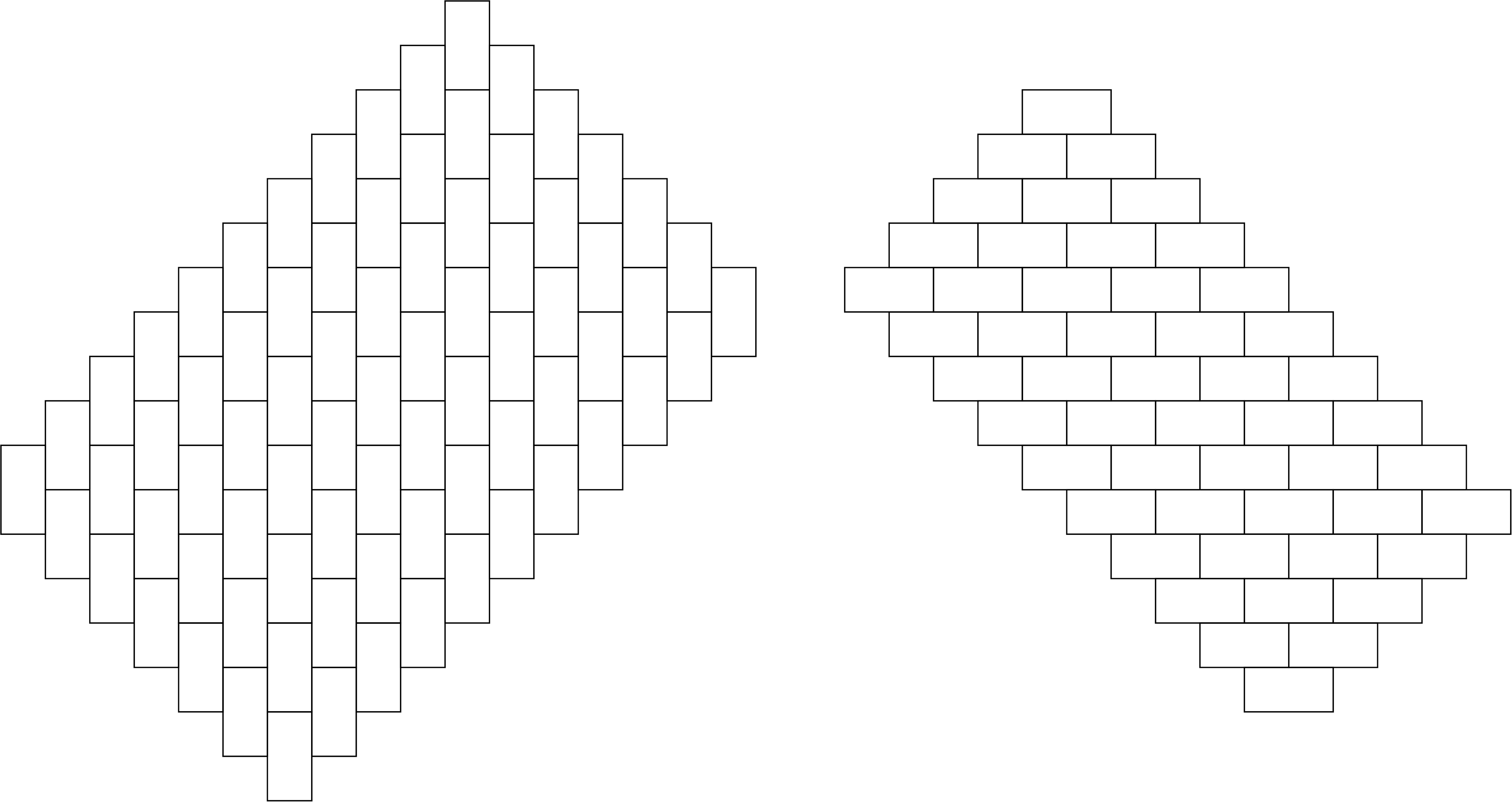}
		\caption{The unique tilings of the rugged rectangles from Figure \ref{fig:rugrec0}.}
		\label{fig:rugrec1}
\end{figure}

\begin{lema}\label{rugrec}Let $t$ be a tiling of $\Z^2$ and $\gamma$ a finite edge-path in $t$ that respects (respectively reverses) edge orientation.
At least one of the following holds\footnote{This is provided by Lemma \ref{lemapath}.}:
\begin{enumerate}
\item[(a)] Every horizontal edge in $\gamma$ has the same orientation;
\item[(b)] Every vertical edge in $\gamma$ has the same orientation.
\end{enumerate}

If exactly one of (a), (b) holds, then $\gamma$ defines a rugged rectangle $R$ in $t$.

Furthermore, if (a) holds but (b) doesn't, the side lengths of $R$ are given by the number of vertical edges in $\gamma$ that point upwards and the number of vertical edges in $\gamma$ that point downwards.
If (b) holds but (a) doesn't, the side lengths of $R$ are given by the number of horizontal edges in $\gamma$ that point left and the number of horizontal edges in $\gamma$ that point right.
\end{lema}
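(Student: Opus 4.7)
The first assertion—that at least one of (a), (b) holds—is precisely item (3) of Lemma~\ref{lemapath} applied to $\gamma$. The real content is the construction of $R$ and the computation of its side lengths when exactly one of (a), (b) is true.

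Suppose without loss of generality that (a) holds but (b) does not; the dual case follows by a reflection argument. By Corollary~\ref{pathorien}, $\gamma$'s edges alternate between horizontal and vertical. Say every horizontal edge of $\gamma$ points east; let $p$ be the number of vertical edges pointing up and $q$ the number pointing down, so by hypothesis $p, q \geq 1$. The plan is to decompose $\gamma$ into maximal staircase sub-paths $\gamma = \gamma_1 * \gamma_2 * \cdots * \gamma_k$ (via concatenation), where each $\gamma_i$ has constant vertical orientation and is therefore itself a staircase edge-path. Consecutive sub-paths alternate between $\{1\text{-}3, 3\text{-}1\}$-type (up verticals with east horizontals) and $\{2\text{-}4, 4\text{-}2\}$-type (down verticals with east horizontals), joined at turning points where the $y$-coordinate is locally extremal.

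To exhibit $R$, use the tiling $t$: along each sub-path $\gamma_i$, the domino propagation argument from the proof of Lemma~\ref{stairpath} (illustrated in Figure~\ref{domprop}) forces the dominoes adjacent to $\gamma_i$ on one specific side to form a brick-wall strip of uniform orientation fitting $\gamma_i$. I would then glue these forced strips together along the turning points of $\gamma$. The result is a region $R$ whose boundary consists of $\gamma$ itself on one side and a complementary edge-path on the other, and the complementary path decomposes into staircase edge-paths of precisely the same types as the $\gamma_i$'s, appropriately paired. This matches the definition of a rugged rectangle, with the $\{1\text{-}3, 3\text{-}1\}$ pair accounting for all up-verticals (total $p$) and the $\{2\text{-}4, 4\text{-}2\}$ pair for all down-verticals (total $q$), yielding the claimed side lengths.

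The main obstacle I anticipate is the bookkeeping at the turning points: I must verify that the brick-wall strips forced by adjacent sub-paths $\gamma_i$ and $\gamma_{i+1}$ mesh consistently in $t$, so that $R$ closes up into a single rugged rectangle rather than fragmenting or self-overlapping. Concretely, at each turning point one must show that $t$ is forced to contain a specific bridging configuration of dominoes compatible with both adjacent propagations, and that the complementary boundary of $R$ produced this way splits evenly into two same-type pairs of staircases whose total vertical edge counts are exactly $p$ and $q$.
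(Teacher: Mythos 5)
The first assertion is indeed item (3) of Lemma~\ref{lemapath}, but your construction of $R$ does not go through, for two reasons. First, you place $\gamma$ itself on the boundary of $R$. A rugged rectangle's boundary consists of exactly two pairs of staircase edge-paths, i.e.\ four staircases in total; if $\gamma$ changes vertical direction more than once, it decomposes into three or more maximal staircase sub-paths and therefore cannot be half of such a boundary. The lemma says $\gamma$ \emph{defines} a rugged rectangle, not that it bounds one. Second, the ``forced brick-wall strip'' adjacent to each sub-path $\gamma_i$ is not forced at all: the propagation argument of Lemma~\ref{stairpath} only says that the type of the domino covering one square adjacent to a staircase edge-path propagates in \emph{one} direction once that type is known; for an arbitrary tiling $t$ (which may well admit flips) the dominoes along a finite staircase segment need not be uniform, so the strips you want to glue at the turning points need not exist. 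The obstacle you flag at the end is therefore not mere bookkeeping --- it is where the argument breaks.

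The missing idea is item (1) of Lemma~\ref{lemapath}: any edge-path joining the same endpoints as $\gamma$, respecting (respectively reversing) edge orientation and having length $l(\gamma)$, is automatically an edge-path in $t$. The paper exploits this by replacing $\gamma$ with two canonical rearrangements $\gamma_{\text{up}}$ and $\gamma_{\text{down}}$, which have the same horizontal edges and the same numbers of up- and down-pointing vertical edges but sort the vertical edges so that all up-edges precede all down-edges (respectively the reverse). Each rearrangement consists of exactly two maximal staircase sub-paths, both rearrangements lie in $t$, and the region they enclose is the rugged rectangle; the side lengths are then read off as the number of up-pointing and the number of down-pointing vertical edges, since the squares fitting each boundary staircase sit beside its vertical edges. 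No analysis of the dominoes adjacent to $\gamma$, and no gluing, is needed.
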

\begin{proof}
Let $v$ be $\gamma$'s starting point and $w$ be $\gamma$'s endpoint, so that $\gamma$ joins $v$ to $w$.
Suppose (a) holds but (b) doesn't; the opposite case is analogous.

Let $\gamma_{\text{up}}$ and $\gamma_{\text{down}}$ be edge-paths joining $v$ to $w$ that respect (respectively reverse) edge orientation and have length $l(\gamma)$;
$\gamma_{\text{up}}$ features all vertical edges pointing upwards before any vertical edge pointing downwards and $\gamma_{\text{down}}$ features all vertical edges pointing downwards before any vertical edge pointing upwards.
Notice $\gamma$, $\gamma_{\text{up}}$ and $\gamma_{\text{down}}$ have the same number of horizontal edges (all of which have the same orientation across all three edge-paths), the same number of vertical edges pointing upwards and the same number of vertical edges pointing downwards.
Moreover, because (b) does not hold, there is at least one vertical edge of each type.

Remember that whenever an edge-path respects or reverses edge orientation, vertical and horizontal edges appear in alternating fashion (see Corollary \ref{pathorien}).
This means $\gamma_{\text{up}}$ has a `rising' staircase edge-path $\gamma_{\text{up}}^+$ followed by a `descending' staircase edge-path $\gamma_{\text{up}}^-$, and $\gamma_{\text{down}}$ has a `descending' staircase edge-path $\gamma_{\text{down}}^-$ followed by a `rising' staircase edge-path $\gamma_{\text{down}}^+$.
Figure \ref{fig:exconstpath} provides an example of this construction.
\begin{figure}[ht]
    \centering
    \def\svgwidth{0.75\columnwidth}
    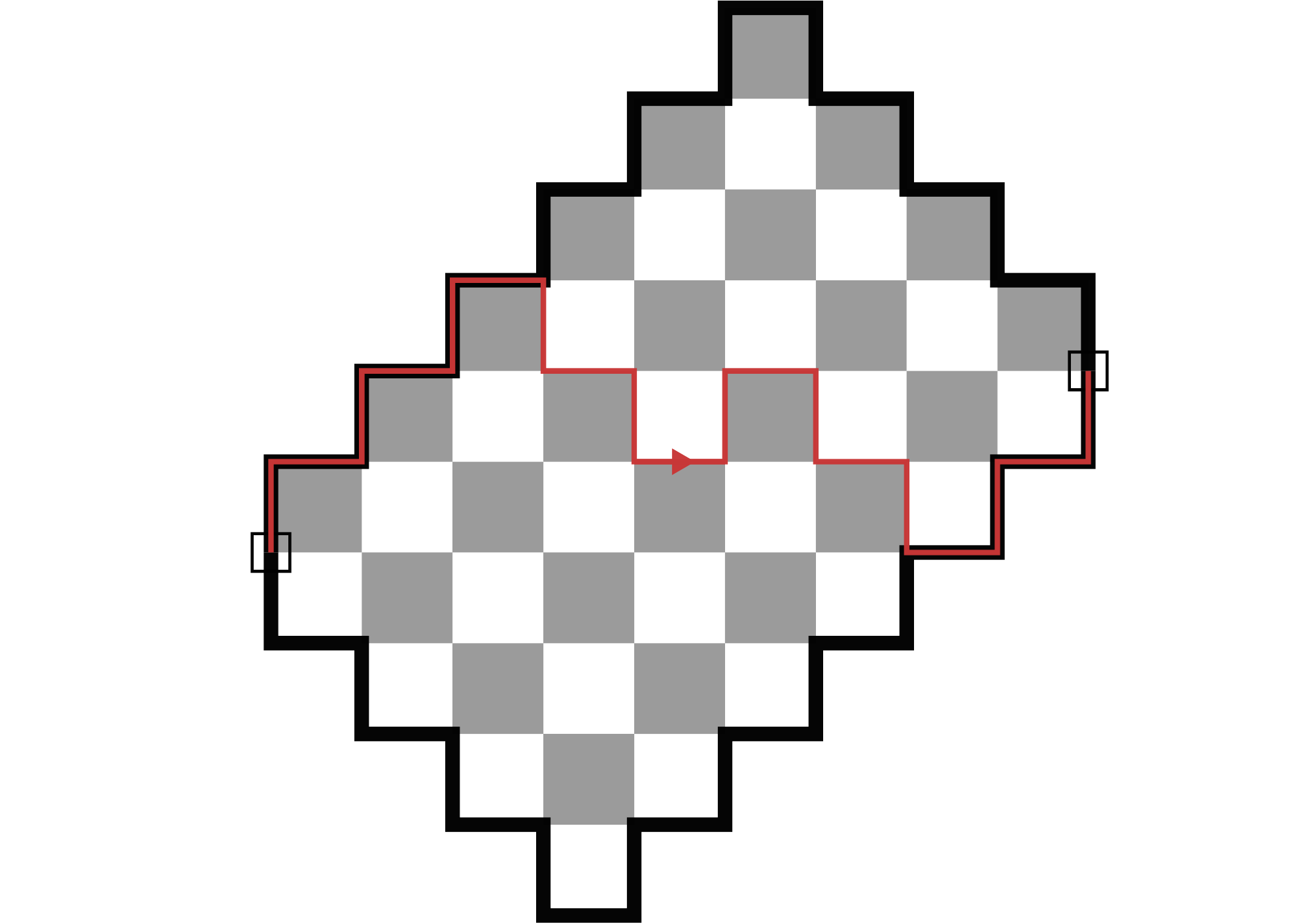
		\caption{An example construction of the paths $\gamma_{\text{up}}$ and $\gamma_{\text{down}}$.}
		\label{fig:exconstpath}
\end{figure}

By Lemma \ref{lemapath}, $\gamma_{\text{up}}$ and $\gamma_{\text{down}}$ are edge-paths in $t$.
We assert that the region $R$ they enclose is a rugged rectangle.
Indeed, $\{\gamma_{\text{up}}^+, \gamma_{\text{down}}^+ \}$ is a pair of parallel staircase edge-paths, for in both $\gamma_{\text{up}}^+$ and $\gamma_{\text{down}}^+$, vertical edge points upward and horizontal edges have the same orientation (because (a) holds).
Furthemore, because both respect (respectively reverse) edge orientation, this also means they are the same type.
Similarly, $\{\gamma_{\text{up}}^-, \gamma_{\text{down}}^- \}$ is a pair of staircase edge-paths that are the same type, so the assertion holds.

It's also easy to see that any square on $R$ that fits $\gamma_{\text{up}}^+$ lies beside a vertical edge on $\gamma_{\text{up}}^+$, and any square on $R$ that fits $\gamma_{\text{up}}^-$ lies beside a vertical edge on $\gamma_{\text{up}}^-$.
Since every vertical edge on $\gamma_{\text{up}}^+$ points upwards and every vertical edge on $\gamma_{\text{up}}^-$ points downwards, the claim on the side lengths of $R$ is proved.
\end{proof}

Two non-parallel doubly-infinite staircase edge-paths meet along a single edge and divide $\R^2$ into four disjoint and quadriculated connected components.
Each such connected component is a planar region we will call \textit{rugged quadrant}\label{def:rugquad}.

There are eight kinds of rugged quadrants, four of which can be tiled in a single way: either entirely by vertical dominoes, or entirely by horizontal dominoes.
These are the north, south, east and west rugged quadrants, and we will refer to them as \textit{cardinal} rugged quadrants.
Each of the other four kinds admits an infinite number of tilings, and will be of no interest to us.
Figure \ref{fig:rugquad} provides examples of rugged quadrants.
\begin{figure}[ht]
		\centering
		\def\svgwidth{0.65\columnwidth}
    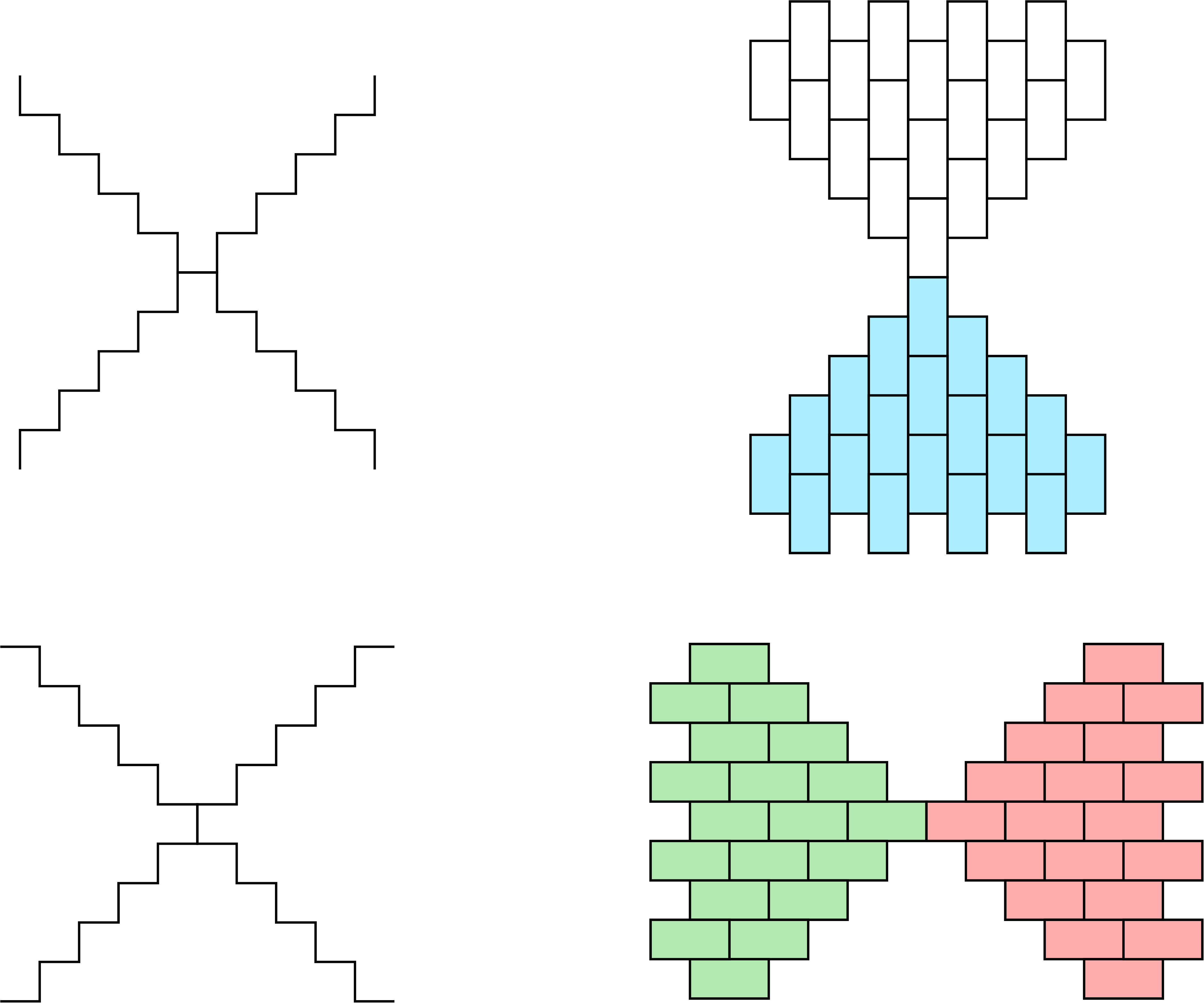
		\caption{The eight rugged quadrants, and the unique tilings of each cardinal quadrant.}
		\label{fig:rugquad}
\end{figure}

\begin{lema}\label{quad}
Let $t$ be a tiling of $\Z^2$ and $Q_0$, $Q_1$ be cardinal rugged quadrants in $t$.
If $Q_0$ and $Q_1$ are the same type (north, south, east or west), then there is a cardinal rugged quadrant $\widetilde{Q}$ in $t$ that is their type and contains them both.
\end{lema}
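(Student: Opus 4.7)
The plan is to construct $\widetilde Q$ explicitly as the smallest cardinal rugged quadrant of the given type containing $Q_0 \cup Q_1$, and then verify it lies in $t$. Without loss of generality assume both $Q_i$ are north cardinal rugged quadrants. Because each cardinal rugged quadrant has a unique tiling by dominoes of a single type, and because $Q_0, Q_1$ share the cardinal direction north, there is a common type $\tau \in \{\text{horizontal},\text{vertical}\}$ such that every square of $Q_0 \cup Q_1$ is covered in $t$ by a $\tau$-domino. Write $\partial Q_i = \alpha_i \cup \beta_i$, where $\alpha_i$ is the NW-going half-staircase edge-path and $\beta_i$ the NE-going one, both emanating from the apex edge $a_i$ of $Q_i$.

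I would next observe that $\alpha_0, \alpha_1$ are parallel NW--SE staircase edge-paths of the same sub-type (determined by the cardinal direction), so one is entirely to the NW of the other; let $\widetilde \alpha$ be the more-NW of the two, extended SE along the common staircase slope past its apex. Symmetrically, let $\widetilde \beta$ be whichever of $\beta_0, \beta_1$ is more NE, extended SW. A routine check confirms that $\widetilde \alpha, \widetilde \beta$ are doubly-infinite staircase edge-paths of the correct types and meet at a uniquely determined apex edge $\widetilde a$, lying SE of one of the old apices $a_i$ and SW of the other (or coinciding with one of them, in which case one $Q_i$ already contains the other and we are done). The north cardinal rugged quadrant $\widetilde Q$ bounded by $\widetilde \alpha \cup \widetilde \beta$ then contains $Q_0 \cup Q_1$ by construction.

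The main obstacle is verifying that $\widetilde Q$ is in $t$: namely, that the new segments of $\widetilde \alpha, \widetilde \beta$ connecting $a_0, a_1$ to $\widetilde a$ are edges of $t$, and that the remaining region $\widetilde Q \setminus (Q_0 \cup Q_1)$ is tiled in $t$ by $\tau$-dominoes. I would prove this by a row-by-row local forcing argument. The region $\widetilde Q \setminus (Q_0 \cup Q_1)$ is a narrow corridor squeezed between the $\tau$-tilings of $Q_0$ and $Q_1$; any square $s$ in the corridor adjacent to $\partial Q_i$ cannot have its domino cross that boundary (because $\alpha_i, \beta_i$ are already edges of $t$), and moreover every alternative $\tau$-partner of $s$ across that boundary is already occupied by a $\tau$-domino of $Q_0$ or $Q_1$. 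The only remaining possibility is for $s$ to be $\tau$-paired with its partner lying inside the corridor. Propagating this constraint inductively fills $\widetilde Q \setminus (Q_0 \cup Q_1)$ by $\tau$-dominoes and simultaneously traces out the new segments of $\widetilde \alpha, \widetilde \beta$ as edges of $t$. The hard part will be checking that this row-by-row forcing is consistent all the way through the corridor and terminates exactly at the apex edge $\widetilde a$, which I expect to follow from the rigidity of the envelope construction of $\widetilde \alpha, \widetilde \beta$.
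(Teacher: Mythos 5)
Your overall plan (form the envelope of the two boundaries and show the region in between is forced) is workable, but two steps conceal genuine difficulties. The first is the claim that $\alpha_0,\alpha_1$ are parallel staircase edge-paths of the same sub-type ``determined by the cardinal direction.'' This is false as stated: translating a north rugged quadrant by $(1,0)$ yields another north rugged quadrant whose boundary staircases carry the opposite colour-induced orientation, so the cardinal direction alone does not fix the sub-type. What is true is that two north quadrants lying in the \emph{same} tiling $t$ must have compatibly oriented boundaries, and this requires proof. The paper obtains it by noting that, since neither quadrant contains the other and a vertical domino of $t$ sits above every horizontal boundary edge of a north quadrant, the two boundaries must cross along a single \emph{horizontal} edge, and a short case check of the possible colour-induced orientations at that edge forces all horizontal edges of both boundaries to agree. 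Without this step your envelope (``whichever is more NW'') is not even well defined.

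The second gap is in the row-by-row forcing. A corridor square that fits one boundary staircase along two of its sides has \emph{two} admissible dominoes, each of which propagates in a different direction along that staircase (this is precisely the two-way ambiguity exploited in Lemma \ref{stairpath}); so ``the only remaining possibility is for $s$ to be $\tau$-paired with its partner inside the corridor'' fails for a generic boundary-adjacent square. The forcing does work, but it has a unique seed: the single square lying directly under the horizontal edge where the two boundaries cross has three of its four sides on $\partial Q_0\cup\partial Q_1$, hence a forced vertical domino, and only from there does the propagation cascade down both arms to the apices and fill the corridor layer by layer. Even once repaired, this is considerably more laborious than the paper's route, which avoids the corridor entirely: the portion $\beta$ of $\partial Q_0\cup\partial Q_1$ joining the two apices through the crossing edge is an edge-path in $t$ that respects edge orientation, and item (1) of Lemma \ref{lemapath} guarantees that \emph{any} orientation-respecting edge-path with the same endpoints and the same length is automatically in $t$; applying this to the rearranged ``descend-then-rise'' path immediately exhibits the boundary of $\widetilde{Q}$ as an edge-path in $t$, after which uniqueness of the tiling of a rugged quadrant finishes the argument.
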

\begin{proof}
If $Q_0 \subseteq Q_1$ or $Q_1 \subseteq Q_0$, there is nothing to prove; suppose this is not the case.
Furthermore, suppose $Q_0$ and $Q_1$ are both north; the proof for other types is analagous to the one that follows.

Let $\gamma_i$ be the edge-path that fits the border of $Q_i$ ($i=0,1$).
Since its edges alternate between horizontal and vertical, an orientation of $\gamma_i$ always respects or always reverses edge orientation (see Corollary \ref{pathorien}); choose the orientation that always respects it.
We claim this choice is the same for both $\gamma_0$ and $\gamma_1$, that is, either both go from left to right or both go from right to left.

Indeed, because $Q_0$ neither contains nor is contained in $Q_1$, $\gamma_0$ and $\gamma_1$ must intersect on non-parallel segments along a single edge.
Furthermore, since a vertical domino lies above every horizontal edge of each $\gamma_i$ (because $Q_i$ is north), that edge must be horizontal; see the following image.
\begin{figure}[H]
		\centering
		\includegraphics[width=0.65\textwidth]{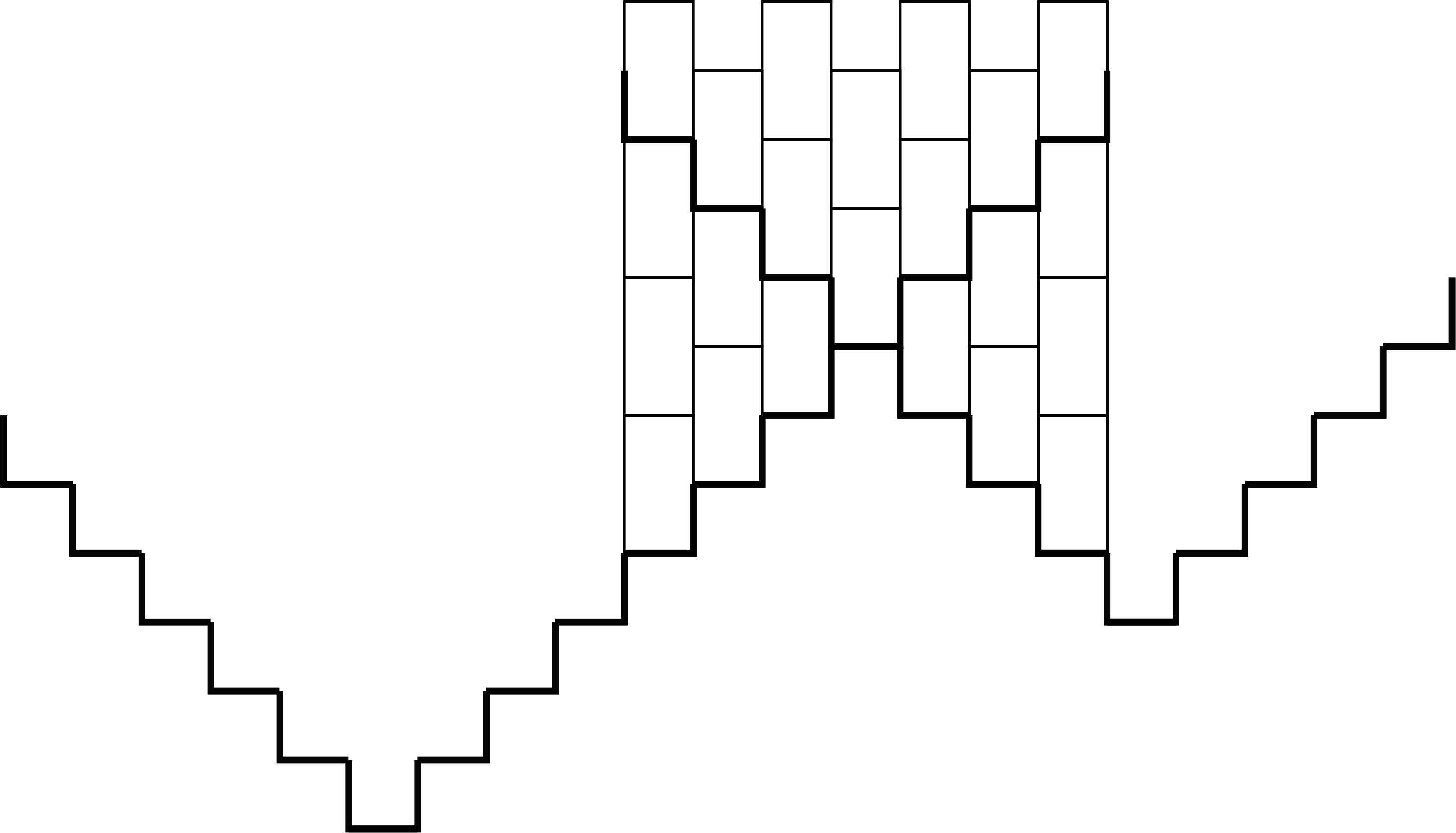}
		\caption{Intersecting north quadrants meet along a single horizontal edge.}
\end{figure}

The only possible edge orientations are shown below.
\begin{figure}[ht]
		\centering
		\includegraphics[width=0.75\textwidth]{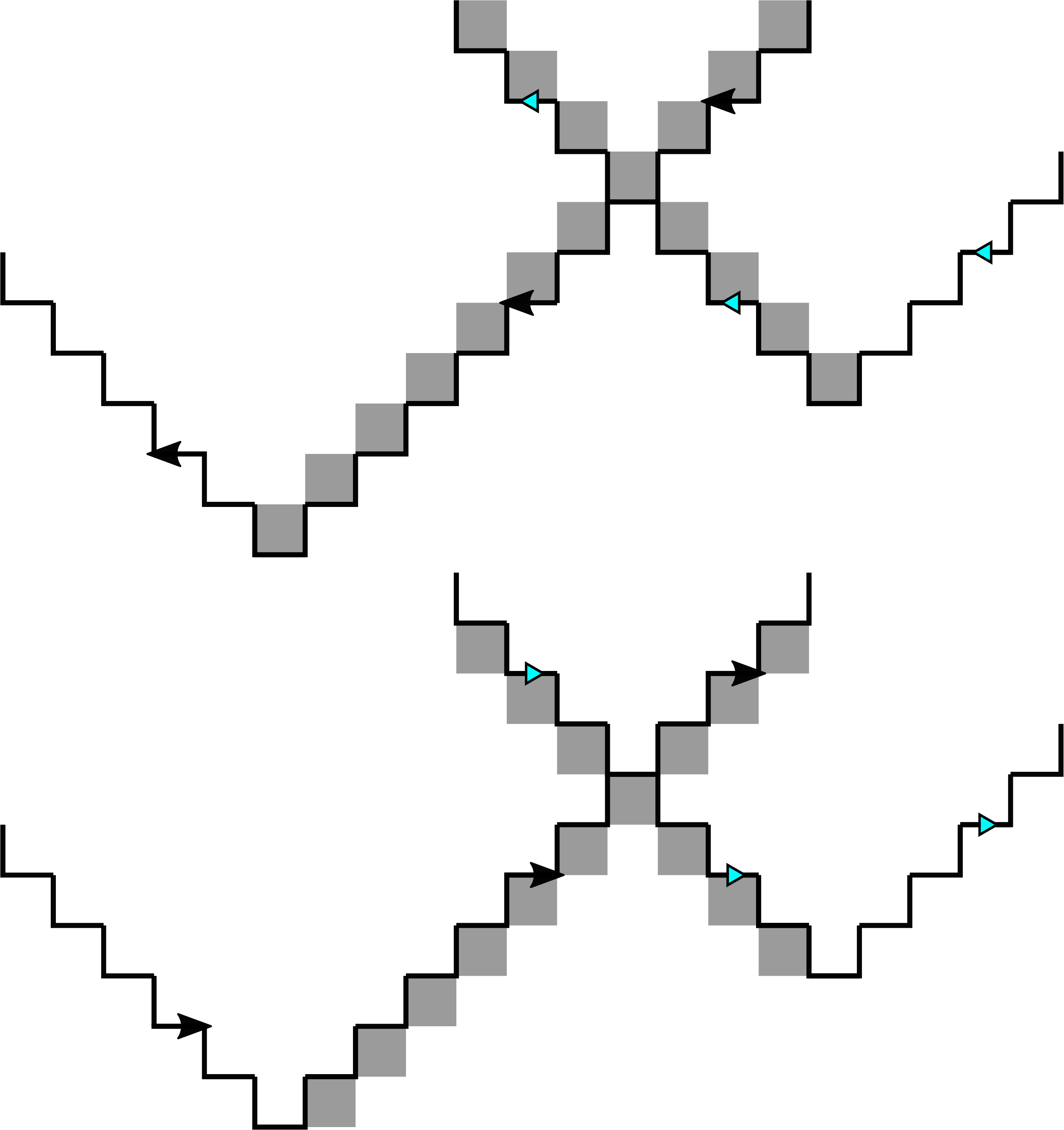}
		\caption{Color-induced orientations on the boundary of intersecting north quadrants.}
\end{figure}

Regardless of the situation, the claim holds.
Notice this implies every horizontal edge of $\gamma_0$ and every horizontal edge of $\gamma_1$ have the same orientation.

Decompose $\gamma_i$ into disjoint staircase edge-paths $\gamma_i^-$ and $\gamma_i^+$, where $\gamma_i^-$ has vertical edges pointing downwards and $\gamma_i^+$ has vertical edges pointing upwards.
Because $\gamma_0$ and $\gamma_1$ both have the same orientation, $\gamma_0^-$ and $\gamma_1^-$ are parallel, and $\gamma_0^+$ and $\gamma_1^+$ are parallel. This implies either $\gamma_0^+$ intersects $\gamma_1^-$ or $\gamma_1^+$ intersects $\gamma_0^-$.
Suppose without loss of generality that $\gamma_0^+$ intersects $\gamma_1^-$ and call $e$ the horizontal edge along which they intersect.

Let $\beta^+$ be the maximal segment of $\gamma_0^+$ that ends with $e$ and $\beta^-$ the maximal segment of $\gamma_1^-$ that starts with $e$.
Observe that $\gamma_0^- \cup \beta^+ \cup \beta^- \cup \gamma_1^+$ is the edge-path that fits the border of $Q_0 \cup Q_1$, and with the orientation inherited from $\gamma_0$ and $\gamma_1$ it always respects edge orientation.

Consider the edge-path $\beta = \beta^+ \cup \beta^-$.
Let $v$ be $\beta$'s starting point and $w$ be $\beta$'s endpoint.
Consider the edge-path $\alpha$ joining $v$ to $w$ that respects edge orientation, has length $l(\beta)$ and features all vertical edges pointing downwards before any vertical edge pointing upwards.
Notice $\beta$ and $\alpha$ have the same number of horizontal edges (all of which have the same orientation), the same number of vertical edges pointing upwards and the same number of vertical edges pointing downwards.

By Corollary \ref{pathorien}, edges in $\alpha$ alternate between vertical and horizontal, so that $\alpha$ has a `descending' staircase edge-path $\alpha^-$ followed by a `rising' staircase edge-path $\alpha^+$.
Furthermore, by Lemma \ref{lemapath}, $\alpha$ is an edge-path in $t$.

We claim $\gamma_0^- \cup \alpha^-$ is a `descending' staircase edge-path.
Indeed, since $\gamma_0^-$ and $\alpha^-$ both respect edge orientation, so does their union; this implies the edges on $\gamma_0^- \cup \alpha^-$ alternate between horizontal and vertical.
Furthermore, by construction all of its vertical edges point downwards, and all of its horizontal edges have the same orientation.
The claim thus holds.
Similarly, $\alpha^+ \cup \gamma_1^+$ is a `rising' staircase edge-path.
Moreover, the entire union $\zeta = \gamma_0^- \cup \alpha^- \cup \alpha^+ \cup \gamma_1^+$ features edges that alternate between horizontal and vertical, because $\alpha = \alpha^- \cup \alpha^+$ does.
\begin{figure}[H]
		\centering
		\def\svgwidth{0.8\columnwidth}
    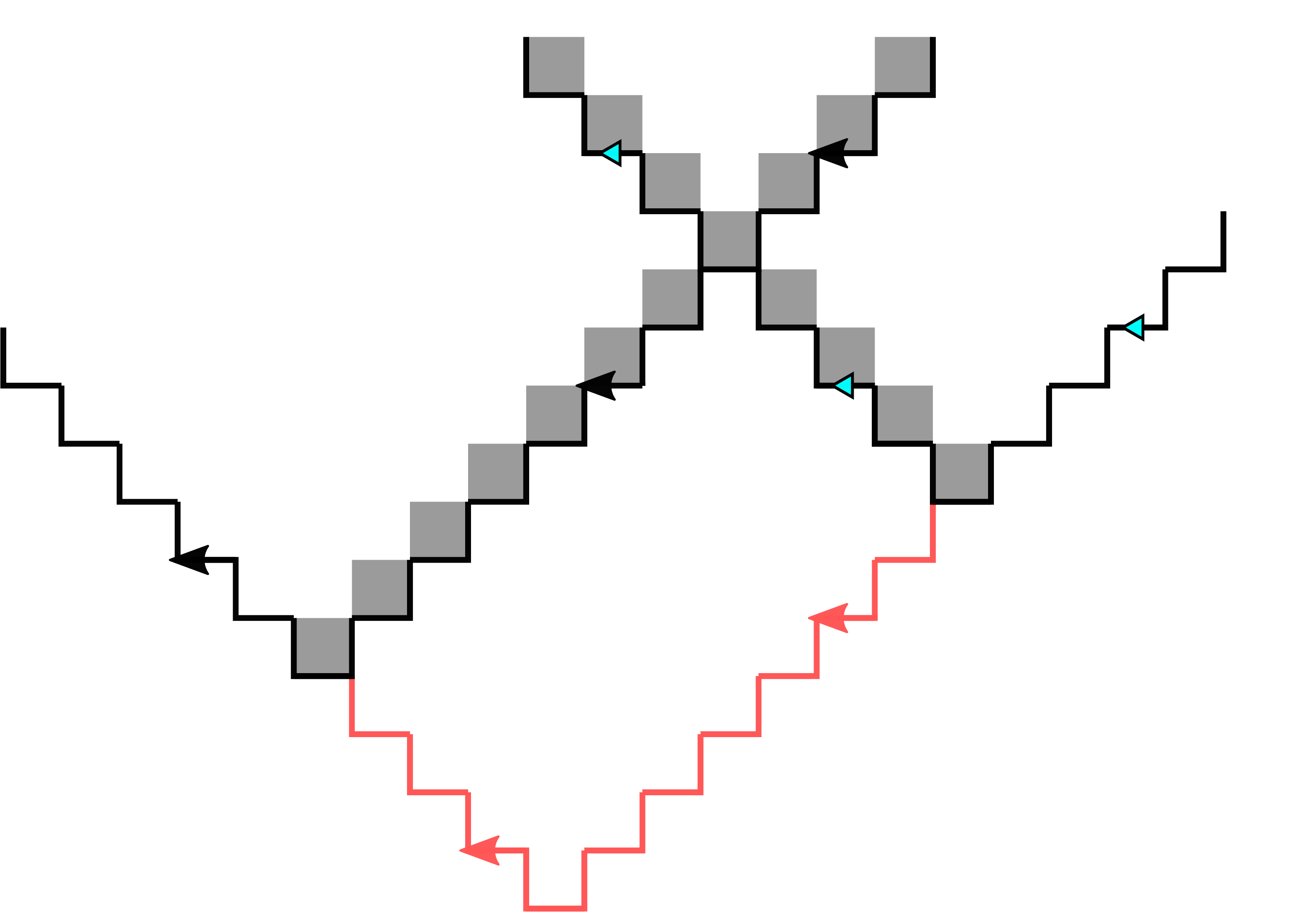
		\caption{The edge-path $\alpha = \alpha^- \cup \alpha^+$ is in $t$ and fits the border of a north quadrant.}
\end{figure}

This means $\zeta$ is an edge-path in $t$ that fits the border of a north rugged quadrant $\widetilde{Q}$.
Since $\gamma_0^-$ and $\gamma_1^+$ are contained in $\zeta$ (alternatively, since $\gamma_0^- \cup \beta^+ \cup \beta^- \cup \gamma_1^+$ is contained in $\widetilde{Q}$), it is clear $\widetilde{Q}$ contains both $Q_0$ and $Q_1$, and we are done.
\end{proof}

We are now ready to prove Theorem \ref{hplanocarac}.

\begin{proof}[Proof of Theorem \ref{hplanocarac}]
For the first part, we will show that if $t$ contains a doubly-infinite domino staircase but does not admit a flip, then it consists entirely of parallel, doubly-infinite staircases.
Indeed, suppose $t$ contains a doubly-infinite domino staircase $S$; then there are two distinct doubly-infinite staircase edge-paths in $t$ that fit $S$, one on either side of it.
By Lemma \ref{stairpath}, if $t$ admits no flips, then on either side of $S$ lies another doubly-infinite domino staircase that is parallel to $S$, so by induction $t$ consists entirely of those.

For the second part, suppose $t$ neither contains a doubly-infinite staircase nor admits a flips.
We will show that $t$ is a windmill tiling.
Let $h$ be $t$'s associated height function.
Because $t$ admits no flips at all, $h$ cannot have local extrema.

Take any $v_0 \in \Z^2$.
Since $v_0$ is not a local maximum of $h$, there must be a neighbouring vertex $v_1 \in \Z^2$ for which $h(v_1) > h(v_0)$.
Of course, $v_1$ is not a local maximum of $h$, so we can repeat the process.
This produces a list $(v_n)_{n \geq 0} \subset \Z^2$ with $h(v_{n+1})>h(v_n)$ for all $n \geq 0$ and in which each $v_n$ is neighbour to $v_{n+1}$.

Notice we may assume $h(v_{n+1})=h(v_n)+1$.
Indeed, when $h(v_{n+1})=h(v_n)+3$, the edge joining $v_n$ to $v_{n+1}$ crosses a domino in $t$, so going from $v_n$ to $v_{n+1}$ round that domino is allowed.
It's clear that each edge traversed this way increases $h$ by $+1$, so there is no loss of generality in the assumption.

Similarly, since $v_0$ is not a local minimum of $h$, there must be a neighbouring vertex $v_{-1} \in \Z^2$ for which $h(v_0)>h(v_{-1})$.
Repeating the process, we obtain a new list $(v_m)_{m \leq 0} \subset \Z^2$ with $h(v_m)>h(v_{m-1})$ for all $m \leq 0$ and in which each $v_m$ is neighbour to $v_{m-1}$.
Like before, we may assume $h(v_{m-1})=h(v_m)-1$.

The union of these two lists yields an indexed list $\left ( v_k\right )_{k \in \Z}$ with $h(v_{k+1}) = h(v_k) + 1$ for all $k \in \Z$ and in which each $v_k$ is neighbour to $v_{k+1}$.

For each $n \in \N$, consider the edge-paths $\gamma_n = (v_k)_{k=-n}^n$, $\gamma_n^+ = (v_k)_{k=0}^n$ and $\gamma_n^- = (v_k)_{k=-n}^0$.
Because $h$ always changes by $+1$ along an edge on each of these paths, they are by construction edge-paths in $t$ that respect edge orientation, so Lemma \ref{lemapath} applies to them.
Furthermore, because $\gamma_n$ is always contained in  $\gamma_{n+1}$, at least one of the statements below is true.

\begin{itemize}
\item[(a)] Each horizontal edge of $\cup_{n \in \N}\gamma_n$ has the same orientation;
\item[(b)] Each vertical edge of each $\cup_{n \in \N}\gamma_n$ has the same orientation.
\end{itemize}

We claim exactly one of these hold.
Indeed, if both (a) and (b) hold, $\cup_{n \in \N}\gamma_n$ is a doubly-infinite staircase edge-path, so Lemma \ref{stairpath} applies.
This contradicts our initial assumption that $t$ neither contains a doubly-infinite domino staircase nor admits a flip, and the claim is thus proved.

Suppose then that (a) holds but not (b).

Let $u_n^+$ be the number of vertical edges pointing upwards in $\gamma_n^+$, $d_n^+$ be the number of vertical edges pointing downwards in $\gamma_n^+$ and similarly for $u_n^-$ and $d_n^-$.
Notice $(u_n^+)_{n \in \N}$ is a nondecreasing sequence of nonnegative integers, and the same is true for the others.
Furthermore, because $\left(u_n^+ + d_n^+\right)$ is the number of vertical edges in $\gamma_n^+$, at least one of $(u_n^+)_{n \in \N}$ and $(d_n^+)_{n \in \N}$ is unbounded, and similarly for $(u_n^-)_{n \in \N}$ and $(d_n^-)_{n \in \N}$.
We assert that:
\begin{equation}\label{sdsu}
(d_n^-) \text{ is unbounded} \Longleftrightarrow \begin{array}{l} (u_n^+) \text{ is unbounded and }\\ (u_n^-)\text{, } (d_n^+) \text{ are bounded}\end{array}
\end{equation}

The $\Longleftarrow$ implication is obvious.
Suppose now that $(d_n^-)_{n \in \N}$ is unbounded; we will prove the $\Longrightarrow$ implication.
Consider the doubly-infinite staircase edge-path $S_d$ (respectively $S_u$) defined by:
\begin{itemize}
\item Its vertical edges all point downwards (respectively upwards);
\item Its horizontal edges have the same orientation as those in $\cup_{n \in \N}\gamma_n$;
\item It respects edge orientation;
\item It contains $v_0$.
\end{itemize}

Let $S_d^-$ be the infinite segment of $S_d$ that \textit{ends} in $v_0$ (as per the edge-path's own orientation), $S_d^+$ be the infinite segment that \textit{starts} at $v_0$, and similarly for $S_u^-$ and $S_u^+$.
Observe the image below.
\begin{figure}[H]
		\centering
		\def\svgwidth{0.925\columnwidth}
    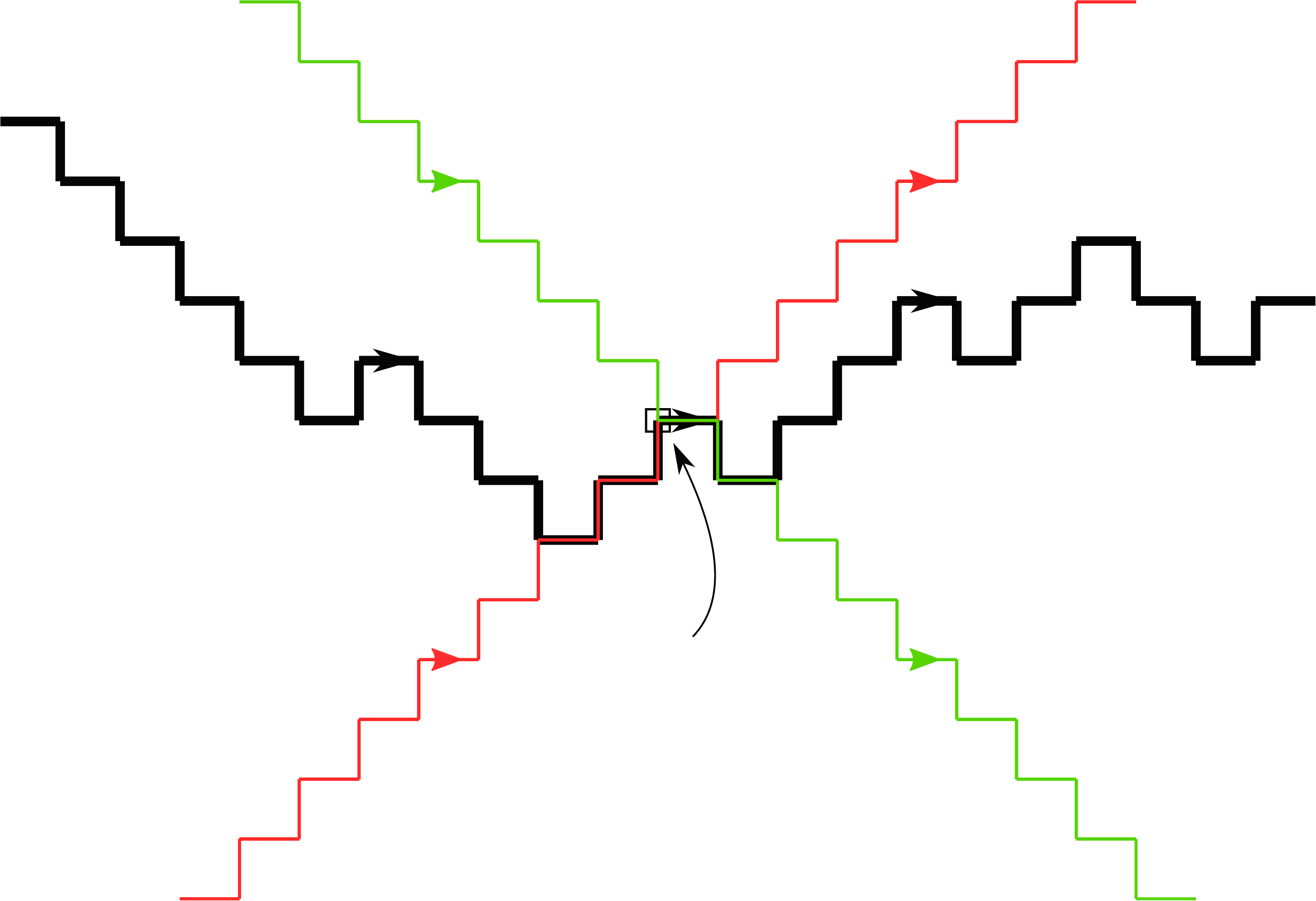
		\caption{The union $\cup_{n \in \N}\gamma_n$ defines doubly-infinite staircase edge-paths $S_d$ and $S_u$ about $v_0$.}
\end{figure}

We claim that no domino in $t$ crosses $S_d^-$, that is, $S_d^-$ is an edge-path in $t$.
Indeed, if $(u_n^-)_{n \in \N}$ is the 0 sequence $S_d^- = \cup_{n \in \N}\gamma_n^-$, so the claim is obviously true.
If $(u_n^-)_{n \in \N}$ is not the 0 sequence there is some $m \geq 0$ with the property that $u_n^- > 0$ for all $n \geq m$.
Then by Lemma \ref{rugrec}, each $\gamma_n^-$ with $n \geq m$ defines a rugged rectangle $R_n$ with side lengths $d_n^-$ along $S_d^-$ and $u_n^-$ along $S_u^-$.
Since $(d_n^-)_{n \in \N}$ is unbounded, the claim is proved.

We will now show that $(d_n^+)_{n \in \N}$ is bounded.
Indeed, if it were not, then $S_d^+$ would be an edge-path in $t$, as in the preceding paragraph.
Since we have shown that $S_d^-$ is an edge-path in $t$, this would mean $S_d$ is an edge-path in $t$, and because it is a doubly-infinite staircase edge-path, Lemma \ref{stairpath} applies.
This contradicts our initial assumptions, so $(d_n^+)_{n \in \N}$ must be bounded.

Now, because $(d_n^+)_{n \in \N}$ is bounded, $(u_n^+)_{n \in \N}$ must be unbounded.
Like before, this implies $S_u^+$ is an edge-path in $t$.
Finally, as in the previous paragraph, it follows that $(u_n^-)_{n \in \N}$ is bounded, thus proving the equivalence in~\eqref{sdsu}.
There are then two cases:
\begin{itemize}
\item $(d_n^-)_{n \in \N}$, $(u_n^+)_{n \in \N}$ are unbounded and  $(u_n^-)_{n \in \N}$, $(d_n^+)_{n \in \N}$ are bounded;
\item $(d_n^-)_{n \in \N}$, $(u_n^+)_{n \in \N}$ are bounded and  $(u_n^-)_{n \in \N}$, $(d_n^+)_{n \in \N}$ are unbounded.
\end{itemize}

In the first case, $S_d^-$ and $S_u^+$ are edge-paths in $t$, so their union is the border of a north rugged quadrant in $t$.
In the latter case, $S_u^-$ and $S_d^+$ are edge-paths in $t$, so their union is the border of a south rugged quadrant in $t$.

In other words, $v_0$ belongs to the `tip' of a north rugged quadrant in $t$ or to the `tip' of a south rugged quadrant in $t$.

We drew this conclusion under the supposition that (a) holds but not (b).
When (b) holds but not (a), the same techniques can be used to conclude that $v_0$ belongs to the `tip' of an east rugged quadrant in $t$ or to the `tip' of a west rugged quadrant in $t$.

In particular, since $v_0$ is free to assume any value in $\Z^2$, we discover that every vertex $v \in \Z^2$ belongs to the `tip' of a cardinal rugged quadrant in $t$.

Let $\mathcal{N}(t)$ be the set of all north rugged quadrants in $t$; suppose it is non-empty.
We claim the union $Q_{\mathcal{N}} = \cup_{Q \in \mathcal{N}(t)} Q$ is in $\mathcal{N}(t)$.
Indeed, Lemma \ref{quad} ensures $Q_{\mathcal{N}}$ is either a north rugged quadrant, a `rugged half plane', or the entire plane.
Since $t$ cannot contain doubly-infinite domino staircases, the last two possibilities are excluded and the claim holds.
Hence, if $\mathcal{N}(t)$ is non-empty, there is a maximal element $Q_{\mathcal{N}} \in \mathcal{N}(t)$ that contains every $\widetilde{Q} \in \mathcal{N}(t)$

This also applies to $\mathcal{S}(t)$, $\mathcal{E}(t)$ and $\mathcal{W}(t)$, respectively the set of all south, east and west rugged quadrants in $t$, whenever they're non-empty.

Now, notice none of $\mathcal{N}(t)$, $\mathcal{S}(t)$, $\mathcal{E}(t)$ or $\mathcal{W}(t)$ may be empty, for we have shown that every vertex of $\Z^2$ belongs to the `tip' of a cardinal rugged quadrant in $t$, and no fewer than four maximal cardinal rugged quadrants with different types can tile $\Z^2$.
It follows that $t$ can be decomposed into the four pieces $Q_{\mathcal{N}}$, $Q_{\mathcal{S}}$, $Q_{\mathcal{E}}$ and $Q_{\mathcal{W}}$.

Finally, it's easy to check the only ways to fit these pieces into a tiling of $\Z^2$ produce windmill tilings, so we are done.\end{proof}

\section{Back to the torus}\label{sec:plantor}

Because tilings of $\T_L$ are $L$-periodic when lifted to $\Z^2$, a windmill tiling can never be the tiling of a torus.
Theorem \ref{hplanocarac} then implies the following corollary:

\begin{corolario}[Characterization of tilings of the torus]\label{htorocarac}
Let $L$ be a valid lattice and $t$ a tiling of $\T_L$.
Then exactly one of the following applies:
\begin{enumerate}
\item $t$ admits a flip;
\item $t$ consists entirely of parallel, doubly-infinite domino staircases.
\end{enumerate}
\end{corolario}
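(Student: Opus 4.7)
The plan is to lift $t$ to its associated $L$-periodic tiling $\tilde{t}$ of the infinite square lattice $\Z^2$ and apply Theorem \ref{hplanocarac} to $\tilde{t}$, then descend the conclusion back to $\T_L$.

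First I would invoke Theorem \ref{hplanocarac} to split into three cases for $\tilde{t}$: it admits a flip, it consists entirely of parallel doubly-infinite domino staircases, or it is a windmill tiling. In the first case, the $L$-periodicity of $\tilde{t}$ means that any flip in $\tilde{t}$ projects to a flip in $t$ (the $2\times 2$ square on which the flip occurs projects injectively onto $\T_L$, because $L$ is generated by vectors whose coordinates have fixed parity and thus have coordinates with absolute value at least $2$ in any component in which they are nonzero, so no fundamental domain collapse occurs on a $2 \times 2$ block --- one can verify this or simply note that parallel cross-flips are still flips on the torus). In the second case, the parallel doubly-infinite staircases in $\tilde{t}$ project to parallel doubly-infinite staircases in $t$ (still doubly-infinite once seen as a cyclic object on $\T_L$ if a whole staircase corresponds to a single cycle, or as several parallel doubly-infinite staircases in $\Z^2$ that map to one another under $L$), giving the desired alternative (2).

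The key obstacle, and the most substantive step, is ruling out the windmill case for $\tilde{t}$. A windmill tiling consists of four cardinal rugged quadrants meeting at a single tip vertex $v_0 \in \Z^2$, and all its infinite staircases emanate from that tip --- none of them is doubly-infinite. But $\tilde{t}$ is $L$-periodic, so for any nonzero $w \in L$ the translate $\tilde{t} + w$ equals $\tilde{t}$, forcing the tip $v_0$ to coincide with $v_0 + w$. This is impossible unless $w = 0$, contradicting the fact that $L$ is a rank-$2$ lattice. Hence $\tilde{t}$ cannot be a windmill tiling. (Equivalently, one may argue directly: $L$-periodicity produces doubly-infinite staircases from any infinite staircase in a windmill by translating it along a non-parallel lattice direction, then concatenating, contradicting the defining property that no staircase in a windmill is doubly-infinite.)

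Finally, I would verify that the three cases for $\tilde{t}$ correspond exactly to the dichotomy claimed for $t$ --- noting that the alternatives are mutually exclusive because a doubly-infinite staircase in $\tilde{t}$ (and thus the staircase structure on $\T_L$) contains no flippable pair of dominoes, by the defining property of staircases. This mutual exclusivity, together with the windmill exclusion, yields exactly the stated dichotomy for $t$.
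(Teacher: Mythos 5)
Your proposal is correct and follows essentially the same route as the paper: the paper derives this corollary in one line by observing that a lifted tiling of $\T_L$ is $L$-periodic, while a windmill tiling cannot be periodic, so the windmill alternative of Theorem \ref{hplanocarac} is excluded. Your additional verifications (flips and staircases descending to the torus, mutual exclusivity) are sound elaborations of what the paper leaves implicit.
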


The next proposition shows this characterization can be described in terms of the flux of a tiling.

\begin{prop}\label{diaescada}
Let $L$ be a valid lattice and $t$ a tiling of $\T_L$ with flux $\varphi_t \in \mathscr{F}(L)$.
Then $t$ admits no flips if and only if $\varphi_t \in \partial Q$.
\end{prop}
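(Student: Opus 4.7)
The plan is to invoke Corollary~\ref{htorocarac}, which equates ``$t$ admits no flip'' with ``$t$ consists entirely of parallel doubly-infinite domino staircases'', so that the proposition becomes the biconditional: $t$ is a parallel-staircase tiling if and only if $\varphi_t \in \partial Q$.

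For the direction ``staircase $\Rightarrow$ boundary'', I would WLOG assume the staircases in $t$ are NE-SW oriented (the other diagonal direction follows by reflective symmetry of $Q$). Fix a doubly-infinite NE staircase edge-path $\gamma \subset t$ fitting one of the domino staircases. By Lemma~\ref{xx}, pick $n > 0$ with $(n,n) \in L$. Every edge of $\gamma$ is in $t$, so $h_t$ changes by $+1$ per edge in each edge's orientation direction; traversing $\gamma$ for $2n$ edges from a vertex $v$ reaches $v + (n,n)$ and gives $h_t(v + (n,n)) - h_t(v) = \pm 2n$ (the sign depending on whether $\gamma$ is type 1-3 or 3-1). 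By $L$-quasiperiodicity (Proposition~\ref{hsqtorogen}), $4 \varphi_t \cdot (n,n) = \pm 2n$, i.e., $\varphi_t \cdot (1,1) = \pm \tfrac12$; combined with $\varphi_t \in Q$ (Theorem~\ref{fluxcarac}), $\varphi_t$ lies on the NE or SW edge of $\partial Q$.

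For the converse I would assume WLOG $\varphi_t \cdot (1,1) = \tfrac12$ and pick $(n,n) \in L$ with $n > 0$. Then $h_t(n,n) = 4 \varphi_t \cdot (n,n) = 2n = h_{\max}(n,n)$ by Proposition~\ref{hmaxfor}. The proof of that proposition identifies the unique minimal-length path $\gamma \in \Gamma(0,(n,n))$ as the NE staircase edge-path from $(0,0)$ to $(n,n)$, and saturation of $h_t$ at $h_{\max}$ along $\gamma$ forces every edge of $\gamma$ into $t$ (an edge not in $t$ would contribute $-3$ rather than $+1$, dropping the total below $2n$). $L$-periodicity extends $\gamma$ to a doubly-infinite NE staircase edge-path in the lift $\tilde t$. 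Windmill tilings are never $L$-periodic, so Theorem~\ref{hplanocarac} leaves $\tilde t$ either admitting a flip or consisting of parallel NE staircases. Applying Lemma~\ref{stairpath} to $\gamma$ and iterating outward---each newly produced domino staircase fitting $\gamma$ has an outer boundary that is itself a forced staircase edge-path in $\tilde t$, to which Lemma~\ref{stairpath} applies again---either foliates $\tilde t$ completely by parallel staircases (case 2, done) or produces a flip somewhere.

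The main obstacle is ruling out this flip alternative. A local maximum of $h_{\tilde t}$ at a type-0 vertex $v_0$ requires horizontal edges at $v_0$ to be in $\tilde t$ and vertical edges to be absent (dually for type-1), whereas any forced NE staircase edge-path through $v_0$ fixes the opposite pattern, giving an immediate contradiction. This closes at once whenever $\{\gamma + v : v \in L\}$ covers $\Z^2$ (for instance, when $\gcd\{b-a : (a,b) \in L\} = 2$). For lattices where the $L$-orbit of $\gamma$ alone misses some vertices, the plan is to augment the family of forced paths by the iteratively produced outer boundaries of Lemma~\ref{stairpath}'s staircases, using $L$-periodicity to propagate them, until the resulting family is dense enough to preclude any local extremum. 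Closing this density argument uniformly across all valid lattices is the technical crux of the proof.
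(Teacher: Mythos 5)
Your forward direction is correct and essentially identical to the paper's: reduce via Corollary~\ref{htorocarac} to a parallel-staircase tiling, evaluate $h_t$ along a staircase edge-path through the origin at a lattice point of the form $(\pm x,\pm x)$ supplied by Lemma~\ref{xx}, and combine $\langle\varphi_t,(\pm1,\pm1)\rangle=\pm\tfrac12$ with $\varphi_t\in Q$ to land on $\partial Q$.

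The converse, however, has a genuine gap exactly where you say it does, and the gap is avoidable. After forcing the staircase edge-path $\gamma$ from $0$ to $(n,n)\in L$ into $t$, you appeal to Lemma~\ref{stairpath}, whose conclusion is only the dichotomy ``doubly-infinite staircase fitting $\gamma$ \emph{or} a flip,'' and you then try to exclude the flip branch by a density argument over local extrema that you do not close. The paper's Lemma~\ref{esctoro} (stated immediately before this proposition) is designed precisely to collapse that dichotomy: since $t$ is $L$-periodic and the parameter $(n,n)$ of $\gamma$ lies in $L$, the tiling is periodic \emph{in the direction of the staircase itself}. The reason Lemma~\ref{stairpath} cannot guarantee a doubly-infinite staircase in general is that the choice of a single domino fitting $\gamma$ propagates only in one direction, so the resulting staircase may terminate and the next staircase (propagating the other way) meets it in a square where a flip is enabled. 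Under $(n,n)$-periodicity this cannot happen: the staircase containing a given domino also contains all its translates by multiples of $(n,n)$, so it cannot have an extremal domino and is forced to be doubly-infinite on each side of $\gamma$, with no flip arising. Each such staircase is fitted on its outer side by another doubly-infinite staircase edge-path in $t$, parallel to $\gamma$, so the argument iterates and foliates all of $\Z^2$ by parallel doubly-infinite staircases; Corollary~\ref{htorocarac} then gives that $t$ admits no flips. Your concern about whether the $L$-orbit of $\gamma$ covers $\Z^2$ is a red herring: coverage is achieved by the outward iteration, not by translates of $\gamma$ alone, and the only thing that needed ruling out at each step was the flip branch, which periodicity kills. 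Replacing your density argument with this periodicity observation (i.e.\ with Lemma~\ref{esctoro}) completes the proof.
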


Before proving it, we need a lemma.

\begin{lema}\label{esctoro}
Let $L$ be a valid lattice, and $t$ a tiling of $\T_L$.
Suppose there is a staircase edge-path $\gamma$ in $t$ joining $w$ to $w+v$, where $w \in \Z^2$ and $v \in L$.
Then $t$ consists entirely of doubly-infinite domino staircases, each parallel to $\gamma$.
\end{lema}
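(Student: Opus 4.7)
The plan is to lift to the infinite square lattice, use $L$-periodicity to promote $\tilde{\gamma}$ (the lift of $\gamma$) to a doubly-infinite staircase edge-path in $\tilde{t}$, then invoke Lemma \ref{stairpath} together with Theorem \ref{hplanocarac}.

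First, a parity check. The lift $\tilde{\gamma}$ is a staircase edge-path in the $L$-periodic lift $\tilde{t}$ from $w$ to $w+v$. Because all horizontal edges of $\tilde{\gamma}$ share one orientation and all its vertical edges share one orientation, writing $a$ and $b$ for their respective counts, the displacement $v$ takes the form $(\pm a, \pm b)$ with $|a-b|\le 1$ by the horizontal/vertical alternation. Since $L$ is valid, the coordinates of $v$ share the same parity, forcing $a=b$; hence $\tilde{\gamma}$ has even length, and its first and last edges are of opposite types (one horizontal, one vertical). Moreover, validity of $L$ also means translation by $v$ preserves the color-induced orientation of edges, so each translate $\tilde{\gamma}+kv$ is a staircase edge-path in $\tilde{t}$ of the same type. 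Concatenating all these translates at the shared endpoints $w+kv$, with alternation preserved at each junction thanks to the parity check, yields a $v$-periodic doubly-infinite staircase edge-path $\Gamma$ in $\tilde{t}$.

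Next, I apply Lemma \ref{stairpath} to $\Gamma$: on each side, $\tilde{t}$ contains a doubly-infinite domino staircase fitting $\Gamma$ or admits a flip. Once a doubly-infinite domino staircase in $\tilde{t}$ is established, its far boundary is another $v$-periodic doubly-infinite staircase edge-path to which the same analysis applies; iterating produces parallel doubly-infinite staircases fitting $\Gamma$ on both sides. Since $\tilde{t}$ is $L$-periodic (hence not a windmill), Theorem \ref{hplanocarac} then forces $\tilde{t}$ to consist entirely of parallel doubly-infinite staircases. Projecting back to $\T_L$ yields the conclusion for $t$, each staircase being parallel to $\gamma$.

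The main obstacle is ruling out the flip alternative in Lemma \ref{stairpath}. I would exploit $v$-periodicity together with the prohibition on any domino of $\tilde{t}$ crossing an edge of $\Gamma$. A short case analysis on the orientation of the domino covering a square adjacent to $\Gamma$ shows that any ``cross-$\Gamma$'' choice is forbidden, while each remaining configuration propagates under translation by $v$ into a doubly-infinite staircase fitting $\Gamma$; a finite staircase terminating in a flip, as in the proof of Lemma \ref{stairpath}, would by $v$-periodicity repeat all along $\Gamma$ and force some translate of a domino into a position straddling an edge of $\Gamma$, a contradiction. Equivalently, once a doubly-infinite domino staircase in $\tilde{t}$ has been produced, Corollary \ref{htorocarac} combined with the non-windmill observation places $t$ in its case (2).
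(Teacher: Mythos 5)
Your core argument is essentially the paper's: validity of $L$ forces the two edge-counts of the staircase path to be equal (so $l(\gamma)$ is even and the first and last edges have different types), the translates $\gamma + n\cdot v$ therefore concatenate into a doubly-infinite staircase edge-path in $t$, and the one-directional domino propagation along such a path, combined with $v$-periodicity, produces a doubly-infinite domino staircase on each side; iterating across the successive fitting edge-paths exhausts the plane. The paper does not route through the dichotomy of Lemma \ref{stairpath} — it uses the propagation observation directly and lets periodicity extend it in the other direction — but your way of excluding the flip alternative (periodicity is incompatible with a maximal staircase having an extremal domino, since translating by $v$ would have to fix it) amounts to the same mechanism, even if "straddling an edge of $\Gamma$" is not quite the contradiction that actually arises.

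One caution about the two shortcuts you offer at the end: neither closes the argument on its own. Excluding the windmill case of Theorem \ref{hplanocarac} still leaves case (1) open, and Corollary \ref{htorocarac} does not say that a torus tiling containing \emph{one} doubly-infinite domino staircase is flip-free — a priori it could contain such a staircase and still admit a flip elsewhere, so you cannot jump to its case (2) without first ruling out all flips. The strip-by-strip iteration is therefore not optional; it is the proof, exactly as in the paper.
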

\begin{proof}
Since $t$ is $L$-periodic and $v \in L$, $t$ is $v$-periodic.
Thus, for each $n \in \Z$ the translated edge-path $(\gamma + n\cdot v)$ is in $t$.
Now, $l(\gamma)$ is the sum of $v$'s coordinates, and because $L$ is valid, that number is even. 
This means $\gamma$'s first and last edge are different types (horizontal or vertical), which in turn implies the union $\cup_{n \in \Z} (\gamma + n\cdot v)$ is itself a staircase edge-path $\widetilde{\gamma}$ in $t$, except now doubly-infinite.

For any doubly-infinite staircase edge-path, the choice of a single domino fitting it propagates infinitely along the staircase in one direction; the direction is given by that domino's type (horizontal or vertical).
Figure \ref{domprop} in Lemma \ref{stairpath} illustrates this.

Using the $v$-periodicity of our tiling, this propagation can be extended infinitely to the other direction, so $\widetilde{\gamma}$ has a doubly-infinite domino staircase on each side.
For each of those, there is another doubly-infinite staircase edge-path that fits it and is parallel to $\widetilde{\gamma}$ (and thus also to $\gamma)$, so we may repeat the process.
The lemma follows.
\end{proof}

We now prove Proposition \ref{diaescada}.

\begin{proof}[Proof of Proposition \ref{diaescada}]
Suppose first that $t$ admits no flips.
In this case, Corollary \ref{htorocarac} implies $t$ consists entirely of parallel, doubly-infinite domino staircases.
Consider then a staircase edge-path $\gamma$ that respects edge orientation, starts at the origin, and fits a staircase in $t$.
Such an edge-path always exists; see the image below.
The marked vertex is the origin.
\begin{figure}[H]
		\centering
		\includegraphics[width=0.99\textwidth]{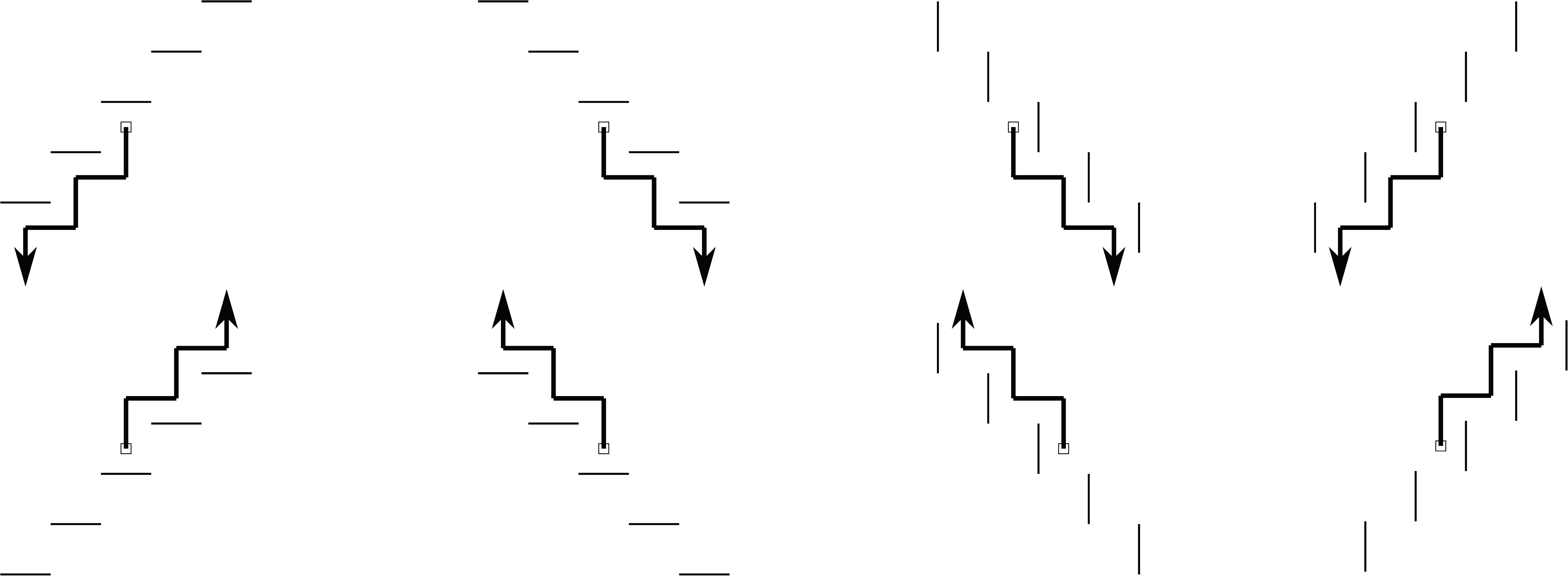}
		\caption{Possible domino staircases about the origin and choice of staircase edge-path $\gamma$.}
\end{figure}

Notice that for a suitable and fixed choice of signs, $\gamma$ contains all vertices of the form $(\pm x, \pm x)$, $x \in \N\setminus\{0\}$.
For these vertices, the constructive definition of height function implies $h(\pm x, \pm x) = 2x$.
By Lemma \ref{xx}, one of those vertices is in $L$.
We thus have$$\varphi_t(\pm x, \pm x) = \big\langle \varphi_t, (\pm x, \pm x) \big\rangle = \frac{1}{4}h(\pm x, \pm x) = \frac{1}{2} x.$$

On the other hand, it holds that$$\big\langle \varphi_t, (\pm x, \pm x) \big\rangle \leq \lVert \varphi_t \rVert_1 \cdot x \leq \frac{1}{2} x,$$where the last inequality follows from the fact that $\varphi_t \in Q$ (see Theorem \ref{fluxcarac}).
Combining the two yields $\lVert \varphi_t \rVert_1 = \tfrac12$, so $\varphi_t \in \partial Q$ as desired.
 
Suppose now that $\varphi_t \in \partial Q$.
Lemma \ref{xx} guarantees that for each choice of signs in $(\pm x, \pm x)$, there is a vertex in $L$ with that form.
For a suitable choice of signs then, there is a vertex in $L$ with that form and
\begin{equation*}
\big\langle \varphi_t, (\pm x, \pm x) \big\rangle = \lVert \varphi_t \rVert_1 \cdot x = \frac{1}{2} x,
\end{equation*}
which implies $h(\pm x, \pm x) = 2x$.
We may assume without loss of generality $x$ is positive (otherwise, take $-x$ instead).

Consider the staircase edge-paths that start at the origin and end in $(\pm x, \pm x)$; there are two: both have length $2x$, and one respects edge orientation while the other reverses it.
Because $h(\pm x, \pm x) = 2x$, the constructive definition of height functions implies the staircase edge-path that respects edge orientation is in $t$.
By Lemma \ref{esctoro}, $t$ consists entirely of parallel, doubly-infinite domino staircases, so it admits no flips and the proof is complete.
\end{proof}

We now know that if $\varphi \in \mathscr{F}(L) \cap \text{int}(Q)$, every tiling of $\T_L$ with flux $\varphi$ admits a flip; in other words, it has a local extremum.
It turns out, however, that it must have both a local minimum and a local maximum, that is, it must admit at least two flips.

\begin{prop}\label{fluxminmax}
Let $L$ be a valid lattice and $t$ a tiling of $\T_L$ with height function $h$ and flux $\varphi \in \mathscr{F}(L) \cap \text{int}(Q)$.
Then $h$ has a both a local minimum and a local maximum.
\end{prop}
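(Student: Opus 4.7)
The strategy extends Proposition \ref{hredux} to the torus, replacing the boundary condition on $R$ with comparison against the extremal toroidal height functions $h_{\max}^{L,\varphi}$ and $h_{\min}^{L,\varphi}$ constructed in Proposition \ref{conversediamond} and its symmetric dual. Since $\varphi \in \text{int}(Q)$, Proposition \ref{diaescada} ensures $t$ admits a flip, producing a second tiling of flux $\varphi$. Hence at least two distinct toroidal height functions with flux $\varphi$ exist, so $h_{\min}^{L,\varphi} \not\equiv h_{\max}^{L,\varphi}$ on $\Z^2$.

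Assume first $h \neq h_{\max}^{L,\varphi}$. The function $g := h_{\max}^{L,\varphi} - h$ is $L$-periodic (both terms share the same quasiperiodicity, with flux $\varphi$), integer-valued, non-negative, and not identically zero; it attains its positive maximum on a finite, non-empty $L$-periodic set $V \subseteq \Z^2$. Choose $v_0 \in V$ minimising $h$. A case analysis at each of the four neighbours $w$ of $v_0$, parallel to the one in Proposition \ref{hredux}, shows that either the edge $v_0 w$ is on both $t$ and $t_{\max}$ (giving $g(w) = g(v_0)$, so $w \in V$ with $h(w) < h(v_0)$, contradicting the choice of $v_0$) or the edge is on only one of the two tilings (giving $g(w) > g(v_0)$, contradicting $v_0 \in V$). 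The mod-$4$ prescription of Proposition \ref{hsquare} rules out $h(w) = h(v_0)$, so every neighbour strictly exceeds $v_0$ in $h$-value: $v_0$ is a strict local minimum of $h$. Symmetrically, if $h \neq h_{\min}^{L,\varphi}$, the function $g := h - h_{\min}^{L,\varphi}$ and a choice of $v_1$ that maximises $g$ and then $h$ within the set where $g$ is maximal produce a strict local maximum of $h$.

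Since $h_{\min}^{L,\varphi} \not\equiv h_{\max}^{L,\varphi}$, at least one of the two arguments above applies to $h$, giving one of the required extrema. For the remaining extremum in the degenerate case $h = h_{\max}^{L,\varphi}$ (respectively $h = h_{\min}^{L,\varphi}$), one argues directly from the explicit formula of Proposition \ref{conversediamond}: at any $v \in L$ the value $h_{\max}^{L,\varphi}(v) = 4\langle\varphi, v\rangle$ is realised by the $v' = v$ term in the minimisation, so each neighbour $w = v + e$ satisfies $h_{\max}^{L,\varphi}(w) \leq h_{\max}^{L,\varphi}(v) + h_{\max}(e) = h_{\max}^{L,\varphi}(v) + 3$. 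The strict inequality $\|\varphi\|_1 < 1/2$ combined with the lower bound $h_{\max}(u) \geq 2\|u\|_\infty - 1$ from Proposition \ref{hmaxfor} prevents any competing $v' \neq v$ from driving $h_{\max}^{L,\varphi}(w)$ below $h_{\max}^{L,\varphi}(v) + 1$, so $v$ is a strict local minimum of $h_{\max}^{L,\varphi}$. The main obstacle is this last quantitative estimate, which requires careful control of the competing cone contributions near the lattice point $v$, rather than the clean combinatorial case analysis available for generic $h$.
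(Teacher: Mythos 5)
Your overall plan---compare $h$ against the extremal height functions $h_{\max}^{L,\varphi}$ and $h_{\min}^{L,\varphi}$ and transplant the selection argument of Proposition \ref{hredux}---runs into exactly the obstruction that the paper flags at the opening of Chapter \ref{chap:fliptorus}, and your proposal does not resolve it. The level set $V$ on which $g = h_{\max}^{L,\varphi} - h$ attains its maximum is $L$-periodic, hence infinite (your description of it as ``finite'' and ``$L$-periodic'' is already inconsistent), and $h$ is only $L$-\emph{quasi}periodic: for $u \in L$ one has $h(v+u) = h(v) + 4\langle \varphi, u\rangle$, so unless $\varphi = 0$ the function $h$ is unbounded both above and below on $V$. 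The step ``choose $v_0 \in V$ minimising $h$'' is therefore impossible in general, and the subsequent case analysis collapses, since the contradictions in Proposition \ref{hredux} are derived precisely from the global extremality of the chosen vertex within $V$. Restricting to a fundamental domain does not help, because a neighbour of the chosen vertex may leave the domain and its $L$-translate back into the domain changes $h$ by $4\langle\varphi,u\rangle$. The paper's Proposition \ref{fluxconecmin} faces the same difficulty and escapes it only by invoking the staircase machinery that is the content of the proof of Proposition \ref{fluxminmax} itself; so any repair of your argument along these lines would be circular.

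For contrast, the paper proves the statement by contradiction without any reference to $h_{\max}^{L,\varphi}$ or $h_{\min}^{L,\varphi}$: if $h$ had no local maximum, one builds an infinite edge-path $(v_n)_{n\ge 0}$ in $t$ with $h(v_{n+1}) = h(v_n)+1$, applies Lemma \ref{lemapath} and Lemma \ref{rugrec} to extract an infinite staircase edge-path in $t$, then uses the finiteness of $\qquotient{\R^2}{L}$ to make that path join some $w$ to $w+v$ with $v \in L$, so Lemma \ref{esctoro} forces $t$ to consist entirely of parallel doubly-infinite domino staircases, contradicting $\varphi \in \mathrm{int}(Q)$ via Proposition \ref{diaescada}. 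That is the key idea missing from your proposal. Your final paragraph on the degenerate case $h = h_{\max}^{L,\varphi}$ is a separate, self-acknowledged gap: the estimate $h_{\max}(u) \ge 2\lVert u\rVert_\infty - 1$ together with $\lVert\varphi\rVert_1 < \tfrac12$ only yields a lower bound of roughly $4\langle\varphi,v\rangle - 3$ on the competing terms at a neighbour of $v$, which is not enough by itself to certify a strict local minimum; but even if that estimate were completed, the main branches of your argument would still fail for the reason above.
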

\begin{proof}
The proof is by contradiction.
We will show that if $h$ does not have one kind of local extremum, there is an infinite staircase edge-path in $t$.
We claim in this case Lemma \ref{esctoro} applies.
Indeed, any edge-path $\gamma$ in $\Z^2$ can be projected onto an edge-path in $\quotient{\R^2}{L}$; since $\quotient{\R^2}{L}$ is finite, if $\gamma$ is long enough the projection must self-intersect, so the claim holds.
By Lemma \ref{esctoro}, $t$ consists entirely of parallel, doubly-infinite domino staircases, contradicting $\varphi \in \mathscr{F}(L) \cap \text{int}(Q)$.

Suppose $h$ does not have a local maximum.
The argument that follows goes similar to the proof of Theorem \ref{hplanocarac}, and is analogous when $h$ does not have a local minimum.

Since no $v_0 \in \Z^2$ is a local maximum of $h$, there is a list $(v_n)_{n \geq 0} \subset \Z^2$ with $h(v_{n+1})=h(v_n)+1$ for all $n \geq 0$ and in which each $v_n$ is neighbour to $v_{n+1}$.
For each $n \in \N$, consider the edge-path $\gamma_n^+ = (v_k)_{k=0}^n$.
Because $h$ always changes by $+1$ along an edge on each of these paths, they are by construction edge-paths in $t$ that respect edge orientation, so Lemma \ref{lemapath} applies to them.
Furthermore, because $\gamma_n^+$ is always contained in  $\gamma_{n+1}^+$, at least one of the statements below is true.
\begin{itemize}
\item[(a)] Each horizontal edge of $\cup_{n \in \N}\gamma_n^+$ has the same orientation;
\item[(b)] Each vertical edge of each $\cup_{n \in \N}\gamma_n^+$ has the same orientation.
\end{itemize}

If both (a) and (b) hold, $\cup_{n \in \N}\gamma_n^+$ is an infinite staircase edge-path in $t$, and we are done.

Suppose now (a) holds but not (b); the other case is analogous.
Consider the doubly-infinite staircase edge-path $S_d^+$ (respectively $S_u^+$) defined by:
\begin{itemize}
\item Its vertical edges all point downwards (respectively upwards);
\item Its horizontal edges have the same orientation as those in $\cup_{n \in \N}\gamma_n$;
\item It respects edge orientation;
\item It starting point is $v_0$.
\end{itemize}

Let $u_n^+$ be the number of vertical edges pointing upwards in $\gamma_n^+$ and $d_n^+$ be the number of vertical edges pointing downwards in $\gamma_n^+$.
Notice $(u_n^+)_{n \in \N}$ and $(d_n^+)_{n \in \N}$ are nondecreasing sequences of nonnegative integers.
Furthermore, because $\left(u_n^+ + d_n^+\right)$ is the number of vertical edges in $\gamma_n^+$, at least one of $(u_n^+)_{n \in \N}$ and $(d_n^+)_{n \in \N}$ is unbounded.

As in the proof of Theorem \ref{hplanocarac}, Lemma \ref{rugrec} guarantees that when $(u_n^+)_{n \in \N}$ is unbounded, $S_u^+$ is in $t$;
and when $(d_n^+)_{n \in \N}$ is unbounded, $S_d^+$ is in $t$.
In other words, at least one of $S_u^+$ and $S_d^+$ is in $t$.
Since they're both infinite staircase edge-paths, the proof is complete.
\end{proof}

Let $L$ be a valid lattice.
Remember $\T_L = \quotient{\R^2}{L}$, so any vertex of $\T_L$ has an $L$-equivalence class in $\Z^2$; we will denote $v$'s equivalence class by $[v]_L$\label{def:vbracketl}.

Let $t$ be a tiling of $\T_L$ with associated height function $h$.
Because $h$ is $L$-quasiperiodic, if $v \in \Z^2$ is a local extremum of $h$, each vertex in $[v]_L$ will also be a local extremum of the same kind.
In other words, we may perform a flip round each vertex in $[v]_L$.
We call this process an \textit{$L$-flip}\label{def:lflip} (round $v$): it is how a flip on a tiling of $\T_L$ manifests in the planar, $L$-periodic representation of $t$.

An $L$-flip round $v$ preserves $h$'s quasiperiodicity.
This is clear when $v$ is not in the equivalence class of the origin $[0]_L$; in this case, the height change on each vertex in $[v]_L$ will be the same, and no height change will occur on other vertices.

When $v$ \textbf{is} in $[0]_L$, the situation is different.
The toroidal height functions we consider take the base value 0 at the origin, so performing an $L$-flip round the origin does not change the value $h$ takes on it; instead, it changes the value on each vertex \textbf{not} in $[0]_L$.
Nonetheless, since that change is the same across all such vertices\footnote{When the origin is a local maximum, the change is $+4$; when the origin is a local minimum, the change is $-4$.}, $h$'s quasiperiodicity is preserved in this case too.

This also shows an $L$-flip preserves a tiling's flux value, because in either case the value $h$ takes on $L$ does not change --- observe that the equivalence class of the origin is $L$ itself.

If the reader had any thoughts about how Proposition \ref{fluxminmax} and Corollary \ref{htorminimal} were contradictory, the discussion above should have cleared those.
There is no conflict: for each flux $\varphi \in \mathscr{F}(L) \cap \text{int}(Q)$, $h_{\min}^{L, \varphi}$ must have all of its local maxima lying on $[0]_L$, so that performing an $L$-flip round those vertices does not contradict the minimality of $h_{\min}^{L, \varphi}$.
By the same token, $h_{\max}^{L, \varphi}$ must have all of its local minima lying on $[0]_L$.

We are now poised to prove the flux-analogue of Proposition \ref{hredux} for the torus.

\begin{prop}\label{fluxconecmin}
Let $L$ be a valid lattice and $\varphi \in \mathscr{F}(L) \cap \text{int}(Q)$.
Let $h_{\min}^{L, \varphi}$ be minimal over height functions of tilings of $\T_L$ with flux $\varphi$.
Let $h \neq h_{\min}^{L, \varphi}$ be a height function associated to a tiling $t$ of $\T_L$ with flux $\varphi$.
Then there is an $L$-flip on $t$ that produces a height function $\tilde{h} \leq h$ with $\tilde{h} < h$ on one equivalence class of vertices of $\Z^2$.
\end{prop}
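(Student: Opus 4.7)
The plan is to mirror the proof of Proposition~\ref{hredux}, modified to account for quasiperiodicity. First I would set $g = h - h_{\min}^{L,\varphi}$; by Proposition~\ref{hsqtorogen} both height functions carry the same quasiperiodicity data, so $g$ is $L$-periodic, by Proposition~\ref{hsquare} it takes nonnegative values in $4\Z$, and since $h \neq h_{\min}^{L,\varphi}$ it is not identically zero. Let $M = \max g > 0$ and $V = g^{-1}(M)$. Because both height functions agree on $L = [0]_L$ (each equals $4 \langle \varphi, \cdot \rangle$ there), $g \equiv 0$ on $[0]_L$, so $V \cap [0]_L = \emptyset$. The whole proof will reduce to the claim that some $v \in V$ is a local maximum of $h$: once such a $v$ is found, the $L$-flip round $v$ does not touch the base vertex (since $v \notin [0]_L$) and simply subtracts $4$ from $h$ at every vertex of $[v]_L$, leaving $h$ unchanged elsewhere, which is exactly the $\tilde h$ required by the statement.

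To prove the claim I plan to argue by contradiction: assume no $v \in V$ is a local max of $h$. I will then mimic the case analysis of Proposition~\ref{hmin}: whenever $v \in V$ and $w$ is a neighbor with $h(w) > h(v)$, the inequality $g(w) \leq M$ together with the standard height-increment rules along $vw$ forces $h_{\min}^{L,\varphi}(w) - h_{\min}^{L,\varphi}(v) = h(w) - h(v)$ (otherwise one would get $g(w) = M + 4$). Checking the four subcases leaves only two admissible situations: either $vw$ lies in $t \cap t_{\min}$ and is traversed in color orientation (increment $+1$), or $vw$ lies in $(t \cup t_{\min})^c$ and is traversed against color orientation (increment $+3$); in either case $w \in V$. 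Iterating produces an infinite edge-path $(v_n)_{n \geq 0}$ in $V$ with $h(v_{n+1}) - h(v_n) \in \{+1, +3\}$. For each $+3$ step, the crossed edge is interior to a common domino of $t$ and $t_{\min}$ (it is determined by its two adjacent squares in both tilings); I will replace the step by the three boundary edges of that domino on one side, producing a detour of three $+1$-edges in $t \cap t_{\min}$ that respects color orientation (the three contributions must sum to the original $+3$, so each is $+1$). After all such replacements I obtain an infinite edge-path $\gamma$ in $t$ that respects color orientation and whose every height increment equals $+1$.

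The path $\gamma$ then falls exactly within the framework of the proof of Proposition~\ref{fluxminmax}: applying Lemma~\ref{lemapath}(3) to its finite truncations, and invoking Lemma~\ref{rugrec} in the mixed case to build rugged rectangles, I extract an infinite staircase edge-path in $t$. Because $t$ is $L$-periodic, projecting this staircase to $\T_L$ must eventually self-intersect, so the lift is a staircase edge-path joining some $w$ to $w + u$ with $u \in L$; Lemma~\ref{esctoro} then forces $t$ to consist entirely of parallel doubly-infinite staircases, and Proposition~\ref{diaescada} yields $\varphi \in \partial Q$, contradicting $\varphi \in \text{int}(Q)$. The main obstacle will be the combinatorial bookkeeping of the four-way case analysis and the verification that every $+3$ step can be replaced by a three-edge detour in $t \cap t_{\min}$ respecting color orientation; once that is in place, the staircase extraction of Proposition~\ref{fluxminmax} transfers almost verbatim.
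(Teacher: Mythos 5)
Your proposal follows the paper's proof essentially step for step: the same difference function $g=h-h_{\min}^{L,\varphi}$, the same reduction to finding a local maximum of $h$ on the set $V$ where $g$ is maximal (which avoids $[0]_L$), the same neighbor-by-neighbor argument that $V$ absorbs any ascending step, the same replacement of $+3$ increments by detours around the common domino of $t$ and $t_{\min}^{L,\varphi}$, and the same hand-off to the staircase extraction of Proposition~\ref{fluxminmax}, Lemma~\ref{esctoro} and the contradiction with $\varphi\in\text{int}(Q)$. The argument is correct and matches the paper's.
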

\begin{proof}
By Proposition \ref{fluxminmax}, we know $h$ has a local maximum, but this is not enough.
Our previous consideration makes it clear we need to show $h$ has a local maximum on a vertex that is not in the equivalence class of the origin.

Consider the difference  $g = h-h_{\min}^{L, \varphi}$.
By Proposition \ref{hsqtorogen}, $g$ is $L$-periodic and in particular bounded.
Moreover, Proposition \ref{hsquare} means $g$ takes nonnegative values in $4\Z$.
Let $V$ be the set of vertices of $\Z^2$ on which $g$ is maximum.
We assert that $h$ has a local maximum lying on $V$.
Notice this is sufficient: since $h \neq h_{\min}^{L, \varphi}$, $g$ necessarily assumes positive values on $V$, so $[0]_L$ does not intersect $V$ (because $g$ is 0 at the origin).

We prove the assertion by contradiction; suppose $V$ contained no local maximum of $h$ and choose any $v_0 \in V$.
Since $v_0$ is not a local maximum of $h$ there must be a neighboring vertex $v_1 \in \Z^2$ for which $h(v_1) > h(v_0)$.
We claim $v_1 \in V$. Indeed, let $e$ be the edge joining $v_0$ to $v_1$.
The possible height changes along $e$ are either $+1$ and $-3$, or $-1$ and $+3$, depending on $e$'s orientation as induced by the coloring of $\Z^2$.
In either case, $h_{\min}^{L, \varphi}$ and $h$ must both increase along $e$, for if $h_{\min}^{L, \varphi}$ decreased along $e$, it would contradict the maximality of $g$ on $v_0$.

It follows that $v_1$ is also not a local maximum of $h$, so we may repeat the process.
This produces a list $(v_n)_{n \geq 0} \subset V$ with $h(v_{n+1})>h(v_n)$ for all $n \geq 0$ and in which each $v_n$ is neighbour to $v_{n+1}$.

Notice we may assume $h(v_{n+1})=h(v_n)+1$.
Indeed, let $t_{\min}^{L, \varphi}$ be the tiling associated to $h_{\min}^{L, \varphi}$.
When $h(v_{n+1})=h(v_n)+3$, the edge joining $v_n$ to $v_{n+1}$ crosses a domino in both $t$ and $t_{\min}^{L, \varphi}$, so going from $v_n$ to $v_{n+1}$ round that domino is allowed in both tilings.
It's clear that each edge traversed this way increases $h$ by $+1$, so there is no loss of generality in the assumption.

At this point, the proof of Proposition \ref{fluxminmax} can be applied verbatim here: the existence of one such list implies the existence of an infinite staircase edge-path in both $t$ and $t_{\min}^{L, \varphi}$.
By Lemma \ref{esctoro}, both $t$ and $t_{\min}^{L, \varphi}$ must consist entirely of parallel, doubly-infinite domino staircases, contradicting $\varphi \in \text{int}(Q)$.
\end{proof}
Like in the planar case, because the situation is finite, Proposition \ref{fluxconecmin} tells us any tiling of $\T_L$ with flux $\varphi$ can be taken by a sequence of $L$-flips to $t_{\min}^{L, \varphi}$. The following corollary is immediate.

\begin{corolario}[Flip-connectedness on the torus]\label{fluxconec}
Let $L$ be a valid lattice and $\varphi \in \mathscr{F}(L) \cap \text{int}(Q)$.
Any two distinct tilings of $\T_L$ with flux $\varphi$ can be joined by a sequence of flips.
\end{corolario}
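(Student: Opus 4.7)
The plan is to use Proposition \ref{fluxconecmin} as a monotone descent lemma, showing that every tiling with flux $\varphi$ can be connected by a sequence of flips to the canonical minimal tiling $t_{\min}^{L,\varphi}$ guaranteed by Corollary \ref{htorminimal}. Once this is established, flip-connectedness of the whole set of tilings with flux $\varphi$ follows immediately: given two tilings $t_1,t_2$ with flux $\varphi$, find sequences of flips taking $t_1 \to t_{\min}^{L,\varphi}$ and $t_2 \to t_{\min}^{L,\varphi}$, then concatenate the first with the reverse of the second (every flip is reversible, since a flip can be undone by the same flip).

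For the descent, let $t$ be a tiling with flux $\varphi$ and height function $h$, and set $g = h - h_{\min}^{L,\varphi}$. By Proposition \ref{hsqtorogen}, both $h$ and $h_{\min}^{L,\varphi}$ satisfy the same $L$-quasiperiodicity with flux $\varphi$, so $g$ is $L$-periodic on $\Z^2$. By Proposition \ref{hsquare}, $g$ takes values in $4\Z$, and by minimality of $h_{\min}^{L,\varphi}$ it is nonnegative. Let $D_L \cap \Z^2$ be a set of representatives of $\Z^2/L$, and define the nonnegative integer
\begin{equation*}
\Sigma(t) \;=\; \frac{1}{4}\sum_{v \in D_L \cap \Z^2} g(v).
\end{equation*}
This is well-defined and finite since $D_L$ contains finitely many vertices, and it equals $0$ precisely when $h = h_{\min}^{L,\varphi}$, i.e.\ when $t = t_{\min}^{L,\varphi}$.

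Now suppose $t \neq t_{\min}^{L,\varphi}$. Proposition \ref{fluxconecmin} applies and yields an $L$-flip producing a new tiling $\tilde{t}$ with flux $\varphi$ (flips preserve flux) and height function $\tilde{h}$ satisfying $\tilde{h} \leq h$ everywhere, with $\tilde{h}(v) < h(v)$ on an entire equivalence class $[v]_L$. Since each equivalence class meets $D_L \cap \Z^2$ exactly once and the height drop there is at least $4$, we obtain $\Sigma(\tilde{t}) \leq \Sigma(t) - 1$. Iterating this procedure produces a strictly decreasing sequence of nonnegative integers, which must terminate. The only possible termination point is $\Sigma = 0$, i.e.\ the minimal tiling $t_{\min}^{L,\varphi}$, so after finitely many $L$-flips we reach $t_{\min}^{L,\varphi}$.

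Since an $L$-flip on the planar lift corresponds to a single flip on $\T_L$ (all vertices in the class $[v]_L$ project to the same vertex of the torus), we have produced a sequence of flips on $\T_L$ joining $t$ to $t_{\min}^{L,\varphi}$. Applying this to both $t_1$ and $t_2$ and concatenating as described at the start yields the desired sequence of flips between $t_1$ and $t_2$. The main conceptual point is the use of $\Sigma$ as a well-founded monotone functional; the only subtlety is that Proposition \ref{fluxconecmin} was already engineered precisely so that the descent step is available and does not conflict with the quasiperiodicity constraint, which is why the argument here is short.
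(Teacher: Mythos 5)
Your proposal is correct and follows the same route as the paper: the paper derives this corollary directly from Proposition \ref{fluxconecmin} by noting that ``because the situation is finite'' repeated application of that proposition takes any tiling with flux $\varphi$ to $t_{\min}^{L,\varphi}$ by $L$-flips, and then concatenates two such descent sequences. Your explicit potential function $\Sigma$ simply makes precise the finiteness/termination step that the paper leaves implicit.
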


\chapter{Kasteleyn matrices for the torus}
\label{chap:kastmattorus}

This chapter is devoted to adapting the construction of Kasteleyn matrices for tori.

Consider the dual graph $G(\Z^2)$\label{def:gz2} of $\Z^2$.
We will represent each unit square of $\Z^2$ by the vertex in its center, so each vertex of $G(\Z^2)$ lies in $\left(\Z+\frac12\right)^2$.
Let $L$ be a valid lattice.
As with vertices, edges on $G(\Z^2)$ have $L$-equivalence classes: two edges belong to the same class if they're related by a translation in $L$.
Given an edge $e$ on $G(\Z^2)$, its equivalence class will be denoted by $[e]_L$\label{def:ebracketl}.

We first tackle the problem of determining an \textit{$L$-Kasteleyn signing}\label{def:lkastsign} of $G(\Z^2)$, that is, an assignment of plus and minus signs to equivalence classes of edges on $G(\Z^2)$ with the following property:
for every four edges on $G(\Z^2)$ that make up a square, the product of their signs is $-1$ (where of course the sign of an edge is the sign of its equivalence class).

Similar to the planar case, these conditions guarantee that whenever we perform an $L$-flip on a tiling of $\T_L$, the total sign on the corresponding summands of the Kasteleyn determinant does not change, but this will become clear later.

An initial observation is that our usual assignment of minus signs to alternating lines of edges on $G(\Z^2)$ is generally not an $L$-Kasteleyn signing.
Indeed, if $e$ is an edge and $v \in \mathscr{O}\cap L$, then $e$ and $e+v$ have different signs.

Rather than show the existence of an $L$-Kasteleyn signing for a given valid lattice $L$, we will exhibit a universal Kasteleyn signing\label{def:lkastsignuni}, which applies to all valid lattices.

Recall the special brick wall tilings, defined just before Lemma \ref{stairpath} and shown in Figure \ref{brickimg}.
For any $v \in \mathscr{E}\sqcup\mathscr{O}$, translation by $v$ is color preserving and therefore a symmetry of each brick wall.
In particular, for each valid lattice $L$ and brick wall $b$, $b$ is $L$-periodic and thus a tiling of $\T_L$.
If all four brick walls are represented on $G(\Z^2)$, it's easy to see that for every four edges on $G(\Z^2)$ that make up a square, each of those edges lies in a different brick wall.
We may thus use each brick wall $b$ to define a universal Kasteleyn signing:
simply assign $-1$ to every edge on $G(\Z^2)$ that is also in $b$, and $+1$ to every other edge (or vice-versa, exchanging $-1$ with $+1$).

In light of this, we expand on the significance of Proposition \ref{hdelmeio}.

\begin{prop}\label{brickflux}
Let $L$ be a valid lattice.
Each of the points $\pm\left(\frac12,0\right)$ and $\pm\left(0,\frac12\right)$ is in $\mathscr{F}(L)$.
For each of those, there is only one tiling of $\T_L$ which realizes that flux, and it is a brick wall.
\end{prop}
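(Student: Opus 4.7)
The plan is to establish realization first, then uniqueness.

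\textbf{Realization.} Proposition \ref{hdelmeio} already shows $\pm(\tfrac12,0),\pm(0,\tfrac12) \in L^\#$, and these are precisely the vertices of $Q$, so by Theorem \ref{fluxcarac} they are candidate flux values. To see each is actually realized, I would observe that each of the four brick walls in Figure \ref{brickimg} is invariant under every vector in $\mathscr{E} \sqcup \mathscr{O}$, hence $L$-periodic for every valid lattice $L$, and therefore descends to a tiling of $\T_L$. A short height-function computation along, say, $(1,1)$ and $(2,0)$ shows that the brick wall of horizontal dominoes whose successive rows shift by $+1$ to the right has flux $(\tfrac12,0)$; the remaining three brick walls realize the other three flux values by the obvious reflective symmetries.

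\textbf{Uniqueness.} Let $t$ be a tiling of $\T_L$ with flux $\varphi=(\tfrac12,0)$; the other three cases are analogous. Since $\varphi \in \partial Q$, Proposition \ref{diaescada} gives that $t$ admits no flips, and Corollary \ref{htorocarac} then implies $t$ consists of parallel, doubly-infinite domino staircases. The crucial observation is that $\lVert\varphi\rVert_1=\tfrac12$ is attained in \emph{both} diagonal directions: $\langle\varphi,(x,x)\rangle=\langle\varphi,(x,-x)\rangle=\tfrac{x}{2}$ for every $x$. Applying Lemma \ref{xx} in each diagonal direction yields nonzero $v^+=(x_1,x_1)\in L$ and $v^-=(x_2,-x_2)\in L$, with $x_1,x_2>0$ after possibly replacing each by its negative. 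Then $h_t(v^\pm)=4\langle\varphi,v^\pm\rangle=2x_i$, which equals the length of the unique orientation-respecting staircase edge-path from $0$ to $v^\pm$; the constructive definition of height functions therefore forces every edge of each such path to lie in $t$. Applying Lemma \ref{esctoro} to each path in turn, $t$ decomposes simultaneously as parallel doubly-infinite NE-SW staircases and as parallel doubly-infinite NW-SE staircases. By the characterization noted just after the definition of brick walls (they are the only tilings admitting both kinds of decomposition), $t$ is a brick wall; since the four brick walls realize four distinct flux values, $t$ is uniquely determined by $\varphi$.

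\textbf{Main obstacle.} The delicate step is precisely the \emph{simultaneous} production of two orientation-respecting staircase edge-paths in $t$, one in each diagonal direction. At a non-vertex point of $\partial Q$ only one diagonal attains $\lVert\varphi\rVert_1$, so one obtains only a single family of parallel staircases and many tilings are possible; it is only at a vertex of $Q$ that the second diagonal supplies the extra constraint collapsing $t$ onto a brick wall. A secondary point requiring care is checking that the relevant path from $0$ to $v^-=(x_2,-x_2)$ is genuinely the orientation-respecting staircase rather than the orientation-reversing one, which follows from Corollary \ref{pathorien} together with a direct check of the edge-profile at the origin.
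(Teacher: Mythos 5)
Your proposal is correct and follows essentially the same route as the paper: realization via Proposition \ref{hdelmeio}, Theorem \ref{fluxcarac} and a direct height-function computation on the brick walls, and uniqueness by using Lemma \ref{xx} to produce lattice vectors $(x,x)$ and $(x,-x)$, forcing the two orientation-respecting staircase edge-paths into $t$ via the height values $h_t(\pm x,\pm x)=2x$, and applying Lemma \ref{esctoro} in both diagonal directions to conclude $t$ is a brick wall. The only cosmetic difference is your initial appeal to Proposition \ref{diaescada} and Corollary \ref{htorocarac}, which the paper's proof does not need since the two-path argument already does all the work.
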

\begin{proof}
That the points are in $\mathscr{F}(L)$ is provided by Proposition \ref{hdelmeio} and Theorem \ref{fluxcarac}.
We first show that the fluxes of the four brick walls are given by these four points.

Let $L$ be a valid lattice and $b$ a brick wall with flux $\varphi_b$ and associated height function $h_b$.
Each marked edge-path on the image below is a staircase that respects edge orientation and by Lemma \ref{xx} intercepts $L$.
The marked vertex is the origin.
\begin{figure}[H]
		\centering
		\begin{subfigure}[H]{0.3\textwidth}
		\includegraphics[width=\textwidth]{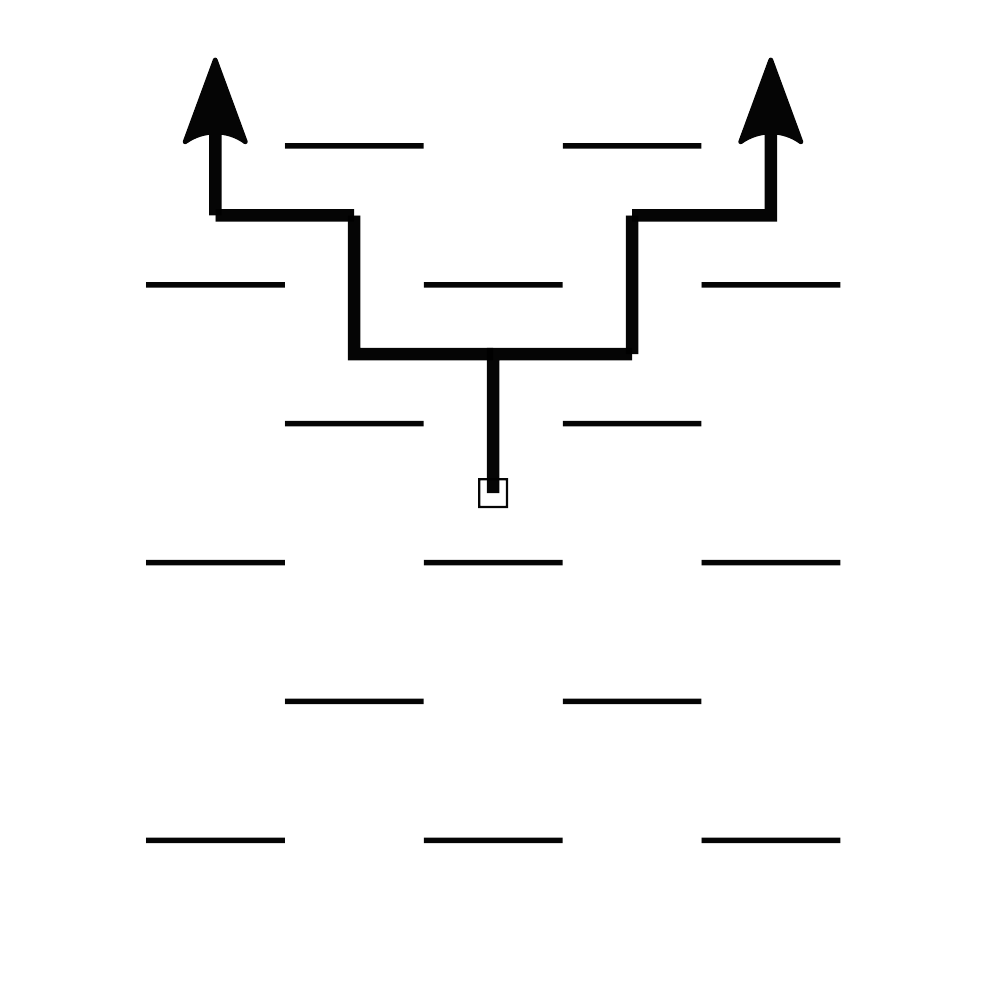}
		\caption{}\label{brickwallpathsa}
		\end{subfigure}
		\begin{subfigure}[H]{0.3\textwidth}
		\includegraphics[width=\textwidth]{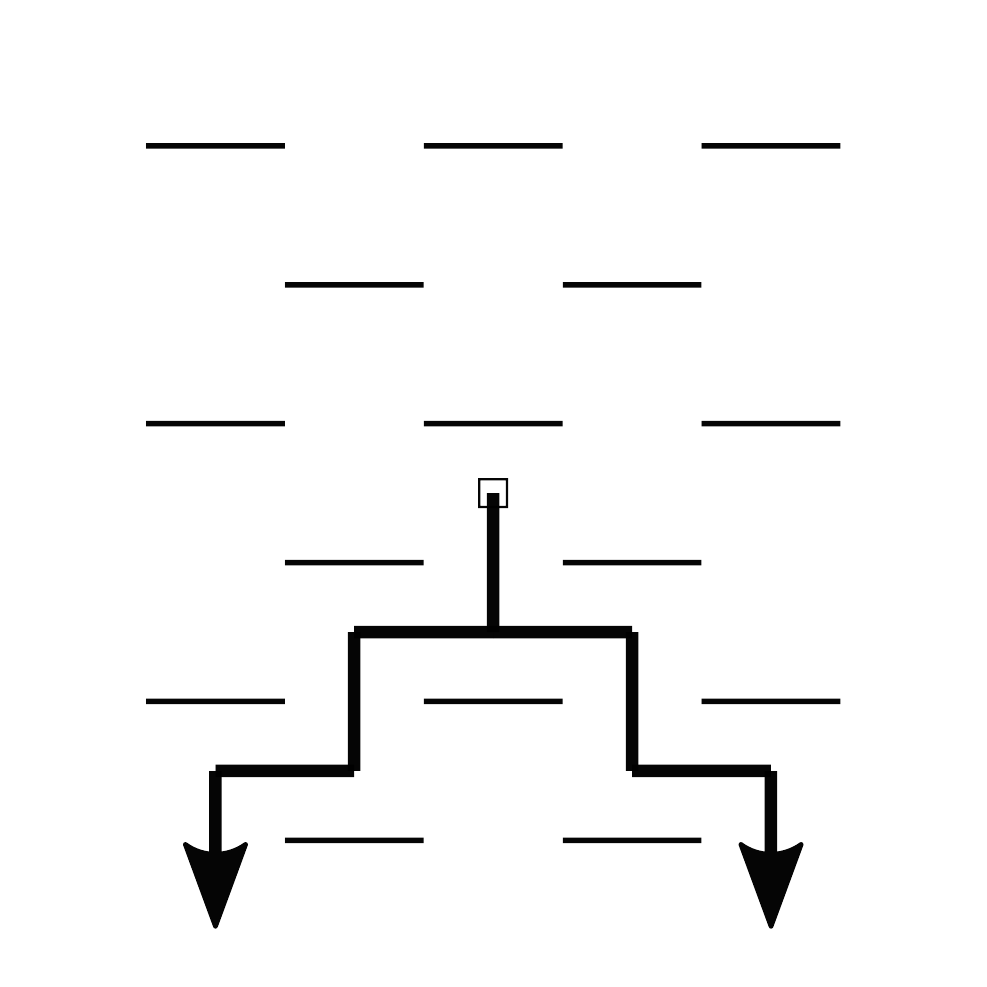}
		\caption{}
		\end{subfigure}\linebreak
		\begin{subfigure}[H]{0.3\textwidth}
		\includegraphics[width=\textwidth]{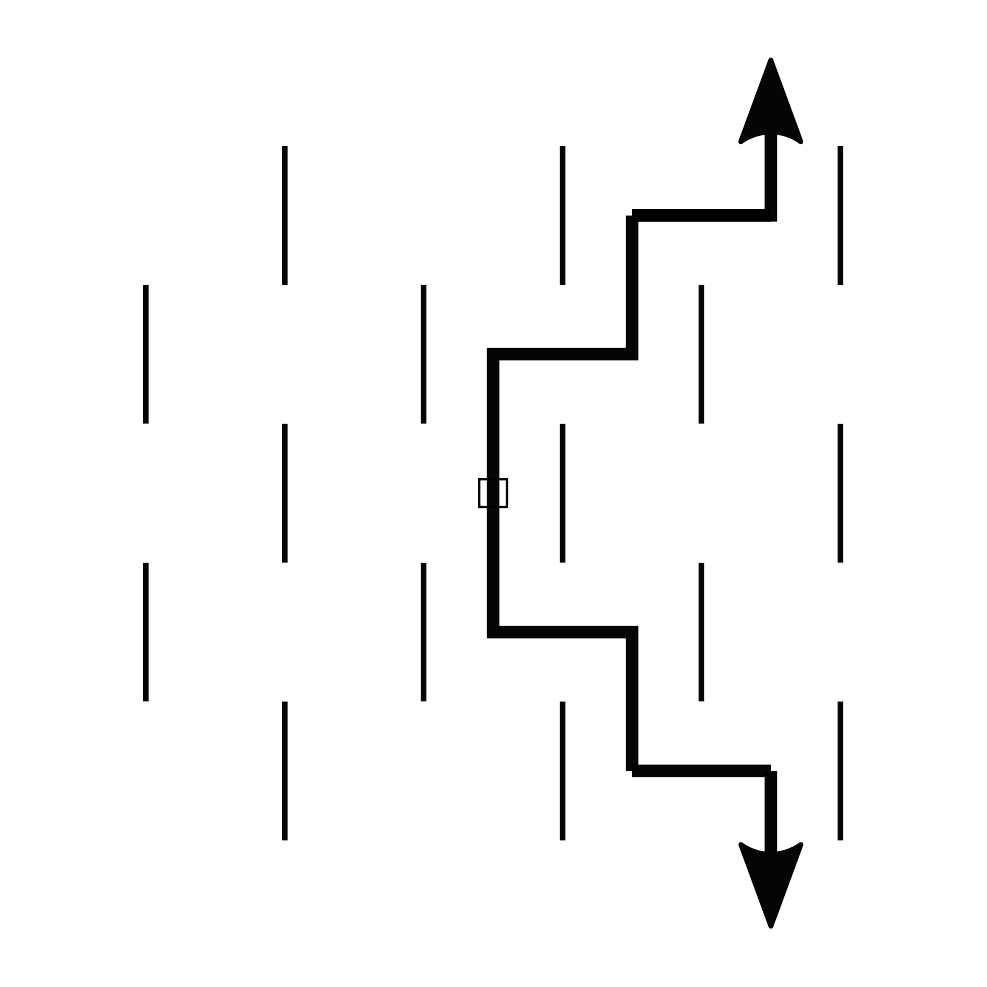}
		\caption{}
		\end{subfigure}
		\begin{subfigure}[H]{0.3\textwidth}
		\includegraphics[width=\textwidth]{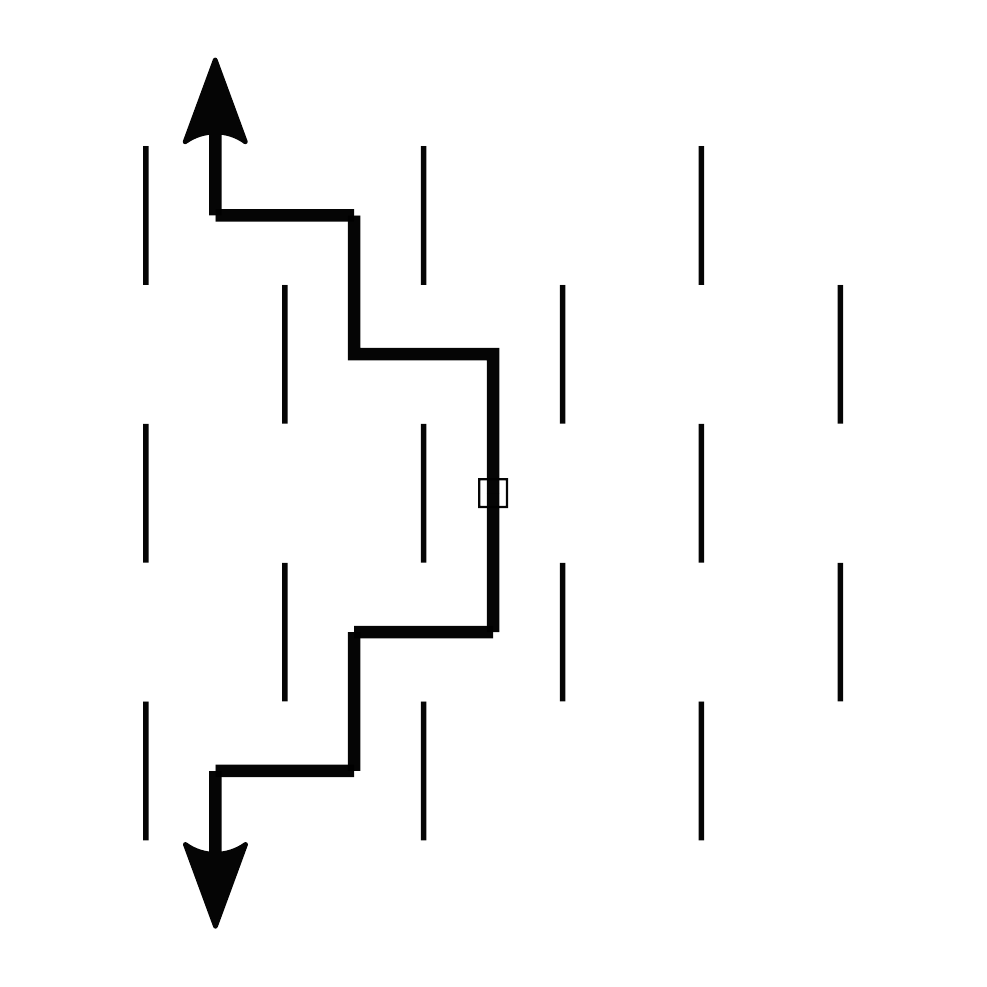}
		\caption{}
		\end{subfigure}
		\caption{Choice of staircase edge-paths for each brick wall.}
\end{figure}

This means there are nonzero $x, y \in \Z$ with $(x,x), (y,-y) \in L$ and
\begin{equation}\left\{ \begin{array}{l}
\begin{alignedat}{11}\label{brickfluxeq}
&h_b(x,x)& &=& \enspace &4& \enspace &\cdot& \enspace &\langle \varphi_b,(x,x) \rangle \enspace &=& \enspace 2 \lvert x \rvert \\
&h_b(y,-y)& &=& \enspace &4& \enspace &\cdot& \enspace &\langle \varphi_b,(y,-y) \rangle \enspace &=& \enspace 2 \lvert y \rvert
\end{alignedat}
\end{array}
\right.
\end{equation}

Each of the four brick walls corresponds to a choice of signs in $x,y \in \Z$ (positive or negative),
and the corresponding solution of system~\eqref{brickfluxeq} for the flux $\varphi_b$ yields one of $\pm\left(\frac12,0\right) , \pm\left(0, \frac12\right)$.
By inspection, $b_{\mathcal{E}}$ has flux $\left(\frac12,0\right)$, $b_{\mathcal{N}}$ has flux $\left(0,\frac12 \right)$, $b_{\mathcal{W}}$ has flux $\left(-\frac12,0\right)$ and $b_{\mathcal{S}}$ has flux $\left(0, -\frac12 \right)$.

It remains to show that a tiling with flux given by one of the four points is a brick wall.
We will do this for the flux $\left(\frac12, 0\right)$, but for other points the reasoning is analogous.
Let $L$ be a valid lattice and $t$ a tiling of $\T_L$ with flux $\left(\frac12, 0\right)$ and associated height function $h$.
By Lemma \ref{xx}, there are positive integers $x_0, x_1$ with $(x_0,x_0), (x_1,-x_1) \in L$.
Let $\gamma_0$ be a 3-1 staircase edge-path joining the origin to $(x_0,x_0)$ and $\gamma_1$ be a 2-4 staircase edge-path joining the origin to $(x_1,-x_1)$, like the edge-paths in Figure \ref{brickwallpathsa}; notice they respect edge orientation.

Clearly, $l(\gamma_i) = 2x_i$.
Since $h(x_0,x_0) = 4\cdot \langle \left(\frac12, 0\right), (x_0,x_0) \rangle = 2x_0$ and similarly $h(x_1,-x_1) =  2x_1$, the constructive definition of height functions implies $\gamma_0$ and $\gamma_1$ are both edge-paths in $t$.
Applying Lemma \ref{esctoro} to each of $\gamma_0$ and $\gamma_1$, it follows that $t$ consists entirely of 3-1 doubly-infinite domino staircases \textbf{and} entirely of 2-4 doubly-infinite domino staircases, so it must be a brick wall (in particular, since it has flux $\left(\frac12,0\right)$, it must be $b_{\mathcal{E}}$).
\end{proof}

Fix once and for all a choice of universal Kasteleyn signing: assign $-1$ to every edge on $G(\Z^2)$ that is also in $b_{\mathcal{N}}$.
We now describe how to construct a Kasteleyn matrix $K$ for a torus $\T_L$.

Let $x_0$ be the smallest positive integer with $v_0 = (x_0,0) \in L$.
Now let $y_1$ be the smallest positive integer for which the set $Y =\{(x,y_1)\enspace |\enspace x \in \R\}$ intersects $L$.
Choose the vertex $v_1 = (x_1,y_1) \in Y \cap L$ with $0 \leq x_1 < x_0$; notice there is always exactly one such $v_1$.
Indeed, if $v = (x,y_1) \in Y \cap L$, there is a unique integer $m$ with $m\cdot x_0 \leq x < (m+1)\cdot x_0$, so we may take $v_1 = v - m\cdot v_0$;
if there were more than one, their difference would contradict the minimality of $x_0$.
Arguments similar to this show that $\{v_0, v_1\}$ generates $L$.7

We now take the following fundamental domain: let $u(0)$ be the straight line edge-path in $\Z^2$ joining the origin to $v_0$.
Let $u(1)$ be the L-shaped edge-path in $\Z^2$ joining the origin to $v_1$ that never coincides with $u_0$ away from the origin.
Our fundamental domain $D_L \subset \R^2$\label{def:dl} is the rectangle with vertices $(0,0), (x_0,0), (0,y_1)$ and $(x_0,y_1)$.
Notice $u(0)$ and $u(1)$ always lie in the boundary of $D_L$, and these edge-paths are used in the flux definition that counts cross-over dominoes.

Enumerate each of $D_L$'s black squares (starting from 1) and do the same to white squares.
There is an obvious correspondence between squares on $D_L$ and equivalence classes of vertices on $G(\Z^2)$.
Also, observe that any equivalence class of edges on $G(\Z^2)$ is given by two equivalence classes of vertices on $G(\Z^2)$, its endpoints.
With this in mind, we now assign weights to each equivalence class of edges on $G(\Z^2)$.
Let $e_{ij}$ be the edge joining the $i$-th back vertex to the $j$-th white vertex: if no such edge exists, we assign the weight 0 to $[e_{ij}]_L$; otherwise, we assign its corresponding Kasteleyn sign (either $+1$ or $-1$) to it.

Next, if there is an edge in $[e_{ij}]_L$ that crosses $u(0)$, we will multiply the weight of $[e_{ij}]_L$ by either $q_0$ or $q_0^{-1}$; we now explain how the exponent is chosen.
Remember that edges on the dual graph represent dominoes, and whenever an edge-path crosses a domino on a tiling, the height function of that tiling changes by either $+3$ or $-3$ along that edge-path.
When the edge in $[e_{ij}]_L$ that crosses $u(0)$ corresponds to a height change of $+3$ along $u(0)$, we choose $q_0$; when it corresponds to a height change of $-3$, we choose $q_0^{-1}$.
Observe that because $u(0)$ joins the origin to $v_0$ (and $v_0$ is in a basis of $L$), there is at most one edge in $[e_{ij}]_L$ that crosses $u(0)$.
Of course, there may be none, and in that case this step does not change the previously assigned weight of $[e_{ij}]_L$.

Finally, we repeat the last step for $u(1)$ and $q_1$ or $q_1^{-1}$.
Notice the effect of Kasteleyn signs and each of the $q_k$'s is cumulative!

Now that all equivalence classes of edges on $G(\Z^2)$ are assigned their corresponding weights, the matrix entry $K(i,j)$ is simply the weight of $[e_{ij}]_L$.
We provide an example of this construction in the next page.
\begin{figure}[p]
		\centering
		\begin{subfigure}[H]{\textwidth}
				\def\svgwidth{\columnwidth}
				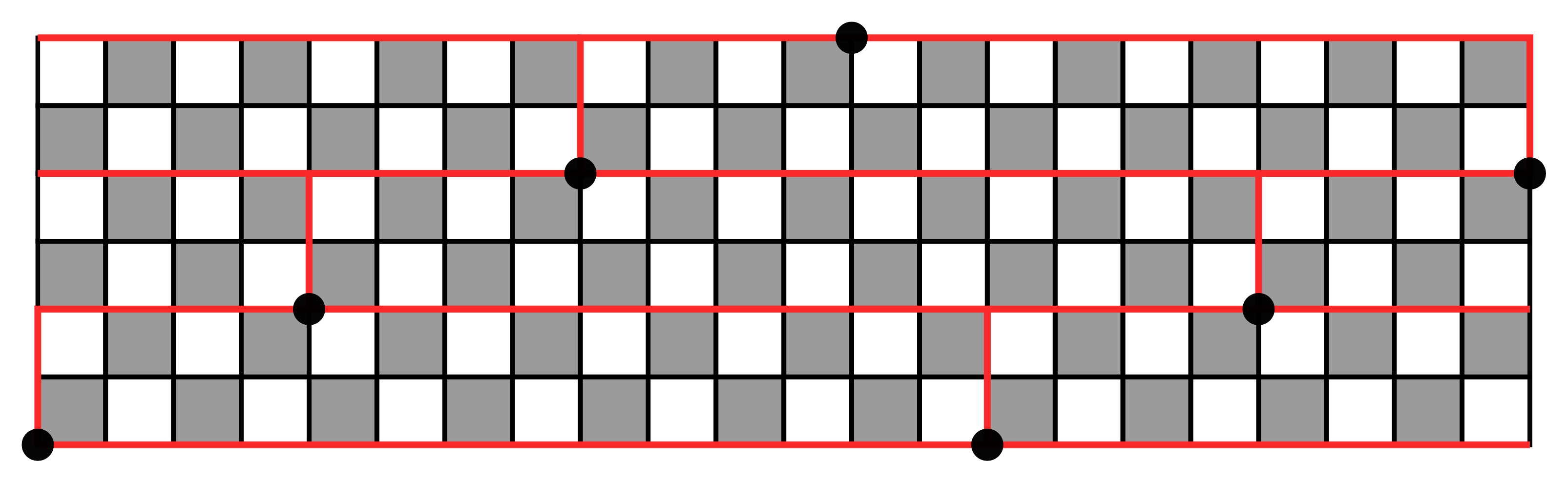
				\caption{A lattice $L$ with $v_0 = (14,0)$ and $v_1 = (4,2)$. The fundamental domain $D_L$ is represented by the red rectangle, and its squares are enumerated.}
				\vspace{30pt}
		\end{subfigure}
		\begin{subfigure}[H]{\textwidth}
				\def\svgwidth{\columnwidth}
				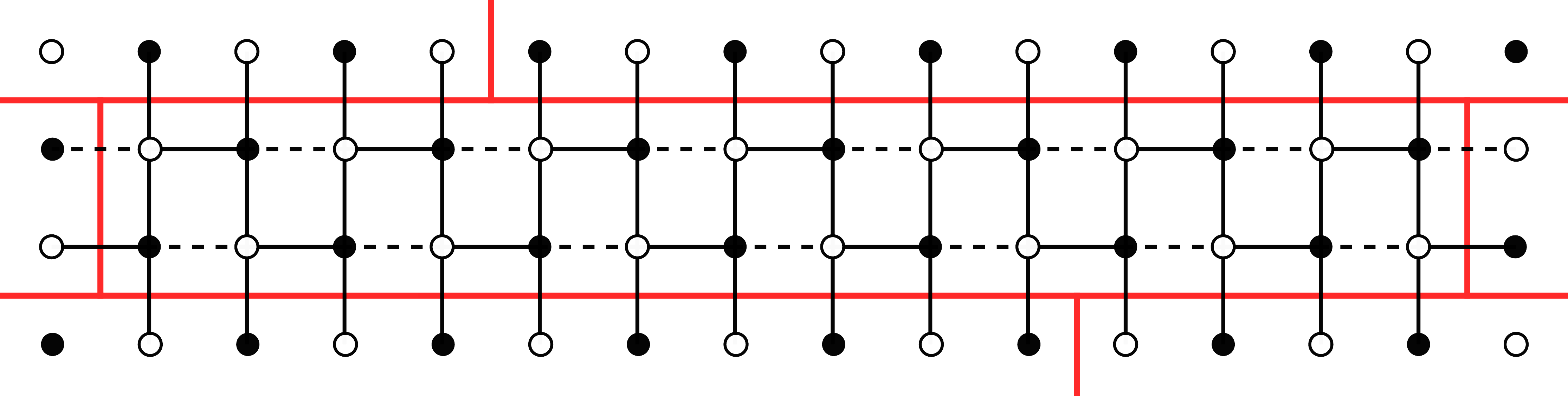
				\caption{The enumeration applied to vertices on $G(\Z^2)$. Negative edges are dashed.}
				\vspace{30pt}
		\end{subfigure}
		\begin{subfigure}[H]{\textwidth}
		\begin{equation*}
		\left(
		\begin{array}{cccccccccccccc}
		-1&0&0&0&0&0&q_1^{-1}&0&q_0&0&0&0&0&1   \\
		1&-1&0&0&0&0&0&1&0&q_0&0&0&0&0   \\
		0&1&-1&0&0&0&0&0&1&0&q_0&0&0&0   \\
		0&0&1&-1&0&0&0&0&0&1&0&q_0&0&0   \\
		0&0&0&1&-1&0&0&0&0&0&1&0&q_0&0   \\
		0&0&0&0&1&-1&0&0&0&0&0&1&0&q_0q_1   \\
		0&0&0&0&0&1&-1&q_0q_1&0&0&0&0&1&0   \\
		1&0&0&0&0&\frac{1}{q_0q_1}&0&-1&0&0&0&0&0&1   \\
		0&1&0&0&0&0&\frac{1}{q_0q_1}&1&-1&0&0&0&0&0   \\
		q_0^{-1}&0&1&0&0&0&0&0&1&-1&0&0&0&0   \\
		0&q_0^{-1}&0&1&0&0&0&0&0&1&-1&0&0&0   \\
		0&0&q_0^{-1}&0&1&0&0&0&0&0&1&-1&0&0   \\
		0&0&0&q_0^{-1}&0&1&0&0&0&0&0&1&-1&0   \\
		0&0&0&0&q_0^{-1}&0&1&0&0&0&0&0&1&-q_1   
		\end{array}
		\right)
		\end{equation*}
		\caption{The resulting Kasteleyn matrix $K$.}
		\vspace{30pt}
		\end{subfigure}
		\caption{An example construction of a Kasteleyn matrix for a torus.}\label{kastex}
\end{figure}

Similarly to the planar case, each nonzero term in the combinatorial expansion of $\det(K)$ can be seen as an $L$-periodic matching of $G(\Z^2)$.
In other words, it can be seen as an $L$-periodic tiling of $\Z^2$, or a tiling of $\T_L$. For each tiling $t$ of $\T_L$, let $K_t$ be its corresponding nonzero term in the combinatorial expansion of $\det(K)$.
From the construction of $K$ and the flux definition via counting cross-over dominoes, the following is clear:
$$\forall \varphi \in \mathscr{F}(L), \enspace \exists n_0,n_1 \in \Z, \enspace  \forall \text{ tiling $t$ of $\T_L$ with flux $\varphi$}, \enspace  K_t = \pm q_0^{n_0}q_1^{n_1}.$$
In fact, whenever $v_k \in \mathscr{E}$, the flux through $v_k$ is precisely $n_k$; whenever $v_k \in \mathscr{O}$, the flux through $v_k$ is $n_k+ \frac12$.
Furthermore, the use of a Kasteleyn signing in the construction of $K$ implies via Corollary \ref{fluxconec} that these terms are all identically signed whenever $\varphi \in \mathscr{F}(L) \cap \text{int}(Q)$.
Thus, for each monomial of the form $c_{ij}\cdot q_0^iq_1^j$ in the full expansion of $\det(K)$, if $q_0^iq_1^j$ corresponds to a flux value $\varphi \in \mathscr{F}(L) \cap \text{int}(Q)$, then $\lvert c_{ij} \rvert$ is the number of tilings of $\T_L$ with flux $\varphi$.

It turns out this is also true for fluxes in $\mathscr{F}(L) \cap \partial Q$.

\begin{prop}\label{signfrontq}
Let $L$ be a valid lattice and $K$ a Kasteleyn matrix for $\T_L$.
Let $\varphi \in \mathscr{F}(L)$ and $c_{ij}\cdot q_0^iq_1^j$ be the monomial in the full expansion of $\det(K)$ that corresponds to $\varphi$.
Then $\lvert c_{ij} \rvert$ is the number of tilings of $\T_L$ with flux $\varphi$.
\end{prop}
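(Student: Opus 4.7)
The plan is to split the argument according to whether $\varphi$ lies in $\mathrm{int}(Q)$ or in $\partial Q$. For $\varphi \in \mathscr{F}(L) \cap \mathrm{int}(Q)$, the result follows directly from machinery already in place. Every tiling $t$ of $\T_L$ with flux $\varphi$ contributes a monomial $K_t = \pm q_0^i q_1^j$, and by Corollary \ref{fluxconec} any two such tilings are joined by a sequence of flips. Thus it suffices to verify that a single flip preserves $K_t$, i.e.\ that the sign change in the permutation (multiplication by a transposition) is cancelled by the sign change in the product of weights. This is precisely the defining property of an $L$-Kasteleyn signing: the four edges of any unit square in $G(\Z^2)$ have signs whose product is $-1$, so swapping the two parallel edges of a $2 \times 2$ block negates the weight product. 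Hence all terms $K_t$ with $t$ of flux $\varphi$ carry the same sign, and $|c_{ij}|$ counts them.

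For $\varphi \in \mathscr{F}(L) \cap \partial Q$, flip-connectedness fails (Proposition \ref{diaescada}), so a substitute is needed. First, the four corner cases $\varphi \in \{\pm(\tfrac12,0),\pm(0,\tfrac12)\}$ are handled by Proposition \ref{brickflux}: each is realised by a unique brick wall, so $|c_{ij}| = 1$ trivially. For the remaining boundary fluxes, Corollary \ref{htorocarac} and Proposition \ref{diaescada} force every such tiling $t$ to decompose into a family of parallel doubly-infinite domino staircases, each of which admits exactly two domino configurations fitting the same pair of staircase edge-paths. I would then introduce the \emph{stairflip}: the move that swaps one such doubly-infinite staircase for its alternate. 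The plan is to establish (a) stairflips preserve flux, so they act within the fixed-flux class; (b) any two tilings of $\T_L$ with the same boundary flux $\varphi$ are joined by a sequence of $L$-stairflips (performed simultaneously on all $L$-translates of a given staircase, to respect periodicity); and (c) each $L$-stairflip preserves the sign of $K_t$.

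Step (a) is immediate: a stairflip is local to one staircase and does not alter any cross-over counts across the basis edge-paths $u(0),u(1)$ (since both configurations of the staircase cross $u(i)$ the same number of times with the same orientations). Step (b) is the boundary analogue of flip-connectedness and I would prove it by induction on the number of staircases in which two flux-equivalent tilings disagree: the parallel-staircase structure means the disagreement is always a disjoint union of $L$-orbits of staircases, each of which can be resolved by one $L$-stairflip. The main obstacle, and the technical heart of the proof, is (c): verifying that a stairflip on a doubly-infinite staircase preserves the total sign $\mathrm{sgn}(\sigma)\prod K_{i,\sigma(i)}$. Rewriting a stairflip as a single $L$-orbit's worth of local exchanges along the staircase, one must check that the parity of the induced permutation on $L$-classes of matched pairs agrees with the parity of the product of sign changes on the affected edges, where the latter is controlled by the universal Kasteleyn signing (the brick wall $b_{\mathcal{N}}$). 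This reduces to a combinatorial identity along one period of the staircase, which I would verify by cases according to the type (1-3, 3-1, 2-4, or 4-2) of the staircase and its orientation relative to $b_{\mathcal{N}}$. Once (c) is in hand, the boundary case follows in the same manner as the interior case, completing the proof.
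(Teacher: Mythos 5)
Your treatment of the interior case is correct and matches the paper: flip-connectedness (Corollary \ref{fluxconec}) plus the defining property of the Kasteleyn signing gives identical signs for all $K_t$ with $\varphi \in \mathrm{int}(Q)$, and the corner fluxes $\pm\left(\tfrac12,0\right), \pm\left(0,\tfrac12\right)$ are indeed trivial by Proposition \ref{brickflux}.

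The boundary case, however, has a genuine gap at step (a): an $L$-stairflip does \emph{not} preserve the flux. A stairflip replaces a doubly-infinite staircase of vertical dominoes by one of horizontal dominoes (or vice versa), and this changes the cross-over counts: where a horizontal edge of the base path crossed a vertical domino of the staircase (height change $+3$), after the stairflip it crosses no domino (height change $-1$), so the height function changes by $4$ on a lattice vector and the flux moves to an adjacent point of $Q_k$. This is exactly the content of Proposition \ref{frontqvec}, where $\varphi - \xi_{L;k}^{\text{vert}}(\varphi)$ is shown to be constant and \emph{nonzero}. Consequently your step (b) cannot proceed as stated: two tilings with the same boundary flux have the same number $n_k^{\text{vert}}$ of vertical staircase classes (Proposition \ref{eschorvert}), so they differ by applying $\xi_L$ to an equal number of vertical and horizontal classes, and any single-stairflip path between them necessarily leaves the fixed-flux class at intermediate steps; your induction "one $L$-stairflip per disagreeing staircase" does not stay within the class, and step (c) is then not the right invariance to prove. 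The paper avoids any connectivity argument on $\partial Q$ altogether: using Propositions \ref{escfrontq} and \ref{eschorvert} it shows directly that for every tiling $t$ with flux $\varphi \in Q_k$, the number of edges of $t$ lying on $b_{\mathcal{N}}$ (hence negatively signed) is determined by $\mathrm{card}\bigl(C(t)\cap \mathrm{Stair}(L;1;\mathrm{hor})\bigr) + \mathrm{card}\bigl(C(t)\cap \mathrm{Stair}(L;2;\mathrm{hor})\bigr)$, which depends only on $\varphi$, and that the permutation of $K_t$ always consists of $c_k$ cycles of equal length, so its sign also depends only on $\varphi$. If you want to salvage your route, you would have to work with \emph{pairs} of compensating stairflips (one vertical-to-horizontal, one horizontal-to-vertical), show such pairs connect the fixed-flux class, and verify sign invariance for the pair — at which point you are essentially reconstructing the paper's counting argument in a more roundabout way.
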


We know Proposition \ref{signfrontq} needs to be proved only for $\varphi \in \mathscr{F}(L)\cap \partial Q$, but before doing it we will study the structure of $\mathscr{F}(L) \cap \partial Q$.

\section{The structure of $\mathscr{F}(L) \cap \partial Q$}\label{sec:strucbound}

Observe that in Theorem \ref{hplanocarac}, when $t$ consists entirely of parallel, doubly-infinite domino staircases, these staircases are in fact the same type.
This is because every doubly-infinite staircase edge-path that fits one of these domino staircases actually fits two of them (one on each side), so they are both the same type.
By induction, this applies to them all.

Consider the boundary of $Q \subset \R^2$; it is a square.
Let $Q_1 \subset \partial Q$\label{def:qk} be the side of the square lying on the first quadrant of $\R^2$, and similarly for $Q_2$, $Q_3$ and $Q_4$.
We will use the observation above to classify the tilings in each $\mathscr{F}(L) \cap Q_k$.

\begin{prop}\label{escfrontq}
Let $L$ be a valid lattice.
For each $k \in \{1,2,3,4\}$, there is a type of domino staircase such that each tiling of $\T_L$ with flux in $\mathscr{F}(L) \cap Q_k$ consists entirely of doubly-infinite domino staircases which are all that type.
\end{prop}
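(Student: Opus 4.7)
The plan is to string together results already established in Chapter \ref{chap:fliptorus} to reduce the statement to a direct calculation in each quadrant. Let $\varphi \in \mathscr{F}(L) \cap Q_k$ and let $t$ be any tiling of $\T_L$ with flux $\varphi$. Since $\varphi \in \partial Q$, Proposition \ref{diaescada} says $t$ admits no flips, and Corollary \ref{htorocarac} then forces $t$ to consist entirely of parallel, doubly-infinite domino staircases. The observation at the opening of this section (two doubly-infinite staircase edge-paths fit each such staircase on either side) implies all of these staircases share the same type. So the proposition reduces to showing that this common type depends only on $k$, and not on the particular tiling or flux chosen inside $Q_k$.

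To identify the type, I will imitate the second half of the proof of Proposition \ref{diaescada}. For each $k \in \{1,2,3,4\}$, fix the unique sign pair $(\epsilon_1,\epsilon_2) \in \{\pm 1\}^2$ such that every $v \in Q_k$ satisfies $\epsilon_1 v_1 \geq 0$ and $\epsilon_2 v_2 \geq 0$. By Lemma \ref{xx}, there is a positive integer $x$ with $(\epsilon_1 x, \epsilon_2 x) \in L$. For any $\varphi \in Q_k$ we have
\[
\langle \varphi, (\epsilon_1 x, \epsilon_2 x) \rangle = (\epsilon_1 \varphi_1 + \epsilon_2 \varphi_2)\, x = \lVert \varphi \rVert_1 \cdot x = x/2,
\]
so by Proposition \ref{hsqtorogen} the height function $h_t$ of $t$ satisfies $h_t(\epsilon_1 x, \epsilon_2 x) = 2x$. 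Among the staircase edge-paths of length $2x$ joining the origin to $(\epsilon_1 x, \epsilon_2 x)$, exactly one respects color-induced edge orientation and one reverses it; since $h_t$ increases by $2x$ along the first, the constructive definition of height functions forces that edge-path to lie in $t$. Applying Lemma \ref{esctoro} to this edge-path (whose endpoint is in $L$) yields that $t$ consists entirely of doubly-infinite domino staircases parallel to it and of its type.

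The type of the orientation-respecting staircase edge-path from the origin to $(\epsilon_1 x, \epsilon_2 x)$ depends only on $(\epsilon_1, \epsilon_2)$: the shape (northeast--southwest when $\epsilon_1 = \epsilon_2$, northwest--southeast otherwise) is set by the signs, and the type-0 edge-profile at the origin combined with the requirement that the first edge respect orientation determines the orientation of the initial vertical edge and thereby the full type label (one of $1$-$3$, $3$-$1$, $2$-$4$, $4$-$2$). Since $(\epsilon_1,\epsilon_2)$ depends only on $k$, so does the common type, completing the proof. The one point to double-check is that this type really is independent of $x$ and of the particular $\varphi \in Q_k$ chosen; but the argument above extracts the type from $(\epsilon_1,\epsilon_2)$ alone, and neither $x$ nor $\varphi$ enters, so no such dependence arises.
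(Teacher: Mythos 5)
Your proof is correct and follows essentially the same route as the paper: Lemma \ref{xx} to produce a lattice point of the form $(\pm x,\pm x)$, the computation $h_t(\pm x,\pm x)=4\langle\varphi,(\pm x,\pm x)\rangle=2x$ forced by $\lVert\varphi\rVert_1=\tfrac12$, the resulting orientation-respecting staircase edge-path lying in $t$, and Lemma \ref{esctoro} to propagate. The paper simply does the case $k=1$ explicitly and declares the others analogous, whereas you handle all four quadrants uniformly via the sign pair $(\epsilon_1,\epsilon_2)$; your opening paragraph invoking Proposition \ref{diaescada} and Corollary \ref{htorocarac} is harmless but not needed, since Lemma \ref{esctoro} already delivers the full conclusion.
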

\begin{proof}
We will prove this for $k=1$, but for other values of $k$ the proof is analogous.
Let $t$ be a tiling of $\T_L$ with associated height function $h$ and flux $\varphi \in \mathscr{F}(L) \cap Q_1$.
We will show $t$ consists entirely of 3-1 doubly-infinite domino staircases.

Write $\varphi = (a,b)$.
Because $\varphi \in Q_1$, $a+b=\frac12$.
By Lemma \ref{xx}, there is some positive integer $x_0$ with $(x_0,x_0) \in L$.
We thus have $h(x_0,x_0) = 4 \cdot \langle \varphi , (x_0,x_0) \rangle = 2x_0$.
Consider the edge-path $\gamma_0$ below.
As in the proof of Proposition \ref{brickflux}, it respects edge orientation, and is a 3-1 staircase edge-path joining the origin to $(x_0,x_0)$.
\begin{figure}[H]
		\centering
		\includegraphics[width=0.25\textwidth]{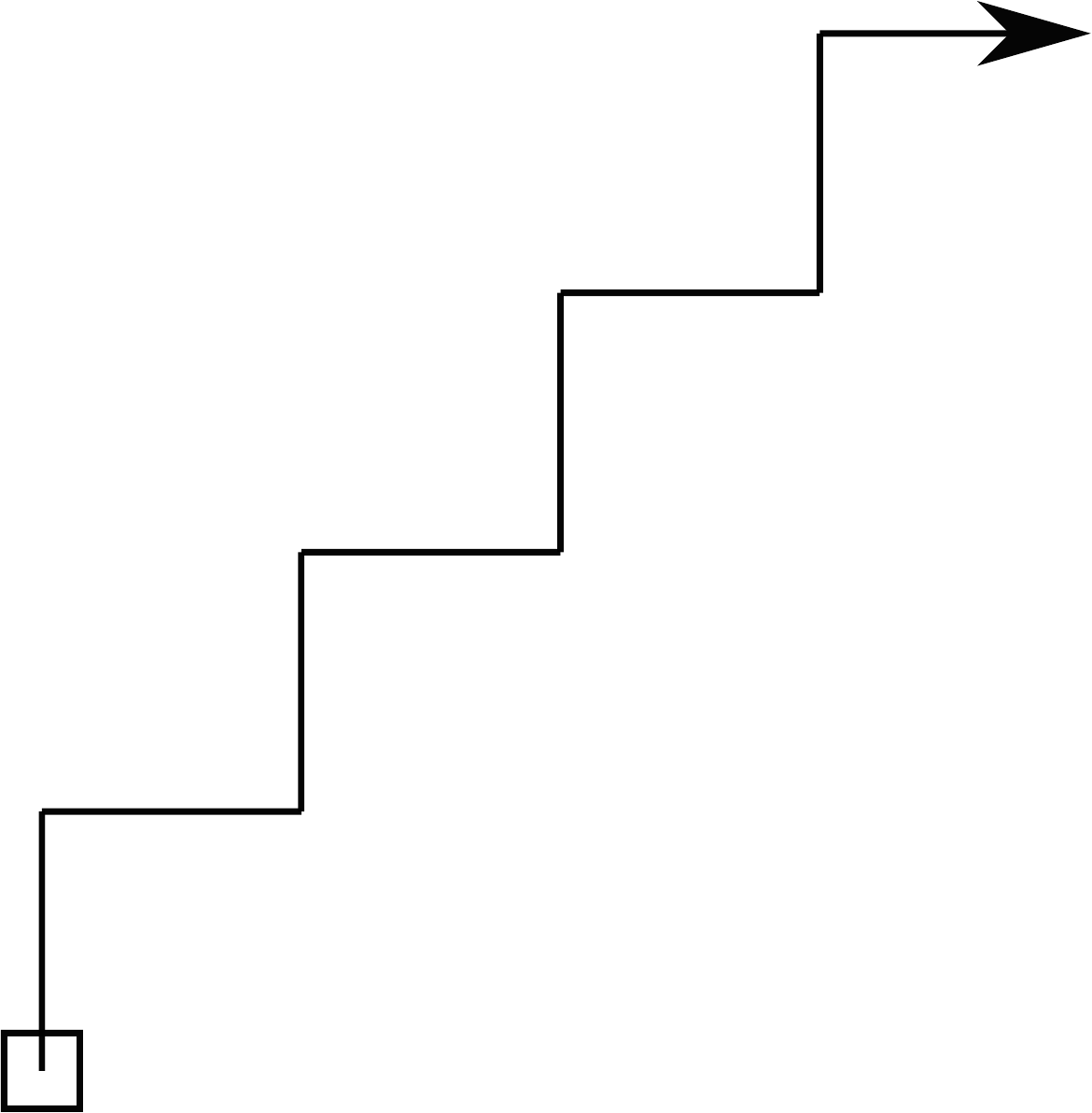}
		\caption{The edge-path $\gamma_0$; the marked vertex is the origin.}
\end{figure}

Like before, it's clear $l(\gamma_0) = 2x_0$, so by the constructive definition of height functions it must be an edge-path in $t$.
We may thus apply Lemma \ref{esctoro} to it, from which we conclude $t$ consists entirely of 3-1 doubly-infinite domino staircases, as desired.
\end{proof}

Let $L$ be a valid lattice and $t$ be a tiling of $\T_L$ with flux $\varphi \in \mathscr{F}(L)$.
By inspection, we have that:\label{def:qkprop}
\begin{alignat*}{1}
\varphi \in Q_1\enspace  &\Rightarrow \enspace \text{$t$ consists entirely of 3-1 doubly-infinite domino staircases} \\
\varphi \in Q_2\enspace  &\Rightarrow \enspace \text{$t$ consists entirely of 4-2 doubly-infinite domino staircases} \\
\varphi \in Q_3\enspace  &\Rightarrow \enspace \text{$t$ consists entirely of 1-3 doubly-infinite domino staircases} \\
\varphi \in Q_4\enspace  &\Rightarrow \enspace \text{$t$ consists entirely of 2-4 doubly-infinite domino staircases}
\end{alignat*}

Of course, the converse is also true.
This can be schematically represented by the diagram below:
\begin{figure}[H]
		\centering
		\def\svgwidth{0.875\columnwidth}
    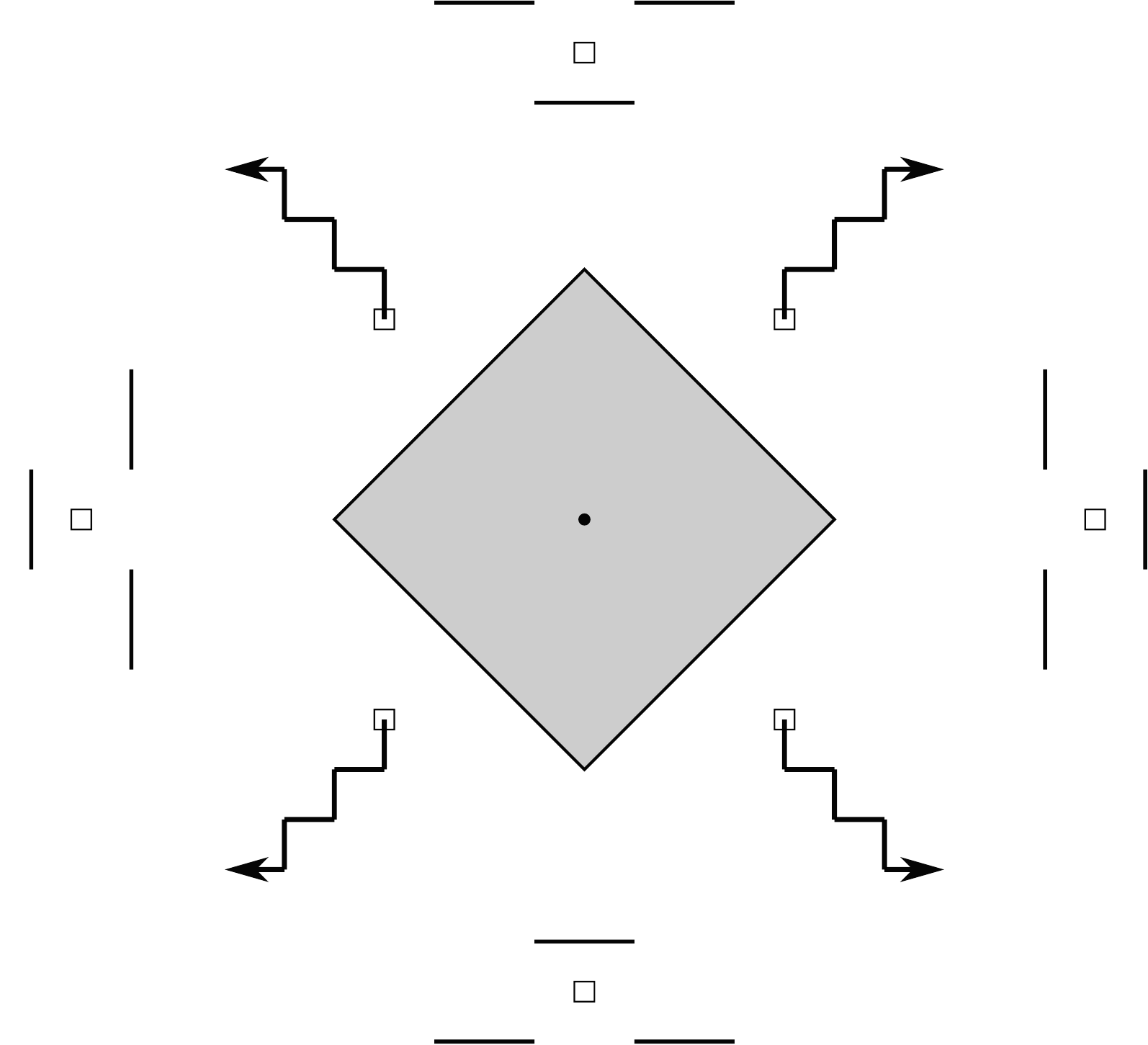
		\vspace{8pt}
		\caption{Schematic representation of the behavior of tilings with flux in $\partial Q$.}
\end{figure}

Notice how each brick wall belongs to two distinct $Q_k$'s: they are as transition tilings between their respective $Q_k$'s.
This will become clearer with the concept of stairflips.

Observe that for any doubly-infinite domino staircase, the dominoes in it are all the same type (horizontal or vertical).
A \textit{stairflip}\label{def:stairflip} on a doubly-infinite domino staircase $S$ is the process of exchanging all dominoes in $S$ by dominoes of the other type (horizontal or vertical).
It is clear a stairflip on $S$ produces a new doubly-infinite domino staircase $\widetilde{S}$;
furthermore, because the doubly-infinite staircase edge-paths that fit $S$ and $\widetilde{S}$ are the same, a stairflip preserves the type of a doubly-infinite domino staircase.
Of course, performing two successive stairflips produces no change.
\begin{figure}[H]
		\centering
		\includegraphics[width=0.65\textwidth]{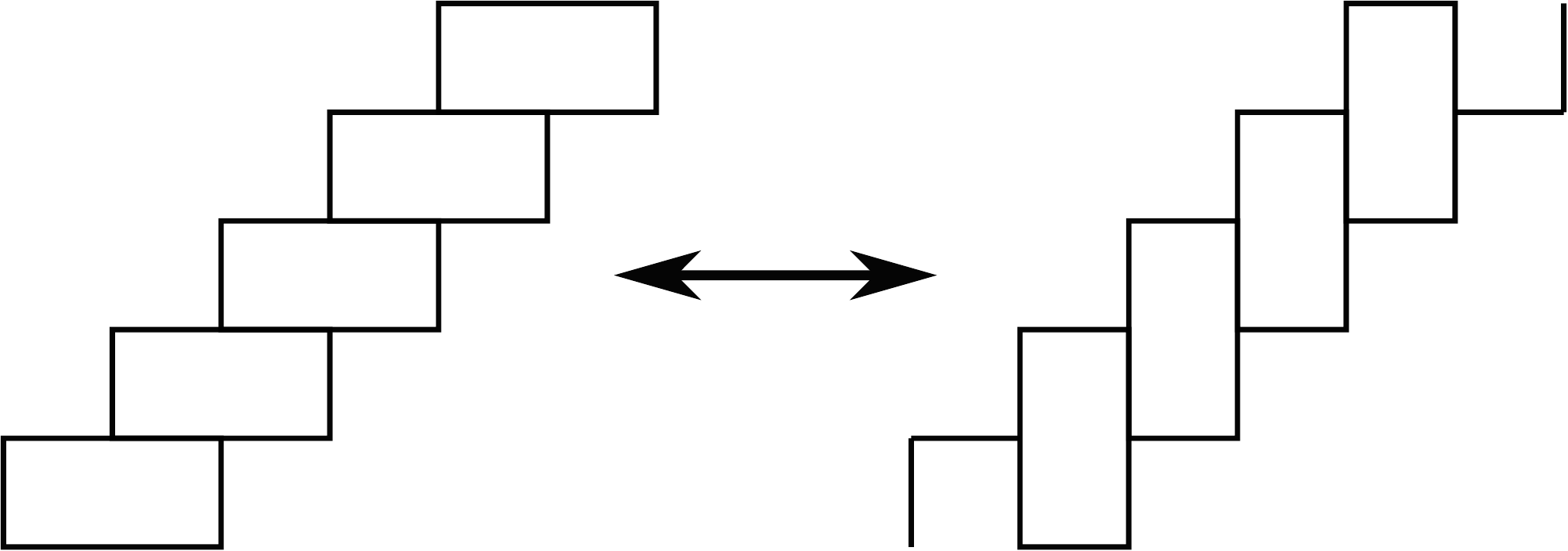}
		\caption{An example of stairflip. Remember the staircases are doubly-infinite!}
\end{figure}

Let $L$ be a valid lattice and $S$ a doubly-infinite domino staircase in $\Z^2$.
We define its $L$-equivalence class $[S]_L$\label{def:Lclassstaircase} as the set of all doubly-infinite domino staircases $\widetilde{S}$ in $\Z^2$ that are the same type as $S$ and satisfy $\Pi(\widetilde{S}) = \Pi(S)$, where $\Pi: \R^2 \longrightarrow \T_L$ is the projection map.
Notice all dominoes in all staircases of $[S]_L$ must be the same type (horizontal or vertical).

Like with flips, we may define an $L$-stairflip\label{def:Lstairflip} on a doubly-infinite domino staircase $S$: simply apply a stairflip to each staircase in $[S]_L$.

Henceforth, we will use interchangeably the terms type-\textbf{1}\label{def:stairtype} staircase and 3-\textbf{1} staircase, and similarly for types 2, 3 and 4.
Let $\text{Stair}(L)\label{def:stairL} =$ \{ $[S]_L$ $|$ $S$ is a doubly-infinite domino staircase in $\Z^2$\} and for $k\in \{ 1,2,3,4\}$ let $\text{Stair}(L;k)\label{def:stairLk} =$ \{ $[S]_L$ $|$ $S$ is a type-$k$ doubly-infinite domino staircase in $\Z^2$\}.
Clearly, $\text{Stair}(L)$ is finite.
A perhaps less obvious observation is that $\text{Stair}(L;1)$ and $\text{Stair}(L;3)$ have the same cardinality.
Indeed, translation by $e_1 = (1,0)$ is a bijection between 1-3 and 3-1 doubly infinite domino staircases, which extends into a bijection between corresponding equivalence classes.
By the same token, $\text{Stair}(L;2)$ and $\text{Stair}(L;4)$ have the same cardinality.

We may further decompose $\text{Stair}(L;k)$ into two disjoint subsets.
Let $\text{Stair}(L;k;\text{vert})$\label{def:stairLkverthor} be the set of equivalence classes in $\text{Stair}(L;k)$ whose domino staircases are all made up of vertical dominoes.
Define $\text{Stair}(L;k;\text{hor})$ similarly for horizontal dominoes.
The $L$-stairflip is an obvious bijection between them.

Define the $L$-stairflip operator\label{def:xiL} $\xi_L: \text{Stair}(L) \longrightarrow \text{Stair}(L)$.
One may restrict it to $\text{Stair}(L;k) \longrightarrow \text{Stair}(L;k)$, and furthermore to $\text{Stair}(L;k;\text{vert}) \longrightarrow \text{Stair}(L;k;\text{hor})$.
Notice that in each case, $\xi_L$ is an involution.

We may use these sets to describe tilings of $\T_L$ with flux in $\partial Q$.
Let $t$ be a tiling of $\T_L$ with flux in $Q_k$.
We know $t$ consists entirely type-$k$ doubly-infinite domino staircases, each of which corresponds to an element of $\text{Stair}(L;k)$.
We may thus identify $t$ with a subset $C(t)$ of $\text{Stair}(L;k)$.
What can we say about $C(t)$?
The crucial observation is that for each $[S]_L \in \text{Stair}(L;k)$ we have $[S]_L \in C(t)$ if and only if $\xi_L\big([S]_L\big) \notin C(t)$.
Clearly, $[S]_L$ and $\xi_L\big([S]_L\big)$ cannot both be in $C(t)$, for their lifts overlap\footnote{Recall the projection map $\Pi: \R^2 \longrightarrow \T_L$.}.
On the other hand, one of them must be in $C(t)$, for otherwise $t$ would not consist entirely of type-$k$ doubly-infinite domino staircases.
This property immediately implies $\text{card}\big(C(t)\big) = \text{card}\big(\text{Stair}(L;k;\text{vert})\big) = \text{card}\big(\text{Stair}(L;k;\text{hor})\big)$, but there's more.

We say a set $C \subset \text{Stair}(L;k)$ is $\xi_L$-$k$-exclusive\label{def:xiLkexclusive} if it satisfies
$$\forall [S]_L \in \text{Stair}(L;k), [S]_L \in C \Longleftrightarrow \xi_L\big([S]_L\big) \notin C.$$

By the preceding paragraph, every tiling $t$ of $\T_L$ with flux in $Q_k$ corresponds to a $\xi_L$-$k$-exclusive set $C(t)$.
Nonetheless, by Corollary \ref{htorocarac} and Propositions \ref{diaescada} and \ref{escfrontq}, every $\xi_L$-$k$-exclusive set $C$ also corresponds to a tiling $t_C$ of $\T_L$ with flux in $Q_k$.
The correspondence is obvious: simply lift the elements in $C$, producing an $L$-periodic tiling $t_C$ of $\Z^2$ that consists entirely of type-$k$ doubly-infinite domino staircases.

Notice $\text{Stair}(L;k;\text{vert})$ is a $\xi_L$-$k$-exclusive set, and it corresponds to a brick wall $b$.
It is clear any such set can be obtained from $\text{Stair}(L;k;\text{vert})$ by choosing a number of its elements and applying $\xi_L$ to them.
In other words, any tiling of $\T_L$ with flux in $Q_k$ can be obtained from $b$ by applying an $L$-stairflip to each of a number of equivalence classes of doubly-infinite domino staircases in $b$.
Moreover, this also means the number of tilings of $T_L$ with flux in $Q_k$ is $2^{c_k}$, where $c_k = \frac12 \text{card}\big(\text{Stair}(L;k)\big) =\text{card}\big(\text{Stair}(L;k;\text{vert})\big)$:
for each element of $\text{Stair}(L;k;\text{vert})$, choose whether or not to apply $\xi_L$ to it.

Observe that $c_k$ depends only on the parity of $k$, i.e. $c_1 = c_3$ and $c_2 = c_4$.
Moreover, they provide a way to count the number of tilings with flux in $\partial Q$:
$$2^{c_1} + 2^{c_2} + 2^{c_3} + 2^{c_4} - 4 = 2\cdot(2^{c_1} + 2^{c_2} - 2).$$

Here, we subtract $4$ because each brick wall is counted twice --- each belongs to two $Q_k$'s.

\begin{prop}\label{eschorvert}
Let $L$ be a valid lattice and $k \in \{1,2,3,4\}$.
As above, each tiling of $t$ of $\T_L$ with flux in $\mathscr{F}(L) \cap Q_k$ corresponds to a unique $\xi_L$-$k$-exclusive set; call it $C(t)$.
For each such tiling, let $n_k^{\textnormal{vert}}(t) = \textnormal{card}\big( C(t) \cap  \textnormal{Stair}(L;k;\textnormal{vert}) \big)$.
If $t$ is a tiling of $\T_L$ with flux in $\mathscr{F}(L) \cap Q_k$, the flux of $t$ depends only on $n_k^{\textnormal{vert}}(t)$.
\end{prop}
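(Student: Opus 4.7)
The plan is to show $\varphi_t = \varphi_{b_k} + (c_k - n_k^{\text{vert}}(t))\cdot \delta_k$ for a fixed vector $\delta_k \in L^\#$ depending only on $k$. By the discussion immediately preceding the proposition, every tiling $t$ with flux in $\mathscr{F}(L)\cap Q_k$ arises from a distinguished brick wall $b_k$ (the one with $C(b_k) = \text{Stair}(L;k;\text{vert})$) by performing an $L$-stairflip on each class in a subset $\mathcal{S}(t) \subseteq \text{Stair}(L;k;\text{vert})$, and $n_k^{\text{vert}}(t) = c_k - |\mathcal{S}(t)|$. So the task reduces to showing that $\varphi_t - \varphi_{b_k}$ depends only on $|\mathcal{S}(t)|$.

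It suffices to isolate the effect of a single $L$-stairflip. Suppose $t$ and $t'$ agree except that $\xi_L$ has been applied to a single class $[S]_L$, so that $C(t') = (C(t)\setminus\{[S]_L\}) \cup \{\xi_L([S]_L)\}$. The key observation is that $t$ and $t'$ share exactly the same collection of doubly-infinite staircase edge-paths: these are determined by the $L$-orbit decomposition of $\Z^2$ into type-$k$ strips and are invariant under $\xi_L$, since a stairflip preserves the two edge-paths bordering any strip. Consequently, the lifted height functions $\tilde h_t$ and $\tilde h_{t'}$ coincide on every such edge-path, and their difference is supported on the vertices strictly interior to the strips in the $L$-orbit of $S$. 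By the constructive definition of height functions, this difference equals a constant $\pm 4$ on each such interior region.

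By Lemma \ref{fluxgen}, one computes $\varphi_{t'}(v_i) - \varphi_t(v_i)$ for $i=0,1$ by evaluating $\tfrac14(\tilde h_{t'}-\tilde h_t)$ on the basis $\{v_0,v_1\}$ of $L$. Taking an edge-path from $0$ to $v_i$ that hugs the shared edge-paths, crossing a strip of $[S]_L$ only when unavoidable, the difference amounts to a signed count of the jumps across strips in the $L$-orbit of $S$. By $L$-periodicity of the tilings, this count is the same for every class $[S]_L \in \text{Stair}(L;k;\text{vert})$: each orbit is a collection of parallel staircases with the same $L$-period, and the number of times $v_i$ crosses the strips of one orbit equals the number of times it crosses another (both equal the index of intersection of $v_i$ with the common parallel direction). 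Thus the single-stairflip flux change $\delta_k$ is independent of the choice of $[S]_L$, giving $\varphi_t = \varphi_{b_k} + |\mathcal{S}(t)|\cdot \delta_k$.

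The main obstacle is the last step: making precise the claim that each $L$-stairflip contributes the same $\delta_k$. One must check that in traversing from $0$ to $v_i$ along the shared skeleton of edge-paths, the number of $\pm 4$ transitions encountered is the same for every $[S]_L \in \text{Stair}(L;k;\text{vert})$. This should be established by a direct translation-invariance argument, using that two classes $[S_0]_L$ and $[S_1]_L$ in $\text{Stair}(L;k;\text{vert})$ are $\Z^2$-translates of one another and that $v_i\in L$ is invariant under the $L$-action, so the intersection count of $v_i$ with the $L$-orbit of $S_0$ equals that with the $L$-orbit of $S_1$.
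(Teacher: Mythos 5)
Your overall strategy -- reduce to a single $L$-stairflip and show that each one shifts the flux by a fixed vector $\delta_k$ -- is viable; it is essentially the paper's Proposition \ref{frontqvec} run in reverse, and composing single-stairflip increments does recover the statement. But two points need attention. First, the description of $\tilde h_{t'}-\tilde h_t$ is wrong as written. A doubly-infinite domino staircase has \emph{no} lattice vertices strictly interior to its strip: every vertex of $\Z^2$ met by the strip lies on one of the two bounding staircase edge-paths. More importantly, the difference is not a bounded perturbation supported near the flipped strips. Since the two tilings agree on both bounding paths of every strip but disagree on the edges crossing the strips of $[S]_L$, the difference $\tilde h_{t'}-\tilde h_t$ is a locally constant step function: it is $0$ on the band containing the origin and changes by a fixed $\pm 4$ each time one passes through a member of $[S]_L$, hence is unbounded. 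Taken literally, your claim that the difference ``equals a constant $\pm 4$ on each such interior region'' would make $\tilde h_{t'}-\tilde h_t$ bounded and force $\varphi_{t'}=\varphi_t$, i.e.\ that stairflips never change the flux -- which is false and would collapse the argument. Your later sentence about ``a signed count of the jumps across strips'' is the correct picture, but it contradicts the earlier one; the write-up needs to commit to the step-function description.

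Second, the step you flag as the ``main obstacle'' is in fact the entire combinatorial content of the proposition, and the sketch given does not yet discharge it. One must show that for $v_i$ in a basis of $L$, the net number of strips of $[S]_L$ separating $0$ from $v_i$ is the same integer for every class $[S]_L\in\textnormal{Stair}(L;k;\textnormal{vert})$, \emph{and} that the sign of the $\pm4$ jump is uniform across classes. The paper does this by enumerating the strips $S_i$ along a transversal line, showing that $[S_i]_L=[S_{i+c}]_L$ with $c=\card\big(\textnormal{Stair}(L;k;\textnormal{vert})\big)$ and that $[S_0]_L,\dots,[S_{c-1}]_L$ are pairwise distinct and exhaust $C(t)$; the explicit horizontal path $\beta_0$ from the origin to the period vector $u$ then crosses each class exactly once, and the coloring of $\Z^2$ fixes the sign of each crossing ($+3$ versus $-1$) uniformly. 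Your appeal to ``the index of intersection of $v_i$ with the common parallel direction'' is the right idea -- writing $v_i=a u+b w$ with $w$ along the staircase direction and noting the count is the integer $a$, independent of the offset of the family -- but this needs to be proved, including the degenerate case where the origin lies on the boundary of a strip of the class being flipped. Until that is done, the proposal is an outline rather than a proof.
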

\begin{proof}
We will prove for $k = 1$, but for other values of $k$ the proof is analogous.

Let $t$ be a tiling of $\T_L$ with flux $\varphi \in \mathscr{F}(L) \cap Q_1$ and associated height function $h_t$.
By Proposition \ref{escfrontq}, $t$ consists entirely of 3-1 doubly-infinite domino staircases, so each 3-1 doubly-infinite staircase edge-path in $\Z^2$ is in $t$.
In particular, there is one such path through the origin, so for each $x \in \Z$, $h_t(x,x) = 2 x$.
Notice this implies the value $h_t$ takes on a vertex of the form $(x,x)$ in $L$ is the same for all tilings of $\T_L$ with flux in $\mathscr{F}(L) \cap Q_1$.

For $s \in \R$, let $\alpha(s) = \left(s, \frac12\right)$ be a continuous path in $\R^2$.
As $\alpha$ is traversed, it crosses each 3-1 doubly-infinite domino staircase in $t$.
Let $S_0$ be the staircase containing $\alpha\left(\frac12\right)$, and for each $i \in \Z$ let $S_{i+1}$ be the first staircase $\alpha$ crosses after $S_i$.
See the image below.
\begin{figure}[H]
		\centering
		\def\svgwidth{\columnwidth}
    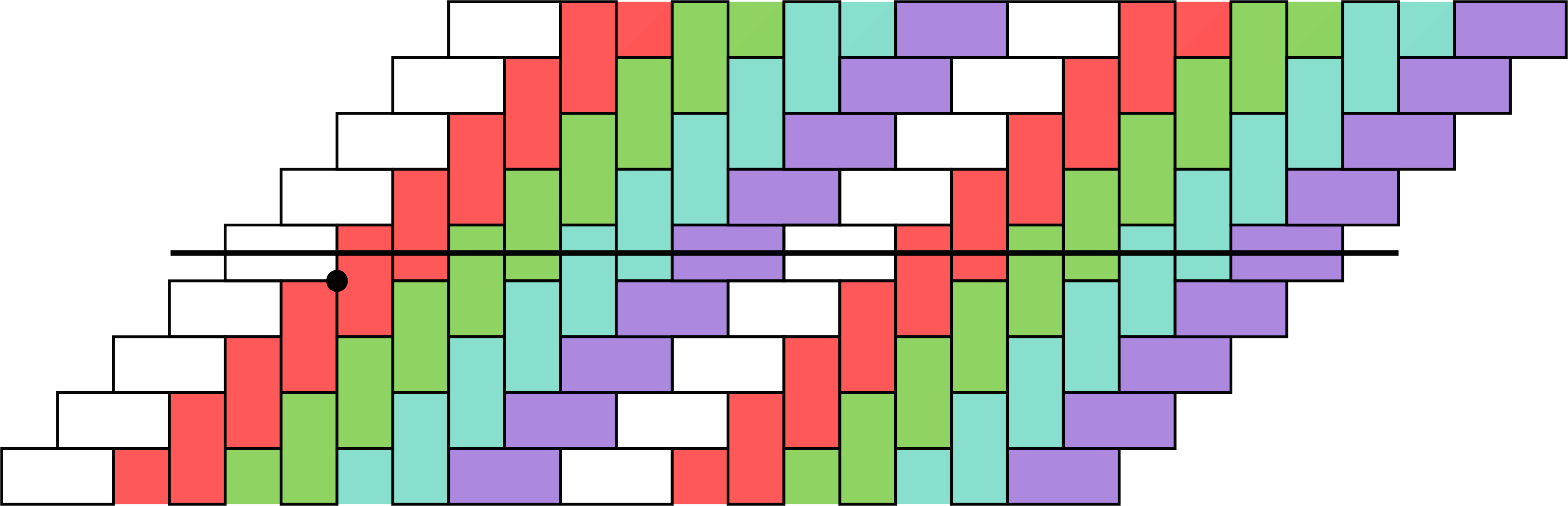
		\caption{Enumerating the $S_i$'s with $\alpha$. The marked vertex is the origin.}
\end{figure}

Observe that $[S]_L = [\widetilde{S}]_L$ if and only if there is some $v \in L$ with $S + v = \widetilde{S}$.
This means $[S_i]_L = [S_i + v]_L$ for all $i \in \Z$ and $v \in L$.
Let $c$ be the smallest positive integer for which $[S_0]_L = [S_c]_L$.
Letting $u \in L$ be such that $S_0 + u = S_c$, it's easy to see that $S_i + u = S_{i+c}$ for all $i \in \Z$.
In particular, $[S_i]_L = [S_{i+c}]_L$ for all $i \in \Z$, and no smaller positive integer may have this property.
Notice this also implies $[S_i]_L = \{S_{i+k\cdot c} \enspace | \enspace k \in \Z \}$ for all $i \in \Z$.
Indeed, suppose it were $\{S_{i+k\cdot c} \enspace | \enspace k \in \Z \} \subsetneq [S_i]_L$.
In this case, there is some integer $j$ not of the form $i +k\cdot c$ with $S_j \in [S_i]_L$.
There must be an integer $m$ such that $S_j$ lies between $S_{i+m\cdot c}$ and $S_{i+(m+1)\cdot c}$.
Let $w \in L$ have the property that $S_{i+m\cdot c} + w = S_j$.
Then $[S_0 + w]_L = [S_0]_L$, and letting $\tilde{c}$ be such that $S_0 + w = S_{\tilde{c}}$ it suffices to note that $0 < \tilde{c} < c$, contradicting the minimality of $c$.

We claim $C(t) = \{[S_0]_L, [S_1]_L, \dots, [S_{c-1}]_L \}$.
Indeed, since $[S_i]_L = [S_{i+c}]_L$ for all $i \in \Z$, $C(t) \subseteq \{[S_0]_L, [S_1]_L, \dots, [S_{c-1}]_L \}$.
On the other hand, because $[S_i]_L = \{S_{i+k\cdot c} \enspace | \enspace k \in \Z \}$ for all $i \in \Z$, the elements in $\{[S_0]_L, [S_1]_L, \dots, [S_{c-1}]_L \}$ are all distinct, so the inclusion must be an equality.
Notice this also implies $c = \text{card}\big(C(t)\big) = \text{card}\big(\text{Stair}(L;1;\text{vert})\big)$, which does not depend on $t$.

Let $\gamma$ be the doubly-infinite staircase edge-path through the origin that fits $S_0$.
Let $R \subset \R^2$ be the connected region enclosed by $\gamma$ and $\gamma + u$.
We claim $\text{int}(R) \cap L = \varnothing$.
Indeed, $\gamma + u$ fits $S_c$ on the same side that $\gamma$ fits $S_0$.
If there were some $v \in \text{int}(R) \cap L$, then there would be some integer $0 <j < c$ with $S_j = S_0 + v \neq S_c$, contradicting the minimality of $c$.

Observe that for any tiling of $\T_L$, its flux is entirely determined by the value its height function takes on two linearly independent vectors on $L$.
By Lemma \ref{xx}, $\gamma$ intersects $L$ away from the origin, say at $w$; and because $S_c = S_0 + u \neq S_0$, $u$ is not of the form $(x,x)$.
In other words, $u$ and $w$ are linearly independent, so $\varphi$ is entirely defined by the value $h_t$ takes on them.
Now, we've shown the value $h_t$ takes on vertices of the form $(x,x)$ in $L$ is the same for all tilings of $\T_L$ with flux in $\mathscr{F}(L) \cap Q_1$, so $\varphi$ is entirely defined by the value $h_t$ takes on $u$.

Let $\beta_0$ be the horizontal edge-path in $\Z^2$ of length $2c$ joining the origin to $\gamma + u$; its endpoint is $(2c,0)$.
Let $\beta_1$ be the edge-path in $\gamma + u$ joining $(2c,0)$ to $u$, so $\beta_0 * \beta_1$ is an edge-path in $\Z^2$ joining the origin to $u \in L$.
Notice $\gamma$ is a 3-1 staircase, so it is an edge-path in each tiling of $\T_L$ with flux in $\mathscr{F}(L) \cap Q_1$.
Of course, this means $\gamma + u$ and $\beta_1$ also have this property, so each such tiling has the same height change along $\beta_1$.
Thus, $h_t(u)$ depends only on the height change along $\beta_0$.

Each edge joining vertices $(2k-1,0)$ and $(2k,0)$ is an edge on a 3-1 staircase, so it is an edge on each tiling of $\T_L$ with flux in $\mathscr{F}(L) \cap Q_1$.
Each edge joining vertices $(2k,0)$ and $(2k+1,0)$ is an edge that crosses $S_k$.
It follows that the height change along $\beta_0$ depends only on these latter edges, and thus depends only on how many dominoes $\beta_0$ crosses along those.
Now, it's easy to check that for each horizontal edge crossing $S_k$, that edge crosses a domino if and only if $S_k$ is made up of vertical dominoes, that is, if and only if $[S_k]_L \in \text{Stair}(L;1;\text{vert})$.
The Proposition follows from the fact that $\beta_0$ crosses each equivalence class in $C(t)$.
\end{proof}

We saw that any $\xi_L$-$k$-exclusive set can be obtained from $\text{Stair}(L;k;\text{vert})$ by choosing a number of its elements and applying $\xi_L$ to them.
By Proposition \ref{eschorvert}, this number entirely determines the flux of the corresponding tiling of $\T_L$, regardless of the choice of elements themselves.
We also knew how to count the total tilings of $\T_L$ with flux in $\mathscr{F}(L) \cap Q_k$, but now we have a way to count the tilings of $\T_L$ with a given flux $\varphi \in \mathscr{F}(L) \cap Q_k$.
Indeed, let $c_k = \text{card}\big(\text{Stair}(L;k;\text{vert})\big)$.
By Proposition \ref{eschorvert}, $n_k^{\text{vert}}(t)$ is the same for each tiling $t$ of $\T_L$ with flux $\varphi$; we may thus speak of $n_k^{\text{vert}}(\varphi)$.
The number of tilings of $\T_L$ with flux $\varphi$ is then simply $\binom {c_k}{n_k^{\text{vert}}(\varphi)}$, corresponding to a choice of $n_k^{\text{vert}}(\varphi)$ elements in $\text{Stair}(L;k;\text{vert})$ to keep, and applying $\xi_L$ to the others.

Let $L$ be a valid lattice and fix $k \in \{1,2,3,4\}$.
Let $t$ be any tiling of $\T_L$ with flux $\varphi \in \mathscr{F}\cap Q_k$ and $\xi_L$-$k$-exclusive set $C(t)$.
If $C(t) \neq \text{Stair}(L;k;\text{hor})$, then $C(t) \cap \text{Stair}(L;k;\text{vert})$ is nonempty.
Let $\widetilde{C}$ be any set obtained from $C(t)$ by choosing an element in $C(t) \cap \text{Stair}(L;k;\text{vert})$ and applying $\xi_L$ to it.
By Proposition \ref{eschorvert}, the flux of the tiling that corresponds to $\widetilde{C}$ does not depend on the choice of element; let $\xi_{L;k}^\text{vert}(\varphi)$ be that flux.

\begin{prop}\label{frontqvec}
The vector $\varphi - \xi_{L;k}^\text{vert}(\varphi)$ is constant and nonzero across all $\varphi \in \big( \mathscr{F}(L)\cap Q_k \big) \setminus \left\{\big(0,\frac12\big), \big(0,-\frac12 \big) \right\}$.
\end{prop}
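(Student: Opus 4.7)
We prove the statement for $k = 1$; the cases $k = 2, 3, 4$ are analogous by symmetry (reflections and the exchange of horizontal with vertical).

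First I would import the setup from the proof of Proposition \ref{eschorvert}. Let $c = \text{card}\bigl(\text{Stair}(L;1;\text{vert})\bigr)$, let $\alpha(s) = (s, \tfrac12)$, enumerate the 3-1 doubly-infinite domino staircases $S_0, S_1, \ldots$ crossed by $\alpha$, and let $u \in L$ be the unique vector with $S_0 + u = S_c$. By Lemma \ref{xx}, choose $w \in L$ of the form $(x,x)$ lying on the 3-1 staircase edge-path $\gamma$ through the origin. The proof of Proposition \ref{eschorvert} shows $u$ and $w$ are linearly independent, and Proposition \ref{escfrontq} ensures $\gamma$ is an edge-path in every tiling of $\T_L$ whose flux lies in $\mathscr{F}(L) \cap Q_1$, so $h_t(w) = 2x$ is independent of such $t$. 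Hence $\varphi_t$ is determined entirely by $h_t(u)$.

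Next I would compute $h_t(u)$ along the decomposition $\beta_0 * \beta_1$ used in the proof of Proposition \ref{eschorvert}, with $\beta_0$ the horizontal path of length $2c$ from the origin to $(2c, 0)$ and $\beta_1$ the segment of $\gamma + u$ from $(2c, 0)$ to $u$. The height change along $\beta_1$ is identical across all tilings with flux in $\mathscr{F}(L) \cap Q_1$, since $\gamma + u$ lies in every such tiling. Along $\beta_0$, the edges from $(2j-1, 0)$ to $(2j, 0)$ lie on 3-1 staircases and thus contribute a fixed total independent of $t$; the edges $e_j$ from $(2j, 0)$ to $(2j+1, 0)$ each cross exactly one $S_j$ for $j = 0, \ldots, c-1$. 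A short check shows every such $e_j$ is oriented from left to right in the color-induced orientation (each sits between a black square above and a white square below), so its contribution to $h_t(u)$ is $+1$ when $S_j$ is horizontal and $-3$ when $S_j$ is vertical. Summing,
\begin{equation*}
h_t(u) \;=\; h_0 \;-\; 4\,n_1^{\text{vert}}(t),
\end{equation*}
where $h_0$ depends only on $L$.

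To conclude, I would note that applying $\xi_L$ to any single class in $C(t) \cap \text{Stair}(L;1;\text{vert})$ decreases $n_1^{\text{vert}}$ by exactly $1$, so $\varphi_t(u) = \tfrac14 h_t(u)$ increases by $1$ while $\varphi_t(w)$ is unchanged. Since $\{u, w\}$ is linearly independent in $L$, the pair of linear conditions $\varphi \mapsto \varphi(u)$ and $\varphi \mapsto \varphi(w)$ pins down a unique vector $\Delta_1 \in \R^2$ with $\varphi - \xi_{L;1}^{\text{vert}}(\varphi) = \Delta_1$ for every admissible $\varphi$, and $\Delta_1 \neq 0$ because $\langle \Delta_1, u \rangle = -1$. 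The main technical point is the color-orientation bookkeeping that makes each crossing edge $e_j$ contribute the same signed change $-4$ upon flipping $S_j$ from horizontal to vertical; once this uniformity is in place, the affine dependence of $\varphi$ on $n_1^{\text{vert}}$, and hence the proposition, is immediate.
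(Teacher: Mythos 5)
Your proposal follows essentially the same route as the paper: reuse the $\beta_0 * \beta_1$ decomposition from the proof of Proposition \ref{eschorvert}, observe that $h_t$ is pinned on the $(x,x)$-vertices of $L$, and reduce everything to the crossing edges of $\beta_0$, so that the difference of fluxes is determined by a fixed $\pm 4$ change in $h_t(u)$ per stairflip. The one concrete error is in the orientation bookkeeping that you yourself flag as the main technical point: the edge from $(2j,0)$ to $(2j+1,0)$ has a \emph{black} square above it and a white square below it, and since black squares carry the clockwise orientation, the color-induced orientation of that edge points from $(2j+1,0)$ to $(2j,0)$ --- i.e.\ $\beta_0$ traverses it \emph{against} its orientation. (Check against the prescription function: $\Phi(1,0)=3$, so an uncrossed first edge gives $h(1,0)=-1$, not $+1$.) Hence the contribution is $-1$ when $S_j$ is horizontal and $+3$ when $S_j$ is vertical, which gives $h_t(u) = h_0 + 4\,n_1^{\text{vert}}(t)$ and $\langle \varphi - \xi_{L;1}^{\text{vert}}(\varphi), u\rangle = +1$ rather than $-1$. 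Since the proposition only asserts that the difference vector is constant and nonzero, this sign flip does not affect the conclusion, and the rest of your argument (linear independence of $u$ and $w$, constancy of $h_t$ on $w$, affine dependence on $n_1^{\text{vert}}$) is sound and matches the paper's.
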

\begin{proof}
Notice $b_{\mathcal{N}}$ and $b_{\mathcal{S}}$ are the only tilings whose $\xi_L$-$k$-exclusive sets may be given by $\text{Stair}(L;k;\text{hor})$, and their fluxes are respectively $\big(0,\frac12\big)$ and $\big(0,-\frac12 \big)$.
Since these are excluded in the statement, $\xi_{L;k}^\text{vert}(\varphi)$ is always well-defined.

Let $t, \tilde{t}$ be tilings of $\T_L$ with fluxes respectively $\varphi$, $\xi_{L;k}^\text{vert}(\varphi)$ and associated height functions respectively $h$, $\tilde{h}$.
Observe that, given two linearly independent vectors $w_0,w_1 \in \R^2$, any vector $w \in \R^2$ is entirely determined by $\langle w, w_0 \rangle$ and $\langle w, w_1 \rangle$.
For vertices on $L$, $h$ and $\tilde{h}$ are given by inner product formulas with respectively $\varphi$ and $\xi_{L;k}^\text{vert}(\varphi)$, so $\varphi - \xi_{L;k}^\text{vert}(\varphi)$ is entirely defined by the value $h - \tilde{h}$ takes on two linearly independent vertices of $L$.
The idea of the proof is to show $h - \tilde{h}$ is constant on two linearly independent vertices of $L$ (across all $\varphi$ as in the statement), so $\varphi - \xi_{L;k}^\text{vert}(\varphi)$ is always the same, and nonzero if $h - \tilde{h}$ is nonzero on at least one of those vectors.

Recall the proof of Proposition \ref{eschorvert}; in what follows, we will use its notation and ideas.
Once again, we will show only the case $k=1$, but for other values of $k$ the proof is analogous.

Because $t, \tilde{t}$ have flux in $\mathscr{F}(L) \cap Q_1$, their height functions coincide on $(x,x)$ for all $x \in \Z^2$; in other words, $h - \tilde{h}$ is always 0 on those vertices.
Since by Lemma \ref{xx} there is a nonzero $v \in L$ with that form, we need only find a vertex in $L$ not of that form on which $h - \tilde{h}$ is constant and nonzero.

Let $C(t)$ be $t$'s $\xi_L$-$1$-exclusive set and similarly for $C(\tilde{t})$.
Choose $\tilde{t}$ so that $C(t)$ and $C(\tilde{t})$ differ in only one element; it's clear this is always possible (as in the paragraph just before the statement of Proposition \ref{frontqvec}).
Analyzing how $h$ and $\tilde{h}$ change along $\beta_0$, it is clear they differ only along the edge $e$ crossing the staircase $S$ whose equivalence class is different in $C(t)$ and $C(\tilde{t})$.
In $t$, $[S]_L \in \text{Stair}(L;1;\text{vert})$, so $e$ crosses a domino; in $\tilde{t}$, $[S]_L \in \text{Stair}(L;1;\text{hor})$, so $e$ does not cross a domino.
Therefore, the coloring of $\Z^2$ implies $h$ changes by $+3$ along $e$ while $\tilde{h}$ changes by $-1$ along $e$.
It follows that $h(u) - \tilde{h}(u)$ is always 4, and we are done.
\end{proof}

Proposition \ref{frontqvec} provides a visual way to interpret the counting of tilings of $\T_L$ with flux in $\mathscr{F}(L) \cap \partial Q$.
For each $k$, orient $Q_k$ (as a line segment in $\R^2$) from $\text{Stair}(L;k;\text{vert})$ to $\text{Stair}(L;k;\text{hor})$.
Let $\mathscr{F}(L) \cap Q_k = \{ p_0, p_1, \dots, p_{c_k}\}$, where the order respects $Q_k$'s orientation; in particular, $p_0$ corresponds to $\text{Stair}(L;k;\text{vert})$ and $p_{c_k}$ corresponds to $\text{Stair}(L;k;\text{hor})$.
Notice $c_k = \text{card}\big(\text{Stair}(L;k;\text{vert})\big)$.

For each $0 \leq j < c_k$,  Proposition \ref{eschorvert} implies $\xi_{L;k}^\text{vert}(p_j) \in Q_k\setminus \{p_j\}$.
In particular, if $p_i = \xi_{L;k}^\text{vert}(p_0)$, then $i>0$.
By Proposition \ref{frontqvec}, if we define $p_{i_j} = \xi_{L;k}^\text{vert}(p_j)$ it follows that $i_j > j$, for $p_{i_j} - p_j$ is constant.
In particular, it must be $i_{c_k-1} = c_k$, so by induction we have $i_j = j+1$.
The following formula thus holds: $\big(\xi_{L;k}^\text{vert}\big)^{i}(p_0) = p_i$.

This means that for all $0 \leq i \leq c_k$, a tiling of $\T_L$ with flux $p_i$ is obtained from the only tiling of $\T_L$ with flux $p_0$ --- the brick wall that corresponds $\text{Stair}(L;k;\text{vert})$ --- by choosing $i$ elements in its $\xi_L$-$k$-exclusive set that are also in $\text{Stair}(L;k;\text{vert})$ and applying $\xi_L$ to them.
In other words, $n_k^{\text{vert}}(p_i)=c_k-i$, so the number of tilings of $\T_L$ with flux $p_i$ is simply $\binom{c_k}{c_k-i} = \binom{c_k}{i}$.

Notice this means that regardless of $L$ or $\mathscr{F}(L)$'s behaviour, if we know $\mathscr{F}(L) \cap Q_k$, then for each $\varphi \in \mathscr{F}(L) \cap Q_k$ we know the number of tilings of $\T_L$ with flux $\varphi$.

Moreover, the effect of $L$-stairflips on a tilings with flux in $\mathscr{F}(L)\cap \partial Q$ can now be better understood: it navigates between adjacent fluxes.
We explain it: let $t$ be one such tiling and suppose its flux $\varphi$ lies in the interior of $Q_k$.
Then it consists entirely of type-$k$ doubly-infinite domino staircases, some of which are made up of vertical dominoes and some of which are made of horizontal ones.
Each extremal point of $Q_k$ corresponds to a different tiling that consists entirely of type-$k$ doubly-infinite staircases and uses dominoes of only one type (vertical or horizontal).
Applying an $L$-stairflip to a staircase in $t$ takes us to a tiling of $\T_L$ whose flux is closest to $\varphi$ in $Q_k$: if that stairflip is applied to a vertical staircase, the new flux is closest to the extremal point of horizontal staircases, and vice-versa. 
Now, if the flux $\varphi$ lies in the boundary of $Q_k$, it is one of the brick walls.
In this case, $t$ can be seen as consisting entirely of type-$k_0$ domino staircases and entirely of type-$k_1$ domino staircases: it is in $\partial Q_{k_0} \cap \partial Q_{k_1}$.
Applying an $L$-stairflip to a type-$k_i$ staircase in $t$ takes us to a tiling of $\T_L$ whose flux is closest to $\varphi$ in $Q_{k_i}$ --- and now there's only one such flux!
This also makes it clear how brick walls are as transition tilings between different $Q_k$'s.

We are now ready to prove Proposition \ref{signfrontq}.

\begin{proof}[Proof of Proposition \ref{signfrontq}]
The discussion before the statement of Proposition \ref{signfrontq} makes it clear we need only prove the case $\varphi \in \mathscr{F}(L) \cap \partial Q$.
Furthermore, it suffices to show that, for each tiling $t$ of $\T_L$ with flux $\varphi \in \mathscr{F}(L) \cap \partial Q$, the sign in $K_t = \pm q_0^iq_1^j$ depends only on the flux (and not on $t$).

By Proposition \ref{escfrontq}, given $\varphi \in \mathscr{F}(L) \cap \partial Q$, there is some $k \in \{1,2,3,4\}$ such that each tiling of $\T_L$ with flux $\varphi$ consists entirely of type-$k$ doubly-infinite domino staircases.
In particular, $\varphi \in Q_k$.
Let $c_k = \text{Stair}(L;k;\text{vert})$.
For each tiling of $\T_L$ with flux in $Q_k$, let $C(t)$ be its $\xi_L$-$k$-exclusive set.
By Proposition \ref{eschorvert}, each tiling $t$ of $\T_L$ with flux $\varphi$ satisfies $\text{card}\big(C(t) \cap \text{Stair}(L;k;\text{vert}) \big) = n_k^{\text{vert}}(\varphi)$, and $\text{card}\big(C(t) \cap \text{Stair}(L;k;\text{hor}) \big) = c_k  -n_k^{\text{vert}}(\varphi)$; in particular, neither depends on $t$.

Remember our universal Kasteleyn signing, based on $b_{\mathcal{N}}$.
An equivalence class of edges on $G(\Z^2)$ is on $b_{\mathcal{N}}$, and thus is negatively signed, if and only if it lies in an element of $\text{Stair}(L;1;\text{hor})$ or in an element of $\text{Stair}(L;2;\text{hor})$.
Now,
\begin{alignat*}{1}
&\text{card}\bigg(C(t) \cap \Big( \text{Stair}(L;1;\text{hor}) \sqcup \text{Stair}(L;2;\text{hor}) \Big) \bigg) = \\
&\text{card}\Big( C(t) \cap \text{Stair}(L;1;\text{hor}) \Big) + \text{card}\Big( C(t) \cap \text{Stair}(L;2;\text{hor}) \Big),
\end{alignat*}
and the preceding paragraph then implies that for each tiling $t$ of $\T_L$ with flux $\varphi$, the expression above depends only on $\varphi$.

Notice that the number of equivalence classes of edges on $G(\Z^2)$ in any two elements of $\text{Stair}(L;1) \sqcup \text{Stair}(L;3)$ is the same, and similarly for any two elements of $\text{Stair}(L;2) \sqcup \text{Stair}(L;4)$:
the former is given by the smallest $x>0$ for which $(x,x) \in L$, and the latter is given by the smallest $x>0$ for which $(x,-x) \in L$.
Together with the previous paragraph, this means that for each tiling of $\T_L$ with flux $\varphi$, the number of equivalence classes of edges on $b_{\mathcal{N}}$ is the same.
Moreover, this also shows that for each such tiling $t$ the permutation associated to $K_t$ is given by $c_k$ cycles of equal length; in particular, the signs of these permutation are all the same.

It follows that the sign of $K_t$ depends only on $\varphi$, as desired.
\end{proof}
\chapter{Sign distribution over $\mathscr{F}(L)$}
\label{chap:signflux}

We now know that each tiling of $\T_L$ with flux $\varphi \in \mathscr{F}(L)$ is assigned the same sign in the combinatorial expansion of the Kasteleyn determinant, so we may speak of the sign of $\varphi$.
Our next results will work to describe how these signs are distributed.

\section{Cycles and cycle flips}
\label{sec:cyc}

Let $t_0, t_1$ be two tilings of $\T_L$.
Represent both simultaneously on a fundamental domain $D_L$ and orient dominoes of $t_0$ from black to white and dominoes of $t_1$ from white to black.
With this orientation, $D_L$ is decomposed into disjoint domino cycles whose dominoes belong alternatingly to $t_0$ and $t_1$; call the collection of these cycles $C(t_0,t_1)$.
This set provides a way to go from $t_0$ to $t_1$: for each cycle $c$ in $C(t_0, t_1)$\label{def:ct0t1}, simply replace each domino in $c$ that is also in $t_0$ with the domino that follows it in $c$.
We call this process a \textit{cycle flip}\label{def:cycflip}.

The image below provides an example of this construction. 
Notice each cycle in $C(t_0, t_1)$ defines edge-paths that are in both $t_0$ and $t_1$ (the edge-paths that fit that cycle).
\begin{figure}[H]
		\centering
		\includegraphics[width=0.65\textwidth]{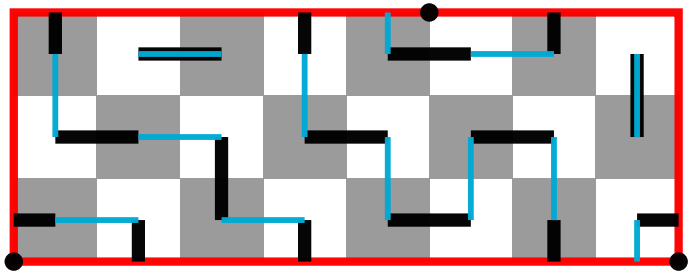}
		\caption{An example of cycle construction. The dominoes of $t_0$ are represented by the black, thicker edges while the dominoes of $t_1$ are represented by the thinner, blue edges. Marked vertices are in the lattice $L$, and the red rectangle is the fundamental domain $D_L$.}
\end{figure}

Of course, we may lift this representation to $\Z^2$, decomposing it into disjoint, $L$-periodic domino paths whose dominoes belong alternatingly to $t_0$ and $t_1$.
Under this representation, each cycle is a collection of either finite, closed domino paths or infinite domino paths.
We will refer to the former by \textit{closed cycles} and to the latter by \textit{open cycles}.
\begin{figure}[ht]
		\centering
		\includegraphics[width=0.99\textwidth]{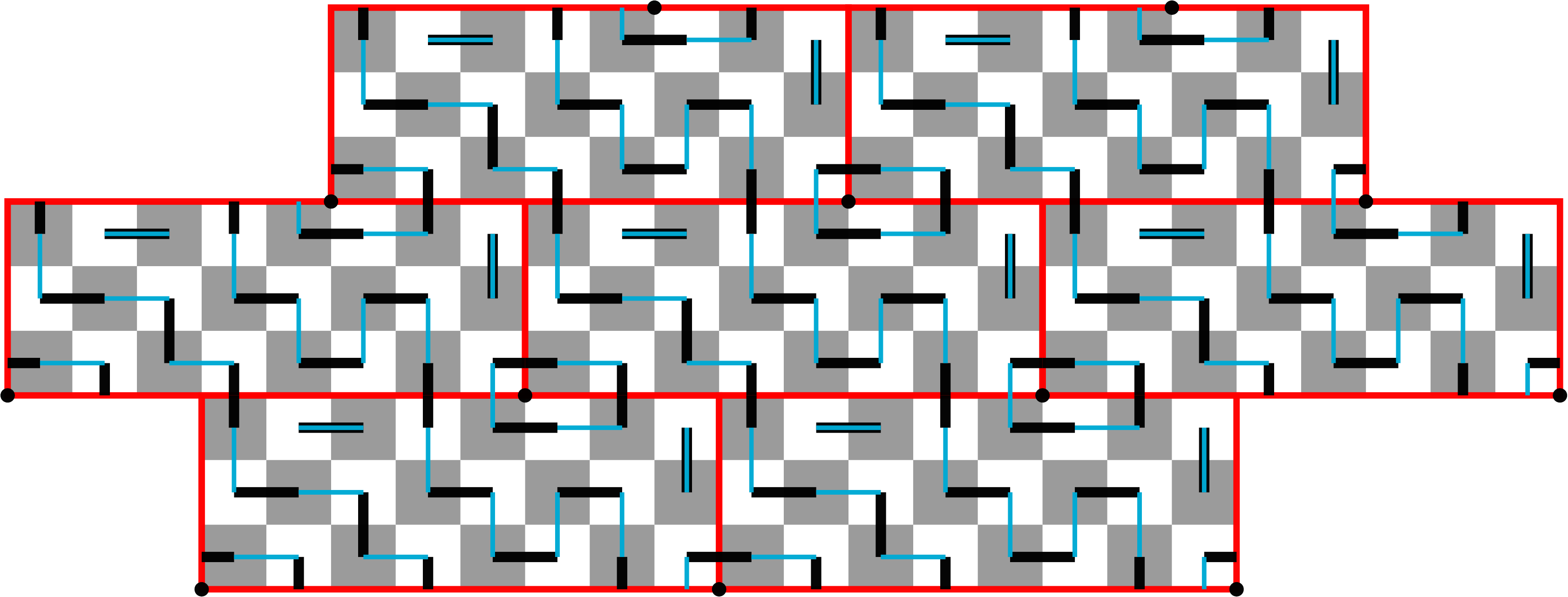}
		\caption{Lifting our previous example to $\Z^2$. We can see $C(t_0,t_1)$ has two trivial cycles, one closed cycle, and two open cycles.}
\end{figure}

Remember dominoes may be seen as edges on $G(\Z^2)$, and vertices of $G(\Z^2)$ lie on $\big(\Z+\frac12\big)^2$.
Thus, each domino path of a cycle can be seen as an edge-path in $\big(\Z+\frac12\big)^2$, which decomposes $\R^2$ into two disjoint, connected components.
For open cycles these components are both unbounded, while for closed cycles one is unbounded and one is bounded.
In the latter case, we call the unbounded component the exterior of the path, and the bounded component the interior of the path.

Define the interior $\text{int}(c)$\label{def:intc} of a closed cycle $c$ to be the union of the interior of domino paths in $c$, and the exterior $\text{ext}(c)$\label{def:extc} of $c$ to be the intersection of the exterior of domino paths in $c$.
Notice $\text{int}(c)$ is never connected, and $\text{ext}(c)$ is always connected; moreover, $\text{ext}(c) \cap \Z^2$ is always connected by edge-paths.
We provide an example of these constructions below.
\begin{figure}[ht]
		\centering
		\includegraphics[width=0.99\textwidth]{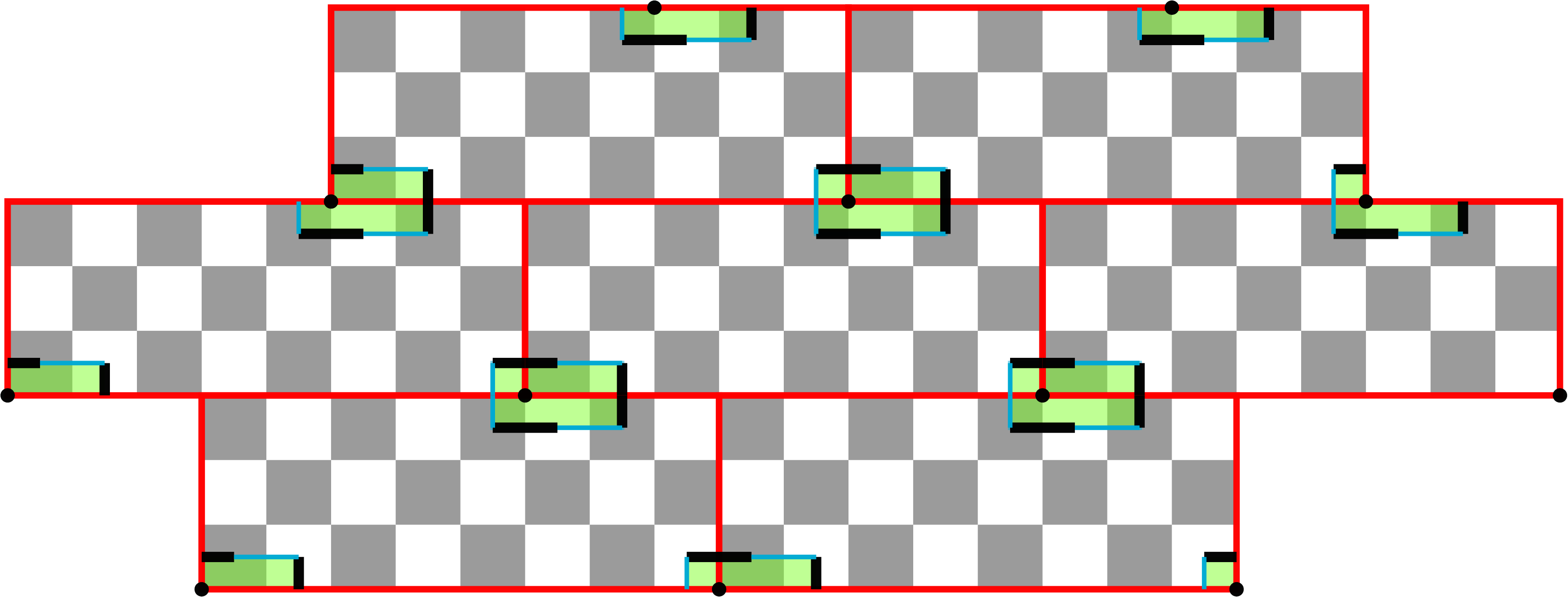}
		\caption{The closed cycle in our previous example. Its interior is tinted green.}
		\vspace{5pt}
\end{figure}

\begin{prop}
Let $L$ be a valid lattice and $t_0, t_1$ be two tilings of $\T_L$ with fluxes respectively $\varphi_0, \varphi_1 \in \mathscr{F}(L)$.
If $t_0$ and $t_1$ differ by a single closed cycle $c$, then $\varphi_0=\varphi_1$.
\end{prop}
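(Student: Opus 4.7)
The plan is to prove $\varphi_0(v) = \varphi_1(v)$ for every $v \in L$, whence $\varphi_0 = \varphi_1$. By Proposition \ref{hsqtorogen}, both flux values satisfy the arithmetic quasiperiodicity $4\varphi_t(v) = h_t(w+v) - h_t(w)$ for every $w \in \Z^2$ and every $v \in L$. So it suffices to exhibit a single $w \in \Z^2$ such that the height change $h_t(w+v) - h_t(w)$ is independent of the choice $t \in \{t_0, t_1\}$ for each $v \in L$.

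First I would lift $t_0, t_1$ to $L$-periodic tilings of $\Z^2$; the closed cycle $c$ then lifts to an $L$-periodic family of closed cycles, each having bounded interior since $c$ is closed. Letting $S_c \subset \Z^2$ denote the set of squares covered by cycle dominoes in the lift, the two tilings agree on every square outside $S_c$. Consequently, for any edge $e$ of $\Z^2$ separating squares $s_1, s_2$, the state of $e$ (crossed by a domino or not) is the same in $t_0$ and $t_1$ unless both $s_1, s_2 \in S_c$, in which case a cycle domino may or may not cover both; I call an edge \emph{safe} if at least one of its adjacent squares lies outside $S_c$.

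Next I would consider the set $U \subset \Z^2$ of lattice points whose four adjacent squares all lie outside $S_c$. The key claim is that $U$ is nonempty, $L$-invariant, and connected via safe edges. Nonemptiness and $L$-invariance follow from the boundedness of each lifted cycle region together with the $L$-periodicity of the configuration. For the connectivity, one exploits the planar topology: since each lifted cycle region is bounded and $L$ is a discrete lattice, one can navigate around any obstructing cycle region through safe edges only. Verifying this connectivity rigorously --- ensuring that no configuration of lifted cycle regions isolates a portion of $U$ --- is the main obstacle of the argument.

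Finally, choose $w \in U$; by $L$-invariance, $w + v \in U$ for every $v \in L$, and one selects a safe edge-path $\gamma$ from $w$ to $w + v$. By the constructive definition of the height function, the height change along each edge of $\gamma$ depends only on its state and its coloring-induced orientation, both of which coincide between $t_0$ and $t_1$ on safe edges. Summing along $\gamma$ gives $h_0(w+v) - h_0(w) = h_1(w+v) - h_1(w)$, and the quasiperiodicity relation then yields $\varphi_0(v) = \varphi_1(v)$.
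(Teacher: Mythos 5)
Your reduction---show that $h_0$ and $h_1$ have the same increment over every $v\in L$, then invoke quasiperiodicity---is exactly the paper's starting point, and your ``safe path'' idea is essentially the paper's argument in its easy case. But there is a genuine gap, and it is not just the connectivity verification you flag: the key claim is false as stated. The set $U$ can be empty. Take $L=2\Z^2$, let $t_0$ consist of the horizontal dominoes $[2i,2i+2]\times[j,j+1]$ and $t_1$ of the vertical dominoes $[i,i+1]\times[2j,2j+2]$. Then $C(t_0,t_1)$ is a single closed cycle of length four, and its lifts are the boundaries of the squares $[2i+\tfrac12,2i+\tfrac32]\times[2j+\tfrac12,2j+\tfrac32]$, covering all four unit squares of every block $[2i,2i+2]\times[2j,2j+2]$. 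Hence \emph{every} unit square lies in $S_c$ and $U=\varnothing$; boundedness of each lifted copy does not prevent the periodic family of copies from covering everything. (Your notion of ``safe'' is also stronger than needed: what matters is only that the edge not be crossed by a cycle domino, i.e.\ that its dual edge not belong to the cycle. An edge whose two adjacent squares are covered by different lifted copies, or by non-consecutive dominoes of one copy, still has the same state in both tilings---and with this weaker notion your argument does go through in the example above.)

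More fundamentally, even with the weaker notion the strategy of avoiding all crossings cannot always succeed, and this is precisely the case the paper must work to handle. The lifted copies of $c$ are disjoint simple closed curves; when they enclose lattice points (the paper's second case), a base point $w$ lying in the interior of a copy $\delta$ has $w+v$ in the interior of the \emph{distinct} copy $\delta+v$, and any edge-path from $w$ to $w+v$ must cross at least two of the curves, each crossing being an edge whose state differs between $t_0$ and $t_1$. The paper resolves this not by avoiding crossings but by a cancellation: it joins two lattice points by a path crossing each of the two relevant copies exactly once, and $L$-periodicity forces the two crossed edges to have the same colour-induced orientation while being traversed in opposite directions, so the two $\pm 4$ discrepancies in the height change cancel. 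That cancellation is the missing ingredient; without it, or without a proof that a crossing-free base point and path always exist (which the example shows is false for your $U$, and which fails whenever the closed interiors of the lifted copies cover $\Z^2$), the argument does not close.
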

\begin{proof}
Let $h_i$ be $t_i$'s associated height function.
We will show $h_0$ and $h_1$ agree on $L$, from which the proposition follows.
There are two cases:
\begin{enumerate}
	\item $\text{int}(c)$ does not intersect $L$;
	\item $\text{int}(c)$ does intersect $L$.
\end{enumerate}

In the first case, $L \subset \text{ext}(c)$, so any two points of $L$ can be joined by an edge-path contained entirely in $\text{ext}(c)$.
Since $t_0$ and $t_1$ coincide along these edge-paths, the height change along them is the same for both $h_0$ and $h_1$, and the they agree on $L$.

In the second case, because domino paths of $c$ are $L$-periodic, each $v \in L$ must belong to the interior of a single domino path of $c$, and each domino path of $c$ must contain a single $v \in L$. Let $v_a, v_b \in L$ and $\delta_a, \delta_b$ be their respective domino paths of $c$.
Let $u_a \in \Z^2$ be the first point to $v_a$'s right in the exterior of $\delta_a$, and similarly for $u_b$.
Let $\gamma_a$ be the horizontal edge-path in $\Z^2$ joining $v_a$ to $u_a$ and $\gamma_b$ be the horizontal edge-path in $\Z^2$ joining $u_b$ to $v_b$.
Finally, let $\gamma_{ab}$ be any edge-path in $\text{ext}(c) \cap \Z^2$ joining $u_a$ to $u_b$, so $\gamma = \gamma_a * \gamma_{ab} * \gamma_b$ is an edge-path in $\Z^2$ joining $v_a$ to $v_b$.
Notice $\gamma$ has a single edge $e_a$ that crosses $\delta_a$, and a single edge $e_b$ that crosses $\delta_b$.

Since $t_0$ and $t_1$ agree except on $c$, $h_0$ and $h_1$ have the same change along $\gamma$ except along $e_a$ and $e_b$.
Now, because of $c$'s $L$-periodicity, $e_a$ and $e_b$ have the same color-induced orientation, but are traversed by $\gamma$ in opposite directions.
Moreover, the $L$-periodicity also implies the domino $e_a$ crosses in $\delta_a$ and the domino $e_b$ crosses in $\delta_b$ belong to the same $t_i$.
In other words, the changes of $h_i$ along $e_a$ and along $e_b$ have equal magnitude but opposite signs, so they cancel each other out.
This means the total change of $h_0$ and $h_1$ along $\gamma$ is in fact the same.
Since they agree on the origin (and the origin is in $L$), they must agree everywhere on $L$, and we are done.
\end{proof}

\begin{corolario}\label{closcyc}
Let $L$ be a valid lattice and $t_0, t_1$ be two tilings of $\T_L$ with fluxes respectively $\varphi_0, \varphi_1 \in \mathscr{F}(L)$.
If each cycle in $C(t_0, t_1)$ is closed, then $\varphi_0 = \varphi_1$.
\end{corolario}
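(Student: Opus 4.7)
The plan is to obtain this as an iterated application of the preceding proposition. Since $\T_L$ is finite, $C(t_0,t_1)$ has only finitely many nontrivial cycles, so I would induct on this number. The base case of a single nontrivial cycle is exactly the preceding proposition (and the case of zero nontrivial cycles means $t_0 = t_1$, trivially giving $\varphi_0 = \varphi_1$).

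For the induction step, I would pick any nontrivial closed cycle $c \in C(t_0,t_1)$ and perform the cycle flip along $c$ alone, leaving all other cycles untouched. This produces an intermediate tiling $t'$ of $\T_L$ with some flux $\varphi'$. By construction, $t_0$ and $t'$ agree outside $c$, so $c$ is the unique nontrivial cycle in $C(t_0,t')$; the preceding proposition then gives $\varphi_0 = \varphi'$. On the other hand, along $c$ the tiling $t'$ was built to match $t_1$, while outside $c$ the disagreements between $t'$ and $t_1$ are precisely those between $t_0$ and $t_1$. Hence $C(t',t_1) = C(t_0,t_1) \setminus \{c\}$, and every cycle in it is still closed. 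The induction hypothesis yields $\varphi' = \varphi_1$, and we conclude $\varphi_0 = \varphi_1$.

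The only point requiring real care is verifying that the cycle flip along a single $c \in C(t_0,t_1)$ genuinely produces a valid tiling of $\T_L$, and that the cycle decomposition afterwards is exactly $C(t_0,t_1)\setminus\{c\}$. For the first, one observes that $c$ is by definition well defined on the torus, so its lift to $\Z^2$ is an $L$-periodic family of finite closed domino paths (each cycle being closed by hypothesis); swapping $t_0$-dominoes for $t_1$-dominoes simultaneously along every lift respects the $L$-action and yields another $L$-periodic tiling. The second is then just unpacking the definition of $C(\cdot,\cdot)$: the pair $(t',t_1)$ agrees along $c$ and disagrees in exactly the same pattern as $(t_0,t_1)$ elsewhere, so its cycle decomposition is as claimed. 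With these two bookkeeping points in hand, the induction runs as sketched above.
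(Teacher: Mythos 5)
Your proof is correct and is exactly the argument the paper leaves implicit: the corollary is stated without proof as an immediate consequence of the preceding proposition, obtained by flipping the closed cycles one at a time and applying that proposition at each step. Your care about the two bookkeeping points ($L$-periodicity of the intermediate tiling and the identification $C(t',t_1) = C(t_0,t_1)\setminus\{c\}$) is appropriate and fills in precisely what the paper omits.
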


\section{The effect of a cycle flip on the sign of a flux}

We know any two tilings of $\T_L$ can be joined by cycle flips, but by Corollary \ref{closcyc} only flips on open cycles can affect the flux.
Our attention now turns to studying how flips on open cycles affect the Kasteleyn sign of a tiling, and thus of their respective fluxes.

Let $L$ be a valid lattice.
We say $v \in L$ is \emph{short}\label{def:short} if $s\cdot v \notin L$ for all $s \in [0,1)$.

Let $t_0$, $t_1$ be two tilings of $\T_L$.
Let $c$ be an open cycle in $C(t_0, t_1)$.
Let $\gamma_c$ be any infinite domino path of $c$.
Since these paths are $L$-periodic, for each $v \in L$ $\gamma_c + v$ is a domino path of $c$.
Moreover, there is a short $u\in L$ for which $\gamma_c = \gamma_c + u$, and it is clear $u$ is unique up to multiplication by $-1$.
Now, if $\widetilde{\gamma}_c$ is another infinite domino path of $c$, $L$-periodicity implies this unique vector is the same.
We will then say $u=u(c)$ is $c$'s \textit{parameter}\label{def:cycparam}.

Let $\widetilde{c}$ be any other open cycle in $C(t_0,t_1)$.
We claim $u(c) = u(\widetilde{c})$.
Indeed, if there were some $k \in \R\setminus\{-1,1\}$ with $u(c) = k \cdot u(\widetilde{c})$, it would contradict the shortness of $u(c)$ or of $u(\widetilde{c})$.
On the other hand, if $u(c)$ and $u(\widetilde{c})$ were linearly independent, $c$ and $\tilde{c}$ would intersect, contradicting their disjointness.
It follows that whenever $C(t_0,t_1)$ contains an open cycle, it has a well-defined parameter $u = u(t_0,t_1)$, a short vector in $L$ unique up to multiplication by $-1$.

For any short $v \in L$, a \textit{$v$-quasicyle}\label{def:quasicyc} is a function $\gamma: \Z \longrightarrow \big(\Z+\frac12\big)^2$ with $\lVert \gamma(t+1) - \gamma(t) \rVert = 1$ for all $t \in \Z$ and such that there is a positive integer $T$ with $\gamma(t+T) = \gamma(t)+v$ for all $t \in \Z$.
We say $T$ is $\gamma$'s quasiperiod, and notice it is always even (because $v$ is in a valid lattice).
We say $\gamma: \Z \longrightarrow \big(\Z+\frac12\big)^2$ is \textit{simple} if it is injective; in other words, if it does not self-intersect in the plane.

Observe that any quasicyle $\gamma$ can be interpreted as a domino-path in the infinite square lattice.
With this in mind, let $C(t_0,t_1)$ contain an open cycle and $v$ be its parameter.
It's easy to see that for any open cycle $c \in C(t_0,t_1)$ and any infinite domino path $\gamma_c$ of $c$, $\gamma_c$ is a simple $v$-quasicycle.

For any $v$-quasicycle $\gamma$, define its sign by
$$\sgn(\gamma)=(-1)^{^{\textstyle \big(\frac{T}{2}+1\big)}} \cdot \prod\limits_{0 \leq t < T}{\sgn\Big(\big[\gamma(t),\gamma(t+1)\big] \Big)},$$
where $T$ is $\gamma$'s quasiperiod and $\sgn(e)$ is the Kasteleyn sign of the edge $e$ in $G(\Z^2) = \big(\Z + \frac12\big)^2$.

Notice that for any $u$ in $L$ and any edge $e$ in $G(\Z^2)$, $\sgn(e) = \sgn(e+u)$.
In particular, because $\gamma(t+T) = \gamma(t) + v$ and $v \in L$, we always have that
$$\sgn\Big(\big[\gamma(t),\gamma(t+1)\big] \Big) = \sgn\Big(\big[\gamma(t),\gamma(t+1)\big] + v\Big) = \sgn\Big(\big[\gamma(t + T),\gamma(t+1 + T)\big] \Big).$$

This means the sign of a quasicycle $\gamma$ obtained from any infinite domino path of an open cycle in $C(t_0,t_1)$ does not depend on a particular choice of parametrization (a choice of edge to be $\gamma(0)$).
Moreover, any two infinite domino paths of one same open cycle in $C(t_0,t_1)$ are related by a translation in $L$, which preserves the quasiperiod and each Kasteleyn sign.
We may thus define the sign of an open cycle $c \in C(t_0,t_1)$ to be the sign of any quasicycle obtained from any infinite domino path of $c$.

Let $t_0, t_1$ be tilings of $\T_L$ with fluxes respectively $\varphi_0$ and $\varphi_1$.
Suppose $\varphi_0 \neq \varphi_1$. By Corollary \ref{closcyc}, $C(t_0,t_1)$ contains an open cycle $c$.
Let $\tilde{t}$ be obtained from $t_0$ by a cycle flip on $c$, and let $\widetilde{\varphi}$ be its flux.
Notice $\varphi_0 \neq \widetilde{\varphi}$, and by construction $\sgn(c)$ is the sign change produced by the cycle flip on $c$, so
$$\frac{\sgn(\varphi_0)}{\sgn(\widetilde{\varphi})}=\sgn(c).$$

Indeed, $\frac{T}{2}$ is the length of the permutation cycle that represents the cycle flip, so its sign is $(-1)^{\big(\frac{T}{2}+1\big)}$.
The product accounts for the sign changes from the Kasteleyn signing.

When $\gamma$ is simple, it divides $\R^2$ into two unbounded connected components: $\R^2_+(\gamma)$ to the left of $\gamma$, and $\R^2_-(\gamma)$ to the right of $\gamma$.
For any simple $v$-quasicycle $\gamma$, let $\gamma_+$ be the edge-path in $\R^2_+(\gamma)\cap\Z^2$ that fits it, and similarly for $\gamma_-$.
The height changes along $\gamma_+$ and $\gamma_-$ are well-defined.
For any vertex $w$ in $\gamma_+$, there is an edge-path along $\gamma_+$ joining $w$ to $w+v$; call it $\gamma_+^w$.
We claim the height change along $\gamma_+^w$ does not depend on $w$.
Indeed, if $w_1,w_2$ are vertices in $\gamma_+$, there is an edge-path $\gamma_+^{w_1,w_2}$ along $\gamma_+$ joining $w_1$ to $w_2$.
Because $\gamma$ is a $v$-quasicycle, $\gamma_+^{w_1,w_2} + v$ is also an edge-path in $\gamma_+$; it joins $w_1+v$ to $w_2 + v$.
It's easy to see the height change along $\gamma_+^{w_1,w_2}$ and $\gamma_+^{w_1,w_2} + v$ is the same, from which the claim follows.
Define then $h(\gamma_+)$ to be the height change along any $\gamma_+^w$.
Of course, the same applies to $\gamma_-$, and $h(\gamma_-)$ is well-defined.
We claim $h(\gamma_+) = h(\gamma_-)$.

Indeed, let $u_+$ and $u_-$ be adjacent vertices with $u_+$ in $\gamma_+$ and $u_-$ in $\gamma_-$.
Let $e$ be the edge joining $u_+$ to $u_-$.
Of course, $u_+ + v$ and $u_- + v$ are also adjacent vertices in their respective edge-paths, and $e + v$ joins them.
Consider then the edge-path $\beta = \gamma_+^{u_+}*(e+v)*\left(\gamma_-^{u_-}\right)^{-1}*e^{-1}$, where $e$ is oriented from $u_+$ to $u_-$ and ${\Bigcdot}^{-1}$ indicates a reversal of orientation.
\begin{figure}[H]
		\centering
		\def\svgwidth{0.99\columnwidth}
    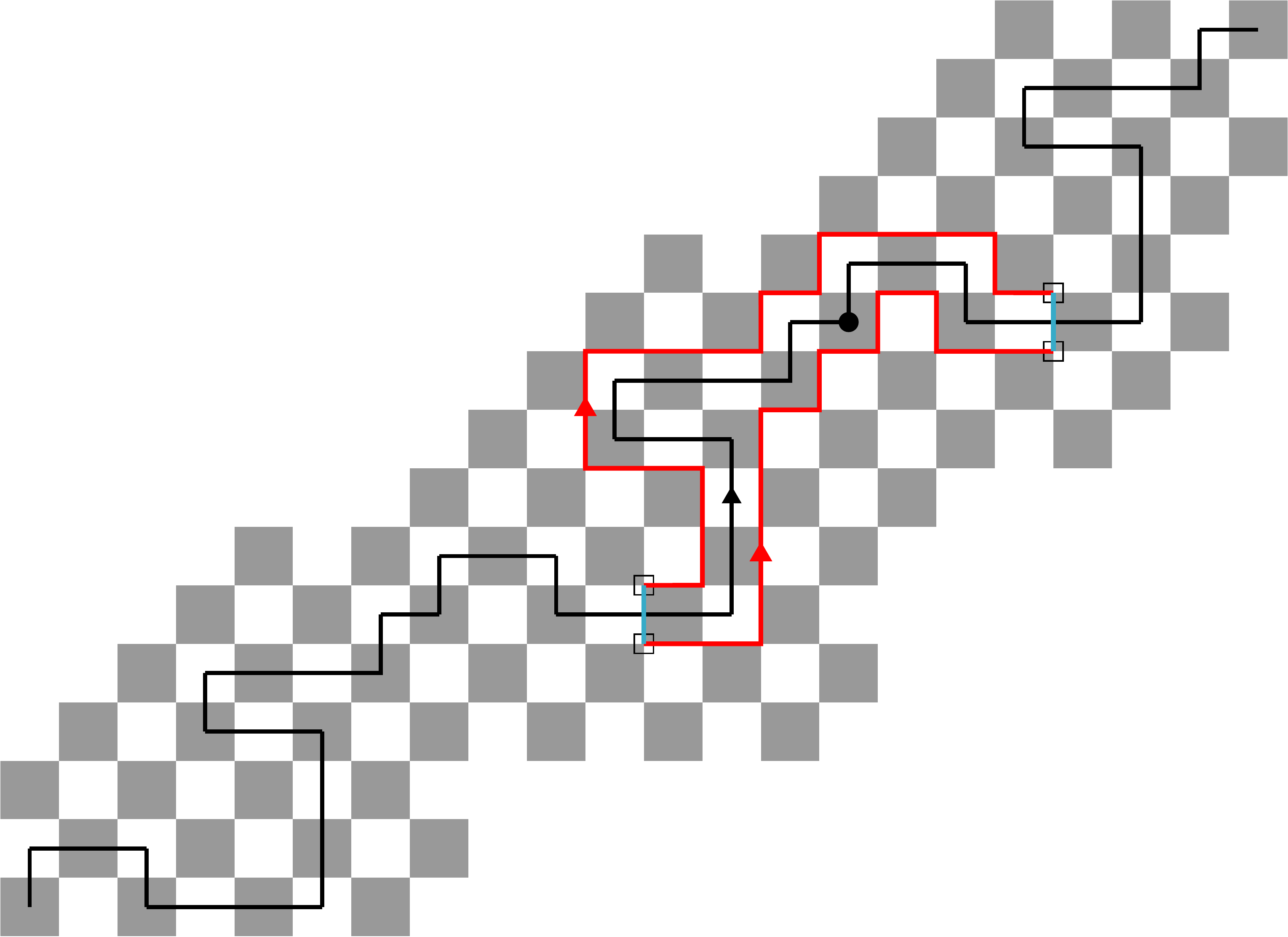
		\caption{The simple $v$-quasicycle $\gamma$ in black, the marked vertices $u_+$, $u_+ + v$, $u_-$ and $u_- + v$, and the edge-paths $\gamma_+^{u_+}$ and $\gamma_-^{u_-}$ in red.}
\end{figure}

Since it is closed, the height change along $\beta$ is 0, so $h(\gamma_+) + h(e+v) - h(\gamma_-) - h(e) = 0$, where $h(e)$ is the height change along $e$.
The claim follows from noting that $h(e+v) = h(e)$, because $v \in L$.

We say a tiling of $\T_L$ is \emph{compatible with $\gamma$}\label{def:compatible} if $t$ contains every other domino in $\gamma$.
In this case, it's clear $\gamma_+$ and $\gamma_-$ are edge-paths in $t$, so by definition
\begin{equation}\label{varphigammaflux}
h(\gamma_+) = 4 \cdot \langle \varphi_t , v \rangle = h(\gamma_-),
\end{equation}
where $\varphi_t$ is the flux of $t$.

Define then $\gamma$'s \textit{pseudo-flux}\label{def:pseudoflux} by $\phi(\gamma) = \frac14 h(\gamma_+) = \frac14 h(\gamma_-)$, so $\phi(\gamma) = \langle \varphi_t , v \rangle$ whenever $t$ is a tiling that's compatible with $\gamma$.
Here, $\varphi_t$ is $t$'s flux and $v$ is $\gamma$'s parameter.

We are now ready to state Proposition \ref{sgnvcyc}.

\begin{prop}\label{sgnvcyc}
Let $L$ be a valid lattice and $v \in L$ be short.
If $\gamma_0, \gamma_1$ are simple $v$-quasicycles, then
$$\frac{\sgn(\gamma_0)}{\sgn(\gamma_1)} = (-1)^{^{\textstyle \phi(\gamma_0) - \phi(\gamma_1)}}.$$
\end{prop}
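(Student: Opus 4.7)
The plan is to establish the identity by showing both sides transform compatibly under an elementary local modification of the quasicycle, and then invoking connectedness under such modifications. By a \emph{flip} of $\gamma$ at a vertex $w\in \Z^2$ I mean: since $\gamma$ is simple, it uses at most two of the four edges of the unit square of $G(\Z^2)$ surrounding $w$, and if it uses exactly two (necessarily consecutive) such edges, the flip replaces them by the complementary two. A flip preserves $v$-quasiperiodicity and the quasiperiod $T$.

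First I would verify that $\sgn(\gamma)\cdot(-1)^{\phi(\gamma)}$ is invariant under a flip. For the $\sgn$ factor: $T$ is unchanged, so $(-1)^{T/2+1}$ is unchanged, and by the defining property of a Kasteleyn signing the four edge signs on a unit square of $G(\Z^2)$ multiply to $-1$; exchanging two edges of this square for the complementary two therefore multiplies the edge-sign product appearing in $\sgn(\gamma)$ by $-1$. For the $\phi$ factor: the flip modifies $\gamma_+$ only locally, replacing a two-edge sub-arc from some $A\in\Z^2$ to a diagonally-opposite vertex $B$ of a unit square of $\Z^2$ through one intermediate vertex by the sub-arc through the opposite intermediate vertex. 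Using the coloring-induced edge orientations, one checks that the four signed contributions around such a unit square sum to $\pm 4$, so the two alternative two-edge routes from $A$ to $B$ contribute $+2$ and $-2$. Thus $h(\gamma_+)$ changes by $\pm 4$, $\phi(\gamma)$ by $\pm 1$, and $(-1)^{\phi(\gamma)}$ flips sign. Both factors pick up a $-1$, so their product is preserved.

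Next I would show that any two simple $v$-quasicycles are connected by a sequence of simplicity-preserving flips. Projecting to the cylinder $\R^2/\langle v\rangle$, both $\gamma_0$ and $\gamma_1$ descend to simple closed curves generating $H_1$; topologically they are ambient isotopic, and the task is to realize such an isotopy discretely. I would argue by induction on the area of the region between $\gamma_0$ and $\gamma_1$ on the cylinder: using a ``convex'' vertex on the boundary of this region, one can always perform a flip on the appropriate quasicycle that strictly decreases the bounded area while remaining simple.

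The hard part will be this connectedness step. Although an individual flip is easy to analyze, producing at each stage a simplicity-preserving flip that also makes progress towards $\gamma_1$ parallels the subtlety in the proof of Proposition \ref{fluxconecmin}, and requires a careful extremal argument. Once the connectedness is in place, the per-flip invariance yields $\sgn(\gamma_0)\cdot(-1)^{\phi(\gamma_0)} = \sgn(\gamma_1)\cdot(-1)^{\phi(\gamma_1)}$, which rearranges into the desired identity.
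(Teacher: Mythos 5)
Your overall strategy --- show that $\sgn(\gamma)\cdot(-1)^{\phi(\gamma)}$ is invariant under an elementary local move and then connect any two simple $v$-quasicycles by such moves --- is exactly the paper's strategy (Lemmas \ref{sgnvcycdiff} and \ref{cycconec}). But there is a genuine gap in your choice of move set. You assert that simplicity forces $\gamma$ to use at most two of the four edges of the unit square around a vertex $w$; this is false. A simple quasicycle can perfectly well traverse \emph{three} consecutive sides of a unit square (it visits all four corners without repeating any), and this configuration is unavoidable: it occurs precisely when $\gamma$ makes a local detour, i.e.\ whenever $\gamma$ does not have minimal quasiperiod. Because your flip always exchanges two edges for two, it preserves the quasiperiod $T$, so no sequence of your flips can connect a quasicycle of minimal length $\lVert v\rVert_1$ to a longer one. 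The connectedness step therefore fails as stated, and no amount of care in the ``extremal argument'' can rescue it without enlarging the move set.

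The repair is to add a second move: exchange three sides of a unit square lying on $\gamma$ for the fourth side, which changes $T$ by $\pm 2$. The invariance of $\sgn(\gamma)\cdot(-1)^{\phi(\gamma)}$ still holds for this move, but by a different mechanism than the one you checked: the Kasteleyn edge-product still flips sign (one or three of the square's edges are negative), yet now the factor $(-1)^{T/2+1}$ \emph{also} flips because $T$ changes by $2$, so $\sgn(\gamma)$ is unchanged; correspondingly, one of $\gamma_+,\gamma_-$ is untouched by the move, so $\phi(\gamma)$ is unchanged. This is precisely the case analysis of the paper's Lemma \ref{sgnvcycdiff} (its case (1) is the three-for-one exchange, its case (2) is your two-for-two exchange), and the paper's Lemma \ref{cycconec} organizes the connectedness by first driving an arbitrary quasicycle down to minimal length using the length-decreasing move, then sweeping one minimal-length quasicycle onto the other using the length-preserving move.
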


For the proof of Proposition \ref{sgnvcyc}, we will need two lemmas.

For any simple quasicycle $\gamma$, define $V_+(\gamma) = \R^2_+(\gamma) \cap \Z^2$, the set of all vertices of $\Z^2$ that lie in $\R^2_+(\gamma)$, and similarly for $V_-(\gamma)$.
Notice that when $\gamma$ is a $v$-quasicycle, $V_+(\gamma)$ and $V_-(\gamma)$ are invariant under translation by $v$.

Now, suppose $\gamma_0, \gamma_1$ are both simple $v$-quasicyles\footnote{Note that the same $v$ applies to both $\gamma$'s, and this also implies they are oriented the same way.}.
Consider the set $V(\gamma_0,\gamma_1)$ of vertices in $\Z^2$ that are to the left of one $\gamma$ but to the right of the other; in other words, the set $V(\gamma_0,\gamma_1) = V_+(\gamma_0) \Delta V_+(\gamma_1) = V_-(\gamma_0) \Delta V_-(\gamma_1)$.
Since the $V_{\pm}(\gamma_i)$ are invariant under translation by $v$, so too is $V(\gamma_0,\gamma_1)$; this means that for each $u \in V(\gamma_0,\gamma_1)$, there are infinitely many copies of $u$ in $V(\gamma_0,\gamma_1)$.
We may thus take the quotient $\widetilde{V}(\gamma_0,\gamma_1) = V(\gamma_0,\gamma_1)/\langle v\rangle$.

\begin{lema}\label{sgnvcycdiff}
Let $L$ be a valid lattice and $v \in L$ be short.
If $\gamma_0, \gamma_1$ are simple $v$-quasicycles and $\card\big(\widetilde{V}(\gamma_0,\gamma_1)\big) = 1$, then
\begin{equation*}
\frac{\sgn(\gamma_0)}{\sgn(\gamma_1)} = (-1)^{^{\textstyle \varphi(\gamma_0) - \varphi(\gamma_1)}}.
\end{equation*}

In other words, when $\card\big(\widetilde{V}(\gamma_0,\gamma_1)\big) = 1$, the statement of Proposition \ref{sgnvcyc} holds.
\end{lema}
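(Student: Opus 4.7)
The plan is to exploit the hypothesis $\card(\widetilde{V}(\gamma_0,\gamma_1)) = 1$ to reduce everything to a local computation around one $\langle v \rangle$-orbit. Specifically, if $[u]$ is the sole class, then $\gamma_0$ and $\gamma_1$ agree on every edge of $G(\Z^2)$ not incident to some translate $u+nv$, and within one period they differ only on the four edges of $G(\Z^2)$ bounding the unit square centered at $u$. Since both quasicycles are simple and share the parameter $v$, they enter and leave this neighborhood at the same two vertices, so exactly one of two cases occurs: \textbf{(A)} each path uses two adjacent edges of the square, and $T_0 = T_1$; or \textbf{(B)} one path uses a single edge while the other uses the remaining three, and $|T_0 - T_1| = 2$.

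I would first compute $\sgn(\gamma_0)/\sgn(\gamma_1)$. Outside the local region the Kasteleyn-sign contributions cancel termwise. Writing $s_1, s_2, s_3, s_4$ for the Kasteleyn signs of the four edges around $u$, the Kasteleyn identity $s_1 s_2 s_3 s_4 = -1$ applied to this unit square of $G(\Z^2)$ gives: in case (A), the factors $(-1)^{T/2+1}$ cancel, and the ratio of edge-products is $(s_i s_j)/(s_k s_l) = -1$; in case (B), the quasiperiod-parity factor contributes $-1$, and the edge-product ratio $s_i s_j s_k / s_l = -1$ contributes another $-1$, for a total of $+1$.

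For the pseudo-flux, pick a base vertex $w$ in $(\gamma_0)_+ \cap (\gamma_1)_+$ far from every copy of $u$. The edge-paths $(\gamma_0)_+^w$ and $(\gamma_1)_+^w$ agree outside a small neighborhood of $u$, and the closed loop they form together encloses $u$ once. Computing $h((\gamma_0)_+) - h((\gamma_1)_+)$ by summing height changes around this loop --- using the intrinsic rule that each edge contributes $\pm 1$ if it lies along a domino of the relevant quasicycle and $\pm 3$ if it crosses one, with sign fixed by the color orientation via $\Phi$ --- produces an integer that one can read off case by case. The calculation shows $\phi(\gamma_0) - \phi(\gamma_1)$ is odd in case (A) and even in case (B), which matches the sign-ratios computed above, completing the proof.

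The principal obstacle is the bookkeeping in the pseudo-flux step, because the same geometric edge of $\Z^2$ lying in the loop may be ``along'' a domino of $\gamma_0$ yet ``across'' a domino of $\gamma_1$, forcing distinct $\pm 1$ versus $\pm 3$ contributions to the two height changes; moreover, the local edges of $(\gamma_{i})_+^w$ near $u$ have to be read off from the left-side tracing of each $\gamma_i$, which differs in the two cases. Once the local picture is drawn out explicitly, using $\Phi$ to fix orientations at $u$, the required parities emerge from a direct case check.
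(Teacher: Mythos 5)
Your proposal is correct and follows essentially the same route as the paper's proof: the same two local cases at the unit square of $G(\Z^2)$ centered on the single class in $\widetilde{V}(\gamma_0,\gamma_1)$, the same use of the Kasteleyn relation $s_1s_2s_3s_4=-1$ combined with the quasiperiod parity $(-1)^{T/2+1}$ to get the sign ratio ($-1$ in your case (A), $+1$ in your case (B)), and a local comparison of fitting-path height changes for the pseudo-flux, with the correct claimed parities. One remark on the step you flag as the main obstacle: no $\pm 3$ bookkeeping is needed, because $h\big((\gamma_i)_\pm\big)$ is by definition computed along a path that no domino of $\gamma_i$ crosses (so every edge contributes $\pm 1$); in case (B) one of the two fitting paths is literally identical for $\gamma_0$ and $\gamma_1$, giving $\phi(\gamma_0)=\phi(\gamma_1)$ at once, while in case (A) the two detours around a unit square of $\Z^2$ have height changes $+2$ and $-2$, giving $\phi(\gamma_0)-\phi(\gamma_1)=\pm 1$ --- and note that the loop in case (B) does not actually enclose $u$ (it is either trivial or bounds a $2\times 1$ strip with $u$ on its boundary).
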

Notice $\card\big(\widetilde{V}(\gamma_0,\gamma_1)\big)=0$ if and only if $\gamma_0 = \gamma_1$.
\begin{proof}
Choose a vertex $w \in \big(\Z+\frac12\big)^2$ that belongs to both $\gamma_0$ and $\gamma_1$.
For each $i\in\{0,1\}$, consider the segment $\gamma_i^w$ of $\gamma_i$ that joins $w$ to $w+v$.
They coincide except round the boundary of a square $S$ with vertices in $\big(\Z+\frac12\big)^2$, which represents the equivalence class in $\widetilde{V}(\gamma_0,\gamma_1)$.
There are two cases:
\begin{enumerate}
	\item Three of the edges of $S$ belong to a $\gamma_i^w$, the other edge belongs to the other $\gamma_j^w$;
	\item Two of the edges of $S$ belong to a $\gamma_i^w$, the other two edges belong to the other $\gamma_j^w$.
\end{enumerate}

\begin{figure}[H]
		\centering
		\def\svgwidth{0.525\columnwidth}
    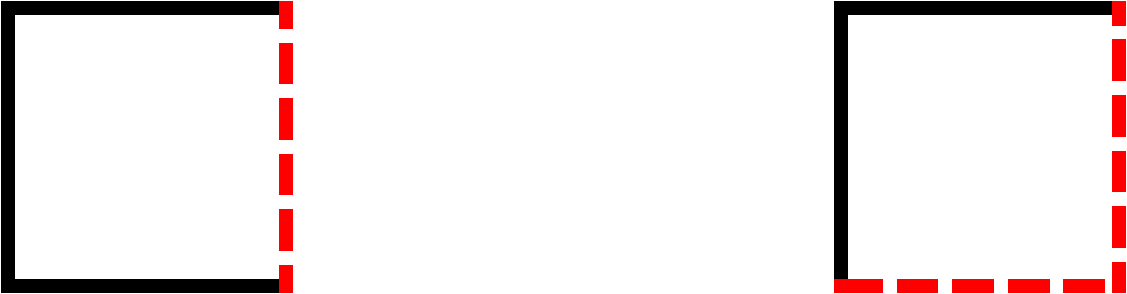
		\caption{An example of each case. $\gamma_i^w$ is represented by black edges and $\gamma_j^w$ by dashed, red edges.}
\end{figure}

Recall that either one or three of the edges of $S$ are negatively signed, so the product of Kasteleyn signs in each $\sgn(\gamma_i)$ is always different.
It follows that $\tfrac{\sgn(\gamma_0)}{\sgn(\gamma_1)}$ is defined entirely by $T_0$ and $T_1$.

In case (1), suppose without loss of generality three of the edges of $S$ belong to $\gamma_0^w$.
Then in the obvious notation, the quasiperiods satisfy $T_0 = T_1 + 2$.
This means the signs $(-1)^{\frac{1}{2}\cdot T_i + 1}$ in each $\sgn(\gamma_i)$ are also different, so $\sgn(\gamma_0)=\sgn(\gamma_1)$.

On the other hand, in this case there is a sign $x \in \{+,-\}$ such that ${(\gamma_0)}_x$ and ${(\gamma_1)}_x$ coincide.
For instance, in our previous example ${(\gamma_0)}_- = {(\gamma_1)}_-$.
It follows that $\phi(\gamma_0) = \frac14 h\big({(\gamma_0)}_x\big) = \frac14 h\big({(\gamma_1)}_x\big)\vphantom{^{^{^{^{}}}}} = \phi(\gamma_1)$, so $(-1)^{\phi(\gamma_0) - \phi(\gamma_1)} = 1$ and the proof for this case is complete.

Suppose now we are in case (2).
Clearly $T_0 = T_1$, implying the signs $(-1)^{\frac{1}{2}\cdot T_i + 1}$ in each $\sgn(\gamma_i)$ are the same, so $\sgn(\gamma_0)=-\sgn(\gamma_1)$.

On the other hand, in this case for each sign $x \in \{+,-\}$, ${(\gamma_0)}_x$ and ${(\gamma_1)}_x$ coincide except round the boundary of a square $S_x$ and its translations by $v$.
Along the segments on $S_x$, the height change for a ${(\gamma_i)}_x$ is $+2$ while the height change for the other ${(\gamma_j)}_x$ is $-2$, so $\phi(\gamma_0) - \phi(\gamma_1) = \frac14 \cdot \big(h({\gamma_0}_x)-h({\gamma_1}_x)\big)$ is either $+1$ or $-1$.
Regardless of the situation, $(-1)^{\phi(\gamma_0) - \phi(\gamma_1)} = -1$ as desired.$\vphantom{^{^{^{^{}}}}}$
\end{proof}

Let $\gamma$ be an oriented edge-path, and suppose no two of its consecutive edges coincide (except for orientation).
For such a path $\gamma$, we now describe the construction of an \textit{argument function}\label{def:argfunction} defined over its edges.

We choose $\gamma$'s initial edge $e_0$ as a base edge, and assign the choice of base value 0 to it; in other words, $\arg(e_0) = 0$.
For other edges, $\arg$ is given recursively by $\arg(e_{j+1}) = \arg(e_j) + \alpha_j$, where $\alpha_j \in \R$ is the angle with smallest modulus such that rotation by $\alpha_j$ round $e_j$'s starting point results in an edge that is parallel and identically oriented to $e_{j+1}$.
Notice the condition we imposed guarantees $\alpha_j$ is always well-defined (there is never a choice between $\pi$ or $-\pi$), so $\arg$ is too.

\begin{lema}\label{cycconec}
Let $L$ be a valid lattice and $v \in L$ be short.
Let $\gamma_0, \gamma_1$ be distinct simple $v$-quasicycles.
There is a finite sequence of simple $v$-quasicyles $(\beta_k)_{k=0}^n$ with $\beta_0 = \gamma_0$, $\beta_n = \gamma_1$, and such that for all $0 \leq k < n$ it holds that $\card\big(\widetilde{V}(\beta_k,\beta_{k+1})\big)=1$.
\end{lema}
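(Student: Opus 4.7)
The plan is to induct on $n = \card\bigl(\widetilde{V}(\gamma_0,\gamma_1)\bigr)$. First I would verify that this cardinality is always finite: projecting to the cylinder $C_v = \R^2/\langle v\rangle$, each $\gamma_i$ descends to a simple closed curve $\bar\gamma_i$ in the non-trivial homology class, and the symmetric difference $V_+(\gamma_0)\,\triangle\, V_+(\gamma_1)$ corresponds to a union of compact regions of $C_v$ lying "between" $\bar\gamma_0$ and $\bar\gamma_1$. These regions contain only finitely many lattice points.

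The base cases are immediate: if $n=0$ then $\gamma_0 = \gamma_1$, and if $n=1$ the sequence $(\gamma_0,\gamma_1)$ itself works. For the inductive step with $n \geq 2$, the goal is to construct a simple $v$-quasicycle $\beta$ together with an equivalence class $[u] \in \widetilde{V}(\gamma_0,\gamma_1)$ such that $\widetilde{V}(\gamma_0,\beta) = \{[u]\}$ and $\widetilde{V}(\beta,\gamma_1) = \widetilde{V}(\gamma_0,\gamma_1)\setminus\{[u]\}$; then the induction hypothesis supplies a chain from $\beta$ to $\gamma_1$, which prepended with $\gamma_0$ completes the sequence. To build $\beta$, I would pick an extremal representative $u \in V(\gamma_0,\gamma_1)$ whose unit square $S_u \subset G(\Z^2)$ shares at least one edge with $\gamma_0$ — such a $u$ exists because the image of $V$ in $C_v$ is nonempty, bounded, and has $\bar\gamma_0$ in its topological boundary. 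Then $\beta$ is obtained from $\gamma_0$ by a single "hook/unhook" or "corner flip" move at $S_u$, performed $\langle v\rangle$-equivariantly on every translate of $S_u$, precisely matching the two configurations analyzed in Lemma~\ref{sgnvcycdiff}.

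The main obstacle is ensuring that some extremal $u$ actually admits a local move preserving simplicity. Because $\gamma_0$ is simple, it uses either $0$, $2$ (adjacent), $2$ (opposite), or $3$ of the four edges of $S_u$; the $2$-adjacent and $3$-edge configurations yield the desired corner-flip or hook-retraction directly, but a "$2$-opposite" configuration (a vertex wedged between two parallel strands of $\gamma_0$) admits no single-square move. The delicate step is to show that within each connected component of $V$ meeting $\gamma_0$ there must exist a vertex in one of the admissible configurations — essentially, the component has a "convex corner" along $\gamma_0$. Once such a $u$ is selected, verifying that $\beta$ is a genuine simple $v$-quasicycle (the $v$-periodicity is inherited from the equivariance of the move, and no new self-intersections appear because the modification is strictly local to the $\langle v\rangle$-orbit of $S_u$ and the admissible configurations are chosen so that the replacement edges were unused by $\gamma_0$) is routine.
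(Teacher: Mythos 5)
Your overall strategy---induct on $\card\big(\widetilde{V}(\gamma_0,\gamma_1)\big)$ and peel off one class at a time by a local move along $\gamma_0$---is plausible, but the argument has a genuine gap exactly where you flag ``the delicate step,'' and the case analysis feeding into it is wrong. A simple curve can use exactly \emph{one} edge of $S_u$; this is the generic configuration (it occurs at every vertex flanking a straight stretch of $\gamma_0$), and your list ``$0$, $2$ (adjacent), $2$ (opposite), or $3$'' omits it. The omission matters. Take $L$ generated by $(4,0)$ and $(1,1)$, $v=(4,0)$, let $\gamma_0$ be the straight horizontal quasicycle through $\big(\tfrac12,\tfrac12\big)$, and let $\gamma_1$ be the bump that goes up two, right two, down two, right two. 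Then $\widetilde{V}(\gamma_0,\gamma_1)$ has four classes, only $(1,1)$ and $(2,1)$ are adjacent to $\gamma_0$, and each of their squares shares exactly one edge with $\gamma_0$: no corner flip or hook retraction is available anywhere, and the ``convex corner along $\gamma_0$'' you postulate does not exist. The only possible move is the length-increasing bump-out (replace one edge of $S_u$ by the other three), and that move is not always simplicity-preserving: it inserts two new vertices of $\big(\Z+\tfrac12\big)^2$ into the path, and $\gamma_0$ may already pass through one of them diagonally, turning at a corner of $S_u$ without using either incident side of $S_u$. So the existence, at \emph{every} stage of the induction and for an arbitrary simple $v$-quasicycle $\beta_k$, of some $u\in V(\beta_k,\gamma_1)$ admitting a safe move is precisely the content of the lemma, and it is nowhere established.

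For comparison, the paper does not attempt a monotone induction on the symmetric difference. It first reduces to the case where $\gamma_1$ has minimal quasiperiod $\lVert v\rVert_1$; then, if $\gamma_0$ is not minimal, it uses an argument function on the edges to locate a \emph{shortest} segment on which the argument takes three consecutive values, and the minimality of that segment is what guarantees the inner vertices are untouched by $\gamma_0$, so that a chain of one-square moves (each changing exactly one class of $\widetilde{V}$, but not necessarily a class of $\widetilde{V}(\gamma_0,\gamma_1)$) safely shortens the curve. Iterating reaches minimal length, and two minimal-length quasicycles are staircases for which every adjacent square is in the $2$-adjacent configuration, so corner flips finish the job. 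If you want to keep your monotone induction, you must prove the admissible-configuration claim for arbitrary simple quasicycles, including the one-edge bump-out together with its non-intersection condition; as written, that unproved claim is the whole difficulty of the lemma.
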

\begin{proof}
Observe that $\gamma$'s quasiperiod is the length of any segment in $\gamma$ joining a vertex to its translation by $v$.
The reader may find it easier to follow the proof with this interpretation, and over the course of this proof, we will refer to $\gamma$'s quasiperiod as $\gamma$'s length.

We need only prove for $\gamma_1$ with minimal length.\footnote{The minimal length is $\lVert v \rVert_1$.}
Indeed, let $\gamma_0$ and $\gamma_1$ be any two simple $v$-quasicycles, and suppose $\widetilde{\gamma}$ is a $v$-quasicycle with minimal length; clearly it is simple.
If Lemma \ref{cycconec} holds for $\gamma_0, \widetilde{\gamma}$ and for $\widetilde{\gamma}, \gamma_1$, we may combine both sequences obtained this way, so Lemma \ref{cycconec} applies to $\gamma_0$ and $\gamma_1$.
We thus assume without loss of generality that $\gamma_1$ has minimal length.

Let $\gamma$ be any simple $v$-quasicycle.
Because it is simple, no two of its consecutive edges coincide, so we may define an argument function $\arg_{\gamma}$ over its edges.
Now, it's easy to see that the following are equivalent:
\begin{itemize}
	\item $\gamma$ has minimal length;
	\item $\arg_{\gamma}$ assumes at most two values.
\end{itemize}

We divide the proof in two cases.

\paragraph{}\indent \indent \textbf{Case 1.} $\gamma_0$ also has minimal length.

Observe that $\arg_{\gamma_0}$ assumes a single value if and only if one of $v$'s coordinates is 0, that is, if and only if and $\gamma_0$ and $\gamma_1$ are parallel straight lines. 
In this situation, it is obvious the lemma holds.

Suppose then that $\arg_{\gamma_0}$ assumes two values, let $\beta_0 = \gamma_0$ and consider $\widetilde{V}(\beta_0,\gamma_1)$.
Make correspond to each vertex in $\Z^2$ the square in $\R^2$ with side length 1 centered on that vertex, so each edge of $\beta_0$ is the side of one such square.
Choose a class $w \in \widetilde{V}(\beta_0,\gamma_1)$ such that $\beta_0$ fits each of $w$'s corresponding squares on two of its sides.
Notice $\widetilde{V}(\beta_0,\gamma_1)$ is non-empty (since $\beta_0 = \gamma_0$ and $\gamma_1$ are distinct), and there's always one such $w$ because $\arg_{\beta_0}$ assumes two values.

Now, because $\arg_{\beta_0}$ does not assume three or more values, all horizontal edges of $\beta_0$ have the same orientation and all of its vertical edges also do.
This means no edge of $\beta_0$ touches any of the other two sides of each of $w$'s squares.
We may thus consider the path $\beta_1$ that coincides with $\beta_0$ except on $w$'s squares; it fits each of these squares on the other two sides.

By construction, $\beta_1$ is a simple $v$-quasicycle, it preserves the minimal length, and its argument function also assumes two values.
Moreover, it is clear $\widetilde{V}(\beta_0,\beta_1) = \{w\}$, so $\card\big(\widetilde{V}(\beta_0,\beta_1)\big)=1$.
Finally, because $w$ lies on different sides of $\beta_0$ and of $\beta_1$, and also on different sides of $\beta_0$ and of $\gamma_1$, $w$ lies on the same side of $\beta_1$ and of $\gamma_1$.
Since $\card\big(\widetilde{V}(\beta_0,\gamma_1)\big) = \card\big(\widetilde{V}(\gamma_0,\gamma_1)\big)$ is finite, this means $\card\big(\widetilde{V}(\beta_1,\gamma_1)\big)= \card\big(\widetilde{V}(\beta_0,\gamma_1)\big)-1$.

If $\card\big(\widetilde{V}(\beta_1,\gamma_1)\big) > 0$ we may repeat the process, and in fact as long as $\card\big(\widetilde{V}(\beta_k,\gamma_1)\big) > 0$ we may do so.
It's easy to verify that $\card\big(\widetilde{V}(\beta_k,\gamma_1)\big) = \card\big(\widetilde{V}(\beta_0,\gamma_1)\big) - k,$ so after a finite number $n = \card\big(\widetilde{V}(\beta_0,\gamma_1)\big)$ of steps, we have $\card\big(\widetilde{V}(\beta_n,\gamma_1)\big) = 0$, that is, $\beta_n = \gamma_1$ as desired.

\paragraph{}\indent \indent \textbf{Case 2.} $\gamma_0$ does not have minimal length.

If $\gamma_0$ does not have minimal length, $\arg_{\gamma_0}$ assumes at least three values.
This means $\gamma_0$ contains at least one segment as in the figure below:
\begin{figure}[H]
		\centering
		\def\svgwidth{0.18\columnwidth}
    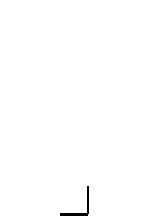
		\caption{A segment with three consecutive argument values.}\label{figarg1}
\end{figure}

Each of these segments is a sequence of consecutive edges $(e_k)_{k=1}^n$ satisfying one of the following:
\begin{equation*}
\begin{split}
	\arg_{\gamma_0}(e_1) < \arg_{\gamma_0}(e_2) = \cdots = \arg_{\gamma_0}(e_{n-1}) < \arg_{\gamma_0}(e_n)\\
	\arg_{\gamma_0}(e_1) > \arg_{\gamma_0}(e_2) = \cdots = \arg_{\gamma_0}(e_{n-1}) > \arg_{\gamma_0}(e_n)
\end{split}
\end{equation*}

For one such segment, we say its length is the number $n\geq3$.
Now, let $\delta$ be one such segment with minimal length.
Because its length is minimal, $\gamma_0$ does not touch any of the `inner vertices' near $\delta$, indicated by a square in Figure \ref{figarg1}.
We may thus consider the simple $v$-quasicycles $\beta_1, \beta_2, \cdots, \beta_{n-2}$ obtained from $\beta_0 = \gamma_0$ by changing the edges (and its translations by $v$) as shown in the following image:
\begin{figure}[H]
		\hspace{0.275cm}\def\svgwidth{\columnwidth}
    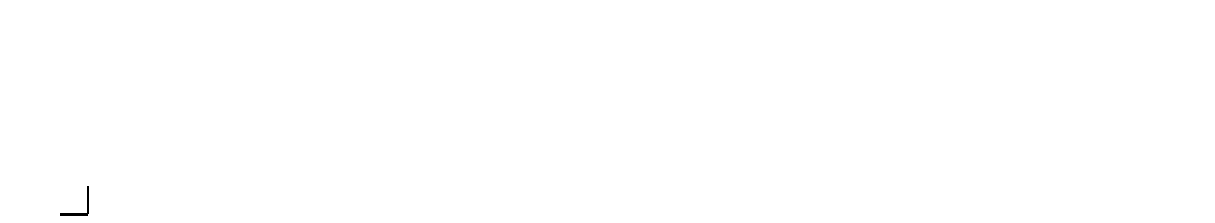
		\caption{The simple $v$-quasicycles $\beta_k$ obtained from $\beta_0 = \gamma_0$.}
\end{figure}

It is clear $\card\big(\widetilde{V}(\beta_k,\beta_{k+1})\big)=1$.
Moreover, in the obvious notation the lengths satisfy $l(\beta_{0}) = l(\beta_{1}) = \cdots = l(\beta_{n-3}) = l(\beta_{n-2}) + 2$, so the length has decreased.
If $l(\beta_{n-2})$ is not minimal, then $\arg_{\beta_{n-2}}$ assumes at least three values, and we may repeat the process.
Since $l(\beta_{0})$ is finite, this must end in a finite number of steps, producing a simple $v$-quasicycle $\beta_k$ with minimal length.
We have thus reduced it to the previous case, and the proof is complete.
\end{proof}

We are now ready to prove Proposition \ref{sgnvcyc}.

\begin{proof}[Proof of Proposition \ref{sgnvcyc}]
By Lemma \ref{sgnvcycdiff}, the Proposition holds when $\card\big(\widetilde{V}(\gamma_0,\gamma_1)\big)\leq 1$.
When $\card\big(\widetilde{V}(\gamma_0,\gamma_1)\big)> 1$, we may use Lemma \ref{cycconec} to obtain a sequence of simple $v$-quasicyles $(\beta_k)_{k=0}^n$ with $\beta_0 = \gamma_0$, $\beta_n = \gamma_1$, and such that for all $0 \leq k < n$ it holds that $\card\big(\widetilde{V}(\beta_k,\beta_{k+1})\big)=1$.
Then
$$\frac{\sgn(\gamma_0)}{\sgn(\gamma_1)} = \frac{\sgn(\beta_0)}{\sgn(\beta_n)} = \prod\limits_{0\leq k < n} \frac{\sgn(\beta_k)}{\sgn(\beta_{k+1})}$$

Applying Lemma \ref{sgnvcycdiff} to each $\beta_k, \beta_{k+1}$ yields
\begin{alignat*}{3}
\frac{\sgn(\gamma_0)}{\sgn(\gamma_1)} =& \enspace \prod\limits_{k=0}^n{(-1)^{^{\textstyle \phi (\beta_k) - \phi (\beta_{k+1})}}} \enspace &=& \enspace (-1)^{^{\textstyle \sum_{k=0}^n{\phi (\beta_k) - \phi (\beta_{k+1})}}} \\
=& \enspace \enspace (-1)^{^{\textstyle \phi (\beta_0)-\phi (\beta_n)}} &=& \enspace (-1)^{^{\textstyle \phi (\gamma_0)-\phi (\gamma_1)}},
\end{alignat*}
so we are done.
\end{proof}

The following corollary is automatic.

\begin{corolario}[Sign formula for quasicycles]\label{sgncyc}
Let $v \in L$ be short.
There is a sign $C_v \in \{-1,+1\}$ such that for any $v$-quasicycle $\gamma$ $$\sgn(\gamma) = C_v \cdot (-1)^{^{\textstyle\phi(\gamma)}}.$$
\end{corolario}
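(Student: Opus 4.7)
The corollary is largely a rearrangement of Proposition \ref{sgnvcyc}. First, since $v \in L$ is short and $L$ is valid, $v$ has coordinates of the same parity, and the $L$-periodic extension of an alternating staircase edge-path from the origin to $v$ is a simple $v$-quasicycle $\gamma^\star$, witnessing existence. Define
\[
C_v := \sgn(\gamma^\star)\cdot(-1)^{\phi(\gamma^\star)}.
\]
For any other simple $v$-quasicycle $\gamma$, Proposition \ref{sgnvcyc} gives $\sgn(\gamma)/\sgn(\gamma^\star) = (-1)^{\phi(\gamma) - \phi(\gamma^\star)}$, which rearranges to $\sgn(\gamma) = C_v \cdot (-1)^{\phi(\gamma)}$. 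This simultaneously shows $C_v$ is independent of the chosen $\gamma^\star$, so it is a genuine invariant of $v$.

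Extending the formula to non-simple $v$-quasicycles requires extending the pseudoflux $\phi$, which the excerpt only defined under simplicity. My approach is reduction to the simple case: any $v$-quasicycle $\gamma$ can be simplified via a sequence of local moves --- removing an immediate backtrack $\gamma(t-1)=\gamma(t+1)$ (which cuts the quasiperiod by $2$), and resolving a transverse self-intersection (which splits $\gamma$ into a shorter $v$-quasicycle together with a closed cycle) --- until a simple $v$-quasicycle $\tilde\gamma$ is obtained. I would then extend $\phi$ by declaring that $\sgn(\cdot)\cdot(-1)^{\phi(\cdot)}$ is preserved under every such move, using the intrinsic definition of $\phi$ on the simple end-product $\tilde\gamma$ as anchor. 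Each move's effect on $\sgn$ is readily computed: a backtrack contributes squared Kasteleyn signs (equal to $+1$) while toggling the parity factor $(-1)^{T/2+1}$, and a crossing resolution decomposes into a shorter quasicycle and a closed cycle whose sign can be tracked independently.

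The main obstacle is showing that the resulting extended $\phi$ is well-defined --- that is, that different reduction sequences from $\gamma$ to a simple quasicycle yield the same value of $(-1)^{\phi(\gamma)}$. I would approach this by verifying that any two reduction sequences differ by a finite sequence of elementary commutations (for instance, two disjoint backtracks can be removed in either order with the same net effect on $\sgn$), so the compatibility on $\sgn$ automatically guarantees compatibility on $\phi$. This bookkeeping, especially for crossing resolutions where the local configuration branches into several cases based on the four edges incident to the self-intersection vertex, is the delicate combinatorial core of the proof; once it is carried out, the corollary follows for all $v$-quasicycles by induction on the number of self-intersections of $\gamma$ in one fundamental period.
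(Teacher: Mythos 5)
Your first paragraph is exactly the paper's argument: the paper dismisses this corollary as ``automatic'' after Proposition \ref{sgnvcyc}, meaning precisely what you wrote --- fix one reference quasicycle, set $C_v$ accordingly, and divide. Two small slips there: quasicycles live on $\big(\Z+\frac12\big)^2$, not on $\Z^2$, so your witness should start at $\big(\frac12,\frac12\big)$ rather than the origin; and a strictly alternating staircase from a point to its translate by $v=(a,b)$ exists only when $\lvert a\rvert = \lvert b\rvert$ --- for general short $v$ take any monotone (e.g.\ L-shaped) path, whose $\langle v\rangle$-periodic extension is visibly injective. Neither affects the argument.

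The remaining two-thirds of your proposal, however, chases a case that lies outside the statement's effective scope. The pseudo-flux $\phi(\gamma)$ is defined in the paper only for \emph{simple} $v$-quasicycles (its construction via the fitted edge-paths $\gamma_+$, $\gamma_-$ presupposes that $\gamma$ divides the plane into two components), so the quantifier ``any $v$-quasicycle'' in the corollary can only range over quasicycles for which $\phi$ makes sense --- the simple ones. Moreover, every quasicycle to which the corollary is actually applied (in Proposition \ref{fluxsignpat}) arises as an infinite domino path of an open cycle in some $C(t_0,t_1)$, and the paper has already observed these are simple. So your proposed reduction scheme for non-simple quasicycles is solving a problem the paper never poses; and as written it would not yet constitute a proof even of that extended claim, since you explicitly defer the well-definedness of the extended $\phi$ under different reduction sequences --- the ``delicate combinatorial core'' --- rather than carry it out. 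Dropping those paragraphs leaves a complete and correct proof coinciding with the paper's.
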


\section{The effect of a cycle flip on the flux itself}\label{sec:cycfluxsgn}

We know each open cycle in $C(t,t_0)$ has the same effect on $\sgn(\varphi_t)$:
as per Corollary \ref{sgncyc}, it is given by $C_v \cdot (-1)^{\langle \varphi_t, v \rangle}$, where $v$ is $C(t,t_0)$'s parameter.
But what about the effect of on the flux itself?

\begin{prop}\label{varphicyc}
Let $L$ be a valid lattice.
Consider two different fluxes $\varphi_0, \varphi_1 \in \mathscr{F}(L)$ and let $t_i$ be any tiling of $\T_L$ with flux $\varphi_i$.
Let $v$ be $C(t_0,t_1)$'s parameter.
Then $\varphi_1 - \varphi_0 \perp v$.

Choose any open cycle $c \in C(t_0,t_1)$ and let $t_c$ be obtained from $t_0$ by a cycle flip on $c$.
Let $\varphi_c$ be its flux.
Then, in addition to $(\varphi_c - \varphi_0) \perp v$, $\varphi_c - \varphi_0$ is short in $L^*$.
In particular, $\varphi_c - \varphi_0$ is uniquely defined up to sign.
\end{prop}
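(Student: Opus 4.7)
The plan is to deduce both statements from the pseudo-flux identity \eqref{varphigammaflux} together with a topological analysis of the cycle $c$.

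For the first claim, I would pick any open cycle $c_0 \in C(t_0, t_1)$ and any infinite domino path $\gamma_0$ of $c_0$. By definition of cycles, the dominoes along $\gamma_0$ alternate between $t_0$ and $t_1$, so every other domino of $\gamma_0$ lies in each tiling: both $t_0$ and $t_1$ are compatible with $\gamma_0$. Since $\gamma_0$ is a simple $v$-quasicycle, applying \eqref{varphigammaflux} with each tiling yields $\langle \varphi_0, v \rangle = \phi(\gamma_0) = \langle \varphi_1, v \rangle$, so $\varphi_1 - \varphi_0 \perp v$.

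The same argument applied to the pair $(t_0, t_c)$ gives $\varphi_c - \varphi_0 \perp v$: the two tilings agree off $c$, so $C(t_0, t_c)$ contains $c$ as its unique nontrivial cycle, and $c$'s parameter is still $v$. Since $v$ is short in $L$, I extend it to a $\Z$-basis $\{v, u\}$ of $L$ with dual basis $\{v^*, u^*\}$ of $L^*$. The orthogonality condition confines $\varphi_c - \varphi_0$ to $L^* \cap v^\perp = \Z u^*$, so $\varphi_c - \varphi_0 = k u^*$ for some integer $k$, and it remains to prove $|k| = 1$.

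This is the step I expect to require the most care. My approach is to study $\Delta = \tfrac14(h_c - h_0)$, which takes integer values on $\Z^2$ (since $h_c$ and $h_0$ share the mod-$4$ prescription function) and is $L$-quasiperiodic with increments $\varphi_c - \varphi_0$; in particular $k = \Delta(w + u) - \Delta(w)$ for any $w$. Along any edge of $\Z^2$, both $h_0$ and $h_c$ change by $\pm 1$ or $\pm 3$, so $\Delta$ changes by $0$ or $\pm 1$, with the latter occurring precisely across edges whose two adjacent squares form a domino of the lifted cycle $\tilde c$. Topologically, $c$ is a simple closed loop on $\T_L$ (every vertex of $G(\Z^2)$ has degree $0$ or $2$ in the symmetric difference $t_0 \triangle t_c$) and is non-null-homologous since it is open, so $\T_L \setminus c$ is an open cylinder. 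Lifting, $\R^2 \setminus \tilde c$ is a disjoint union of strips indexed by $L/\Z v \cong \Z$, on each of which $\Delta$ is constant, and consecutive strips are separated by a jump of exactly $\pm 1$. Because $\{v, u\}$ is a basis of $L$, the image of $u$ in $L/\Z v$ is a generator, so translation by $u$ sends each strip to an immediately adjacent one; this forces $|k| = 1$ and hence $\varphi_c - \varphi_0 = \pm u^*$, the primitive (short) generator of $L^* \cap v^\perp$. Verifying that translation by $u$ really moves each strip exactly one step over (rather than skipping intermediate ones, which would permit $|k| > 1$) is the subtle point, and it is precisely where the basis property of $\{v, u\}$ is essential.
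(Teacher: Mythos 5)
Your proposal is correct and follows essentially the same route as the paper: the perpendicularity claims come from compatibility of both tilings with an infinite domino path of an open cycle via the pseudo-flux identity~\eqref{varphigammaflux}, and shortness comes from extending $v$ to a basis $\{v,u\}$ of $L$ (the paper's Lemma \ref{vshortbase}), observing that the lifted cycle cuts $\R^2$ into strips indexed by $L/\Z v$ on which $h_c-h_0$ is constant and jumps by $\pm4$ between consecutive strips, so that $\langle\varphi_c-\varphi_0,u\rangle=\pm1$. The ``subtle point'' you flag is handled in the paper by showing the infinite domino paths of $c$ form a single $L$-orbit, hence are exactly the translates $\gamma+ku$, which is the same transitivity fact underlying your indexing of the strips by $L/\Z v$.
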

\begin{proof}
That $C(t_0,t_1)$ contains an open cycle (and thus its parameter is well-defined) is provided by Corollary \ref{closcyc}.

For any infinite domino path of $c$, of course $t_0$ and $t_c$ are both compatible with that path, so $\langle \varphi_0 , v \rangle = \langle \varphi_c, v \rangle$, as in equation~\eqref{varphigammaflux}. 
This implies $\varphi_c - \varphi_0 \perp v$; notice this argument also applies to $t_0$ and $t_1$.
It remains to show that $\varphi_c - \varphi_0$ is short in $L^*$.

Consider the set $\Gamma(c) = \{ \text{$\gamma$ $|$ $\gamma$ is an infinite domino path of $c$}\}$.
Let $u \in L$ be such that $\{u,v\}$ is a basis for $L$; the existence of such a vertex is guaranteed by Lemma \ref{vshortbase} below.
Choose any $\gamma \in \Gamma(c)$, and for each $k \in \Z$ let $\gamma_k = \gamma + k \cdot u$.
Notice that because $c$ has parameter $v$ and $\{u,v\}$ is linearly independent, $\gamma_j = \gamma_k$ if and only if $j = k$.
Clearly, each $\gamma_k \in \Gamma(c)$.
We claim $\Gamma(c) = \{\gamma_k \text{ }|\text{ } k \in \Z\}$.
Indeed, $L$ acts transitively on $\Gamma(c)$ by translation, so for any $\widetilde{\gamma} \in \Gamma(c)$ there is some $w = a \cdot u + b \cdot v \in L$ with $\widetilde{\gamma} = \gamma + w$.
It follows that $\widetilde{\gamma} = \gamma_a + b \cdot v = \gamma_a$, because elements of $\Gamma(c)$ are invariant under translation by $v$ (they are $v$-quasicycles).

Now, as edge-paths in $G(\Z^2) = \big(\Z+\frac12\big)^2$, two distinct $\gamma_k$'s are disjoint and `parallel' with respect to $v$.
Consider then $R = \R^2 \setminus \Gamma(c)$.
Each connected component of $R$ has boundary given by exactly two consecutive $\gamma_k$'s, and each $\gamma_k$ is in the boundary of exactly two adjacent connected components of $R$.
Moreover, it is clear $\Z^2 \subset R$.

Let $R_0$ be the connected component of $R$ that contains the origin.
Let $\partial R_0 = \gamma_j \sqcup \gamma_{j+1}$.
Relabel the paths $\gamma_k := \gamma_{k-j}$, so the boundary of $R_0$ is given by $\gamma_0 \sqcup \gamma_1$.
More generally, let $R_k$ be the connected component of $R$ whose boundary is given by $\gamma_k \sqcup \gamma_{k+1}$; clearly, $R_k$ and $R_{k+1}$ are adjacent.

We now compare the height functions $h_0$ of $t_0$ and $h_c$ of $t_c$ on each $R_k \cap \Z^2$.
Remember $t_0$ and $t_c$ differ only by the $\gamma_k$'s, so they coincide on each $R_k$. 
Now, because $h_0$ and $h_c$ are both 0 at the origin, they coincide everywhere on $R_0$; in particular, $h_c - h_0$ is always $0$ on $\langle v \rangle$.

We now compute $h_c - h_0$ on $u \in L \cap R_1$.
Choose any edge-path $\beta$ in $\Z^2$ joining the origin to $u$ and that crosses $\gamma_1 = \partial R_0 \cap \partial R_1$ only once.
It is clear such a path exists, because each $R_k\cap\Z^2$ is connected by edge-paths in $\Z^2$.
The height change along $\beta$ for each of $h_0$ and $h_c$ is the same except on the edge that crosses $\gamma_1$.
Either for one that change $+3$ and for the other that change is $-1$, or these changes are $-3$ and $+1$,
depending on whether the domino of $\gamma_1$ that's crossed by $\beta$ lies in $t_0$ or in $t_c$, and on the orientation (as induced by the coloring of $\Z^2$) of the crossing edge on $\beta$.
This means $h_c - h_0$ is either $+4$ or $-4$ on $u$ (and thus on all of $R_1\cap \Z^2$).
By the same token, it's easy to see that when it is $+4$, $h_c - h_0$ is $4k$ on all of $R_k\cap \Z^2$; when it is $-4$, $h_c - h_0$ is $-4k$ on all of $R_k\cap \Z^2$.

Recall that evaluating a height function on $L$ yields information about the corresponding flux via the inner product identification.
In particular, we now know $\langle \varphi_c - \varphi_0, v \rangle = 0$ and $\langle \varphi_c - \varphi_0, u \rangle = \pm 1$.
This entirely defines $\varphi_c - \varphi_0 \in L^*$.
Moreover, for any $\varphi \in L^*$ we have $\langle \varphi , u \rangle \in \Z$, so $\varphi_c - \varphi_0$ must be short, completing the proof.
\end{proof}

The following lemma was used in the proof of Proposition \ref{varphicyc} above.

\begin{lema}\label{vshortbase}
Let $L$ be a lattice generated by linearly independent vectors $v_0, v_1 \in \Z^2$.
Let $v \in L$ be short.
Then there is some $u \in L$ such that $\{u, v\}$ is a basis for $L$.
\end{lema}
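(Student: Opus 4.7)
The plan is to reduce the statement to a familiar fact about primitive vectors in $\Z^2$ via the coordinates of $v$ in the basis $\{v_0, v_1\}$. Write $v = a v_0 + b v_1$ with $a, b \in \Z$; since $v$ spans $L$'s ambient space (together with any vector not parallel to it), the integers $a, b$ are not both zero.

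First I would show that shortness of $v$ is equivalent to $\gcd(a,b) = 1$. If $d = \gcd(a,b) \geq 2$, then $w = (a/d) v_0 + (b/d) v_1$ lies in $L$ and satisfies $v = d \cdot w$, so $s v = w \in L$ for $s = 1/d \in [0,1)$, contradicting shortness. Conversely, if $\gcd(a,b) = 1$ and $sv \in L$ for some $s \in [0,1)$, writing $sv = \alpha v_0 + \beta v_1$ with $\alpha, \beta \in \Z$ forces $\alpha = sa$ and $\beta = sb$; since $\gcd(a,b) = 1$, integrality of both $sa$ and $sb$ forces $s \in \Z$, so $s = 0$.

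Next, by Bezout's lemma, there exist integers $c, d \in \Z$ with $ad - bc = 1$. I take $u = c v_0 + d v_1 \in L$ and claim $\{v, u\}$ is a basis of $L$. The change-of-basis matrix from $\{v_0, v_1\}$ to $\{v, u\}$ is
\[
M = \begin{pmatrix} a & b \\ c & d \end{pmatrix},
\]
which has determinant $1$, hence lies in $\mathrm{GL}_2(\Z)$. Consequently $M$ is invertible over $\Z$, and any element of $L$, being an integer combination of $v_0, v_1$, is also an integer combination of $v, u$. Since $v$ and $u$ are linearly independent (again because $\det M \neq 0$), $\{v, u\}$ is a basis of $L$ as required.

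The only mildly delicate step is the equivalence between shortness and coprimality of coordinates; everything else is routine linear algebra over $\Z$. I expect no real obstacle.
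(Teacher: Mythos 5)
Your proof is correct and takes essentially the same route as the paper's: both identify shortness of $v = a v_0 + b v_1$ with $\gcd(a,b) = 1$ and then use Bezout to produce a $u$ whose coordinate matrix together with $v$'s is unimodular. The only cosmetic differences are that the paper writes out the inverse combinations (expressing $v_0$ and $v_1$ as integer combinations of $v$ and $u$) where you invoke invertibility in $\mathrm{GL}_2(\Z)$, and that you supply the proof of the shortness--coprimality equivalence which the paper merely asserts.
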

\begin{proof}
Observe that $v = a \cdot v_0 + b \cdot v_1 \in L$ is short if and only if $\gcd(a,b) = 1$.
In this case, there are integers $k_a,k_b \in \Z$ with $k_a \cdot a + k_b \cdot b = 1$.
Let $u \in L$ be the vector $-k_b \cdot v_0 + k_a \cdot v_1$.
It's easy to check that $k_a \cdot v - b \cdot u = v_0$ and $k_b \cdot v + a\cdot u = v_1$, so $\{u, v\}$ generates $L$ and is thus a basis for it.
\end{proof}

We now show there is a set of two `short moves' that connects $\mathscr{F}(L)$.
We will use it to describe a `sign pattern' for fluxes in $\mathscr{F}(L)$ via Proposition \ref{varphicyc}.

Recall that for any two fluxes in $\mathscr{F}(L)$, their difference is an element of $L^*$ (indeed, as per Proposition \ref{hdelmeio}, $L^\#$ is a translation of $L^*$).
We say a basis $\{v_0^*, v_1^*\}$ for $L^*$ is \textit{flux-connecting}\label{def:fluxconnecbasis} if given any two fluxes $\varphi, \widetilde{\varphi} \in \mathscr{F}(L)$ there is a sequence of fluxes $(\varphi_k)_{k=0}^n$ with $\varphi_k \in \mathscr{F}(L)$ for all $0 \leq k \leq n$, $\varphi_0 = \varphi$, $\varphi_n = \widetilde{\varphi}$ and such that $\varphi_{k+1} - \varphi_k = \pm v_i^*$ for all $0 \leq k < n$.
In other words, the moves $\pm v_0^*$ and $\pm v_1^*$ connect $\mathscr{F}(L)$.

\begin{lema}\label{fluxonecbasis}
For any valid lattice $L$, $L^*$ admits a flux-connecting basis.
\end{lema}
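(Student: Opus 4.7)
The plan is to build the basis in two stages: first choose a convenient short basis $\{v_0^*, v_1^*\}$ of $L^*$ via duality from a short basis of $L$, then verify that the moves $\pm v_0^*, \pm v_1^*$ connect $\mathscr{F}(L)$ using the cycle-flip machinery of Section \ref{sec:cyc}.

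First, I would take $\{v_0, v_1\}$ to be a Minkowski-reduced basis of $L$, so both $v_0$ and $v_1$ are short in $L$, and let $\{v_0^*, v_1^*\}$ denote the dual basis in $L^*$. By definition of the dual basis $v_j^*$ is orthogonal to $v_i$ whenever $i \neq j$, and a short argument shows that $\pm v_j^*$ is actually the unique short vector in $L^*$ perpendicular to $v_i$: any $w \in L^*$ with $\langle w, v_i\rangle = 0$ must be of the form $a v_j^*$ with $a \in \Z$, and shortness forces $a = \pm 1$. Proposition \ref{varphicyc} then shows that a single cycle flip on an open cycle with parameter $v_i$ changes the flux by precisely $\pm v_j^*$.

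Next, I would show connectivity. Given distinct $\varphi_0, \varphi_1 \in \mathscr{F}(L)$, pick tilings $t_0, t_1$ realizing them; by the discussion preceding Proposition \ref{varphicyc}, the cycle decomposition $C(t_0, t_1)$ has a well-defined short parameter $v \in L$, and flipping the open cycles one by one produces a sequence $\varphi_0 = \psi_0, \psi_1, \dots, \psi_n = \varphi_1$ in which each $\psi_{k+1} - \psi_k = \pm v^*$ for the short $v^* \in L^*$ perpendicular to $v$. Since $v$ is short in $L$, it appears in some basis $\{v, u\}$ of $L$ (Lemma \ref{vshortbase}); writing $v = a v_0 + b v_1$ with $\gcd(a,b) = 1$ and a companion expression for $u$, one relates $v^*$ to the dual basis, so that $\pm v^* = \pm(\alpha v_0^* + \beta v_1^*)$ for concrete integers $\alpha, \beta$. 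A single $\pm v^*$-move can then be decomposed into $|\alpha|$ moves of $\pm v_0^*$ and $|\beta|$ moves of $\pm v_1^*$, provided all intermediate fluxes remain in $\mathscr{F}(L) = L^\# \cap Q$.

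The main obstacle is keeping the intermediate fluxes inside $Q$. Since $Q$ is convex and the Minkowski-reduced basis $\{v_0^*, v_1^*\}$ consists of the two shortest directions in $L^*$, the standard staircase path in $L^\#$ hugging the straight segment between $\psi_k$ and $\psi_{k+1}$ should remain in $Q$; convexity combined with Proposition \ref{hdelmeio} (which places the corners $\pm(\tfrac12, 0), \pm(0, \tfrac12)$ in $L^\#$, pinning down that $L^\#$ is well-distributed relative to $\partial Q$) is what makes this work. A potential subtlety arises for fluxes in $\partial Q$, where tilings consist rigidly of doubly-infinite staircases (Proposition \ref{escfrontq}) and the usual cycle flips are unavailable; here the stairflip translations of Proposition \ref{frontqvec} should supply exactly the missing moves, since $\pm\delta_k$ is itself a lattice translation in $L^\#$ expressible in the chosen basis.
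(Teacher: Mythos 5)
Your reduction to cycle flips does not address the actual content of the lemma. A flux-connecting basis is defined purely in terms of the finite point set $\mathscr{F}(L)=L^{\#}\cap Q$: one must exhibit two fixed vectors $v_0^*,v_1^*$ such that \emph{any} two points of this set are joined by a staircase of $\pm v_i^*$-steps all of whose intermediate points again lie in $L^{\#}\cap Q$. Your cycle-flip argument (via Proposition \ref{varphicyc}) only produces, for each pair $\varphi_0,\varphi_1$, a path whose steps are all $\pm v^*$ for a single short direction $v^*$ depending on the pair; to land in the fixed basis you must then decompose each $\pm v^*$-move into $\lvert\alpha\rvert$ moves of $\pm v_0^*$ and $\lvert\beta\rvert$ moves of $\pm v_1^*$. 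Whether the intermediate points of that decomposition stay inside $Q$ is exactly the lemma, and the sentence ``the standard staircase path \dots should remain in $Q$; convexity \dots is what makes this work'' is an assertion, not an argument. Convexity of $Q$ does not save you: a staircase in an arbitrarily oriented basis between two points near $\partial Q$ (and $\mathscr{F}(L)$ always meets $\partial Q$, by Proposition \ref{hdelmeio}) can step outside the diamond, and nothing in the proposal ties the Minkowski-reduced dual basis to the orientation of the sides $Q_k$. There is also no reason the needed intermediate points of $L^{\#}$ inside $Q$ even exist for that basis.

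The paper's proof is a direct geometric construction that builds the basis out of the diamond itself rather than out of lattice reduction: it slices $\mathscr{F}(L)$ into lines $l_0,l_1,\dots$ parallel to the side $Q_2$ (which contains the brick-wall fluxes $b_{\mathcal{N}},b_{\mathcal{W}}$), and takes $v_0^*=f_0-f_1$ and $v_1^*=b_{\mathcal{W}}-f_1$ where $f_0,f_1$ are the lattice points on $l_0,l_1$ closest to $b_{\mathcal{W}}$. Both basis vectors cross between adjacent lines, and their difference realizes the minimal step along a line; connectivity is then checked line-by-line, with minimality of the choice guaranteeing no point of $\mathscr{F}(L)$ is skipped. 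If you want to salvage your approach, you would need to prove that your chosen basis has this ``adjacent lines, minimal step'' property with respect to some family of lines covering $\mathscr{F}(L)$ --- at which point you have reproduced the paper's argument and the cycle-flip preamble is superfluous. (A minor additional slip: cycle decompositions $C(t_0,t_1)$ and cycle flips are perfectly available for tilings with flux in $\partial Q$; it is ordinary flips that are forbidden there, so the appeal to Proposition \ref{frontqvec} as a substitute mechanism is not needed for the reason you give.)
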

\begin{proof}
Consider $Q_2 \subset Q$, the side of $\partial Q$ contained in the second quadrant.
We know $b_{\mathcal{N}} = \big(0,\frac12\big)$ and $b_{\mathcal{W}} = \big(-\frac12,0\big)$ are in $\mathscr{F}(L) \cap Q_2$, so every flux in $\mathscr{F}(L)$ belongs to a unique line that is parallel to $Q_2$.
We say $l_0 \supset Q_2$ is the first such line, and $l_{k+1}$ is the line just below $l_k$.
See Figure \ref{fig:qlines}.
\begin{figure}[ht]
		\centering
		\def\svgwidth{0.75\columnwidth}
    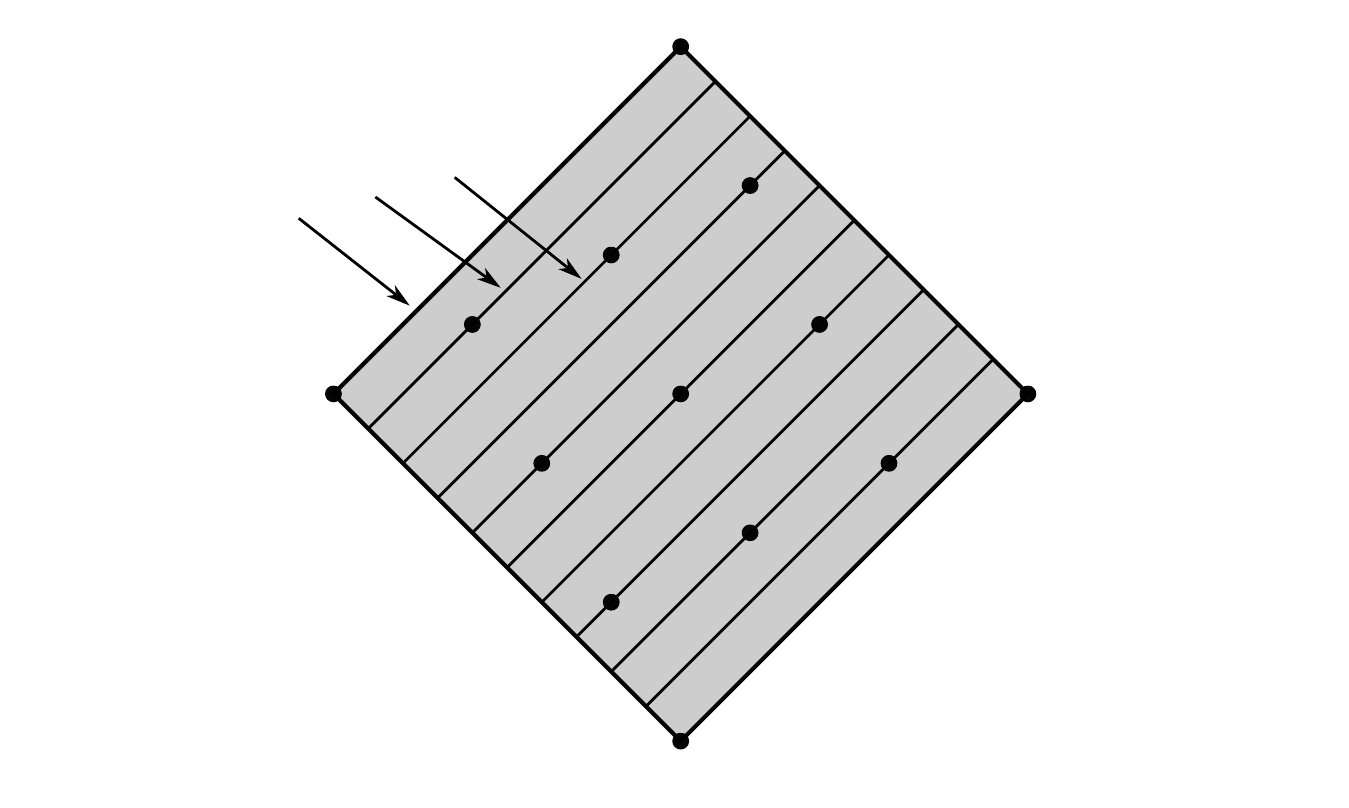
		\caption{Enumerating the lines $l_k$. Marked vertices are elements of $\mathscr{F}(L)$.}
		\label{fig:qlines}
\end{figure}

Consider the brick wall $b_{\mathcal{W}}$, and let $f_0$ be the flux in $l_0$ that is closest to it (but different from it).
Consider the line $l_1$, and let $f_1$ be the flux in it that is closest to $b_{\mathcal{W}}$.
Let $v_0^* = f_0 - f_1$ and $v_1^* = b_{\mathcal{W}} - f_1$. 
Figure \ref{fig:qbase} illustrates this construction.
We claim $\{v_0^*, v_1^*\}$ is a flux-connecting basis of $L^*$.
\begin{figure}[ht]
		\vspace{0.75cm}
		\centering
		\def\svgwidth{0.75\columnwidth}
    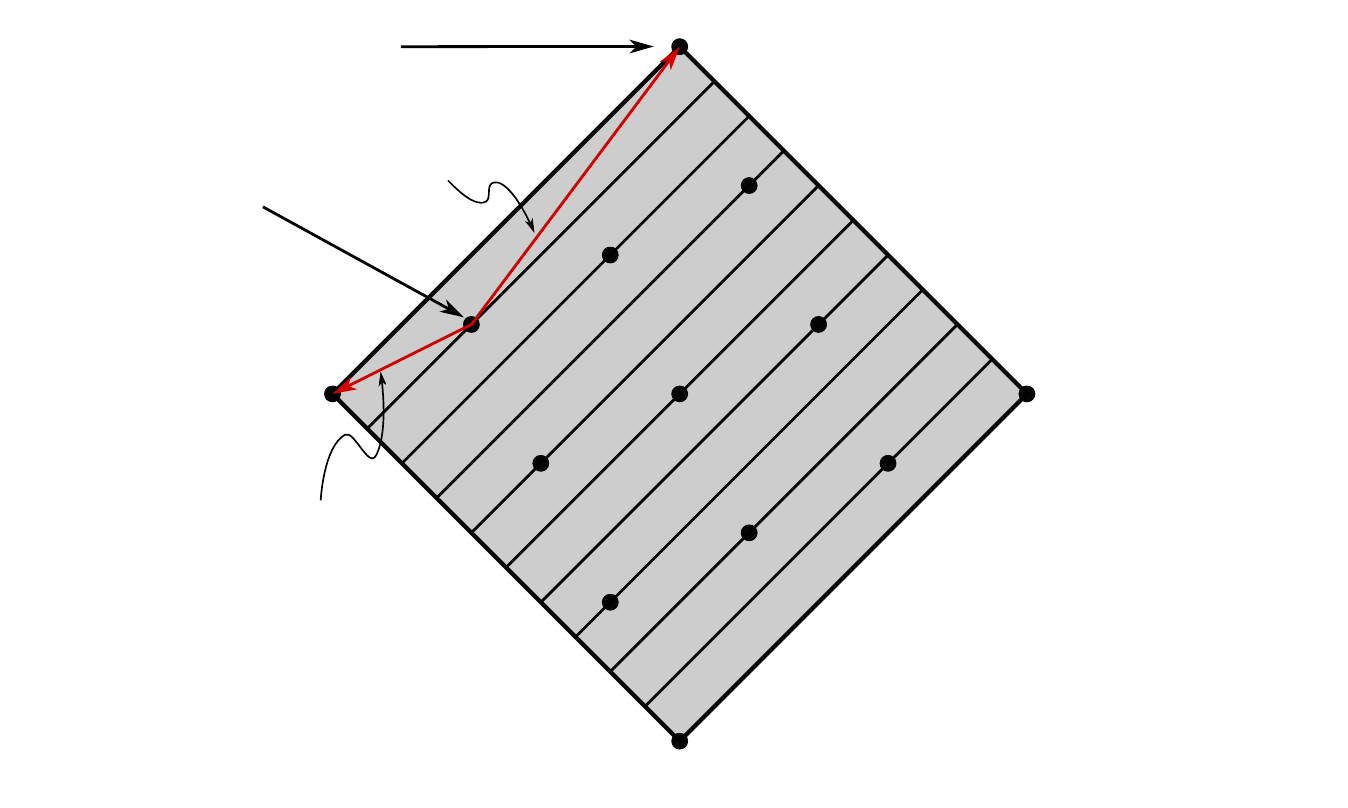
		\caption{The vectors $v_0^*$ and $v_1^*$ form a flux-connecting basis of $L^*$.}
		\label{fig:qbase}
\end{figure}

First, we show the moves $\pm v_i^*$ connect any two fluxes in one same line $l_k$.
Let $\varphi, \widetilde{\varphi} \in \mathscr{F}(L) \cap l_k$ and suppose without loss of generality $\widetilde{\varphi}$ is to the right of $\varphi$.
There must be a line $l_{k-1}$ above $l_k$ or a line $l_{k+1}$ below it (possibly both, but at least one).
In the first case, $v_0^*$ takes $\varphi_0 = \varphi$ to a flux $\varphi_1$ in $l_{k-1}$; and $-v_1^*$ takes $\varphi_1$ to a flux $\varphi_2$ back in $l_k$.
Notice $\varphi_2$ is to the right of $\varphi$, and because of how $v_0^*,v_1^*$ were chosen, there can be no flux in $l_k$ between $\varphi$ and $\varphi_2$. 
If $\varphi_2 \neq \widetilde{\varphi}$, we may repeat the process, and because $l_k \cap \mathscr{F}(L)$ is finite, it must end after a finite number of steps.

The latter case is similar: $-v_1^*$ takes $\varphi_0 = \varphi$ to a flux $\varphi_1$ in $l_{k+1}$; and $v_0^*$ takes $\varphi_1$ to a flux $\varphi_2$ back in $l_k$.
Once again, induction shows $\varphi$ and $\widetilde{\varphi}$ are connected by the moves $\pm v_i^*$.

To complete the proof, we show the moves $\pm v_i^*$ connect any two adjacent lines.
Consider the lines $l_k, l_{k+1}$.
Let $\varphi_k$ be the first flux in $l_k$, that is, the flux in $l_k$ that is closest to $Q_3$; and similarly for $\varphi_{k+1}$.
When $\varphi_k$ is to the right of $\varphi_{k+1}$, $v_0^*$ takes $\varphi_k$ to $\varphi_{k+1}$; when $\varphi_k$ is to the left of $\varphi_{k+1}$, $v_1^*$ takes $\varphi_{k+1}$ to $\varphi_k$.
The image below exemplifies the two cases.
\begin{figure}[H]
		\centering
		\def\svgwidth{0.65\columnwidth}
    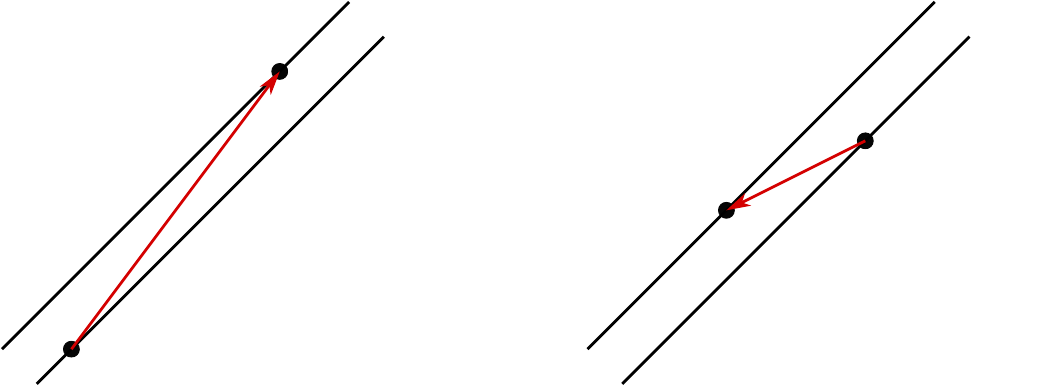
		\caption{Each $v_i^*$ connects adjacent lines in each case.}
\end{figure}

Regardless of the situation, the moves $\pm v_i^*$ connect two adjacent lines.
Similar line arguments also show $\{v_0^*,v_1^*\}$ generates $L^*$, so we are done.
\end{proof}

We are now ready to describe the aforementioned sign pattern.

\begin{prop}[Sign patterns in $\mathscr{F}(L)$]\label{fluxsignpat}
Let $L$ be a valid lattice and $\{v_0^*, v_1^*\}$ be a flux-connecting basis of $L^*$.
Decompose $\mathscr{F}(L)$ in lines parallel to $v_0^*$ and in lines parallel to $v_1^*$.
Then, along any given one of those lines, the sign change between two adjacent fluxes is always the same.
Moreover, for parallel and adjacent lines, the sign change along each line is different.
\end{prop}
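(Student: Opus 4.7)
My plan is to derive a closed-form expression for the sign ratio $\sgn(\varphi+v_i^*)/\sgn(\varphi)$ and then read off both assertions from orthogonality relations with the dual basis.

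First I would set up the dual basis. Let $\{w_0, w_1\} \subset L$ denote the basis of $L$ dual to $\{v_0^*, v_1^*\}$, so $\langle v_i^*, w_j \rangle = \delta_{ij}$. Each $w_j$ is short in $L$: a proper factorization $w_j = k w'$ with $|k| > 1$ would force $\langle v_j^*, w' \rangle = 1/k \notin \Z$, contradicting $v_j^* \in L^*$. The orthogonality identities I will use are $\langle v_i^*, w_{1-i} \rangle = 0$ and $\langle v_{1-i}^*, w_{1-i} \rangle = 1$.

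Next, for fixed $i \in \{0,1\}$ and adjacent fluxes $\varphi, \varphi + v_i^* \in \mathscr{F}(L)$, I would compute the sign ratio by reducing to a single open-cycle flip. Picking any tilings $t_\varphi, t'$ with these fluxes, Corollary \ref{closcyc} ensures $C(t_\varphi, t')$ contains an open cycle; by the discussion preceding Proposition \ref{sgnvcyc}, all open cycles share a common parameter up to sign, and Proposition \ref{varphicyc} forces this parameter to be perpendicular to $v_i^*$, hence equal to $\pm w_{1-i}$. Since every open-cycle flip changes the flux by $\pm v_i^*$ and the net change is $+v_i^*$, at least one cycle $c \in C(t_\varphi, t')$ contributes $+v_i^*$ in isolation. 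Flipping only this $c$ in $t_\varphi$ produces a tiling of flux $\varphi + v_i^*$ that differs from $t_\varphi$ by just $c$; since $t_\varphi$ is compatible with (the lift of) $c$, the pseudo-flux satisfies $\phi(c) = \langle \varphi, w_{1-i} \rangle$, and Corollary \ref{sgncyc} delivers
\[
\frac{\sgn(\varphi + v_i^*)}{\sgn(\varphi)} \;=\; \sgn(c) \;=\; C_{w_{1-i}} \cdot (-1)^{\langle \varphi, w_{1-i} \rangle}.
\]
By Proposition \ref{signfrontq}, this ratio is intrinsic to the flux pair and does not depend on the arbitrary choices of $t_\varphi$ and $t'$.

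Both conclusions then fall out from the formula. Along a line parallel to $v_i^*$, $\varphi$ shifts by multiples of $v_i^*$, and since $\langle v_i^*, w_{1-i} \rangle = 0$, the exponent $\langle \varphi, w_{1-i} \rangle$ is constant along the line, so the sign ratio is constant. Between two parallel adjacent lines, separated by $\pm v_{1-i}^*$, the same exponent shifts by $\pm \langle v_{1-i}^*, w_{1-i} \rangle = \pm 1$, so $(-1)^{\langle \varphi, w_{1-i} \rangle}$ flips sign and the sign ratios on the two lines are opposite. The main obstacle I anticipate is the reduction to a single open-cycle flip: isolating from $C(t_\varphi, t')$ a cycle whose individual flux contribution is exactly $+v_i^*$, which relies on the counting that open-cycle contributions are individually $\pm v_i^*$ and sum to $v_i^*$, hence that at least one $+v_i^*$ summand must be present.
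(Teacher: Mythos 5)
Your proof is correct and follows essentially the same route as the paper's: both reduce the sign ratio between adjacent fluxes to the sign of an open cycle via Proposition \ref{varphicyc} and Corollary \ref{sgncyc}, and both read off constancy along a line from $\langle v_i^*, w_{1-i}\rangle = 0$ and the flip between adjacent parallel lines from $\langle v_{1-i}^*, w_{1-i}\rangle = \pm 1$ (the paper phrases the latter as $\langle v_j^*, v^+\rangle = \pm 1$ via a shortness argument rather than through the explicit dual basis, which it only introduces in the remark after the proof). The one genuine difference is local: where you isolate a single open cycle contributing $+v_i^*$ and flip only it, the paper flips every open cycle in $C(t,t^+)$ and observes that their number is odd ($n^+ = n^- + 1$), so that the product of their identical signs collapses to a single $C_{v^+}\cdot(-1)^{\langle\varphi,v^+\rangle}$ --- your shortcut is valid (it rests on the same additivity of per-cycle flux contributions that the paper uses for its parity count) and slightly leaner, but it is a streamlining rather than a different method.
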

\begin{proof}
Let $i,j \in \{0,1\}$ be distinct.
We will show that if $\varphi \in \mathscr{F}(L)$ is such that $\varphi + v_i^*$ and $\varphi - v_i^*$ are in $\mathscr{F}(L)$, then
\begin{equation}\label{samesign}
\frac{\sgn(\varphi)}{\sgn(\varphi + v_i^*)} = \frac{\sgn(\varphi - v_i^*)}{\sgn(\varphi)}.
\end{equation}

This proves the claim on each given line.
Additionally, we will show that if $\varphi + v_j^*$ and $\varphi+v_i^* + v_j^*$ are in $\mathscr{F}(L)$, then
\begin{equation}\label{diffsign}
\frac{\sgn(\varphi)}{\sgn(\varphi + v_i^*)} = - \frac{\sgn(\varphi+v_j^*)}{\sgn(\varphi + v_j^*+v_i^*)}.
\end{equation}

This proves the claim on parallel and adjacent lines.

Let $t$ be a tiling of $\T_L$ with flux $\varphi$, and similarly for $t^{\pm}$ with $\varphi \pm v_i^*$, for $t_j$ with $\varphi + v_j^*$ and for $t_j^+$ with $\varphi+v_j^* + v_i^*$.
Suppose all relevant vectors are in $\mathscr{F}(L)$.
Let $v^+$ and $v^-$ be respectively the parameters of $C(t, t^+)$ and $C(t^-,t)$.
Observe that $(\varphi + v_i^*) - \varphi = \varphi - (\varphi - v_i^*) = v_i^*$, so by Proposition \ref{varphicyc} $v_i^*\perp v^+,v^-$.
Since $v^+$ and $v^-$ are both short, this implies $v^+ = v^-$ or $v^+ = - v^-$; in other words, they're equal up to multiplication by $-1$.

Inspecting the proof of Proposition \ref{varphicyc}, we see that any cycle flip on a cycle with parameter $u$ changes the flux by a vector in $L^*$ that is short and perpendicular to $u$.
In particular, for any open cycle $c$ in $C(t, t^+) \cup C(t^-,t)$, a cycle flip on $c$ changes the flux by either $v_i^*$ or $-v_i^*$ (recall that $v_i^*$ is in a basis for $L^*$, so it must be short).
This implies the number of open cycles in $C(t,t^+)$ is odd.
Indeed, the total change (from performing a cycle flip on each open cycle) is $v_i^*$, so the number $n^-$ of cycles with a $-v_i^*$ change and the number $n^+$ of cycles with a $v_i^*$ change must satisfy $n^+ = n^- + 1$.
Of course, the same holds for $C(t^-,t)$.

Now, any open cycle in $C(t, t^+) \cup C(t^-,t)$ is compatible with $t$, so by Corollary \ref{sgncyc} there is a sign $C_{v^+}$ such that for each open cycle $c \in C(t, t^+) \cup C(t^-,t)$
$$\sgn(c) = C_{v^+} \cdot (-1)^{^{\textstyle\langle \varphi, v^+ \rangle}}.$$

Since we've shown each of $C(t, t^+)$ and $C(t^-,t)$ has an odd number of open cycles, the total sign change in each case is precisely $C_{v^+} \cdot (-1)^{\langle \varphi, v^+ \rangle}$.
In particular, in each case the total sign change is the same, so we've proved equation~\eqref{samesign}.

For equation~\eqref{diffsign}, observe that $(\varphi + v_j^* + v_i^*) - (\varphi + v_j^*) = v_i^*$,
so once again the parameter $w^+$ of $C(t_j,t_j^+)$ is perpendicular to $v_i^*$ and thus equal to $v^+$ up to multiplication by $-1$ (because they're both short).
The same argument used above also shows $C(t_j,t_j^+)$ has an odd number of open cycles, and since each such cycle is compatible with $t_j$, its total sign change is given by
$$C_{v^+} \cdot (-1)^{^{\textstyle\langle \varphi + v_j^*, v^+ \rangle}} = C_{v^+} \cdot (-1)^{^{\textstyle\langle \varphi, v^+ \rangle}} \cdot (-1)^{^{\textstyle\langle v_j^*, v^+ \rangle}}.$$

We will show $\langle v_j^*, v^+ \rangle$ is either $+1$ or $-1$, from which equation~\eqref{diffsign} follows.
To that end, notice $v_j^*$ is a short element of $L^*$ (because it is in a basis).
Since $v^+$ is short in $L$, the only possible integer values for $\langle v_j^*, v^+ \rangle$ that respect $v_j^*$'s shortness are $-1$, $0$, and $+1$.
Now, $\langle v_i^*, v^+ \rangle = 0$ (because $v_i^* \perp v^+$), so if $\langle v_j^*, v^+ \rangle$ were also 0 it would contradict $\{v_i^*,v_j^*\}$ being a basis for $L^*$, for all its elements would be 0 on the sublattice generated by $v^+$.
It follows that $\langle v_j^*, v^+ \rangle$ is either $+1$ or $-1$, as desired.
\end{proof}

Notice in the proof above that since $v^+$ is unique up to multiplication by $-1$, we may choose it with $\langle v_j^*, v^+ \rangle = 1$.
In particular, since $\langle v_i^*, v^+ \rangle = 0$, we have  $\langle v_k^*, v^+ \rangle = \delta_{ik}$.
This implies the following: let $\{v_0,v_1\}$ be the basis of $L$ that is dual to the flux-connecting basis of $L^*$ $\{v_0^*,v_1^*\}$, that is, they satisfy $\langle v_k^*, v_l \rangle = \delta_{kl}$.
Let $t$, $t_0$ and $t_1$ be tilings of $\T_L$ with fluxes respectively $\varphi$, $\varphi+v_0^*$ and $\varphi + v_1^*$. Then $v_1$ is $C(t,t_0)$'s parameter and $v_0$ is $C(t,t_1)$'s parameter.

The sign pattern in Proposition \ref{fluxsignpat} may also be described as a pattern in which one odd-one-out sign is always surrounded by different signs.
\begin{figure}[H]
		\centering
		\def\svgwidth{0.95\columnwidth}
    \input{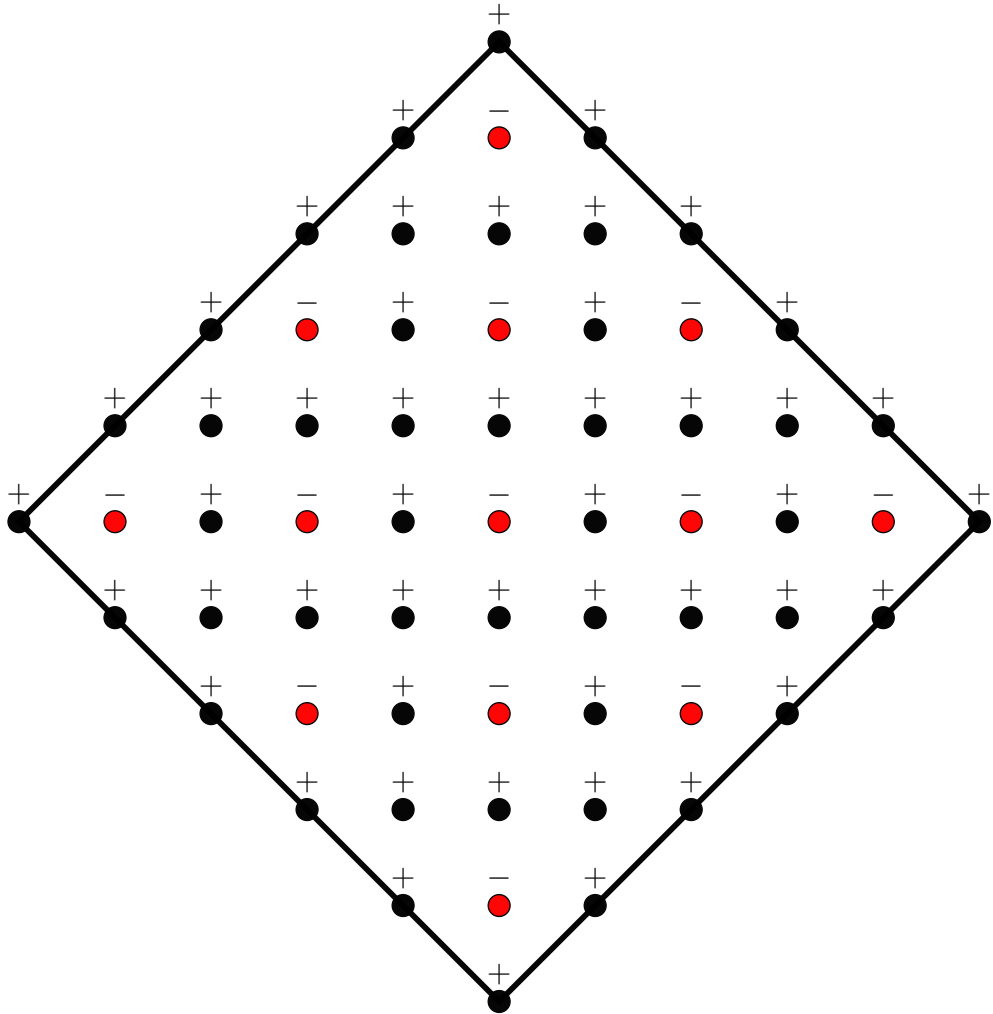}
		\caption{In this example signing, the minus sign is the odd-one-out.}
\end{figure}

Of course, which precise sign ($+1$ or $-1$) is the odd-one-out depends on the choice of Kasteleyn signing for edges of $G(\Z^2)$ and the enumeration of $D_L$'s squares, but the pattern is always the same.

Proposition \ref{fluxsignpat} is also instrumental in showing we can always obtain the total number of tilings of $\T_L$ with a linear combination of $\det\big(K(\pm 1, \pm 1)\big)$.

\begin{prop}\label{proplinearcomb}
Let $L$ be a valid lattice and $K$ a Kasteleyn matrix for $\T_L$.
Let $p_K(q_0,q_1)$ be the Laurent polynomial given by $\det(K)$.
Then there is a choice of constants $s_{00},s_{01},s_{10},s_{11} = \pm\frac{1}{2}$ such that the total number of tilings of $\T_L$ is
\begin{equation*}
s_{00}\cdot p_K(1,1) + s_{01}\cdot p_K(1,-1) + s_{10}\cdot p_K(-1,1) + s_{11}\cdot p_K(-1,-1).
\end{equation*}
\end{prop}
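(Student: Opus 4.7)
The plan is to reduce the problem to a sign-pattern computation on the coefficients of $p_K$. By Proposition \ref{signfrontq}, each nonzero monomial $c_{ij}\, q_0^i q_1^j$ in $p_K$ satisfies $\lvert c_{ij}\rvert$ = (number of tilings of $\T_L$ with the corresponding flux), so the total number of tilings equals $\sum \lvert c_{ij}\rvert = \sum \epsilon_{ij}\, c_{ij}$, where $\epsilon_{ij} = \sgn(c_{ij}) \in \{\pm 1\}$ (with $\epsilon_{ij} = 0$ when $c_{ij} = 0$). The task reduces to exhibiting $\sum \epsilon_{ij}\, c_{ij}$ as a $\pm\tfrac12$-combination of $p_K(\pm 1, \pm 1)$.

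The central step is to deduce from Proposition \ref{fluxsignpat} that there exist $\alpha, \beta, \epsilon_0 \in \{\pm 1\}$ such that
\begin{equation*}
\epsilon_{ij} = \epsilon_0 \cdot \alpha^i \beta^j \cdot (-1)^{ij}
\end{equation*}
for every $(i,j)$ indexing a flux in $\mathscr{F}(L)$. I would first establish this in the coordinates of a flux-connecting basis of $L^*$ produced by Lemma \ref{fluxonecbasis}: iterating the two conclusions of Proposition \ref{fluxsignpat} (constant sign change along each line, opposite sign changes on adjacent parallel lines) determines $\epsilon_{ij}$ from $\epsilon_{00}$ in precisely this form. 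To transfer the formula to the basis $\{v_0^*, v_1^*\}$ underlying $K$, I would use that any two bases of $L^*$ are related by a unimodular transformation and check that $\epsilon_0 \alpha^i \beta^j (-1)^{ij}$ is preserved under such changes of coordinates (the crucial arithmetic point is that $ps+qr$ is odd whenever $ps-qr = \pm 1$, so the cross-term $(-1)^{ij}$ survives, at worst redefining $\alpha, \beta, \epsilon_0$).

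The proof then concludes via the identity $(-1)^{ij} = \tfrac12\bigl[1 + (-1)^i + (-1)^j - (-1)^{i+j}\bigr]$, valid for all integers $i,j$ (a case check on parities). Substituting gives
\begin{equation*}
\sum \lvert c_{ij}\rvert = \tfrac{\epsilon_0}{2}\bigl[p_K(\alpha, \beta) + p_K(-\alpha, \beta) + p_K(\alpha, -\beta) - p_K(-\alpha, -\beta)\bigr],
\end{equation*}
which is a linear combination of the four values $p_K(\pm 1, \pm 1)$ with coefficients each in $\{\pm\tfrac12\}$, as claimed. The step I expect to be the main obstacle is the sign-pattern formula of the middle paragraph: while the odd-one-out behaviour is intuitive from Proposition \ref{fluxsignpat} and the figure in the text, producing a clean closed form valid in the Kasteleyn basis (which is not a priori the flux-connecting one) requires enough change-of-basis bookkeeping to be confident the resulting $\alpha, \beta, \epsilon_0$ are genuine signs rather than something weaker.
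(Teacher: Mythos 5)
Your proof is correct, and it rests on the same two pillars as the paper's own argument---Proposition \ref{signfrontq} to reduce the count to $\sum\lvert c_{ij}\rvert$, and Proposition \ref{fluxsignpat} to control the signs---but the endgame is executed differently. The paper never writes down a closed form for $\sgn(c_{ij})$: it only shows that the sign depends on the parity of $(i,j)$, by observing that two fluxes whose $q$-exponents agree mod $2$ differ by an element of $2L^*$ (a basis-free criterion that sidesteps exactly the change-of-basis bookkeeping you flag as the main obstacle), notes via the odd-one-out pattern that exactly three of the four parity-class signs coincide, and then obtains the $s_{ij}$ by inverting an explicit $4\times 4$ system. Your route instead extracts the full closed form $\epsilon_{ij}=\epsilon_0\,\alpha^i\beta^j(-1)^{ij}$ from the line relations of Proposition \ref{fluxsignpat}, transfers it to the Kasteleyn coordinates via the (correct) observation that $ps+qr$ is odd whenever $ps-qr=\pm 1$, and reads off the coefficients from the identity $(-1)^{ij}=\tfrac12\bigl[1+(-1)^i+(-1)^j-(-1)^{i+j}\bigr]$. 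This buys you the coefficients explicitly, namely $\pm\tfrac{\epsilon_0}{2}$ with the minus sign attached to $p_K(-\alpha,-\beta)$, at the cost of the unimodular verification; if you want to avoid that bookkeeping, the paper's $2L^*$ criterion can be grafted directly onto your argument to give parity-invariance of the sign in the $q$-exponent coordinates. One point worth making explicit in your middle step: iterating the adjacency relations of Proposition \ref{fluxsignpat} so as to reach every flux from a base flux requires that all intermediate fluxes exist, which is supplied by flux-connectedness (Lemma \ref{fluxonecbasis}) together with the fact that $\mathscr{F}(L)=L^\#\cap Q$ meets each line in a contiguous segment.
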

\begin{proof}
For the first part, we will show that for each monomial of the form $c_{ij} \cdot q_0^i q_1^j$ in the full expansion of $\det(K)$, the sign of $c_{ij}$ depends only on the parity of $i,j$.
To that end, observe that by Proposition \ref{fluxsignpat} whenever $\varphi_0, \varphi_1 \in \mathscr{F}(L)$ are such that $\varphi_0 - \varphi_1 \in 2L^*$, then $\sgn(\varphi_0) = \sgn(\varphi_1)$.

Recall our construction of the Kasteleyn matrix, using the rectangular fundamental domain $D_L$ defined by $v_0 = (x_0,0)$ and $v_1=(x_1,y_1)$.
Here, $v_0$ and $v_1$ generate $L$, $x_0,y_1 > 0$ and $0 \leq x_1 < x_0$.
We know that by construction
$$\forall \varphi \in \mathscr{F}(L), \enspace \exists i_0,i_1 \in \Z, \enspace  \forall \text{ tiling $t$ of $\T_L$ with flux $\varphi$}, \enspace  K_t = \pm q_0^{i_0}q_1^{i_1},$$
so we may speak of the $q$-exponents $(i_0,i_1)$ of $\varphi$.
Also by construction, we know that if $y_1$ is even and $\varphi$ has $q$-exponents $(i_0,i_1)$, then $\langle \varphi, v_0 \rangle = i_0$ and $\langle \varphi, v_1 \rangle = i_1$.
When $y_1$ is odd, the latter changes to $\langle \varphi, i_1 \rangle = i_1 + \frac12$.

Let $q_0^{i_0}q_1^{i_1}$ be a monomial in $p_K$ and consider any monomial $q_0^{j_0}q_1^{j_1}$ in $p_K$ such that $i_0 \equiv j_0 \Mod{2}$ and $i_1 \equiv j_1 \Mod{2}$.
Let $\varphi(i_0,i_1) \in \mathscr{F}(L)$ correspond to $q_0^{i_0}q_1^{i_1}$ and similarly for $\varphi(j_0,j_1)$.
Consider the difference $\varphi(i_0,i_1) - \varphi(j_0,j_1) \in L^*$.
Regardless of the parity of $y_1$, it satisfies
\begin{alignat*}{1}
&\langle \varphi(i_0,i_1) - \varphi(j_0,j_1) , v_0 \rangle = i_0 - j_0 \\
&\langle \varphi(i_0,i_1) - \varphi(j_0,j_1) , v_1 \rangle = i_1 - j_1
\end{alignat*}

Because $i_k$ and $j_k$ have the same parity, these numbers are both even integers, so $\varphi(i_0,i_1) - \varphi(j_0,j_1) \in 2L^*$.
It follows that $\varphi(i_0,i_1)$ and $\varphi(j_0,j_1)$ have the same sign, that is, the coefficients of $q_0^{i_0}q_1^{i_1}$ and $q_0^{j_0}q_1^{j_1}$ in $p_K$ have the same sign, so the first part is complete.

Now, for the second part, write
\begin{equation*}
\begin{split}
s_{00}\cdot p_K(1,1) + s_{01}\cdot p_K(1,-1) + s_{10}\cdot p_K(-1,1) + s_{11}\cdot p_K(-1,-1)& \\
\hfill = \sum\limits_{i,j} \lvert c_{ij} \rvert \cdot \sgn(c_{ij}) \cdot \left(s_{00} + (-1)^j\cdot s_{01} + (-1)^i\cdot s_{10} + (-1)^{i+j} \cdot s_{11}\right)&,
\end{split}
\end{equation*}
where from the first part we know $\sgn(c_{ij})$ depends only on the parity of $i,j$.
We thus need to check the system below admits a solution:
\begin{equation*}\begin{array}{c}\left\{\begin{aligned}
s_{00} + s_{01} + s_{10} + s_{11} &= \text{sign of $i,j$ even} \\
s_{00} - s_{01} + s_{10} - s_{11} &= \text{sign of $i$ even, $j$ odd} \\
s_{00} + s_{01} - s_{10} - s_{11} &= \text{sign of $i$ odd, $j$ even} \\
s_{00} - s_{01} - s_{10} + s_{11} &= \text{sign of $i,j$ odd}
\end{aligned}\right.\end{array}\end{equation*}

Notice Proposition \ref{fluxsignpat} ensures exactly three of the signs are equal.
The claim follows from observing that the matrix
\begin{equation*}A = \left(\begin{array}{rrrr}
1&1&1&1 \\
1&-1&1&-1 \\
1&1&-1&-1 \\
1&-1&-1&1 
\end{array}\right)
\end{equation*}
is invertible\footnote{We have $\det(A) = 16$.}, and that applying $A^{-1}$ to any of the eight possible sign configurations yields $s_{ij}$'s as in the statement.
\end{proof}
\chapter{Kasteleyn determinants for the torus}
\label{chap:detkast}

In Section \ref{sec:exret}, we calculated the Kasteleyn determinant for rectangles with integral sides by finding a suitable basis of eigenvectors, from which we derived the eigenvalues of the Kasteleyn matrix itself.
We will now employ similar techniques for calculating Kasteleyn determinants of tori.

Let $L$ be a valid lattice and $K$ be a Kasteleyn matrix for $\T_L$.
For all $i,j \in \Z$, let $v_{i,j} = (i,j) + (\frac12,\frac12)$, so the $v_{i,j}$ enumerate the vertices of $G(\Z^2)$.
$v_{i,j}$ is black whenever $i \equiv j \Mod{2}$, and it is white otherwise.
As we defined it, $K$ is a linear map from the space of equivalence classes of black vertices in $G(\Z^2)$ to the space of white ones, and it acts as follows:
\begin{alignat*}{3}
Kv_{i,j} =& \enspace &\text{Kast}(v_{i,j}v_{i,j+1})& \cdot v_{i,j+1} + \text{Kast}(v_{i,j}v_{i,j-1}) \cdot v_{i,j-1}\\
+& &\text{Kast}(v_{i,j}v_{i+1,j})& \cdot v_{i+1,j} + \text{Kast}(v_{i,j}v_{i-1,j}) \cdot v_{i-1,j}
\end{alignat*}

Here, $\text{Kast}(e)$ is the Kasteleyn weighting of the edge $e$, and if $p_0,p_1$ are two adjacent vertices, $p_0p_1$ indicates the edge joining them.
Observe that $\text{Kast}(e)$ does not depend on a particular choice of representative of $[e]_L$, so our dropping the braces round each $v_{i,j}$ is justified.

\section{The case of $M = KK^* \oplus K^*K$}\label{sec:casom}

Like before, we will consider the matrix $M = KK^* \oplus K^*K$ rather than $K$.
Notice $K^*$ goes from the space of equivalence classes of white vertices in $G(\Z^2)$ to the space of black ones, via:
\begin{alignat*}{3}
K^*v_{i,j} =& \enspace &\overline{\text{Kast}(v_{i,j}v_{i,j+1})}& \cdot v_{i,j+1} + \overline{\text{Kast}(v_{i,j}v_{i,j-1})} \cdot v_{i,j-1}\\
+& &\overline{\text{Kast}(v_{i,j}v_{i+1,j})}& \cdot v_{i+1,j} + \overline{\text{Kast}(v_{i,j}v_{i-1,j})} \cdot v_{i-1,j}
\end{alignat*}
Here, $\overline{\Bigcdot}$ is the complex conjugation.

The entries of $K$ are Laurent monomials in $q_0$ and $q_1$ (their exponents indicate flux value), so we may think of $K$ as a matrix $K(q_0,q_1)$.
Of course, this also means $\det(K)$ is a Laurent polynomial $p_K$ in $q_0$ and $q_1$.
Rather than tackle the problem of finding a formula for $p_K(q_0,q_1) = \det(K)$, we will find the values of $p_K$ when $q_0,q_1 \in \Sp^1 \subset \C$.
Observe this is enough to describe $p_K$.

Notice $\overline{z} = z^{-1}$ whenever $z \in \Sp^1$, so for $q_0,q_1 \in \Sp^1$ we have $K^*_{ij}=(K_{ji})^{-1}$.
In other words, for $q_0,q_1 \in \Sp^1$ we have that
\begin{alignat*}{3}
K^*v_{i,j} =& \enspace &\text{Kast}(v_{i,j}v_{i,j+1})^{^{\scriptstyle -1}}& \cdot v_{i,j+1} + \text{Kast}(v_{i,j}v_{i,j-1})^{^{\scriptstyle -1}} \cdot v_{i,j-1}\\
+& &\text{Kast}(v_{i,j}v_{i+1,j})^{^{\scriptstyle -1}}& \cdot v_{i+1,j} + \text{Kast}(v_{i,j}v_{i-1,j})^{^{\scriptstyle -1}} \cdot v_{i-1,j}
\end{alignat*}

Recall the rectangular fundamental domain $D_L \subset \R^2$ we used in the construction of $K$; its vertices are $(0,0), (x_0,0), (0,y_1)$ and $(x_0,y_1)$, where $v_0 = (x_0,0)$ and $v_1=(x_1,y_1)$ generate $L$ (and $0 \leq x_1 < x_0$).
Because translations of $D_L$ by $v_0$ and $v_1$ tile $\R^2$, we may use it to partition $\big(\Z+\frac12\big)^2$, the set of vertices on $G(\Z^2)$.
For $a,b \in \Z$, consider the sets $D(a,b)\label{def:dab} = \big(D_L + a\cdot v_0 + b \cdot v_1\big)\cap \big(\Z+\tfrac12\big)^2$.
It's easy to see that $P = \{ D(a,b) \text{ } | \text{ } a,b \in \Z\}$ is a partition of $\big(\Z+\frac12\big)^2$.

We now study transitions between adjacent $D(a,b)$'s via edge-paths in $\big(\Z+\tfrac12\big)^2$.
The diagram below represents this behavior schematically; remember the orientation induced by the coloring of $\Z^2$ on edges of its unit squares when determining the Kasteleyn weights of crossing dominoes.
\begin{figure}[H]
		\centering
		\def\svgwidth{0.975\columnwidth}
    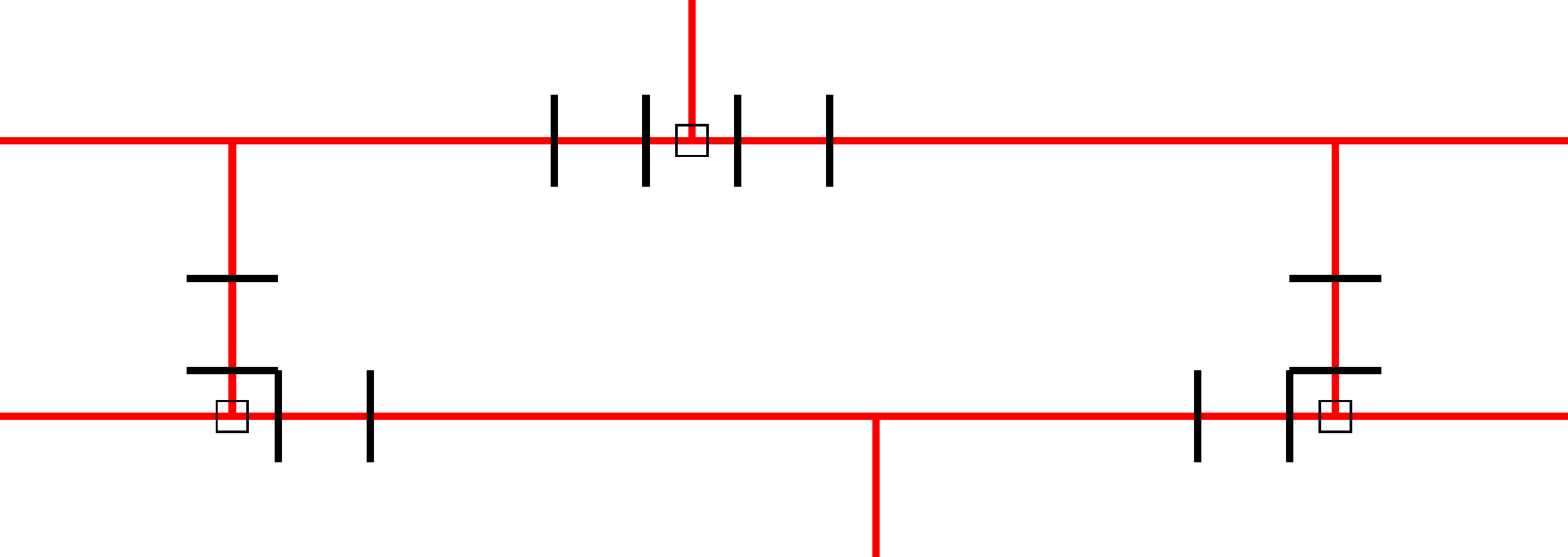
		\caption{Schematic representation of $q$-weights for crossing edges. Red rectangles are fundamental domains, square vertices are elements of $L$, and round black or white vertices are elements of $G(\Z^2)$.}
\end{figure}

More precisely, we mean that:
\begin{itemize}
	\item Edges joining a black vertex in $D(a,b)$ (respectively white) to a white vertex in $D(a,b+1)$ (respectively black) have Kasteleyn weight $\pm q_0^{-1}$ (respectively $\pm q_0$);
	\item Edges joining a black vertex in $D(a,b)$ (respectively white) to a white vertex in $D(a+1,b)$ (respectively black) have Kasteleyn weight $\pm q_1$ (respectively $\pm q_1^{-1}$);
	\item Edges joining a black vertex in $D(a,b)$ (respectively white) to a white vertex in $D(a-1,b+1)$ (respectively black) have Kasteleyn weight $\pm q_0^{-1} q_1^{-1}$ (respectively $\pm q_0 q_1$).
\end{itemize}

Remember our choice of positive orientation for dominoes: from their black square to their white square.
When we assigned Kasteleyn weights to dominoes it was done irrespective of the domino's own orientation, but $K$ naturally maps black vertices to white vertices, so we may think of it as the weight being assigned to dominoes with positive orientation.
For dominoes with negative orientation, we assign it the inverse of its weight with positive orientation.
Thus, each oriented domino has an \emph{oriented weight}\label{def:orientedweight}:
its Kasteleyn weight for dominoes oriented positively, and the inverse of its Kasteleyn weight for dominoes oriented negatively.

For adjacent vertices $p_0,p_1 \in \big(\Z+\frac12)^2$, let $p_0p_1$ the edge that joins them and is oriented from $p_0$ to $p_1$.
For an oriented edge $e$, let $o(e)$ denote its oriented weight.
We thus have that:
\begin{itemize}
	\item $\text{Kast}(p_0p_1) = \text{Kast}(p_1p_0)$, that is, Kasteleyn weight does not depend on orientation;
	\item $o(p_0p_1) = o(p_1p_0)^{-1}$, that is, reversing edge orientation inverts oriented weight;
	\item If $p_0$ is black, then $o(p_0p_1) = \text{Kast}(p_0p_1)$.
\end{itemize}

Furthermore, our previous observation may be simplified.
Let $u,v$ be adjacent vertices with $u \in D(a,b)$.
\begin{itemize}	
	\item If $v \in D(a,b+1)$, then $o(uv) = \pm q_0^{-1}$;
	\item If $v \in D(a+1,b)$, then $o(uv) = \pm q_1$;
	\item If $v \in D(a-1,b+1)$, then $o (uv) = \pm q_0^{-1} q_1^{-1}$.
\end{itemize}

Let $b_i$ be $D_L$'s $i$-th black vertex and $w_j$ be its $j$-th white vertex, as we enumerated them.
With these conventions, $K_{ij} = o(b_iw_j)$.
Moreover, observe that when $q_0,q_1 \in \Sp^1$, $K^*_{ij}=(K_{ji})^{-1}$, so $K^*_{ij} = o(b_jw_i)^{-1} = o(w_ib_j)$.
In this case, the actions of $K$ and $K^*$ can be described by essentially the same formula, below.
The `$=$' symbol draws attention to the fact that each of $K, K^*$ acts on vertices of different colors, so for any given $v_{i,j}$ only one of $K(v_{i,j}), K^*(v_{i,j})$ actually makes sense.
\begin{equation}\begin{alignedat}{3}\label{formulak}
K(v_{i,j}) \enspace \text{`$=$'} \enspace K^*(v_{i,j}) \enspace \text{`$=$'} &\vphantom{+}& \enspace &o(v_{i,j}v_{i,j+1}) \cdot v_{i,j+1} \enspace + \enspace o(v_{i,j}v_{i,j-1}) \cdot v_{i,j-1} \\
&+& &o(v_{i,j}v_{i+1,j}) \cdot v_{i+1,j} \enspace + \enspace o(v_{i,j}v_{i-1,j}) \cdot v_{i-1,j}
\end{alignedat}\end{equation}

For $a,b \in \Z$, let $\mathcal{Q}(a,b) = q_0^{-b}q_1^a$.
For $v \in \big(\Z + \frac12\big)^2$, let $\mathcal{Q}(v) = \mathcal{Q}(a_v,b_v)$, where $v \in D(a_v,b_v)$.
Recall that because $\big(\Z + \frac12\big)^2$ is partitioned by the $D(a,b)$'s, $\mathcal{Q}(v)$ is always well-defined.

\begin{prop}\label{qfund}
Let $(p_k)_{k=0}^n$ be a sequence of adjacent vertices in $\big(\Z+\frac12\big)^2$.
Then
$$\prod\limits_{k=0}^{n-1}{\lvert o(p_kp_{k+1})\rvert} = \frac{\mathcal{Q}(p_n)}{\mathcal{Q}(p_0)}.$$
In particular, the product depends only on the initial and final $D(a,b)$'s.
\end{prop}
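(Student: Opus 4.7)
The plan is to prove a single-edge identity and then telescope. Specifically, I will show that for any two adjacent vertices $p,p' \in \big(\Z+\tfrac12\big)^2$,
$$\lvert o(pp')\rvert = \frac{\mathcal{Q}(p')}{\mathcal{Q}(p)}.$$
Granting this, the full claim is immediate:
$$\prod_{k=0}^{n-1}\lvert o(p_kp_{k+1})\rvert = \prod_{k=0}^{n-1}\frac{\mathcal{Q}(p_{k+1})}{\mathcal{Q}(p_k)} = \frac{\mathcal{Q}(p_n)}{\mathcal{Q}(p_0)},$$
the intermediate factors cancelling in pairs; so the product depends only on the cells containing $p_0$ and $p_n$.

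For the single-edge identity I proceed by case analysis. Writing $p \in D(a,b)$, the unit-step neighbour $p'$ lies either in $D(a,b)$ itself or in one of the six cells $D(a,b\pm 1)$, $D(a\pm 1,b)$, $D(a\mp 1, b\pm 1)$, as catalogued in the three bullet points immediately preceding the statement (together with their orientation-reversals, for which one uses $o(p'p)=o(pp')^{-1}$). When $p'\in D(a,b)$, the edge does not cross any side of $D_L$, so $\lvert o(pp')\rvert = 1 = \mathcal{Q}(p')/\mathcal{Q}(p)$. In each of the other six cases one simply compares the monomial read off from the bullets with the explicit ratio $\mathcal{Q}(a',b')/\mathcal{Q}(a,b) = q_0^{-(b'-b)}q_1^{a'-a}$; for instance if $p'\in D(a-1,b+1)$ the bullets give $\lvert o(pp')\rvert = q_0^{-1}q_1^{-1}$, and indeed $\mathcal{Q}(a-1,b+1)/\mathcal{Q}(a,b) = q_0^{-1}q_1^{-1}$. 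The remaining five ratios check identically.

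The one point requiring genuine care — and the main obstacle, if it deserves that name — is verifying that the case list above is exhaustive: that a unit step in $\big(\Z+\tfrac12\big)^2$ cannot skip a cell or land in a cell outside the schematic. This follows from the specific shape of $D_L$ together with the inequality $0 \le x_1 < x_0$, which arrange the cells $D(a,b)$ into horizontal strips of height $y_1$, each shifted by $x_1$ relative to its neighbour below. Consequently a horizontal unit step keeps $b$ fixed and changes $a$ by $0$ or $\pm 1$, while a vertical unit step changes $b$ by $\pm 1$ and $a$ by $0$ or $\mp 1$, according to which side of the offset the step falls on. Once this geometric verification is made, the case analysis above together with the telescoping identity yields the proposition.
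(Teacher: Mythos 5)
Your proof is correct and follows essentially the same route as the paper's: both establish the single-edge identity $\lvert o(pp')\rvert = \mathcal{Q}(p')/\mathcal{Q}(p)$ from the catalogue of crossing weights and then telescope along the path. The only difference is one of detail --- you spell out the case check and the exhaustiveness of the cell transitions, which the paper dismisses as ``easy to check.''
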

\begin{proof}
Notice that whenever $u$ and $v$ are adjacent vertices in the same $D(a,b)$, then $\lvert o(uv)\rvert = 1$.
Combined with our previous (simplified) observation, it's easy to check that for any two adjacent vertices $u, v \in \big(\Z+\frac12\big)^2$ we have
$$\lvert o(uv)\rvert = \frac{\mathcal{Q}(v)}{\mathcal{Q}(u)}.$$

From this, the proposition follows.
\end{proof}

With Proposition \ref{qfund} and formula~\eqref{formulak}, we can make a good description of the action of $M = KK^* \oplus K^*K$. Notice~\eqref{formulaq} applies irrespective of $v_{i,j}$'s color.

\vspace{-0.2cm}\begin{equation}\begin{alignedat}{4}\label{formulaq}
M v_{i,j} = 4\cdot v_{i,j} \enspace &-& \enspace \frac{\mathcal{Q}(v_{i+2,j})}{\mathcal{Q}(v_{i,j})} \cdot v_{i+2,j} \enspace &-& \enspace \frac{\mathcal{Q}(v_{i-2,j})}{\mathcal{Q}(v_{i,j})} \cdot v_{i-2,j} \\
 &+& \enspace \frac{\mathcal{Q}(v_{i,j+2})}{\mathcal{Q}(v_{i,j})} \cdot v_{i,j+2} \enspace &+& \enspace \frac{\mathcal{Q}(v_{i,j-2})}{\mathcal{Q}(v_{i,j})} \cdot v_{i,j-2}
\end{alignedat}\end{equation}

The coefficient in $v_{i,j}$ comes from moving forward then backwards in each cardinal direction, so any negative edge traversed this way will account for two minus signs, and the end result is always $1$, as per Proposition \ref{qfund}.
Vertices of the form $v_{i\pm 1, j \pm 1}$ do not feature:
each such vertex can be reached from $v_{i,j}$ via exactly two distinct edge-paths, which enclose a square with vertices in $\big(\Z+\frac12\big)^2$.
By Proposition \ref{qfund}, these paths contribute with coefficients that are equal in absolute value, but the definition of Kasteleyn signing ensures they have opposite signs, so they cancel out.
The vertices $v_{i, j\pm 2}$ are reached from $v_{i,j}$ via vertical edge-paths, so they never contain a negative edge for our choice of Kasteleyn signing.
On the other hand, the vertices $v_{i\pm 2, j}$ are reached from $v_{i,j}$ via horizontal edge-paths, so they always contain exactly one negative edge for our fixed Kasteleyn signing.

We will use formula~\eqref{formulaq} to find $M$'s eigenvectors.
$M$ acts on the space of equivalence classes of vertices of $G(\Z^2)$, so a vector in that space may be thought of as a weight on each such equivalence class.
Alternatively, we may think of it as a function on the vertices of $G(\Z^2)$ that is $L$-periodic, so it coincides on each vertex of a given equivalence class.

For each $x \in \big(\R^2\big)^*$, consider the function $\zeta_x: \R^2 \longrightarrow \Sp^1$ given by $\zeta_x(v) = \exp\big(2\pi\bi \cdot x(v)\big)$.
Now, let $q_0,q_1 \in \Sp^1$ be fixed, and take any $z \in \big(\R^2\big)^*$ with $\zeta_z(v_0) = q_1^{-1}$ and $\zeta_z(v_1) = q_0$, where $v_0,v_1 \in L$ are the vectors used in the definition of $D_L$.
Notice this implies $\zeta_z(v+a\cdot v_0 + b\cdot v_1) = q_0^b \cdot q_1^{-a}\cdot\zeta_z(v)$ for all $v \in \R^2$.
Finally, consider the function $f_z$ defined on $\big(\Z+\frac12\big)^2$ by $f_z(v) = \zeta_z(v) \cdot \mathcal{Q}(v)$; observe it is $L$-periodic.
We claim $f_z$ is an eigenvector of $M = M(q_0,q_1)$, i.e., $$v_z = \sum\limits_{i,j} f_z(v_{i,j})\cdot v_{i,j}$$is an eigenvector of $M$.
Indeed, notice the $v_{i,j}$-coordinate of $Mv_z$ is given by
\begin{alignat*}{3}
[Mv_z\big]_{i,j} = 4 \cdot f_z(v_{i,j}) \enspace &-& \enspace f_z(v_{i+2,j}) \cdot \frac{\mathcal{Q}(v_{i,j})}{\mathcal{Q}(v_{i+2,j})} \enspace - \enspace f_z(v_{i-2,j}) \cdot \frac{\mathcal{Q}(v_{i,j})}{\mathcal{Q}(v_{i-2,j})}& \\
&+& \enspace f_z(v_{i,j+2}) \cdot \frac{\mathcal{Q}(v_{i,j})}{\mathcal{Q}(v_{i,j+2})} \enspace + \enspace f_z(v_{i,j-2}) \cdot \frac{\mathcal{Q}(v_{i,j})}{\mathcal{Q}(v_{i,j-2})}&,
\end{alignat*}
which we may rearrange to
\begin{equation*}
f_z(v_{i,j}) \cdot \left(4 \enspace - \enspace \frac{\zeta_z(v_{i+2,j})}{\zeta_z(v_{i,j})} \enspace - \enspace \frac{\zeta_z(v_{i-2,j})}{\zeta_z(v_{i,j})} \enspace + \enspace \frac{\zeta_z(v_{i,j+2})}{\zeta_z(v_{i,j})} \enspace + \enspace \frac{\zeta_z(v_{i,j-2})}{\zeta_z(v_{i,j})}\right).
\end{equation*}

If $x = (x_0,x_1)$, let $\zeta_{x,0} = \exp(2\pi\bi \cdot x_0)$ and $\zeta_{x,1} = \exp(2\pi\bi \cdot x_1)$.
Then we may write
\begin{alignat*}{7}
[Mv_z\big]_{i,j} &=& &f_z(v_{i,j})& \enspace &\cdot& \enspace &\left(4 - {\zeta_{z,0}}^2 - {\zeta_{z,0}}^{-2} + {\zeta_{z,1}}^2 + {\zeta_{z,1}}^{-2}\right) \\
&=& \enspace &f_z(v_{i,j})& \enspace &\cdot& \enspace &\left( - \left(\zeta_{z,0} - {\zeta_{z,0}}^{-1}\right)^2 + \left(\zeta_{z,1} + {\zeta_{z,1}}^{-1}\right)^2 \right).
\end{alignat*}

In other words, for $z=(z_0,z_1)$, $v_z$ is an eigenvector of $M = M(q_0,q_1)$ with associated eigenvalue
\begin{align*}
\lambda_z =& \left( - \left(\zeta_{z,0} - {\zeta_{z,0}}^{-1}\right)^2 + \left(\zeta_{z,1} + {\zeta_{z,1}}^{-1}\right)^2 \right)\\
=& \enspace 4 \cdot \left( \big(\sin(2\pi z_0)\big)^2 + \big(\cos(2\pi z_1)\big)^2 \right).
\end{align*}

We will think of each pair $q_0,q_1 \in \Sp^1$ as a homomorphism $q: L \longrightarrow \Sp^1$ with $q(v_0) = q_1^{-1}$ and $q(v_1) = q_0$, so the condition that $\zeta_z(v_0) = q_1^{-1}$ and $\zeta_z(v_1) = q_0$ reduces to $\zeta_z\raisebox{-.2em}{$\big|_L$} = q$.
Thus, for each $z \in \big(\R^2\big)^*$ with $\zeta_z\raisebox{-.2em}{$\big|_L$} = q$, $f_z$ is an eigenvector of $M = M(q)$.
Are all its eigenvectors of that form?

First note that $\zeta_z\raisebox{-.2em}{$\big|_L$} = \zeta_{\widetilde{z}}\raisebox{-.2em}{$\big|_L$} \Longleftrightarrow \forall v \in L, \big(z- \widetilde{z}\big)(v) \in \Z \Longleftrightarrow z - \widetilde{z} \in L^*$.
We may thus identify $\text{Hom}\big(L, \Sp^1\big)$ with $\quotient{\big(\R^2\big)^*}{L^*}$.$\vphantom{^{^{^{^{^{^{}}}}}}}$
On the other hand, $z - \widetilde{z} \in \big(\Z^2\big)^* \Longleftrightarrow \exists c \in \{\pm 1\}, \forall v \in \big(\Z + \frac12\big)^2, \zeta_z(v) = c \cdot \zeta_{\widetilde{z}}(v)$.

In particular, if $z$ and $\widetilde{z}$ are in the same equivalence class $q \in \quotient{\big(\R^2\big)^*}{L^*}$ and $z - \widetilde{z}$ is in $\big(\Z^2\big)^*$
--- i.e., $z$ and $\widetilde{z}$ represent the same element in $\quotient{q}{\big(\Z^2\big)^*}$ ---,
then $f_z$ and $f_{\widetilde{z}}$ are essentially the same eigenvector of $M(q)$.

\begin{prop}\label{eigenort}
Let $z, \widetilde{z}$ be in the same equivalence class $q \in \quotient{\big(\R^2\big)^*}{L^*}$, but in different equivalence classes of \vphantom{.} $\quotient{q}{\big(\Z^2\big)^*}$.
Then $v_z$ and $v_{\widetilde{z}}$ are orthogonal.
\end{prop}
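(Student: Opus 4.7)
The plan is to compute the standard Hermitian inner product of $v_z$ and $v_{\widetilde{z}}$ directly, show it collapses to a character sum on the finite group $\Z^2/L$, and invoke the orthogonality of characters.

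First I would set up the inner product. The space on which $M$ acts has the $L$-equivalence classes of vertices of $G(\Z^2)$ as an orthonormal basis, so picking one representative of each class in the fundamental domain $D_L$ gives
\begin{equation*}
\langle v_z, v_{\widetilde{z}}\rangle \enspace = \sum_{v \in D_L \cap (\Z+\frac12)^2} f_z(v)\, \overline{f_{\widetilde{z}}(v)}.
\end{equation*}
The crucial observation is that $D_L \cap (\Z+\frac12)^2 = D(0,0)$, so on this set $\mathcal{Q}(v) = q_0^{0}q_1^{0} = 1$. Hence $f_z(v) = \zeta_z(v)$ there, and
\begin{equation*}
\langle v_z, v_{\widetilde{z}}\rangle \enspace = \sum_{v \in D(0,0)} \zeta_z(v)\, \overline{\zeta_{\widetilde{z}}(v)} \enspace = \sum_{v \in D(0,0)} \zeta_{z-\widetilde{z}}(v).
\end{equation*}

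Next I would exploit the hypotheses on $z - \widetilde{z}$. Write $w = z - \widetilde{z}$. Since $z, \widetilde{z}$ represent the same class in $(\R^2)^*/L^*$, we have $w \in L^*$, which means $\zeta_w(\ell) = 1$ for every $\ell \in L$; equivalently, $\zeta_w$ is constant on $L$-cosets of $(\Z+\frac12)^2$. Therefore the sum above only depends on the coset structure, and $D(0,0)$ provides exactly one representative from each $L$-orbit in $(\Z+\frac12)^2$.

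Finally I would translate by $(\frac12, \frac12)$ to identify $(\Z+\frac12)^2/L$ with the finite abelian group $G = \Z^2/L$, picking up the multiplicative constant $\zeta_w(\frac12,\frac12)$:
\begin{equation*}
\sum_{v \in D(0,0)} \zeta_{w}(v) \enspace = \enspace \zeta_{w}\!\big(\tfrac12,\tfrac12\big) \cdot \sum_{[v'] \in \Z^2/L} \zeta_{w}(v').
\end{equation*}
Under the identification $(\Z^2)^* = \Z^2$, the hypothesis that $z$ and $\widetilde{z}$ are in different classes of $q/(\Z^2)^*$ becomes exactly $w \in L^* \setminus \Z^2$. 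Thus the character $\chi\colon G \to \Sp^1$ defined by $\chi([v']) = \zeta_w(v')$ is well-defined (because $w \in L^*$) and nontrivial (because $w \notin \Z^2$, so $\zeta_w$ does not vanish identically on $\Z^2$). By the standard orthogonality of characters on a finite abelian group, $\sum_{g \in G} \chi(g) = 0$, and so $\langle v_z, v_{\widetilde{z}}\rangle = 0$ as claimed.

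The argument is genuinely short: the only point requiring care is the bookkeeping between the three lattices in play — $L \subset \Z^2 \subset L^*$ — and the fact that $(\Z+\frac12)^2$ is a coset rather than a subgroup, so the reduction to a standard character sum on $\Z^2/L$ must be made via an explicit translation. I expect this identification, together with confirming $\mathcal{Q}\equiv 1$ on $D(0,0)$ so that the $\mathcal{Q}$-factors in $f_z\overline{f_{\widetilde z}}$ simply cancel, to be the only delicate step; everything else is routine.
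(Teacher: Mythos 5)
Your proof is correct, and it finishes by a genuinely different route from the paper's. Both arguments begin identically: the inner product reduces to $\sum_{v \in D(0,0)} \zeta_{z-\widetilde{z}}(v)$ once the $\mathcal{Q}$-factors drop out (the paper cancels them because $\lvert\mathcal{Q}\rvert = 1$ on $\Sp^1$; your observation that $\mathcal{Q}\equiv 1$ on $D(0,0)$ is equally valid). From there the paper exploits the rectangular shape of $D_L$: it pulls out the constant $C = \exp\big(2\pi\bi\cdot\langle z-\widetilde z, (\tfrac12,\tfrac12)\rangle\big)$, factors the sum as a product of two geometric sums indexed by $0 \leq i < x_0$ and $0 \leq j < y_1$, and uses $(\zeta_{z-\widetilde z,0})^{x_0}=1$ together with $(\zeta_{z-\widetilde z,1})^{y_1}=(\zeta_{z-\widetilde z,0})^{-x_1}$ to show that at least one factor is a complete, hence vanishing, sum of roots of unity, via a two-case split on whether $\zeta_{z-\widetilde z,0}=1$. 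You instead translate by $(\tfrac12,\tfrac12)$, note that $w=z-\widetilde z\in L^*$ makes $\zeta_w$ descend to a character of the finite abelian group $\Z^2/L$, that $w\notin(\Z^2)^*$ makes this character nontrivial, and invoke orthogonality of characters. Your route is coordinate-free --- it needs no rectangular fundamental domain and no case analysis --- whereas the paper's explicit geometric-sum computation is the template it reuses in the harder Proposition \ref{eigenkort}, where the sums carry parity constraints and the clean quotient-group structure is no longer directly available. One cosmetic slip: you say $\zeta_w$ ``does not vanish identically on $\Z^2$'' where you mean it is not identically $1$ there ($\zeta_w$ takes values in $\Sp^1$ and never vanishes); the surrounding logic is unaffected.
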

\begin{proof}
We will show that $\langle v_z, v_{\widetilde{z}} \rangle = 0$.
Notice $f_z$ and $f_{\widetilde{z}}$ are both $L$-periodic, so we need only verify $\sum_{i,j} f_z(v_{i,j}) \cdot \overline{f_{\widetilde{z}}(v_{i,j})} = 0$, where the sum is carried over the $v_{i,j}$ in a single $D(a,b)$, say $D(0,0)$. 

Since $\overline{\zeta_x} = \zeta_{-x}$ for all $x \in \big(\R^2\big)^*$, it holds that
\begin{alignat*}{1}
&\sum\limits_{i,j} f_z(v_{i,j}) \cdot \overline{f_{\widetilde{z}}(v_{i,j})} = \sum\limits_{i,j} \zeta_z(v_{i,j}) \cdot \overline{\zeta_{\widetilde{z}}(v_{i,j})} = \sum\limits_{i,j} \exp\Big(2\pi\bi \cdot \big(z-\widetilde{z}\big)(v_{i,j})\Big).
\end{alignat*}

The $v_{i,j}$ in $D(0,0)$ are vertices $(i,j) + \big(\frac12, \frac12\big)$ in $\big(\Z+\frac12\big)^2$ with $0 \leq i < x_0$ and $0 \leq j < y_1$, where $v_0 = (x_0,0)$ and $v_1 = (x_1,y_1)$ are the generators of $L$ used in the construction of $D_L$.
Letting $C = \exp\left(2\pi\bi \cdot \left\langle z-\widetilde{z}, \left(\frac12,\frac12\right) \right\rangle \right)$, we have that $\sum_{i,j} f_z(v_{i,j})\cdot \overline{f_{\widetilde{z}}(v_{i,j})}$ is
\begin{alignat*}{1}
 &C \cdot\left(\sum\limits_{0 \leq i < x_0} \exp\Big(2\pi\bi \cdot \Big\langle z - \widetilde{z}, (i,0) \Big\rangle\Big)\right) \cdot \left( \sum\limits_{0 \leq j < y_1} \exp\Big(2\pi\bi \cdot \Big\langle z - \widetilde{z}, (0,j) \Big\rangle\Big)\right)
\end{alignat*}

Rewrite the expression above as:
\begin{equation}\begin{alignedat}{1}\label{fourm}
&\sum\limits_{i,j} f_z(v_{i,j}) \cdot \overline{f_{\widetilde{z}}(v_{i,j})} = \enspace C \cdot \left(\sum\limits_{0 \leq i < x_0} \big(\zeta_{z-\widetilde{z},0}\big)^i\right) \cdot \left( \sum\limits_{0 \leq j < y_1} \big(\zeta_{z-\widetilde{z},1}\big)^j \right)
\end{alignedat}\end{equation}

Observe that $(\zeta_{z-\widetilde{z},0})^{x_0} = \zeta_{z-\widetilde{z}}(v_0) = 1$, because $z$ and $\widetilde{z}$ are both in the same equivalence class $q \in \quotient{\big(\R^2\big)^*}{L^*}$ $\left(\text{they satisfy $\zeta_z\raisebox{-.2em}{$\big|_L$} = \zeta_{\widetilde{z}}\raisebox{-.2em}{$\big|_L$}$}\right)$.
In similar fashion, $\zeta_{z-\widetilde{z}}(v_1) = (\zeta_{z-\widetilde{z},0})^{x_1} \cdot (\zeta_{z-\widetilde{z},1})^{y_1} = 1$, so $(\zeta_{z-\widetilde{z},1})^{y_1} = (\zeta_{z-\widetilde{z},0})^{-x_1}$.
Now, $\zeta_{z-\widetilde{z},0}$ and $\zeta_{z-\widetilde{z},1}$ may not both be $1$, for $z$ and $\widetilde{z}$ are in different equivalences classes of $\quotient{q}{\big(\Z^2\big)^*}$, so their difference does not lie in $\big(\Z^2\big)^*$.

If $\zeta_{z-\widetilde{z},0} \neq 1$, then $(\zeta_{z-\widetilde{z},0})^{x_0} = 1$ implies that $\sum_{0 \leq i < x_0} \big(\zeta_{z-\widetilde{z},0}\big)^i$ is a symmetrical sum in $\Sp^1$, and hence 0.
If $\zeta_{z-\widetilde{z},0} = 1$, then $(\zeta_{z-\widetilde{z},1})^{y_1} = 1$, so similarly $\sum_{0 \leq j < y_1} \big(\zeta_{z-\widetilde{z},1}\big)^j = 0$.
Regardless of the situation, we see from~\eqref{fourm} that $\sum_{i,j} f_z(v_{i,j}) \cdot \overline{f_{\widetilde{z}}(v_{i,j})} = 0$, as desired.
\end{proof}

Proposition \ref{eigenort} tells us each equivalence class of $\quotient{q}{\big(\Z^2\big)^*}$ corresponds to a different eigenvector of $M(q)$.
The next proposition shows these are `all' of its eigenvectors.

\begin{prop}\label{isom}
The groups $\quotient{L^*}{\big(\Z^2\big)^*}$, $\left(\quotient{\Z^2}{L}\right)^*$ and $\quotient{\Z^2}{L}$ are all isomorphic.
\end{prop}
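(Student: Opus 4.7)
The plan is to exhibit a natural perfect pairing
\[
\Phi : \quotient{L^*}{(\Z^2)^*} \times \quotient{\Z^2}{L} \longrightarrow \Sp^1, \qquad \Phi\bigl([z],[v]\bigr) = \exp\bigl(2\pi\bi \langle z,v\rangle\bigr),
\]
from which two of the three isomorphisms fall out, and then to invoke the structure theorem for finite abelian groups for the remaining one. First I would check $\Phi$ is well defined: if $z' = z + u$ with $u \in (\Z^2)^*$ and $v' = v + w$ with $w \in L$, expanding $\langle z',v'\rangle - \langle z,v\rangle$ into the three cross terms $\langle u,v\rangle$, $\langle z,w\rangle$, $\langle u,w\rangle$, each lies in $\Z$ (the first by $v \in \Z^2$, the second by $w \in L$, the third by $L \subset \Z^2$). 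Bilinearity in both slots is immediate.

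Next I would read off the induced homomorphism
\[
\Psi : \quotient{L^*}{(\Z^2)^*} \longrightarrow \left(\quotient{\Z^2}{L}\right)^* = \mathrm{Hom}\left(\quotient{\Z^2}{L}, \Sp^1\right).
\]
For injectivity, if $\Psi([z]) = 1$, then $\langle z,v\rangle \in \Z$ for all $v \in \Z^2$, which is precisely the condition $z \in (\Z^2)^*$, so $[z] = 0$. For surjectivity, given any character $\chi : \quotient{\Z^2}{L} \to \Sp^1$, pull it back to a homomorphism $\tilde\chi : \Z^2 \to \Sp^1$. Since $\Z^2$ is free of rank $2$, we may write $\tilde\chi(v) = \exp(2\pi\bi \langle z,v\rangle)$ for some $z \in \R^2$ (unique modulo $(\Z^2)^*$); the condition $\tilde\chi|_L \equiv 1$ is exactly $z \in L^*$, so $\chi = \Psi([z])$. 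This establishes the canonical isomorphism $\quotient{L^*}{(\Z^2)^*} \cong \left(\quotient{\Z^2}{L}\right)^*$.

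It remains to prove $\quotient{\Z^2}{L} \cong \left(\quotient{\Z^2}{L}\right)^*$. Since $L$ spans $\R^2$, the abelian group $A = \quotient{\Z^2}{L}$ is finite. By the structure theorem, $A \cong \bigoplus_k \Z/n_k\Z$; combined with the elementary fact that $\mathrm{Hom}(\Z/n\Z, \Sp^1) \cong \Z/n\Z$ via $1 \mapsto \exp(2\pi\bi/n)$ and that $\mathrm{Hom}$ commutes with finite direct sums, this yields $A \cong A^*$ (non-canonically). Chaining with $\Psi$ completes the proof. The one spot that most needs care is verifying $\Phi$ is well defined — specifically, remembering that the cross term $\langle u,w\rangle$ with $u \in (\Z^2)^*$ and $w \in L$ is an integer precisely because of the inclusion $L \subset \Z^2$; everything else is a routine application of Pontryagin duality for finite abelian groups, specialized to this concrete lattice setting.
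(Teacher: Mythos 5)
Your proposal is correct and follows essentially the same route as the paper: the exponential pairing $[z] \mapsto \exp(2\pi\bi\langle z,\cdot\rangle)$ giving the canonical isomorphism $\quotient{L^*}{(\Z^2)^*} \cong \left(\quotient{\Z^2}{L}\right)^*$, followed by the structure theorem for finite abelian groups and the fact that $\mathrm{Hom}(\Z/n\Z,\Sp^1)\cong\Z/n\Z$ for the non-canonical isomorphism with $\quotient{\Z^2}{L}$. The well-definedness check via the three cross terms is a slightly more explicit packaging of what the paper verifies, but the substance is identical.
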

\begin{proof}
We first show $\quotient{L^*}{\big(\Z^2\big)^*}$ and $\left(\quotient{\Z^2}{L}\right)^*$ are isomorphic (naturally so, in fact).
Any $f \in L^*$ admits a unique extension by linearity to a functional in $\big(\R^2\big)^*$, which we still call $f$.
Given any such $f$, define $\zeta_f: \R^2 \longrightarrow \Sp^1$ by $\zeta_f(v) = \exp\big(2\pi\bi\cdot f(v)\big)$; notice it is a homomorphism.
Consider $\zeta:\big(\R^2\big)^* \longrightarrow \text{Hom}(\R^2, \Sp^1)$ given by $\zeta(f) = \zeta_f$.
We claim $\zeta$ defines an isomorphism between $\quotient{L^*}{\big(\Z^2\big)^*}$ and $\left(\quotient{\Z^2}{L}\right)^*$.
It's easy to check that it is an homomorphism; we prove it is a bijection. Observe that the following are equivalent:
\begin{itemize}
	\item $f$ and $\widetilde{f}$ are in $L^*$ and belong to the same equivalence class of $\quotient{L^*}{\big(\Z^2\big)^*}$;
	\item $\zeta_f \raisebox{-.2em}{$\big|_L$} = \zeta_{\widetilde{f}}\raisebox{-.2em}{$\big|_L$} = \mathbbm{1}$ and ${\displaystyle \frac{\zeta_f}{\zeta_{\widetilde{f}}}}\raisebox{-.35em}{$\bigg|_{\Z^2}$} = \mathbbm{1}$ (i.e., $\zeta_f$ and $\zeta_{\widetilde{f}}$ coincide on $\Z^2$).
\end{itemize}
 
In addition, each $g \in \left(\quotient{\Z^2}{L}\right)^* = \text{Hom}\left(\quotient{\Z^2}{L}\makebox[0.1ex]{}, \Sp^1\right)$ corresponds to a unique $\widetilde{g} \in \text{Hom}\left(\Z^2,\Sp^1\right)$ that is $L$-periodic (it satisfies $\widetilde{g}\raisebox{-.2em}{$\big|_L$} = \mathbbm{1}$).
It follows that $\zeta$ injectively maps $\quotient{L^*}{\big(\Z^2\big)^*}$ to $\left(\quotient{\Z^2}{L}\right)^*$.

To see $\zeta$ also does so surjectively, observe that any $\widetilde{g} \in \text{Hom}\left(\Z^2,\Sp^1\right)$ satisfies
$$\widetilde{g}(a_1,a_2) = {\widetilde{g}(1,0)}^{a_1} \cdot {\widetilde{g}(0,1)}^{a_2}.$$

Choose any $\alpha_1,\alpha_2 \in \R$ with $\widetilde{g}(1,0) = \exp(2\pi\bi \cdot \alpha_1)$ and $\widetilde{g}(0,1) = \exp(2\pi\bi \cdot \alpha_2)$. Then$$\widetilde{g}(a_1,a_2) = \exp\Big(2\pi\bi \cdot (\alpha_1 \cdot a_1 + \alpha_2 \cdot a_2)\Big),$$so $\widetilde{g}$ is realized by $\zeta_f$, where $f(a_1,a_2) = \alpha_1 \cdot a_1 + \alpha_2 \cdot a_2$. Now it's easy to check that $\widetilde{g}\raisebox{-.2em}{$\big|_L$} = \mathbbm{1}$ if and only if $\forall v \in L$, $f(v) \in \Z$ --- that is, if and only if $f \in L^*$.

We now show $\left(\quotient{\Z^2}{L}\right)^*$ and $\quotient{\Z^2}{L}$ are isomorphic.
Since $\quotient{\Z^2}{L}$ is abelian and finite, there are prime powers $m_1, \cdots, m_k$ with $\quotient{\Z^2}{L} \simeq \quotient{\Z}{(m_1)} \oplus \cdots \oplus \quotient{\Z}{(m_k)}$.
For each $1 \leq i \leq k$, let $e_i$ be a generator of $\quotient{\Z}{(m_i)}$, so each $f \in \left(\quotient{\Z^2}{L}\right)^*$ is uniquely defined by the value it takes on the $e_i$'s.
Each $1 \leq i \leq k$ satisfies $f(e_i)^{m_i} = 1$, so $f(e_i)$ must be a $m_i$-th root of unity.
Under multiplication, the $n$-th roots of unity form a cyclic group of order $n$, which is isomorphic to $\quotient{\Z}{(n)}$.
It follows that $\left(\quotient{\Z^2}{L}\right)^*$ is also isomorphic to $\quotient{\Z}{(m_1)} \oplus \cdots \oplus \quotient{\Z}{(m_k)}$, which completes the proof.
\end{proof}

Each class $q \in \quotient{\big(\R^2\big)^*}{L^*}$ is isomorphic to $L^*$, so $\quotient{q}{\big(\Z^2\big)^*} \simeq \quotient{L^*}{\big(\Z^2\big)^*}$.
Since $n = \card\left(\quotient{\Z^2}{L}\right)$ is the number of squares on the fundamental domain $D_L$, $M$ is $n \times n$.
It follows from Proposition \ref{isom} that $\quotient{q}{\big(\Z^2\big)^*}$ has $n$ elements, so Proposition \ref{eigenort} implies they provide all $n$ of $M(q)$'s eigenvectors.

\begin{corolario}
For each $q \in \textnormal{Hom}\left(L, \Sp^1\right) \simeq \quotient{\big(\R^2\big)^*}{L^*}$ we have
\begin{equation*}
\det\Big(M(q)\Big) = \enspace \prod\limits_{\hphantom{.......}\mathclap{\displaystyle z \in \quotient{q}{\big(\Z^2\big)^*}}} \hphantom{..}{\lambda_z} \enspace\hphantom{...},
\end{equation*}
where for $z=(z_0,z_1)$ it holds that $\lambda_z = 4 \cdot \left( \big(\sin(2\pi z_0)\big)^2 + \big(\cos(2\pi z_1)\big)^2 \right)$.
\end{corolario}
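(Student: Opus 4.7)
The plan is to assemble the eigenvalue count and the orthogonality result established just above the corollary into a standard diagonalization argument. Concretely, $M(q)$ is the direct sum $KK^* \oplus K^*K$ acting on the space of $L$-periodic functions on the two color classes of vertices of $G(\mathbb{Z}^2)$, and its dimension is $2 \cdot \frac{1}{2}\textnormal{card}(\mathbb{Z}^2/L) = \textnormal{card}(\mathbb{Z}^2/L)$, since $D_L$ contains equally many black and white squares. Call this number $n$.

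First I would invoke the computation carried out before Proposition \ref{eigenort}: for each lift $z \in (\mathbb{R}^2)^*$ with $\zeta_z|_L = q$, the vector $v_z = \sum_{i,j} f_z(v_{i,j}) \cdot v_{i,j}$ is an eigenvector of $M(q)$ with eigenvalue $\lambda_z$. Since $z$ and $\widetilde{z}$ representing the same class in $q/(\mathbb{Z}^2)^*$ yield $f_z = c \cdot f_{\widetilde{z}}$ with $c \in \{\pm 1\}$, the eigenvector $v_z$ depends only on the class of $z$ in $q/(\mathbb{Z}^2)^*$, and $\lambda_z$ depends only on that class as well (directly from the cosine/sine form).

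Next I would count these classes. By Proposition \ref{isom}, the group $L^*/(\mathbb{Z}^2)^*$ is isomorphic to $\mathbb{Z}^2/L$, hence has cardinality $n$. Since any coset $q \in (\mathbb{R}^2)^*/L^*$ is an affine copy of $L^*$, the quotient $q/(\mathbb{Z}^2)^*$ is in natural bijection with $L^*/(\mathbb{Z}^2)^*$ and therefore also has $n$ elements. So we have produced exactly $n$ eigenvectors $v_z$ of the $n \times n$ matrix $M(q)$.

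Finally I would apply Proposition \ref{eigenort}: distinct classes in $q/(\mathbb{Z}^2)^*$ give orthogonal $v_z$'s. In particular they are linearly independent, so $\{v_z\}$ forms a basis of eigenvectors and $M(q)$ is diagonalizable with spectrum $\{\lambda_z\}_{z \in q/(\mathbb{Z}^2)^*}$, each occurring with the multiplicity inherited from the enumeration. The determinant is therefore the stated product. I do not expect any real obstacle here: every ingredient is already in place, and the only mild subtlety is making clear that the choice of representative $z$ in $q/(\mathbb{Z}^2)^*$ does not affect either the eigenvector (up to sign) or the eigenvalue $\lambda_z$, so the product on the right-hand side is unambiguously defined.
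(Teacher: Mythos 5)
Your proposal is correct and follows essentially the same route as the paper: the eigenvector computation preceding Proposition \ref{eigenort} supplies the $v_z$ with eigenvalues $\lambda_z$, Proposition \ref{eigenort} gives orthogonality, and Proposition \ref{isom} gives the count $\card\big(\quotient{q}{(\Z^2)^*}\big) = n = \dim M(q)$, so the $v_z$ form an eigenbasis and the determinant is the product of the $\lambda_z$. Your added remark that the eigenvector (up to sign) and the eigenvalue depend only on the class of $z$ in $\quotient{q}{(\Z^2)^*}$ matches the paper's own observation that $\lambda_z$ is independent of the choice of representative.
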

Notice $\lambda_z$ does not depend on choice of representative for $z \in \quotient{q}{\big(\Z^2\big)^*}$.
Moreover, $M = KK^* \oplus K^*K$ implies $\det(M) = \lvert\det(K)\rvert^4$; in particular, $\det(M)$ is always a nonnegative real and $\lvert \det(K) \rvert = \sqrt[4]{\det(M)}$.

\section{Spaces of $L$-quasiperiodic functions and the case of $K, K^*$}\label{sec:espfunc}

We saw that $M$ acted on the space of $L$-periodic functions on $\big(\Z+\frac12\big)^2$ --- the weights on squares of $D_L$ ---, but we now present another interpretation of the situation.
For each $q \in \text{Hom}\big(L, \Sp^1\big)$, consider the space $\mathcal{F}(L,q)$\label{def:flq} of complex functions on $\big(\Z+\frac12\big)^2$ that are $L$-quasiperiodic with parameter $q$, meaning
$$f \in \mathcal{F}(L,q) \Longleftrightarrow \forall u \in \left(\Z+\tfrac12\right)^2, \forall v \in L, f(u + v) = q(v)\cdot f(u).$$

Of course, $q$ is defined by the values it takes on a basis of $L$, $v_0$ and $v_1$ for instance.
If $q(v_0) = q_1^{-1}$ and $q(v_1) = q_0$, the figure in the next page is a representation of some generic $f \in \mathcal{F}(L,q)$, where the number next to a vertex indicates the value $f$ takes on it.

\begin{sidewaysfigure}[p]
		\centering
		\def\svgwidth{0.775\columnwidth}
    \vspace{0.375cm}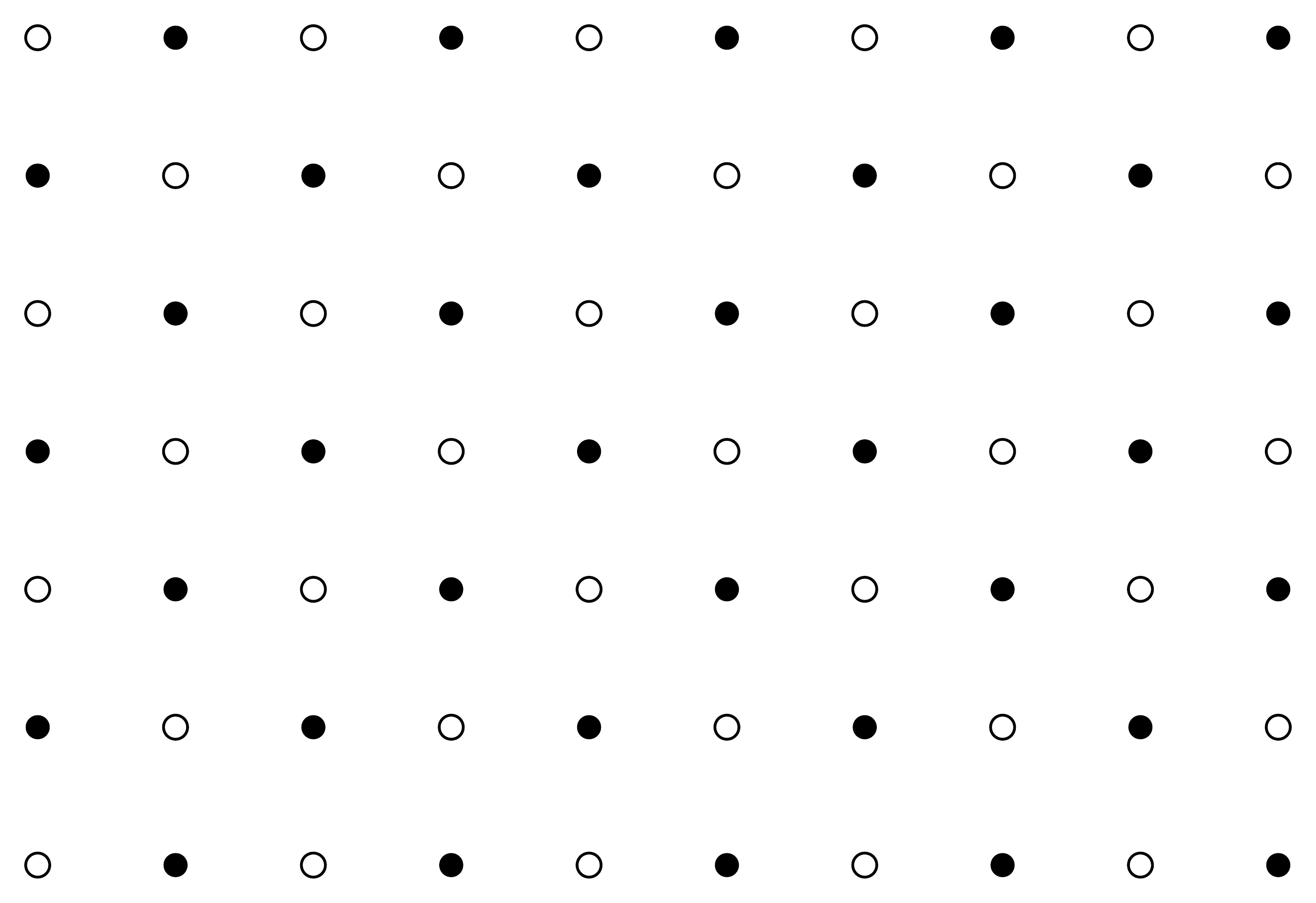
		\caption{Representation of some generic $f \in \mathcal{F}(L,q)$, where $L$ is generated by $v_0 = (8,0)$ and $v_1 = (3,3)$. Vertices are in $G(\Z^2)$.}
\end{sidewaysfigure}

There is a host of obvious isomorphisms between the space of $L$-periodic functions on $\big(\Z+\frac12\big)^2$ and $\mathcal{F}(L,q)$:
simply make two functions correspond if they agree on any particular fundamental domain of $L$.
We will use the isomorphism $\psi$\label{def:psi} that checks for functions agreeing on $D_L = D(0,0)$;
under $\psi$, each $v_{i,j} \in D_L$ corresponds to the function $g_{i,j}$ defined by taking the value 1 on $v_{i,j}$ and 0 on every other vertex of $D_L$ (and by extension through $L$-quasiperiodicity).

This definition serves to take the $\mathcal{Q}(v)$'s away from formula~\eqref{formulaq} and into the space itself, which in turn simplifies the expression of $M(q)$.
Indeed, let $\widetilde{M}$ be given by
\begin{equation}\begin{alignedat}{1}\label{formulameq}
\big(\widetilde{M}g\big)(x,y) = 4\cdot g(x,y) &- g(x+2,y) - g(x-2,y)\\
 &+ g(x,y+2) + g(x,y-2)
\end{alignedat}\end{equation}

With our choice isomorphism and convention for values of $q(v_0)$ and $q(v_1)$ above, it's easily checked that $\widetilde{M}\psi(v_{i,j}) = \psi(Mv_{i,j})$, so $\widetilde{M}(q)$ emulates $M(q)$'s action on $\mathcal{F}(L,q)$.
We will thus refer to $\widetilde{M}$ also by $M$.
Moreover, arguments similar to those used before will show that for formula~\eqref{formulameq}, functions of the form $\zeta_z$ are eigenvectors of $M$ (rather than $f_z = \zeta_z \cdot \mathcal{Q}$), and they are associated to the same eigenvalues $\lambda_z$.
Perhaps a yet better benefit of this approach is that it defines the linear map $M(q)$ without resorting to a particular choice or construction of fundamental domain for $L$.

We may use this interpretation to describe $K$ and $K^*$ similarly.
Let $L_0 \subset \Z^2$\label{def:l0} be the lattice spanned by $\{ (2,0), (1,1) \}$;
observe that any valid lattice is a sublattice of $L_0$. Consider the affine lattices $L_b = L_0 + \big(\frac12, \frac12 \big)$\label{def:l0b} and $L_w = L_0 + \big(\frac12, -\frac12\big)$\label{def:l0w};
notice $L_b$ is the set of black vertices of $G(\Z^2)$ and $L_w$ is the set of white ones.
Now let $\mathcal{B}(L,q)$\label{def:blq} be the space of complex functions on $L_b$ that are $L$-quasiperiodic with parameter $q$, and similarly for $\mathcal{W}(L,q)$\label{def:wlq}.

Like with $M(q)$, we may interpret $K(q)$ as a linear map $\mathcal{B}(L,q) \longrightarrow \mathcal{W}(L,q)$ and ${K(q)}^*$ as a linear map $\mathcal{W}\left(L,q\right) \longrightarrow \mathcal{B}\left(L,q\right)$ (note that for $z \in \Sp^1$, $\vphantom{\Big(}\overline{z^{-1}} = z$).
We provide the analogues of formula~\eqref{formulameq}:
\begin{subequations}
\begin{alignat}{6}
&\big(Kg\big)&(x,y)& \enspace &=&  &g(x+1,y) - g(x-1,y) + g(x,y+1) + g(x,y-1) \label{formulakeq}\\
&\big(K^*g\big)&(x,y)& \enspace &=& \enspace - &g(x+1,y) + g(x-1,y) + g(x,y+1) + g(x,y-1) \label{formulak*eq}
\end{alignat}
\end{subequations}

In each case, which term is negatively signed is justified by our fixed choice of Kasteleyn signing: each domino on the brick wall $b_{\mathcal{N}}$ has a black square on the left and a white square on the right.
\begin{figure}[H]
		\centering
		\includegraphics[width=0.9\textwidth]{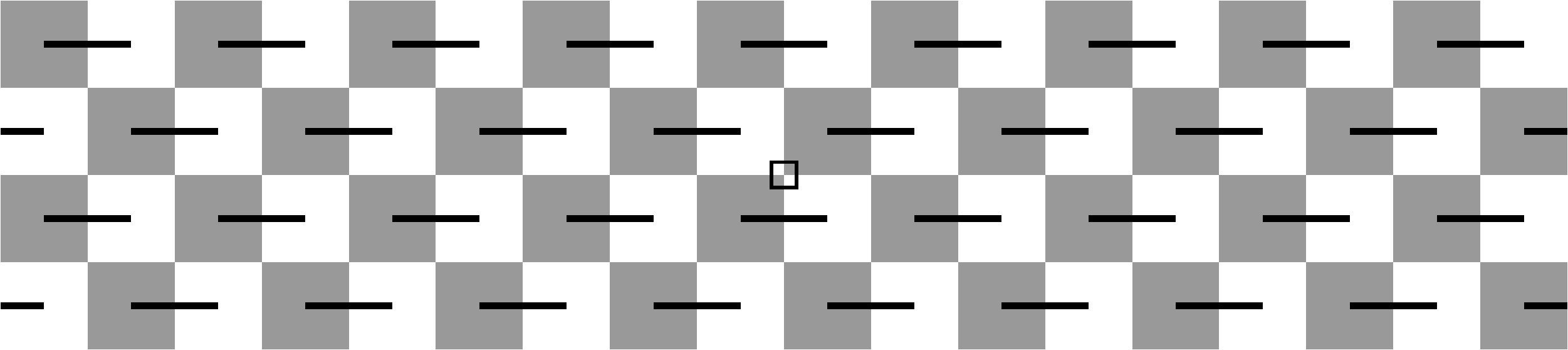}
		\caption{The brick wall $b_{\mathcal{N}}$. The marked vertex is the origin.}
\end{figure}

The \textit{matrices} $K$ and $K^*$ we previously constructed represent the linear maps above in bases we now describe\label{def:dbdw}.
Let $b_i$ be $D_L$'s $i$-th black vertex and $w_j$ its $j$-th white vertex, as we enumerated them.
Let $g[b_i] \in \mathcal{B}(L,q)$ be defined by taking the value $1$ on $b_i$ and $0$ on each other black vertex of $D_L$, and let $g[w_j] \in \mathcal{W}(L,q)$ take the value $1$ on $w_j$ and $0$ on each other white vertex of $D_L$.
Then the $g[b_i]$ form an ordered basis $D_b$ of $\mathcal{B}(L,q)$, and the $g[w_j]$ form an ordered basis $D_w$ of $\mathcal{W}(L,q)$.
The matrix $K = K_D$\label{def:kd} represents the linear map of~\eqref{formulakeq} from $D_b$ to $D_w$, and in similar fashion $K^*_D$ goes from $D_w$ to $D_b$.

We will now choose different bases for $\mathcal{B}(L,q)$ and $\mathcal{W}(L,q)$, given by $\zeta_z$'s.
Observe that one such function satisfies $\zeta_z(u+v) = \zeta_z(u) \cdot \zeta_z(v)$, so it\footnote{Its restriction to the relevant subset of $\big(\Z+\frac12\big)^2$} is an element of $\mathcal{B}(L,q)$ or of $\mathcal{W}(L,q)$ if and only if $\zeta_z\raisebox{-.2em}{$\big|_L$} = q$.
Recall our identification of $\text{Hom}\big(L, \Sp^1\big)$ with $\quotient{\big(\R^2\big)^*}{L^*}$: $\zeta_z\raisebox{-.2em}{$\big|_L$} = \zeta_{\widetilde{z}}\raisebox{-.2em}{$\big|_L$}$ if and only if $z$ and $\widetilde{z}$ are in the same equivalence class.

Granted, for $z$ and $\widetilde{z}$ in one same equivalence class $q \in \quotient{\big(\R^2\big)^*}{L^*}$, $\zeta_z$ and $\zeta_{\widetilde{z}}$ need not agree on $L_b$.
We assert that $\exists c \in \C, \enspace \zeta_z\raisebox{-.2em}{$\big|_{L_b}$} = c \cdot \zeta_{\widetilde{z}}\raisebox{-.2em}{$\big|_{L_b}$} \Longleftrightarrow z - \widetilde{z} \in {L_0}^*$. Indeed, it's easy to check that
$$\zeta_z\raisebox{-.2em}{$\big|_{L_b}$} = c \cdot \zeta_{\widetilde{z}}\raisebox{-.2em}{$\big|_{L_b}$} \Longleftrightarrow \forall u \in L_0, \exp\Big(2\pi\bi\cdot \big(z-\widetilde{z}\big)(u)\Big) \cdot \exp\Big(2\pi\bi\cdot \big(z-\widetilde{z}\big)\big(\tfrac12, \tfrac12\big)\Big) = c,$$
so when there is one such $c$ we have $\exp\left(2\pi\bi\cdot \big(z-\widetilde{z}\big)\big(\tfrac12, \tfrac12\big)\right) = c$, because $u = 0 \in L_0$.
It follows that $\exp\left(2\pi\bi\cdot \big(z-\widetilde{z}\big)(u)\right) = 1$ whenever $u \in L_0$, so $z-\widetilde{z} \in {L_0}^*$.
On the other hand, when $z-\widetilde{z} \in {L_0}^*$ we may take $c = \exp\left(2\pi\bi\cdot \big(z-\widetilde{z}\big)\big(\tfrac12, \tfrac12\big)\right)$, and the assertion is proved.

By the same token, $\exists c \in \C, \enspace \zeta_z\raisebox{-.2em}{$\big|_{L_w}$} = c \cdot \zeta_{\widetilde{z}}\raisebox{-.2em}{$\big|_{L_w}$} \Longleftrightarrow z - \widetilde{z} \in {L_0}^*$.$\vphantom{^{^{^{}}}}$

\begin{prop}\label{eigenkort}
Let $z, \widetilde{z}$ be in the same equivalence class $q \in \quotient{\big(\R^2\big)^*}{L^*}$, but in two different equivalence classes of \vphantom{.} $\quotient{q}{{L_0}^*}$.
Then $\zeta_z$ and $\zeta_{\widetilde{z}}$ are orthogonal in each of $\mathcal{B}(L,q)$ and $\mathcal{W}(L,q)$.
\end{prop}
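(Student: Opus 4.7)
The plan is to mimic the proof of Proposition \ref{eigenort}, with the lattice $L_0$ (the largest valid lattice) playing the role that $\Z^2$ played there. Since $z$ and $\widetilde{z}$ lie in the same class of $\quotient{(\R^2)^*}{L^*}$, we have $z - \widetilde{z} \in L^*$, so $\zeta_{z-\widetilde{z}}$ restricts to the trivial character on $L$. Consequently $\zeta_z \cdot \overline{\zeta_{\widetilde{z}}} = \zeta_{z-\widetilde{z}}$ is $L$-periodic on $L_b$ (and on $L_w$), and the inner product in $\mathcal{B}(L,q)$ collapses to a finite sum of $\zeta_{z-\widetilde{z}}$ over any fundamental domain of $L$ acting on $L_b$ --- and analogously for $\mathcal{W}(L,q)$.

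To evaluate that sum explicitly, I would invoke the Smith normal form for the inclusion $L \subset L_0$: there exist a basis $\{e_0, e_1\}$ of $L_0$ and positive integers $n_0, n_1$ such that $\{n_0 e_0, n_1 e_1\}$ is a basis of $L$. A fundamental domain of $L$ in $L_b$ is then
\begin{equation*}
\mathcal{R}_b = \left\{a \cdot e_0 + b \cdot e_1 + \left(\tfrac{1}{2},\tfrac{1}{2}\right) \;\middle|\; 0 \leq a < n_0,\; 0 \leq b < n_1\right\}.
\end{equation*}
Writing $C = \zeta_{z-\widetilde{z}}\!\left(\tfrac{1}{2},\tfrac{1}{2}\right)$, the inner product factors as
\begin{equation*}
\langle \zeta_z, \zeta_{\widetilde{z}} \rangle_{\mathcal{B}(L,q)} = C \cdot \left(\sum_{0 \leq a < n_0} \zeta_{z-\widetilde{z}}(e_0)^a\right) \cdot \left(\sum_{0 \leq b < n_1} \zeta_{z-\widetilde{z}}(e_1)^b\right).
\end{equation*}

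Since $n_i e_i \in L$, we have $\zeta_{z-\widetilde{z}}(e_i)^{n_i} = 1$ for $i=0,1$. The hypothesis $z - \widetilde{z} \notin {L_0}^*$ means that $\zeta_{z-\widetilde{z}}$ is \emph{not} identically $1$ on $L_0$, so at least one of $\zeta_{z-\widetilde{z}}(e_0)$, $\zeta_{z-\widetilde{z}}(e_1)$ is a nontrivial $n_i$-th root of unity. The corresponding geometric sum is then symmetric in $\Sp^1$ and vanishes, forcing the product --- and hence the inner product --- to be zero. The argument for $\mathcal{W}(L,q)$ is identical, with $L_w = L_0 + \left(\tfrac{1}{2},-\tfrac{1}{2}\right)$ in place of $L_b$; only the scalar $C$ changes.

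I do not expect a serious obstacle here: the result is really a statement about characters of the finite abelian group $\quotient{L_0}{L}$, and the proof of Proposition \ref{eigenort} already set the template. The only delicate point is the bookkeeping check that $z - \widetilde{z} \in L^* \setminus {L_0}^*$ is precisely the condition that $\zeta_{z-\widetilde{z}}$ descends to a nontrivial character of $\quotient{L_0}{L}$ --- and this was essentially already observed in the paragraphs preceding the statement, where the relation between ${L_0}^*$ and agreement on $L_b$ (up to a constant) was spelled out.
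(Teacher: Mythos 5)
Your proof is correct, but it takes a genuinely different route from the paper's. The paper computes the inner product over the rectangular fundamental domain $D_L$ (with vertices determined by $v_0=(x_0,0)$ and $v_1=(x_1,y_1)$); restricting that domain to black vertices couples the two summation indices by the parity condition $j \equiv i \Mod{2}$, so the sum does not factor, and the paper is forced into a three-way case analysis on $\zeta_{v,0}$ and $x_0$ (using the relations $(\zeta_{v,0})^{x_0}=1$ and $(\zeta_{v,0})^{x_1}(\zeta_{v,1})^{y_1}=1$ together with $v \notin {L_0}^*$ to rule out the degenerate subcases). You instead exploit the fact that $L_b$ is a single coset of $L_0$ and choose a fundamental domain adapted to the inclusion $L \subset L_0$ via Smith normal form, so that the character sum over $\quotient{L_0}{L} \simeq \quotient{\Z}{(n_0)} \oplus \quotient{\Z}{(n_1)}$ factors honestly into two geometric series, one of which vanishes because $\zeta_{z-\widetilde{z}}$ is a nontrivial character of this quotient (trivial on $L$ since $z-\widetilde{z}\in L^*$, nontrivial on $L_0$ since $z-\widetilde{z}\notin {L_0}^*$, and if $\zeta_{z-\widetilde{z}}(e_i)\neq 1$ then necessarily $n_i>1$). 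This buys you a shorter, case-free argument that makes the underlying orthogonality-of-characters structure transparent; what the paper's computation buys in exchange is that it stays in the explicit coordinates $(x_0,x_1,y_1)$ already set up for the construction of $K_D$ and reused immediately afterwards in the change-of-basis and determinant formulas of Section \ref{sec:contas}. Both arguments are sound.
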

\begin{proof}
Observe that if $L = {L_0}^*$, there are no such $z, \widetilde{z}$ as in the statement: $\quotient{q}{{L_0}^*}$ has a single class.
We thus suppose without loss of generality that $L \subsetneqq L_0$.

The idea of the proof goes similar to that of Proposition \ref{eigenort}.
We will show the expression below is always 0 whenever $v = z - \widetilde{z} \in L^*\setminus{{L_0}^*}$.
\begin{subequations}
\begin{alignat}{1}
&C \cdot \left[\sum\limits_{0 \leq i < x_0} \big(\zeta_{v,0}\big)^i \cdot \left( \sum\limits_{\substack{0 \leq j < y_1 \\\hphantom{...} j \equiv i \Mod{2}}} \big(\zeta_{v,1}\big)^j \right)\right] \label{indicej}\\
= \enspace &C \cdot \left[\sum\limits_{0 \leq j < y_1} \big(\zeta_{v,1}\big)^j \cdot \left( \sum\limits_{\substack{0 \leq i < x_0 \\\hphantom{...} i \equiv j \Mod{2}}} \big(\zeta_{v,0}\big)^i \right)\right], \label{indicei}
\end{alignat}
\end{subequations}
where $C = \exp\big(2\pi\bi \cdot v\big(\frac12,\frac12\big)\big)$ for $\mathcal{B}(L,q)$ and $C = \exp\big(2\pi\bi \cdot v\big(\frac12,-\frac12\big)\big)$ for $\mathcal{W}(L,q)$.
Notice that because $v \in L^*$ we have
\begin{subequations}
\begin{alignat}{3}
\zeta_v(v_0) \enspace &= \enspace (\zeta_{v,0})^{x_0} \enspace &= \enspace 1& \label{lvo}\\
\zeta_v(v_1) \enspace &= \enspace (\zeta_{v,0})^{x_1} \cdot (\zeta_{v,1})^{y_1} \enspace &= \enspace 1&.\label{lv1}
\end{alignat}
\end{subequations}

On the other hand, $v \notin {L_0}^*$ implies $\left(\zeta_{v,0}, \zeta_{v,1}\right)$ is not in the span of $\big\{\left(\frac12, -\frac12\right), (0, 1)\big\}$.
We will divide the proof in cases.

\paragraph{}\indent \indent \textbf{Case 1.} $\zeta_{v,0} \neq 1$ and $x_0 > 2$.

Remember $x_0$ is always positive and even, so $x_0 = 2k$ for some integer $k>1$.
Equation~\eqref{lvo} implies $\left(\left(\zeta_{v,0}\right)^2\right)^k = 1$, so that
\begin{equation*}
\sum\limits_{\substack{0 \leq i < x_0 \\ i \text{ even}}} \big(\zeta_{v,0}\big)^i = \sum\limits_{0 \leq i < k} \left(\left(\zeta_{v,0}\right)^2\right)^i
\end{equation*}
is a symmetrical sum in $\Sp^1$, and hence 0.
Of course, 
\begin{equation*}
\sum\limits_{\substack{0 \leq i < x_0 \\ i \text{ odd}}} \big(\zeta_{v,0}\big)^i \enspace = \enspace \zeta_{v,0} \cdot \sum\limits_{\substack{0 \leq i < x_0 \\ i \text{ even}}} \big(\zeta_{v,0}\big)^i,
\end{equation*}
and thus it is also 0. It follows that expression~\eqref{indicei} is 0.

\paragraph{}\indent \indent \textbf{Case 2.} $\zeta_{v,0} \neq 1$ and $x_0 = 2$.

In this case, equation~\eqref{lvo} implies $\zeta_{v,0} = -1$, so we may rewrite expression~\eqref{indicej} as
\begin{alignat*}{1}
&C \cdot \left[\left( \sum\limits_{\substack{0 \leq j < y_1 \\ j \text{ even}}} \big(\zeta_{v,1}\big)^j \right) - \left( \sum\limits_{\substack{0 \leq j < y_1 \\ j \text{ odd}}} \big(\zeta_{v,1}\big)^j \right) \right] = C \cdot \left[\sum\limits_{0 \leq j < y_1}\big(-\zeta_{v,1}\big)^j\right].
\end{alignat*}

Additionally, because $v \notin L_0^*$ it may not be $\zeta_{v,1} = \pm 1$.

Suppose first that $x_1,y_1$ are both even.
Then $\left(\zeta_{v,0}\right)^{x_1} = 1$, and equation~\eqref{lv1} implies $\left(\zeta_{v,1}\right)^{y_1} = 1$.
Since $\zeta_{v,1} \neq \pm 1$, we must have $y_1 \geq 4$, so $\sum_{0 \leq j < y_1}\left(-\zeta_{v,1}\right)^j$ is a symmetrical sum in $\Sp^1$ and hence 0.

Suppose now that $x_1,y_1$ are both odd.
Then $\left(\zeta_{v,0}\right)^{x_1} = -1$, and equation~\eqref{lv1} implies $\left(\zeta_{v,1}\right)^{y_1} = -1$, or $\left(-\zeta_{v,1}\right)^{y_1} = 1$.
Since $\zeta_{v,1} \neq \pm 1$, we must have $y_1 \geq 3$, so $\sum_{0 \leq j < y_1}\left(-\zeta_{v,1}\right)^j$ is a symmetrical sum in $\Sp^1$ and hence 0.

\paragraph{}\indent \indent \textbf{Case 3.} $\zeta_{v,0} = 1$.

In this case, equation~\eqref{lv1} implies $\left(\zeta_{v,1}\right)^{y_1} = 1$.
Because $v \notin L_0^*$, it may not be $\zeta_{v,1} = \pm 1$, so $y_1 \geq 3$.
Additionally, we may rewrite expression~\eqref{indicej} as
\begin{alignat*}{1}
&C \cdot \frac{x_0}{2} \cdot \left[\sum\limits_{0 \leq j < y_1}\big(\zeta_{v,1}\big)^j\right],
\end{alignat*}and $\sum_{0 \leq j < y_1}\left(\zeta_{v,1}\right)^j$ is a symmetrical sum in $\Sp^1$.
\end{proof}

Proposition \ref{eigenkort} guarantees that if we choose a $z$ out of each class in $\quotient{q}{{L_0}^*}$, the $\zeta_z$'s are linearly independent in each of $\mathcal{B}(L,q)$ and $\mathcal{W}(L,q)$.
Moreover, as in Proposition \ref{isom}, $\quotient{L^*}{{L_0}^*}$ and $\quotient{L_0}{L}$ are isomorphic, so these vectors generate their respective spaces --- they are bases for them.

Now, observe that applying formulas~\eqref{formulakeq} and~\eqref{formulak*eq} to $\zeta_z$ yield very simple results. 
Indeed, we have that $\left( K \zeta_z\raisebox{-.2em}{$\big|_{L_b}$} \right)(x,y)$ is
\begin{alignat*}{1}
&\zeta_z\raisebox{-.2em}{$\big|_{L_b}$}(x+1,y) - \zeta_z\raisebox{-.2em}{$\big|_{L_b}$}(x-1,y) + \zeta_z\raisebox{-.2em}{$\big|_{L_b}$}(x,y+1) + \zeta_z\raisebox{-.2em}{$\big|_{L_b}$}(x,y-1)\\
=\enspace &\Big(\left( \zeta_{z,0} - {\zeta_{z,0}}^{-1}\right)  +  \left(\zeta_{z,1} + {\zeta_{z,1}}^{-1}\right)\Big) \cdot \zeta_z\raisebox{-.2em}{$\big|_{L_w}$}(x,y).
\end{alignat*}

In other words, $\left( K \zeta_z\raisebox{-.2em}{$\big|_{L_b}$} \right) = \lambda(K,z) \cdot \zeta_z\raisebox{-.2em}{$\big|_{L_w}$}$, and by the same token $\left( K^* \zeta_z\raisebox{-.2em}{$\big|_{L_w}$} \right) = \lambda(K^*,z) \cdot \zeta_z\raisebox{-.2em}{$\big|_{L_b}$}$, where for $z = (z_0,z_1)$
\begin{equation}\label{lambdak}\begin{alignedat}{3}
\lambda(K,z) \hphantom{^*} &= \left(\zeta_{z,1} + {\zeta_{z,1}}^{-1}\right) &+& \left(\zeta_{z,0} - {\zeta_{z,0}}^{-1}\right)\\
&= \hphantom{..} 2\cos(2\pi z_1) &+& \hphantom{..} 2\bi\sin(2\pi z_0)
\end{alignedat}\end{equation}
\begin{equation}\label{lambdak*}\begin{alignedat}{3}
\lambda(K^*,z) &= \left(\zeta_{z,1} + {\zeta_{z,1}}^{-1}\right) &-& \left(\zeta_{z,0} - {\zeta_{z,0}}^{-1}\right)\\
&= \hphantom{..} 2\cos(2\pi z_1) &-& \hphantom{..} 2\bi\sin(2\pi z_0)
\end{alignedat}\end{equation}

Let $n = 2m$ be $\card\left(\quotient{\Z^2}{L}\right)$, so $\mathcal{B}(L,q)$ and $\mathcal{W}(L,q)$ are both $m$ dimensional.
Choose $z_1, \cdots, z_m$ in different classes of $\quotient{q}{{L_0}^*}$.
For each $1 \leq i \leq m$, let $\zeta_i = \zeta_{z_i}$, $\zeta_i[b] = \zeta_i\raisebox{-.2em}{$\big|_{L_b}$}$ and $\zeta_i[w] = \zeta_i\raisebox{-.2em}{$\big|_{L_w}$}$.
We will denote by $E_b$  the ordered basis for $\mathcal{B}(L,q)$ given by the $\zeta_i[b]$, and by $E_w$ the ordered basis for $\mathcal{W}(L,q)$ given by the $\zeta_i[w]$.
We say these are \textit{exponential} bases\label{def:ebew}.

The discussion leading up to this point should make it clear that the matrix $K_E$\label{def:ke} representing $K$ from $E_b$ to $E_w$ is diagonal with entries $\lambda(K,z_i)$, and in similar fashion $K^*_E$ is diagonal with entries $\lambda(K^*,z_i)$.
We can thus calculate $\det(K_E)$ and $\det(K^*_E)$ with a simple product.
How do these determinants relate to that of the Kasteleyn matrix $K_D$ we had previously constructed?

Let $X(D_b,E_b)$\label{def:XAB} be the matrix that changes basis from $D_b$ to $E_b$, and similarly define $X(D_w,E_w)$.
Then it's clear that $K_D = X(D_w,E_w)^{-1} \cdot K_E \cdot X(D_b,E_b)$.
We will study the matrices $X$ in order to understand the relation between $\det(K_E)$ and $\det(K_D)$.

The $j$-th column of $X(D_b,E_b)$ is $\zeta_j[b]$ written in the ordered basis $D_b$.
It is easy to do so:
$$\zeta_j[b] = \sum\limits_{1 \leq i \leq m} \zeta_j[b](b_i) \cdot g[b_i].$$

Notice $\zeta_j[b](b_i)$ is simply $\zeta_j(b_i)$.
Thus, $X(D_b,E_b)_{i,j} = \zeta_j(b_i)$ and similarly $X(D_w,E_w)_{i,j} = \zeta_j(w_i)$.

Is there a relation between $\zeta_j(b_i)$ and $\zeta_j(w_i)$?
Not in principle --- they depend on our enumeration of colored vertices.
In Figure \ref{fig:kastenum}, we present a choice of vertex enumeration on $D_L$ that makes one such relation apparent.
\begin{figure}[ht]
		\centering
		\def\svgwidth{0.975\columnwidth}
    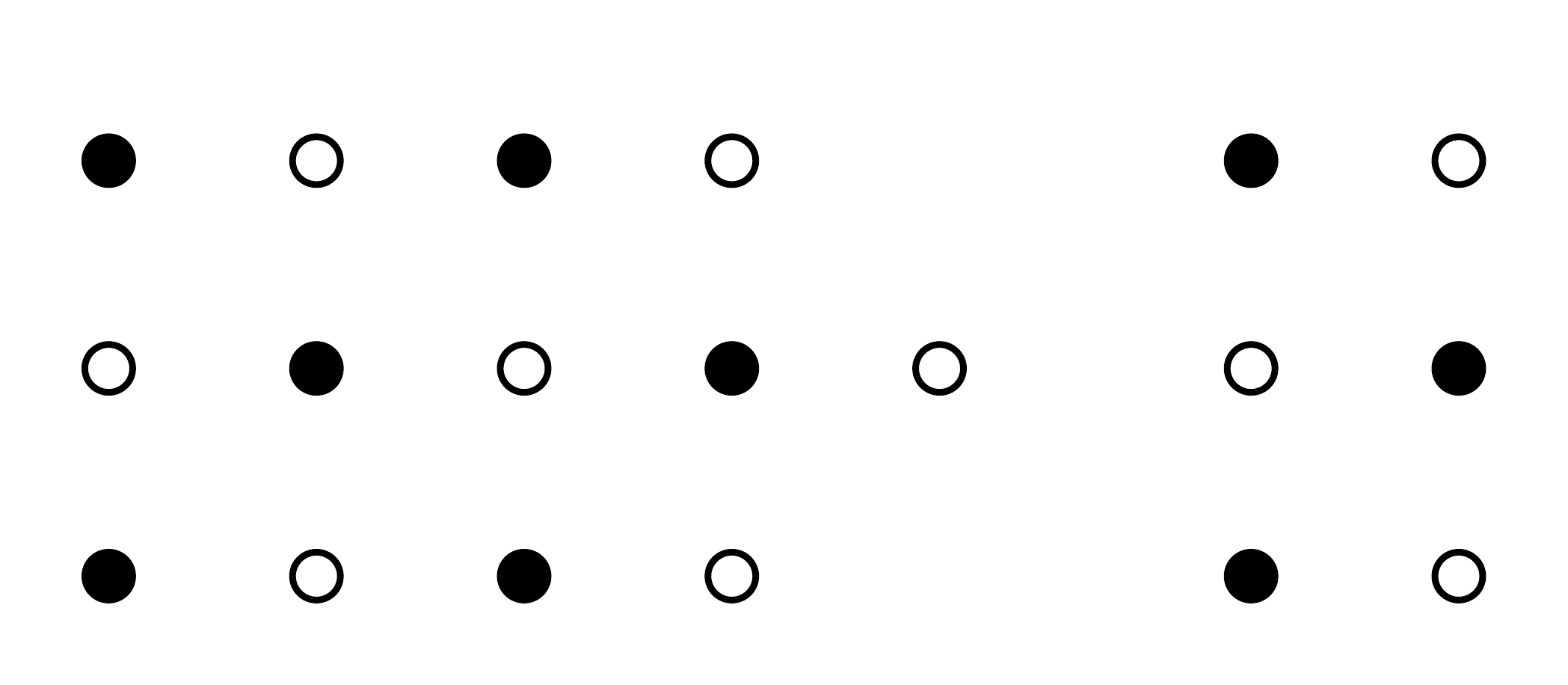
		\caption{Enumeration of $D_L$'s vertices. $b_1$ is the black square $[0,1]^2$. Notice this was used on the example of Figure \ref{kastex}.}
		\label{fig:kastenum}
\end{figure}

We explain it in words.
$D_L$ has $y_1$ lines with $x_0$ squares each.
There are two types of lines: black lines are lines whose first square (from left to right) is black, and similarly for white lines.
Notice line types alternate, and the first line (whose squares touch the horizontal line through the origin) is always black.
We assign 1 to the leftmost black square in that line, and the same number to the white square on its right.
If there's a black square to the right of that white square, we assign the next number, and so on until the line's squares are all labeled; we then proceed to the next line.
We repeat the process above, except this line is white, so we skip its first white square.
Because $x_0$ is always even, a white line 	always ends with a black square, and as we enumerated them, no white square lies to the right of it.
We then assign the first white square on that line (the one we skipped) the same number as its last, black square.
Repeating this procedure enumerates all squares on $D_L$.

With this enumeration, each white vertex $w_i$ satisfies $w_i = b_i + (1,0)$ \textbf{except} for white vertices in the beginning of a white line: these satisfy $w_i + v_0 = b_i + (1,0)$.
It's easy to see there are $\lfloor \frac{y_1}{2} \rfloor$ such vertices.
For vertices of the first kind, we have
\begin{equation*}
\zeta_j(w_i) = \zeta_j(b_i + (1,0)) = \zeta_j(b_i) \cdot \zeta_{j,0},
\end{equation*}
while for vertices of the second kind we have
\begin{equation*}
\zeta_j(w_i) = \zeta_j(b_i + (1,0) - v_0) = \zeta_j(b_i) \cdot \zeta_{j,0} \cdot q_1,
\end{equation*}
where the equality $\zeta_j(-v_0) = q_1$ comes from $\zeta_j\raisebox{-.2em}{$\big|_L$} = q$.

Thus, $X(D_w,E_w)$ is obtained from $X(D_b,E_b)$ by multiplying each column $j$ by $\zeta_{j,0}$ and $\lfloor \frac{y_1}{2} \rfloor$ of its lines --- the ones that correspond to indices $i$ for which $w_i$ is in the beginning of a white line --- by $q_1$.
It follows that
\begin{equation*}
\det\Big(X(D_w,E_w)\Big) = q_1^{^{\displaystyle \floor*{\tfrac{y_1}{2}}}} \cdot \left(\prod\limits_{j=1}^m{\zeta_{j,0}}\right) \cdot \det\Big(X(D_b,E_b)\Big),
\end{equation*}
so $\det(K_D) = \rho \cdot \det(K_E)$, where $\rho = q_1^{-\floor*{\frac{y_1}{2}}} \cdot \prod_{j=1}^m{(\zeta_{j,0})^{-1}}$\label{def:rho}. Notice $\rho \in \Sp^1$.

\section{Formulas for $\det(K_E)$, $\rho$ and uniform scaling}\label{sec:contas}

We will now make choices for our exponential bases from which we'll derive explicit formulas for $\det(K_E)$ and $\rho$.
Observe $L_0^*$ may be alternatively described (under inner product identification) as the lattice $\Z^2 \cup \big(\Z+\frac12\big)^2$, so by equation~\eqref{lambdak} each diagonal entry of $K_E$ is unique up to sign.

Because they're $L$-quasiperiodic with parameter $q$, the $\zeta_z$'s must satisfy:
\begin{equation*}\left\{ \begin{array}{l}
\begin{alignedat}{4}
\zeta_z(v_0) \enspace &= \enspace (\zeta_{z,0})^{x_0} \enspace &=& \enspace q_1^{-1}& \\
\zeta_z(v_1) \enspace &= \enspace (\zeta_{z,0})^{x_1} \cdot (\zeta_{z,1})^{y_1} \enspace &=& \enspace q_0&
\end{alignedat}\end{array} \right. \end{equation*}

There are $2m = x_0 \cdot y_1$ solutions --- twice the number of elements in a basis for $\mathcal{B}(L,q)$ or $\mathcal{W}(L,q)$.
If $q_0 = \exp(2\pi\bi\cdot u_0)$ and $q_1 = \exp(2\pi\bi\cdot u_1)$, these can be written as
\begin{equation*}\left\{ \begin{array}{l}
\begin{alignedat}{5}
\zeta_{z,0} \enspace &=& \enspace {\zeta[k_0,k_1]}_0 \enspace &=& \enspace &\exp\left(2\pi\bi\cdot\frac{1}{x_0}\cdot(k_0-u1)\right) \\
\zeta_{z,1} \enspace &=& \enspace {\zeta[k_0,k_1]}_1 \enspace &=& \enspace &\exp\left(2\pi\bi\cdot\frac{1}{x_0\cdot y_1}\cdot\Big((u_0+k_1)\cdot x_0 + (u_1-k_0)\cdot x_1\Big)\right)
\end{alignedat}\end{array} \right. \end{equation*}
where $0 \leq k_0 < x_0$ and $0 \leq k_1 < y_1$.

Let $E = \left\{\zeta[k_0,k_1]\text{ $|$ $0 \leq k_0 < \frac{1}{2}x_0$ and $0 \leq k_1 < y_1$}\right\}$.

\begin{prop}\label{basis}
$E$ defines a basis for each of $\mathcal{B}(L,q)$ and $\mathcal{W}(L,q)$.
\end{prop}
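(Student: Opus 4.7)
The plan is to show that the $m = \tfrac{1}{2} x_0 y_1$ functions in $E$ represent $m$ pairwise distinct classes of $\quotient{q}{{L_0}^*}$. By Proposition \ref{eigenkort}, their restrictions to $L_b$ (respectively $L_w$) will then be pairwise orthogonal and hence linearly independent; since each of $\mathcal{B}(L,q)$ and $\mathcal{W}(L,q)$ has dimension $m$ (the number of squares of each color in $D_L$), a dimension count will finish the argument.

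First I would observe that $\zeta_z$ depends on $z$ only modulo $\Z^2$, so the $2m$ admissible pairs $(k_0, k_1) \in [0, x_0) \times [0, y_1)$ form a complete set of representatives for the coset $\quotient{q}{\Z^2}$. A direct computation under the inner product identification gives $L_0^* = \{(x,y) \in \R^2 : 2x \in \Z,\ x+y \in \Z\}$; in particular $\Z^2 \subset L_0^*$ with index $2$, the nontrivial $\Z^2$-coset in $L_0^*$ being represented by $\left(\tfrac12, \tfrac12\right)$. Hence the natural surjection $\quotient{q}{\Z^2} \twoheadrightarrow \quotient{q}{L_0^*}$ is $2$-to-$1$, and two parameters $(k_0, k_1), (k_0', k_1')$ give the same class in $\quotient{q}{L_0^*}$ if and only if either $(k_0, k_1) = (k_0', k_1')$ or $z - z' \equiv \left(\tfrac12, \tfrac12\right) \Mod{\Z^2}$.

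For the latter alternative, translating the condition $z_0 - z_0' \equiv \tfrac12 \Mod{1}$ via the explicit formula for ${\zeta[k_0, k_1]}_0$ yields $k_0 - k_0' \equiv \tfrac{x_0}{2} \Mod{x_0}$. The key observation is then immediate: whenever $(k_0, k_1), (k_0', k_1') \in E$, both $k_0, k_0'$ lie in $\left[0, \tfrac{x_0}{2}\right)$, so $k_0 - k_0'$ lies strictly in $\left(-\tfrac{x_0}{2}, \tfrac{x_0}{2}\right)$ and cannot equal $\pm \tfrac{x_0}{2}$. Thus no two distinct elements of $E$ lie in the same class of $\quotient{q}{L_0^*}$, as required.

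The main obstacle will be the bookkeeping among the various lattice quotients ($\Z^2$, $L$, $L_0$, $L^*$, $L_0^*$); once the identification $\quotient{L_0^*}{\Z^2} \simeq \quotient{\Z}{2\Z}$ is in place, the remainder reduces to a one-line congruence check on the $k_0$ coordinate.
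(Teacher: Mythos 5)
Your proof is correct and follows essentially the same route as the paper's: both reduce the statement to showing that the elements of $E$ lie in pairwise distinct classes of $\quotient{q}{{L_0}^*}$, then invoke Proposition \ref{eigenkort} together with the dimension count $\dim \mathcal{B}(L,q) = \dim \mathcal{W}(L,q) = m$. The only difference is in the bookkeeping of that distinctness check: the paper evaluates $\zeta[k_0,k_1]\cdot\big(\zeta[l_0,l_1]\big)^{-1}$ directly on the generators $(2,0)$ and $(1,1)$ of $L_0$, whereas you factor through the index-two inclusion $\Z^2 \subset {L_0}^*$ and reduce everything to the single congruence $k_0 - k_0' \equiv \tfrac{x_0}{2} \Mod{x_0}$, which the restriction $0 \leq k_0, k_0' < \tfrac{x_0}{2}$ rules out.
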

\begin{proof}
By Proposition \ref{eigenkort} and the discussion preceding it, if for all $0 \leq k_0,l_0 < \frac{1}{2}x_0$ and for all $0 \leq k_1, l_1 < y_1$ it holds that
\begin{equation*}
\left(\exists c\in \C, \enspace \zeta[k_0,k_1]\raisebox{-.2em}{$\big|_{L_b}$} = c \cdot \zeta[l_0,l_1]\raisebox{-.2em}{$\big|_{L_b}$}\right) \longrightarrow k_0=l_0 \text{ and }  k_1 = l_1,
\end{equation*}
then $E$ defines a basis for $\mathcal{B}(L,q)$, and similarly for $\mathcal{W}(L,q)$.
As before, in both cases there is one such $c$ if and only if $\zeta[k_0,k_1] \cdot \big(\zeta[l_0,l_1]\big)^{-1} = \mathbbm{1}$ on $L_0 = \text{span}\{(2,0),(1,1)\}$.
On the other hand, we have
\begin{subequations}
\begin{flalign}\label{basis0}
\left(\zeta[k_0,k_1] \cdot \big(\zeta[l_0,l_1]\big)^{-1}\right)(2,0) = \exp\left(2\pi\bi\cdot\frac{2}{x_0}\cdot(k_0-l_0)\right)&&
\end{flalign}
\begin{equation}\label{basis1}
\begin{split}
\begin{flalign*}
\left(\zeta[k_0,k_1] \cdot \big(\zeta[l_0,l_1]\big)^{-1}\right)(1,1) = &&
\end{flalign*}
\\
\begin{flalign}
&&\exp\left(2\pi\bi\cdot\frac{1}{x_0\cdot y_1}\cdot\Big((k_0-l_0)\cdot(y_1-x_1) + (k_1-l_1)\cdot x_0\Big)\right)
\end{flalign}
\end{split}
\end{equation}
\end{subequations}

Now, $-\frac{1}{2}x_0 < k_0 - l_0 < \frac{1}{2}x_0$, so if~\eqref{basis0} is 1 then $k_0 - l_0 = 0$.
In this case,~\eqref{basis1} reduces to $\exp\left(2\pi\bi \cdot\tfrac{1}{y_1}\cdot(k_1 - l_1)\right)$.
Since $-y_1 < k_1 - l_1 < y_1$, if~\eqref{basis0} and~\eqref{basis1} are both 1, then $k_0 = l_0$ and $k_1 = l_1$, as desired.
\end{proof}

With these bases and in the obvious notation, we have that
\begin{align*}\lambda(k_0,k_1) = \enspace &2\cos\left(2\pi\cdot\frac{1}{x_0\cdot y_1}\cdot\Big( (u_0 + k_1) \cdot x_0 + (u_1 - k_0) \cdot x_1 \Big) \right)\\
 + \enspace &2\bi\sin\left(2\pi\cdot\frac{1}{x_0} \cdot (k_0 - u_1) \right),\end{align*}
so $\det(K_E) = \prod_{0 \leq k_0 < \frac12 x_0}{\prod_{0 \leq k_1 < y_1}{\lambda(k_0,k_1)}}$. The term $\prod_{j=1}^m{(\zeta_{j,0})^{-1}}$ in the complex phase $\rho$ also admits a simple formula:$\vphantom{^{^{^{^{}}}}}$
$$\prod\limits_{0 \leq k_0 < \frac{1}{2}x_0}{\prod\limits_{0 \leq k_1 <y_1}{\big({\zeta[k_0,k_1]}_0\big)^{-1}}} = \exp\left(2\pi\bi\cdot\frac{y_1}{2}\cdot\left(u_1 + \frac12 - \frac{x_0}{4}\right)\right)$$

Using this, we may write\label{def:rho12}
$$\rho = \underbrace{\exp\left(2\pi\bi\cdot\frac{y_1}{2}\cdot\left(\frac12 - \frac{x_0}{4}\right)\right)}_{\displaystyle \rho_1\vphantom{^{^{}}}} \cdot \underbrace{\vphantom{\bigg(}\exp\left(2\pi\bi \cdot u_1 \cdot \left(\frac{y_1}{2}-\floor*{\frac{y_1}{2}}\right) \right)}_{\displaystyle \rho_2\vphantom{^{^{}}}}$$

Notice $\rho_2 = 1$ whenever $y_1$ is even.
When $y_1$ is odd, $\rho_2$ is a square root of $q_1 = \exp(2\pi\bi\cdot u_1)$.
If we restrict $u_1$ to lie on the interval $[0,1)$ --- that is, if $\arg(q_1) \in [0,2\pi)$ ---, then $\rho_2$ is the square root of $q_1$ in the upper half-plane that is not $-1$.

When $y_1$ is even, $\rho_1 = 1$ except when $x_0 \equiv 0 \Mod{4}$ and $y_1 \equiv 2 \Mod{4}$; in this case $\rho_1 = -1$. In particular, $\rho = \pm 1$ whenever $y_1$ is even.

When $y_1$ is odd, there are more cases for $\rho_1$:
\begin{equation*}\left\{
\begin{array}{lcl}
x_0 \equiv 2 \Mod{8} &\Longrightarrow &\rho_1 = +1 \\
x_0 \equiv 6 \Mod{8} &\Longrightarrow &\rho_1 = -1 \\
x_0 \equiv 0 \Mod{8}\text{, } y_1 \equiv 1 \Mod{4} &\Longrightarrow &\rho_1 = +\bi \\
x_0 \equiv 0 \Mod{8}\text{, } y_1 \equiv 3 \Mod{4} &\Longrightarrow &\rho_1 = -\bi \\
x_0 \equiv 4 \Mod{8}\text{, } y_1 \equiv 1 \Mod{4} &\Longrightarrow &\rho_1 = -\bi \\
x_0 \equiv 4 \Mod{8}\text{, } y_1 \equiv 3 \Mod{4} &\Longrightarrow &\rho_1 = +\bi
\end{array}\right.
\end{equation*}

An interesting fact is that when $y_1$ is even, $\det(K_E)$ is always real, regardless of the values of $q_0, q_1 \in \Sp^1$.
Indeed, it's easy to check that for each $0 \leq k_1 < \frac12 y_1$ it holds that
$$\lambda(k_0,k_1) = - \overline{\lambda\left(k_0,k_1+\tfrac12 y_1\right)},$$
so in this case $\det(K_E) = \prod_{0 \leq k_0 < \frac12 x_0}{\prod_{0 \leq k_1 < \frac12 y_1}{-{\lvert\lambda(k_0,k_1)\rvert}^2}}$, with sign given by $(-1)^{\frac14x_0\cdot y_1}$.
In particular, taking $\rho$ into account we conclude $\det(K_D) \leq 0$ whenever $y_1 \equiv 0 \Mod{4}$ and $\det(K_D) \geq 0$ whenever $y_1 \equiv 2 \Mod{4}$.

These formulas allow us to better understand what happens as some uniform scaling dilates the lattice $L$, and this will be the content of our next result.
Observe that for any valid lattice $L$, the numbers $x_0,x_1$ and $y_1$ are uniquely defined, and vice-versa.
Let $p_{[L,E]}: \R^2 \longrightarrow \C$ be defined by $p_{[L,E]}(u_0,u_1) = \det\big(K_E(u_0,u_1)\big)$, where $K_E$ is the Kasteleyn matrix for $L$ represented in our choice exponential bases and $u_0, u_1$ are the arguments of $q_0,q_1$ as above.

\begin{prop}\label{plperiod}
For any valid lattice $L$, $p_{[L,E]}$ satisfies the following periodicity relations:
\begin{alignat*}{3}
p_{[L,E]}(u_0+1,u_1) &= &p_{[L,E]}(u_0,u_1)& \\
p_{[L,E]}(u_0,u_1+1) &= (-1)^{y_1} \cdot \enspace &p_{[L,E]}(u_0,u_1)&
\end{alignat*}
In particular, we always have $p_{[L,E]}(u_0,u_1+2) = p_{[L,E]}(u_0,u_1)$.
\end{prop}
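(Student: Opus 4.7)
The plan is to work directly with the explicit product formula
$$p_{[L,E]}(u_0,u_1) = \prod_{0 \leq k_0 < \frac12 x_0}\prod_{0 \leq k_1 < y_1}\lambda(k_0,k_1;u_0,u_1),$$
and to establish each periodicity by exhibiting an index shift that rearranges the factors (up to a global sign, in the $u_1$-case). First I would record how $\lambda(k_0,k_1)$ transforms under the unit translations: since $u_0$ appears only inside the cosine argument through $\tfrac{u_0+k_1}{y_1}$, while $u_1$ appears both there through $\tfrac{(u_1-k_0)x_1}{x_0 y_1}$ and in the sine argument through $\tfrac{k_0-u_1}{x_0}$, direct inspection yields
$$\lambda(k_0,k_1)\big|_{u_0+1,u_1} = \lambda(k_0,k_1+1)\big|_{u_0,u_1}, \qquad \lambda(k_0,k_1)\big|_{u_0,u_1+1} = \lambda(k_0-1,k_1)\big|_{u_0,u_1}.$$

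The $u_0$-periodicity then falls out immediately: $\lambda$ is $y_1$-periodic in $k_1$ (shifting $k_1$ by $y_1$ shifts the cosine argument by $2\pi$), so $k_1 \mapsto k_1+1$ is a bijection of $\Z/y_1\Z$ and the full double product is unchanged. The $u_1$-case is subtler, because the shift $k_0 \mapsto k_0 - 1$ takes the range $\{0,\ldots,\tfrac12 x_0 - 1\}$ to $\{-1,\ldots,\tfrac12 x_0 - 2\}$, differing from the original by a single column. The strategy will be to relate the missing column $k_0 = \tfrac12 x_0 - 1$ to the extra column $k_0 = -1$ via a hidden symmetry of $\lambda$. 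Specifically, I would establish
$$\lambda\big(k_0 + \tfrac12 x_0,\, k_1 + m_0\big) = -\lambda(k_0,k_1),$$
where $m_0$ is any integer solution to $2m_0 \equiv x_1 + y_1 \pmod{2y_1}$; such $m_0$ exists precisely because $L$ is valid, so $x_1+y_1$ is even. This identity is again a direct computation: the shift moves $z_0$ by $\tfrac12$ and $z_1$ by $\tfrac12$ modulo $1$, flipping the signs of both $\sin(2\pi z_0)$ and $\cos(2\pi z_1)$.

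Combined with the easily verified equality $\lambda(k_0+x_0, k_1+x_1) = \lambda(k_0,k_1)$, the displayed symmetry rearranges to
$$\lambda(-1,\, k_1 + m_0 - x_1) = -\lambda\big(\tfrac12 x_0 - 1,\, k_1\big).$$
Taking the product over $0 \leq k_1 < y_1$ and noting that $k_1 \mapsto k_1 + m_0 - x_1$ is a bijection of $\Z/y_1\Z$ (by $y_1$-periodicity of $\lambda$ in $k_1$) collects exactly $y_1$ minus signs, giving $p_{[L,E]}(u_0,u_1+1) = (-1)^{y_1}\,p_{[L,E]}(u_0,u_1)$ as required. The main obstacle is isolating the right symmetry of $\lambda$ in the $u_1$-case and noticing that the validity hypothesis on $L$ is exactly what is needed for $m_0 \in \Z$ to exist; once that is in place, the rest is rearrangement of products.
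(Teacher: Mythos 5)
Your proof is correct, but it takes a genuinely different route from the one in the paper. The paper does not touch the product formula at all: it writes $p_{[L,E]}(u_0,u_1) = \det\big(K_D(u_0,u_1)\big)\cdot\rho^{-1}$, observes that $K_D$ depends on $(u_0,u_1)$ only through $q_0=\exp(2\pi\bi u_0)$ and $q_1=\exp(2\pi\bi u_1)$ and is therefore invariant under integer translations, and then reads the sign off the explicit phase $\rho=\rho_1\rho_2$ (with $\rho_1$ constant and $\rho_2=\exp(2\pi\bi\cdot u_1(\tfrac{y_1}{2}-\lfloor\tfrac{y_1}{2}\rfloor))$, which picks up a factor $-1$ under $u_1\mapsto u_1+1$ exactly when $y_1$ is odd). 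That argument is two lines long but rests entirely on the earlier computation of $\rho$. Your argument instead stays inside the eigenvalue product: the transformation rules $\lambda(k_0,k_1)\big|_{u_0+1}=\lambda(k_0,k_1+1)$ and $\lambda(k_0,k_1)\big|_{u_1+1}=\lambda(k_0-1,k_1)$ are correct, the $y_1$-periodicity in $k_1$ and the identity $\lambda(k_0+x_0,k_1+x_1)=\lambda(k_0,k_1)$ check out, and the key symmetry $\lambda(k_0+\tfrac12 x_0,k_1+m_0)=-\lambda(k_0,k_1)$ holds with $2m_0\equiv x_1+y_1\pmod{2y_1}$, since the shift moves $z_0$ by $\tfrac12$ and $z_1$ by $\tfrac{2m_0-x_1}{2y_1}\equiv\tfrac12\pmod 1$; your observation that validity of $L$ (parity of $x_1+y_1$) is precisely what makes $m_0$ an integer is a nice point the paper's proof never has to confront. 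The bookkeeping then correctly collects $y_1$ minus signs from the one displaced column. What your approach buys is independence from the $\rho$ computation and an explicit exhibition of the symmetry of the spectrum responsible for the sign; what it costs is length, since the paper's factorization through $\det(K_D)$ makes the periodicity essentially tautological.
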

\begin{proof}
In the obvious notation, we have that $p_{[L,E]}(u_0,u_1) = \det\big(K_D(u_0,u_1)\big) \cdot \rho^{-1}$, and it's clear $K_D(u_0+a,u_1+b) = K_D(u_0,u_1)$ whenever $a,b \in \Z$.
We thus need only study how $\rho$ varies with $u_0,u_1$.

Inspecting the formula $\rho = \rho_1 \cdot \rho_2$ above, we see that $\rho_1$ depends only on $L$ (and not on $u_0,u_1$) and $\rho_2$ depends only on $u_1$, so the periodicity in $u_0$ is proved.
Now $\rho_2(u_1)$ is always 1 when $y_1$ is even, so the relation holds in this case.
When $y_1$ is odd, $\rho_2 = \exp\big(2\pi\bi \cdot \frac{1}{2}u_1\big)$ and the relation also is true.
\end{proof}

\begin{theo}\label{pkn}
Let $L$ be a valid lattice.
For any positive integer $n$ and reals $u_0, u_1$ $$p_{[nL,E]}(n\cdot u_0, n\cdot u_1) = \prod\limits_{0 \leq i,j < n} p_{[L,E]}\left(u_0+\frac{i}{n},u_1 - \frac{j}{n}\right).$$
\end{theo}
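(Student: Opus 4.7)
The plan is to reduce Theorem \ref{pkn} to a direct equality of multisets of eigenvalues, using the explicit diagonal form of $K_E$ derived in Section \ref{sec:contas}. Since $K_E$ is diagonal in any exponential basis, $p_{[L,E]}(u_0,u_1)$ is just the product of its $\tfrac12 x_0 y_1$ diagonal entries
$$\lambda_{k_0,k_1}(u_0,u_1) = 2\cos\!\Big(\tfrac{2\pi}{x_0 y_1}\big((u_0+k_1)x_0 + (u_1-k_0)x_1\big)\Big) + 2\bi\,\sin\!\Big(\tfrac{2\pi}{x_0}(k_0-u_1)\Big),$$
indexed by $0 \le k_0 < x_0/2$ and $0 \le k_1 < y_1$. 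The right-hand side of the theorem is thus a product of $n^2 \cdot \tfrac12 x_0 y_1 = \tfrac12(nx_0)(ny_1)$ complex numbers, which agrees with the number of eigenvalues of $K_E[nL]$.

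First I would note that the canonical rectangular domain $D_{nL}$ has parameters $(n x_0, n x_1, n y_1)$: the vectors $n v_0 = (n x_0, 0)$ and $n v_1 = (n x_1, n y_1)$ generate $nL$ and still satisfy $0 \le n x_1 < n x_0$. Writing the corresponding diagonal entries at the point $(nu_0,nu_1)$ gives
$$\Lambda_{K_0,K_1} = 2\cos\!\Big(\tfrac{2\pi}{n x_0 y_1}\big((n u_0 + K_1)x_0 + (n u_1 - K_0)x_1\big)\Big) + 2\bi\,\sin\!\Big(\tfrac{2\pi}{n x_0}(K_0 - n u_1)\Big),$$
for $0 \le K_0 < n x_0/2$ and $0 \le K_1 < n y_1$. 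The key step is then the reindexing $K_0 = n k_0 + j$ and $K_1 = n k_1 + i$ with $0 \le k_0 < x_0/2$, $0 \le k_1 < y_1$ and $0 \le i,j < n$. Euclidean division shows that this is a bijection from the product index set onto $\{0,\dots,nx_0/2-1\}\times\{0,\dots,ny_1-1\}$. Substituting, the $\sin$ argument of $\Lambda_{nk_0+j,\,nk_1+i}$ simplifies to $\tfrac{2\pi}{x_0}(k_0 + j/n - u_1)$, which is exactly the $\sin$ argument of $\lambda_{k_0,k_1}(u_0 + i/n,\, u_1 - j/n)$; a parallel calculation matches the $\cos$ arguments. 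Hence
$$\Lambda_{n k_0 + j,\, n k_1 + i} = \lambda_{k_0,k_1}\!\left(u_0 + \tfrac{i}{n},\, u_1 - \tfrac{j}{n}\right),$$
and grouping the product first over $(k_0,k_1)$ and then over $(i,j)$ yields the claimed factorization.

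The main obstacle is not conceptual but purely combinatorial bookkeeping: one must verify the reindexing really is a bijection (crucially, $nx_0/2 = n\cdot(x_0/2)$ because $x_0$ is even, so no parity issue arises), and that the shifts $u_0 \mapsto u_0 + i/n$ and $u_1 \mapsto u_1 - j/n$ appear attached to the correct indices and with the correct signs inside both the $\cos$ and the $\sin$. Once this is set up correctly, the identity drops out by inspection, and no further ideas are required; in particular, the complex phase $\rho$ does not enter because the theorem is stated for $K_E$ rather than $K_D$.
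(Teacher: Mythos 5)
Your proposal is correct and follows essentially the same route as the paper's proof: write $p_{[nL,E]}(nu_0,nu_1)$ as the product of the diagonal entries $\lambda_n(k_0,k_1)$ for the lattice $nL$ (generated by $nv_0, nv_1$), reindex by Euclidean division $k_0 = nl_0 + j$, $k_1 = nl_1 + i$, and observe that each factor equals $\lambda(l_0,l_1)$ evaluated at $\left(u_0+\frac{i}{n}, u_1-\frac{j}{n}\right)$. The simplifications of the $\cos$ and $\sin$ arguments you describe match the paper's computation exactly, so nothing further is needed.
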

\begin{proof}
Let $L$ be generated by $v_0 = (x_0,0)$ and $v_1=(x_1,y_1)$, with $x_0, y_1 > 0$ and $0 \leq x_1 < x_0$.
Of course, in this case $nL$ is generated by $n\cdot v_0$ and $n\cdot v_1$.
Applying our formulas to $nL$ yields
\begin{equation*}
p_{[nL,E]}(n\cdot u_0, n\cdot u_1) = \prod_{\mathclap{\substack{\setlength{\jot}{-0.8\baselineskip}\begin{aligned}\scriptstyle\hphantom{.....} &\scriptstyle 0\leq k_0 < \frac{1}{2} nx_0 \\ &\scriptstyle0 \leq k_1 < n y_1\end{aligned}}}}{\hphantom{..}\lambda_n(k_0,k_1),}
\end{equation*}
where $\lambda_n(k_0,k_1)$ is given by
\begin{alignat*}{3}
2\cos &\left( 2\pi\cdot\frac{1}{nx_0\cdot ny_1}\cdot\Big( (nu_0 + k_1) \cdot nx_0 + (nu_1 - k_0) \cdot nx_1 \Big) \right)\enspace \\
+ \enspace 2\bi\sin &\left( 2\pi\cdot\frac{1}{nx_0} \cdot (k_0 - nu_1) \right) \\
= \hphantom{+} \enspace 2\cos &\left( 2\pi\cdot\frac{1}{x_0\cdot y_1} \cdot \left( \left(u_0 + \frac{k_1}{n}\right) \cdot x_0 + \left(u_1 - \frac{k_0}{n}\right) \cdot x_1 \right) \right) \enspace \\
+ \enspace 2\bi\sin &\left( 2\pi\cdot\frac{1}{x_0} \cdot \left(\frac{k_0}{n} - u_1 \right)\right).
\end{alignat*}

Now, for each $0\leq k_0 < \frac{1}{2} nx_0$ there are unique integers $0 \leq l_0 < \frac12 x_0$ and $0 \leq j < n$ with $k_0 = n\cdot l_0 + j$ (division with remainder).
In similar fashion, $k_1 = n \cdot l_1 + i$, where $0 \leq l_1 < y_1$ and $0 \leq i < n$ are unique integers.
We may then rewrite $\lambda_n(k_0,k_1)$ as $\lambda(l_0,l_1,i,j)$:
\begin{align*}
2\cos &\left(\frac{2\pi}{x_0\cdot y_1} \cdot \left( \left[\left(u_0 + \frac{i}{n}\right) + l_1\right] \cdot x_0 + \left[\left(u_1 -\frac{j}{n}\right) - l_0\right] \cdot x_1 \right) \right) \\
+ \enspace 2\bi\sin &\left(\frac{2\pi}{x_0} \cdot \left[l_0 - \left( u_1 - \frac{j}{n}\right)\right]\right).
\end{align*}

It's easy to see the $l_0$'s and $j$'s are in bijection with the $k_0$'s, and similarly for the $l_1$'s and $i$'s with the $k_1$'s.
It follows that:
\begin{equation*}
p_{[nL,E]}(n\cdot u_0, n\cdot u_1) = \prod\limits_{\vphantom{^{^{^{.}}}}0 \leq i, j < n}{\hphantom{..}\prod_{\mathclap{\substack{\setlength{\jot}{-0.8\baselineskip}\begin{aligned}\scriptstyle\hphantom{....} &\scriptstyle 0\leq l_0 < \frac{1}{2} x_0 \\ &\scriptstyle0 \leq l_1 < y_1\end{aligned}}}}{\hphantom{...}\lambda(l_0,l_1,i,j).}}
\end{equation*}

The theorem follows from observing that
$$\prod_{\mathclap{\substack{\setlength{\jot}{-0.8\baselineskip}\begin{aligned}\scriptstyle\hphantom{....} &\scriptstyle 0\leq l_0 < \frac{1}{2} x_0 \\ &\scriptstyle0 \leq l_1 < y_1\end{aligned}}}}{\hphantom{...}\lambda(l_0,l_1,i,j),} = p_{[L,E]}\left(u_0+\frac{i}{n},u_1 - \frac{j}{n}\right).$$
\end{proof}

Intuitively, Theorem \ref{pkn} says $\det\big({K[nL]}_E(q_0,q_1)\big)$ can be obtained from determinants of ${K[L]}_E$ by considering all $n$-th roots of $q_0$ and of $q_1$.
We can make this more precise.

We will say $L$\label{def:latticeoddeven} is odd if $y_1$ is odd, and $L$ is even if $y_1$ is even.
Let $P_{[L,E]}: \Sp^1 \times \Sp^1 \longrightarrow \C$ be given by $P_{[L,E]}(q_0,q_1) = p_{[L,E]}(u_0,u_1) \cdot \rho_2(L,u_1)$, where $q_i = \exp(2\pi\bi \cdot u_i)$.
In other words:
\begin{equation*}
P_{[L,E]}(q_0,q_1) = \left\{ \begin{array}{ll}
p_{[L,E]}(u_0,u_1)&\text{if $L$ is even;}\\
p_{[L,E]}(u_0,u_1)\cdot \exp(\pi\bi \cdot u_1)&\text{if $L$ is odd.}
\end{array}\right.
\end{equation*}

Notice that Proposition \ref{plperiod} ensures $P_{[L,E]}$ is well-defined.
Because $p_{[L,E]}(u_0,u_1) = \det(K_D(u_0,u_1))\cdot \rho^{-1}$, we have that:
\begin{equation}\label{pdetk}
\begin{alignedat}{2}
P_{[L,E]}(q_0,q_1) =  p_{[L,E]}(u_0,u_1) \cdot \rho_2(L,u_1) &= \det\big(K_D(u_0,u_1)\big) \cdot &{\rho_1(L)}^{-1} \\
& = \det\big(K_D(q_0,q_1)\big) \cdot &{\rho_1(L)}^{-1}
\end{alignedat}
\end{equation}

This means that, except for the complex phase ${\rho_1(L)}^{-1}$ --- which does not depend on $q_0$ or $q_1$ ---, $P_{[L,E]}$ is in fact the initial Laurent polynomial $P_K$ we calculated from our matrix $K_D$. 
In other words, the coefficients of $P_{[L,E]}$ are the Fourier coefficients of $p_{[L,E]}$, so that in particular $p_{[L,E]}$ has finitely many nonzero Fourier coefficients.
We hope to further consider this point of view in future work.

Of course, $P_{[L,E]}$ also admits its own version of Theorem \ref{pkn}.
Observe that $P_{[nL,E]}({q_0}^n,{q_1}^n)$ equals:
\begin{alignat*}{3}
 \enspace &\rho_2(nL,n\cdot u_1) \enspace  &\cdot& \enspace p_{[nL,E]}(n\cdot u_0,n \cdot u_1) \\
= \enspace &\rho_2(nL,n\cdot u_1) \enspace &\cdot& \prod\limits_{0 \leq i,j < n}{ p_{[L,E]}\left(u_0+\frac{i}{n},u_1 - \frac{j}{n}\right)} \\
= \enspace &\rho_2(nL,n\cdot u_1) \enspace  &\cdot& \prod\limits_{0 \leq i,j < n}{ P_{[L,E]}\left(q_0 \cdot \zeta^i,q_1 \cdot \zeta^{-j}\right) \cdot {\rho_2\left(L, u_1-\frac{j}{n}\right)}^{-1}},
\end{alignat*}
where $\zeta = \exp\left(2\pi\bi\frac{1}{n}\right)$.

When $L$ is even, $\rho_2(nL,n\cdot u_1)$ and the product $\prod_{0 \leq i,j < n}{{\rho_2\left(L, u_1-\frac{j}{n}\right)}^{-1}}$ are both trivially 1.
When $L$ is odd, it holds that
\begin{alignat*}{1}
\prod\limits_{0 \leq i,j < n}{{\rho_2\left(L, u_1-\frac{j}{n}\right)}^{-1}} &= \prod\limits_{0 \leq i,j < n}{\exp(-\pi\bi\cdot u_1)\cdot \exp\left(\pi\bi\cdot\tfrac{j}{n}\right)} \\
&= \exp\left(-n^2 \pi\bi\cdot u_1\right) \cdot \exp\left(\pi\bi \cdot \tfrac12 n (n-1)\right).
\end{alignat*}

If $n$ is even, $\rho_2(nL,n\cdot u_1) = 1$.
When both $n$ and $L$ are odd, $\rho_2(nL,n\cdot u_1) = \exp(n\pi\bi \cdot u_1)$ and we have that
\begin{equation*}
\begin{split}
\rho_2(nL,n\cdot u_1) \cdot \prod\limits_{0 \leq i,j < n}{{\rho_2\left(L, u_1-\frac{j}{n}\right)}^{-1}} \\
= \exp(-\pi\bi \cdot n(n-1) \cdot u_1) \cdot \exp\left(\pi\bi \cdot \tfrac12 n (n-1)\right)
\end{split}
\end{equation*}

Observe how in each case the value of the product $\rho_2(nL,n\cdot u_1) \cdot \prod_{0 \leq i,j < n}{{\rho_2\left(L, u_1-\frac{j}{n}\right)}^{-1}}$ does not depend on the choice of $u_1$ for which ${q_1}^n = \exp(2\pi\bi \cdot nu_1)$.

We can summarize these findings with
$$P_{[nL,E]}({q_0}^n,{q_1}^n) = \mu_0(q_1,n,L) \cdot \prod\limits_{0 \leq i,j < n}{ P_{[L,E]}\left(q_0 \cdot \zeta^i,q_1 \cdot \zeta^{-j}\right)},$$
\begin{flalign*}
\text{where $\mu_0(q_1,n,L) =$ }\left\{ \begin{array}{ll}
\begin{alignedat}{1}
&1 \\
&{q_1}^{-\tfrac12 n^2}  \\
-&{q_1}^{-\tfrac12 n^2} \\
&{q_1}^{-\tfrac12 n(n-1)} \\
-&{q_1}^{-\tfrac12 n(n-1)}
\end{alignedat}
&
\begin{alignedat}{1}
&\text{if $L$ is even;}\vphantom{1} \\
&\text{if $L$ is odd and $n \equiv 0 \Mod{4}$;}\vphantom{{q_1}^{-\tfrac12 n^2}} \\
&\text{if $L$ is odd and $n \equiv 2 \Mod{4}$;}\vphantom{{q_1}^{-\tfrac12 n^2}}  \\
&\text{if $L$ is odd and $n \equiv 1 \Mod{4}$;}\vphantom{{q_1}^{-\tfrac12 n(n-1)}} \\
&\text{if $L$ is odd and $n \equiv 3 \Mod{4}$.}\vphantom{{q_1}^{-\tfrac12 n(n-1)}}
\end{alignedat}
\end{array}\right.&&
\end{flalign*}

A more elegant expression can be given.
Using equation~\eqref{pdetk} we wite:
\begin{flalign*}
\mathrlap{\det\Big(K_D[nL]\left({q_0}^n,{q_1}^n\right)\Big) =}&&\\
&&\mu_0(q_1,n,L) \cdot \rho_1(nL) \cdot {\rho_1(L)}^{-n^2} \cdot \prod\limits_{0 \leq i,j < n}{ \det\Big(K_D[L]\left(q_0 \cdot \zeta^i,q_1 \cdot \zeta^{-j}\right)\Big)}
\end{flalign*}

Notice we always have ${\rho_1(L)}^4 = 1$, so ${\rho_1(L)}^{-n^2}$ is 1 whenever $n$ is even, and it is ${\rho_1(L)}^{-1}$ whenever $n$ is odd.
Letting $\mu_1(q_1,n,L) = \mu_0(q_1,n,L) \cdot \rho_1(nL) \cdot {\rho_1(L)}^{-n^2}$, it is not hard to check that
\begin{equation*}
\mu_1(q_1,n,L) = \left\{ \begin{array}{ll}
\begin{alignedat}{1}
&1 \\
&{q_1}^{-\tfrac12 n^2}  \\
&{q_1}^{-\tfrac12 n(n-1)} \\
\end{alignedat}
&
\begin{alignedat}{1}
&\text{if $L$ is even;}\vphantom{1} \\
&\text{if $L$ is odd and $n$ is even;} \vphantom{{q_1}^{-\tfrac12 n^2}}\\
&\text{if $L$ is odd and $n$ is odd.} \vphantom{{q_1}^{-\tfrac12 n(n-1)}}
\end{alignedat}
\end{array}\right.
\end{equation*}

Note how in each case the exponent of $q_1$ is an integer --- there is no ambiguity with choosing square roots.

\begin{corolario}Let $\mu_1$ be as above.
Then for each $q_0,q_1 \in \Sp^1$
\begin{equation*}
\det\Big(K_D[nL]\left({q_0}^n,{q_1}^n\right)\Big) = \mu_1(q_1,n,L) \cdot \prod\limits_{0 \leq i,j < n}{ \det\Big(K_D[L]\left(q_0 \cdot \zeta^i,q_1 \cdot \zeta^{-j}\right)\Big)},
\end{equation*}where $\zeta = \exp\left(2\pi\bi \frac{1}{n}\right)$ is an $n$-th root of unity.
\end{corolario}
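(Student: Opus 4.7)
The plan is to exploit the fact that, once $K_E$ is diagonal, $p_{[L,E]}$ is simply the explicit product of eigenvalues worked out in Section \ref{sec:contas}, and the claimed identity reduces to a bookkeeping match between two such products via division with remainder.

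First I would write out both sides using the explicit formula for $p_{[L,E]}$. For the lattice $L$ with generators $v_0 = (x_0,0)$ and $v_1 = (x_1,y_1)$, we have
\[
p_{[L,E]}(u_0,u_1) = \prod_{0\leq l_0 < \frac12 x_0} \prod_{0\leq l_1 < y_1} \lambda(l_0,l_1),
\]
where $\lambda(l_0,l_1)$ is given in terms of cosine and sine as derived just after Proposition \ref{basis}. Apply this same formula to $nL$, whose generators are $n v_0 = (nx_0,0)$ and $nv_1 = (nx_1, ny_1)$, evaluated at $(nu_0, nu_1)$. The resulting eigenvalues $\lambda_n(k_0,k_1)$ are indexed by $0 \leq k_0 < \frac12 nx_0$ and $0 \leq k_1 < ny_1$.

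Next I would reindex the product. For each admissible $k_0$ there are unique integers $0 \leq l_0 < \frac12 x_0$ and $0 \leq j < n$ with $k_0 = n l_0 + j$, and similarly unique $0 \leq l_1 < y_1$ and $0 \leq i < n$ with $k_1 = n l_1 + i$. This yields a bijection between the indexing sets of the two products. The key computation is then to substitute these expressions into $\lambda_n(k_0,k_1)$ and observe that the factors of $n$ cancel so as to give exactly $\lambda(l_0,l_1)$ evaluated for $L$ at the shifted arguments $\bigl(u_0 + \tfrac{i}{n},\, u_1 - \tfrac{j}{n}\bigr)$. Concretely, the cosine argument becomes
\[
\frac{2\pi}{x_0\cdot y_1}\left( \Bigl[\bigl(u_0 + \tfrac{i}{n}\bigr) + l_1\Bigr] x_0 + \Bigl[\bigl(u_1 - \tfrac{j}{n}\bigr) - l_0\Bigr] x_1 \right),
\]
and the sine argument becomes $\tfrac{2\pi}{x_0}\bigl(l_0 - (u_1 - \tfrac{j}{n})\bigr)$, matching the definition of $\lambda(l_0,l_1)$ for $L$ at $\bigl(u_0 + \tfrac{i}{n},\, u_1 - \tfrac{j}{n}\bigr)$ on the nose.

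Finally, splitting the resulting quadruple product as an outer product over $(i,j)$ of an inner product over $(l_0,l_1)$, the inner product is by definition $p_{[L,E]}\bigl(u_0 + \tfrac{i}{n},\, u_1 - \tfrac{j}{n}\bigr)$, and the theorem follows. I do not expect a genuine obstacle: since $K_E$ is already diagonal, there is no algebraic content beyond the reindexing and the cancellation of the scale factor $n$ in the trigonometric arguments. The only point requiring care is confirming that the bijection $k_0 \leftrightarrow (l_0,j)$ respects the asymmetric bound $\frac12 x_0$ (as opposed to $x_0$) on $l_0$; this is automatic because $\frac12(nx_0) = n\cdot\tfrac12 x_0$, so $n$ divides the range cleanly.
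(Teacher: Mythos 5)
Your reindexing argument is correct as far as it goes, and it is exactly the paper's proof --- but of Theorem \ref{pkn}, not of this Corollary. What you have established is the product formula for $p_{[nL,E]}(n u_0, n u_1) = \det\big(K_E[nL]\big)$ in terms of the $p_{[L,E]}\big(u_0+\tfrac{i}{n}, u_1-\tfrac{j}{n}\big)$. The Corollary, however, is a statement about $K_D$, the Kasteleyn matrix in the domain bases $D_b, D_w$, and the two determinants differ by the complex phase $\rho = \rho_1 \cdot \rho_2$ computed in Section \ref{sec:contas}: $\det(K_D) = \rho \cdot \det(K_E)$. The factor $\mu_1(q_1,n,L)$ in the statement is precisely the accumulated discrepancy $\mu_0(q_1,n,L)\cdot \rho_1(nL)\cdot \rho_1(L)^{-n^2}$ arising from the phase $\rho_2(nL, n u_1)$ on the left against the product $\prod_{0\le i,j<n} \rho_2\big(L, u_1 - \tfrac{j}{n}\big)^{-1}$ on the right, together with the $\rho_1$'s. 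Your proposal never mentions $\rho$, $\rho_1$, $\rho_2$, or $\mu_1$, so it cannot produce the stated identity; the fact that your final formula carries no correction factor at all is the symptom.

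Concretely, to close the gap you must (i) invoke Theorem \ref{pkn} (your argument), (ii) pass from $p_{[L,E]}$ to $\det(K_D[L])$ via $P_{[L,E]}(q_0,q_1) = \det\big(K_D(q_0,q_1)\big)\cdot \rho_1(L)^{-1} = p_{[L,E]}(u_0,u_1)\cdot\rho_2(L,u_1)$, (iii) compute the product of the $\rho_2$ factors case by case according to the parity of $y_1$ (whether $L$ is odd or even) and of $n$, and check that the result does not depend on the choice of $u_1$ with $q_1^n = \exp(2\pi\bi\cdot n u_1)$ --- this well-definedness point matters because the Corollary is stated for $q_0, q_1 \in \Sp^1$ rather than for real arguments, and it rests on the periodicity relations of Proposition \ref{plperiod}; and (iv) absorb $\rho_1(nL)\cdot\rho_1(L)^{-n^2}$ into $\mu_0$ to obtain $\mu_1$. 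Steps (ii)--(iv) are the actual content of the Corollary beyond the theorem you proved, and they are entirely absent from your proposal.
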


\glsaddall

\printnoidxglossaries

\bibliography{biblio}{}
\bibliographystyle{plain}
\end{document}